\newtheorem{theorem}{Theorem}
\newtheorem{thm}[equation]{Theorem}
\newtheorem{conjecture}[theorem]{Conjecture}
\newtheorem{corollary}[equation]{Corollary}
\newtheorem{definition}[equation]{Definition}
\newtheorem{example}[equation]{Example}
\newtheorem{lemma}[equation]{Lemma}
\newtheorem{notation}[theorem]{Notation}
\newtheorem{problem}[theorem]{Problem}
\newtheorem{proposition}[equation]{Proposition}
\newtheorem{remark}[equation]{Remark}
\newenvironment{proof}[1][Proof]{\textbf{#1.} }{\ \rule{0.5em}{0.5em}}
\begin{document}



\title{Deformations of Compact Holomorphic Poisson Manifolds and Algebraic Poisson Schemes}
\author{Chunghoon Kim}
\degreemonth{March}
\degreeyear{2014}
\degree{Doctor of Philosophy}
\chair{Professor Ziv Ran}
\othermembers{Professor Wee Liang Gan\\
Professor David Rush}
\numberofmembers{3}
\field{Mathematics}
\campus{Riverside}

\maketitle
\copyrightpage{}
\approvalpage{}

\degreesemester{Winter}

\begin{frontmatter}

\begin{acknowledgements}

I thank Professor Ziv Ran for being my advisor and providing me with advice, in particular, in the preparation of the part \ref{part2} of the thesis which is based on his paper \cite{Ran00}. I thank Professor Wee Liang Gan for helpful conversations, his generosity and his help as a graduate advisor. Especially I thank Professor Wee Liang Gan for bringing Namikawa's paper \cite{Nam09} to my attention. Namikawa's paper \cite{Nam09} was crucial in this thesis. My work began when I started thinking about the holomorphic version of deformations of Poisson schemes presented in $\cite{Nam09}$. I was inspired by Kodaira's informal explanation on deformations of complex structures, saying that {\it ``deformations of a complex manifold $M$ is the glueing of the same polydisks $U_j$ via different identification''} parametrized by a variable $t$. The double complex in the proof of Proposition 8 in Namikawa's paper \cite{Nam09}, gave me the idea that deformations of a holomorphic Poisson manifold $(M,\Lambda)$ is the glueing of the Poisson polydisks $(U_j,\Lambda_j(t))$ via different identification parametrized by a variable $t$. That led me to define a Poisson analytic family on the basis of Kodaira-Spencer.
I thank Professor David Rush for serving on my dissertation committee and his wonderful courses he gave. I thank Professor Gerhard Gierz for financial support in my final year in the PhD program.

My thesis would not have existed without great books, articles, and papers from senior mathematicians. I am deeply indebted to the following books and articles: Kodaira's  book \cite{Kod05},  Sernesi's book \cite{Ser06}, Manetti's lecture notes \cite{Man04}, Hartshorne's book \cite{Har10}, and Schlessinger and Lichtenbaum's paper \cite{Sch67}.
During summer vacation in 2012 in Korea, I spent everyday reading Kodaira's monumental book ``complex manifolds and deformations of complex structures'' (\cite{Kod05}) with Ravel's G major piano concerto 2nd movement and Thomas Mann's Doctor Faustus. That was the most happiest in my whole graduate years.

I have been fortunate to have great friends here in UCR. I thank John Dusel for being my friend. I learned a lot from him on the importance of writing. I truly appreciate his support, encouragement, and advice. I will miss our numerous conversations. I thank Adam Navas for being my friend. Probably he is the one whom I most hung out with in my graduate years. I will miss the times I spent with him and I appreciate his tremendous help. I don't want to imagine how my life in UCR would be without him. I thank Jorge Perez for his genuine kindness. He is the kindest person I have ever met in my whole life. His existence gave me a lot of thoughts on human nature, and I believe that he will find his meaning of life. I thank Jason Payne for being my friend. After he left UCR, sometimes I missed our conversations on mathematics, and I really appreciate his occasional emails. I also thank Daniel Majcherek and Oliver Thistlethwaite for spending our graduate years together as good friends. I thank Dr. Sung Rak Choi for his kindness and his help in my early graduate years while he was a postdoc here in UCR. 

I thank my family whose constant encouragement and support helped me to finish my thesis. Lastly, I thank Professor Yongnam Lee who encouraged me to be a mathematician when I was an undergraduate student. I deeply appreciate his help and support.

\end{acknowledgements}

\begin{dedication}
\null\vfil
{\large
\begin{center}

{\it In memory of my grandfather Joo Young Kim, whose unconditional love will remain forever in my heart.}
\end{center}}
\vfil\null
\end{dedication}

\begin{abstract}

 In this thesis, we study deformations of compact holomorphic Poisson manifolds and algebraic Poisson schemes. Deformations of compact holomorphic Poisson manifolds is based on Kodaira-Spencer's analytic deformation theory of compact complex manifolds, and deformations of algebraic Poisson schemes is based on Grothendieck's algebraic deformation theory of algebraic schemes. The only difference is that we deform an additional structure, namely `Poisson structures'  as well as underlying complex structures or algebraic structures in a family of compact holomorphic Poisson manifolds or in a family of algebraic Poisson schemes. Hence when we ignore Poisson structures, the underlying deformation theory  is same to ordinary deformation theory in the sense of Kodaira-Spencer, and Grothendieck.

In the part \ref{part1} of the thesis, we study deformations of compact holomorphic Poisson manifolds. We define a concept of a family of holomorphic Poisson manifolds, called a Poisson analytic family on the basis of Kodaira-Spencer's complex analytic family. We use the truncated holomorphic Poisson cohomology to study infinitesimal deformations of holomorphic Poisson manifolds and define a Poisson Kodaira Spencer map. We deduce the integrability condition. We study the `theorem of existence' for holomorphic Poisson structures.

In the part \ref{part2} of the thesis, we describe a differential graded Lie algebra governing infinitesimal Poisson deformations of a compact holomorphic Poisson manifold $(X,\Lambda_0)$ over a local artinian $\mathbb{C}$-algebra with the residue $\mathbb{C}$. We study an universal Poisson deformation of $(X,\Lambda_0)$ when $HP^1(X,\Lambda_0)=0$. 

In the part \ref{part3} of the thesis, we study deformations of algebraic Poisson schemes. We focus on infinitesimal Poisson deformations of an algebraic Poisson scheme $(X,\Lambda_0)$ over a local artinian $k$-algebra with the residue $k$, where $k$ is a algebraically closed field. We study first order Poisson deformation, obstruction and Poisson deformation functor $PDef_{(X,\Lambda_0)}$. By following \cite{Sch67}, we construct a Poisson contangent complex for a Poisson $k$-algebra homomorphism $A\to B$ and a Poisson $B$-module $M$ and define $PT^i(B/A,M)$.
\end{abstract}

\tableofcontents

\end{frontmatter}
\addcontentsline{toc}{chapter}{Preface}
\chapter*{Preface}

In this thesis, we study deformations of compact holomorphic Poisson manifolds \footnote{For general information of Poisson geometry, see Appendix \ref{appendixa}. A holomorphic Poisson manifold is a complex manifold such that its structure sheaf is a sheaf of Poisson algebras. For more details of deformations of compact holomorphic Poisson manifolds, see the part \ref{part1} of the thesis.} and algebraic Poisson schemes.\footnote{A Poisson algebraic scheme is an algebraic scheme over an algebraically closed field $k$ such that its structure sheaf is a sheaf of Poisson algebras. For more details of the definition of Poisson schemes and deformations of algebraic Poisson schemes, see the part \ref{part3} of the thesis.} Deformations of compact holomorphic Poisson manifolds is based on Kodaira-Spencer's deformation theory of compact complex manifolds, and deformations of algebraic Poisson schemes is based on Grothendieck's deformation theory of algebraic schemes. The only difference is that we deform an additional structure, namely `Poisson' structures' in a family of compact holomorphic Poisson manifolds or algebraic Poisson schemes. Hence when we ignore Poisson structures, the underlying deformation theory  is same to ordinary deformation theory in the sense of Kodaira-Spencer, and Grothendieck. The relationship between deformations of compact complex manifolds and deformations of algebraic schemes is well described in the Introduction of Sernesi's book \cite{Ser06}. I will briefly explain their relationship described in \cite{Ser06}, and then extend their relationship to the relationship between deformations of compact holomorphic Poisson manifolds, and algebraic Poisson schemes in the following.

Given a compact complex manifold $X$, a family of deformations of $X$ is a commutative diagram of holomorphic maps between complex manifolds
\begin{center}
$\xi:$$\begin{CD}
X @>>> \mathcal{X}\\
@VVV @VV\pi V\\
\star @>>> B
\end{CD}$
\end{center}
with $\pi$ proper and smooth, B connected and where $\star$ denotes the singleton space. We denote by $\mathcal{X}_t$ the fibre $\pi^{-1}(t),t\in B$. We call $(\mathcal{X},B,\pi)$ a complex analytic family. Kodaira and Spencer started studying small deformations of $X$ in a complex analytic family by defining, for every tangent vector $\frac{\partial}{\partial t}\in T_{t_0}B$, the derivative of the family along $\frac{{\partial}}{\partial t}\in T_{t_0}B$ as an element 
\begin{align*}
\frac{\partial \mathcal{X}_t}{\partial t}\in H^1(X,\Theta)
\end{align*}
which gives a Kodaira Spencer map $\kappa:T_{t_0}B\to H^1(X,\Theta)$. They investigated the problem of classifying all small deformations of $X$, by constructing a ``complete family" of deformations of $X$ which roughly means that every small deformation of $X$ is induced from the complete family. More precisely, they established the following theorems.

\begin{thm}[Theorem of Existence]
Let $X$ be a compact complex manifold and suppose $H^2(X,\Theta)=0$. Then there exists a complex analytic family $(\mathcal{X},B,\pi)$ with $0\in B\subset \mathbb{C}^m$ satisfying the following conditions:
\begin{enumerate}
\item $\pi^{-1}(0)=M$
\item $\rho_0:\frac{\partial}{\partial t}\to \left(\frac{\partial M_t}{\partial t}\right)_{t=0}$ with $M_t=\pi^{-1}(t)$ is an isomorphism of $T_0(B)$ onto $H^1(M,\Theta):T_0(B)\xrightarrow{\rho_0} H^1(M,\Theta)$.
\end{enumerate}
\end{thm}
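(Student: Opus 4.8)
The plan is to follow the power-series method of Kodaira--Spencer--Nirenberg: produce the family as a formal power series solution of the integrability equation and then prove its convergence. Fix a Hermitian metric on $X$, write $\Theta$ for the holomorphic tangent sheaf, and let $m=\dim_{\mathbb C}H^1(X,\Theta)$. Deformations of the complex structure of $X$ are encoded by a smooth $\Theta$-valued $(0,1)$-form $\varphi=\varphi(t)$ depending holomorphically on a parameter $t=(t_1,\dots,t_m)$ near $0\in\mathbb C^m$, subject to $\varphi(0)=0$ and the integrability (Maurer--Cartan) equation
\[
\bar\partial\varphi(t)=\tfrac12[\varphi(t),\varphi(t)].
\]
I would look for $\varphi$ as a formal series $\varphi(t)=\sum_{|\nu|\ge1}\varphi_\nu\,t^\nu$ with coefficients $\Theta$-valued $(0,1)$-forms, imposing the normalization $\bar\partial^{*}\varphi_\nu=0$ to single out a unique solution.

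For the first-order term I would set $\varphi_1=\sum_{i=1}^m\beta_i\,t_i$, where $\beta_1,\dots,\beta_m$ are the harmonic $\Theta$-valued $(0,1)$-forms representing a basis of $H^1(X,\Theta)$; this choice is what ultimately makes the Kodaira--Spencer map at $0$ the asserted isomorphism. Then I would proceed by induction on the total degree $\mu$: assuming $\varphi$ has been built modulo degree $\mu$ so that the Maurer--Cartan equation holds modulo degree $\mu$, set
\[
\varphi_\mu=\tfrac12\,\bar\partial^{*}G\,[\varphi,\varphi]_\mu,
\]
with $G$ the Green operator for the $\bar\partial$-Laplacian and $[\varphi,\varphi]_\mu$ the degree-$\mu$ homogeneous part, which involves only the already-constructed lower-order terms. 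Two points must be checked: that $[\varphi,\varphi]_\mu$ is $\bar\partial$-closed, which follows from the graded Jacobi identity together with the inductive hypothesis, and that its harmonic projection vanishes --- this is precisely where the hypothesis $H^2(X,\Theta)=0$ enters, since the harmonic projection is a harmonic $\Theta$-valued $(0,2)$-form and hence represents a class in $H^2(X,\Theta)=0$. Granting these, the Hodge decomposition gives $\bar\partial\varphi_\mu=\tfrac12[\varphi,\varphi]_\mu$, while $\bar\partial^{*}\varphi_\mu=0$ holds automatically because $\bar\partial^{*}\bar\partial^{*}=0$.

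The main obstacle is convergence of this formal series in a neighborhood of $0$. Here I would work in H\"older norms $\|\cdot\|_{k+\alpha}$ and use the elliptic a priori estimate $\|\bar\partial^{*}G\psi\|_{k+1+\alpha}\le C\|\psi\|_{k+\alpha}$ together with the bilinear estimate for the bracket $\|[\varphi,\psi]\|_{k+\alpha}\le C'\|\varphi\|_{k+1+\alpha}\|\psi\|_{k+1+\alpha}$ to compare $\varphi(t)$ termwise with a majorant series of Kodaira type, say $A\sum_{\ell\ge1}\frac{b^{\ell-1}}{\ell^{2}}\,(t_1+\cdots+t_m)^{\ell}$; choosing $A$ large and $b$ large relative to $C,C'$ one shows that the majorant dominates the series coefficient by coefficient, which yields convergence of $\varphi(t)$ on some small polydisc $B\ni0$ in $\mathbb C^m$.

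Once $\varphi(t)$ converges, it defines for each $t\in B$ an integrable almost-complex structure on the underlying differentiable manifold of $X$ (integrability being exactly the Maurer--Cartan equation), hence a complex manifold $\mathcal X_t$; these assemble into a complex analytic family $(\mathcal X,B,\pi)$ with $\pi$ proper (its fibres are diffeomorphic to the compact $X$) and smooth, and with $\pi^{-1}(0)=X$ since $\varphi(0)=0$, which is condition (1). Finally, differentiating the construction at $t=0$ shows that the Kodaira--Spencer map $\rho_0$ sends $\partial/\partial t_i$ to the class $[\beta_i]$, so by the choice of the $\beta_i$ it is an isomorphism $T_0(B)\xrightarrow{\ \sim\ }H^1(X,\Theta)$, which is condition (2).
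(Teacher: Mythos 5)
Your proposal is correct, but it does not follow the route taken in the thesis. The thesis does not reprove this classical statement at all: it quotes it from Kodaira (\cite{Kodaira58}, \cite{Kod05}) and instead proves the Poisson analogue (Theorem \ref{theorem of existence}) by the Kuranishi method of \cite{Mor71}. There the solution is not built order by order with an obstruction check at every degree; one writes down the single fixed-point equation $\beta(t)=\beta_1(t)-\tfrac12 L^{*}G[\beta(t),\beta(t)]$ (with $L=\bar\partial+[\Lambda_0,-]$, and $\bar\partial$, $\bar\partial^{*}G$ in your classical setting), whose formal power-series solution converges in H\"older norm with no hypothesis on the obstruction space, and then proves separately that $\beta(t)$ satisfies the Maurer--Cartan equation if and only if $H[\beta(t),\beta(t)]=0$, via the identity $\psi(t)=L^{*}G[\psi(t),\beta(t)]$ for $\psi(t)=L\beta(t)+\tfrac12[\beta(t),\beta(t)]$ and the estimate $\|\psi\|_{k+\alpha}\le C\|\psi\|_{k+\alpha}\|\beta\|_{k+\alpha}$, which forces $\psi=0$ for small $t$. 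The vanishing of the obstruction group ($H^{2}(X,\Theta)=0$ here, $HP^{3}(M,\Lambda_0)=0$ in the Poisson case) is therefore invoked only once, to kill the harmonic projection, rather than at every inductive step as in your argument; your convergence proof is the Kodaira majorant-series comparison, whereas the thesis cites the H\"older-norm estimates of \cite{Mor71} for the fixed-point series. What each approach buys: yours is self-contained and closer to Kodaira's original presentation, while the Kuranishi formulation decouples construction from obstruction, so the same series $\beta(t)$ and the locus $\{H[\beta(t),\beta(t)]=0\}$ immediately set up the Kuranishi family when the obstruction space does not vanish (as discussed in Section \ref{section6}). Both arguments finish identically, via Newlander--Nirenberg applied to $\varphi(t)$ viewed on $X\times B$ (holomorphic dependence on $t$ being needed and available) and the computation that $\rho_0(\partial/\partial t_i)=[\beta_i]$.
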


\begin{thm}[Theorem of Completeness]
Let $(\mathcal{X},B,\pi)$ be a complex analytic family and $\pi^{-1}(0)=X$. If $\rho_0:T_0 B\to H^1(X,\Theta)$ is surjective, the complex analytic family $(\mathcal{X},B,\pi)$ is compete at $0\in B$.
\end{thm}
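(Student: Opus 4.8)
The plan is to prove completeness by the classical power-series-plus-convergence method of Kodaira--Spencer \cite{Kod05}, working in the patching picture that matches the ``glueing of polydisks'' philosophy quoted above. Fix a finite Stein (polydisk) covering $\mathcal{U} = \{U_j\}$ of $X$ fine enough that both families in play trivialize over it, and represent $(\mathcal{X},B,\pi)$, $B \subset \mathbb{C}^m$, by holomorphic transition maps $z_j = f_{jk}(z_k,t)$ with $f_{jk}(z_k,0)$ the transition maps of $X$. To show the family is complete at $0$ I would take an arbitrary complex analytic family $(\mathcal{Y},D,\varpi)$, $D \subset \mathbb{C}^n$, with $\varpi^{-1}(0) = X$, represented over (a refinement of) $\mathcal{U}$ by transition maps $z_j = g_{jk}(z_k,s)$, and seek a holomorphic map $h \colon D' \to B$ on a neighborhood $D'$ of $0$, $h(0)=0$, together with fibrewise biholomorphisms $w_j = \Phi_j(z_j,s)$, $\Phi_j(z_j,0) = z_j$, satisfying the cocycle-matching identity
$$\Phi_j\big(g_{jk}(z_k,s),\, s\big) \;=\; f_{jk}\big(\Phi_k(z_k,s),\, h(s)\big)$$
as an identity in $(z_k,s)$ over $U_j \cap U_k$. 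Such data is exactly an isomorphism $\mathcal{Y}|_{D'} \cong h^{*}\mathcal{X}$ over $D'$, which is what completeness of $(\mathcal{X},B,\pi)$ at $0$ requires.

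Next I would solve the matching identity formally. Expand $h(s) = \sum_{\nu \ge 1} h_\nu(s)$ and $\Phi_j(z_j,s) = z_j + \sum_{\nu \ge 1}\phi_{j,\nu}(z_j,s)$ into parts homogeneous of degree $\nu$ in $s$, with $h_\nu$ valued in $\mathbb{C}^m$ and each $\phi_{j,\nu}$ a holomorphic vector field on $U_j$ depending polynomially on $s$, and collect terms of degree $\nu$. One obtains, for each $\nu$, an equation of the shape
$$(\delta\,\phi_{\cdot,\nu})_{jk} \;=\; \Gamma_{jk,\nu}(s) \;-\; \sum_{\lambda=1}^{m} h_{\nu,\lambda}(s)\,\theta_{jk}^{(\lambda)},$$
where $\delta$ is the Čech coboundary on $0$-cochains valued in $\Theta$, the collection $\{\theta_{jk}^{(\lambda)}\}$ is the $\Theta$-valued $1$-cocycle on $\mathcal{U}$ representing $\rho_0(\partial/\partial t_\lambda)$ for $\mathcal{X}$, and $\Gamma_{\cdot,\nu}$ is a $\Theta$-valued $1$-cochain, homogeneous of degree $\nu$ in $s$, built from $f$, $g$ and the already-determined $h_1,\dots,h_{\nu-1}$ and $\phi_{\cdot,1},\dots,\phi_{\cdot,\nu-1}$. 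Using the lower-order equations together with the associativity (cocycle) identities for the $f_{jk}$ and the $g_{jk}$, I would verify that $\Gamma_{\cdot,\nu}$ is itself a Čech $1$-cocycle. At $\nu = 1$ the equation says precisely that the Kodaira--Spencer class of $\mathcal{Y}$ at $0$ lies in the image of $\rho_0$ for $\mathcal{X}$, which holds by hypothesis since that image is all of $H^1(X,\Theta)$; this pins down the linear part $h_1 = (dh)_0$. For $\nu \ge 2$, because the classes $[\theta^{(\lambda)}]$ span $H^1(X,\Theta)$ by the surjectivity hypothesis, I can choose the homogeneous polynomials $h_{\nu,\lambda}(s)$ so that $\Gamma_{\cdot,\nu} - \sum_\lambda h_{\nu,\lambda}\,\theta^{(\lambda)}$ becomes a coboundary $\delta(\phi_{\cdot,\nu})$, and take such $\phi_{j,\nu}$; this yields a formal power-series solution.

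The remaining step, and the genuinely hard one, is convergence. Fixing Hölder norms on slightly shrunk polydisks $U_j' \Subset U_j$, I would use finite-dimensionality of $H^1(X,\Theta)$ (here compactness of $X$ is essential) and the surjectivity hypothesis to fix, once and for all, a bounded linear right inverse to $\delta$ on the relevant space of cocycles -- concretely via harmonic theory and the operator $G\bar\partial^{*}$ after passing through the Dolbeault--Čech comparison, or directly via a bounded homotopy operator for the good cover -- so that at each stage $\|\phi_{\cdot,\nu}\|$ and $\|h_\nu\|$ are controlled by $\|\Gamma_{\cdot,\nu}\|$ up to a fixed constant. Then I would run Kodaira's majorant argument: estimate $\|\Gamma_{\cdot,\nu}\|$ in terms of the lower-order norms using the explicit nonlinear dependence (compositions $f_{jk}(\cdot,h(s))$, $g_{jk}$, $\Phi_k^{-1}$) and Cauchy estimates, and compare the resulting recursion with the coefficients of a convergent majorant series such as $A\sum_{\nu}(c\,|s|)^{\nu}/\nu^{2}$ with $A,c$ chosen large. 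This forces $h(s)$ and each $\Phi_j(z_j,s)$ to converge on a neighborhood $D'$ of $0$, producing the desired holomorphic $h$ and the isomorphism $\mathcal{Y}|_{D'} \cong h^{*}\mathcal{X}$.

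I expect the main obstacle to be exactly this convergence estimate. The cohomological bookkeeping -- that $\Gamma_{\cdot,\nu}$ is a cocycle and that subtracting $\sum_\lambda h_{\nu,\lambda}\theta^{(\lambda)}$ kills its class -- is delicate but essentially formal, whereas bounding the nonlinear composition terms uniformly in $\nu$ and extracting a positive radius of convergence requires the full analytic machinery (bounded solution operators from elliptic theory, Cauchy/Hölder estimates, and the method of majorants) of Kodaira--Spencer--Nirenberg; this is the only place where hard analysis enters, and where compactness of $X$ is used in an essential way.
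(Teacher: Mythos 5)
Your outline is correct and is essentially the argument this paper relies on: the thesis states the Theorem of Completeness without proof, citing Kodaira (\cite{Kod05}, \cite{Kodaira58}), and your scheme---the matching identity $\Phi_j(g_{jk}(z_k,s),s)=f_{jk}(\Phi_k(z_k,s),h(s))$, the order-by-order construction of $h_\nu$ and $\phi_{\cdot,\nu}$ in which surjectivity of $\rho_0$ lets the obstruction $1$-cocycle $\Gamma_{\cdot,\nu}$ be written as $\sum_\lambda h_{\nu,\lambda}\theta^{(\lambda)}$ plus a coboundary, followed by convergence via bounded solution operators and a majorant series---is precisely Kodaira's classical proof of completeness.
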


By combing these two theorems, we get

\begin{corollary}
If $H^2(X,\Theta)=0$, then there exists a complete family of deformations of $X$ whose Kodaira Spencer map is an isomorphism. If moreover, $H^0(X,\Theta)=0$, then such complete family is universal.
\end{corollary}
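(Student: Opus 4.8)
The plan is to obtain the Corollary by combining the two theorems just stated, and then to upgrade ``complete'' to ``universal'' via a standard rigidity argument that uses $H^{0}(X,\Theta)=0$.

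First, since $H^{2}(X,\Theta)=0$, the Theorem of Existence produces a complex analytic family $(\mathcal{X},B,\pi)$ with $0\in B\subset\mathbb{C}^{m}$, $\pi^{-1}(0)=X$, and Kodaira--Spencer map $\rho_{0}\colon T_{0}(B)\to H^{1}(X,\Theta)$ an isomorphism. In particular $\rho_{0}$ is surjective, so the Theorem of Completeness shows that $(\mathcal{X},B,\pi)$ is complete at $0$. This already proves the first assertion: the family just constructed is complete and has bijective Kodaira--Spencer map.

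For the second assertion we assume in addition $H^{0}(X,\Theta)=0$ and must show that the inducing map furnished by completeness is unique. Given any family $(\mathcal{Y},N,p)$ with $p^{-1}(s_{0})=X$, completeness provides a holomorphic map germ $h\colon(N,s_{0})\to(B,0)$ together with an isomorphism $h^{*}(\mathcal{X},B,\pi)\cong(\mathcal{Y},N,p)$; I would show that $h$ is uniquely determined by induction on the order of its Taylor expansion at $s_{0}$. At first order, two candidate maps inducing the same family have the same image under $\rho_{0}$ (via the comparison of Kodaira--Spencer classes), hence coincide because $\rho_{0}$ is injective. At order $n+1$, assuming two candidates agree to order $n$, their discrepancy splits into a contribution measured by $H^{0}(X,\Theta)$ --- the indeterminacy in the chosen isomorphism of the induced families, coming from infinitesimal automorphisms of $X$ --- and a contribution measured by $\ker\rho_{0}$ --- the residual freedom in the base direction; both vanish under our hypotheses, so the two candidates agree to order $n+1$. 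Hence $h$ is unique, i.e.\ the complete family is universal. Equivalently, this is exactly the conclusion of Kodaira--Spencer's universality theorem, whose hypotheses --- completeness at $0$, bijectivity of $\rho_{0}$, and $H^{0}(X,\Theta)=0$ --- are all now in hand.

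The main obstacle is this last step. The first assertion is a purely formal consequence of the two quoted theorems, whereas uniqueness of the inducing map is not: it requires the rigidity supplied by the absence of infinitesimal automorphisms of $X$, together with careful bookkeeping that tracks, through all orders, both the ambiguity in the base map and the ambiguity in the chosen isomorphism of families. One must also be careful that completeness at $0$ does not by itself yield completeness at nearby points, so the induction should be phrased using only the data over the Artinian thickenings of $s_{0}$, where $H^{0}(X,\Theta)=0$ applies directly.
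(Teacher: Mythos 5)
Your derivation of the first assertion is exactly what the paper does: the corollary is stated as an immediate combination of the Theorem of Existence and the Theorem of Completeness (existence gives a family with bijective Kodaira--Spencer map, surjectivity of $\rho_0$ then gives completeness), with no further argument supplied. For the universality claim the paper offers no proof at all --- it is quoted as classical Kodaira--Spencer theory --- and your inductive rigidity argument (first-order uniqueness of the inducing map from injectivity of $\rho_0$, higher-order uniqueness because $H^{0}(X,\Theta)=0$ removes the automorphism ambiguity in the chosen isomorphisms, with the induction carried out over the Artinian thickenings of $s_0$) is the standard, correct way to fill in that omitted step.
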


Later Kuranishi generalized this result without assumptions on $H^2(X,\Theta)=0$ by relaxing the definition of a family of deformations of $X$ in a way that $B$ is allowed to be an analytic space.

On the other hand,  Grothendieck's algebraic deformation theory is to algebraically formalize Kodaira-Spencer's analytic deformation theory. Let $X$ be an algebraic scheme over $k$, where $k$ is an algebraically closed field. A local deformation, or a local family of deformations of $X$ is a commutative diagram

\begin{center}
$\xi:$$\begin{CD}
X @>>> \mathcal{X}\\
@VVV @VV\pi V\\
Spec(k) @>>> S
\end{CD}$
\end{center}
where $\pi$ is a flat, $S=Spec(A)$ where $A$ is a local $k$-algebra with residue field $k$, and $X$ is identified with the fibre over the closed point. We can define a deformation functor
\begin{align*}
Def_X:\mathcal{A}^*\to (Sets)
\end{align*}
defined by $Def_X(A)=\{\text{local deformations of $X$ over $Spec(A)$}\}/(\text{isomorphisms})$, where $\mathcal{A}^*$ is the category of noetherian local $k$-algebras with the residue $k$. To study the question of representability of the functor $Def_X$ by some noetherian local $k$-algebra $\mathcal{O}$, the approach of Grothendieck was to formalize the method of Kodaira and Spencer, which consists in a formal construction followed by a proof of convergence. One of main problems is on prorepresentabiliy of $Def_X:\bold{Art}\to (Sets)$, where $\bold{Art}$ is the category of local artinian $k$-algebras with residue $k$.

As I said before, deformation theories of holomorphic Poisson manifolds and algebraic Poisson schemes are based on deformation theories of compact complex manifolds and algebraic schemes. The main difference is that we simply put one more structure on complex analytic families or algebraic families, namely ``Poisson structures''. So deformations of compact holomorphic Poisson manifolds, or algebraic Poisson schemes mean that we deform not only underlying complex or algebraic structures, but also Poisson structures.  I will explain small deformations of compact holomorphic Poisson manifolds. Given a holomorphic Poisson manifold $(X,\Lambda_0)$, a family of deformations of $(X,\Lambda_0)$ is a commutative diagram of holomorphic maps between a holomorphic Poisson manifold $(\mathcal{X},\Lambda)$ and a complex manifold $B$
\begin{center}
$\xi:$$\begin{CD}
(X,\Lambda_0) @>>> (\mathcal{X},\Lambda)\\
@VVV @VV\pi V\\
\star @>>> B
\end{CD}$
\end{center}
with $\pi$ is proper and smooth, $B$ is connected and where $\star$ denotes the singleton space. We denote $(\mathcal{X}_t,\Lambda_t)$ the fiber $\pi^{-1}(t), t\in B$ which is a compact holomorphic Poisson submanifold of $(\mathcal{X},\Lambda)$. We call $(\mathcal{X},\Lambda, \pi, B)$ a Poisson analytic family. As in a complex analytic family, we can define, for every tangent vector $\frac{\partial}{\partial t}\in T_{t_0} B$, the derivative of the family along $\frac{\partial}{\partial t}$ as an element
\begin{equation*}
\frac{\partial (\mathcal{X}_t,\Lambda_t)}{\partial t}\in HP^2(X,\Lambda_0)
\end{equation*}
which gives a linear map
\begin{equation*}
\varphi:T_{t_0} B\to HP^2(X,\Lambda_0)
\end{equation*}
called the Poisson Kodaira Spencer map of the family $(\mathcal{X},\Lambda,\pi, B)$. We can also define the concept of a complete family as in deformations of compact complex manifolds. I was interested in the problem of classifying all small deformations of $(X,\Lambda_0)$, by constructing a ``complete family" of deformations of $(X,\Lambda_0)$, but by some technical issues, I believe that I only proved the theorem of existence for holomorphic Poisson structures.
\begin{thm}[Theorem of Existence for  holomorphic Poisson structures]\label{theorem of existence}
Let $(M,\Lambda_0)$ be a compact holomorphic Poisson manifold satisfying some assumption and suppose that $HP^3(M,\Lambda_0)=0$. Then there exists a Poisson analytic family $(
\mathcal{X},\Lambda,B,\pi)$ with $0\in B\subset \mathbb{C}^m$ satisfying the following conditions:
\begin{enumerate}
\item $\pi^{-1}(0)=(X,\Lambda_0)$
\item $\varphi_0:\frac{\partial}{\partial t}\to\left(\frac{\partial (M_t,\Lambda_t)}{\partial t}\right)_{t=0}$ with $(\mathcal{X}_t,\Lambda_t)=\pi^{-1}(t)$ is an isomorphism of $T_0(B)$ onto $HP^2(M,\Lambda_0):T_0 B\xrightarrow{\rho_0} HP^2(M,\Lambda_0)$.
\end{enumerate}
\end{thm}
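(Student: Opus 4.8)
The plan is to imitate Kodaira's proof of the classical Theorem of Existence, replacing the tangent-sheaf-valued Čech/Dolbeault complex by the truncated holomorphic Poisson complex and replacing the obstruction space $H^2(M,\Theta)$ by $HP^3(M,\Lambda_0)$. Concretely, I would work with the differential graded Lie algebra (or the associated Dolbeault-type resolution) computing the truncated holomorphic Poisson cohomology $HP^\bullet(M,\Lambda_0)$, whose degree-one part governs infinitesimal Poisson deformations (so that first-order deformations are classified by $HP^2(M,\Lambda_0)$ as in the statement) and whose obstructions live in $HP^3(M,\Lambda_0)$. The first step is formal: choose harmonic representatives $\beta_1,\dots,\beta_m$ of a basis of $HP^2(M,\Lambda_0)$ and build, order by order in $t=(t_1,\dots,t_m)$, a formal power series solution
\begin{equation*}
\psi(t)=\sum_{|I|\ge 1}\psi_I t^I
\end{equation*}
of the Poisson Maurer–Cartan / integrability equation deduced earlier in the thesis, with $\psi_1(t)=\sum_{\nu}\beta_\nu t_\nu$. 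At each order one must solve an equation of the shape $\bar\partial$-type operator applied to $\psi_I$ equals a known quadratic expression in lower-order terms; the right-hand side is automatically a cocycle, and since its cohomology class lies in $HP^3(M,\Lambda_0)=0$ it is a coboundary, so $\psi_I$ exists. Using the Green operator and the harmonic projection one makes a canonical choice of $\psi_I$, exactly as in Kodaira's $\S 5$–$\S 6$.

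The second, analytically heavier, step is convergence: one must show the formally constructed $\psi(t)$ converges for $\|t\|$ small, so that it actually defines a holomorphic family $(\mathcal{X},\Lambda,B,\pi)$ over a polydisc $B\subset\mathbb{C}^m$. Here I would run the standard majorant-series argument — introduce the auxiliary series $A(t)=\frac{b}{c}\sum_{k\ge 1}\frac{(c\,t_1+\dots+c\,t_m)^k}{k^2}$, establish the elementary inequality $A(t)^2 \ll \frac{b}{c}A(t)$, prove a priori bounds $\|\psi_I\|_{k+\alpha}$ against the coefficients of $A(t)$ by induction using the boundedness of the Green operator on Hölder spaces, and conclude convergence in a fixed Hölder norm, hence (by elliptic regularity / Dolbeault) convergence to a genuine holomorphic Poisson structure. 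The "some assumption" in the hypothesis is presumably precisely what is needed to set up this elliptic machinery and the harmonic theory for the truncated Poisson complex uniformly in $t$ (e.g. an assumption guaranteeing that the relevant cohomology dimension $\dim HP^2(M_t,\Lambda_t)$ does not jump, or that the complex admits a well-behaved elliptic resolution); I would state it explicitly at this point and verify it enters only through upper-semicontinuity and the existence of Green's operators.

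Finally, I would check the two itemized conclusions. That $\pi^{-1}(0)=(X,\Lambda_0)$ is immediate from $\psi(0)=0$. For the Poisson Kodaira–Spencer map, one differentiates the family at $t=0$: since $\psi(t)=\sum_\nu\beta_\nu t_\nu+O(|t|^2)$ with the $\beta_\nu$ harmonic, the derivative $\partial(\mathcal{X}_t,\Lambda_t)/\partial t_\nu|_{t=0}$ is the class $[\beta_\nu]\in HP^2(M,\Lambda_0)$, so $\varphi_0$ sends a basis of $T_0B$ to a basis of $HP^2(M,\Lambda_0)$ and is therefore an isomorphism — this uses the identification, established earlier in the thesis, of the derivative of a Poisson analytic family with a class in $HP^2$. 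The main obstacle I expect is not the formal construction (that is a routine unwinding of the DGLA structure once $HP^3=0$) but the convergence proof: transporting Kodaira's Hölder-space estimates to the truncated holomorphic Poisson complex requires that this complex be (or be quasi-isomorphic to) an elliptic complex with a good Hodge theory, and checking that the nonlinear bracket term satisfies the requisite product estimates in those norms — this is exactly where the unspecified "assumption" must be pinned down and is the delicate heart of the argument.
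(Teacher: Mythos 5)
Your proposal is correct in outline and would prove the theorem, but it follows the Kodaira--Nirenberg--Spencer order-by-order scheme rather than the route the thesis actually takes. In the thesis the construction is explicitly the Kuranishi method of \cite{Mor71}: one first solves the auxiliary fixed-point equation $\beta(t)=\beta_1(t)-\tfrac12 L^*G[\beta(t),\beta(t)]$, with $\beta_1(t)=\sum_v(\eta_v+\pi_v)t_v$ built from harmonic representatives of a basis of $\mathbb{H}^2\cong HP^2(M,\Lambda_0)$; this power series exists and converges in the H\"older norm irrespective of any obstruction, and the hypothesis $HP^3(M,\Lambda_0)=0$ enters only once, at the end, through the equivalence ``$L\beta(t)+\tfrac12[\beta(t),\beta(t)]=0$ if and only if $H[\beta(t),\beta(t)]=0$'' (with $H$ the harmonic projection onto $\mathbb{H}^3$), which is proved by the estimate $\psi(t)=L^*G[\psi(t),\beta(t)]$ forcing $\psi(t)=0$ for small $|t|$. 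Your scheme instead invokes $HP^3=0$ at every order to kill the obstruction class of the quadratic term, chooses $\psi_I$ via $L^*G$, and then proves convergence with Kodaira's majorant series $A(t)$; this is equally valid and needs the same analytic inputs (the bracket product estimate and the H\"older bound on $G$, i.e.\ Lemmas \ref{lemma5.2.2} and \ref{lemma5.2.3}), but it is tied to the unobstructed case, whereas the fixed-point formulation is the one that carries over to the Kuranishi-family problem posed in Section \ref{section6}. The remaining steps you describe --- regularity of $\alpha(t)=\varphi(t)+\Lambda(t)$ in $(z,t)$ and holomorphy in $t$, Newlander--Nirenberg to integrate $\varphi(t)$, Theorem \ref{m} to make $\Lambda(t)^{2,0}$ a holomorphic Poisson structure on $M_{\varphi(t)}$, and Theorem \ref{n} to read the Poisson Kodaira--Spencer map off the harmonic first-order term --- coincide with the paper's.

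One correction concerning the unspecified hypothesis: the assumption (\ref{assumption}) is not a semicontinuity or non-jumping condition on $\dim HP^2(M_t,\Lambda_t)$; it is the a priori Schauder estimate $\|\varphi\|_{k+\alpha}\le C(\|\Box\varphi\|_{k-2+\alpha}+\|\varphi\|_0)$ for the Laplacian $\Box=LL^*+L^*L$ of the operator $L=\bar\partial+[\Lambda_0,-]$ acting on $A^2$ (ellipticity of the complex itself is established in Appendix \ref{appendixd}), and it is used exactly where you anticipate: to obtain the H\"older bound on the Green operator and hence convergence of the series.
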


\begin{conjecture}[Theorem of Completeness for holomorphic Poisson structures]\

 Let $(\mathcal{X},\Lambda, B,\pi)$ be a Poisson analytic family and $\pi^{-1}(0)=(X,\Lambda_0)$. If $\varphi_0:T_0 B\to HP^2(X,\Lambda_0)$ is surjective, then the Poisson analytic family $(\mathcal{X},\Lambda, B,\pi)$ is complete at $0\in B$. 
\end{conjecture}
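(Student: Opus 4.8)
The plan is to adapt Kodaira and Spencer's proof of the Theorem of Completeness to the Poisson setting, carrying the bivector along at every stage. Let $(\mathcal{Y},\Gamma,D,\omega)$ be an arbitrary Poisson analytic family with $0\in D\subset\mathbb{C}^{l}$ and $\omega^{-1}(0)=(X,\Lambda_{0})$; one must produce a holomorphic map $h\colon D\to B$ with $h(0)=0$ and an isomorphism of Poisson analytic families $(\mathcal{Y},\Gamma)\xrightarrow{\ \sim\ }h^{*}(\mathcal{X},\Lambda)$ over a neighbourhood of $0$. First I would fix a finite cover of $X$ by coordinate polydisks $U_{j}$ and present the two families near the central fibre by their gluing data: holomorphic transition maps $f_{jk}(w_{k},t)$ and bivectors $\Lambda_{j}(t)$ on $U_{j}\times B$ for $(\mathcal{X},\Lambda)$, and $g_{jk}(z_{k},s)$ and bivectors $\Gamma_{j}(s)$ on $U_{j}\times D$ for $(\mathcal{Y},\Gamma)$, each subject to the cocycle relations and to the Poisson integrability conditions deduced in Part~\ref{part1}. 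An isomorphism as required then amounts to fibrewise biholomorphisms $w_{j}=\Phi_{j}(z_{j},s)$ with $\Phi_{j}(z_{j},0)=z_{j}$ satisfying, on overlaps, $\Phi_{j}(g_{jk}(z_{k},s),s)=f_{jk}(\Phi_{k}(z_{k},s),h(s))$ together with $(\Phi_{j})_{*}\Gamma_{j}(s)=\Lambda_{j}(h(s))$.

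Next I would solve this system formally as power series in $s=(s_{1},\dots,s_{l})$, writing $h=\sum_{\mu\ge 1}h^{(\mu)}$ and $\Phi_{j}=\mathrm{id}+\sum_{\mu\ge 1}\Phi_{j}^{(\mu)}$ with all terms homogeneous of degree $\mu$. Collecting degree-one terms, the complex-structure equation contributes a \v{C}ech $1$-cochain valued in $\Theta_{X}$ and the Poisson equation a $0$-cochain valued in $\wedge^{2}\Theta_{X}$; these assemble to a class in the truncated holomorphic Poisson cohomology $HP^{2}(X,\Lambda_{0})$, namely the Poisson Kodaira--Spencer class of $(\mathcal{Y},\Gamma)$ at $0$ minus $\varphi_{0}\circ h^{(1)}$, where $h^{(1)}\colon T_{0}D\to T_{0}B$ is the linear part of $h$. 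Since $\varphi_{0}$ is surjective, $h^{(1)}$ can be chosen so that this class vanishes, and the residual equation then exhibits the $1$-cochain as a total coboundary, determining $\Phi_{j}^{(1)}$ up to automorphisms of the central Poisson fibre. Inductively, suppose $h$ and the $\Phi_{j}$ have been found modulo degree $\mu$; the degree-$\mu$ part of the system reduces to one equation $\delta\Phi^{(\mu)}=\psi^{(\mu)}-\Psi(h^{(\mu)})$ in the total complex, where $\delta$ is the total coboundary, $\psi^{(\mu)}$ is an explicit polynomial in the gluing data of orders $<\mu$, and $\Psi(h^{(\mu)})$ is a cocycle representing $\varphi_{0}(h^{(\mu)})\in HP^{2}(X,\Lambda_{0})$. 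The crucial lemma is that $\psi^{(\mu)}$ is an $HP^{2}$-cocycle; this is a bookkeeping argument using the cocycle identities for $f_{jk},g_{jk}$ and the two Poisson integrability conditions, carried out in the total complex of the Dolbeault operator $\bar\partial$ and the Lichnerowicz differential $[\Lambda_{0},-]$, and parallel to Kodaira's verification in the non-Poisson case. Surjectivity of $\varphi_{0}$ lets us choose $h^{(\mu)}$ with $\varphi_{0}(h^{(\mu)})=[\psi^{(\mu)}]$, after which $\Phi_{j}^{(\mu)}$ is obtained by solving the resulting coboundary equation.

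The last and most delicate step is convergence. Here I would follow the Kodaira--Nirenberg--Spencer scheme: fix H\"older norms on the $U_{j}$, replace the purely \v{C}ech solution operator by the harmonic one built from the Green operator of the elliptic Laplacian attached to the truncated holomorphic Poisson complex (so that the chosen primitives $\Phi_{j}^{(\mu)}$, $h^{(\mu)}$ are canonical and satisfy uniform a priori estimates), and bound the series termwise against an explicit convergent majorant of the classical Kodaira type. Propagating the estimate through the recursion gives absolute and uniform convergence of $h$ and of the $\Phi_{j}$ near $0$; their limits furnish the desired isomorphism of Poisson analytic families, so $(\mathcal{X},\Lambda,B,\pi)$ is complete at $0$.

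I expect convergence to be the real obstacle, and presumably this is the ``technical issue'' alluded to above. The cohomology $HP^{\bullet}(X,\Lambda_{0})$ is the hypercohomology of a \emph{double} complex ($\bar\partial$ together with $[\Lambda_{0},-]$), so before the majorant method can run one must construct a single second-order elliptic operator on the total complex whose kernel computes $HP^{2}$, establish the associated Hodge decomposition and Green operator with the standard estimates, and keep the interaction of the two differentials under uniform control along the recursion --- all while also carrying the extra nonlinear term $(\Phi_{j})_{*}\Gamma_{j}(s)$. A secondary, milder point is to pin down exactly which hypotheses are hidden in the phrase ``some assumption'' of Theorem~\ref{theorem of existence}, since the same conditions are presumably what make the Hodge theory of the truncated Poisson complex behave as in the classical case.
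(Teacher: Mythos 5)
You should be aware that the paper contains no proof of this statement: it is deliberately stated as a conjecture, and the author says explicitly (both in the preface and in the footnote to the corresponding Problem in Chapter \ref{chapter1}) that Kodaira's method from \cite{Kod05} can be set up but that he ``could not prove the inductive step in the Poisson direction.'' Your proposal reproduces that set-up faithfully --- gluing data $f_{jk}(w_k,t)$, $\Lambda_j(t)$ versus $g_{jk}(z_k,s)$, $\Gamma_j(s)$, the system $\Phi_j(g_{jk}(z_k,s),s)=f_{jk}(\Phi_k(z_k,s),h(s))$ together with $(\Phi_j)_*\Gamma_j(s)=\Lambda_j(h(s))$, and power-series induction on $s$ --- but at the decisive point you write that the closedness of the order-$\mu$ obstruction $\psi^{(\mu)}$ in the total complex ``is a bookkeeping argument \ldots parallel to Kodaira's verification in the non-Poisson case.'' That is exactly the step the paper identifies as the open difficulty, and asserting it does not discharge it. In the classical case the obstruction is a single \v{C}ech $1$-cochain in $\Theta_X$ and its $\delta$-closedness follows from the cocycle identity for the $f_{jk}$ alone; here the obstruction is a pair (a $1$-cochain of vector fields coming from the transition maps and a $0$-cochain of bivectors coming from $(\Phi_j)_*\Gamma_j(s)-\Lambda_j(h(s))$), and total closedness requires new identities mixing $\delta$, the Lichnerowicz differential $[\Lambda_0,-]$, the push-forward of $\Gamma_j$ under the partially-constructed $\Phi_j$, and the integrability conditions $[\Gamma_j,\Gamma_j]=0$, $[\Lambda_j(t),\Lambda_j(t)]=0$. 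Moreover, the correction at each order is made by a single vector-field $0$-cochain $\{\Phi_j^{(\mu)}\}$ whose total coboundary must kill \emph{both} components simultaneously; that this is possible is part of what must be proved, not a formal consequence of surjectivity of $\varphi_0$. Until that lemma is established, the induction does not close and the argument is incomplete.

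A secondary point: you locate ``the real obstacle'' in convergence and propose to run the recursion through the Green operator of an elliptic Laplacian for the truncated complex. Kodaira's completeness proof (\cite{Kod05}, as opposed to the existence proof) needs no Hodge theory at all --- convergence is obtained by elementary majorant estimates on the \v{C}ech data --- so importing harmonic theory here both changes the method and re-imports the analytic hypothesis (\ref{assumption}) that this paper needs only for the Theorem of Existence, where the operator $\bar{\partial}+[\Lambda_0,-]$ genuinely enters. If the formal inductive step in the Poisson direction were settled, one should first try to push the classical majorant argument through unchanged; the convergence question is likely the lesser of the two problems.
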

By combining the theorem and the conjecture, we get
\begin{corollary}
If $HP^3(X,\Lambda_0)=0$, then there exists a complete family of deformations of $(X,\Lambda_0)$ whose Poisson Kodaira Spencer map is an isomorphism. Moreover if $HP^1(X,\Lambda_0)=0$, then such complete family is universal.
\end{corollary}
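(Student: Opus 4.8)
The plan is to derive the Corollary by combining Theorem~\ref{theorem of existence} with the Theorem of Completeness stated just above (here a conjecture), in direct analogy with the classical Kodaira--Spencer picture. Since $HP^3(X,\Lambda_0)=0$ and $(X,\Lambda_0)$ is assumed to satisfy the hypothesis required by Theorem~\ref{theorem of existence}, that theorem produces a Poisson analytic family $(\mathcal{X},\Lambda,B,\pi)$ over a polydisc $0\in B\subset\mathbb{C}^m$ with $\pi^{-1}(0)=(X,\Lambda_0)$ whose Poisson Kodaira--Spencer map $\varphi_0\colon T_0B\to HP^2(X,\Lambda_0)$ is an isomorphism. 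In particular $\varphi_0$ is surjective, so the Theorem of Completeness applies and $(\mathcal{X},\Lambda,B,\pi)$ is complete at $0\in B$. This already yields the first assertion: a complete Poisson analytic family of deformations of $(X,\Lambda_0)$ with bijective Poisson Kodaira--Spencer map.

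For the second assertion I would show that, under the extra hypothesis $HP^1(X,\Lambda_0)=0$, this complete family is universal at $0$, i.e. that for every Poisson analytic family $(\mathcal{Y},\Lambda',N,\varpi)$ with $\varpi^{-1}(s_0)\cong(X,\Lambda_0)$ there is a \emph{unique} germ of holomorphic map $h\colon(N,s_0)\to(B,0)$ with $(\mathcal{Y},\Lambda',N,\varpi)\cong h^{*}(\mathcal{X},\Lambda,B,\pi)$ as Poisson analytic families. Completeness already supplies such an $h$, so only uniqueness must be proved. I would argue by the standard jet-by-jet comparison: given two inducing maps $h_1,h_2$ with $h_1(s_0)=h_2(s_0)=0$ and Poisson-compatible isomorphisms $f_i\colon\mathcal{Y}\xrightarrow{\sim}h_i^{*}\mathcal{X}$ over $N$, suppose the Taylor expansions of $h_1$ and $h_2$ first differ in degree $\nu\ge 1$, and let $P_\nu$ be the $T_0B$-valued homogeneous degree-$\nu$ part of $h_1-h_2$. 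Comparing the two pulled-back families modulo terms of degree $>\nu$ in the coordinates of $N$, the cocycle representing $\varphi_0(P_\nu)$ is seen to differ from zero only by a correction built from a degree-$\nu$ polynomial with coefficients in the degree-$1$ part of the truncated holomorphic Poisson complex, that is, in $HP^1(X,\Lambda_0)$; since $HP^1(X,\Lambda_0)=0$ this correction vanishes, and since $\varphi_0$ is injective we get $P_\nu=0$, a contradiction. Hence $h_1=h_2$ as formal germs, and holomorphy of both maps gives $h_1=h_2$.

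The step I expect to be the main obstacle is precisely this last one: one must set up carefully, over the Artinian truncations of $\mathcal{O}_N$, the identification of the ambiguity in the inducing map with a class in $HP^1(X,\Lambda_0)$, keeping track of how the Poisson bivector and the underlying complex structure interact --- the subspace $H^0(X,\Theta)\subset HP^1(X,\Lambda_0)$ governs infinitesimal automorphisms of the complex structure, while the remaining part of $HP^1$ accounts for the Poisson-compatible automorphisms --- and then to feed this into the convergence machinery (the H\"older-norm elliptic estimates) already used in the proof of Theorem~\ref{theorem of existence} rather than redoing it. Granting the Theorem of Completeness and the ``some assumption'' of Theorem~\ref{theorem of existence}, the argument above is the Poisson translation of Kodaira's proof in \cite{Kod05} of the universality of the Kuranishi family when $H^0(X,\Theta)=0$.
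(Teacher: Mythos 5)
Your derivation follows exactly the route the paper itself takes: the corollary is presented there as the formal combination of Theorem \ref{theorem of existence} (which, under the assumption (\ref{assumption}) and $HP^3(X,\Lambda_0)=0$, supplies a Poisson analytic family with bijective Poisson Kodaira--Spencer map) with the Theorem of Completeness, which in the paper is explicitly only a conjecture; like you, the author gives no further argument at this point, and the universality claim under $HP^1(X,\Lambda_0)=0$ is asserted by analogy with the classical Kodaira--Spencer statement. Your extra sketch of the jet-by-jet uniqueness argument is the right mechanism, and it is substantiated in the paper only later and only at the formal/Artinian level (the universal Poisson deformation of Theorem \ref{2theorem} and the prorepresentability of $PDef_{(X,\Lambda_0)}$ when $HP^1(X,\Lambda_0)=0$, which rests on the fact that infinitesimal Poisson automorphisms are then trivial, Lemma \ref{3lm}). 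One correction to your heuristic: the inclusion $H^0(X,\Theta)\subset HP^1(X,\Lambda_0)$ is backwards. From the truncated complex $0\to\Theta_X\to\wedge^2\Theta_X\to\cdots$ one computes
\begin{align*}
HP^1(X,\Lambda_0)=\{v\in H^0(X,\Theta_X)\mid [\Lambda_0,v]=0\},
\end{align*}
the global Poisson vector fields, a subspace of $H^0(X,\Theta_X)$; in particular $HP^1(X,\Lambda_0)=0$ does not force $H^0(X,\Theta_X)=0$, so the uniqueness step must be run with Poisson-compatible automorphisms (whose infinitesimal counterparts are exactly the elements of $HP^1$), not with all automorphisms of the underlying complex structure, and there is no ``remaining part'' of $HP^1$ beyond these. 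With that fixed, your argument is the correct Poisson translation of Kodaira's universality proof, and it is, as in the paper, conditional on the conjectured Theorem of Completeness.
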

The natural question is the existence of Kuranishi family for deformations of a holomorphic Poisson manifold. 
\begin{conjecture}
A complete family of deformations of $(X,\Lambda_0)$ such that the Poisson Kodaira Spencer map is an isomorphism exists without assumptions on $HP^3(X,\Lambda_0)=0$ provided the base $B$ is allowed to be an analytic space.
\end{conjecture}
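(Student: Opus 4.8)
The plan is to carry out Kuranishi's construction in the holomorphic Poisson setting, using the differential graded Lie algebra $L^\bullet$ governing Poisson deformations described in Part \ref{part2}. After fixing Hermitian metrics this is
\[
L^k=\bigoplus_{p+q=k}A^{0,q}\bigl(X,\wedge^{p+1}\Theta_X\bigr),\qquad d=\bar\partial+[\Lambda_0,-],
\]
with bracket the Schouten--Nijenhuis bracket, so that $H^k(L^\bullet)=HP^{k+1}(X,\Lambda_0)$; in particular $H^1(L^\bullet)=HP^2$ carries first-order Poisson deformations and $H^2(L^\bullet)=HP^3$ the obstructions. Writing a degree-one element as $\eta=\phi+\psi$ with $\phi\in A^{0,1}(\Theta_X)$ and $\psi\in A^{0,0}(\wedge^2\Theta_X)$, the Maurer--Cartan equation $d\eta+\tfrac12[\eta,\eta]=0$ splits by bidegree into the integrability of the almost complex structure $\phi$, the holomorphicity of $\Lambda_0+\psi$ for that structure, and the Jacobi identity for $\Lambda_0+\psi$; thus a small Maurer--Cartan element over a parameter space is exactly a Poisson analytic family deforming $(X,\Lambda_0)$.

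First I would set up Hodge theory for $L^\bullet$. The key point is that the total Dolbeault complex $(L^\bullet,d)$ is elliptic: filtering by the Dolbeault degree $q$, the principal symbol of $d$ at a nonzero real covector $\xi$ has associated graded equal to exterior multiplication by $\xi^{0,1}\neq 0$ tensored with the identity on $\wedge^{\bullet+1}\Theta_X$, which is exact, so the symbol complex of $d$ is exact. Ellipticity on the compact manifold $X$ then yields finite-dimensionality of $HP^\bullet$, a harmonic decomposition $\mathrm{id}=\mathcal H+\Box\,\mathcal G$ with Green operator $\mathcal G$, and a finite harmonic basis $\beta_1,\dots,\beta_m$ of $H^1(L^\bullet)=HP^2$.

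Next I would solve the Kuranishi equation
\[
\eta(t)=\sum_{i=1}^m t_i\beta_i+\tfrac12\,d^*\mathcal G\,[\eta(t),\eta(t)],\qquad t=(t_1,\dots,t_m)\in\mathbb C^m,
\]
by the usual power-series recursion with $\eta_1=\sum t_i\beta_i$, and prove convergence on a small polydisc by Kodaira's majorant method; the only new ingredient is the elliptic (H\"older) estimate for the Schouten bracket $L^1\times L^1\to L^2$, which is routine once one observes it to be a first-order bidifferential operator with holomorphic coefficients. Any such $\eta(t)$ satisfies $d^*\eta(t)=0$, and the standard identity expressing $d\eta+\tfrac12[\eta,\eta]$ as $\mathcal H\bigl([\eta,\eta]\bigr)$ (as in Kodaira's complex-analytic case) shows that $\eta(t)$ is Maurer--Cartan precisely on the analytic germ
\[
B=\bigl\{\,t : \mathcal H\bigl([\eta(t),\eta(t)]\bigr)=0\,\bigr\}\subset\mathbb C^m,
\]
which is cut out by at most $\dim HP^3$ holomorphic equations and reduces to a polydisc when $HP^3=0$, recovering Theorem \ref{theorem of existence}. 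Over $B$, the bidegree decomposition above together with Newlander--Nirenberg with parameters (as in Kodaira's book) integrates $\eta(t)$ to a Poisson analytic family $(\mathcal X,\Lambda,B,\pi)$ with $\pi^{-1}(0)=(X,\Lambda_0)$; since the defining equations of $B$ vanish to second order, $T_0B=\mathbb C^m$, and the Poisson Kodaira--Spencer map sends $\partial/\partial t_i\mapsto[\beta_i]$, hence is an isomorphism onto $HP^2$.

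The hard part will be completeness: showing that any Poisson analytic family $(\mathcal Y,\Lambda',S,s_0)$ with fibre $(X,\Lambda_0)$ over $s_0$ is, near $s_0$, the pullback of $(\mathcal X,\Lambda,B,\pi)$ under a holomorphic map $(S,s_0)\to(B,0)$. I expect this to require either (i) a direct adaptation of Kodaira's proof of the Theorem of Completeness, in which the classifying map is produced by solving, with $s\in S$ as parameters, the gauge/Maurer--Cartan equation in $\exp(L^0)$ that trivializes the given family against $\eta(t)$, invoking the implicit function theorem in Banach spaces of H\"older sections --- now with the bivector component of $\eta$ carried along; or (ii) first proving formal versality of $\eta(t)$ over $B$ by the DGLA obstruction calculus (this is exactly the step where $HP^3=0$ is \emph{not} needed), and then upgrading formal versality to analytic completeness by a convergence argument. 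A secondary issue is to identify, and ideally remove, the unnamed hypothesis (``some assumption'') in the statement of Theorem \ref{theorem of existence}, and to check that the notion of Poisson analytic family used here behaves well under the base changes invoked in (i)--(ii). Granting completeness, combining it with the construction above yields, with no assumption on $HP^3(X,\Lambda_0)$, a complete family whose Poisson Kodaira--Spencer map is an isomorphism, and when moreover $HP^1(X,\Lambda_0)=0$ the infinitesimal automorphisms vanish and the family is universal, as in the Corollary.
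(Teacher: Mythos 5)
The statement you are addressing is stated in the paper as a \emph{conjecture}: the author proves the Theorem of Existence (Theorem \ref{theorem of existence}) only under the analytic hypothesis (\ref{assumption}) and under $HP^3(M,\Lambda_0)=0$, and in Section \ref{section6} merely sets up the definitions of a Poisson analytic family over an analytic set and records, in a footnote, the expected Kuranishi base $S=\{t:|t|<\epsilon,\ b_\lambda(t)=0\}$ with $b_\lambda(t)=([\beta(t),\beta(t)],\beta_\lambda)$, explicitly stating that the problem was not solved. So there is no proof in the paper to compare against, and your text, up to the point where it constructs $\eta(t)$ by the recursion $\eta(t)=\eta_1(t)+\tfrac12 d^*\mathcal G[\eta(t),\eta(t)]$ and cuts out $B$ by $\mathcal H([\eta(t),\eta(t)])=0$, is the same construction the paper already carries out in Chapter \ref{chapter3} and anticipates in Section \ref{section6}; it does not go beyond it.

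The genuine gaps are exactly the two points your plan defers. First, completeness (equivalently versality of the family over the analytic germ $B$) is the entire content of the conjecture beyond the already-proved existence theorem, and you only name two possible strategies --- adapting Kodaira's completeness argument with the bivector component carried along, or formal versality plus a convergence argument --- without executing either. This is precisely where the paper's own attempts stall: the author records that the inductive step ``in the Poisson direction'' of Kodaira's completeness method could not be carried out, and your proposal gives no new idea for how the gauge equation in $\exp(L^0)$ is to be solved against $\eta(t)$ with the Poisson part included, nor how formal versality would be upgraded to an analytic statement over a possibly singular base. Second, the analytic input is asserted rather than supplied: the paper's construction uses assumption (\ref{assumption}) to obtain the Green-operator estimate (Lemma \ref{lemma5.2.3}), and calling the H\"older estimates ``routine'' neither proves the Schauder estimate for $\Box=LL^*+L^*L$ nor removes (\ref{assumption}); likewise the Newlander--Nirenberg-with-parameters step is invoked over $B$, which is now an analytic space with singularities rather than a polydisc, and neither you nor the paper justifies integration of $\eta(t)$ to a Poisson analytic family over such a base. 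As it stands your submission is a program consistent with the paper's intended approach, not a proof of the conjecture.
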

 While I worked on deformations of holomorphic Poisson structures, the reason why I focused on ``theorem of existence", ``theorem of completeness" and``construction of Kuranishi family" for holomorphic Poisson structures is that I wanted to extend the relationship between Kodaira Spencer's analytic deformation theory and Grothendieck's algebraic deformation theory to the relationship between analytic Poisson deformation theory and algebraic deformation theory of Poisson schemes as presented in the book \cite{Ser06}. Now I will explain deformations of algebraic Poisson schemes. Deformations of Poisson schemes have already been studied by Namikawa (\cite{Nam09}), Ginzburg, and Kaledin (\cite{Gin04}). It seems that Ginzburg and Kaledin (\cite{Gin04}) defined firstly deformations of Poisson schemes in the context of Grothendieck's deformation theory. Let's fix an algebraically closed field $k$ and consider an Poisson algebraic $k$-scheme $(X,\Lambda_0)$. A local Poisson deformation or a local family of Poisson deformations of $(X,\Lambda_0)$ is a cartesian diagram

\begin{center}
$\xi:$$\begin{CD}
(X,\Lambda_0) @>>> (\mathcal{X},\Lambda)\\
@VVV @VV\pi V \\
Spec(k) @>>> S
\end{CD}$
\end{center}
where $\pi$ is a flat morphism, $S=Spec\,A$ and $(\mathcal{X},\Lambda)$ is a Poisson $S$-scheme via $\pi$ where $A$ is a local $k$-algebra with residue field $k$, and the Poisson $k$-scheme $(X,\Lambda_0)$ is identified with the fiber over the closed point. Similarly we can define a Poisson deformation functor
\begin{align*}
Def_X:\mathcal{A}^*\to (Sets)
\end{align*}
defined by $Def_X(A)=\{\text{local Poisson deformations of $X$ over $Spec(A)$}\}/(\text{isomorphisms})$, where $\mathcal{A}^*$ is the category of noetherian local $k$-algebras with the residue $k$. We can consider analogous problems coming from classical deformation theory of algebraic schemes.

 I have been guided by the analytic and algebraic deformation theory originating from Kodaira-Spencer and Grothendieck in the context of Poisson category through books, articles and papers from senior mathematicians.  My thesis on deformations of compact holomorphic Poisson manifolds and algebraic Poisson schemes is the sophistication of this general picture.

\part{Deformations of compact holomorphic Poisson manifolds}\label{part1}

In the first part of the thesis, we study deformations of holomorphic Poisson structures in the framework of Kodaira and Spencer's deformation theory of complex analytic structures (\cite{Kod58},\cite{Kod60}). The main difference from Kodaira and Spencer's deformation theory is that  for deformations of a holomorphic Poisson manifold, we deform not only its complex structures, but also holomorphic Poisson structures. We thoroughly apply Kodaira and Spencer's ideas to holomorphic Poisson category.

 Kodaira and Spencer's main idea of deformations of complex analytic structures is as follows \cite[p.182]{Kod05}. A $n$-dimensional compact complex manifold $M$\footnote{In this thesis, we assume that a complex manifold is connected} is obtained by glueing domains $U_1,...,U_n$ in $\mathbb{C}^n:M=\cup_{j=1}^n U_j$ where $\mathfrak{U}=\{U_j|j=1,...,n\}$ is a locally finite open covering of $M$, and that each $U_j$ is a polydisk:
 \begin{align*}
 U_j=\{z_j\in \mathbb{C}^n||z_j^1|<1,...,|z_j^n|<1\}
 \end{align*}
and for $p\in U_j\cap U_k$, the coordinate transformation
\begin{align*}
f_{jk}:z_k\to z_j=(z_j^1,...,z_j^n)=f_{jk}(z_k)
\end{align*}
transforming the local coordinates $z_k=(z_k^1,...,z_k^n)=z_k(p)$ into the local coordinates $z_j=(z_j^1,...,z_j^n)=z_j(p)$ is  biholomorphic. According to Kodaira,

 \begin{quote}
 \textit{$``$A deformation of $M$ is considered to be the glueing of the same polydisks $U_j$ via different identification. In other words, replacing $f_{jk}^{\alpha}(z_k) $ by the functions $f_{jk}^{\alpha}(z_k,t)=f^{\alpha}_{jk}(z_k,t_1,...,t_m),$ $ f_{jk}(z_k,0)=f_{jk}^{\alpha}(z_k)$ of $z_k$, and the parameter $t=(t_1,...,t_m)$, we obtain deformations $M_t$ of $M=M_0$ by glueing the polydiscks $U_1,...,U_n$ by identifying $z_k\in U_k$ with $z_j=f_{jk}(z_k,t)\in U_j$"}
\end{quote}

A $n$-dimensional compact holomorphic Poisson manifold $M$ is a compact complex manifold such that the structure sheaf $\mathcal{O}_M$ is a sheaf of Poisson algebras.(See Appendix \ref{appendixa}) The holomorphic Poisson structure is encoded in a holomorphic section (a holomorphic bivector field) $\Lambda \in H^0(M,\wedge^2 \Theta_M)$ with $[\Lambda,\Lambda]=0$.\footnote{We denote by $T=T_M$  the holomorphic tangent bundle of $M$, by $\Theta_M$ the sheaf of holomorphic vector fields on $M$, by $T^*=T_{M}^*$ by the dual bundle of  $T_M$, by $\bar{T}=\bar{T}_M$ the anti holomorphic tangent bundle, by $\bar{T}^*=\bar{T}^*_M$ the dual bundle of $\bar{T}^*_M$, by $T_{\mathbb{C}} M=T\oplus \bar{T}$ the complexified tangent bundle, and by $T_{\mathbb{C}}^* M=T^* \oplus \bar{T}^*$ the dual bundle of $T_{\mathbb{C}} M$ and the bracket $[-,-]$ is the Schouten bracket. See Appendix \ref{appendixc}.} In the sequel a holomorphic Poisson manifold will be denoted by $(M,\Lambda)$. For deformations of a holomorphic Poisson manifold $(M,\Lambda)$, we use the ideas of Kodaira and Spencer. A $n$-dimensional holomorphic Poisson manifold is obtained by glueing the domains $U_1,...,U_n$ in $\mathbb{C}^n$: $M=\bigcup_{j=1}^n U_j$ where $\mathfrak{U}=\{U_j|j=1,...,n\}$ is a locally finite open covering of $M$ and each $U_j$ is a polydisk 
 \begin{align*}
 U_j=\{z_j\in \mathbb{C}^n||z_j^1|<1,...,|z_j^n|<1\}
 \end{align*}
equipped with a holomorphic bivector fields $\Lambda_j=\sum_{\alpha,\beta=1}^n g_{\alpha\beta}^j(z_j) \frac{\partial}{\partial z_j^{\alpha}}\wedge \frac{\partial}{\partial z_j^{\beta}}$\footnote{In this thesis, we always assume that $g_{\alpha\beta}^j(z)=-g_{\beta\alpha}^j(z)$} with $[\Lambda_j,\Lambda_j]=0$ on $U_j$ and for $p\in U_j\cap U_k$, the coordinate transformation
\begin{align*}
f_{jk}:z_k\to z_j=(z_j^1,...,z_j^n)=f_{jk}(z_k)
\end{align*}
transforming the local coordinates $z_k=(z_k^1,...,z_k^n)=z_k(p)$ into the local coordinates $z_j=(z_j^1,...,z_j^n)=z_j(p)$ is  a biholomorphic Poisson map.\footnote{For the definition of Poisson map, See Appendix \ref{appendixa}.} 

 Deformations of a holomorphic Poisson manifold $(M,\Lambda)$ is the glueing of the Poisson polydisks $(U_j,\Lambda_j(t))$ parametrized by $t$ via different identification. That is, replacing $f_{jk}^{\alpha}(z_k)$ by the functions $f_{jk}^{\alpha}(z_k,t) ( f_{jk}(z_k,0)=f_{jk}^{\alpha}(z_k)$ of $z_k$), replacing $\Lambda_j=\sum_{\alpha,\beta=1}^n g_{\alpha\beta}^j(z_j) \frac{\partial}{\partial z_j^{\alpha}}\wedge \frac{\partial}{\partial z_j^{\beta}}$ by $\Lambda_j(t)=\sum_{\alpha,\beta=1}^n g_{\alpha\beta}^j(z_j,t) \frac{\partial}{\partial z_j^{\alpha}}\wedge \frac{\partial}{\partial z_j^{\beta}}$ with $[\Lambda_j(t),\Lambda_j(t)]=0$ and $\Lambda_j(0)=\Lambda_j$, and the parmeter $t=(t_1,...,t_m)$, we obtain defomrations $(M_t,\Lambda_t)$ by gluing the Poisson polydisks $(U_1,\Lambda_1(t)),...,(U_n,\Lambda_n(t))$ by identifying $z_k\in U_k$ with $z_j=f_{jk}(z_k,t)\in U_j$. The work on deformations of holomorphic Poisson structures is based on this fundamental idea.

In chapter \ref{chapter1}, we define a family of compact holomorphic Poisson manifolds, called a Poisson analytic family in the framework of Kodaira-Spencer deformation theory. In other words, when we ignore Poisson structures, a family of compact holomorphic Poisson manifolds is just a family of compact complex manifolds in the sense of Kodaira and Spencer. So deformations of holomorphic Poisson manifolds means that we deform complex structures as well as Poisson structures. And we show that infinitesimal deformation of a holomorphic Poisson manifold $(M,\Lambda)$ in a Poisson analytic family is encoded in the truncated holomorphic Poisson cohomology. More precisely, an infinitesimal deformation is realized as an element in the second hypercohomology group\footnote{We adopt the notation from \cite{Nam09} for the expression of the truncated holomorphic Poisson cohomology groups}  $HP^2(M,\Lambda)$ of a complex of sheaves $0\to \Theta_M\to \wedge^2 \Theta_M\to \cdots\to \wedge^n \Theta_M\to 0$ induced by $[\Lambda,-]$. Analogously to deformations of complex structure, we define so called Poisson Kodaira Spencer map where the Kodaira Spencer map is realized as a component of the Poisson Kodaira Spencer map. We define a concept of a trivial family, locally trivial family, rigidity and pullback family, and raise some questions that I cannot answer at this stage. 

In chapter \ref{chapter2}, we study the integrability condition for a Poisson analytic family. Kodaira showed that given a family of deformations of  a compact complex manifold $M$, locally the family is represented by a $C^{\infty} (0,1)$ vectors $\varphi(t)$ with $\varphi(0)=0$ satisfying $\bar{\partial}\varphi(t)-\frac{1}{2}[\varphi(t),\varphi(t)]=0$. And we show that given a family of deformations of a holomorphic Poisson manifold $(M,\Lambda)$, locally the family is represented by a $C^{\infty} (0,1)$ vectors $\varphi(t)$ with $\varphi(0)=0$ and a $C^{\infty}$ bivectors $\Lambda(t)$ with $\Lambda(0)=\Lambda$ satisfying $[\Lambda(t),\Lambda(t)]=0, \bar{\partial} \Lambda(t)-[\Lambda(t),\varphi(t)]=0$, and $\bar{\partial}\varphi(t)-\frac{1}{2}[\varphi(t),\varphi(t)]=0$. Replacing $\varphi(t)$ by $-\varphi(t)$, the integrability condition becomes $\bar{\partial}(\varphi(t)+\Lambda(t))+\frac{1}{2}[\varphi(t)+\Lambda(t),\varphi(t)+\Lambda(t)]=0$ which is a solution of the Maurer Cartan equation of the differential graded Lie algebra $(\mathfrak{g},\bar{\partial},[-,-])$.(See Appendix \ref{appendixc}). But we have another differential graded Lie algebra structure on $\mathfrak{g}$.(See Proposition \ref{d}) If we take $\Lambda'(t)=\Lambda(t)-\Lambda$. Then we have $\Lambda'(0)=0$ and the integrability condition is equivalent to $L(\varphi(t)+\Lambda'(t))+\frac{1}{2}[\varphi(t)+\Lambda'(t),\varphi(t)+\Lambda'(t)]=0$ where $L=\bar{\partial}+[\Lambda,-]$.\footnote{We remark that the integrability condition was proved in more general context in the language of generalized complex geometry (See \cite{Gua11}). As $H^1(M,\Theta)$ is realized as a subspace of the generalized second cohomology group of a complex manifold $M$, $HP^2(M,\Lambda)$ is realized as a subspace of the generalized second cohomology group of a holomorphic Poisson manifold $(M,\Lambda)$. In this thesis, we deduce the integrability condition by following Kodaira's original approach, that is,   by starting from a concept of a geometric family (a Poisson analytic family). } Then $\varphi(t)+\Lambda'(t)$ is a solution of the Mauer Cartan equation of the differential graded Lie algebra $(\mathfrak{g},L,[-,-])$. In the part II of the thesis, we show that the differential graded Lie algebra $(\mathfrak{g},L,[-,-])$ is a differential graded Lie algebra governing the holomorphic Poisson deformations of $(M,\Lambda)$ in the language of functor of Artin rings.

In chapter \ref{chapter3}, under some  assumption, we establish an analogous theorem to the following theorem of Kodaira and Spencer.(\cite{Kodaira58},\cite{Kod05} p.270)
\begin{thm}[Theorem of Existence]
Let $M$ be a compact complex manifold and suppose $H^2(M,\Theta)=0$. Then there exists a complex analytic family $(\mathcal{M},B,\omega)$ with $0\in B\subset \mathbb{C}^m$ satisfying the following conditions:
\begin{enumerate}
\item $\omega^{-1}(0)=M$
\item $\rho_0:\frac{\partial}{\partial t}\to \left(\frac{\partial M_t}{\partial t}\right)_{t=0}$ with $M_t=\omega^{-1}(t)$ is an isomorphism of $T_0(B)$ onto $H^1(M,\Theta):T_0(B)\xrightarrow{\rho_0} H^1(M,\Theta)$.
\end{enumerate}
\end{thm}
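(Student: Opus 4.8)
I would reproduce Kodaira's argument, which proceeds in three stages: a formal power-series construction, a convergence proof, and the passage from the power series to an honest family. Fix a Hermitian metric on $M$; let $\bar\partial^*$ be the adjoint of $\bar\partial$ on $A^{0,\bullet}(M,\Theta)$, let $\Delta_{\bar\partial}=\bar\partial\bar\partial^*+\bar\partial^*\bar\partial$ be the associated Laplacian, $\mathbb{H}$ the projection onto harmonic forms, and $G$ the Green operator, so that the identity equals $\mathbb{H}+\Delta_{\bar\partial}G$ in each degree and the space of harmonic forms in $A^{0,q}(M,\Theta)$ is isomorphic to $H^q(M,\Theta)$. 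Put $m=\dim_{\mathbb{C}}H^1(M,\Theta)$ and choose harmonic $\eta_1,\dots,\eta_m\in A^{0,1}(M,\Theta)$ whose classes form a basis of $H^1(M,\Theta)$; write $\eta(t)=\sum_{i=1}^m\eta_i t_i$ for $t=(t_1,\dots,t_m)$.

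The first stage is to construct a formal power series $\varphi(t)=\sum_{|I|\ge 1}\varphi_I t^I$ with $\varphi_I\in A^{0,1}(M,\Theta)$, having linear term $\eta(t)$ and solving the integrability equation $\bar\partial\varphi(t)=\tfrac12[\varphi(t),\varphi(t)]$. Rather than solve this directly I would solve the integral equation $\varphi(t)=\eta(t)+\tfrac12\bar\partial^*G[\varphi(t),\varphi(t)]$, whose coefficients $\varphi_I$ are uniquely determined by a recursion with \emph{no} cohomological obstruction, since $\bar\partial^*G$ is everywhere defined. One must then check that a formal solution of the integral equation actually solves the integrability equation: writing $\sigma(t)=\bar\partial\varphi(t)-\tfrac12[\varphi(t),\varphi(t)]\in A^{0,2}(M,\Theta)$, one has $\mathbb{H}\sigma(t)=0$ because it lies in a space isomorphic to $H^2(M,\Theta)=0$ — this is exactly where the hypothesis is used — and then, combining this with the graded Jacobi identity $[[\varphi,\varphi],\varphi]=0$ and the Leibniz rule $\bar\partial[\varphi,\varphi]=2[\bar\partial\varphi,\varphi]$, one derives the linear relation $\sigma(t)=-\bar\partial^*G[\sigma(t),\varphi(t)]$; since $\varphi(t)=O(t)$, comparing orders in $t$ forces $\sigma(t)\equiv 0$.

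The second stage, and the technical heart of the proof, is convergence. I would fix a large integer $k$ and $0<\alpha<1$ and put Hölder norms $\|\cdot\|_{k+\alpha}$ on the bundle-valued forms; the elliptic a priori estimate $\|Gu\|_{k+2+\alpha}\le C\|u\|_{k+\alpha}$ gives $\|\bar\partial^*Gu\|_{k+1+\alpha}\le C\|u\|_{k+\alpha}$, and, the Schouten bracket being a first-order bidifferential operator, $\|[\varphi,\psi]\|_{k+\alpha}\le C\|\varphi\|_{k+1+\alpha}\|\psi\|_{k+1+\alpha}$, so the recursion preserves the norm $\|\cdot\|_{k+1+\alpha}$. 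I would then compare $\varphi(t)$ coefficientwise with the classical majorant $A(t)=\frac{b}{16c}\sum_{\nu\ge 1}\frac{\bigl(c(t_1+\cdots+t_m)\bigr)^\nu}{\nu^2}$, which has nonnegative coefficients, dominates $\eta(t)$ once $b$ is large, and satisfies $A(t)^2\ll\frac{b}{c}A(t)$ coefficientwise; choosing $b$ and $c$ in terms of $C$, an induction on the total order of the multi-index shows that $\|\varphi_I\|_{k+1+\alpha}$ is bounded by the corresponding coefficient of $A(t)$, so $\varphi(t)$ converges in the $C^{k+1+\alpha}$ topology for $|t|$ small. Bootstrapping the equation $\bar\partial\varphi(t)=\tfrac12[\varphi(t),\varphi(t)]$ by elliptic regularity then shows $\varphi(t)$ is $C^\infty$ in the base point and holomorphic in $t$.

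The third stage turns the convergent $\varphi(t)$ into the family. For $|t|$ small, $\varphi(t)$ is a small $\Theta$-valued $(0,1)$-form satisfying the integrability equation, hence defines an integrable almost complex structure $M_t$ on the smooth manifold underlying $M$; concretely, on a polydisk cover $\{U_j\}$ of $M$ one solves the elliptic systems defining $\varphi(t)$-holomorphic functions by the implicit function theorem near $t=0$, obtaining coordinate transformations $f_{jk}(\,\cdot\,,t)$ holomorphic in $t$ with $f_{jk}(\,\cdot\,,0)=f_{jk}$, and glues $\bigsqcup_j(U_j\times B)$ accordingly to obtain a complex manifold $\mathcal{M}$ with a proper smooth holomorphic map $\omega:\mathcal{M}\to B$, where $B\subset\mathbb{C}^m$ is a small polydisk, $\omega^{-1}(0)=M$, and $\omega^{-1}(t)=M_t$. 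By construction $\rho_0(\partial/\partial t_i)=[\,\partial\varphi/\partial t_i|_{t=0}\,]=[\eta_i]\in H^1(M,\Theta)$, and since the $[\eta_i]$ form a basis of $H^1(M,\Theta)$ and $\dim B=m$, the Kodaira--Spencer map $\rho_0:T_0(B)\to H^1(M,\Theta)$ is an isomorphism. I expect the convergence estimate — the self-reproducing inequality $A(t)^2\ll\tfrac{b}{c}A(t)$ balanced against the one-derivative gain of $\bar\partial^*G$ and the one-derivative loss of the bracket, which is what genuinely uses compactness of $M$ and the a priori estimates for $\bar\partial$ — to be the main obstacle; everything else is essentially linear algebra together with standard Hodge theory (see \cite{Kod05}).
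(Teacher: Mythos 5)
Your proposal is correct and follows essentially the same route the paper relies on: the paper does not reprove this classical statement but cites Kodaira (\cite{Kodaira58}, \cite{Kod05} p.270), and the scheme you describe — solving $\varphi(t)=\eta(t)+\tfrac12\bar\partial^*G[\varphi(t),\varphi(t)]$, using $H^2(M,\Theta)=0$ to kill the harmonic projection so the Maurer--Cartan equation holds, proving convergence with H\"older norms and a majorant series, and then invoking Newlander--Nirenberg to assemble the family with $\rho_0$ an isomorphism — is exactly the Kodaira/Kuranishi argument that the paper itself adapts (via \cite{Mor71}, with $L=\bar\partial+[\Lambda,-]$ in place of $\bar\partial$) to prove its Poisson analogue in Chapter 3. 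The only cosmetic difference is that the paper's version concludes the vanishing of the error term by a norm estimate for small $|t|$ rather than by comparing orders of the formal power series; both are standard and equivalent here.
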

Similiary, under the assumption (\ref{assumption}), we prove the theorem of existence for deformations of holomorphic Poisson structures.(See Theorem \ref{theorem of existence}) 
\begin{thm}[Theorem of Existence for  holomorphic Poisson structures]\label{theorem of existence}
Let $(M,\Lambda_0)$ be a compact holomorphic Poisson manifold satisfying $($\ref{assumption}$)$ and suppose that $HP^3(M,\Lambda_0)=0$. Then there exists a Poisson analytic family $(
\mathcal{M},\Lambda,B,\omega)$ with $0\in B\subset \mathbb{C}^m$ satisfying the following conditions:
\begin{enumerate}
\item $\omega^{-1}(0)=(M,\Lambda_0)$
\item $\varphi_0:\frac{\partial}{\partial t}\to\left(\frac{\partial (M_t,\Lambda_t)}{\partial t}\right)_{t=0}$ with $(M_t,\Lambda_t)=\omega^{-1}(t)$ is an isomorphism of $T_0(B)$ onto $HP^2(M,\Lambda_0):T_0 B\xrightarrow{\rho_0} HP^2(M,\Lambda_0)$.
\end{enumerate}
\end{thm}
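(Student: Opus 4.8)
The plan is to adapt Kodaira's proof of the Theorem of Existence (via the construction of a convergent power-series solution to the integrability equation) to the Poisson setting, using the differential graded Lie algebra $(\mathfrak{g},L,[-,-])$ with $L=\bar\partial+[\Lambda_0,-]$ identified in Chapter \ref{chapter2}. First I would fix a harmonic basis: choosing a hermitian metric, let $\eta_1,\dots,\eta_m$ be a basis of the space of $L$-harmonic representatives of $HP^2(M,\Lambda_0)$ (these are pairs consisting of a $(0,1)$-form valued in $\Theta_M$ and a bivector piece), and set the linear term $\psi_1(t)=\sum_\nu t_\nu \eta_\nu$. I would then seek a formal power series $\psi(t)=\psi_1(t)+\psi_2(t)+\cdots$, with each $\psi_r(t)$ homogeneous of degree $r$ in $t=(t_1,\dots,t_m)$, solving the Maurer--Cartan equation $L\psi(t)+\tfrac12[\psi(t),\psi(t)]=0$. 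Writing $\psi(t)=\psi_1(t)+L^*G\,\Phi(t)$ where $\Phi(t)=-\tfrac12[\psi(t),\psi(t)]$, $G$ is the Green operator and $L^*$ the formal adjoint, the equation is solved order by order exactly as in Kodaira \cite{Kod05}, provided the obstruction classes vanish.

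The obstruction at order $r$ lives in the cohomology of degree one higher: the degree-$r$ part of $[\psi(t),\psi(t)]$ is $L$-closed by the Jacobi identity and the lower-order equations, and one must check its harmonic projection (equivalently its class in $HP^3(M,\Lambda_0)$) is zero. This is precisely where the hypothesis $HP^3(M,\Lambda_0)=0$ enters and makes every obstruction vanish, so the recursion proceeds and yields a formal solution $\psi(t)$. Convergence is then established by Kodaira's method of majorants: one shows that the recursively defined $\psi(t)$ is dominated, in a suitable Hölder norm $\|\cdot\|_{k+\alpha}$ on a fixed coordinate cover, by the explicit convergent series $A\sum_{r\ge1}\frac{(b|t|)^r}{r^2}$ for appropriate constants $A,b$, using the a priori estimate $\|L^*G u\|_{k+1+\alpha}\le C\|u\|_{k+\alpha}$ together with the product estimate for the Schouten bracket $\|[u,v]\|_{k+\alpha}\le C\|u\|_{k+1+\alpha}\|v\|_{k+1+\alpha}$. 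This gives a genuine (not merely formal) solution on a polydisk $B=\{|t|<\varepsilon\}\subset\mathbb{C}^m$.

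Having the convergent solution $\psi(t)=(\varphi(t),\Lambda(t)-\Lambda_0)$, I would reconstruct the Poisson analytic family: $\varphi(t)$ defines, as in Kodaira's theory, a complex analytic family $(\mathcal{M},B,\omega)$ with $\omega^{-1}(0)=M$ by gluing the polydisks $U_j$ via the identifications determined by $\varphi(t)$, and the bivector part $\Lambda(t)$, which by the Maurer--Cartan equation satisfies $[\Lambda(t),\Lambda(t)]=0$ and $\bar\partial\Lambda(t)-[\Lambda(t),\varphi(t)]=0$, glues to a holomorphic Poisson structure $\Lambda$ on $\mathcal{M}$ making $\pi=\omega$ a Poisson analytic family in the sense of Chapter \ref{chapter1}; this is where the unstated ``assumption (\ref{assumption})'' is needed to guarantee that the family of bivectors actually assembles into a global holomorphic bivector field fibering correctly over $B$. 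Finally, condition (2) — that the Poisson Kodaira--Spencer map $\varphi_0:T_0B\to HP^2(M,\Lambda_0)$ is an isomorphism — follows because by construction the degree-one term of $\psi(t)$ is $\sum_\nu t_\nu\eta_\nu$ with $\{\eta_\nu\}$ a basis of harmonic representatives, so $\varphi_0(\partial/\partial t_\nu)=[\eta_\nu]$ spans $HP^2(M,\Lambda_0)$ bijectively. I expect the main obstacle to be not the obstruction-vanishing step (which is formal given $HP^3=0$) but the convergence argument and, more delicately, verifying that the gluing of the $\Lambda(t)$ genuinely produces a Poisson analytic family in the precise technical sense defined earlier — this is the point that forces the extra hypothesis (\ref{assumption}) and must be handled with care.
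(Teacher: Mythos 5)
Your proposal follows essentially the same route as the paper: the Kuranishi ansatz $\beta(t)=\beta_1(t)-\tfrac12 L^*G[\beta(t),\beta(t)]$ with harmonic linear term, convergence in the H\"older norms $\|\cdot\|_{k+\alpha}$ via the bracket estimate and the Green-operator estimate, the role of $HP^3(M,\Lambda_0)=0$ being to kill the harmonic projection $H[\beta(t),\beta(t)]$ so that the solution of the integral equation actually solves the Maurer--Cartan equation, and reconstruction of the family by the Newlander--Nirenberg theorem together with Theorem \ref{m}, with condition (2) coming from Theorem \ref{n} and the choice of harmonic basis. One correction: assumption (\ref{assumption}) is precisely the H\"older a priori estimate $\|\varphi\|_{k+\alpha}\le C(\|\Box\varphi\|_{k-2+\alpha}+\|\varphi\|_0)$ for $\Box=LL^*+L^*L$, and in the paper it is used only to obtain the estimate $\|G\varphi\|_{k+\alpha}\le C\|\varphi\|_{k-2+\alpha}$ driving the convergence and the regularity of $\alpha(t)$ in $(z,t)$, not --- as you suggest at the end --- to make the bivectors $\Lambda(t)$ assemble into a Poisson analytic family; that gluing step requires no extra hypothesis, being handled by Theorem \ref{m} and Newlander--Nirenberg alone.
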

Our proof is rather formal. We throughly follow the Kuranishi methods presented in \cite{Mor71}. The reason for the assumption is to apply their methods in the holomorphic Poisson context and my unfamiliarity with the analytic properties of the operator $\bar{\partial}+[\Lambda,-]$.  I do not know that we could relax the assumption (\ref{assumption}).  Lastly, based on Kuranishi's lecture notes \cite{Kur71}, we define a Poisson analytic family over a complex space, a concept of pullback family, and complete family. We pose a problem on the existence of Kuranishi family in holomorphic Poisson context.  I could not access to this problem for my unfamiliarity with analysis behind the operator $\bar{\partial} +[\Lambda,-]$.

\chapter{Poisson analytic families}\label{chapter1}

\section{Families of holomorphic Poisson manifolds}

\begin{definition}$($compare $\cite{Kod05}$ p.59$)$ \label{test}
Suppose given a domain $B\in \mathbb{C}^m$ , and a set $\{(M_t,\Lambda_t)|t \in B\}$ of holomorphic Poisson manifolds $(M_t,\Lambda_t)$, depending on $t\in B$. We say that $\{(M_t,\Lambda_t)|t\in B\}$ is a family of compact holomorphic Poisson manifolds or a Poisson analytic family of compact holomorphic Poisson manifolds if $(\mathcal{M},\Lambda)$ is a holomorphic Poisson manifold and a holomorphic map $\pi:\mathcal{M}\to B$ satisfies the following properties
\begin{enumerate}
\item $\pi^{-1}(t)$ is a compact holomorphic Poisson submanifolds of $(\mathcal{M},\Lambda)$.
\item $(M_t,\Lambda_t)=\pi^{-1}(t)(M_t$ has the induced Poisson holomorphic structure $\Lambda_t$ from $\Lambda)$.
\item The rank of Jacobian of $\pi$ is equal to $m$ at every point of $\mathcal{M}$.
\end{enumerate}
\end{definition}

Then we can choose a system of local complex coordinates $\{z_1,...,z_j,...\},z_j:p\to z_j(p)$, and coordinate polydisks $U_j$ with respect to $z_j$ satisfying the following conditions.
\begin{enumerate}
\item $z_j(p)=(z_j^1(p),...,z_j^n(p),t_1,...,t_m),(t_1,...,t_m)=\omega(p)$
\item $\mathcal{U}=\{\mathcal{U}_j|j=1,2,...\}$ is locally finite.
\end{enumerate}
Then 
\begin{align*}
\{p\mapsto (z_j^1(p),...,z_j^n(p))| \mathcal{U}_j \cap M_t\ne \emptyset\}
\end{align*}
gives a system of local complex coordinates on $M_t$. In terms of these coordinates, $\omega$ is the projection given by
\begin{align*}
\omega:(z_j^1,...,z_j^n,t_1,...,t_m)\to (t_1,...,t_m).
\end{align*}
For $j,k$ with $U_j\cap U_k\ne \emptyset$, we denote the coordinate transformations from $z_k$ to $z_j$ by
\begin{align*}
f_{jk}:(z_k^1,...,z_k^n,t)\to (z_j^1,...,z_j^n,t)=f_{jk}(z_k^1,...,z_k^n,t)
\end{align*}
Note that $t_1,...,t_m$ as part of local coordinates on $\mathcal{M}$ do not change under these coordinate transformations. Thus $f_{jk}$ is given by
\begin{align*}
z_j^{\mathcal{\alpha}}=f_{jk}(z_k^1,...,z_k^n,t_1,...,t_m),\,\,\,\,\,\,\alpha=1,...,n.
\end{align*}

We now discuss the holomorphic Poisson structures. Since $M_t\hookrightarrow \mathcal{M}$ is a holomorphic Poisson manifold induced from $\Lambda$ and $\mathcal{M}=\cup M_t$, the Poisson structure $\Lambda$ on $\mathcal{M}$ can be expressed in terms of local coordinates as $\Lambda=g_{\alpha \beta}(z_j^1,...,z_j^n,t)\frac{\partial}{\partial{z_j^{\alpha}}}\wedge \frac{\partial}{\partial{z_j^{\beta}}}$ on $U_j$. For fixed $t^0$, the holomorphic Poisson structure $\Lambda_{t^0}$ on $M_{t_0}$ is given by $g_{\alpha \beta}(z_j^1,...,z_j^n,t^0)\frac{\partial}{\partial{z_j^{\alpha}}}\wedge \frac{\partial}{\partial{z_j^{\beta}}}$ by restricting $\Lambda$ to $M_{t^0}$ and $g_{\alpha\beta}(z_j,t)$ is  holomorphic with respect to $z_j$.


Of course, the definition can be extended to the case $B$ is an arbitrary complex manifold. We also could define a family of compact holomorphic Poisson manifolds in the following way.

\begin{definition}$($compare $\cite{Uen99}$ p.2$)$
A holomorphic map $\pi$ of $\mathcal{M}$ onto $B$ is called a family of compact holomorphic Poisson manifolds or a Poisson analytic family of compact holomorphic poisson manifolds if theres a holomorphic Poisson manifold $(\mathcal{M},\Lambda)$ satisfying the following conditions
\begin{enumerate}
\item $\pi$ is proper. In other words, inverse image of a compact set in $B$ is compact.
\item $\pi$ is a submersion. In other words, for each point $x\in \mathcal{M}$, $d\pi_x:T_x \mathcal{M} \to T_{\pi(x)} B$ is surjective.\\
$($The above two conditions imply that $\pi^{-1}(t)$ is a complex submanifold of $\mathcal{M}$ for each $t\in B$.$)$
\item $\pi^{-1}(t)$ is a holomorphic Poisson submanifold of $(\mathcal{M},\Lambda)$ for each $t\in B$
\item $\pi^{-1}(t)$ is connected.
\end{enumerate}
\end{definition}

\begin{example}[complex tori]$($$\cite{Kod58}$ p.408$)$
Let $S$ be the space of $n\times n$ matrices $s=(s_{\beta}^{\alpha})$ with $|\mathfrak{J} s | >0$, where $\alpha$ denotes the row index and $\beta$ the column index. For each matrix $s\in S$ we define an $n\times 2n$ matrix $\omega(s)=(\omega_j^{\alpha}(s))$ by
\begin{equation} \label{chunghoon}
\omega_j^{\alpha}(s)=
\begin{cases}
\delta_j^{\alpha},\,\,\,\,\,\,\,\,\,\, $\text{for $1\leq j\leq n$}$\\
s_j^{\alpha},\,\,\,\,\,\,\,\,\ $\text{for $j=n+\beta, 1\leq \beta \leq n$}$
\end{cases}
\end{equation}

Let $\mathbb{C}^n$ be the space of $n$ complex variables $z=(z^1,...,z^{\alpha},...,z^n)$ and let $G$ be the discontinuous abelian group of analytic automorphisms of $\mathbb{C}^n\times S$ generated by
\begin{align*}
g_j:(z,s)\to (z+\omega_j(s),s),\,\,\,\,\, j=1,...,2n,
\end{align*}
where $\omega_j(s)=(\omega_j^1(s),...,\omega_j^{\alpha}(s),...,\omega_j^n(s))$ is th $j$-th column vector of $\omega(s)$. The factor space
\begin{align*}
\mathscr{B}=\mathbb{C}^n\times S/G
\end{align*}
is obviously a complex manifold and the canonical projection $\mathbb{C}^m\times S\to S$ induces a regular map $\omega:\mathscr{B}\to S$ such that $B_s=\omega^{-1}(s)$ is a complex torus of complex dimension $n$ with the periods $\omega_j(s)$ $(j=1,...,2n)$ Then $\mathscr{B}=\{B_s,s\in S\}$ forms a complex analytic family of complex tori.

We would like to describe a $G$-invariant holomorphic bicvector field of the form $\sum_{i,j} f(z,s)\frac{\partial}{\partial z_i}\wedge \frac{\partial}{\partial z_j}$ on $\mathbb{C}^n\times S$. Then this induces a holomorphic bivector field on $\mathscr{B}$. 
Any element of $g\in G$ is of the form $g:(z,s)\to (z+m_1\omega_1(s)+\cdots +m_n\omega_{2n}(s),s)$. Hence for $\sum_{i,j} f_{ij}(z,s)\frac{\partial}{\partial z_i}\wedge \frac{\partial}{\partial z_j}$ to be invariant vector field, we have $f_{ij}(z,s)=f_{ij}(z+m_1\omega_1(s)+\cdots +m_n\omega_{2n} (s),s)$ for any inters $m_1,...,m_{2n}$. This means that $f(z,s)$ is independent of $z$. So $f(z,s)=f(s)$. Hence an invariant bivector field is of the from $\Lambda=\sum_{i,j}f_{ij}(s)\frac{\partial}{\partial z_i}\wedge \frac{\partial}{\partial z_j}$. Since $f_{ij}(s)$ are independent of $z$, we have $[\Lambda,\Lambda]=0$. So $(\mathscr{B},\Lambda)$ is a Poisson analytic family.

\end{example}

\begin{example}[Hirzebruch-Nagata surface]$($$\cite{Uen99}$ p.13$)$
Take two $\mathbb{C}\times \mathbb{P}^1$ and write the coordinates as $(u,(\xi_0:\xi_1), (v,(\eta_0:\eta_1))$, respectively, where $(\xi_0:\xi_1),(\eta_0:\eta_1)$ are the homogeneous coordinates of $\mathbb{P}^1$.
Patch $\mathbb{C}\times \mathbb{P}^1$ together by relation

\begin{equation}
\begin{cases}
u=1/v, \\
(\xi_0:\xi_1)=(\eta_0:v^m\eta_1)
\end{cases}
\end{equation}

Then we obtain a two dimensional compact  complex manifold $F_m$. The complex manifold $F_m$ is called the Hirzebruch-Nagata surface.
Now we deform the patching by introducing  a new patching relation with parameter $t\in \mathbb{C}$.

\begin{equation}
\begin{cases}
u=1/v, \\
(\xi_0:\xi_1)=(\eta_0:v^m\eta_1+tv^k\eta_0), m-2\leq 2k \leq  m
\end{cases}
\end{equation}
and patching two $\mathbb{C}\times \mathbb{P}^1$ by the relation, we obtain a surface $S_t$ for each $t\in \mathbb{C}$. By the relation we have $S_0=F_m$ and $\omega:\mathcal{S}=\{S_t\}_{t\in\mathbb{C}} \to \mathbb{C}$ is an complex analytic family. 

We  put a holomorphic Poisson structure $\Lambda$ on $\mathcal{S}$ so that $(\mathcal{S},\Lambda)$ is a Poisson analytic family. 

For one $\mathbb{C}\times \mathbb{P}^1$ with coordinate $(u,(\xi_0:\xi_1))$, we have two affine covers, namely, $\mathbb{C}\times \mathbb{C}$ and $\mathbb{C}\times \mathbb{C}$. They are glued via  $\mathbb{C}\times (\mathbb{C}-\{0\})$ and  $\mathbb{C}\times (\mathbb{C}-\{0\})$ by $(u,x=\frac{\xi_1}{\xi_0})\mapsto (u,y=\frac{\xi_0}{\xi_1})=(u,\frac{1}{x})$. Similary for another $\mathbb{C}\times \mathbb{P}^1$ they are glued via $\mathbb{C}\times (\mathbb{C}-\{0\})$ and $\mathbb{C}\times (\mathbb{C}-\{0\})$ and $\mathbb{C}\times (\mathbb{C}-\{0\})$ via $(v,w=\frac{\eta_1}{\eta_0})\mapsto (v,z)=(v,\frac{1}{w}=\frac{\eta_0}{\eta_1})$. We put holomorphic Poisson structures $\Lambda$ with $[\Lambda,\Lambda]=0$ on each patches and show that they are glued via the above relations to give a global bivector field on $\mathcal{S}$. On $(u,x)$ coordinate, we give $g(t)x^2\frac{\partial}{\partial u}\wedge \frac{\partial}{\partial x}$. On $(u,y)$ coordinate, we give $-g(t)\frac{\partial}{\partial u}\wedge \frac{\partial}{\partial y}$. On $(v,w)$ coordinate, we give $-g(t)v^{2k-m+2}(wv^{m-k}+t)^2\frac{\partial}{\partial v}\wedge\frac{\partial}{\partial w}$. And on $(v,z)$ coordinate, we give $g(t)v^{2k-m+2}(v^{m-k}+tz)^2\frac{\partial}{\partial v}\wedge \frac{\partial}{\partial z}$. In the following  picture, we have
{\tiny{
\begin{center}
$\begin{CD}
(u,x)=(\frac{1}{v},v^mw+tv^k)=(\frac{1}{v},\frac{v^m+tv^kz}{z}),g(t)x^2\frac{\partial}{\partial u}\wedge \frac{\partial}{\partial x}@<<< (v,w),-g(t)v^{2k-m+2}(wv^{m-k}+t)^2\frac{\partial}{\partial v}\wedge\frac{\partial}{\partial w}\\
@VVV @VVV \\
(u,y)=(u,\frac{1}{x})=(\frac{1}{v},\frac{z}{v^m+tv^kz})=(\frac{1}{v},\frac{1}{v^mw+tv^k}),-g(t)\frac{\partial}{\partial u}\wedge \frac{\partial}{\partial y}@<<< (v,z=\frac{1}{w}),g(t)v^{2k-m+2}(v^{m-k}+tz)^2\frac{\partial}{\partial v}\wedge \frac{\partial}{\partial z}
\end{CD}$
\end{center}}}

we have $\frac{\partial}{\partial x}=-\frac{1}{x^2}\frac{\partial}{\partial y}$, $\frac{\partial}{\partial v}=-\frac{1}{v^2}\frac{\partial}{\partial u}+(mv^{m-1}w+ktv^{k-1})\frac{\partial}{\partial x}=-\frac{1}{v^2}\frac{\partial}{\partial u}+\frac{mv^{m-1}+ktv^{k-1}z}{z}\frac{\partial}{\partial x}=-\frac{1}{v^2}\frac{\partial}{\partial u}+\frac{-mv^{m-1}-ktv^{k-1}}{(v^m+tv^k)^2}\frac{\partial}{\partial y}$.

$\frac{\partial}{\partial w}=v^m\frac{\partial}{\partial x}=\frac{-v^m}{(v^mw+tv^k)^2}\frac{\partial}{\partial y}$, and $\frac{\partial}{\partial z}=\frac{-v^m}{z^2}\frac{\partial}{\partial x}=\frac{v^m}{(v^m+tv^kz)^2}\frac{\partial}{\partial y}$, $\frac{\partial}{\partial w}=-\frac{1}{w^2}\frac{\partial}{\partial z}$.

Now we show that they are glued. 
\begin{enumerate}
\item $(u,x)$ and $(u,y)$. $g(t)x^2\frac{\partial}{\partial u}\wedge \frac{\partial}{\partial x}=g(t)x^2(-\frac{1}{x^2})\frac{\partial}{\partial u}\wedge \frac{\partial}{\partial y}=-g(t)\frac{\partial}{\partial u}\wedge \frac{\partial}{\partial y}$
\item $(v,w)$ and $(v,z)$. $-g(t)v^{2k-m+2}(wv^{m-k}+t)^2\frac{\partial}{\partial v}\wedge\frac{\partial}{\partial w}=-g(t)v^{2k-m+2}(wv^{m-k}+t)^2(\frac{-1}{w^2})\frac{\partial}{\partial v}\wedge\frac{\partial}{\partial z}=g(t)v^{2k-m+2}(v^{m-k}+tz)^2\frac{\partial}{\partial v}\wedge \frac{\partial}{\partial z}$

\item $(u,x)$ and $(v,w)$. $-g(t)v^{2k-m+2}(wv^{m-k}+t)^2\frac{\partial}{\partial v}\wedge\frac{\partial}{\partial w}=-g(t)v^{2k-m+2}(wv^{m-k}+t)^2(-v^{m-2})\frac{\partial}{\partial u}\wedge\frac{\partial}{\partial x}=g(t)x^2\frac{\partial}{\partial u}\wedge \frac{\partial}{\partial x}$
\item $(u,y)$ and $(v,z)$. $g(t)v^{2k-m+2}(v^{m-k}+tz)^2\frac{\partial}{\partial v}\wedge \frac{\partial}{\partial z}=g(t)v^{2k-m+2}(v^{m-k}+tz)^2\frac{v^m}{(v^m+tv^kz)^2}(-\frac{1}{v^2})\frac{\partial}{\partial u}\wedge \frac{\partial}{\partial y}=-g(t)\frac{\partial}{\partial u}\wedge \frac{\partial}{\partial y}$
\item $(u,x)$ and $(v,z)$. $g(t)v^{2k-m+2}(v^{m-k}+tz)^2\frac{\partial}{\partial v}\wedge \frac{\partial}{\partial z}=g(t)v^{2k-m+2}(v^{m-k}+tz)^2\frac{v^{m-2}}{z^2}\frac{\partial}{\partial u}\wedge \frac{\partial}{\partial x}=g(t)(\frac{v^{m}}{z}+tv^k)^2\frac{\partial}{\partial u}\wedge \frac{\partial}{\partial x}=g(t)x^2\frac{\partial}{\partial u}\wedge \frac{\partial}{\partial x}$
\item $(u,y)$ and $(v,w)$. $-g(t)v^{2k-m+2}(wv^{m-k}+t)^2\frac{\partial}{\partial v}\wedge\frac{\partial}{\partial w}=-g(t)v^{2k-m+2}(wv^{m-k}+t)^2\frac{v^{m-2}}{(v^mw+tv^k)^2}\frac{\partial}{\partial u}\wedge\frac{\partial}{\partial y}=-g(t)\frac{\partial}{\partial u}\wedge \frac{\partial}{\partial y}$

\end{enumerate}

So $(\mathcal{S},\Lambda)$ is a Poisson analytic family.
\end{example}

\begin{example}[Hopf surfaces]
By a Hopf surface we mean any complex manifold homeomorphic to $S^1\times S^3$. Let $W=\mathbb{C}^2-\{0\}$.
\begin{thm}\label{w}
For every Hopf surface $X$ there exist numbers $m\in \mathbb{N},a,b,t, \in \mathbb{C}$ satisfying
\begin{equation*}
0<|a|\leq |b| <1\,\,\,\,\, \text{and}\,\,\,\,\, (b^m-a)t=0
\end{equation*}
such that $X$ is biholomorphic to $W/<\gamma>$, where $\gamma$ is an automorphism of $W$ given by 
\begin{equation*}
\gamma(z_1,z_2)=(az_1+tz_2^m,bz_2)
\end{equation*}
Conversely, for any $m,a,b,t$ as above, the corresponding group $<\gamma>$ acts freely and properly discontinuous on $W$ and the complex manifold $W/<\gamma>$ is a Hopf surface.
\end{thm}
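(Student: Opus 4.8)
The theorem has two halves, which I would establish by rather different arguments; essentially all of the weight sits in the first (``classification'') half, while the converse is a direct verification.

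\emph{The converse.} Given $m,a,b,t$ as in the statement, note first that $\gamma(z_1,z_2)=(az_1+tz_2^m,bz_2)$ is a polynomial automorphism of $\mathbb{C}^2$ whose only zero is the origin (because $a,b\ne0$), so it restricts to a biholomorphism of $W$. An easy induction gives $\gamma^n(z_1,z_2)=(a^nz_1+c_nz_2^m,\,b^nz_2)$, with $c_n=0$ when $t=0$ and $c_n=n\,b^{m(n-1)}t$ in the resonant case $b^m=a$; from $|a|\le|b|<1$ one then reads off that $\gamma^n\to0$ uniformly on compacta as $n\to+\infty$ and $\|\gamma^n(\cdot)\|\to\infty$ as $n\to-\infty$. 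Hence $\langle\gamma\rangle\cong\mathbb{Z}$, the action on $W$ is fixed--point free (a fixed point of $\gamma^n$, $n\ne0$, would force $a^n=1$ or $b^n=1$), and for every compact $K\subset W$ one has $\gamma^n(K)\cap K=\varnothing$ for all but finitely many $n$; so the action is free and properly discontinuous and $W/\langle\gamma\rangle$ is a complex surface. It is compact because a suitable closed spherical shell in $W$ is a fundamental domain, and it is diffeomorphic to $S^1\times S^3$ because it is the mapping torus of an orientation--preserving self--diffeomorphism of $S^3$ (the fibre of a fibration $W/\langle\gamma\rangle\to S^1$ lifting the product decomposition $W\cong S^3\times\mathbb{R}$), and every orientation--preserving self--diffeomorphism of $S^3$ is isotopic to the identity. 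Thus $W/\langle\gamma\rangle$ is a Hopf surface.

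\emph{The classification.} Let $X$ be a Hopf surface. From $X\approx S^1\times S^3$ we get $\pi_1(X)\cong\mathbb{Z}$, $b_1(X)=1$ and $b_2(X)=0$; in particular $b_1$ is odd, so $X$ is a minimal compact complex surface of class $\mathrm{VII}_0$. The one genuinely nontrivial input, which I would take from Kodaira's structure theory of surfaces with $b_1=1$, is that for such an $X$ with moreover $\pi_1\cong\mathbb{Z}$ the universal cover $\widetilde X$ is biholomorphic to $W=\mathbb{C}^2\setminus\{0\}$; the deck group is then generated by a single biholomorphism $g$ of $W$, and $X=W/\langle g\rangle$. By Hartogs' theorem $g$ and $g^{-1}$ extend holomorphically across the origin (a point has codimension $2$ in $\mathbb{C}^2$), so $g$ is the restriction of a biholomorphism $\widehat g$ of $\mathbb{C}^2$; since $\widehat g$ is bijective and carries $W$ onto $W$, it fixes $0$. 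Since $X=W/\langle g\rangle$ is compact, the fixed point $0$ of $\widehat g$ is, after possibly replacing $g$ by $g^{-1}$, attracting, so the eigenvalues $a,b$ of $A=d\widehat g_0$ lie in the open unit disc; ordering them, $0<|a|\le|b|<1$ (were some eigenvalue on or outside the unit circle, the $\mathbb{Z}$--action on $W$ could not be properly discontinuous, contradicting compactness and smoothness of $X$).

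It remains to normalize $\widehat g$. Since both eigenvalues lie in the open unit disc, $\widehat g$ is an attracting fixed point in the Poincar\'e domain, so by the Poincar\'e--Dulac theorem it is holomorphically conjugate, by a biholomorphism of $\mathbb{C}^2$ fixing the origin, to a polynomial normal form equal to $A$ plus resonant monomials. A short arithmetic check shows that, with $0<|a|\le|b|<1$, the only resonance that can occur is $a=b^m$ for an integer $m\ge2$, contributing exactly the monomial $z_2^m$ in the first coordinate; allowing in addition the non--semisimple (Jordan) possibility $a=b$, which contributes an off--diagonal term $z_2$ and corresponds to $m=1$, one obtains precisely $(z_1,z_2)\mapsto(az_1+tz_2^m,bz_2)$ with $(b^m-a)t=0$ (the constraint being vacuous when $t=0$). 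Since the conjugating biholomorphism fixes $0$ it restricts to a biholomorphism of $W$ and descends to an isomorphism $X\cong W/\langle\gamma\rangle$, completing the proof. The hard step is the quoted analytic fact that $\widetilde X\cong W$; granting it, the remainder is the elementary dynamics of contracting polynomial automorphisms of $\mathbb{C}^2$ together with Poincar\'e--Dulac normalization.
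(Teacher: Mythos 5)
You should first be aware that the paper does not prove this theorem at all: its ``proof'' is the single line ``See [Kod65]'', i.e.\ the statement is quoted from Kodaira. So there is no in-paper argument to compare against; the only meaningful comparison is with Kodaira's classification, and your sketch does follow that standard route. Your treatment of the converse is essentially complete (the induction $\gamma^n(z_1,z_2)=(a^nz_1+na^{n-1}tz_2^m,b^nz_2)$ in the resonant case, freeness, proper discontinuity, compactness via a fundamental shell, and the mapping-torus identification with $S^1\times S^3$), and you correctly isolate the one deep input of the classification half, namely that the universal cover of a Hopf surface is biholomorphic to $W=\mathbb{C}^2\setminus\{0\}$ --- which is exactly the content you cannot avoid importing from Kodaira, the same source the paper cites.

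Two steps in your classification half are, however, stated more strongly than what the quoted theorems give, and as written they are gaps. First, the Poincar\'e--Dulac theorem conjugates the germ of $\widehat g$ at $0$ to its normal form only by a \emph{local} biholomorphism near the origin, not ``by a biholomorphism of $\mathbb{C}^2$''; to get $X\cong W/\langle\gamma\rangle$ you must propagate the local conjugation $\varphi_{\mathrm{loc}}$ to all of $W$ using the contraction dynamics, e.g.\ $\varphi(z):=\gamma^{-n}\bigl(\varphi_{\mathrm{loc}}(\widehat g^{\,n}(z))\bigr)$ for $n$ large, checking well-definedness from the conjugacy relation on the small neighborhood and that every $\widehat g$-orbit eventually enters it (this last point uses compactness of the quotient, which forces the basin of attraction of $0$ to be all of $\mathbb{C}^2$). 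Second, the assertion that the eigenvalues of $d\widehat g_0$ lie strictly inside the unit disc ``since otherwise the action could not be properly discontinuous'' is not an argument: an eigenvalue on the unit circle does not visibly contradict proper discontinuity. The standard proof uses compactness of $X$ to show that, after replacing $g$ by $g^{-1}$ if necessary, $\widehat g^{\,n}\to 0$ uniformly on compact subsets of $W$, and deduces from this that the spectral radius of $d\widehat g_0$ is $<1$ (the eigenvalues are nonzero automatically, $\widehat g$ being an automorphism). Both points are repaired by standard arguments of exactly the kind found in Kodaira's paper, so your outline is sound, but as submitted these two claims are asserted rather than proved.
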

\begin{proof}
See $\cite{Kod65}$.
\end{proof}

We construct an one parameter Poisson analytic family of general Hopf surfaces.
An automorphism of $W\times \mathbb{C}$ given by
\begin{equation*}
g:(z_1,z_2,t)\to(az_1+tz_2^m,bz_2,t)
\end{equation*}
where $0<|a|\leq |b| <1$ and $b^m-a=0$$($i.e $a=b^m$$)$, generates an infinitely cyclic group $G$, which properly discontinuous and fixed point free by Theorem \ref{w}. Hence $\mathcal{M}=W\times \mathbb{C}/G$ is a complex manifold. Since the projection of $W\times \mathbb{C}$ to $\mathbb{C}$ commutes with $g$, it induces a holomorphic map $\omega$ of $\mathcal{M}$ to $\mathbb{C}$. Clearly the rank of the Jacobian matrix of $\omega$ is equal to 1. Thus $(\mathcal{M},\mathbb{C},\omega)$ is a complex analytic family with $\omega^{-1}(t)=W/G_t=M_t$.

We give a holomorphic Poisson structure on $\mathcal{M}$. A holomorphic bivector field on $\mathcal{M}$ is induced from a $G$-invariant holomorphic bivector field on $W\times \mathbb{C}$. In what follows we write $(z',t')=(z_1',z_2',t')$ instead of $(a^n z_1+na^{n-1}t z_2^m,b^n z_2,t)$. In this notation we have 
\begin{equation*}
g^n:(z_1,z_2,t)\to(z_1',z_2',t'),
\end{equation*}

We consider a $G$-invariant holomorphic bivector field on $W\times \mathbb{C}$ of the form $f(z_1,z_2,t)\frac{\partial}{\partial z_1}\wedge\frac{\partial}{\partial z_2}$, where $f(z_1,z_2,t)$ is a holomorphic function on $W\times \mathbb{C}$. Since
\begin{equation*}
\frac{\partial}{\partial z_1}=a^n\frac{\partial}{\partial z_1'},\,\,\,\,\, \frac{\partial}{\partial z_2}=mna^{n-1}t z_2^{m-1}\frac{\partial}{\partial z_1'}+b^n\frac{\partial}{\partial z_2'}
\end{equation*}
the bivector field $f(z_1,z_2,t)\frac{\partial}{\partial z_1}\wedge\frac{\partial}{\partial z_2}$ is transformed by $g^n$ into the bivector field 
\begin{equation*}
f(z_1,z_2,t)a^n b^n\frac{\partial}{\partial z_1'}\wedge\frac{\partial}{\partial z_2'}
\end{equation*}
Since $f(z_1,z_2,t)\frac{\partial}{\partial z_1}\wedge\frac{\partial}{\partial z_2}$ is $G$-invariant, we have

\begin{equation*}
f(z_1',z_2',t')=f(z_1,z_2,t)a^nb^n
\end{equation*}
By Hartog's theorem, holomorphic function $f(z_1,z_2,t)$ on $W\times \mathbb{C}$ are extended to holomorphic function on $\mathbb{C}^2\times \mathbb{C}$. Therefore we may assume that $f(z_1,z_2,t)$ is holomorphic on all $\mathbb{C}^2\times \mathbb{C}$. We have
\begin{equation*}
f(z_1,z_2,t)=\frac{1}{a^n b^n}f(a^n z_1+na^{n-1}t z_2^m,b^n z_2,t)=\frac{1}{b^{n(m+1)}}f(b^{nm} z_1+nb^{m(n-1)}t z_2^m,b^n z_2,t)
\end{equation*}
Consequently, since $0<|b |<1$, letting
\begin{equation*}
f(z_1,z_2,t)=\sum_{i,j,k}^{+\infty} c_{ijk} {z_1}^i {z_2}^j t^k
\end{equation*}
be the power series expansion of $f(z_1,z_2,t)$, we have
\begin{align*}
f(z_1,z_2,t)&=\lim_{n\to +\infty} \frac{1}{b^{n(m+1)}} \sum_{i,j,k} c_{ijk}(b^{nm} z_1+nb^{m(n-1)}t z_2^m)^i(b^n z_2)^j t^k\\
                 & =(c_{0(m+1)0}+c_{0(m+1)1}t+c_{0(m+1)2}t^2+\cdots)z_2^{m+1}
\end{align*}

Hence $(\mathcal{M},([(c_{0(m+1)0}+c_{0(m+1)1}t+c_{0(m+1)2}t^2+\cdots)z_2^{m+1}]\frac{\partial}{\partial z_1}\wedge \frac{\partial}{\partial z_2})$ is a Poisson analytic family of Hopf surfaces. For each $t$, we have a holomorphic Poisson structure $[(c_{0(m+1)0}+c_{0(m+1)1}t+c_{0(m+1)2}t^2+\cdots)z_2^{m+1}]\frac{\partial}{\partial z_1}\wedge \frac{\partial}{\partial z_2}$ on a Hopf surface $M_t$.

\end{example}

\section{Infinitesimal deformation}

\subsection{Infinitesimal deformation and truncated holomorphic Poisson cohomology}\

In this section, we show that for a Poisson analytic family $(\mathcal{M},\Lambda,B,\omega)$, the infinitesimal deformation of a holomorphic Poisson manifold $(M_t,\Lambda_t)$ with dimension $n$ is captured by the second Hyperohomology group of complex of sheaves $0\to \Theta_{M_t}\to \wedge^2 \Theta_{M_t}\to \cdots \to \wedge^n \Theta_{M_t}\to 0$ induced by $[\Lambda_t,-]$\footnote{For the definition, see Appendix \ref{appendixb}} analogously to how the infinitesimal deformation of a complex manifold $M_t$ is captured by the first cohomology group $H^1(M_t,\Theta_t)$.

Let $(M,\Lambda)$ be a holomorphic Poisson manifold and consider the complex of sheaves 
\begin{align*}
0\to \Theta_M\xrightarrow{[\Lambda,-]}\wedge^2 \Theta_M\xrightarrow{[\Lambda,-]}\cdots \xrightarrow{[\Lambda,-]} \wedge^n \Theta_M\to 0
\end{align*}
where $\Theta_M$ is the holomorphic tangent sheaf. Let $\mathcal{U}=\{U_j\}$ be sufficiently fine open covering of $M$ such that $U_j=\{z_j\in \mathbb{C}^n||z_j^{\alpha}|<r_j^{\alpha},\alpha=1,...,n\}$. Then we can compute the hypercohomology group of the above complex of sheaves by the following \u{C}ech resolution.(See Appendix \ref{appendixb})
\begin{center}
$\begin{CD}
@A[\Lambda,-]AA\\
C^0(\mathcal{U},\wedge^3 \Theta_M)@>-\delta>>\cdots\\
@A[\Lambda,-]AA @A[\Lambda,-]AA\\
C^0(\mathcal{U},\wedge^2 \Theta_M)@>\delta>> C^1(\mathcal{U},\wedge^2 \Theta_M)@>-\delta>>\cdots\\
@A[\Lambda,-]AA @A[\Lambda,-]AA @A[\Lambda,-]AA\\
C^0(\mathcal{U},\Theta_M)@>-\delta>>C^1(\mathcal{U},\Theta_M)@>\delta>>C^2(\mathcal{U},\Theta_M)@>-\delta>>\cdots\\
@AAA @AAA @AAA @AAA \\
0@>>>0 @>>> 0 @>>> 0@>>> \cdots
\end{CD}$
\end{center}

\begin{definition}
We say that the $i$-th truncated holomorphic Poisson cohomology group\footnote{In \cite{Wei99}, holomorphic Poisson cohomology for a holomorphic Poisson manifold $(M,\Lambda)$ is defined by the $i$-th hypercohomology group of complex of sheaves $\mathcal{O}_M\to \Theta_M\to \wedge^2 \Theta_M \to \cdots \to\wedge^n \Theta_M\to 0$ induced by $[\Lambda,-]$. However  since there is no role of the structure sheaf $\mathcal{O}_M$ in deformations of holomorphic Poisson manifolds, we truncate the complex of sheaves. See also \cite{Nam09}. } of a holomorphic Poisson manifold $(M,\Lambda)$ is the $i$-th hypercohomology group associated with the complex of sheaves $0\to \Theta_M\xrightarrow{[\Lambda,-]} \wedge^2 \Theta_M\xrightarrow{[\Lambda,-]} \cdots \xrightarrow{[\Lambda,-]} \wedge^n \Theta_M\to 0$ where $\Theta_M$ is the holomorphic tangent sheaf, and is denoted by $HP^i(X,\Lambda)$\footnote{We adopt the notation from \cite{Nam09}. By general philosophy of deformation theory, it might be natural to shift the grading after truncation so that the $0$-th cohomology group corresponds to infinitesimal automorphisms, the first cohomology group corresponds to infinitesimal deformations and third cohomology group corresponds to obstructions. However, we follow \cite{Nam09}. So I put $0\to 0\to 0\cdots$  on the bottom of the complex.}
\end{definition}

 Now we relate the 2nd  truncated holomorphic Poisson cohomology group $HP^2(M_t,\Lambda_t)$ to the infinitesimal deformation of $(M_t,\Lambda_t)$ in a Poisson  analytic family $(\mathcal{M},\Lambda,B,\pi)$ for each $t$. Let $t_0\in B$ and choose a sufficiently small polydisk $\Delta$ with $t_0\in \Delta \subset B$. Then $\omega^{-1}(\Delta)=\mathcal{M}_{\Delta}=\bigcup_{j=1}^l \mathcal{U}_j$ where   $\mathcal{U}_j:=U_j\times \Delta$ such that $U_j$ is a polydisk, and $(z_j,t)\in U_j\times \Delta$ and $(z_k,t)\in U_k\times \Delta$  are the same point on $\mathcal{M}_{\Delta}$ if 
 \begin{align*}
 z_j^{\alpha}=f_{jk}^{\alpha}(z_k,t),\,\,\, \alpha=1,...,n
 \end{align*}
$z_j^{\alpha}=f_{jk}^{\alpha}(z_k^1,...,z_k^n,t_1,...,t_m)$ 
is a holomorphic transition function in $z_k^1,...,z_k^n,t_1,...,t_m$. And on each local complex coordinate system $U_j\times \Delta$, $\Lambda$ can be expressed as $\sum_{\alpha,\beta} g_{\alpha \beta}^{j}(z,t)\frac{\partial}{\partial z_{j}^{\alpha}}\wedge \frac{\partial}{\partial z_{j}^{\beta}}$ where $g^{j}_{\alpha \beta}(z,t)$ is a holomorphic function on $U_{j}\times \Delta$ and $g_{\alpha \beta}^j(f_{jk}^1(z_k,t),...,f_{jk}^n(z_k,t))=\sum_{r,s} g_{rs}^k(z_k,t)\frac{\partial f_{jk}^{\alpha}}{\partial z_k^r}\frac{\partial f_{jk}^{\beta}}{\partial z_k^s}$ on $U_j\times \Delta\cap U_k\times \Delta$. We will denote $\mathcal{U}_j^t:=U_j\times t$. For each $t\in \Delta$, $\mathcal{U}^t=\{\mathcal{U}_j^t\}$ be an open covering of $M_t$. Let $\frac{\partial}{\partial t}=\sum_{\lambda=1}^m c_{\lambda}\frac{\partial}{\partial t_{\lambda}}$, $c_{\lambda}\in \mathbb{C}$, of $B$. We show that 

\begin{proposition}\label{gg}
\begin{align*}
( \{\theta_{jk}(t)=\sum_{\alpha=1}^n \frac{\partial f_{jk}^{\alpha}(z_k,t)}{\partial t}\frac{\partial}{\partial z_j^{\alpha}}\},\{\Lambda_j(t)=\sum_{\alpha,\beta} \frac{\partial g_{\alpha \beta}^{j}(z,t)}{\partial t}\frac{\partial}{\partial z_{j}^{\alpha}}\wedge \frac{\partial}{\partial z_{j}^{\beta}}\})\in C^1(\mathcal{U}^t,\Theta_{M_t})\oplus C^0(\mathcal{U}^t,\wedge^2 \Theta_{M_t})
\end{align*}
define a 2-cocycle and call its cohomology class $\in HP^2(M_t,\Lambda_t)$ the infinitesimal $($Poisson$)$ deformation along $\frac{\partial}{\partial t}$ and this expression is independent of the choice of system of local coordinates.
\end{proposition}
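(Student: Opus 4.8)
The plan is to fix $t\in\Delta$ and to compute $HP^2(M_t,\Lambda_t)$ by the \v{C}ech double complex displayed above, in which a total $2$-cochain is exactly an element of $C^1(\mathcal{U}^t,\Theta_{M_t})\oplus C^0(\mathcal{U}^t,\wedge^2\Theta_{M_t})$ and the total differential of such a cochain has three components, landing in $C^2(\mathcal{U}^t,\Theta_{M_t})$, in $C^1(\mathcal{U}^t,\wedge^2\Theta_{M_t})$ and in $C^0(\mathcal{U}^t,\wedge^3\Theta_{M_t})$ respectively. Thus the assertion that $(\{\theta_{jk}(t)\},\{\Lambda_j(t)\})$ is a cocycle unwinds into three identities: $(\mathrm{i})$ $\delta\{\theta_{jk}(t)\}=0$, i.e.\ $\{\theta_{jk}(t)\}$ is an ordinary \v{C}ech $1$-cocycle with values in $\Theta_{M_t}$; $(\mathrm{ii})$ on every double overlap $\Lambda_j(t)-(f_{jk})_*\Lambda_k(t)=\pm[\Lambda_t,\theta_{jk}(t)]$, all terms read in the $z_j$-chart; and $(\mathrm{iii})$ $[\Lambda_t,\Lambda_j(t)]=0$ for every $j$. (For fixed $t$ the coefficients $\partial f^\alpha_{jk}/\partial t$ and $\partial g^j_{\alpha\beta}/\partial t$ are holomorphic in $z$, so $\theta_{jk}(t)$ and $\Lambda_j(t)$ are honest holomorphic sections over $M_t$.)

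Two of these are routine. For $(\mathrm{i})$ I would differentiate the identity $f^\alpha_{ik}(z_k,t)=f^\alpha_{ij}(f_{jk}(z_k,t),t)$ with respect to $t$ and apply the chain rule; what comes out is $\theta_{ik}(t)=\theta_{ij}(t)+(f_{ij})_*\theta_{jk}(t)$ on triple overlaps, which is verbatim the Kodaira--Spencer computation for complex manifolds and needs no change here. For $(\mathrm{iii})$ I would use that $[\Lambda,\Lambda]=0$ reads, in the chart $U_j$, $[\Lambda^{(j)}(z,t),\Lambda^{(j)}(z,t)]=0$ identically in $(z,t)$ with $\Lambda^{(j)}(z,t)=\sum g^j_{\alpha\beta}(z,t)\,\partial_{z^\alpha_j}\wedge\partial_{z^\beta_j}$; differentiating in $t$ and using the Leibniz rule for the Schouten bracket gives $2[\Lambda_j(t),\Lambda_t]=0$, hence $(\mathrm{iii})$.

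The heart of the matter is $(\mathrm{ii})$, and I would obtain it by differentiating the Poisson-compatibility relation
\[
g^j_{\alpha\beta}\bigl(f_{jk}(z_k,t),t\bigr)=\sum_{r,s}g^k_{rs}(z_k,t)\,\frac{\partial f^\alpha_{jk}}{\partial z_k^r}\,\frac{\partial f^\beta_{jk}}{\partial z_k^s}
\]
with respect to $t$. The $t$-derivative of the left side is $\partial_t g^j_{\alpha\beta}(f_{jk},t)$, which reassembles into $\Lambda_j(t)$ read in the $z_j$-chart, plus the chain-rule term $\sum_\gamma(\partial g^j_{\alpha\beta}/\partial z_j^\gamma)(\partial f^\gamma_{jk}/\partial t)$; the $t$-derivative of the right side is $\sum_{r,s}(\partial_t g^k_{rs})(\partial f^\alpha_{jk}/\partial z^r_k)(\partial f^\beta_{jk}/\partial z^s_k)$, which reassembles into $(f_{jk})_*\Lambda_k(t)$, plus two terms carrying $g^k_{rs}\,\partial_{z^r_k}(\partial f^\alpha_{jk}/\partial t)$. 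After rewriting the $z_k$-derivatives in the $z_j$-coordinates, these three leftover terms are precisely the three terms in the coordinate expression of the Lie derivative $\mathcal{L}_{\theta_{jk}(t)}\Lambda_t$; since $\mathcal{L}_\theta\Lambda=-[\Lambda,\theta]$ this is exactly $(\mathrm{ii})$.

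Finally, for independence of the choice of local coordinates I would compare the cocycle just produced with the one attached to a second admissible system $(\tilde z_j,\tilde U_j)$, related to the first by $\tilde z_j=h_j(z_j,t)$; setting $\sigma_j(t)=\sum_\alpha(\partial h^\alpha_j/\partial t)\,\partial_{\tilde z^\alpha_j}\in C^0(\mathcal{U}^t,\Theta_{M_t})$, the same differentiate-and-chain-rule manipulation shows that the two $2$-cocycles differ by the total coboundary of $\{\sigma_j(t)\}$, its \v{C}ech part accounting for the change in $\{\theta_{jk}(t)\}$ and its vertical part $[\Lambda_t,\sigma_j(t)]$ for the change in $\{\Lambda_j(t)\}$, so they define the same class in $HP^2(M_t,\Lambda_t)$. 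I expect $(\mathrm{ii})$ to be the only real obstacle: the genuine work is the bookkeeping that turns the $t$-derivative of the Poisson-compatibility relation, after passing to one chart, exactly into the Schouten bracket $[\Lambda_t,\theta_{jk}(t)]$ — in particular handling the mixed derivatives $\partial^2 f^\alpha_{jk}/\partial t\,\partial z^r_k$ and converting $z_k$-derivatives into $z_j$-derivatives; granting that identity, the rest is formal.
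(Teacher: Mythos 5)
Your plan is correct and is essentially the paper's own proof: the cocycle condition is split into the same three identities (the Čech cocycle condition for $\{\theta_{jk}(t)\}$ via the Kodaira–Spencer computation, $[\Lambda_t,\Lambda_j(t)]=0$ by differentiating $[\Lambda,\Lambda]=0$ in $t$, and the key relation $\Lambda_k(t)-\Lambda_j(t)+[\Lambda_t,\theta_{jk}(t)]=0$ by differentiating the Poisson-compatibility relation $g^j_{\alpha\beta}(f_{jk}(z_k,t),t)=\sum_{r,s}g^k_{rs}\,\partial_{z_k^r}f^\alpha_{jk}\,\partial_{z_k^s}f^\beta_{jk}$ in $t$), followed by the same coboundary argument for coordinate independence. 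The only difference is presentational: you package the leftover terms as the Lie derivative $\mathcal{L}_{\theta_{jk}(t)}\Lambda_t=-[\Lambda_t,\theta_{jk}(t)]$, whereas the paper expands the Schouten bracket directly in coordinates; you should still pin down the undetermined sign in your identity (ii), which the explicit expansion settles.
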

\begin{proof}
First, $\delta(\{\theta_{jk}(t)\})=0$ (See $\cite{Kod05}$ p.201). Second, since $[\sum_{\alpha,\beta} g_{\alpha \beta}^{j}(z,t)\frac{\partial}{\partial z_{j}^{\alpha}}\wedge \frac{\partial}{\partial z_{j}^{\beta}},\sum_{\alpha,\beta} g_{\alpha \beta}^{j}(z,t)\frac{\partial}{\partial z_{j}^{\alpha}}\wedge \frac{\partial}{\partial z_{j}^{\beta}}]=0$, by taking the derivative with repect to $t$,  we have $[\sum_{\alpha,\beta} \frac{\partial g_{\alpha \beta}^{j}(z,t)}{\partial t}\frac{\partial}{\partial z_{j}^{\alpha}}\wedge \frac{\partial}{\partial z_{j}^{\beta}},\sum_{\alpha,\beta} g_{\alpha \beta}^{j}(z,t)\frac{\partial}{\partial z_{j}^{\alpha}}\wedge \frac{\partial}{\partial z_{j}^{\beta}}]+[\sum_{\alpha,\beta} g_{\alpha \beta}^{j}(z,t)\frac{\partial}{\partial z_{j}^{\alpha}}\wedge \frac{\partial}{\partial z_{j}^{\beta}},\sum_{\alpha,\beta} \frac{\partial g_{\alpha \beta}^{j}(z,t)}{\partial t}\frac{\partial}{\partial z_{j}^{\alpha}}\wedge \frac{\partial}{\partial z_{j}^{\beta}}]=2[\sum_{\alpha,\beta} g_{\alpha \beta}^{j}(z,t)\frac{\partial}{\partial z_{j}^{\alpha}}\wedge \frac{\partial}{\partial z_{j}^{\beta}},\sum_{\alpha,\beta} \frac{\partial g_{\alpha \beta}^{j}(z,t)}{\partial t}\frac{\partial}{\partial z_{j}^{\alpha}}\wedge \frac{\partial}{\partial z_{j}^{\beta}}]=0$. It remains to show that $\delta(\{\Lambda_j(t)\})+[\Lambda_t,\{\theta_{jk}\}]=0$. More precisely, on $\mathcal{U}_{jk}^t$, we show that $\Lambda_{k}(t)-\Lambda_{j}(t)+[\Lambda_t,\theta_{jk}(t)]=0$. In other words,
\begin{align*}
(*)\sum_{r,s=1}^n \frac{\partial g^k_{rs}}{\partial t}\frac{\partial}{\partial z^{r}_k}\wedge\frac{\partial}{\partial z_k^{s}}-\sum_{\alpha,\beta=1}^n \frac{\partial g^j_{\alpha \beta}}{\partial t}\frac{\partial}{\partial z^{\alpha}_j}\wedge\frac{\partial}{\partial z_j^{\beta}}+[\sum_{r,s=1}^n g_{rs}^{j}(z,t)\frac{\partial}{\partial z_{j}^{r}}\wedge \frac{\partial}{\partial z_{j}^{s}},\sum_{c=1}^n \frac{\partial f_{jk}^{c}(z_k,t)}{\partial t}\frac{\partial}{\partial z_j^{c}}]=0
\end{align*}
We note that since $z_j^{\alpha}=f_{jk}^{\alpha}(z_k^1,...,z_k^n,t_1,...,t_m)$ for $\alpha=1,...,n$, $\frac{\partial}{\partial z_k^{r}}=\sum_{a=1}^{n}\frac{\partial f_{jk}^a}{\partial z_k^{r}}\frac{\partial}{\partial z_j^a}$ for $r=1,...,n$. Hence

\begin{align*}
\sum_{r,s=1}^n \frac{\partial g^k_{rs}}{\partial t}\frac{\partial}{\partial z^{r}_k}\wedge\frac{\partial}{\partial z_k^{s}}=\sum_{r,s,a,b=1}^n \frac{\partial g_{rs}^k}{\partial t}\frac{\partial f_{jk}^a}{\partial z_k^r}\frac{\partial f_{jk}^b}{\partial z_k^s}\frac{\partial}{\partial z_j^a}\wedge \frac{\partial}{\partial z_j^b}\\
\end{align*}
\begin{align*}
&[\sum_{r,s=1}^n g_{rs}^{j}(z,t)\frac{\partial}{\partial z_{j}^{r}}\wedge \frac{\partial}{\partial z_{j}^{s}},\sum_{c=1}^n \frac{\partial f_{jk}^{c}(z_k,t)}{\partial t}\frac{\partial}{\partial z_j^{c}}]=\sum_{r,s,c=1}^n [g_{rs}^{j}(z,t)\frac{\partial}{\partial z_{j}^{r}}\wedge \frac{\partial}{\partial z_{j}^{s}},\frac{\partial f_{jk}^{c}(z_k,t)}{\partial t}\frac{\partial}{\partial z_j^{c}}]\\
&=\sum_{r,s,c=1}^n [g_{rs}^j \frac{\partial}{\partial z_j^r},\frac{\partial f_{jk}^c}{\partial t} \frac{\partial}{\partial z_j^c}]\wedge \frac{\partial}{\partial z_j^s}-g_{rs}^j[\frac{\partial}{\partial z_j^s},\frac{\partial f_{jk}^c}{\partial t}\frac{\partial}{\partial z_j^c}]\wedge \frac{\partial}{\partial z_j^r}\\
&=\sum_{r,s,c=1}^n g_{rs}^j\frac{\partial}{\partial z_j^r}\left(\frac{\partial f_{jk}^c}{\partial t}\right) \frac{\partial}{\partial z_j^c}\wedge \frac{\partial}{\partial z_j^s}-\frac{\partial f_{jk}^c}{\partial t}\frac{\partial g_{rs}^j}{\partial z_j^c}\frac{\partial}{\partial z_j^r}\wedge \frac{\partial}{\partial z_j^s}+g_{rs}^j\frac{\partial}{\partial z_j^s}\left(\frac{\partial f_{jk}^c}{\partial t}\right)\frac{\partial}{\partial z_j^r}\wedge \frac{\partial}{\partial z_j^c}
\end{align*}
By considering the coefficients of $\frac{\partial}{\partial z_j^a}\wedge \frac{\partial}{\partial z_j^b}$, $(*)$ is equivalent to
\begin{align*}
(**)\sum_{r,s=1}^n \frac{\partial g_{rs}^k}{\partial t}\frac{\partial f_{jk}^a}{\partial z_k^r}\frac{\partial f_{jk}^b}{\partial z_k^s}-\frac{\partial g_{ab}^j}{\partial t}-\sum_{c=1}^n \frac{\partial g_{ab}^j}{\partial z_j^c}\frac{\partial f_{jk}^c}{\partial t}+\sum_{c=1}^n g_{cb}^j\frac{\partial}{\partial z_j^c}\left(\frac{\partial f_{jk}^a}{\partial t}\right)+g_{ac}^j\frac{\partial}{\partial z_j^c}\left(\frac{\partial f_{jk}^b}{\partial t}\right)=0
\end{align*}

On the other hand, since $\sum_{\alpha,\beta=1}^n g_{\alpha \beta}^j\frac{\partial}{\partial z_j^{\alpha}}\wedge \frac{\partial}{\partial z_j^{\beta}}=\sum_{r,s=1}^n g_{rs}^k\frac{\partial}{\partial z_k^{r}}\wedge \frac{\partial}{\partial z_k^{s}}=\sum_{r,s,a,b=1}^n g_{rs}^k \frac{\partial f_{jk}^a}{\partial z_k^r}\frac{\partial f_{jk}^b}{\partial z_k^s} \frac{\partial}{\partial z_j^a}\wedge\frac{\partial }{\partial z_j^b}$ on $\mathcal{U}_j^t \cap \mathcal{U}_k^t\ne \emptyset$, we have
\begin{align*}
g_{ab}^j(f_{jk}^1(z_k,t),...,f_{jk}^n(z_k,t),t_1,...,t_m)=\sum_{r,s=1}^n g_{rs}^k \frac{\partial f_{jk}^a}{\partial z_k^r}\frac{\partial f_{jk}^b}{\partial z_k^s}.\\
\end{align*}
By taking the derivative with respect to $t$, we have
\begin{align*}
\frac{\partial g_{ab}^j}{\partial z_j^1}\frac{\partial f_{jk}^1}{\partial t}+\cdots+\frac{\partial g_{ab}^j}{\partial z_j^n}\frac{\partial f_{jk}^n}{\partial t}+\frac{\partial g_{ab}^j}{\partial t}=\sum_{r,s=1}^n \frac{\partial g_{rs}^k}{\partial t}\frac{\partial f_{jk}^a}{\partial z_k^r}\frac{\partial f_{jk}^b}{\partial z_k^s}+g_{rs}^k(\frac{\partial}{\partial z_k^r}\left(\frac{\partial f_{jk}^a}{\partial t}\right)\frac{\partial f_{jk}^b}{\partial z_k^s}+\frac{\partial f_{jk}^a}{\partial z_k^r}\frac{\partial}{\partial z_k^s}\left(\frac{\partial f_{jk}^b}{\partial t}\right) )
\end{align*}
Hence $(**)$ is equivalent to
\begin{align*}
\sum_{c=1}^n g_{cb}^j\frac{\partial}{\partial z_j^c}\left(\frac{\partial f_{jk}^a}{\partial t}\right)+g_{ac}^j\frac{\partial}{\partial z_j^c}\left(\frac{\partial f_{jk}^b}{\partial t}\right)=\sum_{r,s=1}^n g_{rs}^k(\frac{\partial}{\partial z_k^r}\left(\frac{\partial f_{jk}^a}{\partial t}\right)\frac{\partial f_{jk}^b}{\partial z_k^s}+\frac{\partial f_{jk}^a}{\partial z_k^r}\frac{\partial}{\partial z_k^s}\left(\frac{\partial f_{jk}^b}{\partial t}\right) )
\end{align*}
Indeed,
{\small{\begin{align*}
\sum_{c=1}^n g_{cb}^j\frac{\partial}{\partial z_j^c}\left(\frac{\partial f_{jk}^a}{\partial t}\right)+g_{ac}^j\frac{\partial}{\partial z_j^c}\left(\frac{\partial f_{jk}^b}{\partial t}\right)=\sum_{r,s,c=1}^n g_{rs}^k\frac{\partial f_{jk}^c}{\partial z_k^r}\frac{\partial f_{jk}^b}{\partial z_k^s}\frac{\partial}{\partial z_j^c}\left(\frac{\partial f_{jk}^a}{\partial t}\right)+g_{rs}^k\frac{\partial f_{jk}^a}{\partial z_k^r}\frac{\partial f_{jk}^c}{\partial z_k^s}\frac{\partial}{\partial z_j^c}\left(\frac{\partial f_{jk}^b}{\partial t}\right)\\
\sum_{r,s=1}^n g_{rs}^k(\frac{\partial}{\partial z_k^r}\left(\frac{\partial f_{jk}^a}{\partial t}\right)\frac{\partial f_{jk}^b}{\partial z_k^s}+\frac{\partial f_{jk}^a}{\partial z_k^r}\frac{\partial}{\partial z_k^s}\left(\frac{\partial f_{jk}^b}{\partial t}\right) )=\sum_{r,s,c=1}^n g_{rs}^k\frac{\partial f_{jk}^c}{\partial z_k^r}\frac{\partial}{\partial z_j^c}\left(\frac{\partial f_{jk}^a}{\partial t}\right)\frac{\partial f_{jk}^b}{\partial z_k^s}+g_{rs}^k\frac{\partial f_{jk}^a}{\partial z_k^r}\frac{\partial f_{jk}^c}{\partial z_k^s}\frac{\partial}{\partial z_j^c}\left(\frac{\partial f_{jk}^b}{\partial t}\right)
\end{align*}}}
It remains to show that $(\Lambda(t),\theta(t))$ is independent of the choice of systems of local coordinates. We can show the infinitesimal deformation does not change under the refinement of the open covering (See $\cite{Kod05}$ page 190). Since we can choose a common refinement for two system of local coordinates, it is sufficient to show that given two local coordinates $x_j=(z_j,t)$ and $u_j=(w_j,t)$ on each $\mathcal{U}_j$, the infinitesimal deformation $(\eta(t),\Lambda'(t))$ with respect to $\{u_j\}$ coincides with $(\theta(t),\Lambda(t))$ with respect to $\{x_j\}$. Let 
\begin{align*}
w_j^{\alpha}=g_j^{\alpha}(z_j^1,...,z_j^n,t)
\end{align*}
the coordinate transformation from $(z_j,t)$ to $(w_j,t)$ which is holomorphic in $z_j^1,...,z_j^n$.
So we have $\frac{\partial}{\partial z_j^r}=\sum_{a} \frac{\partial g_j^a}{\partial z_j^r}\frac{\partial}{\partial w_j^a}$. And let 
\begin{align*}
\theta_j(t)=\sum_{\alpha=1}^n \frac{\partial g_j^{\alpha}(z_j,t)}{\partial t} \frac{\partial }{\partial w_j^{\alpha}}, \,\,\,\,\,w_j^{\alpha}=g_j^{\alpha}(z_j,t),
\end{align*}
Then we claim that $(\theta_{jk}(t),\Lambda_j(t))-(\eta_{jk}(t),\Lambda'_j(t))=\theta_k(t)-\theta_j(t)-[\Lambda, \theta_j(t)]=-\delta(-\theta(t))+[\Lambda,-\theta(t)]$. Since $\delta(\theta_j(t))=\{\theta_{jk}(t)\}-\{\eta_{jk}(t)\}$(see page 192). We only need to see $\Lambda_j(t)-\Lambda'_j(t)+[\Lambda,\theta_j(t)]=0$. Equivalently,
{\small{\begin{align*}
\sum_{r,s} \frac{\partial \Lambda^{rs}_{j}(z_j,t)}{\partial t}\frac{\partial }{\partial z_j^r}\wedge \frac{\partial}{\partial z_j^s}-\sum_{\alpha,\beta} \frac{\partial \Lambda^{'\alpha\beta}_j(w_j,t)}{\partial t}\frac{\partial}{\partial w_j^{\alpha}}\wedge \frac{\partial}{\partial w_j^{\beta}}+[\sum_{\alpha,\beta} \Lambda^{'\alpha\beta}_j(w_j,t)\frac{\partial}{\partial w_j^{\alpha}}\wedge \frac{\partial}{\partial w_j^{\beta}},\sum_c \frac{\partial g_j^{c}(z_j,t)}{\partial t} \frac{\partial }{\partial w_j^{c}}]=0
\end{align*}}}
But the computation is essentially same to the above.
\end{proof}

\begin{definition}[(holomorphic) Poisson Kodaira-Spencer map]
Let $(\mathcal{M},\Lambda,B,\pi)$ be a family of compact holomorphic Poisson manifolds, where $B$ is a domain of $\mathbb{C}^m$, and $(z,t)$ its system of local coordinates. Then each $(z_j,t)$ on $\mathcal{U}_j$ is a local complex coordinate system of the complex manifold $\mathcal{M}$. And in the local complex coordinate system $\Lambda$ can be expressed as $\sum_{\alpha,\beta} g_{\alpha \beta}^{j}(z,t)\frac{\partial}{\partial z_{j}^{\alpha}}\wedge \frac{\partial}{\partial z_{j}^{\beta}}$ where $g^{j}_{\alpha \beta}(z,t)$ is a holomorphic function on $\mathcal{U}_{j}$. For a tangent vector $\frac{\partial}{\partial t}=\sum_{\lambda=1}^{m} c_{\lambda}\frac{\partial}{\partial t_{\lambda}},c_{\lambda} \in \mathbb{C}$, of $B$, we put 
\begin{align*}
\frac{\partial \Lambda_t}{\partial t}=\sum_{\alpha,\beta}\left[\sum_{\lambda=1}^{m}c_{\lambda}\frac{\partial g_{\alpha \beta}^{j}(z,t)}{\partial t_{\lambda}}\right] \frac{\partial}{\partial z_{j}^{\alpha}}\wedge \frac{\partial}{\partial z_{j}^{\beta}}
\end{align*}
The $($holomorphic$)$ Poisson Kodaira-Spencer map is a $\mathbb{C}$-linear map of 
\begin{align*}
\varphi_t:T_t(B) &\to HP^2(M_t,\Lambda_t)\\
\frac{\partial}{\partial t} &\mapsto \left[\rho_t\left(\frac{\partial}{\partial t}\right)\left(=\frac{\partial{M}_t}{\partial t}\right), \frac{\partial{\Lambda_t}}{\partial t}\right]=\frac{\partial (M_t,\Lambda_t)}{\partial t}
\end{align*}
where $\rho_t:T_t(B)\to H^1(M_t,\Theta_t)$ is the Kodaira-Spencer map. $($See $\cite{Kod05}$ p.201$)$
\end{definition}

\subsection{Tirivial, locally trivial family and rigidity}\

\begin{definition}
Two Poisson analytic families $(\mathcal{M},\Lambda, B,\pi)$ and $(\mathcal{N},\Lambda', B,\pi')$ are equivalent if there is a biholomorphic Poisson map $\Phi$ of $(\mathcal{M},\Lambda)$ onto $(\mathcal{N},\Lambda')$ such that $\pi=\pi'\circ \Phi$. 
Then $(M_t,\Lambda_t)$ and $(N_t,\Lambda_t)$ are biholomorphic Poisson map. 
\end{definition}

\begin{definition}
A Poisson analytic family $(\mathcal{M},\Lambda,,B,\pi)$ is called trivial if it is equivalent to $(M\times B,\Lambda_{t_0}\oplus 0,B,\pi')$\footnote{For the definition of product of holomorphic Poisson manifolds, see Appendix \ref{appendixa}. Here we consider $B$ as a holomorphic Poisson manifold with trivial Poisson structure $0$} with $M=\pi^{-1}(t^0)$ where $t^0$ is some point of $B$. Similarly we define the local triviality of $(\mathcal{M},\Lambda,B,\pi):$ for each $t\in B$, there exists a neighborhood $\Delta$ of $t$ such that $(\mathcal{M}_{\Delta},\Lambda_{\Delta}, \Delta,\pi)$ is trivial.\footnote{Let $(\mathcal{M},\Lambda,B,\pi)$ be a Poisson analytic family. Let $\Delta$ be an open set of $B$. Then ($\mathcal{M}_{\Delta}=\pi^{-1}(\Delta),\Lambda|_{M_{\Delta}},\Delta,\pi|_{\mathcal{M}_{\Delta}})$ is an Poisson analytic family. We denote the family by $(\mathcal{M}_{\Delta},\Lambda_{\Delta},\Delta,\pi)$}
\end{definition}

The following problem is an analogue of a question from deformations of complex structures.
\begin{problem}
If $dim\,HP^2(M_t,\Lambda_t)$ is independent of $t\in B$, and $\varphi_t=0$ identically, then is the holomorphic Poisson family $(\mathcal{M},\Lambda,B,\pi)$ locally trivial $?$\footnote{For the question of deformations of complex structures, we can find the proof in $\cite{Kod05}$. But I could not access to this problem for unfamiliarity of analysis}
\end{problem}

\begin{definition}
We say that a compact holomorphic Poisson manifold $(M,\Lambda_0)$ is rigid if, for any Poisson analytic family $(\mathcal{M},\Lambda,B,\pi)$ such that $M_{t_0}=M$, we can find a neighborhood $\Delta$ of $t_0$ such that $M_t=M_{t_0}$ for $t\in \Delta$. More precisely, $(\mathcal{M}_{\Delta},\Lambda_{\Delta},\Delta,\pi)$ is Poisson biholomorphic to $(M_{t_0}\times \Delta, \Lambda_{t_0}\oplus 0,\Delta, pr)$ where we consider $\Delta$ as a trivial holomorphic Poisson manifold $(\Delta,0)$ and $pr$ is the second projection.
\end{definition}

The following problem is an analogue of a question from deformations of complex structures.

\begin{problem}
If $HP^2(M,\Lambda_0)=0$, is $(M,\Lambda_0)$ is rigid $?$\footnote{I verified that we can use Kodaira's methods presented in \cite{Mor71}. Actually the proof is the special case of theorem of completeness(See $\cite{Kod05}$). But I could not prove the inductive step in Kodaira's methods. However, in the part \ref{part3} of the thesis, we prove that for a nonsingular Poisson variety $(X,\Lambda_0)$ over an algebraically closed field $k$, if $HP^2(X,\Lambda_0)=0$, then $(X,\Lambda_0)$ is rigid in algebraic Poisson deformations (see Proposition \ref{3rigid}). I could not find any example with $HP^2(M,\Lambda)=0$. Even for complex projective plane $\mathbb{P}_{\mathbb{C}}^2$ with any holomorphic Poisson structure $\Lambda$, $HP^2(\mathbb{P}_{\mathbb{C}}^2,\Lambda)\ne 0$. See \cite{Pin11}.}
\end{problem}

\subsection{Change of parameter}(compare $\cite{Kod05}$ p.205)
Suppose given a Poisson analytic family $\{(M_t,\Lambda_t)|(M_t,\Lambda_t)=\omega^{-1}(t),t\in B\}=(\mathcal{M},\Lambda,B,\pi)$ of compact holomorphic Poisson manifolds, where $B$ is a domain of $\mathbb{C}^m$. Let $D$ be a domain of $\mathbb{C}^r$ and $h:s\to t=h(s),s\in D$, a holomorphic map of $D$ into $B$. Then by changing the parameter from $t$ to $s$, we construct a Poisson analytic family $\{(M_{h(t)},\Lambda_{h(t)})|s\in D\}$ on the parameter space $D$ in the following way.

Let $\mathcal{M}\times_B D:=\{(p,s)\in \mathcal{M}\times B|\omega(p)=h(s)\}$. Then we have the following commutative diagram
\begin{center}
$\begin{CD}
\mathcal{M}\times_B D @>p>> \mathcal{M}\\
@V\pi VV @VV\omega V\\
D @>h>> B
\end{CD}$
\end{center}
Since $\omega$ is a submersion, $\mathcal{M}\times_B D$ is a complex submanifold of $\mathcal{M}\times D$ and $\pi$ is a submersion. So $(\mathcal{M}\times_B D,D,\pi)$ is a complex analytic family in the sense of Kodaira and Spencer and we have $\pi^{-1}(s)=M_{h(s)}$. We show that it is naturally a Poisson analytic family such that $\pi^{-1}(s)=(M_{h(s)},\Lambda_{h(s)})$. Note that $D$ can be considered as a holomorphic Poisson manifold with trivial Poisson structure. In other words, $(D,0)$ is a holomorphic Poisson manifold. Then $(\mathcal{M}\times D,\Lambda\oplus 0)$ is a holomorphic Poisson manifold.\footnote{For the definition of product of two holomorphic Poisson manifolds, see Appendix \ref{appendixa}} We show that $\mathcal{M}\times_B D$ is a holomorphic Poisson submanifold of $(\mathcal{M}\times D,\Lambda\oplus 0)$. We check locally by applying Proposition \ref{box} (3). Let $(p_0,s_0)\in \mathcal{M}\times_B D$. Taking a sufficiently small coordinate polydisk $\Delta$ with $h(s_0)\in \Delta$, we represent $(\mathcal{M}_{\Delta},\Lambda_{\Delta})=\omega^{-1}(\Delta)$ in the form of
\begin{align*}
(\mathcal{M}_{\Delta},\Lambda_{\Delta})=(\bigcup_{j=1}^l U_j\times \Delta, \sum_{\alpha,\beta} g_{\alpha\beta}^j(z_j,t)\frac{\partial}{\partial z_j^{\alpha}}\wedge\frac{\partial}{\partial z_j^{\beta}})
\end{align*}
where each $U_j$ is a polydisk independent of $t$, and $(z_j,t)\in U_j\times \Delta$ and $(z_k,t)\in U_k\times \Delta$ are the same point on $\mathcal{M}_{\Delta}$ if $z_j^{\alpha}=f_{jk}^{\alpha}(z_k,t), \alpha=1,...,n$. Let $E$ be a sufficiently small polydisk of $D$ with $s_0\in E$ and $h(E)\subset \Delta$. Then we can represent $\mathcal{M}\times D$ locally in the form of 
\begin{align*}
(\mathcal{M}_{\Delta}\times E,\Lambda_{\Delta}\oplus 0)=(\bigcup_{j=1}^l U_j\times \Delta\times E, \sum_{\alpha,\beta} g_{\alpha\beta}^j(z_j,t)\frac{\partial}{\partial z_j^{\alpha}}\wedge\frac{\partial}{\partial z_j^{\beta}})
\end{align*}
where $(z_j,t,s)\in U_j\times \Delta\times E$ and $(z_k,t,s)\in U_k\times \Delta\times E$ are the same point on $\mathcal{M}_{\Delta}\times E$ if $z_j=f_{jk}(z_k,t)$.
And we can represent $\mathcal{M}\times_B D$ locally in the form of
\begin{align*}
\bigcup_{j=1}^l U_j\times G_E \subset \mathcal{M}\times \Delta
\end{align*}
where $G_E=\{(h(s),s)|s\in E\}\subset \Delta\times E$ and $(z_j,h(s),s)\in U_j\times G_E$ and $(z_k,h(s),s)\in U_k\times G_E$ are the same points if $z_j=f_{jk}(z_k,h(s))$. We note that at $(p_0,s_0)\in \mathcal{M}\times_B D$, we have $(\Lambda\oplus 0)_{(p_0,s_0)}=\sum_{\alpha,\beta} g_{\alpha\beta}^j(p_0,h(s_0))\frac{\partial}{\partial z_j^{\alpha}}|_{p_0}\wedge\frac{\partial}{\partial z_j^{\beta}}|_{p_0}\in \wedge^2 T_{\mathcal{M}\times_B D}$. Hence $\mathcal{M}\times_B D$ is a holomorphic Poisson submanifold of $(\mathcal{M}\times D,\Lambda \oplus 0)$. Since $i:\mathcal{M}\times_B D\hookrightarrow \mathcal{M}\times D$ is a Poisson map and $\mathcal{M}\times D \to\mathcal{M}$ is a Poisson map, $p:\mathcal{M}\times_B D\to \mathcal{M}$ is a Poisson map.

Since $G_E$ is biholomorphic to $E$. The holomorphic Poisson manifold $\mathcal{M}\times_B D$ is represented locally by the form
\begin{align*}
(\bigcup_{j=1}^l U_j\times E, \sum_{\alpha,\beta} g_{\alpha\beta}^j(z_j,h(s))\frac{\partial}{\partial z_j^{\alpha}}\wedge\frac{\partial}{\partial z_j^{\beta}})
\end{align*}
where $(z_k,s)\in U_k\times E$ and $(z_j,s)\in U_j\times E$ are the same points if $z_j=f_{jk}(z_k,h(s))$.

\begin{definition}
The Poisson analytic family $(\mathcal{M}\times_B D,D, (\Lambda\oplus 0)|_{\mathcal{M}\times_B D},\pi)$ is called the Poisson analytic family induced from $(\mathcal{M},B,\Lambda,\omega)$ by the holomorphic map $h:D\to B$.
\end{definition}

Now we consider the change of variable formula in the infinitesimal deformations.

\begin{thm}
For any tangent vector $\frac{\partial}{\partial s}=c_1\frac{\partial}{\partial s_1}+\cdots +c_r\frac{\partial}{\partial s_r}\in T_s(D)$, the infinetesimal holomorphic poisson deformation of $(M_{h(s)},\Lambda_{h(s)})$ along $\frac{\partial}{\partial s}$ is given by
\begin{align*}
\frac{\partial(M_{h(s)},\Lambda_{h(s)})}{\partial{s}}=(\sum_{\lambda=1}^{m} \frac{\partial t_{\lambda}}{\partial s} \frac{\partial M_t}{\partial t_{\lambda}},\sum_{\lambda=1}^{m} \frac{\partial t_{\lambda}}{\partial s}\frac{\partial{\Lambda_t}}{\partial t_{\lambda}})
\end{align*}
\end{thm}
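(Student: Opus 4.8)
The plan is to reduce the statement to a termwise application of the chain rule, using the explicit local model of the induced family $\mathcal{M}\times_B D$ worked out just above. Recall that, after restricting to a sufficiently small polydisk, the induced Poisson analytic family is represented in the form $\bigl(\bigcup_{j=1}^l U_j\times E,\ \sum_{\alpha,\beta} g^j_{\alpha\beta}(z_j,h(s))\tfrac{\partial}{\partial z_j^\alpha}\wedge\tfrac{\partial}{\partial z_j^\beta}\bigr)$, where the $U_j$ are the same polydisks appearing in the local description of $(\mathcal{M},\Lambda)$ over $\Delta$, and $(z_k,s)\in U_k\times E$, $(z_j,s)\in U_j\times E$ are glued by $z_j=f_{jk}(z_k,h(s))$. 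Thus we have a genuine system of local coordinates for the induced family, with transition functions $\tilde f_{jk}(z_k,s):=f_{jk}(z_k,h(s))$ and Poisson coefficients $\tilde g^j_{\alpha\beta}(z_j,s):=g^j_{\alpha\beta}(z_j,h(s))$. In particular the two families literally share the polydisks $U_j$, which makes the canonical identification of $M_{h(s)}$ with the fiber $\pi^{-1}(s)$ of the induced family transparent.

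First I would apply Proposition \ref{gg} to the induced family with respect to this system of local coordinates. For $\frac{\partial}{\partial s}=\sum_{i=1}^r c_i\frac{\partial}{\partial s_i}$, the infinitesimal Poisson deformation $\frac{\partial(M_{h(s)},\Lambda_{h(s)})}{\partial s}\in HP^2(M_{h(s)},\Lambda_{h(s)})$ is the cohomology class of the $2$-cocycle
\[
\Bigl(\bigl\{\sum_\alpha \tfrac{\partial \tilde f^\alpha_{jk}}{\partial s}\tfrac{\partial}{\partial z_j^\alpha}\bigr\},\ \bigl\{\sum_{\alpha,\beta}\tfrac{\partial \tilde g^j_{\alpha\beta}}{\partial s}\tfrac{\partial}{\partial z_j^\alpha}\wedge\tfrac{\partial}{\partial z_j^\beta}\bigr\}\Bigr),
\]
and by Proposition \ref{gg} this class does not depend on the chosen local coordinates, so it indeed computes the Poisson Kodaira-Spencer image $\varphi_s(\tfrac{\partial}{\partial s})$.

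Next I would expand by the chain rule. Since $\tilde f^\alpha_{jk}(z_k,s)=f^\alpha_{jk}(z_k,h(s))$ and $t_\lambda=h_\lambda(s)$, one has $\frac{\partial \tilde f^\alpha_{jk}}{\partial s}=\sum_{\lambda=1}^m \left.\frac{\partial f^\alpha_{jk}}{\partial t_\lambda}\right|_{t=h(s)}\frac{\partial t_\lambda}{\partial s}$ and likewise $\frac{\partial \tilde g^j_{\alpha\beta}}{\partial s}=\sum_{\lambda=1}^m \left.\frac{\partial g^j_{\alpha\beta}}{\partial t_\lambda}\right|_{t=h(s)}\frac{\partial t_\lambda}{\partial s}$, where $\frac{\partial t_\lambda}{\partial s}=\sum_{i=1}^r c_i\frac{\partial h_\lambda}{\partial s_i}$ is a scalar (constant along the fiber $M_{h(s)}$). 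Substituting, and using linearity of the coboundary $\delta$ and of $[\Lambda_{h(s)},-]$ in these scalar factors, the cocycle above equals $\sum_{\lambda=1}^m \frac{\partial t_\lambda}{\partial s}$ times the cocycle
\[
\Bigl(\bigl\{\sum_\alpha \tfrac{\partial f^\alpha_{jk}(z_k,h(s))}{\partial t_\lambda}\tfrac{\partial}{\partial z_j^\alpha}\bigr\},\ \bigl\{\sum_{\alpha,\beta}\tfrac{\partial g^j_{\alpha\beta}(z_j,h(s))}{\partial t_\lambda}\tfrac{\partial}{\partial z_j^\alpha}\wedge\tfrac{\partial}{\partial z_j^\beta}\bigr\}\Bigr),
\]
which, again by Proposition \ref{gg}, represents $\frac{\partial(M_t,\Lambda_t)}{\partial t_\lambda}$ evaluated at $t=h(s)$, i.e. $\bigl(\frac{\partial M_t}{\partial t_\lambda},\frac{\partial\Lambda_t}{\partial t_\lambda}\bigr)$ transported to $M_{h(s)}$ via the shared polydisks. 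Passing to cohomology classes yields the asserted formula.

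The computation is entirely formal; for the first component it is word-for-word Kodaira's proof of the corresponding statement for the Kodaira-Spencer map (\cite{Kod05} p.\,205--207), and the second component is its verbatim analogue for bivector fields. The only point requiring care is notational rather than substantive: one must read the symbols $\frac{\partial M_t}{\partial t_\lambda}$ and $\frac{\partial\Lambda_t}{\partial t_\lambda}$ on the right-hand side as classes attached to $M_{h(s)}$, namely as the images of the (Poisson) Kodaira-Spencer classes of the original family over $t=h(s)$ under the canonical isomorphism $M_{h(s)}\xrightarrow{\ \sim\ }\pi^{-1}(s)$. The explicit local model above, in which both families are built from the same polydisks $U_j$, makes this identification manifest, so there is no real obstacle.
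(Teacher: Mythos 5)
Your argument is exactly the paper's own: represent $\frac{\partial(M_{h(s)},\Lambda_{h(s)})}{\partial s}$ by the cocycle $(\{\eta_{jk}(s)\},\{\Lambda_j(s)\})$ coming from the local model of the induced family, apply the chain rule to $f^{\alpha}_{jk}(z_k,h(s))$ and $g^j_{\alpha\beta}(z_j,h(s))$, and conclude by linearity that this cocycle is $\sum_\lambda \frac{\partial t_\lambda}{\partial s}$ times the cocycles representing $\bigl(\frac{\partial M_t}{\partial t_\lambda},\frac{\partial\Lambda_t}{\partial t_\lambda}\bigr)$. The proof is correct and matches the paper's in substance and detail.
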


\begin{proof}
We put
\begin{align*}
\theta_{\lambda jk}(t)&=\sum_{\alpha=1}^{n}\frac{\partial{f_{jk}^{\alpha}(z_k,t_1,...,t_m)}}{\partial{t_{\lambda}}}\frac{\partial}{\partial{z_j^{\alpha}}},\\
\eta_{jk}(s)&=\sum_{\alpha=1}^{n}\frac{\partial{f_{jk}^{\alpha}(z_k, h(s))}}{\partial{s}}\frac{\partial}{\partial{z_j^{\alpha}}},\\
\Lambda_{\lambda j}(t)&=\sum_{\alpha,\beta =1}^n\frac{\partial{g^{j}_{\alpha \beta}(z_j,t_1,...,t_m)}}{\partial{t_{\lambda}}}\frac{\partial}{\partial{z_j^{\alpha}}}\wedge \frac{\partial}{\partial{z_j^{\beta}}},\\
\Lambda_{j}(s)&=\sum_{\alpha, \beta=1}^n\frac{\partial{g^{j}_{\alpha \beta}(z_j,h(s))}}{\partial{s}}\frac{\partial}{\partial{z_j^{\alpha}}}\wedge \frac{\partial}{\partial{z_j^{\beta}}}
\end{align*}
$\frac{\partial{(M_t,\Lambda_t)}}{\partial{t_{\lambda}}}$ is the cohomology class of the 1-cocycle $(\{\theta_{\lambda jk}(t)\},\{\Lambda_{j}(s)\})$, and $\frac{\partial{(M_{h(s)}, \Lambda_{h(s)})}}{\partial{s}}$ is that of $(\{\eta_{jk}(s)\},\{\Lambda_{j}(s)\})$. Since $h(s)=(t_1,...,t_m)$, we have
\begin{align*}
\frac{\partial{f_{jk}^{\alpha}(z_k, h(s))}}{\partial{s}}&=\sum_{\lambda=1}^{m}\frac{\partial{t_{\lambda}}}{\partial{s}}\frac{\partial{f_{jk}^{\alpha}(z_k,t_1,...,t_m)}}{\partial{t_{\lambda}}},\\
\frac{\partial{g^{j}_{\alpha \beta}(z_j,h(s))}}{\partial{s}}&=\sum_{l=1}^{r}c_l\frac{\partial{{g^{j}_{\alpha \beta}(z_j,h(s))}}}{\partial{s_l}}=\sum_{l=1}^r \sum_{\lambda=1}^m c_l\frac{\partial{t_{\lambda}}}{\partial{s_l}}\frac{\partial{{g^{j}_{\alpha \beta}(z_j,t_1,...,t_m)}}}{\partial{t_\lambda}}=\sum_{\lambda=1}^{m}\frac{\partial{t_{\lambda}}}{\partial{s}}\frac{\partial{g^j_{\alpha \beta}(z_j,t_1,...,t_m)}}{\partial{t_{\lambda}}}
\end{align*}
Hence we get the theorem.
\end{proof}

At this point, we discuss a concept of completeness in deformations of holomorphic Poisson manifolds. We define a complete family.

\begin{definition}
Let $(\mathcal{M},\Lambda,B, \omega)$ be a Poisson analytic family of compact holomorphic Poisson manifolds, and $t^0\in B$. Then $(\mathcal{M},\Lambda,B,\omega)$ is called complete at $t^0\in B$ if for any Poisson analytic family $(\mathcal{N},\Lambda',D,\pi)$ such that $D$ is a domain of $\mathbb{C}^l$ containing $0$ and that $\pi^{-1}(0)=\omega^{-1}(t^0)$, there are a sufficiently small domain $\Delta$ with $0\in \Delta\subset D$, and a holomorphic map $h:s\to t=h(s)$ with $h(0)=t^0$ such that $(\mathcal{N}_{\Delta},{\Lambda'}_{\Delta},\Delta,\pi)$ is the Poisson analytic family induced from $(\mathcal{M},\Lambda,B,\omega)$ by $h$ where $(\mathcal{N}_{\Delta},{\Lambda'}_{\Delta})=\pi^{-1}(\Delta)$.
\end{definition}

The following problem is an analogue of theorem of completeness from deformations of complex structures.

\begin{problem}[Theorem of Completeness for deformations of holomorphic Poisson manifolds]
If $\varphi_0:T_0 B\to HP^2(M,\Lambda_0)$ is surjective, is the Poisson analytic family $(\mathcal{M},\Lambda, B,\omega)$ complete at $0\in B?$\footnote{I verified that for this problem, we can use Kodaira's methods presented in $\cite{Kod05}$. But I could not prove the inductive step in the Poisson direction.}
\end{problem}

\chapter{Integrability condition}\label{chapter2}


In a family $(\mathcal{M},B,\Lambda)$ of deformations of  a complex manifold $M$, the deformations near $M$ are represented by a $C^{\infty}$ vector $(1,0)$-form $\varphi(t) \in A^{0,1}(M,T_M)$ on $M$ with $\varphi(0)=0$, $\bar{\partial} \varphi(t)-\frac{1}{2}[\varphi(t),\varphi(t)]=0$ where $t \in \Delta$ a sufficiently small polydisk in $B$. In this chapter, we show that in a family $(\mathcal{M},B,\Lambda,\pi)$ of deformations of a holomorphic Poisson manifold $(M,\Lambda_0)$, the deformations near $(M,\Lambda_0)$ are represented by $C^{\infty}$ vector $(1,0)$-form $\varphi(t)\in A^{0,1}(M,T_M)$ and $C^{\infty}$ bivector $\Lambda(t)\in A^{0,0}(M,\wedge^2 T_M)$ with $\varphi(0)=0$, $\Lambda(0)=\Lambda_0$ and $\bar{\partial}(\varphi(t)+\Lambda(t))+\frac{1}{2}[\varphi(t)+\Lambda(t),\varphi(t)+\Lambda(t)]=0$. To deduce the integrability condition, we follow Kodaira's approach ($\cite{Kod05}$ section \S 5.3 (b) page 259) in the context of holomorphic Poisson deformations.

\section{Preliminaries}

Let $(\mathcal{M}, \Lambda, B,\omega)$ be a Poisson analytic family of compact Poisson holomorphic manifolds, and put $(M_t,\Lambda_t)=\omega^{-1}(t)$ where $B$ is a domain of $\mathbb{C}^m$ containing the origin $0$. Define $|t|=max_{\lambda}|t_{\lambda}|$ for $t=(t_1,...,t_m)\in \mathbb{C}^m$, and let $\Delta=\Delta_r =\{t\in \mathbb{C}^m||t|<r\}$ the polydisk of radius $r>0$. If we take a sufficiently small $\Delta \subset B,\mathcal{M}_{\Delta}=\omega^{-1}(\Delta)$ is represented in the form
\begin{align*}
\mathcal{M}_{\Delta}=\bigcup_j U_j\times \Delta
\end{align*}
We denote a point of $U_j$ by $\xi_j=(\xi_j^1,...,\xi_j^n)$ and its holomorphic Poisson structure $\Lambda_j=g_{\alpha \beta}^j(\xi_j,t) \frac{\partial}{\partial \xi_j^{\alpha}}\wedge \frac{\partial}{\partial \xi_j^{\beta}}$ on $U_j\times \Delta$. For simplicity we assume that $U_j=\{\xi_j\in \mathbb{C}^m||\xi_j|<1\}$ where $|\xi|=mat_a|\xi_j^a|$. $(\xi_j,t)\in U_j\times \Delta$ and $(\xi_k,t)\in U_k\times \Delta$ are the same point on $\mathcal{M}_{\Delta}$ if $\xi_j^{\alpha}=f_{jk}^{\alpha}(\xi_k,t)$, $\alpha=1,...,n$ where $f_{jk}^{\alpha}(\xi_k,t)$ is a poisson holomorphic map of $\xi_{k}^1,...,\xi_k^n,t_1,...,t_m$, defined on $U_k\times \Delta \cap U_j\times \Delta$ and we have $g_{\alpha \beta}^j(f_{jk}^1(\xi_k,t),...,f_{jk}^n(\xi_k,t))=\sum_{r,s} g_{rs}^k(\xi_k,t)\frac{\partial f_{jk}^{\alpha}}{\partial \xi_k^r}\frac{\partial f_{jk}^{\beta}}{\partial \xi_k^s}$. So $(M_t,\Lambda_t)=\cup_j (U_j,\Lambda_j(t))$ is a compact holomorphic Poisson manifold obtained by glueing a finite number of Poisson polydiscks $(U_1,\Lambda_1(t)),...,(U_j,\Lambda_j(t)),...$ by identifying $\xi_j\in U_j$ and $\xi_k\in U_k$ if $\xi_j=f_{jk}(\xi_k,t)$, and that holomorphic Poisson structure of $(M_t,\Lambda_t)$ varies since the manner of glueing and holomorphic Poisson structure vary with $t$. We note that by $\cite{Kod05}$ Theorem 2.3, when we ignore complex structures and Poisson structures $M_t$ for any $t\in \Delta$ is a diffeomorphic to $M_0$ as differentiable manifolds.

By $\cite{Kod05}$ Theorem 2.5, if we take a sufficiently small $\Delta$, there is a diffeomorphism $\Psi$ of $M\times \Delta$ onto $\mathcal{M}_{\Delta}$ as differentiable manifolds such that $\omega\circ \Psi$ is the projection $M\times \Delta \to \Delta$, where we put $M=M_0$. If we denote a point of $M$ by $z$, we have
\begin{align*}
\omega\circ \Psi(z,t)=t,\,\,\,\,\, t\in \Delta.
\end{align*}
 
$\Psi$ is the identity of $M=M\times 0$ onto $M=M_0\subset \mathcal{M}_{\triangle}$, namely $\Psi(z,0)=z$. Put $\Psi(z,t)=(\xi,t)=(\xi_j,t)$ for $\Psi(z,t)\in U_j\times \triangle$. Then each component $\xi_j^{\alpha}=\xi_j^{\alpha}(z,t)$, $\alpha=1,...,n$, of $\xi_j=(\xi_j^1,...,\xi_j^n)$ is a $C^{\infty}$ function: 
\begin{align*}
\Psi(z,t)=(\xi_j^1(z,t),...,\xi_j^n(z,t),t_1,...,t_m).
\end{align*}
If we identify $\mathcal{M}_{\Delta}=\Psi(M\times \Delta)$ with $M_0\times \Delta$ via $\Psi$, $(\mathcal{M}_{\Delta},\Lambda)$ is considered as a holomorphic Poisson structure defined on the $C^{\infty}$ manifold $M\times \Delta$ by the system of local coordinates
\begin{align*}
\{(\xi_j,t)|j=1,2,3,...\},\,\,\,\,\, (\xi_j,t)=(\xi_j^1(z,t),...,\xi_j^n(z,t),t_1,...,t_m).
\end{align*}
and local holomorphic Poisson structures on $U_j\times \Delta$
\begin{align*}
\{\sum_{\alpha,\beta} g_{\alpha \beta}^j(\xi_j(z,t),t)\frac{\partial}{\partial \xi_j^{\alpha}}\wedge \frac{\partial}{\partial \xi_j^{\beta}}|j=1,2,3,...\}
\end{align*}


Let $(z^1,...,z^n)$ be arbitrary local complex coordinates of a point $z$ of $M_0$.
\begin{align*}
\xi_j^{\alpha}(z,t)=\xi_j^{\alpha}(z_1,...,z_n,t_1,...,t_m),\,\,\,\,\, \alpha=1,...,n,
\end{align*}
are $C^{\infty}$ functions of the complex variables $z^1,...,z^n,t_1,...,t_m$. Since for $t=0$, both $(\xi_j^1(z,0),...,\xi_j^n(z,0))$ and $(z_1,...,z_n)$ are local complex coordinates on the complex holomorphic manifold $M_0=M$, $\xi_j^{\alpha}(z,0)$ are holomorphic functions of $z_1,...,z_n$, and
\begin{align*}
det\left(\frac{\partial \xi_j^{\alpha}(z,0)}{\partial z_{\lambda}}\right)_{\alpha,\lambda=1,...,n}\ne 0
\end{align*}
Hence, if we take $\Delta$ sufficiently small, it follows that 
\begin{align*}
det\left(\frac{\partial \xi_j^{\alpha}(z,t)}{\partial z_{\lambda}}\right)_{\alpha,\lambda=1,...,n}\ne 0
\end{align*}
for any $t\in \Delta$.

With this preparation, we identify the holomorphic Poisson deformations near $(M,\Lambda_0)$, where $M=M_0$ in the analytic family $(\mathcal{M},\Lambda,B,\Delta)$ with $\varphi(t)+\Lambda(t)$ where $\varphi(t)$ is a $C^{\infty}$ vector $(0,1)$ form and $\Lambda(t)$ is a $C^{\infty}$ bivector on $M$.

\subsection{Identification of the deformations of complex structures with $\varphi(t)$}\

We consider $\bar{\partial} \xi_j^{\alpha}(z,t)=\sum \bar{\partial}\xi_j^{\alpha}(z,t)d\bar{z}^v$. The domain $\mathcal{U}_j=\Psi^{-1}(U_j\times \triangle)$ of $\xi_j^{\alpha}(z,t)$ is a domain of $M\times \triangle$.
 
 Since $det\left(\frac{\partial \xi_j^{\alpha}(z,t)}{\partial z_{\lambda}}\right)_{\alpha,\lambda=1,...,n}\ne 0$, we define a $(0,1)$-form $\varphi^{\lambda}_j(z,t)=\sum_{v=1}^n \varphi^{\lambda}_{jv}(z,t)d\bar{z_v}$ in the following way:
 
\begin{equation*}
\left(
\begin{matrix}
\varphi_j^1(z,t)\\
\vdots \\
\varphi_j^n(z,t)
\end{matrix}
\right)
:=
\left(
\begin{matrix}
\frac{\partial \xi_j^1}{\partial z_1} & \dots & \frac{\partial \xi_j^1}{\partial z_n}\\
\vdots & \vdots\\
\frac{\partial \xi_j^n}{\partial z_1} & \dots & \frac{\partial \xi_j^n}{\partial z_n}
\end{matrix}
\right)^{-1}
\left(
\begin{matrix}
\bar{\partial} \xi_j^1\\
\vdots \\
\bar{\partial} \xi_j^n
\end{matrix}
\right)
\end{equation*}

Then we have

\begin{equation*}
\left(
\begin{matrix}
\frac{\partial \xi_j^1}{\partial z_1} & \dots & \frac{\partial \xi_j^1}{\partial z_n}\\
\vdots & \vdots\\
\frac{\partial \xi_j^n}{\partial z_1} & \dots & \frac{\partial \xi_j^n}{\partial z_n}
\end{matrix}
\right)
\left(
\begin{matrix}
\varphi_j^1(z,t)\\
\vdots \\
\varphi_j^n(z,t)
\end{matrix}
\right)
=
\left(
\begin{matrix}
\bar{\partial} \xi_j^1\\
\vdots \\
\bar{\partial} \xi_j^n
\end{matrix}
\right)
\end{equation*}
which is equivalent to
\begin{equation*}
\left(
\begin{matrix}
\frac{\partial \xi_j^1}{\partial z_1} & \dots & \frac{\partial \xi_j^1}{\partial z_n}\\
\vdots & \vdots\\
\frac{\partial \xi_j^n}{\partial z_1} & \dots & \frac{\partial \xi_j^n}{\partial z_n}
\end{matrix}
\right)
\left(
\begin{matrix}
\varphi_{j1}^1 & \dots & \varphi_{jn}^1\\
\vdots & \vdots\\
\varphi_{j1}^n & \dots & \varphi_{jn}^n
\end{matrix}
\right)
\left(
\begin{matrix}
d\bar{z_1}\\
\vdots \\
d\bar{z_n}
\end{matrix}
\right)
=
\left(
\begin{matrix}
\frac{\partial \xi_j^1}{\partial \bar{z_1}} & \dots & \frac{\partial \xi_j^1}{\partial \bar{z_n}}\\
\vdots & \vdots\\
\frac{\partial \xi_j^n}{\partial \bar{z_1}} & \dots & \frac{\partial \xi_j^n}{\partial \bar{z_n}}
\end{matrix}
\right)
\left(
\begin{matrix}
d\bar{z_1}\\
\vdots \\
d\bar{z_n}
\end{matrix}
\right)
\end{equation*}
In other words, we have $(0,1)$-forms
\begin{align*}
\varphi^{\lambda}_j(z,t)=\sum_{v=1}^n \varphi^{\lambda}_{jv}(z,t)d\bar{z_v}
\end{align*}
for each $\lambda=1,...,n$, such that
\begin{align*}
\bar{\partial}\xi_j^{\alpha}(z,t)=\sum_{\lambda=1}^{n} \varphi_j^{\lambda}(z,t)\frac{\partial \xi_j^{\alpha}(z,t)}{\partial z_{\lambda}},\,\,\,\,\, \alpha=1,...,n
\end{align*}
The coefficients $\varphi_{jv}^{\alpha}(z,t)$ are $C^{\infty}$ functions on $\mathcal{U}_j$.
\begin{lemma}\label{c}
On $\mathcal{U}_j \cap \mathcal{U}_k$, we have
\begin{align*}
\sum_{\lambda=1}^n \varphi_j^{\lambda}(z,t)\frac{\partial}{\partial z_{\lambda}}=\sum_{\lambda=1}^n \varphi_k^{\lambda}(z,t)\frac{\partial}{\partial z_{\lambda}}
\end{align*}
\end{lemma}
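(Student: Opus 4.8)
The plan is to run the classical Kodaira argument that identifies the Kodaira--Spencer form $\varphi(t)$ as a global object on $M$; the Poisson structure $\Lambda$ plays no role in this particular lemma, since it only concerns the deformation of the complex structure. Concretely, I would use that on the overlap $\mathcal{U}_j\cap\mathcal{U}_k$ the fibre coordinates are related by the holomorphic transition functions $\xi_j^{\alpha}=f_{jk}^{\alpha}(\xi_k,t)$, apply $\bar\partial$ in the $z$-variables with $t$ held fixed, and exploit holomorphicity of $f_{jk}^{\alpha}$ in $\xi_k$ together with the defining relations of $\varphi_j$ and $\varphi_k$.

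First I would write $\xi_j^{\alpha}(z,t)=f_{jk}^{\alpha}(\xi_k^1(z,t),\dots,\xi_k^n(z,t),t)$ and differentiate. Because $f_{jk}^{\alpha}$ is holomorphic in the variables $\xi_k^1,\dots,\xi_k^n$, the $\partial/\partial\bar\xi_k^r$ terms in the chain rule vanish, so simultaneously $\bar\partial\xi_j^{\alpha}=\sum_r \tfrac{\partial f_{jk}^{\alpha}}{\partial \xi_k^r}\,\bar\partial\xi_k^r$ and $\tfrac{\partial \xi_j^{\alpha}}{\partial z_{\mu}}=\sum_r \tfrac{\partial f_{jk}^{\alpha}}{\partial \xi_k^r}\,\tfrac{\partial \xi_k^r}{\partial z_{\mu}}$. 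Substituting the defining relation $\bar\partial\xi_k^r=\sum_{\mu}\varphi_k^{\mu}(z,t)\tfrac{\partial \xi_k^r}{\partial z_{\mu}}$ into the first identity and reassembling with the second gives $\bar\partial\xi_j^{\alpha}=\sum_{\mu}\varphi_k^{\mu}(z,t)\tfrac{\partial \xi_j^{\alpha}}{\partial z_{\mu}}$ for all $\alpha=1,\dots,n$.

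On the other hand, by the very construction of $\varphi_j$ one has $\bar\partial\xi_j^{\alpha}=\sum_{\lambda}\varphi_j^{\lambda}(z,t)\tfrac{\partial \xi_j^{\alpha}}{\partial z_{\lambda}}$. Subtracting the two expressions yields $\sum_{\lambda}\bigl(\varphi_j^{\lambda}(z,t)-\varphi_k^{\lambda}(z,t)\bigr)\tfrac{\partial \xi_j^{\alpha}}{\partial z_{\lambda}}=0$ for every $\alpha$, and since $\det\bigl(\partial \xi_j^{\alpha}(z,t)/\partial z_{\lambda}\bigr)\ne 0$ on $\mathcal{U}_j$ for $\Delta$ sufficiently small (as established just above), the coefficients coincide, $\varphi_j^{\lambda}(z,t)=\varphi_k^{\lambda}(z,t)$ for each $\lambda$; a fortiori $\sum_{\lambda}\varphi_j^{\lambda}\,\partial/\partial z_{\lambda}=\sum_{\lambda}\varphi_k^{\lambda}\,\partial/\partial z_{\lambda}$ on $\mathcal{U}_j\cap\mathcal{U}_k$.

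I do not expect a genuine obstacle here: the only point needing care is the bookkeeping in the chain rule — keeping $\bar\partial$ restricted to the $d\bar z^v$ directions with $t$ treated as a constant, so that holomorphicity of $f_{jk}$ in $\xi_k$ (holomorphicity in $t$ is not even used) legitimately discards the anti-holomorphic $\xi_k$ contributions. This is exactly the computation carried out in \cite{Kod05}, transcribed verbatim.
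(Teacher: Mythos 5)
Your argument is correct and is precisely the computation the paper relies on: the paper's proof of Lemma \ref{c} is just the citation to \cite{Kod05} p.262, and Kodaira's proof there is exactly your chain-rule argument using holomorphicity of $f_{jk}^{\alpha}$ in $\xi_k$ (with $t$ fixed) together with the invertibility of $\bigl(\partial\xi_j^{\alpha}/\partial z_{\lambda}\bigr)$ to conclude $\varphi_j^{\lambda}=\varphi_k^{\lambda}$ componentwise in the fixed coordinates $z$. No gaps; your remark that holomorphicity in $t$ is not needed here is also accurate.
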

\begin{proof}
See $\cite{Kod05}$ p.262.
\end{proof}

If for $(z,t)\in \mathcal{U}_j$, we define
\begin{align}\label{b}
\varphi(z,t):=\sum_{\lambda=1}^n \varphi_j^{\lambda}(z,t) \frac{\partial}{\partial z_{\lambda}}=\sum_{v,\lambda}  \varphi_v^{\lambda}(z,t) d\bar{z_v} \frac{\partial}{\partial z_{\lambda}}
\end{align}
 By Lemma \ref{c}, $\varphi(t)=\varphi(z,t)$ is a $C^{\infty}$ vector $(0,1)$-form on $M$ for every $t\in\triangle$.

Then since $\bar{\partial} \xi_j^{\alpha}(z,0)=0$, and $det\left(\frac{\partial \xi_j^{\alpha}(z,t)}{\partial z_{\lambda}}\right)_{\alpha,\lambda=1,...,n}\ne 0$, we have $\varphi(0)=0$. $\varphi(t)$ satisfies $\bar{\partial}\varphi(t)-\frac{1}{2}[\varphi(t),\varphi(t)]=0$\footnote{For the proof, see $\cite{Kod05}$ p.263,p.265} and we have the following theorem.
 
\begin{thm}\label{text}
If we take a sufficiently small polydisk $\Delta$, then for $t\in \Delta$, al local $C^{\infty}$ function $f$ on $M$ is holomorphic  with respect to the complex structure $M_t$ if and only if $f$ satisfies the equation
\begin{align*}
(\bar{\partial}-\varphi(t))f=0
\end{align*}
\end{thm}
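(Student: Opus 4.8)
The plan is to describe the complex structure $M_t$ by its antiholomorphic tangent bundle $\bar{T}_{M_t}\subset T_{\mathbb{C}}M$ and to use the fact that a local $C^{\infty}$ function is holomorphic for $M_t$ precisely when it is annihilated by $\bar{T}_{M_t}$. The first step is therefore to exhibit a local frame of $\bar{T}_{M_t}$ in terms of $\varphi(t)$. Working on a chart $\mathcal{U}_j$ with coordinates $z=(z_1,\dots,z_n)$, I would compare coefficients of $d\bar{z}_v$ in the relation $\bar{\partial}\xi_j^{\alpha}(z,t)=\sum_{\lambda}\varphi_j^{\lambda}(z,t)\frac{\partial\xi_j^{\alpha}(z,t)}{\partial z_{\lambda}}$ established above: this yields $\frac{\partial\xi_j^{\alpha}}{\partial\bar{z}_v}=\sum_{\lambda}\varphi_{jv}^{\lambda}\frac{\partial\xi_j^{\alpha}}{\partial z_{\lambda}}$, so each vector field $\bar{D}_v:=\frac{\partial}{\partial\bar{z}_v}-\sum_{\lambda=1}^{n}\varphi_{jv}^{\lambda}(z,t)\frac{\partial}{\partial z_{\lambda}}$ satisfies $d\xi_j^{\alpha}(\bar{D}_v)=0$ for every $\alpha$. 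Since the $(1,0)$-forms $d\xi_j^1,\dots,d\xi_j^n$ of $M_t$ span the annihilator of $\bar{T}_{M_t}$, and since $\bar{D}_1,\dots,\bar{D}_n$ are $\mathbb{C}$-linearly independent (each contains $\frac{\partial}{\partial\bar{z}_v}$ with unit coefficient) while $\bar{T}_{M_t}$ has rank $n$, the $\bar{D}_v$ form a local frame of $\bar{T}_{M_t}$. Here the hypothesis that $\Delta$ be sufficiently small is used only to keep $\det\left(\frac{\partial\xi_j^{\alpha}}{\partial z_{\lambda}}\right)\neq 0$ for $t\in\Delta$, which has already been arranged in the preliminaries.

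Next I would invoke the standard fact that a local $C^{\infty}$ function $f$ is holomorphic with respect to $M_t$ if and only if it is locally a holomorphic function of the coordinates $\xi_j^1,\dots,\xi_j^n$ of $M_t$, equivalently if and only if $\bar{D}_v f=0$ for $v=1,\dots,n$. Writing $\bar{D}_v f=\frac{\partial f}{\partial\bar{z}_v}-\sum_{\lambda}\varphi_{jv}^{\lambda}(z,t)\frac{\partial f}{\partial z_{\lambda}}$, one sees that the system $\{\bar{D}_v f=0\}_v$ is precisely the vanishing of all $d\bar{z}_v$-components of the $(0,1)$-form $\bar{\partial}f-\varphi(t)f$, where by \eqref{b} $\varphi(t)f=\sum_{v,\lambda}\varphi_v^{\lambda}(z,t)\frac{\partial f}{\partial z_{\lambda}}\,d\bar{z}_v$. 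Hence $f$ is holomorphic for $M_t$ if and only if $(\bar{\partial}-\varphi(t))f=0$, which is the assertion. I would close by noting that Lemma \ref{c} makes $\varphi(t)$, and therefore the equation $(\bar{\partial}-\varphi(t))f=0$, independent of the chart chosen, so that the statement is coordinate-free; and that for $t=0$ the relation $\varphi(0)=0$ reduces it to the tautology that holomorphicity with respect to $M=M_0$ means $\bar{\partial}f=0$.

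I do not expect a genuine obstacle here: once the preliminaries above are in place, the proof is a direct coefficient comparison. The points that require the most care are (i) keeping the two complex structures notationally separated --- the fixed one, with coordinates $z$, versus $M_t$, with coordinates $\xi_j(z,t)$ --- throughout the computation, and (ii) observing that the statement is local, so it suffices to argue on a single chart $\mathcal{U}_j$ and then appeal to Lemma \ref{c} for global coherence.
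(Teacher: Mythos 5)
Your argument is correct. Note that the paper itself gives no proof of this theorem: it simply cites Kodaira (\cite{Kod05}, Theorem 5.3, p.~263), so the relevant comparison is with Kodaira's argument. Kodaira proceeds in the "dual" direction: he regards $f$ as a function of the $M_t$-coordinates $(\xi_j,\bar{\xi}_j)$, expands $(\bar{\partial}-\varphi(t))f$ by the chain rule as a combination $\sum_\beta \frac{\partial f}{\partial \bar{\xi}_j^{\beta}}\bigl(\bar{\partial}\bar{\xi}_j^{\beta}-\sum_\lambda \varphi^{\lambda}\frac{\partial \bar{\xi}_j^{\beta}}{\partial z_{\lambda}}\bigr)$, and then uses the nondegeneracy of the resulting coefficient matrix (for $t$ small, where $\det(\delta_v^{\lambda}-\sum_\mu\varphi_v^{\mu}\overline{\varphi_{\mu}^{\lambda}})\neq 0$) to conclude that the vanishing of $(\bar{\partial}-\varphi(t))f$ is equivalent to $\partial f/\partial\bar{\xi}_j^{\beta}=0$ for all $\beta$. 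You instead exhibit the vector fields $\bar{D}_v=\frac{\partial}{\partial\bar{z}_v}-\sum_\lambda\varphi_{jv}^{\lambda}\frac{\partial}{\partial z_{\lambda}}$ as a frame of $\bar{T}_{M_t}$ by the annihilator-plus-dimension-count argument, and read off the equivalence directly as the vanishing of the $d\bar{z}_v$-components of $(\bar{\partial}-\varphi(t))f$. The two routes carry the same content, but yours is slightly cleaner within the family setting: because the $\xi_j^{\alpha}$ are honest holomorphic coordinates on the fibre $M_t$, you only need $\det\bigl(\frac{\partial\xi_j^{\alpha}}{\partial z_{\lambda}}\bigr)\neq 0$ (to define $\varphi(t)$) and you never have to invoke the auxiliary determinant condition that Kodaira's coefficient-matrix computation passes through; that condition becomes essential only in the converse (Newlander--Nirenberg) direction, where $\varphi$ is given abstractly rather than arising from a family. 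Your closing remarks on chart-independence via Lemma \ref{c} and on the case $t=0$ are also correct.
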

\begin{proof}
See $\cite{Kod05}$ Theorem 5.3 p.263.
\end{proof}

\subsection{Identification of the deformations of Poisson structures with $\Lambda(t)$}\

For, on each $U_j\times \Delta$, the holomorphic Poisson structure $\sum_{\alpha,\beta}g_{\alpha,\beta}^j(\xi_j,t) \frac{\partial}{\partial \xi_j^{\beta}}\wedge \frac{\partial}{\partial \xi_j^{\beta}}$, there exists the unique bivector field  $\Lambda'=\sum_{r,s} f_{rs}^j(z,t)\frac{\partial}{\partial z_r}\wedge \frac{\partial}{\partial z_s}$ on  $\mathcal{U}_j=\Psi^{-1}(U_j\times \Delta)$ such that $\sum_{r,s} f_{rs}^j(z,t)\frac{\partial \xi_j^{\alpha}}{\partial z_r}\frac{\partial \xi_j^{\beta}}{\partial z_s}=g_{\alpha\beta}^j(\xi_j(z,t),t)$.
Indeed, since $det\left(\frac{\partial \xi_j^{\alpha}(z,t)}{\partial z_{\lambda}}\right)_{\alpha,\lambda=1,...,n}\ne 0$,
we set
\begin{equation*}
\left(
\begin{matrix}
f_{11}^j(z,t)& \dots & f_{1n}^j(z,t)\\
\vdots & \vdots &\vdots\\
f_{n1}^j(z,t)& \dots & f_{nn}^j(z,t)
\end{matrix}
\right)
:=
\left(
\begin{matrix}
\frac{\partial \xi_j^1}{\partial z_1} & \dots & \frac{\partial \xi_j^1}{\partial z_n}\\
\vdots & \vdots &\vdots\\
\frac{\partial \xi_j^n}{\partial z_1} & \dots & \frac{\partial \xi_j^n}{\partial z_n}
\end{matrix}
\right)^{-1}
\left(
\begin{matrix}
g_{11}^j(\xi_j(z,t))& \dots & g_{1n}^j(\xi_j(z,t))\\
\vdots & \vdots &\vdots\\
g_{n1}^j(\xi_j(z,t)) & \dots & g_{nn}^j(\xi_j(z,t))
\end{matrix}
\right)
\left(
\begin{matrix}
\frac{\partial \xi_j^1}{\partial z_1} & \dots & \frac{\partial \xi_j^n}{\partial z_1}\\
\vdots & \vdots &\vdots\\
\frac{\partial \xi_j^1}{\partial z_n} & \dots & \frac{\partial \xi_j^n}{\partial z_n}
\end{matrix}
\right)^{-1}
\end{equation*}
Then we have the unique $C^{\infty}$ bivector field $\Lambda_j':=\sum_{r,s} f_{rs}^j(z,t)\frac{\partial}{\partial z_r}\wedge \frac{\partial}{\partial z_s}$ on $\mathcal{U}_j$

\begin{lemma}\label{e}
On $\mathcal{U}_j\cap \mathcal{U}_k$, we have $f_{rs}^j(z,t)=f_{rs}^k(z,t)$. 
 
\end{lemma}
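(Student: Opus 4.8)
The plan is to show that on the overlap $\mathcal{U}_j\cap\mathcal{U}_k$ the bivector fields $\Lambda_j'$ and $\Lambda_k'$ are merely two expressions, in the same coordinates $(z,t)$, of one and the same object, hence coincide. Concretely I would verify that $(f_{rs}^k)$ solves the very system that characterizes $(f_{rs}^j)$, and then invoke uniqueness coming from the non-vanishing of $\det\big(\partial\xi_j^\alpha(z,t)/\partial z_\lambda\big)$. The argument is the Poisson analogue of Lemma~\ref{c}.

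First I would record the chain rule. On $\mathcal{U}_j\cap\mathcal{U}_k$ we have $\xi_j^\alpha(z,t)=f_{jk}^\alpha(\xi_k(z,t),t)$ with $f_{jk}^\alpha$ holomorphic in $\xi_k^1,\dots,\xi_k^n$, while $t_1,\dots,t_m$ are independent of $z$. Applying $\partial/\partial z_r$ and using $\partial f_{jk}^\alpha/\partial\bar\xi_k^p=0$ gives
\begin{equation*}
\frac{\partial \xi_j^\alpha}{\partial z_r}=\sum_{p=1}^n\frac{\partial f_{jk}^\alpha}{\partial \xi_k^p}\,\frac{\partial \xi_k^p}{\partial z_r},\qquad \alpha,r=1,\dots,n,
\end{equation*}
where $\partial f_{jk}^\alpha/\partial\xi_k^p$ is evaluated at $\xi_k=\xi_k(z,t)$. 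Substituting this into $\sum_{r,s}f_{rs}^k(z,t)\tfrac{\partial\xi_j^\alpha}{\partial z_r}\tfrac{\partial\xi_j^\beta}{\partial z_s}$, then using the defining relation of $f^k$, namely $\sum_{r,s}f_{rs}^k(z,t)\tfrac{\partial\xi_k^p}{\partial z_r}\tfrac{\partial\xi_k^q}{\partial z_s}=g_{pq}^k(\xi_k(z,t),t)$, and finally the Poisson-compatibility of the transition map, $g_{\alpha\beta}^j(f_{jk}(\xi_k,t),t)=\sum_{p,q}g_{pq}^k(\xi_k,t)\tfrac{\partial f_{jk}^\alpha}{\partial\xi_k^p}\tfrac{\partial f_{jk}^\beta}{\partial\xi_k^q}$, I obtain
\begin{equation*}
\sum_{r,s}f_{rs}^k(z,t)\frac{\partial\xi_j^\alpha}{\partial z_r}\frac{\partial\xi_j^\beta}{\partial z_s}
=\sum_{p,q}\frac{\partial f_{jk}^\alpha}{\partial\xi_k^p}\frac{\partial f_{jk}^\beta}{\partial\xi_k^q}\,g_{pq}^k(\xi_k(z,t),t)
=g_{\alpha\beta}^j\big(\xi_j(z,t),t\big).
\end{equation*}

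Thus $(f_{rs}^k(z,t))$ satisfies exactly the equation $\sum_{r,s}f_{rs}\,a_{\alpha r}\,a_{\beta s}=g_{\alpha\beta}^j(\xi_j(z,t),t)$ with $a_{\alpha r}:=\partial\xi_j^\alpha(z,t)/\partial z_r$, which is the equation defining $(f_{rs}^j(z,t))$. Since the matrix $(a_{\alpha r})$ is invertible for $t\in\Delta$, this system is of the form $A\,F\,A^{\mathsf T}=G$ with $A$ invertible, whose unique solution is $F=A^{-1}G\,(A^{-1})^{\mathsf T}$ (the explicit formula already written down for $f^j$); hence $f_{rs}^j(z,t)=f_{rs}^k(z,t)$ on $\mathcal{U}_j\cap\mathcal{U}_k$. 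The one point that genuinely uses the hypotheses, rather than being pure bookkeeping, is the chain-rule identity above: it relies on the transition maps $f_{jk}$ being \emph{holomorphic} (Poisson) maps, so that no $\bar\partial\xi_k$ terms intrude — exactly as in the proof of Lemma~\ref{c}. Granting that, the remaining steps are just the chain rule together with the Poisson condition on the $f_{jk}$.
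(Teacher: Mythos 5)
Your proof is correct and follows essentially the same route as the paper's: both use the chain rule $\partial\xi_j^\alpha/\partial z_r=\sum_p(\partial\xi_j^\alpha/\partial\xi_k^p)(\partial\xi_k^p/\partial z_r)$ (valid because the transition maps are holomorphic), the Poisson compatibility relation between $g^j$ and $g^k$, and the invertibility of $\left(\partial\xi_j^\alpha/\partial z_\lambda\right)$ to conclude $f_{rs}^j=f_{rs}^k$; the paper merely phrases the same computation in matrix form.
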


\begin{proof}
We first note the following identities.
\begin{equation*}
\left(
\begin{matrix}
\frac{\partial \xi_j^1}{\partial z_1} & \dots & \frac{\partial \xi_j^1}{\partial z_n}\\
\vdots & \vdots\\
\frac{\partial \xi_j^n}{\partial z_1} & \dots & \frac{\partial \xi_j^n}{\partial z_n}
\end{matrix}
\right)
\left(
\begin{matrix}
f_{11}^j(z,t)& \dots & f_{1n}^j(z,t)\\
\vdots & \vdots & \vdots\\
f_{n1}^j(z,t) & \dots & f_{nn}^j(z,t)
\end{matrix}
\right)
\left(
\begin{matrix}
\frac{\partial \xi_j^1}{\partial z_1} & \dots & \frac{\partial \xi_j^n}{\partial z_1}\\
\vdots & \vdots &\vdots\\
\frac{\partial \xi_j^1}{\partial z_n} & \dots & \frac{\partial \xi_j^n}{\partial z_n}
\end{matrix}
\right)
=\left(
\begin{matrix}
g_{11}^j(\xi_j(z,t))& \dots & g_{1n}^j(\xi_j(z,t))\\
\vdots & \vdots &\vdots\\
g_{n1}^j(\xi_j(z,t)) & \dots & g_{nn}^j(\xi_j(z,t))
\end{matrix}
\right)
\end{equation*}
\begin{equation*}
\left(
\begin{matrix}
\frac{\partial \xi_k^1}{\partial z_1} & \dots & \frac{\partial \xi_k^1}{\partial z_n}\\
\vdots & \vdots &\vdots\\
\frac{\partial \xi_k^n}{\partial z_1} & \dots & \frac{\partial \xi_k^n}{\partial z_n}
\end{matrix}
\right)
\left(
\begin{matrix}
f_{11}^k(z,t)& \dots & f_{1n}^k(z,t)\\
\vdots & \vdots &\vdots\\
f_{n1}^k(z,t) & \dots & f_{nn}^k(z,t)
\end{matrix}
\right)
\left(
\begin{matrix}
\frac{\partial \xi_k^1}{\partial z_1} & \dots & \frac{\partial \xi_k^n}{\partial z_1}\\
\vdots & \vdots &\vdots \\
\frac{\partial \xi_k^1}{\partial z_n} & \dots & \frac{\partial \xi_k^n}{\partial z_n}
\end{matrix}
\right)
=\left(
\begin{matrix}
g_{11}^k(\xi_k(z,t))& \dots & g_{1n}^k(\xi_k(z,t))\\
\vdots & \vdots &\vdots\\
g_{n1}^j(\xi_k(z,t)) & \dots & g_{nn}^k(\xi_k(z,t))
\end{matrix}
\right)
\end{equation*}
\begin{equation*}
\left(
\begin{matrix}
\frac{\partial \xi_j^1}{\partial \xi_k^1} & \dots & \frac{\partial \xi_j^1}{\partial \xi_k^n}\\
\vdots & \vdots &\vdots\\
\frac{\partial \xi_j^n}{\partial \xi_k^1} & \dots & \frac{\partial \xi_j^n}{\partial \xi_k^n}
\end{matrix}
\right)
\left(
\begin{matrix}
g_{11}^k(\xi_k(z,t))& \dots & g_{1n}^k(\xi_k(z,t))\\
\vdots & \vdots &\vdots\\
g_{n1}^k(\xi_k(z,t)) & \dots & g_{nn}^k(\xi_k(z,t))
\end{matrix}
\right)
\left(
\begin{matrix}
\frac{\partial \xi_j^1}{\partial \xi_k^1} & \dots & \frac{\partial \xi_j^n}{\partial \xi_k^1}\\
\vdots & \vdots &\vdots \\
\frac{\partial \xi_j^1}{\partial \xi_k^n} & \dots & \frac{\partial \xi_j^n}{\partial \xi_k^n}
\end{matrix}
\right)
=\left(
\begin{matrix}
g_{11}^j(\xi_j(z,t))& \dots & g_{1n}^j(\xi_j(z,t))\\
\vdots & \vdots &\vdots\\
g_{n1}^j(\xi_j(z,t)) & \dots & g_{nn}^j(\xi_j(z,t))
\end{matrix}
\right)
\end{equation*}

Since $\frac{\partial \xi_j^q}{\partial z_p}=\sum_{r=1}^n \frac{\partial \xi_j^q}{\partial \xi_k^r}\frac{\partial \xi_k^r}{\partial z_p}$, we have 
\begin{equation*}
\left(
\begin{matrix}
\frac{\partial \xi_j^1}{\partial \xi_k^1} & \dots & \frac{\partial \xi_j^1}{\partial \xi_k^n}\\
\vdots & \vdots &\vdots\\
\frac{\partial \xi_j^n}{\partial \xi_k^1} & \dots & \frac{\partial \xi_j^n}{\partial \xi_k^n}
\end{matrix}
\right)
\left(
\begin{matrix}
\frac{\partial \xi_k^1}{\partial z_1} & \dots & \frac{\partial \xi_k^1}{\partial z_n}\\
\vdots & \vdots &\vdots\\
\frac{\partial \xi_k^n}{\partial z_1} & \dots & \frac{\partial \xi_k^n}{\partial z_n}
\end{matrix}
\right)
=\left(
\begin{matrix}
\frac{\partial \xi_j^1}{\partial z_1} & \dots & \frac{\partial \xi_j^1}{\partial z_n}\\
\vdots & \vdots &\vdots\\
\frac{\partial \xi_j^n}{\partial z_1} & \dots & \frac{\partial \xi_j^n}{\partial z_n}
\end{matrix}
\right)
\end{equation*}
\begin{equation*}
\left(
\begin{matrix}
\frac{\partial \xi_k^1}{\partial z_1} & \dots & \frac{\partial \xi_k^n}{\partial z_1}\\
\vdots & \vdots &\vdots\\
\frac{\partial \xi_k^1}{\partial z_n} & \dots & \frac{\partial \xi_k^n}{\partial z_n}
\end{matrix}
\right)
\left(
\begin{matrix}
\frac{\partial \xi_j^1}{\partial \xi_k^1} & \dots & \frac{\partial \xi_j^n}{\partial \xi_k^1}\\
\vdots & \vdots &\vdots \\
\frac{\partial \xi_j^1}{\partial \xi_k^n} & \dots & \frac{\partial \xi_j^n}{\partial \xi_k^n}
\end{matrix}
\right)
=\left(
\begin{matrix}
\frac{\partial \xi_j^1}{\partial z_1} & \dots & \frac{\partial \xi_j^n}{\partial z_1}\\
\vdots & \vdots &\vdots\\
\frac{\partial \xi_j^1}{\partial z_n} & \dots & \frac{\partial \xi_j^n}{\partial z_n}
\end{matrix}
\right)
\end{equation*}
Since $det\left(\frac{\partial \xi_j^{\alpha}(z,t)}{\partial z_{\lambda}}\right)_{\alpha,\lambda=1,...,n}\ne 0$, we have $f_{rs}^j(z,t)=f_{rs}^k(z,t)$.
\end{proof}

If for $(z,t)\in \mathcal{U}_j$, we define 
\begin{align}\label{f}
\Lambda(z,t):=\sum_{r,s} f_{rs}^j(z,t)\frac{\partial}{\partial z_r}\wedge \frac{\partial}{\partial z_s}.
\end{align}
By Lemma \ref{e}, $\Lambda(t)=\Lambda(z,t)$ is a $C^{\infty}$ bivector field on $M$ for every $t\in \Delta$.

\begin{thm}\label{1thm}
If we take a sufficiently small polydiesk $\Delta$, then for the Poisson structure $\sum_{\alpha,\beta}g_{\alpha,\beta}^j(\xi_j,t) \frac{\partial}{\partial \xi_j^{\beta}}\wedge \frac{\partial}{\partial \xi_j^{\beta}}$ on $U_j\times \Delta$ for each $j$, there exists the unique bivector field  $\Lambda_j'=\sum_{r,s} f_{rs}^j(z,t)\frac{\partial}{\partial z_r}\wedge \frac{\partial}{\partial z_s}$ on $\mathcal{U}_j$ satisfying
\begin{enumerate}
\item $\sum_{r,s} f_{rs}^j(z,t)\frac{\partial \xi_j^{\alpha}}{\partial z_r}\frac{\partial \xi_j^{\beta}}{\partial z_s}=g_{\alpha\beta}^j(\xi_j(z,t),t)$
\item $\Lambda_j'$ are glued together to define a $C^{\infty}$ bivector field $\Lambda'$ on $M\times \Delta$ 
\item for each $j$, $[\Lambda_j',\Lambda_j']=0$. Hence we have $[\Lambda',\Lambda']=0$
\end{enumerate}
\end{thm}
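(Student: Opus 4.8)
The plan is to read (1) and (2) straight off the construction that precedes the statement, and to spend the real effort on (3). For (1): writing $J=\bigl(\partial\xi_j^{\alpha}/\partial z_{\lambda}\bigr)_{\alpha,\lambda}$ and $G=\bigl(g^{j}_{\alpha\beta}(\xi_j(z,t),t)\bigr)$, the matrix formula used to define $(f^{j}_{rs})$ is $F=J^{-1}G(J^{T})^{-1}$, which, after transposing, is exactly $\sum_{r,s}f^{j}_{rs}\,\frac{\partial\xi_j^{\alpha}}{\partial z_r}\frac{\partial\xi_j^{\beta}}{\partial z_s}=g^{j}_{\alpha\beta}(\xi_j(z,t),t)$; uniqueness of a bivector field on $\mathcal{U}_j$ satisfying this follows from $\det J\neq 0$, which holds on a sufficiently small $\Delta$ by the non-vanishing recorded just before Theorem \ref{text}, and the $f^{j}_{rs}$ are $C^{\infty}$ in $(z,t)$ because $\xi_j$, its holomorphic derivatives and $J^{-1}$ are. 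For (2): Lemma \ref{e} gives $f^{j}_{rs}=f^{k}_{rs}$ on $\mathcal{U}_j\cap\mathcal{U}_k$, so the $\Lambda_j'$ glue to a single global $C^{\infty}$ bivector field $\Lambda'$ on $M\times\Delta$ restricting to $\Lambda_j'$ on each $\mathcal{U}_j$.

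For (3) I would fix $j$ and $t$ and work on $\mathcal{U}_j\cap(M\times\{t\})$. First, since the holomorphic bivector field $\sum g^{j}_{\alpha\beta}(\xi_j,t)\,\frac{\partial}{\partial\xi_j^{\alpha}}\wedge\frac{\partial}{\partial\xi_j^{\beta}}$ on $U_j\times\Delta$ has no $\partial/\partial t_{\lambda}$-component, the hypothesis $[\Lambda,\Lambda]=0$ reduces, for each fixed $t$, to the Jacobi identity for the functions $g^{j}_{\alpha\beta}(\,\cdot\,,t)$,
\[
\sum_{\gamma}\Bigl(g^{j}_{\gamma\alpha}\frac{\partial g^{j}_{\beta\mu}}{\partial\xi_j^{\gamma}}+g^{j}_{\gamma\beta}\frac{\partial g^{j}_{\mu\alpha}}{\partial\xi_j^{\gamma}}+g^{j}_{\gamma\mu}\frac{\partial g^{j}_{\alpha\beta}}{\partial\xi_j^{\gamma}}\Bigr)=0 .
\]
Then I would differentiate relation (1) in the holomorphic directions $\partial/\partial z_p$, using the key point that $g^{j}_{\alpha\beta}$ is holomorphic in its $\xi_j$-arguments, so that
\[
\frac{\partial}{\partial z_p}\bigl(g^{j}_{\alpha\beta}(\xi_j(z,t),t)\bigr)=\sum_{\gamma}\frac{\partial g^{j}_{\alpha\beta}}{\partial\xi_j^{\gamma}}\,\frac{\partial\xi_j^{\gamma}}{\partial z_p}
\]
with no antiholomorphic contribution, even though $\xi_j$ is only $C^{\infty}$ in $z$. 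Substituting $f^{j}_{rs}=\sum_{\alpha,\beta}(J^{-1})_{r\alpha}(J^{-1})_{s\beta}\,g^{j}_{\alpha\beta}$ together with $\partial(J^{-1})/\partial z_p=-J^{-1}(\partial J/\partial z_p)J^{-1}$ into the Schouten-bracket expansion of $[\Lambda_j',\Lambda_j']$, the terms carrying the mixed second derivatives $\partial^{2}\xi_j^{\alpha}/\partial z_p\partial z_r$ cancel by their symmetry in $p,r$ once the free indices are contracted against the Jacobians, and what remains is $(J^{-1})^{\otimes 3}$ applied to the Jacobi expression above; hence $[\Lambda_j',\Lambda_j']=0$. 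Since the Schouten bracket is local and $\Lambda'|_{\mathcal{U}_j}=\Lambda_j'$, this gives $[\Lambda',\Lambda']=0$.

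The only real obstacle is organizing this last computation so that the cancellation of the $\partial^{2}\xi_j$-terms is transparent; it is in essence the standard proof that the Schouten bracket is covariant under a holomorphic change of coordinates, carried out with the substitution $\partial/\partial\xi_j^{\alpha}\mapsto\sum_r(J^{-1})_{r\alpha}\,\partial/\partial z_r$. What genuinely must be checked, and is the whole point, is that the non-holomorphicity of $\xi_j(z,t)$ in $z$ does no damage: $[\Lambda_j',\Lambda_j']$ differentiates $f^{j}_{rs}$ only in holomorphic $z$-directions, and relation (1) is an identity between functions holomorphic in $\xi_j$, so precisely the same formal manipulation as in the honest holomorphic case goes through. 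I would also keep in mind that, because the $g^{j}_{\alpha\beta}$ are holomorphic in $\xi_j$ but the coefficients $f^{j}_{rs}$ are not, one cannot simply invoke naturality of $[-,-]$ under the $C^{\infty}$ map $\xi_j(\,\cdot\,,t)$; the vanishing of $[\Lambda_j',\Lambda_j']$ is special to bivectors whose transform is built from the holomorphic Jacobian of a holomorphic Poisson bracket, which is exactly what the computation above exploits.
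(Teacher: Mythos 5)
Your proposal is correct and takes essentially the same route as the paper: (1)--(2) are read off the construction and Lemma \ref{e}, and (3) is the same coordinate computation via the local Jacobi-identity criterion of Lemma \ref{formula}, using that $g^j_{\alpha\beta}$ is holomorphic in $\xi_j$ (so the chain rule has no antiholomorphic contribution) and that the second-derivative terms cancel by symmetry of $\partial^2\xi_j$ together with antisymmetry of the coefficients. The only cosmetic difference is the direction of substitution: you expand $[\Lambda_j',\Lambda_j']$ and rewrite $f^j_{rs}$ through $(J^{-1})$ to land on $(J^{-1})^{\otimes 3}$ contracted with the Jacobi expression for $g$, whereas the paper substitutes $g$ in terms of $f$ into the Jacobi identity for $g$ and then cancels the invertible Jacobian factors --- the same calculation run in reverse.
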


We need the following lemma to prove the theorem.

\begin{lemma}\label{formula}
If $\rho=\sum_{p,q} \sigma_{pq}\frac{\partial }{\partial z_p}\wedge \frac{\partial}{\partial z_q}$, then 
$[\sigma,\sigma]=0$ is equivalent to 
\begin{align*}
\sum_{l=1}^n \sigma_{lk}\frac{\partial \sigma_{ij}}{\partial z_l}+\sigma_{li}\frac{\partial \sigma_{jk}}{\partial z_l}+\sigma_{lj}\frac{\partial \sigma_{ki}}{\partial z_l}=0
\end{align*}
for each $1\leq i,j,k \leq n$.
\end{lemma}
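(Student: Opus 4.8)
The plan is to evaluate the Schouten bracket $[\sigma,\sigma]$ directly in the coordinates $z_1,\dots,z_n$ from the defining properties of the bracket (Appendix \ref{appendixc}): it extends the Lie bracket of vector fields, is a graded derivation in each argument, annihilates functions ($[f,g]=0$), and satisfies $[X,f]=\sum_l X^l\,\partial f/\partial z_l$ for $X=\sum_l X^l\,\partial/\partial z_l$. Abbreviating $\partial_r$ for $\partial/\partial z_r$, I would write $\sigma=\sum_{r,s}\sigma_{rs}\,\partial_r\wedge\partial_s$ with $\sigma_{rs}=-\sigma_{sr}$, expand $[\sigma,\sigma]$ bilinearly into the brackets $[\sigma_{pq}\,\partial_p\wedge\partial_q,\ \sigma_{rs}\,\partial_r\wedge\partial_s]$, apply the graded Leibniz rule, and use $[\partial_p,\partial_q]=0$. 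Every surviving summand then arises by letting one coordinate field $\partial_p$ differentiate one coefficient function, i.e.\ is of the shape $\pm\,\sigma_{pq}\,(\partial_p\sigma_{rs})\,\partial_q\wedge\partial_r\wedge\partial_s$.

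Next I would collect, for a fixed triple $i<j<k$, the coefficient of $\partial_i\wedge\partial_j\wedge\partial_k$ in this expansion. Grouping the terms and using the antisymmetry $\sigma_{pq}=-\sigma_{qp}$ to merge the index patterns, this coefficient equals a fixed nonzero numerical multiple (the constant depends only on the summation convention) of
\[
\sum_{l=1}^n\Bigl(\sigma_{lk}\frac{\partial\sigma_{ij}}{\partial z_l}+\sigma_{li}\frac{\partial\sigma_{jk}}{\partial z_l}+\sigma_{lj}\frac{\partial\sigma_{ki}}{\partial z_l}\Bigr).
\]
Since the trivectors $\partial_i\wedge\partial_j\wedge\partial_k$ with $i<j<k$ form a local frame, $[\sigma,\sigma]=0$ if and only if each such sum vanishes for all $i<j<k$. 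Finally, the displayed expression is totally antisymmetric in $(i,j,k)$ and vanishes identically whenever two of the indices coincide (each such case collapses, via $\sigma_{pq}=-\sigma_{qp}$ and $\sigma_{pp}=0$, to a trivially zero sum), so requiring it for $i<j<k$ is the same as requiring it for all $1\le i,j,k\le n$; this is exactly the asserted equivalence.

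The main obstacle is purely bookkeeping: tracking the signs and the combinatorial multiplicities in the graded Leibniz expansion of $[\sigma,\sigma]$, and checking that after antisymmetrization precisely the three cyclic terms $\sigma_{lk}\partial_l\sigma_{ij}$, $\sigma_{li}\partial_l\sigma_{jk}$, $\sigma_{lj}\partial_l\sigma_{ki}$ survive with a common coefficient. There is no conceptual difficulty: the identity is the coordinate form of the Jacobi identity for the bracket determined by $\{z_i,z_j\}=\sigma_{ij}$, and one could just as well obtain it by writing out $\{\{z_i,z_j\},z_k\}+\{\{z_j,z_k\},z_i\}+\{\{z_k,z_i\},z_j\}=0$ and reading off coefficients; I would present it via the Schouten bracket to stay consistent with the notation of the rest of the chapter.
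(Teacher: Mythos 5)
Your proposal is correct, and in fact there is nothing in the paper to compare it against: the thesis states Lemma \ref{formula} without proof and immediately uses it in the proof of Theorem \ref{1thm} (and again in Lemma \ref{ab}), so your argument supplies exactly the computation the paper takes for granted. The route you choose is the standard one and it works: expand $[\sigma,\sigma]$ bilinearly using the graded Leibniz rule and $[\partial_p,\partial_q]=0$, observe that each surviving term has the form $\pm\,\sigma_{pq}(\partial_p\sigma_{rs})\,\partial_q\wedge\partial_r\wedge\partial_s$, and read off the coefficient of $\partial_i\wedge\partial_j\wedge\partial_k$ for $i<j<k$; the antisymmetry $\sigma_{pq}=-\sigma_{qp}$ (which the thesis assumes throughout for the coefficients of its bivectors) guarantees both that this coefficient is a nonzero multiple of the cyclic sum $\sum_l\bigl(\sigma_{lk}\partial_l\sigma_{ij}+\sigma_{li}\partial_l\sigma_{jk}+\sigma_{lj}\partial_l\sigma_{ki}\bigr)$ and that the condition for $i<j<k$ is equivalent to the condition for all triples, since the expression is totally antisymmetric and collapses to zero when two indices coincide. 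Your closing remark is also apt: the identity is precisely the Jacobi identity $\{\{z_i,z_j\},z_k\}+\{\{z_j,z_k\},z_i\}+\{\{z_k,z_i\},z_j\}=0$ for the bracket with $\{z_i,z_j\}$ proportional to $\sigma_{ij}$, and that observation gives an even shorter derivation; presenting it via the Schouten bracket, as you do, keeps the notation consistent with Appendix \ref{appendixc}. The only caveat is that your write-up defers the sign and multiplicity bookkeeping; if this were to be inserted as an actual proof, that expansion (or the Jacobi-identity shortcut) should be carried out explicitly, but there is no gap in the plan itself.
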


\begin{proof}[Proof of Theorem \ref{1thm}]
We have already showed $(1)$ and $(2)$. It remains to show $(3)$. We note that $[\sum_{\alpha,\beta} g_{\alpha\beta}^j(\xi_j,t) \frac{\partial}{\partial \xi_j^{\alpha}}\wedge\frac{\partial}{\partial \xi_j^{\beta}},\sum_{\alpha,\beta} g_{\alpha\beta}^j(\xi_j,t) \frac{\partial}{\partial \xi_j^{\alpha}}\wedge\frac{\partial}{\partial \xi_j^{\beta}}]=0$ and $g_{\alpha\beta}^j(\xi_j(z,t),t)=\sum_{a,b} f_{ab}^j(z,t)\frac{\partial \xi_j^{\alpha}}{\partial z_a}\frac{\partial \xi_j^{\beta}}{\partial z_b}$ is holomorphic with respect to $\xi_j=(\xi_j^{\alpha}),\alpha=1,...,n$. In the following, for simplicity, we denote $\xi_j^{\alpha}(z_j,t)$ by $\xi_{\alpha}$ and $f_{ab}^j(z,t)$ by $f_{ab}$. By Lemma \ref{formula}, we have

\begin{align*}
0=&\sum_{a,b,c,d,l} f_{ab}\frac{\partial \xi_l}{\partial z_a}\frac{\partial \xi_k}{\partial z_b}\frac{\partial}{\partial \xi_l}\left(f_{cd}\frac{\partial \xi_i}{\partial z_c}\frac{\partial \xi_j}{\partial z_d}\right)+f_{ab}\frac{\partial \xi_l}{\partial z_a}\frac{\partial \xi_i}{\partial z_b}\frac{\partial}{\partial \xi_l}\left(f_{cd}\frac{\partial \xi_j}{\partial z_c}\frac{\partial \xi_k}{\partial z_d}\right)+f_{ab}\frac{\partial \xi_l}{\partial z_a}\frac{\partial \xi_j}{\partial z_b}\frac{\partial}{\partial \xi_l}\left(f_{cd}\frac{\partial \xi_k}{\partial z_c}\frac{\partial \xi_i}{\partial z_d}\right)\\
+&\sum_{a,b,c,d,l} f_{ab}\frac{\partial \bar{\xi}_l}{\partial z_a}\frac{\partial \xi_k}{\partial z_b}\frac{\partial}{\partial \bar{\xi}_l}\left(f_{cd}\frac{\partial \xi_i}{\partial z_c}\frac{\partial \xi_j}{\partial z_d}\right)+f_{ab}\frac{\partial \bar{\xi}_l}{\partial z_a}\frac{\partial \xi_i}{\partial z_b}\frac{\partial}{\partial \bar{\xi}_l}\left(f_{cd}\frac{\partial \xi_j}{\partial z_c}\frac{\partial \xi_k}{\partial z_d}\right)+f_{ab}\frac{\partial \bar{\xi}_l}{\partial z_a}\frac{\partial \xi_j}{\partial z_b}\frac{\partial}{\partial \bar{\xi}_l}\left(f_{cd}\frac{\partial \xi_k}{\partial z_c}\frac{\partial \xi_i}{\partial z_d}\right)\\
=&\sum_{a,b,c,d,l}f_{ab}\frac{\partial \xi_l}{\partial z_a}\frac{\partial \xi_k}{\partial z_b}\frac{\partial f_{cd}}{\partial \xi_l}\frac{\partial \xi_i}{\partial z_c}\frac{\partial \xi_j}{\partial z_d}+f_{ab}\frac{\partial \xi_l}{\partial z_a}\frac{\partial \xi_k}{\partial z_b}f_{cd}\frac{\partial}{\partial \xi_l}\left(\frac{\partial \xi_i}{\partial z_c}\right)\frac{\partial \xi_j}{\partial z_d}+f_{ab}\frac{\partial \xi_l}{\partial z_a}\frac{\partial \xi_k}{\partial z_b}f_{cd}\frac{\partial \xi_i}{\partial z_c}\frac{\partial}{\partial \xi_l}\left(\frac{\partial \xi_j}{\partial z_d}\right)\\
+&\sum_{a,b,c,d,l}f_{ab}\frac{\partial \xi_l}{\partial z_a}\frac{\partial \xi_i}{\partial z_b}\frac{\partial f_{cd}}{\partial \xi_l}\frac{\partial \xi_j}{\partial z_c}\frac{\partial \xi_k}{\partial z_d}+f_{ab}\frac{\partial \xi_l}{\partial z_a}\frac{\partial \xi_i}{\partial z_b}f_{cd}\frac{\partial}{\partial \xi_l}\left(\frac{\partial \xi_j}{\partial z_c}\right)\frac{\partial \xi_k}{\partial z_d}+f_{ab}\frac{\partial \xi_l}{\partial z_a}\frac{\partial \xi_i}{\partial z_b}f_{cd}\frac{\partial \xi_j}{\partial z_c}\frac{\partial}{\partial \xi_l}\left(\frac{\partial \xi_k}{\partial z_d}\right)\\
+&\sum_{a,b,c,d,l}f_{ab}\frac{\partial \xi_l}{\partial z_a}\frac{\partial \xi_j}{\partial z_b}\frac{\partial f_{cd}}{\partial \xi_l}\frac{\partial \xi_k}{\partial z_c}\frac{\partial \xi_i}{\partial z_d}+f_{ab}\frac{\partial \xi_l}{\partial z_a}\frac{\partial \xi_j}{\partial z_b}f_{cd}\frac{\partial}{\partial \xi_l}\left(\frac{\partial \xi_k}{\partial z_c}\right)\frac{\partial \xi_i}{\partial z_d}+f_{ab}\frac{\partial \xi_l}{\partial z_a}\frac{\partial \xi_j}{\partial z_b}f_{cd}\frac{\partial \xi_k}{\partial z_c}\frac{\partial}{\partial \xi_l}\left(\frac{\partial \xi_i}{\partial z_d}\right)\\
+&\sum_{a,b,c,d,l}f_{ab}\frac{\partial \bar{\xi}_l}{\partial z_a}\frac{\partial \xi_k}{\partial z_b}\frac{\partial f_{cd}}{\partial \bar{\xi}_l}\frac{\partial \xi_i}{\partial z_c}\frac{\partial \xi_j}{\partial z_d}+f_{ab}\frac{\partial \bar{\xi}_l}{\partial z_a}\frac{\partial \xi_k}{\partial z_b}f_{cd}\frac{\partial}{\partial \bar{\xi}_l}\left(\frac{\partial \xi_i}{\partial z_c}\right)\frac{\partial \xi_j}{\partial z_d}+f_{ab}\frac{\partial \bar{\xi}_l}{\partial z_a}\frac{\partial \xi_k}{\partial z_b}f_{cd}\frac{\partial \xi_i}{\partial z_c}\frac{\partial}{\partial \bar{\xi}_l}\left(\frac{\partial \xi_j}{\partial z_d}\right)\\
+&\sum_{a,b,c,d,l}f_{ab}\frac{\partial \bar{\xi}_l}{\partial z_a}\frac{\partial \xi_i}{\partial z_b}\frac{\partial f_{cd}}{\partial \bar{\xi}_l}\frac{\partial \xi_j}{\partial z_c}\frac{\partial \xi_k}{\partial z_d}+f_{ab}\frac{\partial \bar{\xi}_l}{\partial z_a}\frac{\partial \xi_i}{\partial z_b}f_{cd}\frac{\partial}{\partial \bar{\xi}_l}\left(\frac{\partial \xi_j}{\partial z_c}\right)\frac{\partial \xi_k}{\partial z_d}+f_{ab}\frac{\partial \bar{\xi}_l}{\partial z_a}\frac{\partial \xi_i}{\partial z_b}f_{cd}\frac{\partial \xi_j}{\partial z_c}\frac{\partial}{\partial \bar{\xi}_l}\left(\frac{\partial \xi_k}{\partial z_d}\right)\\
+&\sum_{a,b,c,d,l}f_{ab}\frac{\partial \bar{\xi}_l}{\partial z_a}\frac{\partial \xi_j}{\partial z_b}\frac{\partial f_{cd}}{\partial \bar{\xi}_l}\frac{\partial \xi_k}{\partial z_c}\frac{\partial \xi_i}{\partial z_d}+f_{ab}\frac{\partial \bar{\xi}_l}{\partial z_a}\frac{\partial \xi_j}{\partial z_b}f_{cd}\frac{\partial}{\partial \bar{\xi}_l}\left(\frac{\partial \xi_k}{\partial z_c}\right)\frac{\partial \xi_i}{\partial z_d}+f_{ab}\frac{\partial \bar{\xi}_l}{\partial z_a}\frac{\partial \xi_j}{\partial z_b}f_{cd}\frac{\partial \xi_k}{\partial z_c}\frac{\partial}{\partial \bar{\xi}_l}\left(\frac{\partial \xi_i}{\partial z_d}\right)\\
=&\sum_{a,b,c,d}f_{ab}\frac{\partial f_{cd}}{\partial z_a}\frac{\partial \xi_k}{\partial z_b}\frac{\partial \xi_i}{\partial z_c}\frac{\partial \xi_j}{\partial z_d}+f_{ab}\frac{\partial \xi_k}{\partial z_b}f_{cd}\frac{\partial^2 \xi_i}{\partial z_a\partial z_c}\frac{\partial \xi_j}{\partial z_d}+f_{ab}\frac{\partial \xi_k}{\partial z_b}f_{cd}\frac{\partial \xi_i}{\partial z_c}\frac{\partial^2 \xi_j}{\partial z_a\partial z_d}\\
+&\sum_{a,b,c,d}f_{ab}\frac{\partial f_{cd}}{\partial z_a}\frac{\partial \xi_i}{\partial z_b}\frac{\partial \xi_j}{\partial z_c}\frac{\partial \xi_k}{\partial z_d}+f_{ab}\frac{\partial \xi_i}{\partial z_b}f_{cd}\frac{\partial^2 \xi_j}{\partial z_a\partial z_c}\frac{\partial \xi_k}{\partial z_d}+f_{ab}\frac{\partial \xi_i}{\partial z_b}f_{cd}\frac{\partial \xi_j}{\partial z_c}\frac{\partial^2 \xi_k}{\partial z_a\partial z_d}\\
+&\sum_{a,b,c,d}f_{ab}\frac{\partial f_{cd}}{\partial z_a}\frac{\partial \xi_j}{\partial z_b}\frac{\partial \xi_k}{\partial z_c}\frac{\partial \xi_i}{\partial z_d}+f_{ab}\frac{\partial \xi_j}{\partial z_b}f_{cd}\frac{\partial^2 \xi_k}{\partial z_a\partial z_c}\frac{\partial \xi_i}{\partial z_d}+f_{ab}\frac{\partial \xi_j}{\partial z_b}f_{cd}\frac{\partial \xi_k}{\partial z_c}\frac{\partial^2 \xi_i}{\partial z_a\partial z_d}\\
=&\sum_{a,b,c,d}f_{ab}\frac{\partial f_{cd}}{\partial z_a}\frac{\partial \xi_k}{\partial z_b}\frac{\partial \xi_i}{\partial z_c}\frac{\partial \xi_j}{\partial z_d}+f_{ab}\frac{\partial f_{cd}}{\partial z_a}\frac{\partial \xi_i}{\partial z_b}\frac{\partial \xi_j}{\partial z_c}\frac{\partial \xi_k}{\partial z_d}+f_{ab}\frac{\partial f_{cd}}{\partial z_a}\frac{\partial \xi_j}{\partial z_b}\frac{\partial \xi_k}{\partial z_c}\frac{\partial \xi_i}{\partial z_d}\\
=&\sum_{a,b,c,d}\left(f_{ab}\frac{\partial f_{cd}}{\partial z_a}+f_{ac}\frac{\partial f_{db}}{\partial z_a}+f_{ad}\frac{\partial f_{bc}}{\partial z_a}\right)\frac{\partial \xi_i}{\partial z_c}\frac{\partial \xi_j}{\partial z_d}\frac{\partial \xi_k}{\partial z_b}
\end{align*}
Since $det\left(\frac{\partial \xi_j^{\alpha}(z,t)}{\partial z_{\lambda}}\right)_{\alpha,\lambda=1,...,n}\ne 0$, $\sum_a f_{ab}\frac{\partial f_{cd}}{\partial z_a}+f_{ac}\frac{\partial f_{db}}{\partial z_a}+f_{ad}\frac{\partial f_{bc}}{\partial z_a}=0$. So by Lemma \ref{formula}, $[\Lambda_j',\Lambda_j']=0$.
\end{proof}

\begin{remark}
In summary, for holomorphic Poisson manifold $(M_t,\Lambda_t)$ for each $t\in \Delta$ in the Poisson analytic family, there exists a bivector field $\Lambda'(t)$ on $M$ with $[\Lambda'(t),\Lambda'(t)]=0$ for $t\in \Delta$. Conversely, $\Lambda'(t)$ induces $\Lambda_t$ via diffeomorphism $\Psi$. More precisely, the $(2,0)$-part of  $\Psi_{*} \Lambda'(t)$ is $\Lambda_t$ for $t\in \Delta$.\footnote{For the type of a bivector field, we mean the decomposition $\wedge^2 T_{\mathbb{C}} M=\wedge^2 T^{1,0} \oplus T^{1,0}\otimes T^{0,1} \oplus \wedge^2 T^{0,1}$ with respect to the almost complex structure induced from the complex structure. If $\Lambda\in C^{\infty}(\wedge T_{\mathbb{C}} M)$, then we denote by $\Lambda^{2,0}$ the component of $C^{\infty}(\wedge^2 T^{1,0})$, by $\Lambda^{1,1}$ the component of $C^{\infty}(T^{0,1}\otimes T^{0,1})$, and by $\Lambda^{0,2}$ the component of $C^{\infty}(\wedge^2 T^{0,1})$. So we have $\Lambda=\Lambda^{2,0}+\Lambda^{1,1}+\Lambda^{0,2}$.}
\end{remark}

Now we discuss the condition when a $C^{\infty}$ bivector field $\Lambda$ on $M$ with $[\Lambda,\Lambda]=0$ gives a holomorphic bivector field $\Lambda^{2,0}$ with respect to the complex structure $M_t$ when we restrict $\Lambda$ to $(2,0)$ part. Before proceeding our discussion, we recall a bracket structure $[-,-]$ on $A=\bigoplus_{p+q=i,p\geq 0,q\geq 1} A^{0,p}(M,\wedge^q T_M)$ (See Appendix \ref{appendixc}), which we need for the computation of the integrability condition.

The bracket structure on $A$ is defined in the following way.
\begin{align*}
[-,-]:A^{0,p}(M,\wedge^q T_M)\times A^{0,p'}(M,\wedge^{q'} T_M)\to A^{p+p'}(M,\wedge^{q+q'-1} T_M)
\end{align*}
In local coordinates it is given by
\begin{align*}
[fdz_I\frac{\partial}{\partial z_J},gdz_K\frac{\partial}{\partial z_L}]=(-1)^{|K|(|J|+1)} dz_I\wedge dz_K [f\frac{\partial}{\partial z_J},g\frac{\partial}{\partial z_L}]\end{align*}
Then $(A[1],\bar{\partial},[-,-])$ is a differntial graded Lie algebra.
So we have the following. For $a\in A^{0,p}(M,\wedge^q T_M), b\in A^{0,p'}(M,\wedge^{q'} T_M)$,
\begin{enumerate}
\item $[a,b]=-(-1)^{(p+q+1)(p'+q'+1)}[b,a]$
\item $[a,[b,c]]=[[a,b],c]+(-1)^{(p+q+1)(p'+q'+1)}[b,[a,c]]$
\item $\bar{\partial}[a,b]=[\bar{\partial} a,b]+(-1)^{p+q+1}[a,\bar{\partial} b]$
\end{enumerate}

\begin{thm}\label{m}
If we take a sufficiently small polydisk $\Delta$, then for $t\in \Delta$, a $(2,0)$-part $\Lambda^{2,0}$ of a $C^{\infty}$ bivector field  $\Lambda=\sum_{\alpha,\beta=1}^n f_{\alpha \beta}(z)\frac{\partial}{\partial z_{\alpha}}\wedge \frac{\partial}{\partial z_{\beta}}$ on $M$ is holomorphic with respect to the complex structure $M_t$, if and only if it satisfies the equation
\begin{align*}
\bar{\partial}\Lambda-[\Lambda,\varphi(t)]=0
\end{align*}
Moreover, if $[\Lambda,\Lambda]=0$, then $[\Lambda^{2,0},\Lambda^{2,0}]=0$.

\end{thm}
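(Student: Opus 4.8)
The plan is to mimic Kodaira's proof of Theorem \ref{text} (\cite{Kod05}, Theorem~5.3), passing from functions to bivector fields. I would fix $t\in\Delta$ and work in one chart $\mathcal{U}_j$ with $M_t$-holomorphic coordinates $\xi=\xi_j=\xi_j(z,t)$. Since $\partial/\partial\xi^\gamma_j$ and $\partial/\partial\bar\xi^\gamma_j$ have type $(1,0)$ and $(0,1)$ for $M_t$, the chain rule $\partial/\partial z_a=\sum_\gamma(\partial\xi^\gamma_j/\partial z_a)\,\partial/\partial\xi^\gamma_j+\sum_\gamma(\partial\bar\xi^\gamma_j/\partial z_a)\,\partial/\partial\bar\xi^\gamma_j$ shows that the $(2,0)$-part of $\Lambda$ with respect to $M_t$ is $\Lambda^{2,0}=\sum_{\gamma,\delta}G^j_{\gamma\delta}\,\frac{\partial}{\partial\xi^\gamma_j}\wedge\frac{\partial}{\partial\xi^\delta_j}$ with $G^j_{\gamma\delta}:=\sum_{a,b}f_{ab}\,(\partial\xi^\gamma_j/\partial z_a)(\partial\xi^\delta_j/\partial z_b)$. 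Hence $\Lambda^{2,0}$ is holomorphic on $M_t$ if and only if every $G^j_{\gamma\delta}$ is a holomorphic function on $M_t$, which by Theorem \ref{text} is equivalent to $(\bar\partial-\varphi(t))G^j_{\gamma\delta}=0$ for all $j,\gamma,\delta$.

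The core of the argument will be to match the system $\{(\bar\partial-\varphi(t))G^j_{\gamma\delta}=0\}$ with the single equation $\bar\partial\Lambda-[\Lambda,\varphi(t)]=0$. I would first record that $\bar\partial\Lambda-[\Lambda,\varphi(t)]$ is a well-defined global $(0,1)$-form valued in $\wedge^2\Theta_M$ (using the bracket and its graded identities recalled just before the theorem; by graded antisymmetry $[\Lambda,\varphi(t)]=[\varphi(t),\Lambda]$, since $\Lambda$ and $\varphi(t)$ both have odd total degree). Then I would expand $\bar\partial\Lambda$ and $[\Lambda,\varphi(t)]$ in the $z$-coordinates and substitute the differentiated forms of the identity $G^j_{\gamma\delta}(\xi_j(z,t),t)=\sum_{a,b}f_{ab}\,(\partial\xi^\gamma_j/\partial z_a)(\partial\xi^\delta_j/\partial z_b)$, together with the defining relation $\bar\partial\xi^\gamma_j=\sum_\lambda\varphi^\lambda_j\,\partial\xi^\gamma_j/\partial z_\lambda$ of $\varphi(t)$ recalled above (which eliminates the $\bar z$-derivatives of $\xi_j$). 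After rearrangement the coefficients of $\bar\partial\Lambda-[\Lambda,\varphi(t)]$ in the $z$-frame should come out as linear combinations, with entries of the invertible matrix $(\partial\xi^\alpha_j/\partial z_\lambda)^{-1}$ as coefficients, of the quantities $\partial G^j_{\gamma\delta}/\partial\bar z_v-\sum_\lambda\varphi^\lambda_v\,\partial G^j_{\gamma\delta}/\partial z_\lambda$; since that matrix is nonsingular for $\Delta$ small, vanishing of $\bar\partial\Lambda-[\Lambda,\varphi(t)]$ on $\mathcal{U}_j$ is then equivalent to $(\bar\partial-\varphi(t))G^j_{\gamma\delta}=0$ for all $\gamma,\delta$, giving the asserted equivalence. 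This computation is structurally the one already carried out in the proof of Proposition \ref{gg} (and of Lemma \ref{e}), so I would import those manipulations. The hard part will be the bookkeeping of the Schouten-bracket signs and verifying that the components $\Lambda^{1,1}_t,\Lambda^{0,2}_t$ of $\Lambda$ with respect to $M_t$ contribute nothing beyond the holomorphicity of $\Lambda^{2,0}$.

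For the last assertion I would work under the conclusion of the first part, i.e. $\Lambda^{2,0}$ holomorphic on $M_t$ (which is the case of interest and is forced by $\bar\partial\Lambda-[\Lambda,\varphi(t)]=0$), so that the $G^j_{\gamma\delta}$ are holomorphic in $\xi_j$. By Lemma \ref{formula} applied in the $\xi_j$-coordinates, $[\Lambda^{2,0},\Lambda^{2,0}]=0$ reads $\sum_l G^j_{l\gamma}\,\partial G^j_{\delta\epsilon}/\partial\xi^l_j+(\text{cyclic in }\gamma,\delta,\epsilon)=0$; substituting $G^j_{\gamma\delta}(\xi_j(z,t),t)=\sum_{a,b}f_{ab}\,(\partial\xi^\gamma_j/\partial z_a)(\partial\xi^\delta_j/\partial z_b)$ and using $\partial G^j_{\gamma\delta}/\partial\bar\xi_j=0$, the identity chain in the proof of Theorem \ref{1thm}(3) turns this into $\sum_{b,c,d}\big(\sum_a f_{ab}\,\partial f_{cd}/\partial z_a+(\text{cyclic})\big)(\partial\xi^\gamma_j/\partial z_c)(\partial\xi^\delta_j/\partial z_d)(\partial\xi^\epsilon_j/\partial z_b)$, which vanishes because $[\Lambda,\Lambda]=0$ says precisely $\sum_a f_{ab}\,\partial f_{cd}/\partial z_a+(\text{cyclic})=0$ by Lemma \ref{formula}. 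Thus $[\Lambda,\Lambda]=0$ forces $[\Lambda^{2,0},\Lambda^{2,0}]=0$. (Alternatively one can reason by types: on the complex manifold $M_t$ the Schouten bracket of two bivectors of type $(2,0)$ is again of type $(2,0)$, so decomposing $[\Lambda,\Lambda]=0$ into $M_t$-types and extracting the $(3,0)$-part — using that $\Lambda^{2,0}$ is $M_t$-holomorphic, so the cross term $[\Lambda^{2,0},\Lambda^{1,1}_t]$ has trivial $(3,0)$-part — yields $[\Lambda^{2,0},\Lambda^{2,0}]=0$.)
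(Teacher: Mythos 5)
Your plan for the equivalence is the paper's own argument: the paper likewise writes $\Lambda^{2,0}=\sum_{i,j}\bigl(\sum_{\alpha,\beta}f_{\alpha\beta}\tfrac{\partial\xi^i}{\partial z_\alpha}\tfrac{\partial\xi^j}{\partial z_\beta}\bigr)\tfrac{\partial}{\partial\xi^i}\wedge\tfrac{\partial}{\partial\xi^j}$, invokes Theorem \ref{text} to reduce holomorphicity of $\Lambda^{2,0}$ on $M_t$ to $(\bar\partial-\varphi(t))$ annihilating these coefficient functions, expands both $\bar\partial\Lambda-[\Lambda,\varphi(t)]=0$ and that condition in the $z$-frame (using $\bar\partial\xi^i=\sum_c\varphi^c_v\,\tfrac{\partial\xi^i}{\partial z_c}d\bar z_v$), and matches the two coefficient systems via the nonvanishing of $\det(\partial\xi^\alpha/\partial z_\lambda)$; the only cosmetic difference is that the paper contracts the $z$-frame equations with the Jacobian and cancels it, rather than expressing the $z$-frame coefficients through its inverse, which is the same invertibility step. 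For the final assertion the paper's proof is exactly your parenthetical alternative: decompose $\Lambda=\Lambda^{2,0}+\Lambda^{1,1}+\Lambda^{0,2}$ with respect to $M_t$, note that the $(3,0)$-part of $[\Lambda,\Lambda]$ is $[\Lambda^{2,0},\Lambda^{2,0}]$ plus the $(3,0)$-part of $[\Lambda^{2,0},\Lambda^{1,1}]$, and kill the cross term using that $\Lambda^{2,0}$ is $M_t$-holomorphic. Your primary route — rerunning the chain of identities from Theorem \ref{1thm}(3) in reverse, from $[\Lambda,\Lambda]=0$ in the $z$-frame to the $\xi$-frame Jacobi system of Lemma \ref{formula} — is also correct, since that chain consists of coordinate identities plus the vanishing of the $\bar\xi$-derivative terms, but it is longer than the type argument; a point in your favour is that you state explicitly that the "Moreover" needs $\Lambda^{2,0}$ holomorphic (i.e.\ the equation of the first part), a hypothesis the paper uses silently in its one-line justification that $[\Lambda^{2,0},\Lambda^{1,1}]^{3,0}=0$.
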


\begin{proof}
Since $(2,0)$ part of $\Lambda=\sum_{\alpha,\beta=1}^n f_{\alpha \beta}(z)\frac{\partial}{\partial z_{\alpha}}\wedge \frac{\partial}{\partial z_{\beta}}$ is $\sum_{\alpha,\beta,i,j} f_{\alpha \beta}\frac{\partial \xi^i}{\partial z_{\alpha}}\frac{\partial \xi^j}{\partial z_{\beta}}\frac{\partial}{\partial \xi^i}\wedge \frac{\partial}{\partial \xi^j}$, we have to show that for each $i,j$,$\sum_{\alpha,\beta} f_{\alpha \beta}\frac{\partial \xi^i}{\partial z_{\alpha}}\frac{\partial \xi^j}{\partial z_{\beta}}$ is holomorphic with respect to the complex structure $M_t$, which is equivalent to $(\bar{\partial}-\varphi(t))(\sum_{\alpha,\beta} f_{\alpha \beta}\frac{\partial \xi^i}{\partial z_{\alpha}}\frac{\partial \xi^j}{\partial z_{\beta}})=0$ by Theorem \ref{text}, if and only if $\bar{\partial}\Lambda-[\Lambda,\varphi(t)]=0$. First we compute $\bar{\partial}\Lambda-[\Lambda,\varphi(t)]=\sum_{\alpha,\beta, v} \frac{\partial f_{\alpha\beta}}{\partial \bar{z}_v}d\bar{z}_v\frac{\partial}{\partial z_{\alpha}}\wedge \frac{\partial}{\partial z_{\beta}}-[\sum_{\alpha,\beta} f_{\alpha\beta}\frac{\partial}{\partial z_{\alpha}}\wedge \frac{\partial}{\partial z_{\beta}},\sum_{v,\lambda}  \varphi_v^{\lambda}(z,t) d\bar{z_v} \frac{\partial}{\partial z_{\lambda}}]=\sum_{\alpha,\beta, v} \frac{\partial f_{\alpha\beta}}{\partial \bar{z}_v}d\bar{z}_v\frac{\partial}{\partial z_{\alpha}}\wedge \frac{\partial}{\partial z_{\beta}}-\sum_{\alpha,\beta, v,\lambda} [f_{\alpha\beta}\frac{\partial}{\partial z_{\alpha}}\wedge \frac{\partial}{\partial z_{\beta}},\varphi_v^{\lambda} d\bar{z_v} \frac{\partial}{\partial z_{\lambda}}]=\sum_{\alpha,\beta, v} \frac{\partial f_{\alpha\beta}}{\partial \bar{z}_v}d\bar{z}_v\frac{\partial}{\partial z_{\alpha}}\wedge \frac{\partial}{\partial z_{\beta}}+\sum_{\alpha,\beta, v,\lambda} [f_{\alpha\beta}\frac{\partial}{\partial z_{\alpha}}\wedge \frac{\partial}{\partial z_{\beta}},\varphi_v^{\lambda} \frac{\partial}{\partial z_{\lambda}}]d\bar{z}_v=\sum_{\alpha,\beta, v} \frac{\partial f_{\alpha\beta}}{\partial \bar{z}_v}d\bar{z}_v\frac{\partial}{\partial z_{\alpha}}\wedge \frac{\partial}{\partial z_{\beta}}+\sum_{\alpha,\beta, v,\lambda} [f_{\alpha\beta}\frac{\partial}{\partial z_{\alpha}}\wedge \frac{\partial}{\partial z_{\beta}},\varphi_v^{\lambda} \frac{\partial}{\partial z_{\lambda}}]d\bar{z}_v=\sum_{\alpha,\beta, v} \frac{\partial f_{\alpha\beta}}{\partial \bar{z}_v}d\bar{z}_v\frac{\partial}{\partial z_{\alpha}}\wedge \frac{\partial}{\partial z_{\beta}}+\sum_{\alpha,\beta, v,\lambda}(f_{\alpha\beta}\frac{\partial \phi_v^{\lambda}}{\partial z_{\alpha}}\frac{\partial}{\partial z_{\lambda}}\wedge \frac{\partial}{\partial z_{\beta}}-\varphi_{v}^{\lambda} \frac{\partial f_{\alpha\beta}}{\partial z_{\lambda}}\frac{\partial}{\partial z_{\alpha}}\wedge \frac{\partial}{\partial z_{\beta}}+f_{\alpha\beta} \frac{\partial \varphi_v^{\lambda}}{\partial z_{\beta}}\frac{\partial}{\partial z_{\alpha}}\wedge \frac{\partial}{\partial z_{\lambda}})d\bar{z}_v$. By considering the coefficients of $d\bar{z}_v\frac{\partial}{\partial z_{\alpha}}\wedge \frac{\partial}{\partial z_{\beta}}$, $\bar{\partial}\Lambda-[\Lambda,\varphi(t)]=0$ is equivalent to $\sum_{\alpha,\beta,v} [\frac{\partial f_{\alpha\beta}}{\partial \bar{z}_v}+\sum_c (f_{c\beta}\frac{\partial \varphi^{\alpha}_v}{\partial z_c}-\varphi^c_v\frac{\partial f_{\alpha \beta}}{\partial z_c}+f_{\alpha c}\frac{\partial \varphi^{\beta}_v}{\partial z_c})] d\bar{z}_v\frac{\partial}{\partial z_{\alpha}}\wedge \frac{\partial}{\partial z_{\beta}}=0$ which is equivalent to 
\begin{center}
$(*)\frac{\partial f_{\alpha\beta}}{\partial \bar{z}_v}+\sum_c (f_{c\beta}\frac{\partial \varphi^{\alpha}_v}{\partial z_c}-\varphi^c_v\frac{\partial f_{\alpha \beta}}{\partial z_c}+f_{\alpha c}\frac{\partial \varphi^{\beta}_v}{\partial z_c})]=0$ for each $\alpha,\beta,v$.
\end{center}
On the other hand, we compute $(\bar{\partial}-\varphi(t))(\sum_{\alpha,\beta} f_{\alpha \beta}\frac{\partial \xi^i}{\partial z_{\alpha}}\frac{\partial \xi^j}{\partial z_{\beta}})=\sum_{\alpha,\beta}(\bar{\partial}-\varphi(t))(f_{\alpha \beta}\frac{\partial \xi^i}{\partial z_{\alpha}}\frac{\partial \xi^j}{\partial z_{\beta}})$ for each $i,j$. $\sum_{\alpha,\beta}(\bar{\partial}-\varphi(t))(f_{\alpha \beta}\frac{\partial \xi^i}{\partial z_{\alpha}}\frac{\partial \xi^j}{\partial z_{\beta}})=\sum_{\alpha ,\beta,v} (\frac{\partial f_{\alpha\beta}}{\partial \bar{z}_v}\frac{\partial \xi^i}{\partial z_{\alpha}}\frac{\partial \xi^j}{\partial z_{\beta}}+f_{\alpha\beta}\frac{\partial}{\partial \bar{z}_v}(\frac{\partial \xi^i}{\partial z_{\alpha}})\frac{\partial \xi^j}{\partial z_{\beta}}+f_{\alpha\beta}\frac{\partial \xi^i}{\partial z_{\alpha}}\frac{\partial}{\partial \bar{z}_v}(\frac{\partial \xi^j}{\partial z_{\beta}}))d\bar{z}_v-\sum_{\alpha,\beta,v,\lambda} \varphi^{\lambda}_v d\bar{z}_v(\frac{\partial f_{\alpha\beta}}{\partial z_{\lambda}}\frac{\partial \xi^i}{\partial z_{\alpha}}\frac{\partial \xi_j}{\partial z_{\beta}}+f_{\alpha\beta}\frac{\partial^2 \xi^i}{\partial z_{\alpha} \partial z_{\lambda}}\frac{\partial \xi^j}{\partial z_{\beta}}+f_{\alpha\beta}\frac{\partial \xi^i}{\partial z_{\alpha}}\frac{\partial \xi^j}{\partial z_{\beta}\partial z_{\lambda}})$. So $(\bar{\partial}-\varphi(t))(\sum_{\alpha,\beta} f_{\alpha \beta}\frac{\partial \xi^i}{\partial z_{\alpha}}\frac{\partial \xi^j}{\partial z_{\beta}})=0$ is equivalent to
\begin{center}
$(**)\sum_{\alpha ,\beta} (\frac{\partial f_{\alpha\beta}}{\partial \bar{z}_v}\frac{\partial \xi^i}{\partial z_{\alpha}}\frac{\partial \xi^j}{\partial z_{\beta}}+f_{\alpha\beta}\frac{\partial}{\partial z_{\alpha}}(\frac{\partial \xi^i}{\partial \bar{z}_v})\frac{\partial \xi^j}{\partial z_{\beta}}+f_{\alpha\beta}\frac{\partial \xi^i}{\partial z_{\alpha}}\frac{\partial}{\partial z_{\beta}}(\frac{\partial \xi^j}{\partial \bar{z}_v}))-\sum_{\alpha,\beta,c} \varphi^{c}_v(\frac{\partial f_{\alpha\beta}}{\partial z_{c}}\frac{\partial \xi^i}{\partial z_{\alpha}}\frac{\partial \xi_j}{\partial z_{\beta}}+f_{\alpha\beta}\frac{\partial^2 \xi^i}{\partial z_{\alpha} \partial z_{c}}\frac{\partial \xi^j}{\partial z_{\beta}}+f_{\alpha\beta}\frac{\partial \xi^i}{\partial z_{\alpha}}\frac{\partial \xi^j}{\partial z_{\beta}\partial z_{c}})=0$
\end{center}
for each $i,j,v$.
Since

\begin{equation*}
\left(
\begin{matrix}
\frac{\partial \xi^1}{\partial z_1} & \dots & \frac{\partial \xi^1}{\partial z_n}\\
\vdots & \vdots\\
\frac{\partial \xi^n}{\partial z_1} & \dots & \frac{\partial \xi^n}{\partial z_n}
\end{matrix}
\right)
\left(
\begin{matrix}
\varphi_{1}^1 & \dots & \varphi_{n}^1\\
\vdots & \vdots\\
\varphi_{1}^n & \dots & \varphi_{n}^n
\end{matrix}
\right)
=
\left(
\begin{matrix}
\frac{\partial \xi^1}{\partial \bar{z_1}} & \dots & \frac{\partial \xi^1}{\partial \bar{z_n}}\\
\vdots & \vdots\\
\frac{\partial \xi^n}{\partial \bar{z_1}} & \dots & \frac{\partial \xi^n}{\partial \bar{z_n}}
\end{matrix}
\right),
\end{equation*}
 we have $\frac{\partial \xi^i}{\partial \bar{z}_v}=\sum_c \frac{\partial \xi^i}{\partial z_c}\varphi^c_v$ and $\frac{\partial \xi^j}{\partial \bar{z}_v}=\sum_c \frac{\partial \xi^j}{\partial z_c}\varphi^c_v$.
So $(**)$ is equivalent to
\begin{center}
$\sum_{\alpha ,\beta} \frac{\partial f_{\alpha\beta}}{\partial \bar{z}_v}\frac{\partial \xi^i}{\partial z_{\alpha}}\frac{\partial \xi^j}{\partial z_{\beta}}+\sum_{\alpha,\beta,c}(f_{\alpha\beta}(\frac{\partial^2 \xi^i}{\partial z_{\alpha} \partial z_c}\varphi^c_v+\frac{\partial \xi_i}{\partial z_c}\frac{\partial \varphi^c_v}{\partial z_{\alpha}})\frac{\partial \xi^j}{\partial z_{\beta}}+f_{\alpha\beta}\frac{\partial \xi^i}{\partial z_{\alpha}}(\frac{\partial^2 \xi^j}{\partial z_{\beta} \partial z_c}\varphi^c_v+\frac{\partial \xi_j}{\partial z_c}\frac{\partial \varphi^c_v}{\partial z_{\beta}}))-\sum_{\alpha,\beta,c} \varphi^{c}_v(\frac{\partial f_{\alpha\beta}}{\partial z_{c}}\frac{\partial \xi^i}{\partial z_{\alpha}}\frac{\partial \xi_j}{\partial z_{\beta}}+f_{\alpha\beta}\frac{\partial^2 \xi^i}{\partial z_{\alpha} \partial z_{c}}\frac{\partial \xi^j}{\partial z_{\beta}}+f_{\alpha\beta}\frac{\partial \xi^i}{\partial z_{\alpha}}\frac{\partial \xi^j}{\partial z_{\beta}\partial z_{c}})=0$.
\end{center}
which is equivalent to
\begin{center}
$\sum_{\alpha ,\beta} \frac{\partial f_{\alpha\beta}}{\partial \bar{z}_v}\frac{\partial \xi^i}{\partial z_{\alpha}}\frac{\partial \xi^j}{\partial z_{\beta}}+\sum_{\alpha,\beta,c}(f_{\alpha\beta}(\frac{\partial \xi_i}{\partial z_c}\frac{\partial \varphi^c_v}{\partial z_{\alpha}})\frac{\partial \xi^j}{\partial z_{\beta}}+f_{\alpha\beta}\frac{\partial \xi^i}{\partial z_{\alpha}}(\frac{\partial \xi_j}{\partial z_c}\frac{\partial \varphi^c_v}{\partial z_{\beta}}))-\sum_{\alpha,\beta,c} \varphi^{c}_v(\frac{\partial f_{\alpha\beta}}{\partial z_{c}}\frac{\partial \xi^i}{\partial z_{\alpha}}\frac{\partial \xi_j}{\partial z_{\beta}})
=0$
\end{center}
which is equivalent to
\begin{center}
$\sum_{\alpha,\beta} [\frac{\partial f_{\alpha\beta}}{\partial \bar{z}_v}+\sum_c (f_{c\beta}\frac{\partial \varphi^{\alpha}_v}{\partial z_c}-\varphi^c_v\frac{\partial f_{\alpha \beta}}{\partial z_c}+f_{\alpha c}\frac{\partial \varphi^{\beta}_v}{\partial z_c})]\frac{\partial \xi^i}{\partial z_{\alpha}}\frac{\partial \xi^j}{\partial z_{\beta}}=0$ for each $i,j,v$.
\end{center}
Since $det\left(\frac{\partial \xi_j^{\alpha}(z,t)}{\partial z_{\lambda}}\right)_{\alpha,\lambda=1,...,n}\ne 0$, this is equivalent to
\begin{center}
$(***) \frac{\partial f_{\alpha\beta}}{\partial \bar{z}_v}+\sum_c (f_{c\beta}\frac{\partial \varphi^{\alpha}_v}{\partial z_c}-\varphi^c_v\frac{\partial f_{\alpha \beta}}{\partial z_c}+f_{\alpha c}\frac{\partial \varphi^{\beta}_v}{\partial z_c})=0$ for each $\alpha,\beta,v$.
\end{center}
Note that $(*)$ is same to $(***)$.

For the second statement, we can write $\Lambda=\sum_{a,b} f_{ab}\frac{\partial}{\partial z_a}\wedge \frac{\partial}{\partial z_b}=\sum_{a,b,i,j} f_{ab}\frac{\partial \xi_i}{\partial z_a}\frac{\partial \xi_j}{\partial z_b}\frac{\partial}{\partial \xi_i}\wedge \frac{\partial}{\partial \xi_j}+ 2f_{ab}\frac{\partial \xi_i}{\partial z_a}\frac{\partial \bar{\xi}_j}{\partial z_b}\frac{\partial}{\partial \xi_i}\wedge \frac{\partial}{\partial \bar{\xi}_j}+ f_{ab}\frac{\partial \xi_i}{\partial z_a}\frac{\partial \bar{\xi}_j}{\partial z_b}\frac{\partial}{\partial \bar{\xi}_i}\wedge \frac{\partial}{\partial \bar{\xi}_j}=\Lambda^{2,0}+\Lambda^{1,1}+\Lambda^{2,0}$. Since $[\Lambda,\Lambda]=0$, $(3,0)$ part of $[\Lambda,\Lambda]=0$. But $(3,0)$ part happens in $[\Lambda^{2,0},\Lambda^{2,0}]+[\Lambda^{2,0},\Lambda^{1,1}]^{3,0}$. Since $\Lambda^{2,0}$ is holomorphic with respect to the complex structure induced by $\varphi(t)$, $[\Lambda^{2,0},\Lambda^{1,1}]^{3,0}=0$.
\end{proof}

\begin{remark}
A $C^{\infty}$ complex bivector field $\Lambda$ on $M$ with $[\Lambda,\Lambda]=0$ gives a Poisson bracket on $C^{\infty}$ complex valued functions on $M$. We point out that when we restrict to holomorphic functions with respect to the complex structure $M_t$, this is exactly the Poisson bracket induced from $\Lambda^{2,0}$. 
\end{remark}

\section{Expression of  infinitesimal deformations in terms of $\varphi(t)$ and $\Lambda(t)$}\

In this section, we study how the infinitesimal deformation of $(M,\Lambda_0)$ in a family is represented in terms of $\varphi(t)$ and $\Lambda(t)$. Recall that $(\mathcal{M},\Lambda,B,\pi)$ is a Poisson analytic family of compact holomorphic Poisson manifolds with $(M,\Lambda_0)=\omega^{-1}(0)$ and for sufficiently small polydisk $\Delta\subset B$, $M_{\Delta}=\omega^{-1}(\Delta)$ is represented in the form $\mathcal{M}_{\Delta}=\bigcup_j U_j\times \Delta$ where $U_j=\{x_j\in \mathbb{C}^m||\xi_j|<1\}$ and the holomorphic Poisson structures on $U_j$ is $\sum_{\alpha\beta} g_{\alpha\beta}(\xi_j,t) \frac{\partial}{\partial \xi_j^{\alpha}}\wedge \frac{\partial}{\partial \xi_j^{\beta}}$ and $\xi_j^{\alpha}=f_{jk}^{\alpha}(\xi_k,t),\alpha=1,...,m$ on $U_k\times \Delta \cap U_j\times \Delta$. We showed that the infinitesimal deformation at $(M,\Lambda_0)$ is captured by the element $(\frac{(M_t,\Lambda_t)}{\partial t})_{t=0}\in HP^2(M,\Lambda_0)$ of the complex of sheaves of $0\to \Theta_{M} \to \wedge^2 \Theta_{M}\to \cdots \to \wedge^n \Theta_M\to 0$ by using the following \u{C}ech hypercohomology resolution associated with the open covering $\mathcal{U}^0=\{U_j^0:=U_j\times 0\}$. (See Proposition \ref{gg})

\begin{center}
$\begin{CD}
@A[\Lambda,-]AA\\
C^0(\mathcal{U}^0,\wedge^3 \Theta_M)@>-\delta>>\cdots\\
@A[\Lambda,-]AA @A[\Lambda,-]AA\\
C^0(\mathcal{U}^0,\wedge^2 \Theta_M)@>\delta>> C^1(\mathcal{U}^0,\wedge^2 \Theta_M)@>-\delta>>\cdots\\
@A[\Lambda,-]AA @A[\Lambda,-]AA @A[\Lambda,-]AA\\
C^0(\mathcal{U}^0,\Theta_M)@>-\delta>>C^1(\mathcal{U}^0,\Theta_M)@>\delta>>C^2(\mathcal{U}^0,\Theta_M)@>-\delta>>\cdots\\
@AAA @AAA @AAA @AAA \\
0@>>>0 @>>> 0 @>>> 0@>>> \cdots
\end{CD}$
\end{center}

And we can also compute the hypercohomology group of $0\to \Theta_M \to \wedge^2 \Theta_M\to \cdots \to \wedge^n \Theta_M\to 0$ by using the following Dolbeault type resolution.(See example \ref{rr})
\begin{center}
$\begin{CD}
@A[\Lambda,-]AA\\
A^{0,0}(M,\wedge^3 T_M)@>\bar{\partial}>>\cdots\\
@A[\Lambda,-]AA @A[\Lambda,-]AA\\
A^{0,0}(M,\wedge^2 T_M)@>\bar{\partial}>> A^{0,1}(M,\wedge^2 T_M)@>\bar{\partial}>>\cdots\\
@A[\Lambda,-]AA @A[\Lambda,-]AA @A[\Lambda,-]AA\\
A^{0,0}(M,T_M)@>\bar{\partial}>>A^{0,1}(M,T_M)@>\bar{\partial}>>A^{0,2}(M, T_M)@>\bar{\partial}>>\cdots\\
@AAA @AAA @AAA @AAA \\
0@>>>0 @>>> 0 @>>> 0@>>> \cdots
\end{CD}$
\end{center}
We describe how the element in the Cech hypercohomology look like in the Dolbeault hypercohomology.
In the picture below, we connect two resolutions. We only depict a part of resolutions that we need in the next page.

{\tiny
\[
\xymatrixrowsep{0.2in}
\xymatrixcolsep{0.1in}
\xymatrix{
& & \wedge^3 \Theta_M \ar[ld]  \ar[rr]  &  & C^0(\wedge^3 \Theta_M) \ar[ld]\\
&A^{0,0}(M,\wedge^3 T_M)  \ar[rr] & & C^0(\mathscr{A}^{0,0}(\wedge^3 T_M))\\
&& \wedge^2 \Theta_M \ar@{.>}[uu] \ar@{.>}[ld] \ar@{.>}[rr] & & C^0(\wedge^2 \Theta_M) \ar[uu] \ar[ld] \ar[rr]^{\delta} && C^1(\wedge^2 \Theta_X) \ar[ld]\\
& A^{0,0}(M,\wedge^2  T_M)  \ar[uu] \ar[ld] \ar[rr] && C^0(\mathscr{A}^{0,0}(\wedge^2 T_M)) \ar[uu] \ar[ld] \ar[rr]^{\delta} && C^1(\mathscr{A}^{0,0}(T_M))   \\
A^{0,1}(M,\wedge^2 T_M) \ar[rr]  && C^0(A^{0,1}(\wedge^2 T_M)) && C^0(\Theta_M) \ar@{.>}[ld]\ar@{.>}[uu] \ar@{.>}[rr]^{-\delta} & & C^1(\Theta_X) \ar[uu] \ar[ld]\\
& A^{0,0}(M,T_M) \ar@{.>}[uu] \ar@{.>}[rr]  \ar@{.>}[ld] && C^0(\mathscr{A}^{0,0}(T_M)) \ar[uu]^{[\Lambda,-]} \ar[rr]^{-\delta}  \ar[ld]^{\bar{\partial}} && C^1(\mathscr{A}^{0,0}(T_M)) \ar[ld] \ar[uu]\\
A^{0,1}(M,T_M) \ar[uu] \ar[rr] & & C^0(\mathscr{A}^{0,1}(T_M)) \ar[uu] \ar[rr] & & C^1(\mathscr{A}^{0,1}(T_M))
}\]}

Now we explicitly construct the isomorphism of second hypercomology groups from \u{C}ech hyperresolution and Dolbeault hyperresolution, namely
\begin{align*}
HP^2(M,\Lambda_0)\cong \frac{ker(A^{0,0}(M,\wedge^2 T_M)\oplus A^{1,0}(M, T_M)\to A^{0,0}(M,\wedge^3 T_M)\oplus A^{1,0}(M,\wedge^2 T_M)\oplus A^{2,0}(M, T_M))}{im(A^{0,0}(M,T_M)\to A^{0,0}(M,\wedge^2 T_M)\oplus A^{1,0}(M,\wedge T_M))}
\end{align*}
Note that each horizontal complex is exact except for edges of the ``real wall".

We define the map in the following way:
let $(b,a) \in \mathcal{C}^0(\mathcal{U}, \wedge^2 \Theta_M)\oplus \mathcal{C}^1(\mathcal{U},\Theta_M)$ be a cohomology class of $HP^2(M,\Lambda)$. Since $\delta a=0$, there exists a $c\in C^0(\mathcal{U},\mathscr{A}^{0,0}(T_M))$ such that $-\delta c=a$. Since $a$ is holomorphic $(\bar{\partial}a=0)$, by the commutativity $\bar{\partial} c\in A^{0,1}(M, T_M)$. And we claim that $[\Lambda,c]-b\in A^{0,0}(M,\wedge^2 T_M)$. Indeed $\delta([\Lambda,c]-b)=\delta([\Lambda,c])-\delta b=-[\Lambda,-\delta c]-\delta b=-[\Lambda,a]-\delta b=0$. Now we show that $(\bar{\partial} c, [\Lambda,c]-b)$ is a cohomology class of Dolbeault type resolution. Clearly $\bar{\partial}(\bar{\partial}c)=0.$ $[\Lambda,[\Lambda,c]-b]=0$. And $\bar{\partial} ([\Lambda,c]-b)+[\Lambda, \bar{\partial} c]=-[\Lambda,\bar{\partial} c]+[\Lambda,\bar{\partial c}]=0$. We define the map by $(b,a)\mapsto ([\Lambda,c]-b,\bar{\partial} c)$. 

Now we show that this map is well defined.
\begin{enumerate}
\item (independence of choice of $c$)
let $c'$ with $-\delta c'=a$. Then $-\delta(c-c')=0$. So $d=c-c'\in A^{0,0}(M,\Theta_M)$. Then $([\Lambda,c]-b,\bar{\partial c})-([\Lambda,c']-b,\bar{\partial} c')=([\Lambda,c-c'],\bar{\partial}(c-c'))=\bar{\partial}d+[\Lambda,d]$

\item (independence of choice of $(b,a)$)
Let $(b,a)$ and $(b',a')$ are in the same cohomology class.  We show that $(b-b',a-a')$ is mapped to $0$. Indeed, there exists $e\in C^0(\mathcal{U},\Theta_M)$ such that $-\delta e+[\Lambda,e]=(a-a')-(b-b')$. We can use $e$ as $c$. Then $(b-b',a-a')$ is mapped to $([\Lambda, e]-(b-b'),\bar{\partial} e)=(0,0)$.
\end{enumerate}

For the inverse map, let $(\beta,\alpha) \in A^{0,0}(M,\wedge^2  T_M)\oplus A^{0,1}(M,  T_M) $ be the cohomology class of Dolbeault type resolution. 
Then there exists $c\in C^0(\mathcal{U},\mathscr{A}^{0,0}(T_M))$ such that $\bar{\partial} c =\alpha$. We define the inverse map $(\beta,\alpha) \mapsto ([\Lambda,c]-\beta,-\delta c)$.

\begin{thm}\label{n}
$\left(\left(\frac{\partial \varphi(t)}{\partial t}\right)_{t=0},- \left(\frac{\partial \Lambda(t)}{\partial t}\right)_{t=0}\right)$ satisfies 

$[\Lambda_0, -\left(\frac{\partial \Lambda(t)}{\partial t}\right)_{t=0}]=0$,  $\bar{\partial} \left( -(\frac{\partial \Lambda(t)}{\partial t})_{t=0}\right) +[\Lambda_0,\left(\frac{\partial \varphi(t)}{\partial t}\right)_{t=0}]=0$, $\bar{\partial} \left(\frac{\partial \varphi(t)}{\partial t}\right)_{t=0}=0$, and under the isomorphism
\begin{align*}
HP^2(M)\cong \frac{ker(A^{0,0}(M,\wedge^2 T_M)\oplus A^{1,0}(M,\wedge T_M)\to A^{0,0}(M,\wedge^3 T_M)\oplus A^{1,0}(M,\wedge^2 T_M)\oplus A^{2,0}(M,T_M))}{im(A^{0,0}(M,T_M)\to A^{0,0}(M,\wedge^2 T_M)\oplus A^{1,0}(M,\wedge T_M))}
\end{align*}
$\left(\frac{(M_t,\Lambda_t)}{\partial t}\right)_{t=0}\in HP^2(M,\Lambda_0)$ corresponds to $\left(\left(\frac{\partial \varphi(t)}{\partial t}\right)_{t=0}, -\left(\frac{\partial \Lambda(t)}{\partial t}\right)_{t=0}\right)$

\end{thm}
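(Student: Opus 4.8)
The strategy is to transport the \v{C}ech description of the Poisson Kodaira--Spencer class obtained in Proposition~\ref{gg} through the explicit isomorphism between the \v{C}ech hyperresolution and the Dolbeault hyperresolution constructed just above, and to match the output against $\bigl((\partial\varphi(t)/\partial t)_{t=0},-(\partial\Lambda(t)/\partial t)_{t=0}\bigr)$. Recall from Proposition~\ref{gg} that the infinitesimal deformation at $t=0$ is the class of the $2$-cocycle $\bigl(\{\theta_{jk}\},\{\Lambda_j\}\bigr)\in C^1(\mathcal{U}^0,\Theta_M)\oplus C^0(\mathcal{U}^0,\wedge^2\Theta_M)$, where $\theta_{jk}=\sum_\alpha\frac{\partial f_{jk}^\alpha(z_k,t)}{\partial t}\big|_{t=0}\frac{\partial}{\partial z_j^\alpha}$ and $\Lambda_j=\sum_{\alpha,\beta}\frac{\partial g_{\alpha\beta}^j(z_j,t)}{\partial t}\big|_{t=0}\frac{\partial}{\partial z_j^\alpha}\wedge\frac{\partial}{\partial z_j^\beta}$ (in the notation of the theorem, $(b,a)=(\{\Lambda_j\},\{\theta_{jk}\})$). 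According to the recipe above, one first chooses $c\in C^0(\mathcal{U}^0,\mathscr{A}^{0,0}(T_M))$ with $-\delta c=\{\theta_{jk}\}$, and then the image in Dolbeault cohomology is $\bigl([\Lambda_0,c]-\{\Lambda_j\},\ \bar\partial c\bigr)$.

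The key step is to produce the right $c$ from the diffeomorphism $\Psi$. Following Kodaira (\cite{Kod05} p.201 and pp.263--265), if one writes $\Psi(z,t)=(\xi_j(z,t),t)$ on $\mathcal{U}_j$ and differentiates the identity $\xi_j^\alpha=f_{jk}^\alpha(\xi_k(z,t),t)$ in $t$ at $t=0$, one finds that the $C^\infty$ $(1,0)$-vector fields $c_j:=\sum_\alpha\frac{\partial\xi_j^\alpha(z,t)}{\partial t}\big|_{t=0}\frac{\partial}{\partial z_j^\alpha}$ satisfy $c_j-c_k=\theta_{jk}$, i.e.\ $-\delta\{c_j\}=\{\theta_{jk}\}$ once one accounts for the sign convention of $\delta$ on $C^0(\mathcal{U}^0,\mathscr{A}^{0,0}(T_M))$; this is exactly the place where Kodaira shows $\bar\partial c_j=\varphi(0)$-related data, and in fact the computation there gives $\bigl(\bar\partial c\bigr)=\bigl(\frac{\partial\varphi(t)}{\partial t}\bigr)_{t=0}$. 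So I would take $c=\{c_j\}$, and the first Dolbeault component is automatically $(\partial\varphi(t)/\partial t)_{t=0}$, which moreover is $\bar\partial$-closed since it is $\bar\partial$ of a global object and lies in $A^{0,1}(M,T_M)$ by the commutativity argument already given (because $a=\{\theta_{jk}\}$ is holomorphic). For the second component one must compute $[\Lambda_0,c_j]-\Lambda_j$ and show it equals $-(\partial\Lambda(t)/\partial t)_{t=0}$ on each $\mathcal{U}_j$. This is the computational heart: differentiate in $t$ at $t=0$ the defining relation of Theorem~\ref{1thm}(1), namely $\sum_{r,s}f_{rs}^j(z,t)\frac{\partial\xi_j^\alpha}{\partial z_r}\frac{\partial\xi_j^\beta}{\partial z_s}=g_{\alpha\beta}^j(\xi_j(z,t),t)$, and compare with the Schouten bracket $[\Lambda_0,c_j]$ expanded in coordinates exactly as in the proof of Proposition~\ref{gg} and Theorem~\ref{m}. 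The bracket term contributes precisely the derivatives of the $\xi_j^\alpha$ in $t$ (matching the right-hand side's chain-rule terms $\sum_c\frac{\partial g_{\alpha\beta}^j}{\partial\xi^c}\frac{\partial\xi_j^c}{\partial t}$ plus the $\frac{\partial g}{\partial t}$ term), while $\Lambda_j$ absorbs the explicit $t$-derivative $\frac{\partial g_{\alpha\beta}^j}{\partial t}$, and $-(\partial\Lambda(t)/\partial t)_{t=0}=-\sum_{r,s}\frac{\partial f_{rs}^j(z,t)}{\partial t}\big|_{t=0}\frac{\partial}{\partial z_r}\wedge\frac{\partial}{\partial z_s}$ is what is left.

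Once $c$ is identified, the three displayed identities follow formally: $\bar\partial\bigl(\frac{\partial\varphi(t)}{\partial t}\bigr)_{t=0}=\bar\partial\bar\partial c=0$; the cocycle conditions of the Dolbeault hyperresolution (which the general recipe above already verified for any image class) read, for the pair $\bigl(-(\partial\Lambda(t)/\partial t)_{t=0},(\partial\varphi(t)/\partial t)_{t=0}\bigr)=\bigl([\Lambda_0,c]-b,\bar\partial c\bigr)$, exactly as $[\Lambda_0,-(\partial\Lambda(t)/\partial t)_{t=0}]=0$ and $\bar\partial\bigl(-(\partial\Lambda(t)/\partial t)_{t=0}\bigr)+[\Lambda_0,(\partial\varphi(t)/\partial t)_{t=0}]=0$. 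Finally, by construction $\bigl(\frac{\partial(M_t,\Lambda_t)}{\partial t}\bigr)_{t=0}=[(b,a)]$ maps under the isomorphism to $[([\Lambda_0,c]-b,\bar\partial c)]$, which is the class of $\bigl((\partial\varphi(t)/\partial t)_{t=0},-(\partial\Lambda(t)/\partial t)_{t=0}\bigr)$, and well-definedness was already checked. The main obstacle is the middle coordinate computation showing $[\Lambda_0,c_j]-\Lambda_j=-(\partial\Lambda(t)/\partial t)_{t=0}$ pointwise; this is a longer but entirely mechanical Leibniz/chain-rule expansion of the Schouten bracket, wholly parallel to the verification of $(*)$ in Proposition~\ref{gg}, and I would present it by differentiating Theorem~\ref{1thm}(1) at $t=0$ and organizing terms by the monomials $\frac{\partial}{\partial z_r}\wedge\frac{\partial}{\partial z_s}$, invoking $\det(\partial\xi_j^\alpha/\partial z_\lambda)\neq0$ to conclude coefficientwise.
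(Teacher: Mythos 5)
Your proposal follows essentially the same route as the paper's proof: the paper likewise takes the $0$-cochain $c=\{\xi_j\}$ with $\xi_j=\sum_\alpha\dot{\xi}_j^\alpha\,\partial/\partial \xi_j^\alpha$ (note the frame $\partial/\partial \xi_j^\alpha$, not $\partial/\partial z_j^\alpha$, so that $\delta\{\xi_j\}=-\{\theta_{jk}\}$ comes out directly), proves $\bar{\partial}\xi_j=\dot{\varphi}$ and $\dot{\Lambda}-\sigma_j+[\Lambda_0,\xi_j]=0$ by exactly the chain-rule computation you outline (its Lemma \ref{beta}, obtained by differentiating the relation of Theorem \ref{1thm}(1) at $t=0$), and then applies the \v{C}ech-to-Dolbeault map $(b,a)\mapsto([\Lambda_0,c]-b,\bar{\partial}c)$. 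The only cosmetic difference is that the paper gets the three displayed identities by differentiating the integrability conditions at $t=0$, whereas you deduce them from the cocycle conditions of the Dolbeault hyperresolution once the representatives are identified; both are fine.
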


\begin{proof}
By taking the derivative of our integrability condition with respect to $t$ and plugging $0$, we get the first claim. Now we construct the isomorphism between two second cohomology groups and their correspondence. Put

\begin{align*}
\theta_{jk}&=\sum_{\alpha=1}^n \left(\frac{\partial f_{jk}^{\alpha}(\xi_k,t)}{\partial t}\right)_{t=0}\frac{\partial}{\partial \xi_j^{\alpha}}\\
\sigma_j&=\sum_{r,s=1}^n \left(\frac{\partial g_{rs}(\xi,t)}{\partial t}\right)_{t=0} \frac{\partial}{\partial \xi_j^r}\wedge \frac{\partial}{\partial \xi_j^s}
\end{align*}
The infinitesimal deformation $\left(\frac{\partial (M_t,\Lambda_t)}{\partial t}\right)_{t=0}\in HP^2(M,\Lambda_0)$ is the cohomology class of the $(\{\theta_{jk}\},\{\sigma_j\})\in C^1(\mathcal{U}^0,\Theta)\oplus C^0(\mathcal{U}^0,\wedge^2 \Theta)$. We fix a tangent vector $\frac{\partial}{\partial t}\in T_0(\Delta)$, denote $\left(\frac{\partial f(t)}{\partial t}\right)_{t=0}$ by $\dot{f}$.  By differentiating
\begin{align*}
\xi_j^{\alpha}(z,t)=f_{jk}^{\alpha}(\xi_k(z,t),t)=f_{jk}(\xi_k^1(z,t),...,\xi_k^n(z,t),t)
\end{align*}
with respect to $t$ and putting $t=0$, we get
\begin{align*}
\dot{\xi_j}^{\alpha}:=\sum_{\beta=1}^n \frac{\partial \xi_j^{\alpha}}{\partial \xi_k^{\beta}}\dot{\xi_k^{\beta}}+\left( \frac{\partial f_{jk}^{\alpha}(\xi_k,t)}{\partial t}\right)_{t=0}
\end{align*}
where $\xi_j^{\alpha}=\xi_j^{\alpha}(z,0)$ and $\xi_k^{\beta}=\xi_k^{\beta}(z,0)$. Therefore putting
\begin{align*}
\xi_j=\sum_{\alpha=1}^n \dot{\xi_j}^{\alpha}\frac{\partial}{\partial \xi_j^{\alpha}}
\end{align*}
for each $j$, we have
\begin{align*}
\theta_{jk}=\xi_j-\xi_k
\end{align*}
Since $\dot{\xi_j}^{\alpha}=\left(\frac{\partial \xi_j^{\alpha}(z,t)}{\partial t}\right)_{t=0}$ is a $C^{\infty}$ function on $U_j$, $\xi_j$ is a $C^{\infty}$ vector field on $U_j$. So we have $\{\xi_j \} \in C^0(\mathcal{U}_0, \mathcal{A}^{0,0}(\Theta))$ and $\delta \{ \xi_j \}=-\{ \theta_{jk} \}$ where $\delta$ is the usual \u{C}ech map.

We need the following lemma.

\begin{lemma}\label{beta}
$\bar{\partial} \xi_j =\sum_{\lambda=1}^n \left( \frac{\partial \varphi^{\lambda}(z,t)}{\partial t}\right)_{t=0}\frac{\partial}{\partial z_{\lambda}}=\sum_{\lambda=1}^n \dot{\varphi}^{\lambda}\frac{\partial}{\partial z_{\lambda}}=\dot{\varphi}$ and $\dot{\Lambda}-{\sigma_j}+[\Lambda_0, \xi_j]=0$. More precisely,

\begin{align*}
\sum_{r,s=1}^n \left(\frac{\partial f_{rs} (z,t)}{\partial t}\right)_{t=0} \frac{\partial}{\partial z_r}\wedge \frac{\partial}{\partial z_s}-\sum_{\alpha,\beta=1}^n \left(\frac{\partial g_{\alpha\beta}^j(\xi_j,t)}{\partial t}\right)_{t=0} \frac{\partial}{\partial \xi_j^{\alpha}} \wedge \frac{\partial}{\partial \xi_j^{\beta}}+[\sum_{r,s=1}^n g_{rs}^j(\xi_j,0)\frac{\partial}{\partial \xi_j^r}\wedge \frac{\partial}{\partial \xi_j^s},\sum_{c=1}^n \dot{\xi}_j^c\frac{\partial}{\partial \xi_j^c}]=0
\end{align*}
equivalently,
\begin{align*}
(*)\sum_{r,s} \dot{f_{rs}} \frac{\partial}{\partial z_r}\wedge \frac{\partial}{\partial z_s}-\sum_{\alpha,\beta=1}^n \dot{g}_{\alpha\beta}^j \frac{\partial}{\partial \xi_j^{\alpha}} \wedge \frac{\partial}{\partial \xi_j^{\beta}}+[\sum_{r,s=1}^n g_{rs}^j(\xi_j,0)\frac{\partial}{\partial \xi_j^r}\wedge \frac{\partial}{\partial \xi_j^s},\sum_{c=1}^n \dot{\xi}_j^c\frac{\partial}{\partial \xi_j^c}]=0
\end{align*}
\end{lemma}

\begin{proof}
By differentiating
\begin{align*}
\bar{\partial} \xi_j^{\alpha}(z,t)=\sum_{\lambda=1}^n \varphi^{\lambda}(z,t)\frac{\partial \xi_j^{\alpha}(z,t)}{\partial z_{\lambda}}
\end{align*}
with respect to $t$, and putting $t=0$, we obtain
\begin{align*}
\bar{\partial}\dot{\xi}_j^{\alpha}=\sum_{\lambda=1}^{n} \dot{\varphi}^{\lambda}\frac{\partial \xi_j^{\alpha}(z,0)}{\partial z_{\lambda}}
\end{align*}
since $\varphi^{\lambda}(z,0)=0$. Hence
\begin{align*}
\bar{\partial} \xi_j=\sum_{\alpha=1}^n \bar{\partial} \dot{\xi}_j^{\alpha}\frac{\partial}{\partial \xi_j^{\alpha}}=\sum_{\lambda=1}^n\sum_{\alpha=1}^n \dot{\varphi}^{\lambda} \frac{\partial \xi_j^{\alpha}(z,0)}{\partial z_{\lambda}}\frac{\partial}{\partial \xi_j^{\alpha}}=\sum_{\lambda=1}^n \dot{\varphi}^{\lambda} \frac{\partial}{\partial z_{\lambda}}=\dot{\varphi}
\end{align*}
For $(*)$, we note that
\begin{align*}
\sum_{r,s=1} \dot{f_{rs}}\frac{\partial}{\partial z_r}\wedge \frac{\partial}{\partial z_s}=\sum_{r,s,a,b=1} \dot{f_{rs}} \frac{\partial \xi_j^a (z,0)}{\partial z_r}\frac{\partial \xi_j^b(z,0)}{\partial z_s}\frac{\partial}{\partial \xi_j^a}\wedge \frac{\partial}{\partial \xi_j^b}\\
\end{align*}
\begin{align*}
&\sum_{r,s,c=1}^n [g_{rs}^j(\xi_j,0) \frac{\partial}{\partial \xi_j^r}\wedge \frac{\partial}{\partial \xi_j^s}, \dot{\xi}_j^c\frac{\partial}{\partial \xi_j^c}]=\sum_{r,s,c=1} [g_{rs}^j(\xi_j,0)\frac{\partial}{\partial \xi_j^r},\dot{\xi}_j^c \frac{\partial}{\partial \xi_j^c}]\wedge \frac{\partial}{\partial \xi_j^s}-g_{rs}^j(\xi_j,0)[\frac{\partial}{\partial \xi_j^s},\dot{\xi}_j^c \frac{\partial}{\partial \xi_j^c}]\wedge\frac{\partial}{\partial \xi_j^r}\\
&=\sum_{r,s,c=1}^n g_{rs}^j(\xi_j,0)\frac{\partial \dot{\xi}_j^c}{\partial \xi_j^r}\frac{\partial}{\partial \xi_j^c}\wedge \frac{\partial}{\partial \xi_j^s}-\dot{\xi}_j^c\frac{\partial g_{rs}(\xi_j,0)}{\partial \xi_j^c}\frac{\partial}{\partial \xi_j^r}\wedge \frac{\partial}{\partial \xi_j^s}+g_{rs}^j(\xi_j,0)\frac{\partial \dot{\xi}_j^c}{\partial \xi_j^s}\frac{\partial}{\partial \xi_j^r}\wedge\frac{\partial}{\partial \xi_j^c}
\end{align*}
By considering the coefficients of $\frac{\partial}{\partial \xi_j^a}\wedge \frac{\partial}{\partial \xi_j^b}$, $(*)$ is equivalent to
\begin{align*}
(**) \sum_{r,s=1}^n \dot{f_{rs}} \frac{\partial \xi_j^a (z,0)}{\partial z_r}\frac{\partial \xi_j^b(z,0)}{\partial z_s}-\dot{g}_{ab}^j-\sum_{c=1}^n \dot{\xi}_j^c\frac{\partial g_{ab}(\xi_j,0)}{\partial \xi_j^c}+\sum_{c=1}^n g_{cb}^j(\xi_j,0)\frac{\partial \dot{\xi}_j^a}{\partial \xi_j^c}+g_{ac}^j(\xi_j,0)\frac{\partial \dot{\xi}_j^b}{\partial \xi_j^c}=0
\end{align*}
On the other hand, we have
\begin{align*}
g_{ab}^j(\xi_j^1(z,t),...,\xi_j^n(z,t),t_1,...,t_m)=\sum_{r,s=1}^n f_{rs}(z,t)\frac{\partial \xi_j^a(z,t)}{\partial z_r}\frac{\partial \xi_j^b(z,t)}{\partial z_s}
\end{align*}
By taking the derivative with respect to $t$ and putting $t=0$, we have
\begin{align*}
\frac{\partial g_{ab}^j(\xi_j,0)}{\partial \xi_j^1}\dot{\xi}_j^1+\cdots + \frac{\partial g_{ab}^j(\xi_j,0)}{\partial \xi_j^n}\dot{\xi}_j^n+\dot{g}_{ab}^j=\sum_{r,s=1}^n\dot{f}_{rs}\frac{\partial \xi_j^a(z,0)}{\partial z_r}\frac{\partial \xi_j^b(z,0)}{\partial z_s}+f_{rs}(z,0)( \frac{\partial \dot{\xi}_j^a}{\partial z_r}\frac{\partial \xi_j^b(z,0)}{\partial z_s}+\frac{\partial \xi_j^a(z,0)}{\partial z_r}\frac{\partial \dot{\xi}_j^b}{\partial z_s})
\end{align*}
Hence $(**)$ is equivalent to
\begin{align*}
\sum_{c=1}^n g_{cb}^j(\xi_j,0)\frac{\partial \dot{\xi}_j^a}{\partial \xi_j^c}+g_{ac}^j(\xi_j,0)\frac{\partial \dot{\xi}_j^b}{\partial \xi_j^c}=\sum_{r,s=1}^n f_{rs}(z,0)\frac{\partial \dot{\xi}_j^a}{\partial z_r}\frac{\partial \xi_j^b(z,0)}{\partial z_s}+ f_{rs}(z,0)\frac{\partial \xi_j^a(z,0)}{\partial z_r}\frac{\partial \dot{\xi}_j^b}{\partial z_s}
\end{align*}
Indeed,
\begin{align*}
\sum_{c=1}^n g_{cb}^j(\xi_j,0)\frac{\partial \dot{\xi}_j^a}{\partial \xi_j^c}+g_{ac}^j(\xi_j,0)\frac{\partial \dot{\xi}_j^b}{\partial \xi_j^c}&=\sum_{r,s,c=1}^n f_{rs}(z,0)\frac{\partial \xi_j^c(z,0)}{\partial z_r}\frac{\partial \xi_j^b(z,0)}{\partial z_s}\frac{\partial \dot{\xi}_j^a}{\partial \xi_j^c}+ f_{rs}(z,0)\frac{\partial \xi_j^a(z,0)}{\partial z_r}\frac{\partial \xi_j^c(z,0)}{\partial z_s}\frac{\partial \dot{\xi}_j^b}{\partial \xi_j^c}\\
&=\sum_{r,s=1}^n f_{rs}(z,0)\frac{\partial \dot{\xi}_j^a}{\partial z_r}\frac{\partial \xi_j^b(z,0)}{\partial z_s}+ f_{rs}(z,0)\frac{\partial \xi_j^a(z,0)}{\partial z_r}\frac{\partial \dot{\xi}_j^b}{\partial z_s}
\end{align*}
\end{proof}
Going back to our proof of Theorem \ref{n}, we defined the isomorphism between two hypercohomology groups $(b,a)\mapsto ([\Lambda,c]-b)$ where $-\delta c=a$ in the discussion above the Theorem \ref{n}. We take $(b,a)=(\{\sigma_j\},\{\theta_{jk}\})$ and $c=\{\xi_j\}$. Since $-\delta \{\xi_j\}=\{\theta_{jk}\}$, we have $-\delta c=a$. Then by the isomorphism $(\{\sigma_j\},\{\theta_{jk}\})$ is mapped to $([\Lambda,\{\xi_j\}]-\{\sigma_j\}, \bar{\partial}\{\xi_j\})$ which is $(-\dot{\Lambda}, \dot{\varphi})$ by Lemma \ref{beta}.
\end{proof}

\section{Integrability condition}\

We showed that given a Poisson analytic family $(\mathcal{M},\Lambda,B,\omega)$ deformation $(M_t,\Lambda_t)$ of $M$ near $(M_0,\Lambda_0)$ is represented by the vector $(0,1)$-form $\varphi(t)$ and the bivector field $\Lambda(t)$ on $M$ with $\varphi(0)=0$ and $\Lambda(0)=\Lambda_0$ satisfying the conditions: (1)$[\Lambda(t),\Lambda(t)]=0,(2)\bar{\partial} \Lambda(t)-[\Lambda(t),\varphi(t)]=0$ and (3)$\bar{\partial} \varphi(t)-\frac{1}{2}[\varphi(t),\varphi(t)]=0$ 

Conversely, we show that on the holomorphic Poisson manifold $(M,\Lambda_0)$, a vector $(0,1)$-form $\varphi$ and a bivector field $\Lambda$ on $M$ such that $\varphi$ and $\Lambda_0+\Lambda$ satisfying the interability condition define another holomorphic Poisson structure on $M$.

Let $\varphi=\sum_{\lambda=1}^n \varphi^{\lambda}_{\bar{v}}(z)d\bar{z}^v\frac{\partial}{\partial z_{\lambda}}$ be a $C^{\infty}$ vector $(0,1)$-form  and $\Lambda$ be a $C^{\infty}$ bivector field on a holomorphic Poisson manifold $(M,\Lambda_0)$ and suppose $det(\delta_v^{\lambda}-\sum_{\mu} \varphi_{v}^{\mu}\overline{\varphi_{\mu}^{\lambda}})\ne 0$. We assume that $\varphi$ and $\Lambda$ satisfies the integrability condition\footnote{If we replace $\varphi$ by $-\varphi$, then $(1),(2)$, and $(3)$ are equivalent to
\begin{align*}
L(\Lambda+\varphi)+\frac{1}{2}[\Lambda+\varphi,\Lambda+\varphi]=0 \,\,\,\,\text{where}\,\,L=\bar{\partial}+[\Lambda_0,-]
\end{align*}
which is a solution of the Maurer-Cartan equation of a differential graded Lie algebra $(\mathfrak{g}=\bigoplus_{i\geq 0} g^i=\bigoplus_{p+q-1=i, q \geq1} A^{0,p}(M,\wedge^q T_M),L,[-,-])$(See Appendix \ref{appendixc}). In the part \ref{part2} of the thesis, we prove that  this differential graded Lie algebra controls deformations of a holomorphic Poisson manifold $(M,\Lambda_0)$ in the language of functor of Artin rings.}

\begin{align*}
&(1)[\Lambda_0+\Lambda,\Lambda_0+\Lambda]=0\\
&(2)\bar{\partial} (\Lambda_0+\Lambda)-[\Lambda_0+\Lambda,\varphi]=0\\
&(3)\bar{\partial}\varphi-\frac{1}{2}[\varphi,\varphi]=0
\end{align*}

Then by the Newlander-Nirenberg theorem($\cite{New57}$,$\cite{Kod05}$), the condition (3) gives a finite open covering $\{U_j\}$ of $M$  and $C^{\infty}$-functions $\xi_j^{\alpha}=\xi_j^{\alpha}(z),\alpha=1,...,n$ on each $U_j$ such that $\xi_j:z\to \xi_j(z)=(\xi_j^1(z),...,\xi_j^n(z))$ gives complex coordinates on $U_j$ and $\{\xi_1,...,\xi_j,...\}$ defines another complex structure on $M$,which we denote by $M_{\varphi}$. And by Theorem \ref{m}, the condition $(1)$ and $(2)$ gives another holomorphic Poisson structure $(\Lambda_0+\Lambda)^{2,0}$ on $M$ with respect to the complex structure induced by $\varphi$ where $(\Lambda_0+\Lambda)^{2,0}$ means the $(2,0)$-part of $\Lambda_0+\Lambda$ with respect to the complex structure induced by $\varphi$.

\begin{example}[Hitchin-Goto family] 
Hitchin showed the following theorem.

\begin{thm}
Let $(M,\sigma)$ be a holomorphic Poisson manifold which satisfies the $\partial\bar{\partial}$-lemma. Then any class $\sigma([\omega])\in H^1(M,T)$ for $[\omega]\in H^1(M,T^*)$ is tangent to a deformation of complex structure induced by  $\phi(t)=\sigma(\alpha)$ where $\alpha=t\omega+\partial (t^2\beta_2+t^3\beta_3+\cdots)$ for $(0,1)$-forms $\beta_i$ with respect to the original complex structure.
\end{thm}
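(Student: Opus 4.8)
The plan is to produce a convergent power series $\alpha(t)=t\omega+\sum_{k\ge 2}\partial\beta_k\,t^k$ with $\beta_k\in A^{0,1}(M)$ such that $\phi(t):=\sigma(\alpha(t))\in A^{0,1}(M,T_M)$ solves the Maurer--Cartan equation $\bar\partial\phi(t)-\tfrac12[\phi(t),\phi(t)]=0$; by the Newlander--Nirenberg theorem this defines a deformation of the complex structure of $M$, and its Kodaira--Spencer class at $t=0$ is $[\sigma(\omega)]=\sigma([\omega])$, which is the assertion. First I would use the $\partial\bar\partial$-lemma to replace the given $\bar\partial$-closed $(1,1)$-form representative of the class $[\omega]\in H^1(M,T^*)=H^{1,1}_{\bar\partial}(M)$ by a $d$-closed one: if $\omega_0$ is $\bar\partial$-closed then $\partial\omega_0$ is $\bar\partial$-closed and $\partial$-exact, hence $\partial\omega_0=\partial\bar\partial\tau$ for some $(1,0)$-form $\tau$, and $\omega:=\omega_0-\bar\partial\tau$ is cohomologous to $\omega_0$ with $d\omega=0$; in particular $\partial\omega=\bar\partial\omega=0$.

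The key reduction is an algebraic identity forced by the Poisson condition. Since $\sigma$ is holomorphic, $\bar\partial\sigma=0$, so the contraction $\sigma^{\#}$ commutes with $\bar\partial$; and since $[\sigma,\sigma]=0$, a Schouten-bracket computation in local coordinates gives, for every $(1,1)$-form $\alpha$,
\begin{align*}
\bar\partial\,\sigma(\alpha)-\tfrac12\bigl[\sigma(\alpha),\sigma(\alpha)\bigr]=\sigma\!\bigl(\bar\partial\alpha-\tfrac{c}{2}\,\partial q(\alpha)\bigr),
\end{align*}
where $q(\alpha)\in A^{0,2}(M)$ is the scalar $(0,2)$-form obtained by contracting $\alpha$ twice with $\sigma$, and $c$ is a universal constant. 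Granting this, it suffices to solve $\bar\partial\alpha(t)=\tfrac{c}{2}\,\partial q(\alpha(t))$ as an identity of $(1,2)$-forms, with $\alpha(0)=0$ and $(\partial\alpha/\partial t)(0)=\omega$. Establishing this identity with the correct signs and constant is the technical heart of the argument; it is the Poisson analogue of the usual fact that the quadratic term of the deformation equation lies in the expected type.

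Next I would solve the reduced equation recursively. Inserting the ansatz $\alpha(t)=t\omega+\sum_{k\ge2}\partial\beta_k t^k$ and using $\bar\partial\omega=0$, the coefficient of $t^k$ ($k\ge 2$) gives an equation of the shape $\bar\partial\partial\beta_k=\tfrac{c}{2}\,\partial R_k$, where $R_k\in A^{0,2}(M)$ is the degree-$k$ part of $q(\alpha(t))$ and depends only on $\omega,\beta_2,\dots,\beta_{k-1}$. The crucial point, to be checked by induction using the lower-order equations together with the Jacobi identity and $[\sigma,\sigma]=0$, is that $\partial\bar\partial R_k=0$; then $\partial R_k$ is a $d$-closed $(1,2)$-form that is manifestly $\partial$-exact, so by the $\partial\bar\partial$-lemma $\partial R_k=-\partial\bar\partial\mu_k$ for some $(0,1)$-form $\mu_k$. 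Setting $\beta_k:=\tfrac{c}{2}\mu_k$ gives $\bar\partial\partial\beta_k=\tfrac{c}{2}\partial R_k$ and closes the induction; this is precisely the mechanism that forces the corrections to $\alpha(t)$ to be of the form $\partial\beta_k$ with $\beta_k$ a $(0,1)$-form in the original complex structure, as in the statement.

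Finally I would treat convergence. Since $M$ satisfies the $\partial\bar\partial$-lemma, one has the Hodge-theoretic package (harmonic projections and Green's operators for $\bar\partial$ on the relevant bundles), so the $\mu_k$ may be chosen canonically via Green's operators, and Kodaira's majorant-series estimate shows that $\sum_{k\ge2}\partial\beta_k\,t^k$ converges for $|t|$ small. The resulting $\phi(t)=\sigma(\alpha(t))$ is then a genuine convergent solution of the Maurer--Cartan equation with $(\partial\phi/\partial t)(0)=\sigma(\omega)$, exhibiting $\sigma([\omega])$ as the Kodaira--Spencer class of an honest deformation of the complex structure, of the stated form. The main obstacles I anticipate are: establishing the key identity with the correct constants, and the inductive verification that $\partial\bar\partial R_k=0$, i.e.\ making the Poisson condition and the Jacobi identity interact so that the $\partial\bar\partial$-lemma applies in exactly the form needed; the convergence step is then routine elliptic analysis.
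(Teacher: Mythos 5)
The paper does not actually prove this statement: it is quoted from Hitchin's work and the ``proof'' in the thesis is the citation ``See \cite{Hit12}, Theorem 1'' (indeed, later in the same example the author explicitly \emph{assumes} convergence of $\phi(t)$ rather than establishing it). So there is no internal argument to compare against; what you have written is, in outline, a reconstruction of Hitchin's original strategy, which is the Tian--Todorov-style iteration: replace $\omega$ by a $d$-closed representative via the $\partial\bar{\partial}$-lemma, reduce the Maurer--Cartan equation for $\sigma(\alpha)$ to a scalar-form equation through an identity relating the Schouten bracket $[\sigma(\alpha),\sigma(\alpha)]$ to $\sigma$ applied to $\partial$ of the double contraction, solve order by order using the $\partial\bar{\partial}$-lemma, and handle convergence by a majorant argument.

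Two points in your write-up need repair before this counts as a proof. First, your ``key identity'' is asserted \emph{for every} $(1,1)$-form $\alpha$, and in that generality it is false: the clean identity $\bar{\partial}\,\sigma(\alpha)-\tfrac12[\sigma(\alpha),\sigma(\alpha)]=\sigma\bigl(\bar{\partial}\alpha-\tfrac{c}{2}\,\partial q(\alpha)\bigr)$ requires $\partial\alpha=0$; for general $\alpha$ the Koszul-bracket computation produces additional terms involving $\partial\alpha$ contracted with $\sigma$. This hypothesis is not a side remark but the structural reason for the ansatz: $\omega$ is first made $d$-closed and the corrections are taken of the form $\partial\beta_k$ precisely so that $\partial\alpha(t)=0$ identically and the extra terms drop out. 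As written, your reduction to $\bar{\partial}\alpha(t)=\tfrac{c}{2}\,\partial q(\alpha(t))$ rests on an incorrect statement, even though the ansatz happens to live in the regime where the corrected statement applies. Second, the two steps you defer --- proving the identity with its constants, and the inductive verification that $\bar{\partial}\partial R_k=0$ so that the $\partial\bar{\partial}$-lemma yields $\beta_k$ --- are the entire mathematical content of the theorem; without them the proposal is a plan, not a proof. Your convergence paragraph is plausible (canonical choices of $\mu_k$ via Green's operators plus Kodaira's majorant estimates), but note that even the thesis you are working from does not carry this out and instead assumes convergence, so calling it ``routine'' should be backed by the actual elliptic estimates for the chosen $\mu_k$.
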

\begin{proof}
See $\cite{Hit12}$ theorem 1.
\end{proof}

And also Hitchin showed for each $\phi(t)$, there is a holomorphic Poisson structure $\sigma_t$ with respect to $M_{\phi(t)}$. We construct a Poisson analytic family $(\mathcal{M},\Lambda)$ such that $(M_t,\Lambda_t)=(M_{\phi(t)},\sigma_t)$ by showing that $(\phi(t),\sigma)$ satisfies the integrability condition.

\begin{lemma}\label{ab}
Let $\sigma \in C^{\infty}(\wedge^ 2T_M)$ be a bivector of a complex manifold $M$ such that $[\sigma,\sigma]=0$. Then we have $[\sigma,\sigma(\partial \beta)]=0$ where $\beta$ is an $(0,1)$-form.

\end{lemma}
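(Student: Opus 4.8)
The plan is to reduce the statement to the graded Jacobi identity for the differential graded Lie algebra $(A[1],\bar\partial,[-,-])$ recalled above, where $A=\bigoplus_{p,q}A^{0,p}(M,\wedge^q T_M)$. The key point I would establish first is that $\sigma(\partial\beta)$, regarded as an element of $A^{0,1}(M,\wedge^1 T_M)$, coincides up to a nonzero scalar with the bracket $[\sigma,\beta]$, where $\beta\in A^{0,1}(M)=A^{0,1}(M,\wedge^0 T_M)$. Granting this, one has $[\sigma,\sigma(\partial\beta)]=\pm[\sigma,[\sigma,\beta]]$, and applying property (2) of the bracket with $a=b=\sigma$ — for which the sign is $(-1)^{(0+2+1)^2}=-1$ — yields $2[\sigma,[\sigma,\beta]]=[[\sigma,\sigma],\beta]$; since $[\sigma,\sigma]=0$ by hypothesis, it follows that $[\sigma,\sigma(\partial\beta)]=0$.

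To justify the identity $\sigma(\partial\beta)=\pm[\sigma,\beta]$ I would work in local holomorphic coordinates: write $\sigma=\sum_{i,j}\sigma^{ij}\partial_{z_i}\wedge\partial_{z_j}$ and $\beta=\sum_k\beta_{\bar k}\,d\bar z_k$. The local formula for $[-,-]$ (with $d\bar z$'s in place of the $dz$'s) gives $[\sigma,\beta_{\bar k}\,d\bar z_k]=\pm\,d\bar z_k\wedge[\sigma,\beta_{\bar k}]$, where $[\sigma,\beta_{\bar k}]$ is the ordinary Schouten bracket of the bivector $\sigma$ with the function $\beta_{\bar k}$; in that bracket the coefficients $\sigma^{ij}$ are not differentiated and $[\sigma,\beta_{\bar k}]=\pm\,\iota_{\partial\beta_{\bar k}}\sigma=\pm\,\sigma^{\sharp}(\partial\beta_{\bar k})$, because $\sigma$ pairs only with the $(1,0)$-part of $d\beta_{\bar k}$. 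On the other hand $\partial\beta=\sum_k(\partial\beta_{\bar k})\wedge d\bar z_k$, so contracting the $(1,0)$-legs with $\sigma$ gives $\sigma(\partial\beta)=\sum_k\sigma^{\sharp}(\partial\beta_{\bar k})\wedge d\bar z_k$. Comparing the two expressions — and noting that both sides are globally defined, so the local identity globalizes — gives the claim. Holomorphicity of $\sigma$ plays no role here; only that $\sigma$ is a $(2,0)$-bivector.

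The genuine bookkeeping lies in tracking the Koszul signs dictated by the grading conventions of Appendix \ref{appendixc}, both in the local bracket formula and in commuting $d\bar z_k$ past a vector field. I expect this to be the only real obstacle, and it is harmless: any nonzero scalar multiple of $[\sigma,[\sigma,\beta]]$ vanishes, so the precise sign never matters. An alternative phrasing that sidesteps the signs entirely is to note that each $\sigma^{\sharp}(\partial\beta_{\bar k})$ is the Hamiltonian vector field $X_{\beta_{\bar k}}$ of the local function $\beta_{\bar k}$ with respect to the Poisson bivector $\sigma$, that $[\sigma,X_f]=\pm[\sigma,[\sigma,f]]=0$ for every function $f$ by the same graded Jacobi identity, and that $[\sigma,-]$ applied to $\sum_k d\bar z_k\otimes\sigma^{\sharp}(\partial\beta_{\bar k})$ only brings down these vanishing brackets.
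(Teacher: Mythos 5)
Your proposal is correct, and it reaches the conclusion by a genuinely different mechanism than the paper. Both arguments begin with the same local identification: writing $\sigma=\sum\sigma^{lk}\partial_{z_l}\wedge\partial_{z_k}$ and $\beta=\sum f_i\,d\bar z_i$, one has $\sigma(\partial\beta)=\sum_i 2\sigma^{lk}\tfrac{\partial f_i}{\partial z_l}\partial_{z_k}\,d\bar z_i$, i.e.\ $d\bar z_i$ wedged with the Hamiltonian-type vector fields $\sigma^{\sharp}(\partial f_i)$; your observation that this equals $\pm[\sigma,\beta]$ in the full algebra $\bigoplus_{p,q\ge 0}A^{0,p}(M,\wedge^q T_M)$ of Appendix \ref{appendixc} is a correct (and slightly stronger) global repackaging of that step, and the sign is indeed immaterial since the target is zero. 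Where you diverge is the vanishing step: the paper proves $\bigl[\sigma,\sigma^{lk}\tfrac{\partial f_i}{\partial z_l}\partial_{z_k}\bigr]=0$ by brute force — expanding the Schouten bracket, cancelling the second-derivative terms $\sigma^{pq}\sigma^{lk}\tfrac{\partial^2 f_i}{\partial z_p\partial z_l}$ in antisymmetric pairs, and recognizing the surviving first-order terms as the coordinate expression of $[\sigma,\sigma]=0$ from Lemma \ref{formula} — whereas you invoke the graded Jacobi identity (with the sign $(-1)^{3\cdot 3}=-1$ for $a=b=\sigma$, correctly computed) to get $2[\sigma,[\sigma,\beta]]=[[\sigma,\sigma],\beta]=0$. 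Your route is shorter, makes transparent that holomorphicity of $\sigma$ is never used, and is insensitive to the Koszul signs; its only requirement is that the bracket and its Jacobi identity are available on the degree-$(0,1)$-form part of the algebra, which the paper's Appendix \ref{appendixc} does provide (the Chapter 2 version of the algebra restricts to $q\ge 1$, so one must cite the appendix, as you do). The paper's computation, by contrast, stays entirely within explicit coordinate formulas and Lemma \ref{formula}, matching the computational style of the surrounding chapter but at the cost of length. No gap in your argument.
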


\begin{proof}
If we write $\sigma=\sum_{l,k=1}^n \sigma^{lk}\frac{\partial}{\partial z_l}\wedge \frac{\partial}{\partial z_k}$ and $\beta= \sum_{i=1}^n f_id\bar{z_i}$. Then $\partial \beta=\sum_{i,j=1}^n \frac{\partial f_i}{\partial z_j} dz_j\wedge d\bar{z_i}$.  Then $\sigma(\partial \beta)=\sum_{i,l,k} \sigma^{lk}\frac{\partial f_i}{\partial z_l}\frac{\partial}{\partial z_k}d\bar{z_i}-\sigma^{lk}\frac{\partial f_i}{\partial z_k}\frac{\partial}{\partial z_l}d\bar{z_i}=\sum_{i,l,k} 2\sigma^{lk}\frac{\partial f_i}{\partial z_l}\frac{\partial}{\partial z_k}d\bar{z_i}$. So it is sufficient to show that 
\begin{align*}
\sum_{p,q,l,k} [\sigma^{pq}\frac{\partial}{\partial z_p}\wedge \frac{\partial}{\partial z_q},\sigma^{lk}\frac{\partial f_i}{\partial z_l}\frac{\partial}{\partial z_k}]=0
\end{align*}

$\sum_{p,q,l,k} [\sigma^{pq}\frac{\partial}{\partial z_p}\wedge \frac{\partial}{\partial z_q},\sigma^{lk}\frac{\partial f_i}{\partial z_l}\frac{\partial}{\partial z_k}]=\sum_{p,q,l,k} [\sigma^{pq}\frac{\partial}{\partial z_p},\sigma^{lk}\frac{\partial f_i}{\partial z_l}\frac{\partial}{\partial z_k}]\frac{\partial}{\partial z_q}-\sigma^{pq}[\frac{\partial}{\partial z_q},\sigma^{lk}\frac{\partial f_i}{\partial z_l}\frac{\partial}{\partial z_k}]\frac{\partial}{\partial z_p}\\=\sum_{p,q,l,k} \sigma^{pq}\frac{\partial \sigma^{lk}}{\partial z_p}\frac{\partial f_i}{\partial z_l}\frac{\partial}{\partial z_k}\wedge \frac{\partial}{\partial z_q}+\sigma^{pq}\sigma^{lk}\frac{\partial^2 f_{i}}{\partial z_p \partial z_l}\frac{\partial}{\partial z_k}\wedge \frac{\partial }{\partial z_q}-\sigma^{lk}\frac{\partial f_i}{\partial z_l}\frac{\partial \sigma^{pq}}{\partial z_k}\frac{\partial}{\partial z_p}\wedge \frac{\partial}{\partial z_q}-\sigma^{pq}\frac{\partial \sigma^{lk}}{\partial z_q}\frac{\partial f_i}{\partial z_l}\frac{\partial}{\partial z_k}\wedge \frac{\partial}{\partial z_p}-\sigma^{pq}\sigma^{lk}\frac{\partial^2 f_{i}}{\partial z_q\partial z_l}\frac{\partial}{\partial z_k}\wedge \frac{\partial}{\partial z_p}$.

Let's consider $\sum_{p,q,l,k} \sigma^{pq}\sigma^{lk}\frac{\partial^2 f_{i}}{\partial z_p \partial z_l}\frac{\partial}{\partial z_k}\wedge \frac{\partial }{\partial z_q}-\sigma^{pq}\sigma^{lk}\frac{\partial^2 f_{i}}{\partial z_q\partial z_l}\frac{\partial}{\partial z_k}\wedge \frac{\partial}{\partial z_p}=2\sum_{p,q,l,k} \sigma^{pq}\sigma^{lk}\frac{\partial^2 f_{i}}{\partial z_p \partial z_l}\frac{\partial}{\partial z_k}\wedge \frac{\partial }{\partial z_q}$. By considering the coefficient of $\frac{\partial}{\partial z_a}\wedge \frac{\partial}{\partial z_b}$ of $\sum_{p,q,l,k} \sigma^{pq}\sigma^{lk}\frac{\partial^2 f_{i}}{\partial z_p \partial z_l}\frac{\partial}{\partial z_k}\wedge \frac{\partial }{\partial z_q}$, we have $\sum_{p,l} \sigma^{pb}\sigma^{la}\frac{\partial^2 f_{i}}{\partial z_p \partial z_l}\frac{\partial}{\partial z_a}\wedge \frac{\partial }{\partial z_b}-\sum_{p,l} \sigma^{pa}\sigma^{lb}\frac{\partial^2 f_{i}}{\partial z_p \partial z_l}\frac{\partial}{\partial z_a}\wedge \frac{\partial }{\partial z_b}=\sum_{p,l} \sigma^{pb}\sigma^{la}\frac{\partial^2 f_{i}}{\partial z_p \partial z_l}\frac{\partial}{\partial z_a}\wedge \frac{\partial }{\partial z_b}-\sum_{p,l} \sigma^{la}\sigma^{pb}\frac{\partial^2 f_{i}}{\partial z_l \partial z_p}\frac{\partial}{\partial z_a}\wedge \frac{\partial }{\partial z_b}=0$

So we have $\sum_{p,q,l,k} [\sigma^{pq}\frac{\partial}{\partial z_p}\wedge \frac{\partial}{\partial z_q},\sigma^{lk}\frac{\partial f_i}{\partial z_l}\frac{\partial}{\partial z_k}]=\sum_{p,q,l,k} \sigma^{pq}\frac{\partial \sigma^{lk}}{\partial z_p}\frac{\partial f_i}{\partial z_l}\frac{\partial}{\partial z_k}\wedge \frac{\partial}{\partial z_q}-\sigma^{lk}\frac{\partial f_i}{\partial z_l}\frac{\partial \sigma^{pq}}{\partial z_k}\frac{\partial}{\partial z_p}\wedge \frac{\partial}{\partial z_q}-\sigma^{pq}\frac{\partial \sigma^{lk}}{\partial z_q}\frac{\partial f_i}{\partial z_l}\frac{\partial}{\partial z_k}\wedge \frac{\partial}{\partial z_p}=\sum_{q,k,l}\left( \sum_p \sigma^{pq}\frac{\partial \sigma^{lk}}{\partial z_p}\frac{\partial f_i}{\partial z_l}\frac{\partial}{\partial z_k}\wedge \frac{\partial}{\partial z_q}-\sigma^{lp}\frac{\partial f_i}{\partial z_l}\frac{\partial \sigma^{kq}}{\partial z_p}\frac{\partial}{\partial z_k}\wedge \frac{\partial}{\partial z_q}-\sigma^{qp}\frac{\partial \sigma^{lk}}{\partial z_p}\frac{\partial f_i}{\partial z_l}\frac{\partial}{\partial z_k}\wedge \frac{\partial}{\partial z_q}\right)=\sum_{q,k,l}\left( \sum_p \sigma^{pq}\frac{\partial \sigma^{lk}}{\partial z_p}\frac{\partial f_i}{\partial z_l}\frac{\partial}{\partial z_k}\wedge \frac{\partial}{\partial z_q}+\sigma^{pl}\frac{\partial f_i}{\partial z_l}\frac{\partial \sigma^{kq}}{\partial z_p}\frac{\partial}{\partial z_k}\wedge \frac{\partial}{\partial z_q}+\sigma^{kp}\frac{\partial \sigma^{lq}}{\partial z_p}\frac{\partial f_i}{\partial z_l}\frac{\partial}{\partial z_k}\wedge \frac{\partial}{\partial z_q}\right)=0 $ by Lemma \ref{formula}.
\end{proof}

Now we assume that $\phi(t)=\sigma(\alpha)$ constructed by Hitchin converges for $t\in \Delta \subset \mathbb{C}$. And we know that $\psi(t)=-\phi(t)$ satisfies $\bar{\partial}\psi(t)-\frac{1}{2}[\psi(t),\psi(t)]=0$. We can consider $\phi:=\phi(t)$ as a $C^{\infty} (0,1)$-vector on $M\times \Delta$. Then by Newlander-Nirenberg theorem$($$\cite{New57}$,$\cite{Kod05}$ p.268$)$, we can give a holomorphic coordinate on $M\times \Delta$ induced by $\phi$ (more precisely $-\phi$) . Let's denote the complex manifold induced by $\phi$ by $\mathcal{M}$. Then $\omega:\mathcal{M}\to \Delta$ is a family of compact complex manifolds. If we choosee a sufficiently fine locally finite open covering $\{U_j\}$ of $M$, we have $n+1$ holomorphic coordinates $\xi_j^{\beta}(z,t),\beta=1,...,n$ and $t$ on each $U_j\times \Delta$, and the map
\begin{equation*}
\xi_j:(z,t)\to(\xi_j^1(z,t),...,\xi_j^{n}(z,t),t)
\end{equation*}
gives local complex coordinates of $\mathcal{M}$ on $U_j\times \Delta$.

And we can think of $\sigma$  on $M$ as a $C^{\infty}$ bivector on $M\times \Delta$. $($more precisely, $\sigma\oplus 0$ $)$. We note that a bivector $(2,0)$ part $\Lambda$ of $\sigma\in C^{\infty}(\wedge^2 T_M)$ on $M\times \Delta$ is holomorphic with respect to the complex structure $\mathcal{M}$, in other words with respect to coordinates systems $\xi_j(z,t)$, if and only if it satisfies
\begin{align*}
\bar{\partial} \sigma +[\sigma,\phi(t)]=0
\end{align*}

Since $\sigma$ satisfies the equation by Lemma \ref{ab}, $\Lambda$ is a holomrphic bivector field on $\mathcal{M}$ and $\Lambda_t$ induces Poisson holomorphic structure on $M_{\phi(t)}$ for each $t$. If we write $\sigma=\sum_{\alpha,\beta=1}^n  \sigma^{\alpha\beta}(z_j)\frac{\partial}{\partial z_j^{\alpha}} \wedge \frac{\partial}{\partial z_j^{\beta}}$ on $U_j\times \Delta$, then in new complex coordinate systems, it becomes $\Lambda=\sum_{p,q,\alpha,\beta=1}^n \sigma^{\alpha\beta}(z_j)\frac{\partial \xi_j^p}{\partial z_j^{\alpha}}\frac{\partial \xi_j^q}{\partial z_j^q}\frac{\partial}{\partial \xi_j^p}\wedge \frac{\partial}{\partial \xi_j^q}$. So we have a Poisson analytic family $(\mathcal{M},\Lambda)$. For each $t$, we have 
\begin{align*}
\Lambda_t=\sum_{p,q,\alpha,\beta=1}^n \sigma^{\alpha\beta}(z_j)\frac{\partial \xi_j^p(z_j,t)}{\partial z_j^{\alpha}}\frac{\partial \xi_j^q(z_j,t)}{\partial z_j^q}\frac{\partial}{\partial \xi_j^p}\wedge \frac{\partial}{\partial \xi_j^q}
\end{align*}

On the other hand, Hitchin defines a holomorphic Poisson structure on $M_{\phi(t)}$ in the following way:
\begin{align*}
\sigma_t(f,g)=\sigma(\partial f,\partial g)
\end{align*}
for $f,g$ local holomorphic functions with respect to the complex structure at $t$. So we have $\sigma_t(\xi_j^p(z,t),\xi_j^q(z,t))=\sum_{\alpha,\beta=1}^n \sigma^{\alpha\beta}(z_j)\frac{\partial \xi_j^p(z_j,t)}{\partial z_j^{\alpha}}\frac{\partial \xi_j^q(z_j,t)}{\partial z_j^q}$. This implies that $(\mathcal{M}_t,\Lambda_t)=(M_{\phi(t)},\sigma_t)$. And since $\Lambda$ does not depend on $t$, we have 0 in the Poisson direction under the Poisson Kodaira Spencer map $\varphi_0: T_0\Delta\to HP^2(M,\sigma)$ by Theorem \ref{n}. More precisely $\varphi_0(\frac{\partial}{\partial t})=(\sigma([\omega]),0)$.

\end{example}

\chapter{Theorem of Existence for holomorphic Poisson structures}\label{chapter3}

\section{Theorem of existence}
In this chapter, we prove theorem of existence for holomorphic Poisson deformations under the assumption (\ref{assumption}) as an analogue of theorem of existence for deformations of complex structure.
\subsection{Statement of the theorem}\

Before we state the theorem of existence, we discuss the assumption (\ref{assumption}) \footnote{For my unfamiliarity of analysis, I do not know that we can relax the assumption} that we use in the proof of theorem of existence. Let $(M,\Lambda)$ be compact holomorphic Poisson manifold. First we note that the differential operator $L=\bar{\partial} +[\Lambda,-]$ is elliptic and so we have operators $L^*, H, G$ and $\Box=LL^*+L^*L$.(See Appendix \ref{appendixd})

we  introduce the H\"{o}lder norms in the spaces $A^p=A^{0,p-1}(M,T)\oplus \cdots \oplus A^{0,0}(M, \wedge^p T)$. To do this, we fix a finite open covering $\{U_j\}$ of $M$ such that $(z_j)$ are coordinates  on $U_j$. Let $\varphi\in A^p$,
\begin{align*}
\varphi=\sum_{r+s=p, s\geq 1} \varphi_{j \alpha_1\cdots\alpha_r\beta_1\cdots\beta_s}(z)d\bar{z}_j^{\alpha_1}\wedge \cdots \wedge d\bar{z}_j^{\alpha_r}\wedge \frac{\partial}{\partial z_j^{\beta_1}}\wedge\cdots \wedge\frac{\partial}{\partial z_j^{\beta_s}}
\end{align*}
Let $k\in \mathbb{Z},k\geq 0,\alpha\in \mathbb{R},0<\alpha<1$. Let $h=(h_1,...,h_{2n}),h_i\geq 0,\sum_{i=1}^{2n} h_i=|h|$ where $n=\dim\, M$. Then denote
\begin{align*}
D_j^h=\left(\frac{\partial}{\partial x_j^1}\right)^{h_1}\cdots \left(\frac{\partial}{\partial x_j^{2n}}\right)^{h_{2n}},\,\,\,\,\,z_j^{\alpha}=x_j^{2\alpha-1}+ix_j^{2\alpha}
\end{align*}

Then the H\"{o}lder norm $||\varphi||_{k+\alpha}$ is defined as follows:
\begin{align*}
||\varphi||_{k+\alpha}=\max_j \{ \sum_{h, |h|\leq k}\left( \sup_{z\in U_j}|D_j^h \varphi_{j \alpha_1\cdots\alpha_r\beta_1\cdots\beta_s}(z)|\right)+\sup_{y,z\in U_j,|h|=k} 
\frac{|D_j^h \varphi_{j \alpha_1\cdots\alpha_r\beta_1\cdots\beta_s}(y)-D_j^h \varphi_{j \alpha_1\cdots\alpha_r\beta_1\cdots\beta_s}(z)|}{|y-z|^{\alpha}} \},
\end{align*}
where the sup is over all $\alpha_1,...,\alpha_r,\beta_1,...,\beta_s$

Our assumption is the following. For any $\varphi\in A^2$,
\begin{equation}\label{assumption}
||\varphi||_{k+\alpha}\leq C(|| \Box \varphi||_{k-2+\alpha}+||\varphi||_0)
\end{equation}
where $k\geq 2$, $C$ is a constant which is independent of $\varphi$
\begin{center}
and $||\varphi||_0=\max_{j,\alpha_1,...,\beta_s} \sup_{z\in U_j} |\varphi_{j \alpha_1\cdots\alpha_r\beta_1\cdots\beta_s}(z)|$.
\end{center}
Now we state the theorem of existence of holomorphic Poisson deformations.
\begin{thm}[Theorem of Existence]\label{theorem of existence}
Let $(M,\Lambda_0)$ be a compact holomorphic Poisson manifold satisfying $($\ref{assumption}$)$ and suppose that $HP^3(M,\Lambda_0)=0$. Then there exists a Poisson analytic family $(
\mathcal{M},\Lambda,B,\omega)$ with $0\in B\subset \mathbb{C}^m$ satisfying the following conditions:
\begin{enumerate}
\item $\omega^{-1}(0)=(M,\Lambda_0)$
\item $\varphi_0:\frac{\partial}{\partial t}\to \left(\frac{\partial (M_t,\Lambda_t)}{\partial t}\right)_{t=0}$ with $(M_t,\Lambda_t)=\omega^{-1}(t)$ is an isomorphism of $T_0(B)$ onto $HP^2(M,\Lambda_0):T_0 B\xrightarrow{\rho_0} HP^2(M,\Lambda_0)$.
\end{enumerate}
\end{thm}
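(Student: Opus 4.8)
The plan is to imitate, line for line, Kodaira's proof of the Theorem of Existence for complex structures (as presented in \cite{Kod05} and \cite{Mor71}), replacing the Dolbeault complex $A^{0,\bullet}(M,T_M)$ with the truncated holomorphic Poisson complex $A^{\bullet} = \bigoplus_{p+q-1=\bullet,\,q\ge 1} A^{0,p}(M,\wedge^q T_M)$ and the operator $\bar\partial$ with $L = \bar\partial + [\Lambda_0,-]$. By the discussion in Chapter \ref{chapter2}, a Poisson analytic family near $(M,\Lambda_0)$ is the same thing as a solution $\psi(t) = \varphi(t) + \Lambda'(t)$, with $\psi(0)=0$, of the Maurer--Cartan equation $L\psi(t) + \tfrac12[\psi(t),\psi(t)] = 0$ in the DGLA $(\mathfrak g, L, [-,-])$. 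So the task reduces to: construct a convergent power series $\psi(t) = \sum_{|\nu|\ge 1}\psi_\nu t^\nu$ in $m = \dim HP^2(M,\Lambda_0)$ parameters, solving the Maurer--Cartan equation, whose linear term $\psi_1 = \sum_i \beta_i t_i$ realizes a basis $\beta_1,\dots,\beta_m$ of the harmonic space $\mathbb H^2 = \ker\Box \subset A^2$, and then invoke Theorem \ref{m} (together with the Newlander--Nirenberg theorem) to interpret $\psi(t)$ geometrically as the desired family $(\mathcal M,\Lambda,B,\omega)$ with the stated Poisson Kodaira--Spencer isomorphism.

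The key steps, in order, are as follows. \emph{Step 1 (formal solution).} Write the equation as $\psi(t) = L^*G\,Q(\psi(t))$ where $Q(\psi) = -\tfrac12[\psi,\psi]$ and $G$ is the Green operator for $\Box = LL^*+L^*L$, after imposing the gauge/harmonic normalization $H\psi(t) = \psi_1(t)$ (the harmonic part is prescribed) and $L^*\psi(t)=0$. Solving degree by degree in $t$: at order $\nu$ one needs $L^*G$ applied to a polynomial in the lower-order $\psi_\mu$; the obstruction to continuing is that the relevant quadratic expression be $L$-closed and have vanishing harmonic projection, i.e.\ lie in the image of $L$ on $A^2$, which is where $HP^3(M,\Lambda_0)=0$ enters — it forces $\mathbb H^3 = 0$ so every obstruction class vanishes and the recursion never stops. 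This is the exact analogue of Kodaira's use of $H^2(M,\Theta)=0$. \emph{Step 2 (convergence).} Estimate $\|\psi_\nu\|_{k+\alpha}$ by the majorant-series method: one shows $\psi(t) \ll b\,\dfrac{A(t)}{1 - (A(t)/b)}$ (Kodaira's classical majorant, with $A(t) = \sum_i t_i$ suitably scaled) using the a priori elliptic estimate. Precisely here the assumption (\ref{assumption}), $\|\varphi\|_{k+\alpha} \le C(\|\Box\varphi\|_{k-2+\alpha} + \|\varphi\|_0)$, is invoked exactly as Kodaira invokes the corresponding estimate for $\bar\partial$: it controls $L^*G$ on the H\"older scale and gives the ``$\|\psi_\nu\|_{k+\alpha} \le$ (majorant coefficient)'' induction. \emph{Step 3 (geometric interpretation).} With $\psi(t) = \varphi(t) + \Lambda'(t)$ convergent for $t$ in a small polydisk $B$, the component $\varphi(t)$ satisfies $\bar\partial\varphi(t) - \tfrac12[\varphi(t),\varphi(t)] = 0$, so by Newlander--Nirenberg it defines a complex structure $M_t$ and hence a complex analytic family $(\mathcal M,B,\omega)$; the component $\Lambda_0+\Lambda'(t)$ then satisfies conditions (1),(2) of Theorem \ref{m}, so its $(2,0)$-part $\Lambda_t$ is a holomorphic Poisson structure on $M_t$ with $[\Lambda_t,\Lambda_t]=0$, giving the Poisson analytic family. \emph{Step 4 (Kodaira--Spencer isomorphism).} By Theorem \ref{n}, the Poisson Kodaira--Spencer map sends $\partial/\partial t_i$ to the class of $(\dot\varphi, -\dot\Lambda)$ at $t=0$, which is $\psi_i = \beta_i$; since $\{\beta_i\}$ was chosen to be a basis of $\mathbb H^2 \cong HP^2(M,\Lambda_0)$, $\varphi_0$ is an isomorphism.

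The main obstacle I expect is \textbf{Step 2, the convergence argument}, and this is exactly what the author flags: the majorant estimates require genuine control of the analytic properties of $L = \bar\partial + [\Lambda_0,-]$ — ellipticity of $\Box$, the H\"older-space mapping properties of $L^*$ and $G$, and finiteness of the harmonic spaces — and the nonlinearity $[\psi,\psi]$ now mixes the $\bar\partial$-form degree and the polyvector degree, so one must check that the Schouten bracket is still a bounded bilinear operation losing exactly one derivative in these norms and that the recursion's ``formal'' part (Step 1) interacts with the norms the way Kodaira's does. The assumption (\ref{assumption}) is precisely the hypothesis that lets one bypass proving these analytic facts from scratch: granting it, the whole convergence scheme of \cite{Mor71} transcribes verbatim, with $T_M$ replaced by the complex $\wedge^\bullet T_M$. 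The formal obstruction step (Step 1) is comparatively routine once $HP^3 = 0$ is assumed, and Steps 3--4 are essentially bookkeeping citing Theorems \ref{m} and \ref{n} and Newlander--Nirenberg.
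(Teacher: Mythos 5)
Your proposal follows essentially the same route as the paper: the Kuranishi scheme of \cite{Mor71} transplanted to the DGLA $(\mathfrak{g},\,L=\bar\partial+[\Lambda_0,-],\,[-,-])$, with the assumption (\ref{assumption}) supplying the H\"older estimates for $G$ and $L^*G$ in the convergence step, and Newlander--Nirenberg together with Theorems \ref{m} and \ref{n} giving the geometric interpretation and the Poisson Kodaira--Spencer isomorphism. One small discrepancy: in the fixed-point formulation $\beta(t)=\beta_1(t)-\tfrac12 L^*G[\beta(t),\beta(t)]$ that you (and the paper) adopt, the formal recursion is unobstructed, and the paper invokes $HP^3(M,\Lambda_0)=0$ only afterwards, to get $H[\beta(t),\beta(t)]=0$ and hence, via the estimate argument showing $\psi(t)=L^*G[\psi(t),\beta(t)]$ forces $\psi(t)=0$ for small $t$, that $\beta(t)$ genuinely solves the Maurer--Cartan equation --- rather than to kill order-by-order obstruction classes as in Kodaira's original power-series scheme, which is how you describe its role.
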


Assume that there is a Poisson analytic family $(\mathcal{M},\Lambda,B,\omega)$ satisfying $(1)$ and $(2)$. Take a sufficiently small $\Delta$ with $0\in \Delta \subset B$, and the $C^{\infty}$ vector $(0,1)$-form $\varphi(t)=\sum_{\lambda=1}^{n} \varphi^{\lambda}(z,t)\frac{\partial}{\partial z_{\lambda}}$ and $\Lambda(t)$ on $M$ defined by (\ref{b}) and (\ref{f}) defined by $(\mathcal{M}_{\Delta},\Lambda_{\Delta},\Delta, \omega)$. Then we have $[\Lambda(t),\Lambda(t)]=0,\bar{\partial}\Lambda(t)-[\Lambda(t),\varphi(t)]=0,\bar{\partial}\varphi(t)=\frac{1}{2}[\varphi(t),\varphi(t)]$ and we have $\varphi(0)=0,\Lambda(0)=\Lambda_0$. If we put
\begin{align*}
(\dot{\varphi}_{\lambda},-\dot{\Lambda}_{\lambda})=(\left(\frac{\partial \varphi(t)}{\partial t_{\lambda}}\right)_{t=0}, -\left(\frac{\partial \Lambda(t)}{\partial t_{\lambda}}\right)_{t=0}),\,\,\,\,\, \lambda=1,...,m,
\end{align*}
Then $\{(\dot{\varphi}_{1},-\dot{\Lambda}_{1}),...,(\dot{\varphi}_{m},-\dot{\Lambda}_m)\}$ forms a basis of $HP^2(M,\Lambda_0)$.

Conversely, given $(\beta_{\lambda},\pi_{\lambda})\in A^{0,1}(M,T)\oplus A^{2,0}(M,\wedge^2 T)$ for $\lambda=1,...,m$, such that $\{(\eta_{1},\pi_{1}),...,(\eta_{m},\pi_{m})\}$ forms a basis of $HP^2(M,\Lambda_0)$, assume that there is a family $\{(\varphi(t),\Lambda(t))|t\in \Delta \}$ of $C^{\infty}$ vector $(0,1)$-forms $\varphi(t)$ and $(2,0)$ vector $\Lambda(t)$ on $M$ with $0\in \Delta \subset \mathbb{C}^m$, which satisfy 
\begin{enumerate}
\item $[\Lambda(t),\Lambda(t)]=0$\\
\item $\bar{\partial} \Lambda(t)-[\Lambda(t),\varphi(t)]=0$\\
\item $\bar{\partial}\varphi(t)=\frac{1}{2}[\varphi(t),\varphi(t)]$
\end{enumerate}

and the initial conditions
\begin{align*}
\varphi(0)=0, \Lambda(0)=\Lambda_0,(\left(\frac{\partial \varphi(t)}{\partial t_{\lambda}}\right)_{t=0}, -\left(\frac{\partial \Lambda(t)}{\partial t_{\lambda}}\right)_{t=0})=(\eta_{\lambda},\pi_{\lambda}),\,\,\,\,\, \lambda=1,...,m,
\end{align*}

Since $\Delta$ is assumed to be sufficiently small, we may assume that $\varphi(t)=\sum_{\lambda}\sum_{v} \varphi^{\lambda}_v d\bar{z}_v\frac{\partial}{\partial z_{\lambda}}$ satisfies

\begin{align*}
det(\delta^{\lambda}_v-\sum_{\mu}\varphi^{\mu}_v(t)\overline{\varphi^{\lambda}_{\mu}(t)})\ne 0.
\end{align*}

Therefore, by the Newlander-Nirenberg theorem($\cite{New57}$,$\cite{Kod05}$ p.268), each $\varphi(t)$ determines a complex structure $M_{\varphi(t)}$ on $M$. And condition $(2)$,$(3)$ implies $(2,0)$-part $\Lambda(t)^{2,0}$ of $\Lambda(t)$ is a holomorphic Poisson structure on $M_{\varphi(t)}$. If the family $\{M_{\varphi(t)},\Lambda(t)^{2,0}\}$  is a Poisson analytic family, it satisfies the conditions (1) and (2) in Theorem \ref{theorem of existence} by our assumption and Theorem \ref{n}. we construct such a family $\{(\varphi(t),\Lambda(t))|t\in \Delta \}$ and then show that  $\{(M_{\varphi(t)},\Lambda(t)^{2,0})\}$ is a Poisson analytic family.(See \ref{subsection})

\begin{remark}
Constructing $\{\varphi(t),\Lambda(t)\}$ is equivalent to constructing $\{-\varphi(t),\Lambda(t)\}$. By replacing $\varphi(t)$ by $-\varphi(t)$, we construct $\varphi(t)$ satisfying
\begin{enumerate}
\item $[\Lambda(t),\Lambda(t)]=0$\\
\item $\bar{\partial} \Lambda(t)+[\Lambda(t),\varphi(t)]=0$\\
\item $\bar{\partial}\varphi(t)+\frac{1}{2}[\varphi(t),\varphi(t)]=0$
\end{enumerate}

and the initial conditions
\begin{align*}
\varphi(0)=0, \Lambda(0)=\Lambda_0,(-\left(\frac{\partial \varphi(t)}{\partial t_{\lambda}}\right)_{t=0}, -\left(\frac{\partial \Lambda(t)}{\partial t_{\lambda}}\right)_{t=0})=(-\eta_{\lambda},\pi_{\lambda}),\,\,\,\,\, \lambda=1,...,m,
\end{align*}
And we note that $(1),(2),(3)$ are equivalent to 

\begin{center}
$\bar{\partial} (\varphi(t)+\Lambda(t))+\frac{1}{2}[\varphi(t)+\Lambda(t),\varphi(t)+\Lambda(t)]=0$ 
\end{center}
\end{remark}

We construct $\alpha(t)=\varphi(t)+\Lambda(t)$ in the following section.

\subsection{Construction of $\alpha(t)=$$\varphi(t)+\Lambda(t)$}\
We use the Kuranishi methods. We need the following lemmas.

\begin{lemma}\label{lemma5.2.2}
For $\varphi,\psi\in A^2$, we have $||[\varphi,\psi]||_{k+\alpha}\leq C||\varphi||_{k+1+\alpha}||\psi||_{k+1+\alpha}$, where $C$ is independent of $\varphi$ and $\psi$.
\end{lemma}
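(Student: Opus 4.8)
The plan is to reduce the statement to a local coordinate estimate. The H\"older norm $\|\cdot\|_{k+\alpha}$ on $A^p$ was defined via a fixed finite open covering $\{U_j\}$ with coordinates $(z_j)$, by taking the maximum over $j$ of the sup-norms (and H\"older quotients) of the component functions $\varphi_{j\alpha_1\cdots\alpha_r\beta_1\cdots\beta_s}(z)$ and all their derivatives $D_j^h$ of order $\le k$. So I would first fix an index $j$ and work entirely in the chart $U_j$, writing $\varphi$ and $\psi$ in terms of their component functions. Since the bracket $[-,-]$ on $A=\bigoplus_{p+q=i,p\ge0,q\ge1}A^{0,p}(M,\wedge^q T_M)$ was given in local coordinates by the explicit formula
\begin{align*}
[f\,dz_I\tfrac{\partial}{\partial z_J},\,g\,dz_K\tfrac{\partial}{\partial z_L}]=(-1)^{|K|(|J|+1)}\,dz_I\wedge dz_K\,[f\tfrac{\partial}{\partial z_J},g\tfrac{\partial}{\partial z_L}],
\end{align*}
the components of $[\varphi,\psi]$ in the chart $U_j$ are finite sums of terms of the form (component of $\varphi$) times (first derivative of a component of $\psi$), plus (first derivative of a component of $\varphi$) times (component of $\psi$) — this is where one derivative is "spent", which is exactly why the right-hand side has indices $k+1+\alpha$ rather than $k+\alpha$.

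The key steps, in order, are: (1) expand $[\varphi,\psi]$ in a chart $U_j$ and observe each component is a finite $\mathbb{C}$-linear combination of products $u\cdot\partial v$ where $u$ is a component of $\varphi$ or $\psi$ and $\partial v$ is a first-order coordinate derivative of a component of the other; (2) invoke the standard multiplicative property of H\"older norms on a fixed bounded domain, namely $\|u v\|_{C^{k+\alpha}(U_j)}\le C_k\|u\|_{C^{k+\alpha}(U_j)}\|v\|_{C^{k+\alpha}(U_j)}$, together with $\|\partial v\|_{C^{k+\alpha}(U_j)}\le\|v\|_{C^{k+1+\alpha}(U_j)}$; (3) combine these to bound $\|[\varphi,\psi]_{j,\dots}\|_{C^{k+\alpha}(U_j)}$ by a constant times $\|\varphi\|_{k+1+\alpha}\|\psi\|_{k+1+\alpha}$, where the constant absorbs the (finite, bounded) number of summands and the combinatorial factors $(-1)^{|K|(|J|+1)}$; (4) take the maximum over the finitely many charts $j$ to conclude. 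One subtlety to handle carefully: the local expression for $[\varphi,\psi]$ in chart $U_j$ uses the components of $\varphi,\psi$ with respect to the same chart $j$, so the estimate is genuinely chart-by-chart and no transition-function juggling is needed — this keeps the constant uniform.

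The main obstacle is not conceptual but bookkeeping: one must pin down the precise form of the local expansion of the Schouten-type bracket on forms-with-polyvector-coefficients (tracking which terms carry the derivative and verifying no second derivatives appear), and then verify that the elementary Leibniz/H\"older product inequality $\|uv\|_{C^{k+\alpha}}\le C\|u\|_{C^{k+\alpha}}\|v\|_{C^{k+\alpha}}$ is being applied on a domain (here $U_j$, or a slightly shrunk relatively compact subdomain) where it holds with a constant depending only on $k$ and the geometry of the covering, not on $\varphi$ or $\psi$. Since the covering $\{U_j\}$ is fixed once and for all, this uniformity is automatic, so I expect the proof to be short: expand, apply the product inequality term by term, sum, and take the max over charts.
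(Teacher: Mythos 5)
Your proposal is correct, and it is exactly the standard argument: the Schouten-type bracket is a first-order bidifferential operator in the chart components, so each component of $[\varphi,\psi]$ on $U_j$ is a finite sum of products of a component of one argument with a first coordinate derivative of a component of the other, and the elementary H\"older product inequality on $U_j$ together with $\|\partial v\|_{k+\alpha}\le\|v\|_{k+1+\alpha}$ and a maximum over the finitely many fixed charts gives the bound. The paper itself states Lemma \ref{lemma5.2.2} without proof (deferring, as with Lemma \ref{lemma5.2.3}, to the corresponding estimates in \cite{Mor71}), and your chart-by-chart expansion is precisely that intended argument; the only small remark is that no shrinking of $U_j$ is needed, since the product estimate follows directly from the Leibniz rule and the definition of the norm on $U_j$ itself.
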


\begin{lemma}\label{lemma5.2.3}
For $\varphi\in A^2$, we have $||G\varphi||_{k+\alpha}\leq C||\varphi||_{k-2+\alpha},k\geq 2$, where $C$ depends only on $k$ and $\alpha$, not on $\varphi$.
\end{lemma}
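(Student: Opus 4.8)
The inequality is the standard Schauder estimate for the Green's operator of the elliptic operator $\Box = LL^* + L^*L$, and the plan is to derive it from the a priori estimate (\ref{assumption}) together with the Hodge-theoretic identities for $\Box$ on $A^2$. Recall from Appendix \ref{appendixd} that $\Box$ is self-adjoint and elliptic, that its kernel (the space of $\Box$-harmonic elements of $A^2$) is finite dimensional and consists of $C^\infty$ forms, and that the harmonic projection $H$ and Green's operator $G$ satisfy $\mathrm{Id} = H + \Box G$ together with $HG = 0$. In particular $G\varphi$ always lies in the orthogonal complement of the harmonic space, so $H(G\varphi) = 0$.

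The heart of the matter is the following clean estimate on the orthogonal complement of the harmonic space: there is a constant $C=C(k,\alpha)$ such that
\begin{equation*}
\|\psi\|_{k+\alpha} \le C\,\|\Box\psi\|_{k-2+\alpha} \qquad \text{for all } \psi\in A^2 \text{ with } H\psi = 0.
\end{equation*}
Granting this, I would finish as follows. Apply it to $\psi = G\varphi$, which satisfies $H\psi = 0$; since $\Box G\varphi = \varphi - H\varphi$, this gives
\begin{equation*}
\|G\varphi\|_{k+\alpha} \le C\,\|\varphi - H\varphi\|_{k-2+\alpha} \le C\big(\|\varphi\|_{k-2+\alpha} + \|H\varphi\|_{k-2+\alpha}\big).
\end{equation*}
The harmonic part is harmless: $H\varphi$ is a finite linear combination $\sum_i \langle\varphi,e_i\rangle_{L^2}e_i$ of fixed smooth harmonic forms $e_i$, whose coefficients are bounded by $\|\varphi\|_{L^2}\le C\|\varphi\|_0$ on the compact manifold $M$, so $\|H\varphi\|_{k-2+\alpha} \le C\|\varphi\|_0 \le C\|\varphi\|_{k-2+\alpha}$ (the last step because $k\ge 2$ forces $k-2+\alpha>0$, whence $\|\cdot\|_0 \le \|\cdot\|_{k-2+\alpha}$). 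This yields exactly $\|G\varphi\|_{k+\alpha}\le C\|\varphi\|_{k-2+\alpha}$.

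It remains to prove the clean estimate, and this is the main obstacle, since the a priori estimate (\ref{assumption}) carries the lower-order term $\|\psi\|_0$ on its right side, and that term must be absorbed. I would argue by contradiction and compactness. If no such $C$ existed, there would be a sequence $\psi_n\in A^2$ with $H\psi_n = 0$, $\|\psi_n\|_{k+\alpha}=1$, and $\|\Box\psi_n\|_{k-2+\alpha}\to 0$. By (\ref{assumption}) the sequence is bounded in $C^{k+\alpha}$, so by Arzel\`a--Ascoli (the embedding $C^{k+\alpha}\hookrightarrow C^{k+\alpha'}$ is compact for $0<\alpha'<\alpha$) a subsequence converges in $C^0$; feeding the differences $\psi_n-\psi_m$ back into (\ref{assumption}) and using $\|\Box(\psi_n-\psi_m)\|_{k-2+\alpha}\to 0$ shows the subsequence is Cauchy, hence convergent, in $C^{k+\alpha}$, with limit $\psi$ satisfying $\|\psi\|_{k+\alpha}=1$. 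Continuity of $\Box\colon C^{k+\alpha}\to C^{k-2+\alpha}$ gives $\Box\psi=0$, so $\psi$ is harmonic, while continuity of the finite-rank projection $H$ gives $H\psi=\lim H\psi_n=0$; hence $\psi=H\psi=0$, contradicting $\|\psi\|_{k+\alpha}=1$.

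This contradiction establishes the clean estimate and completes the proof. The only inputs beyond (\ref{assumption}) are the standard elliptic and Hodge-theoretic facts for $L=\bar{\partial}+[\Lambda_0,-]$ recorded in Appendix \ref{appendixd} (self-adjointness and ellipticity of $\Box$, finite dimensionality and smoothness of the harmonic space, the identity $\mathrm{Id}=H+\Box G$, and $HG=0$) together with the compactness of the H\"older embedding. I expect the compactness step removing $\|\psi\|_0$ to be the only delicate point; everything else is bookkeeping with the Hodge identities.
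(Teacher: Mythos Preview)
Your argument is correct and is precisely the standard route: derive a ``clean'' Schauder estimate on the orthogonal complement of the harmonic space by the usual compactness/contradiction argument from (\ref{assumption}), then apply it to $G\varphi$ using $HG=0$ and $\Box G=\mathrm{Id}-H$. The paper itself does not spell any of this out---its proof is simply a reference to \cite{Mor71}, Proposition~2.3, together with the remark that the assumption (\ref{assumption}) is what makes that proposition go through---so what you have written is exactly the detailed argument behind that citation rather than a different approach.
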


\begin{proof}
This follows from the assumption \ref{assumption}. See \cite{Mor71} p.160 Proposition 2.3.
\end{proof}

Let us now construct the $\varphi(t)$ and $\Lambda(t)$. We want to construct $\alpha(t):=\varphi(t)+\Lambda(t)=\Lambda_0+\sum_{\mu=1}^{\infty} \varphi_{\mu}(t)+\Lambda_{\mu}(t)$, where
\begin{align*}
\varphi_{\mu}(t)+\Lambda_{\mu}(t)=\sum_{v_1+\cdots+v_m=\mu} (\varphi_{v_1\cdots v_m}+\Lambda_{v_1\cdots v_m})t_1^{v_1}\cdots t_m^{v_m}
\end{align*}
where $\varphi_{v_1\cdots v_m}+\Lambda_{v_1\cdots v_m}\in A^{0,1}(M,T)\oplus A^{0,0}(M,\wedge^2  T)$ such that

\begin{align}
&\bar{\partial} \alpha(t)+\frac{1}{2}[\alpha(t),\alpha(t)]=0\\
&\alpha_1(t)=\varphi_1(t)+\Lambda_1(t)=\sum_{v=1}^m (\eta_v+\pi_v)t_v,
\end{align}
where $\{\eta_v+\pi_v\}$ is a basis for $\mathbb{H}^2\cong HP^2(M,\Lambda_0)$. 

Let $\beta(t)=\alpha(t)-\Lambda_0$. Then $(5.2.3)$ is equivalent to
\begin{align*}
L\beta(t)+\frac{1}{2}[\beta(t),\beta(t)]=0.
\end{align*}

Consider the equation
\begin{align*}
(*) \beta(t)=\beta_1(t)-\frac{1}{2}L^*G[\beta(t),\beta(t)],
\end{align*}
where $\beta_1(t)=\alpha_1(t)$.
$(*)$ has a unique formal power series solution $\beta(t)$. Indeed,
\begin{align*}
&\beta_2(t)=-\frac{1}{2}L^*G[\beta_1(t),\beta_1(t)]\\
&\beta_3(t)=-\frac{1}{2}L^*G([\beta_1(t),\beta_2(t)]+[\beta_2(t),\beta_1(t)])\\
&\beta_{\mu}(t)=-\frac{1}{2}L^* G\left( \sum_{\lambda=1}^{\mu-1}[\beta_{\lambda}(t),\beta_{\mu-\lambda}(t)]\right)
\end{align*}

\begin{proposition}
For small $|t|$, $\beta(t)=\sum_{\mu=1}^{\infty} \beta_{\mu}(t)$ converges in the norm $||\cdot||_{k+\alpha}$.
\end{proposition}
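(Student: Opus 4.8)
The plan is to run the classical Kodaira–Spencer–Kuranishi convergence argument by the method of majorants, now applied to the single equation $(*)$ in the differential graded Lie algebra $(\mathfrak{g},L,[-,-])$. First I would record the two analytic inputs already isolated: the multiplication estimate $\|[\varphi,\psi]\|_{k+\alpha}\le C\|\varphi\|_{k+1+\alpha}\|\psi\|_{k+1+\alpha}$ (Lemma~\ref{lemma5.2.2}) and the smoothing estimate $\|G\varphi\|_{k+\alpha}\le C\|\varphi\|_{k-2+\alpha}$ (Lemma~\ref{lemma5.2.3}), which rests on assumption~(\ref{assumption}). Combining these with the fact that $L^*$ loses one derivative gives, for the quadratic term in $(*)$, a bound of the shape $\|L^*G[\beta,\gamma]\|_{k+\alpha}\le C_1\|\beta\|_{k+\alpha}\|\gamma\|_{k+\alpha}$, i.e.\ the nonlinear operator $\beta\mapsto -\tfrac12 L^*G[\beta,\beta]$ is bounded and ``gains nothing but loses nothing'' in the fixed norm $\|\cdot\|_{k+\alpha}$. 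I would fix once and for all an integer $k\ge 2$ and the exponent $\alpha$, and work in the Banach space $(A^2,\|\cdot\|_{k+\alpha})$ throughout.

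Next I would introduce the majorant series. Set $A(t)=\dfrac{b}{16c}\sum_{\mu\ge 1}\dfrac{(c(t_1+\cdots+t_m))^{\mu}}{\mu^2}$ for suitable positive constants $b,c$, which is the standard Kodaira majorant satisfying the functional inequality $A(t)\gg \dfrac{b}{16c}\,c(t_1+\cdots+t_m)+\dfrac{c}{b}A(t)^2$ coefficientwise, and is convergent for $|t|$ small. Choosing $b\ge \|\beta_1(t)\|_{k+\alpha}$ (meaning $b\ge \sum_v\|\eta_v+\pi_v\|_{k+\alpha}$, absorbing the $m$ linear coefficients) and $c$ large enough that $C_1\le c/b$, I would prove by induction on $\mu$ that $\|\beta_\mu(t)\|_{k+\alpha}\ll$ the degree-$\mu$ part of $A(t)$: the base case $\mu=1$ is the choice of $b$, and the inductive step feeds the recursion $\beta_\mu=-\tfrac12 L^*G\big(\sum_{\lambda=1}^{\mu-1}[\beta_\lambda,\beta_{\mu-\lambda}]\big)$ through the bound $\|L^*G[\cdot,\cdot]\|_{k+\alpha}\le C_1\|\cdot\|_{k+\alpha}\|\cdot\|_{k+\alpha}$ and compares with the quadratic term of the majorant inequality. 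Since $A(t)$ converges in a polydisk $|t|<r_0$, the comparison forces $\sum_\mu\|\beta_\mu(t)\|_{k+\alpha}<\infty$ there, which is precisely the assertion; hence $\beta(t)=\sum_\mu\beta_\mu(t)$ converges in $\|\cdot\|_{k+\alpha}$ for $|t|<r_0$, and so does $\alpha(t)=\Lambda_0+\beta(t)$.

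The main obstacle, as in Kodaira's treatment, is not the combinatorics of the majorant but ensuring that the norm is genuinely preserved by the nonlinear operator: one needs $L^*G$ applied to a bracket of elements of $A^2$ to land back in $A^2$ with no net loss of regularity, and this is exactly where the a priori estimate (\ref{assumption}) (through Lemma~\ref{lemma5.2.3}) and the ellipticity of $L=\bar\partial+[\Lambda_0,-]$ are essential; without (\ref{assumption}) the Green's operator estimate, and hence the whole scheme, breaks down. A secondary point to be careful about is that $\beta_1(t)$ is harmonic, so $L^*G$ need not annihilate it, but it does not appear inside the recursion except through brackets, so no difficulty arises there. Once convergence in $\|\cdot\|_{k+\alpha}$ is established for every $k$, a standard bootstrap (re-running the estimate with $k$ replaced by $k+1$, using that the solution of $(*)$ is independent of $k$) upgrades $\beta(t)$ to a $C^\infty$—indeed real-analytic in $t$—family, which is what the subsequent sections require; I would state this bootstrap as an immediate corollary rather than belabor it here.
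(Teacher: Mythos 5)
Your argument is correct and is essentially the same as the paper's proof, which simply cites the standard Kuranishi convergence argument in Morrow--Kodaira (\cite{Mor71}, Proposition 2.4, p.~162): the estimate chain $\|L^*G[\beta,\gamma]\|_{k+\alpha}\le C_1\|\beta\|_{k+\alpha}\|\gamma\|_{k+\alpha}$ obtained from Lemmas \ref{lemma5.2.2} and \ref{lemma5.2.3} together with the power-series majorant $A(t)$ is exactly that argument, and it is the same chain the paper itself uses in the subsequent proposition on $\psi(t)$. Nothing further is needed.
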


\begin{proof}
See \cite{Mor71} p.162 Proposition 2.4.
\end{proof}

\begin{proposition}
The $\beta(t)$ satisfies $L\beta(t)+\frac{1}{2}[\beta(t),\beta(t)]=0$ if and only if $H[\beta(t),\beta(t)]=0$, where $H:A^3=A^{0,2}(M,T)\oplus A^{0,1}(M,\wedge^2 T)\oplus A^{0,0}(M,\wedge^3 T)\to \mathbb{H}^3\cong HP^3(M,\Lambda_0)$ is the orthogonal projection to the harmonic subspace of $A^3$.
\end{proposition}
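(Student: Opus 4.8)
The plan is to mimic the classical Kuranishi argument (as in \cite{Mor71} and \cite{Kod05}) with the operator $L = \bar\partial + [\Lambda_0,-]$ in place of $\bar\partial$, using the ellipticity of $L$ (so that $L^*,H,G,\Box=LL^*+L^*L$ exist) together with the standard Hodge-type identity $LG = GL$, $L^*G = GL^*$, $I = H + \Box G = H + LL^*G + L^*LG$ on the relevant degree. I would first establish the forward direction: assume $\beta(t)$ solves the Maurer--Cartan equation $L\beta(t) + \tfrac12[\beta(t),\beta(t)] = 0$; apply $L$ to the defining relation $(*)$, $\beta(t) = \beta_1(t) - \tfrac12 L^*G[\beta(t),\beta(t)]$, and use $L\beta_1(t)=0$ (since $\beta_1 = \alpha_1$ is harmonic) to get $L\beta(t) = -\tfrac12 LL^*G[\beta(t),\beta(t)]$. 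Then, using $LL^*G = \Box G - L^*LG = (I - H) - L^*LG$ on $A^3$, together with $H[\beta(t),\beta(t)] = 0$ by hypothesis, one rewrites $L\beta(t) = -\tfrac12[\beta(t),\beta(t)] + \tfrac12 L^*LG[\beta(t),\beta(t)] = -\tfrac12[\beta(t),\beta(t)] + \tfrac12 L^*G\,L[\beta(t),\beta(t)]$, and the Maurer--Cartan equation follows once one shows the extra term $L^*G\,L[\beta(t),\beta(t)]$ vanishes.

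That vanishing is the crux, and it is where I expect the main obstacle. The standard trick is to set $\gamma(t) := L\beta(t) + \tfrac12[\beta(t),\beta(t)] \in A^3$ and show $\gamma(t)$ satisfies a linear homogeneous equation of the form $\gamma(t) = -L^*G[\beta(t),\gamma(t)]$ (or $\gamma = -\tfrac12 L^*G$ applied to an expression linear in $\gamma$), using the graded Jacobi identity $[\beta,[\beta,\beta]] = 0$ and the compatibility $\bar\partial[a,b] = [\bar\partial a, b] \pm [a,\bar\partial b]$, hence $L[a,b] = [La,b] \pm [a,Lb]$, valid because $[\Lambda_0,-]$ is a derivation of the bracket and $[\Lambda_0,\Lambda_0]=0$. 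Concretely: $L\gamma(t) = L[\beta(t),\gamma(t)]$ up to sign and, combined with $H\gamma(t) = H[\beta,\beta]/2 = 0$ (using $LH = 0$ and the hypothesis, plus $L^2 = 0$ which holds since $L$ is a differential of the DGLA $(\mathfrak g, L,[-,-])$), the elliptic estimate and the recursive structure force $\gamma(t)\equiv 0$ for small $|t|$ by the same majorant/convergence bookkeeping used in the Proposition preceding this one (and Lemmas \ref{lemma5.2.2}, \ref{lemma5.2.3}). I would present this as: $\gamma(t)$ is given by a convergent power series all of whose coefficients vanish by induction on the total degree $\mu$, since at each stage $\gamma_\mu$ is expressed via $L^*G$ of a bracket involving only lower-order $\gamma_\nu$, which vanish by inductive hypothesis, and the degree-$1$ term $\gamma_1 = L\beta_1 = 0$.

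For the converse direction — if $\beta(t)$ solves Maurer--Cartan then $H[\beta(t),\beta(t)] = 0$ — the argument is immediate: from $[\beta(t),\beta(t)] = -2L\beta(t)$ and $HL = 0$ (the harmonic projection annihilates the image of $L$, since $L^* $ of a harmonic form is zero and $H$ is self-adjoint), we get $H[\beta(t),\beta(t)] = -2HL\beta(t) = 0$. I would state this first as the easy half. The only subtlety worth flagging is making sure all identities ($L^2=0$, $LG=GL$, $HL=LH=0$, $\Box G = I - H$) are recorded for the operator $\bar\partial + [\Lambda_0,-]$ on the total complex $\mathfrak g = \bigoplus_{p+q-1=i,\,q\ge1} A^{0,p}(M,\wedge^q T_M)$; these follow formally once $L$ is known to be elliptic (which is granted by the setup in this subsection) and once one knows $(\mathfrak g, L, [-,-])$ is a DGLA (Proposition \ref{d}), so the graded Leibniz rule for $L$ over the bracket holds. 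With those in hand, the proof is the verbatim Poisson analogue of \cite{Mor71} Proposition 2.5 / \cite{Kod05}, and I would simply cite that source for the convergence bookkeeping rather than reproduce it.
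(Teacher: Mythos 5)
Your proposal is correct, and for most of its length it coincides with the paper's own argument: the easy implication is obtained exactly as you say by applying $H$ and using $HL=0$, and the substantive implication is reduced, as in the paper, to the fixed-point identity for $\psi(t)=L\beta(t)+\tfrac{1}{2}[\beta(t),\beta(t)]$, derived from the recursion $\beta(t)=\beta_1(t)-\tfrac{1}{2}L^*G[\beta(t),\beta(t)]$, the decomposition $I=H+\Box G$ together with the hypothesis $H[\beta(t),\beta(t)]=0$, the commutations $LG=GL$, $L^*G=GL^*$, the Leibniz rule for $L$ over the bracket and the graded Jacobi identity, giving $\psi(t)=L^*G[\psi(t),\beta(t)]$. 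Where you genuinely diverge is the concluding step: the paper finishes analytically, using Lemmas \ref{lemma5.2.2} and \ref{lemma5.2.3} to get $\|\psi(t)\|_{k+\alpha}\leq C_1C_{k,\alpha}C\,\|\psi(t)\|_{k+\alpha}\|\beta(t)\|_{k+\alpha}$ and choosing $|t|$ so small that $C_1C_{k,\alpha}C\|\beta(t)\|_{k+\alpha}<1$, forcing $\psi(t)=0$; you instead finish formally, by induction on the degree in $t$, observing that since $\beta(t)$ has no constant term the degree-$\mu$ coefficient of $\psi$ is $L^*G$ applied to brackets of strictly lower-degree coefficients of $\psi$ with $\beta$, with base case $\psi_1=L\beta_1=0$. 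Both finishes are legitimate here (the convergence of $\beta(t)$, already established, makes $\psi(t)$ a convergent series, so termwise vanishing gives $\psi\equiv0$), and your induction has the advantage of not invoking the H\"older estimates at this stage, while the paper's contraction estimate is the version that survives when one argues with the solution itself rather than its expansion. Two small repairs before writing it up: the identity in your second paragraph should read $L\psi(t)=\pm[\beta(t),\psi(t)]$ (there is a stray $L$ on the right-hand side), and in any case the fixed-point equation should be taken directly from the recursion as in your first paragraph rather than from that differential identity alone; also, your first paragraph is mislabeled, since it assumes $H[\beta(t),\beta(t)]=0$ and derives the Maurer--Cartan equation (the converse implication), whereas the direct implication is the short computation in your third paragraph.
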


\begin{proof}
$(=>)$ $L\beta(t)=-\frac{1}{2}[\beta(t),\beta(t)]$. If we take $H$ on both sides, we have
\begin{align*}
0=HL\beta(t)=-\frac{1}{2}H[\beta(t),\beta(t)]
\end{align*}
$(<=)$ Let $H[\beta(t),\beta(t)]=0$. 

Set $\psi(t)=L\beta(t)+\frac{1}{2}[\beta(t),\beta(t)]=\bar{\partial} \beta(t)+[\Lambda_0,\beta(t)]+\frac{1}{2}[\beta(t),\beta(t)]$
\begin{align*}
2\psi(t)&=2L\beta(t)+[\beta(t),\beta(t)]\\
           &=-LL^*G[\beta(t),\beta(t)]+[\beta(t),\beta(t)]\\
           &=-LL^*G[\beta(t),\beta(t)]+\Box G[\beta(t),\beta(t)]\\
           &=-LL^*G[\beta(t),\beta(t)]+(LL^*+L^*L)G[\beta(t),\beta(t)]\\
           &=L^*LG[\beta(t),\beta(t)]=L^*GL[\beta(t),\beta(t)]\\
           &=2L^*G[L\beta(t),\beta(t)]
\end{align*}
since $(A[1],L,[-,-])$ is a differential graded Lie algebra (See Proposition \ref{d}).
So we have $\psi(t)=L^*G[\psi(t),\beta(t)]$. And by Lemma \ref{lemma5.2.2} and Lemma \ref{lemma5.2.3}, we have
\begin{align*}
||\psi(t)||_{k+\alpha}&=||L^*G[\psi(t),\beta(t)]||_{k+\alpha}\\
                               &\leq C_1||G[\psi(t),\beta(t)]||_{k+1+\alpha}\\
                               & \leq C_1 C_{k,\alpha} ||[\psi(t),\beta(t)]||_{k-1+\alpha}\\
                               & \leq C_1C_{k,\alpha}C||\psi(t)||_{k+\alpha}||\beta(t)||_{k+\alpha}
\end{align*}
Choose $|t|$ so small that $||\beta(t)||_{k+\alpha}C_1C_{k,\alpha}C<1$. Then we get the contradiction $||\psi(t)||_{k+\alpha}<||\psi(t)||_{k+\alpha}$ unless $\psi(t)=0$ for all small $t$ since $\beta(t)$ converges and $\beta(0)=0$.
\end{proof}

\begin{proposition}\label{pr}
$\alpha(t)=\varphi(t)+\Lambda(t)$ is $C^{\infty}$ in $(z,t)$ and holomorphic in $t$.
\end{proposition}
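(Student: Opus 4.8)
The plan is to run the classical elliptic regularity/bootstrapping argument of Kuranishi (as presented in \cite{Mor71}), adapted to the operator $L = \bar\partial + [\Lambda_0,-]$ and the assumption (\ref{assumption}). The starting point is the integral equation $(*)$, namely $\beta(t) = \beta_1(t) - \frac12 L^*G[\beta(t),\beta(t)]$, together with the fact already established that its formal power series solution converges in the H\"older norm $\|\cdot\|_{k+\alpha}$ for small $|t|$, and that this $\beta(t)$ (equivalently $\alpha(t) = \Lambda_0 + \beta(t)$) actually solves the Maurer--Cartan equation $L\beta(t) + \frac12[\beta(t),\beta(t)] = 0$. So $\alpha(t)$ is already known to be, for fixed small $t$, an element of $A^2$ of class $C^{k+\alpha}$ in $z$, with coefficients depending on $t$.

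First I would promote the $z$-regularity from $C^{k+\alpha}$ to $C^\infty$. Rewrite $(*)$ as $\Box$ applied to something, or more directly use the relation $\beta(t) = \beta_1(t) - \frac12 L^*G[\beta(t),\beta(t)]$: the right-hand side involves $[\beta(t),\beta(t)]$, which by Lemma \ref{lemma5.2.2} lies in $A^3$ with norm controlled by $\|\beta(t)\|_{k+1+\alpha}^2$, and then $L^*G$ applied to it gains regularity by Lemma \ref{lemma5.2.3} (which encodes the a priori estimate from (\ref{assumption})). Thus if $\beta(t) \in C^{k+\alpha}$ then $[\beta(t),\beta(t)] \in C^{k-1+\alpha}$, hence $G[\beta(t),\beta(t)] \in C^{k+1+\alpha}$ and $L^*G[\beta(t),\beta(t)] \in C^{k+\alpha}$ — so naively this only recovers what we had. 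The correct bootstrap instead uses the differential equation $L\beta(t) = -\frac12[\beta(t),\beta(t)]$ directly: $\beta(t)$ is a solution of an elliptic system $\Box\beta(t) = (\text{lower order in } \beta, \text{quadratic})$ after applying $L^*$ and using $L^*L\beta = \Box\beta - LL^*\beta$ together with the Kuranishi gauge $L^*\beta = -\frac12 L^*\cdot(\text{something})$ built into $(*)$; from $\beta(t) \in C^{k+\alpha}$ we get $[\beta,\beta] \in C^{k-1+\alpha}$, but elliptic regularity for $\Box$ on the inhomogeneous equation gives $\beta(t) \in C^{k+1+\alpha}$, and iterating yields $\beta(t) \in \bigcap_k C^{k+\alpha} = C^\infty$ in $z$. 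I would cite \cite{Mor71} p.163 for the precise form of this step.

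Next I would handle joint smoothness in $(z,t)$ and holomorphy in $t$. Holomorphy in $t$ is essentially formal: each $\beta_\mu(t)$ is a homogeneous polynomial in $t$ (no $\bar t$), built by the recursion $\beta_\mu(t) = -\frac12 L^*G\sum_{\lambda=1}^{\mu-1}[\beta_\lambda(t),\beta_{\mu-\lambda}(t)]$ out of $\beta_1(t) = \sum_v(\eta_v+\pi_v)t_v$, which is holomorphic in $t$; since the series converges in $\|\cdot\|_{k+\alpha}$ uniformly for $|t|$ small, the limit $\beta(t)$ is holomorphic in $t$ with values in the Banach space of $C^{k+\alpha}$ sections, hence the coefficient functions are holomorphic in $t$ for each fixed $z$. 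For joint $C^\infty$ regularity in $(z,t)$ one treats $t$ as additional variables: view $\alpha(t)$ as a section over $M\times\Delta$ and observe that it satisfies an elliptic equation in the $z$-variables with coefficients smooth (indeed holomorphic) in $t$; differentiating the integral equation $(*)$ with respect to $t$ (using that $L^*,G$ act only in $z$ and commute with $\partial/\partial t_\lambda$) shows all $t$-derivatives of $\beta(t)$ again satisfy equations of the same type and hence are $C^\infty$ in $z$ with the same uniformity, giving $\alpha \in C^\infty(M\times\Delta)$.

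The main obstacle I anticipate is precisely the elliptic bootstrap step: making rigorous that the combination of the Kuranishi gauge condition (hidden in the use of $L^*G$ in $(*)$) and the a priori estimate (\ref{assumption}) for $\Box$ upgrades $C^{k+\alpha}$ regularity to $C^{k+1+\alpha}$. This is where the assumption (\ref{assumption}) is genuinely used and where the author has already flagged unfamiliarity with the analytic properties of $\bar\partial + [\Lambda,-]$; the honest route is to reduce to the statement in \cite{Mor71} (Proposition 2.5 and its proof on p.163), checking that nothing in that argument used properties of $\bar\partial$ beyond ellipticity and the a priori inequality, both of which hold for $L$ under (\ref{assumption}). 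Everything else — the polynomial/holomorphic structure in $t$, commuting $L^*$ and $G$ past $\partial/\partial t$, and the Banach-space-valued power series convergence — is routine given the convergence proposition already proved.
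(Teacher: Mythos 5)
Your proposal is correct and follows essentially the same route as the paper: the paper's entire proof of Proposition \ref{pr} is a citation to \cite{Mor71} p.163 (Proposition 2.6), i.e.\ it invokes exactly the Kuranishi elliptic-bootstrap and power-series argument you reconstruct, with the assumption (\ref{assumption}) supplying the a priori estimate for $\Box$ in place of the classical $\bar{\partial}$-Laplacian. Your explicit reduction to \cite{Mor71}, checking that only ellipticity and the a priori inequality for $L=\bar{\partial}+[\Lambda_0,-]$ are needed, is precisely the intended justification.
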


\begin{proof}
See \cite{Mor71} p.163 Proposition 2.6.
\end{proof}

With the assumption $HP^3(M,\Lambda_0)=0$, $\beta(t)$ solves the integrabiliy condition for small $|t|$, where $t\in \Delta_{\epsilon}$.

\subsection{Construction of a Poisson analytic family}\label{subsection}\

 We have constructed a family $\{(\varphi(t),\Lambda(t))|t\in \triangle_{\epsilon}\}$ of $C^{\infty}$ vector $(0,1)$-forms $\varphi(t)=\sum_{\lambda=1}^n\sum_{v=1}^n \varphi_v^{\lambda}(z,t)d\bar{z}^v\frac{\partial}{\partial z^{\lambda}}$ and $C^{\infty}$ $(2,0)$ bivector $\Lambda(t)=\sum_{\alpha,\beta=1}^n g_{\alpha\beta}(z,t)\frac{\partial}{\partial z_{\alpha}}\wedge \frac{\partial}{\partial z_{\beta}}$ satisfying the integrability condition $[\Lambda(t),\Lambda(t)]=0,\bar{\partial}\Lambda(t)=[\Lambda(t),\varphi(t)],\bar{\partial}\varphi(t)=\frac{1}{2}[\varphi(t),\varphi(t)]$ and the initial conditions $\varphi(0)=0, \Lambda(0)=\Lambda_0, (\left(\frac{\partial \varphi(t)}{\partial t_{\lambda}}\right)_{t=0}, -\left(\frac{\partial \Lambda(t)}{\partial t_{\lambda}}\right)_{t=0})=(\beta_{\lambda},\pi_{\lambda}),\lambda=1,...,m$, where $\varphi_v^{\lambda}(z,t)$ and $g_{\alpha\beta}(z,t)$ are $C^{\infty}$ functions of $z^1,...,z^n,t_1,...,t_m$ and holomorphic in $t_1,...,t_m$.

 Each $(\varphi(t),\Lambda(t))$ determines a holomorphic Poisson structure $(M_{\varphi(t)},\Lambda(t))$ on $M$. In order to show that $\{(M_{\varphi(t)},\Lambda(t))|t\in \Delta_{\epsilon}\}$ is a Poisson analytic family, we consider $\varphi=\varphi(t)$ as a vector $(0,1)$-form on the complex manifold $M\times \Delta_{\epsilon}$  and $\Lambda=\Lambda(t)$ as a $(2,0)$ bivector on $M\times \Delta_{\epsilon}$. Namely, we consider $\varphi(t)$ as
 
\begin{align*}
\varphi = \varphi(t)= \sum_{\lambda=1}^n \left(\sum_{v=1}^n \varphi_v^{\lambda} d\bar{z}^v+\sum_{\mu=1}^m \varphi_{n+\mu}^{\lambda} d\bar{t}_{\mu} \right)\frac{\partial}{\partial z^{\lambda}}+\sum_{\mu=1}^m\varphi^{n+\mu}\frac{\partial}{\partial t_{\mu}}
\end{align*}
with $\varphi^{t+\mu}=\varphi_{n+\mu}^{\lambda}=0,\mu=1,...,m$. Then since $\varphi_v^{\lambda}=\varphi_v^{\lambda}(z,t)$ are holomorphic in $t_1,...,t_m$ (Proposition \ref{pr}), we have $\frac{\partial \varphi_v^{\lambda}}{\partial \bar{t}_{\mu}}=0$ in
\begin{align*}
\bar{\partial}\varphi=\sum_{\lambda,v=1}^n \left(\sum_{\beta=1}^n\frac{\partial \varphi_v^{\lambda}}{\partial \bar{z}^{\beta}}d\bar{z}^{\beta}+\sum_{\mu=1}^m \frac{\partial \varphi_v^{\lambda}}{\partial \bar{t}_{\mu}}d\bar{t}_{\mu}\right) \wedge d\bar{z}^v\frac{\partial}{\partial z^{\lambda}}
\end{align*}

Similary since $g_{\alpha\beta}(z,t)$ is holomorphic in $t_1,...,t_m$ (Proposition \ref{pr}), we have $\frac{\partial g_{\alpha\beta}}{\partial \bar{t}_{\mu}}=0$ in

\begin{align*}
\bar{\partial} \Lambda=\sum_{\alpha,\beta} \left(\sum_{v=1}^n \frac{\partial g_{\alpha\beta}}{\partial \bar{z}^v}d\bar{z}^v+\sum_{\mu=1}^m \frac{\partial g_{\alpha\beta}}{\partial \bar{t}_{\mu}}d\bar{t}_{\mu}\right)\frac{\partial}{\partial z^{\alpha}}\wedge \frac{\partial}{\partial z^{\beta}}
\end{align*}

By $\bar{\partial}\varphi(t)$ we denote the exterior differential of $\varphi(t)$ as a vector $(0,1)$-form on $M$ with fixed $t$. Then $\bar{\partial}\varphi$ coincides with $\bar{\partial}\varphi(t)$ and we obtain $[\varphi,\varphi]=[\varphi(t),\varphi(t)]$. Similary $\bar{\partial}\Lambda(t)$ coincides with $\bar{\partial} \Lambda$ and we obtain $[\Lambda,\varphi]=[\Lambda(t),\varphi(t)]$ and $[\Lambda,\Lambda]=[\Lambda(t),\Lambda(t)]$. Therefore as a $C^{\infty}$ vector $(0,1)$-form on $M\times \Delta_{\epsilon}$ and $C^{\infty}$ $(2,0)$ bivector on $M\times \Delta_{\epsilon}$, $\varphi$ and $\Lambda$ satisfies $\bar{\partial}\varphi=\frac{1}{2}[\varphi,\varphi]$, $ \bar{\partial}\Lambda=[\Lambda,\varphi]$, and $[\Lambda,\Lambda]=0$.

Then by the Newlander-Nirenberg theorem($\cite{New57}$,$\cite{Kod05}$ p.268), $\varphi$ defines a complex structure $\mathcal{M}$ on $M\times \Delta_{\epsilon}$ and $ \bar{\partial}\Lambda=[\Lambda,\varphi]$, and $[\Lambda,\Lambda]=0$ imply that (2,0)-part $\Lambda^{2,0}$ of $\Lambda$ defines a holomorphic Poisson structure ($\mathcal{M},\Lambda^{2,0})$. If we choose a sufficiently fine locally finite open covering $\{U_j\}$ of $M$, and take a sufficiently small $\Delta_{\epsilon}$, we have  $C^{\infty}$ functions $\xi_j^{\beta}(z,t),\beta=1,...,m+n$ on each $U_j\times \Delta_{\epsilon}$, and the map
\begin{align*}
\xi_j:(z,t)\to (\xi_j^1(z,t),...,\xi_j^{n+m}(z,t))
\end{align*}
gives local complex coordinates of $\mathcal{M}$ on $U_j\times \Delta_{\epsilon}$, 
and $\xi_j^{n+\mu}(z,t)=t_{\mu}$ for $\mu=1,...,m$.

Then we have 
\begin{align*}
\xi_j:(z,t)\to (\xi_j^1(z,t),...,\xi_j^n(z,t),t_1,...,t_m).
\end{align*}

Therefore 
\begin{align*}
\omega:(\xi_j^1(z,t),...,\xi_j^n(z,t),t_1,...,t_m)\to (t_1,...,t_m)
\end{align*}
is a holomorphic map of $\mathcal{M}$ onto $\Delta_{\epsilon}$. For each $t\in \Delta_{\epsilon}$, $\omega^{-1}(t)$ is a holomorphic Poisson manifold whose system of local complex coordinates is given by $\{\xi_j^1(z,t),...,\xi_j^n(z,t)\}$ and a holomorphic Poisson structure is given by $(2,0)$-part $\Lambda(t)^{2,0}$ of $\Lambda(t)$. So we have $\omega^{-1}(t)=(M_{\varphi(t)},\Lambda(t)^{2,0})$. Thus $\{(M_{\varphi(t)},\Lambda(t)^{2,0})|t\in \Delta_{\epsilon}\}$ forms a Poisson analytic family $(\mathcal{M},\Lambda^{2,0},\Delta_{\epsilon},\omega)$.

\begin{example}
Let $U_i=\{[z_0,z_1,z_2]|z_i\ne0\}$ $i=0,1,2$ be an open cover of complex projective plane $\mathbb{P}_{\mathbb{C}}^2$. Let $x=\frac{z_1}{z_0}$ and $w=\frac{z_2}{z_0}$ be coordinates on $U_0$. Then the holomorphic Poisson structures on $U_0$ are parametrized by $t=(t_1,...,t_{10})\in \mathbb{C}^{10}$
\begin{align*}
(t_1+t_2x+t_3w+t_4x^2+t_5xw+t_6w^2+t_7x^3+t_8x^2w+t_9xw^2+t_{10}w^3)\frac{\partial}{\partial x}\wedge \frac{\partial}{\partial w}
\end{align*}
This parametrizes the whole holomorphic Poisson structures on $\mathbb{P}_{\mathbb{C}}^2$.$($See $\cite{Pin11}$ Proposition 2.2$)$. Let $\Lambda_0=x\frac{\partial}{\partial x}\wedge \frac{\partial}{\partial w}$ be the holomorphic Poisson structure on $\mathbb{P}_{\mathbb{C}}^2$.   Then $HP^2(\mathbb{P}_{\mathbb{C}}^2,\Lambda_0)=5$, $HP^3(\mathbb{P}_{\mathbb{C}}^2,\Lambda_0)=0$.$($See $\cite{Pin11}$ Example 3.5 $)$ and $w^2\frac{\partial}{\partial x}\wedge\frac{\partial}{\partial w}$, $x^3\frac{\partial}{\partial x}\wedge\frac{\partial}{\partial w}$, $x^2w\frac{\partial}{\partial x}\wedge\frac{\partial}{\partial w}$, $xw^2\frac{\partial}{\partial x}\wedge\frac{\partial}{\partial w}$  and $w^3\frac{\partial}{\partial x}\wedge\frac{\partial}{\partial w}$ are the representatives of the cohomology classes consisting of the basis of $HP^2(\mathbb{P}_{\mathbb{C}}^2,\Lambda_0)$. Let $t=(t_1,t_2,t_3,t_4,t_5)\in \mathbb{C}^5$. Let $\Lambda(t)=(t_1w^2+x+t_2x^3+t_3x^2w+t_5xw^2+t_5w^3)\frac{\partial}{\partial x}\wedge \frac{\partial}{\partial w}$ be the holomorphic Poisson structure on $\mathbb{P}_{\mathbb{C}}^2\times \mathbb{C}^5$. Then $(\mathbb{P}_{\mathbb{C}}^2\times \mathbb{C}^5,\Lambda(t),\mathbb{C}^5, \omega)$, where $\omega$ is the natural projection, is a Poisson analytic family with $\omega^{-1}(0)=(\mathbb{P}_{\mathbb{C}}^2,\Lambda_0)$. Since the complex structure does not change in the family, the Poisson Kodaira Spencer map is an isomorphism. Hence the family satisfies the theorem of existence for $(\mathbb{P}_{\mathbb{C}}^2,\Lambda_0)$. \end{example}

\section{A concept of  Kuranishi family in holomorphic Poisson category}\label{section6}
By following the lecture notes of Kuranishi \cite{Kur71}, we extend the definition of complex analytic family over a complex space to Poisson analytic family over a complex space and raise the question of existence of a complete family without assumption $HP^3(M,\Lambda)=0$ where $(M,\Lambda)$ is a holomorphic Poisson manifold.

In this section $\bold{M}$ is a real $C^{\infty}$ compact manifold. And an analytic set $S$ is by definition a subset of a domain $D$ (which is called the ambient space of $S$) defined by zero locus of finitely many holomorphic functions on $D$. A map $f:S\to S'$ between two analytic sets is called analytic if for each $s\in S$, $f$ can be extended to a complex analytic map from an open neighborhood of $s$ in $D$ into the ambient space of $S'$.

\begin{definition}
Let $S$ be an analytic set. By a $C^{\infty}$ family of holomorphic Poisson charts of $\bold{M}$ with a parameter in $S$  we mean 
\begin{enumerate}
\item a $C^{\infty}$ map
\begin{align*}
\tilde{z}:U\times S'\to \mathbb{C}^n
\end{align*}
where $U$ (resp $S'$) is an open subset of $\bold{M}$ (resp. of $S$), such that the map $z^t:U\to \mathbb{C}^n$ defined by $z^t(p)=\tilde{z}(p,t)$ is a holomorphic complex chart of $\bold{M}$ for each $t\in S'$.
\item A $C^{\infty}$-complex bivector field $\Lambda$ of the form $\sum_{i,j} g_{ij}(x,t)\frac{\partial}{\partial x_i}\wedge \frac{\partial}{\partial x_j}$ on $M\times S$ \footnote{Here we mean by a $C^{\infty}$ bivector field $\Lambda$ on $M\times S$ that for each $s\in S$, $g_{ij}(x,s)$ can be extended to be $C^{\infty}$ functions on $M\times U$ where $U$ is a neighorbood of $s$ in the ambient space of $S$  } such that $[\Lambda,\Lambda]=0$ for each $t\in S$ and $(2,0)$-part $\Lambda_t^{2,0}$ of the restriction $\Lambda_t$ of $\Lambda$ on $\bold{M} \times t$ is a holmorphic bivector field with respect to the complex structure defined by $z^t$.
\end{enumerate}
\end{definition}


Let $\tilde{z}^{\sharp}:U\times S'\to \mathbb{C}^n\times S'$ defined by $\tilde{z}^{\sharp}=(\tilde{z}(p,t),t)$. 

If $\tilde{w}$ is another such family with domain $V$ and over $S''$, we mean by change of charts from $\tilde{z}$ to $\tilde{w}$ the map $\tilde{w}^{\sharp}\circ \tilde{z}^{\sharp -1}$ of $\tilde{z}^{\sharp}((U\cap V)\times (S'\cap S''))$ to $\tilde{w}^{\sharp}((U\cap V)\times (S'\cap S''))$. The domain and the image of the change of charts are open subsets of $\mathbb{C}^n\times S$, and hence we may ask if the change is complex analytic.

Now we can define the notion of a Poisson analytic family of holomopric poisson structures on $\bold{M}$ 

\begin{definition}
Let $S$ be an analytic set. By a Poisson analytic family of holomorphic Poisson structures on $\bold{M}$ with a parameter in $S$ we mean
\begin{enumerate}
\item A $C^{\infty}$ complex bivector field $\Lambda$ on $M\times S$ of the form $\sum_{i,j} g_{ij}(x,s)\frac{\partial}{\partial x_i}\wedge \frac{\partial}{\partial x_j}$ with $[\Lambda,\Lambda]=0$ for each $s\in S$.
\item a collection $(\tilde{\Phi},\Lambda)$ of $C^{\infty}$ families of holomorphic Poisson charts of $\bold{M}$ with a parameter in $S$ satisfying 
\begin{enumerate}
\item If $\tilde{z}$,$\tilde{w}\in \tilde{\Phi}$, then the change of charts from $\tilde{z}$ to $\tilde{w}$ is complex analytic.
\item for any $p$ in $\bold{M}$ and any $t$ in $S$ there is a $\tilde{z}$ in $\tilde{\Phi}$ with domain $U$ and $S'$ such that $(p,t)\in U\times S'$.
\item If $\tilde{u}$ is a $C^{\infty}$ family of Poisson holomorphic charts of $\bold{M}$ with a parameter in $S$ and if the change of charts from $\tilde{z}$ to $\tilde{u}$ is complex analytic for any $\tilde{z}$ in $\tilde{\Phi}$, then $\tilde{u}$ is in $\tilde{\Phi}$.
\end{enumerate}
\end{enumerate}

\end{definition}

If this is the case, for each fixed $t$, $\Phi_t=\{z^t:\tilde{z}\in \tilde{\Phi}\}$ is a chart covering of a holomophic Poisson structure, say $(M_t,\Lambda_t^{2,0})$ on $\bold{M}$. $(M_t,\Lambda_t^{2,0})$ is called the holomoprhic Poisson structure in $\tilde{\Phi}$ over $t$. Thus we have a collection $\{(M_t,\Lambda_t):t\in S\}$ of holomoprhic Poisson structures on $\bold{M}$. 

Let $B$ be an analytic set. Denote by $\tau$ an analytic map $B\to S$ and let $\Lambda \circ {\tau}=\sum_{i,j} g_{ij}(x,h(s))\frac{\partial}{\partial x_i}\wedge \frac{\partial}{\partial x_j}$. If $\tilde{z}$ is a $C^{\infty}$ family of holomorphic Poisson chart of $\bold{M}$ with domain $U$ and over $S'\subset S$, then $\tilde{z}\circ(id\times \tau):U\times B'\to \mathbb{C}^n$ where $B'=\tau^{-1}(S')$ and $id$ is the identity map of $U$, is a $C^{\infty}$ family of holomorphic Poisson charts of $\bold{M}$ with a parameter in $B$ with respect to $\Lambda \circ {\tau}=\sum_{i,j} g_{ij}(x,h(s))\frac{\partial}{\partial x_i}\wedge \frac{\partial}{\partial x_j}$. It is clear that the collection $\{\tilde{z}\circ (id\times \tau):\tilde{z}\in \tilde{\Phi}\}$ can be enlarged to a unique Poisson analytic family of holomorphic Poisson structures $(\tilde{\Phi}\circ \tau,\Lambda\circ \tau)$ on $\bold{M}$. 
\begin{definition}
The above family $(\tilde{\Phi}\circ \tau,\Lambda\circ{\tau})$ is called the Poisson analytic family induced from $(\tilde{\Phi},\Lambda)$ by $\tau$.
\end{definition}

Let $(\tilde{\Phi},\Lambda)$ and $(\tilde{\Psi},\Lambda')$ be Poisson analytic family over an analytic set $B$. Denote by $\bold{f}$ a family of diffeomorphism of $\bold{M}$ parametrized by $B$, say $\{f^b:b\in B\}$

\begin{definition}
We say that $\bold{f}$ induces an isomorphism from $(\tilde{\Phi},\Lambda)$ to $(\tilde{\Psi},\Lambda')$ over the identity map of $B$ if the following conditions are satisfied:
\begin{enumerate}
\item for $\tilde{w}:V\times B'\to \mathbb{C}^n$ in $\tilde{\Psi}$, let $U, B''$ be open subsets such that $f^b(U)\subset V$ for all $b\in B''$. Then $(p,b)\in U\times B''\mapsto \tilde{w}(f^b(p),b)\in \mathbb{C}^n$ is an element of $\tilde{\Phi}$.
\item the map $(p,b)\in M\times B\mapsto f^b(p)\in \bold{M}$ is a $C^{\infty}$ map, i.e. on a neighborhood of each point it is $C^{\infty}$
\item let $F:\bold{M}\times B\to \bold{M}\times B$ defined by $(x,t)\to (f^t(x),t)$. Then $F_*\Lambda=\Lambda'$. In particular we have $f^b_* \Lambda_b^{2,0}=\Lambda_b^{'2,0}$ for each $b\in B$.
\end{enumerate}
\end{definition}


\begin{definition}
Let $(M,\Lambda_0)$ be a holomorphic Poisson structure on $\bold{M}$. We say that $(\tilde{\Phi},\Lambda)$ is a Poisson analytic family of deformations of $(M,\Lambda_0)$ over $(S,s_0)$ if the holomorphic Poisson structure in $\tilde{\Phi}$ over $s_0$ is $(M,\Lambda_0)$.
\end{definition}

\begin{definition}
A Poisson analytic family $(\tilde{\Phi},\Lambda)$ of deformations of $(M,\Lambda_0)$ over $(S,s_0)$ is called complete at $s_0$ if for any pointed analytic set $(B,b_0)$ and any Poisson analytic family $(\tilde{\Psi},\Lambda')$ of deformations of $(M,\Lambda)$ over $(B,b_0)$, we can find an open neighborhood $B'$ of $b_0$ and a complex analytic map $\tau:(B',b_0)\to (S,s_0)$ such that $(\tilde{\Psi}|_{B'},\Lambda'|_{B'})$ is isomorphic to the family $(\tilde{\Phi}\circ \tau,\Lambda\circ \tau)$.
\end{definition}
The following problem is an analogue of Kuranishi's completeness theorem.

\begin{problem}
Let $(M,\Lambda_0)$ be a compact holomorphic Poisson manifold. Then does there a complete Poisson analytic  family of deformations of $(M,\Lambda_0)$ exist?\footnote{For my unfamiliarity of analysis, I could not access to this problem.} \footnote{
Let $(M,\Lambda_0)$ be a compact holomorphic Poisson manifold. We fix a Hermitian metric on $M$ and define $L^*,\Box,G,...$, and so forth. Let $\{\eta_v+\pi_v|v=1,...,m\}$ be a base for $\mathbb{H}^2\cong HP^2(M,\Lambda_0)$. Assume that we have a unique convergent power series solution $\beta(t)$ of 
\begin{align*}
\beta(t)=\beta_1(t) +\frac{1}{2}L^*G[\beta(t),\beta(t)],
\end{align*}
where $\beta_1(t)=\sum_{v=1}^m (\eta_v+\pi_v)t_v$.  $\beta(t)=\alpha(t)-\Lambda_0$ satisfies
\begin{align*}
L\beta(t)+\frac{1}{2}[\beta(t),\beta(t)]=0
\end{align*}
if and only if $H[\beta(t),\beta(t)]=0$. Let $\{\beta_{\lambda}|\lambda=1,...,r\}$ be an orthonomal base of $\mathbb{H}^3$ and let $(-,-)$ be the inner product in $A^{0,0}(M,\wedge^3 T)\oplus A^{0,1}(M,\wedge^2 T)\oplus A^{0,2}(M,T)$. Then
\begin{align*}
H[\beta(t),\beta(t)]=\sum_{\lambda=1}^r ([\beta(t),\beta(t)],\beta_{\lambda})\beta_{\lambda}
\end{align*}
Hence $H[\beta(t),\beta(t)]=0$ if and only if $([\beta(t),\beta(t)],\beta_{\lambda})=0$ for $\lambda=1,...,r$. Since $\beta(t)$ is a power series in $t$ so is $([\beta(t),\beta(t)],\beta_{\lambda})=b_{\lambda}(t)$. Thus $b_{\lambda}(t)$ is holomorphic in $t$ for $\lambda=1,...,r$ and $|t|$ small $(|t|<\epsilon)$. Also $b_{\lambda}(0)=0$. Define an analytic set $S$ as follows:
\begin{align*}
S=\{t||t|<\epsilon,b_{\lambda}(t),\lambda=1,...,r\}
\end{align*}
Then $S$ is an analytic subset of $B_{\epsilon}$ containing the origin. I believe that this would be the base space of Kuranishi family for holomorphic Poisson deformations of $M$.}
\end{problem}

Lastly, we pose some natural questions I can not answer at this stage.

\begin{problem}
Can we establish the upper-semicontinuity theorem in a Poisson analytic family? $($See $\cite{Kod05}$ page 200$)$
\end{problem}

\begin{problem}
Let $(\mathcal{M},B,\omega)$ be a complex analytic family. Let $ b\in B$ and $\omega^{-1}(b)=M_b$ with a holomorphic Poisson structure $\Lambda_b$. What is the conditions or obstructions for the following? 
\begin{center}
``There exists an open neighborhood $U$ of $b$ in $B$ such that $(\mathcal{M}|_U,U,\omega)$ can be extended to be a Poisson analytic family $(\mathcal{M}|_U,\Lambda,U,\omega)$ such that $\omega^{-1}(b)=(M_b,\Lambda_b)$".\footnote{This problem is related to the operator $L=\bar{\partial}+[\Lambda,-]$}
\end{center}
\end{problem}

\begin{problem}
Can we establish the stability theorem for holomorphic Poisson submanifolds in the holomorphic Poisson category? $($$\cite{Kod63}$$)$ 
\end{problem}

\part{Infiniteismal Poisson deformations and Universal Poisson deformations of compact holomorphic Poisson manifolds}\label{part2}

In the part II of the thesis, we present infinitesimal deformations of a compact holomorphic Poisson manifold $(X,\Lambda_0)$ over an artinian local $\mathbb{C}$-algebra $(R,\mathfrak{m})$ with residue $\mathbb{C}$. We extend the method of \cite{Ran00} to show that given an infinitesimal Poisson deformation of $(X,\Lambda_0)$, as in the case of the part I of the thesis, we can canonically associate an element $\phi+\Lambda\in (A^{0,0}(X,T)\oplus A^{0,0}(X,\wedge^2 T))\otimes \mathfrak{m}$ satisfying the Maurer-Cartan equation $L(\phi+\Lambda)+\frac{1}{2}[\phi+\Lambda,\phi+\Lambda]=0$ of the differential graded Lie algebra $\mathfrak{g}=\bigoplus_i g_i=(\bigoplus_{p+q-1=i,p\geq 0, q\geq 1} A^{0,p}(X,\wedge^q T),L=\bar{\partial}+[\Lambda_0,-],[-,-])$. By using the language of functors of Artin rings, we show that the differential graded Lie algebra $\mathfrak{g}$ controls infinitesimal Poisson deformations of $(X,\Lambda_0)$. In other words, we establish the following theorem.

\begin{theorem}
Let $(X,\Lambda_0)$ be a compact holomorphic Poisson manifold. Then the Poisson deformation functor $PDef_{(X,\Lambda_0)}$ is controlled by the differential graded Lie algebra $\mathfrak{g}=(\bigoplus_{p+q-1=i,p\geq 0,q\geq 1}$
$A^{0,p}(X,\wedge^q T),L=\bar{\partial}+[\Lambda_0,-],[-,-])$. In other words, we have an isomorphism of two functors
\begin{align*}
Def_\mathfrak{g}\cong PDef_{(X,\Lambda_0)}
\end{align*}
\end{theorem}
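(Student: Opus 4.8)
The plan is to produce, for every Artin local $\mathbb{C}$-algebra $(A,\mathfrak m_A)$ with residue field $\mathbb{C}$, a natural bijection $\Phi_A\colon Def_{\mathfrak g}(A)\to PDef_{(X,\Lambda_0)}(A)$, and then to check that $\{\Phi_A\}_A$ is a morphism of functors. Recall that $Def_{\mathfrak g}(A)$ is the set of Maurer--Cartan elements $x=\phi+\Lambda$ in $\mathfrak g^1\otimes\mathfrak m_A=\bigl(A^{0,1}(X,T)\oplus A^{0,0}(X,\wedge^2T)\bigr)\otimes\mathfrak m_A$, i.e. solutions of $Lx+\tfrac12[x,x]=0$ with $L=\bar\partial+[\Lambda_0,-]$, modulo the gauge action of $\exp(\mathfrak g^0\otimes\mathfrak m_A)$ with $\mathfrak g^0=A^{0,0}(X,T)$. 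Decomposing this Maurer--Cartan equation by the bidegree $(p,q)$ of $A^{0,p}(X,\wedge^qT)$, exactly as in the discussion preceding Theorem~\ref{theorem of existence}, shows it is equivalent to the three equations $[\Lambda_0+\Lambda,\Lambda_0+\Lambda]=0$, $\bar\partial(\Lambda_0+\Lambda)-[\Lambda_0+\Lambda,\varphi]=0$ and $\bar\partial\varphi-\tfrac12[\varphi,\varphi]=0$ with $\varphi=-\phi$; that is, it is precisely the integrability condition of Chapter~\ref{chapter2} tensored with $\mathfrak m_A$.

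First I would construct $\Phi_A$. Given a Maurer--Cartan element $\phi+\Lambda$, the equation $\bar\partial\varphi=\tfrac12[\varphi,\varphi]$ implies that the kernel of $\bar\partial-\phi$ on the sheaf of $A$-valued $C^\infty$ functions is a sheaf of flat $A$-algebras deforming $\mathcal O_X$: this is the Kodaira--Spencer/Newlander--Nirenberg passage from $\varphi$ to a complex structure used in Chapter~\ref{chapter2}, but over the Artin ring $A$ it is purely formal, since one may filter by powers of $\mathfrak m_A$ and reduce to the exactness of the Dolbeault complex on the associated graded, so no analytic input is needed. Call the resulting deformation of the complex manifold $\mathcal X_\phi/\mathrm{Spec}\,A$. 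The two remaining equations, together with the $A$-linear version of Theorem~\ref{m} and Theorem~\ref{1thm}, say that the $(2,0)$-part $(\Lambda_0+\Lambda)^{2,0}$ with respect to $\mathcal X_\phi$ is a holomorphic Poisson structure on $\mathcal X_\phi$ reducing to $\Lambda_0$ on the closed fibre. Set $\Phi_A([\phi+\Lambda])=[(\mathcal X_\phi,(\Lambda_0+\Lambda)^{2,0})]$. That $\Phi_A$ descends to gauge classes is the usual dictionary: $\xi\in\mathfrak g^0\otimes\mathfrak m_A=A^{0,0}(X,T)\otimes\mathfrak m_A$ is a $(1,0)$-vector field with entries in $\mathfrak m_A$, whose exponential integrates (formally, over $A$) to an automorphism of $X\times\mathrm{Spec}\,A$; composed with the coordinate change intertwining the two $\bar\partial$-operators it gives a Poisson isomorphism between the two deformations, and one checks that $\exp(\xi)\ast(\phi+\Lambda)$ in the DGLA matches the transport of $\phi+\Lambda$ under this automorphism, which is the $\otimes A$ analogue of the change-of-coordinates computations in Proposition~\ref{gg}.

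Next I would prove that $\Phi_A$ is bijective, by induction along small extensions $0\to(t)\to A\to A'\to 0$ with $\mathfrak m_A\cdot t=0$. For surjectivity: given a Poisson deformation $(\mathcal X,\Lambda_{\mathcal X})/\mathrm{Spec}\,A$, choose a $C^\infty$ $A$-trivialization $\mathcal X\cong X\times\mathrm{Spec}\,A$ (possible since such a deformation is topologically trivial and $A$ is Artinian); under it the holomorphic structure of $\mathcal X$ becomes an operator $\bar\partial-\phi$ with $\phi\in A^{0,1}(X,T)\otimes\mathfrak m_A$ satisfying $\bar\partial\varphi=\tfrac12[\varphi,\varphi]$, exactly as in Lemma~\ref{c} and~(\ref{b}), while $\Lambda_{\mathcal X}$ becomes, by the $A$-linear Theorem~\ref{1thm}, the $(2,0)$-part of a unique $C^\infty$ bivector $\Lambda_0+\Lambda$ with $[\Lambda_0+\Lambda,\Lambda_0+\Lambda]=0$ reducing to $\Lambda_0$; one then checks the remaining equation $\bar\partial(\Lambda_0+\Lambda)=[\Lambda_0+\Lambda,\varphi]$, so that $\phi+\Lambda\in Def_{\mathfrak g}(A)$ maps to the given class (the case $A=\mathbb{C}[\epsilon]/(\epsilon^2)$ is Theorem~\ref{n}). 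For injectivity: if $\Phi_A([\phi+\Lambda])=\Phi_A([\phi'+\Lambda'])$, a Poisson isomorphism of deformations over $A$ reducing to the identity on the closed fibre can be built up one small extension at a time, the obstruction to lifting it at each stage and the remaining ambiguity being controlled by $\mathfrak g^0\otimes(t)$; running the induction shows the isomorphism has the form $\exp(\xi)$ for some $\xi\in\mathfrak g^0\otimes\mathfrak m_A$, so $\phi+\Lambda$ and $\phi'+\Lambda'$ are gauge equivalent. Finally, trivializations, the associated $\phi$ and $\Lambda$, and the gauge transformations are all compatible with pullback along ring homomorphisms $A\to A'$, so $\{\Phi_A\}$ is a natural isomorphism and $Def_{\mathfrak g}\cong PDef_{(X,\Lambda_0)}$.

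The step I expect to be the main obstacle is the dictionary between the DGLA gauge action and \emph{Poisson} isomorphisms of geometric deformations, together with the inductive arguments that every Poisson deformation (resp. every Poisson isomorphism) arises from a Maurer--Cartan element (resp. a gauge transformation): the care needed is to verify that the extra bracket term in $L=\bar\partial+[\Lambda_0,-]$ and in the gauge action records exactly the compatibility with $\Lambda_0$, so that what is captured on the geometric side is an isomorphism of Poisson deformations and not merely of the underlying complex-analytic ones. Since $HP^1(X,\Lambda_0)$ is not assumed to vanish, $(X,\Lambda_0)$ may carry infinitesimal Poisson automorphisms, so injectivity is genuinely not formal and requires this analysis rather than a count of dimensions.
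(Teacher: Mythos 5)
Your proposal is correct and follows essentially the same route as the paper's own proof: the paper likewise realizes a Maurer--Cartan element $\phi+\Lambda$ geometrically as the sheaf $\ker(\bar{\partial}+\phi)$ on $\mathscr{A}^{0,0}_X\otimes R$ with bracket $\{f,g\}=[[\Lambda_0+\Lambda,f],g]$, identifies the gauge action with Poisson isomorphisms via $\exp(a)$, and obtains surjectivity from a global $C^{\infty}$-trivialization of the deformation over the Artin ring together with the canonical element $\phi+\Lambda$ extracted from it. The steps you flag as delicate are exactly where the paper invests its effort --- the trivialization you invoke in one line is Proposition~\ref{2prot}, proved by induction on the exponent of $\mathfrak{m}$ with partitions of unity, and Lemma~\ref{2len} supplies the fact that isomorphisms reducing to the identity are exponentials of elements of $A^{0,0}(X,T)\otimes\mathfrak{m}$ --- so your sketch matches the paper's argument in both structure and substance.
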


 We  study universal Poisson deformation of a compact holomorphic Poisson manifold $(X,\Lambda_0)$ with $HP^1(X,\Lambda_0)=0$. Based on the method of \cite{Ran00}, we explicitly construct a $n$-th order universal Poisson deformation space $P_n^u$ over an artinian $\mathbb{C}$-algebra $(R_n^u,\mathfrak{m}^u_n)$ with exponent $n$ (i.e $\mathfrak{m}_n^{u\,n+1}=0$) such that any infinitesimal Poisson deformation of $(X,\Lambda_0)$ over an artinian local $\mathbb{C}$-algebra $(R,\mathfrak{m})$ with exponent $n$ (i.e $\mathfrak{m}^{n+1}=0$) can be induced from the $n$-th order universal Poisson deformation space via base change from a canonical ring homomorphism $r:R_n^u\to R$ up to equivalence. By taking the limit, we have a universal Poisson formal space. The main ingredient of universal Poisson deformation is the Jacobi complex or Quillen standard complex associated with the differential graded Lie algebra $\mathfrak{g}=\bigoplus_i g_i=(\bigoplus_{p+q-1=i,p\geq 0, q\geq 1} A^{0,p}(X,\wedge^q T), L:=\bar{\partial}+[\Lambda_0,-], [-,-])$. The base ring of an $n$-th universal Poisson deformation is given by $R_n^u:=\mathbb{C}\oplus \mathfrak{m}_n^u$, where $\mathfrak{m}_n^u=\mathbb{H}^0(J_n(\mathfrak{g}))^*$ which is the dual of $0$-th Jacobi cohomology group associated to $\mathfrak{g}$. For given an infinitesimal Poisson deformation, the associated element $\alpha:=\phi+\Lambda\in (A^{0,0}(X,T)\oplus A^{0,0}(X,\wedge^2 T))\otimes \mathfrak{m}$ gives an element $[\epsilon(\alpha)]$ in $\mathbb{H}^0(J_n(\mathfrak{g}))\otimes \mathfrak{m}$ which is the $0$-th Jacobi cohomology group associated with the differential graded Lie algebra $\mathfrak{g}\otimes \mathfrak{m}$. The element $[\epsilon(\alpha)]$ gives a ring homomorphism $r:R_n^u\to R$. We will present the full detail of the construction of Jacobi complex since we need actual computations for our main theorem\footnote{Similar result on universal Poisson deformation of a Poisson algebra is proved in \cite{Gin04} (Theorem 1.10). More precisely, for a Poisson algebra $A$ with $HP^1(A)$ and $HP^2(A)$ a finite-dimensional vector space over $\mathbb{C}$, there is an universal formal Poisson deformation of the algebra $A$. We also prove in the part \ref{part3} of the thesis that the Poisson deformation functor $PDef_{(X,\Lambda_0)}$ is prorepresentable when $(X,\Lambda_0)$ is a smooth projective Poisson scheme with $HP^1(X,\Lambda_0)=0$.} in the following:

\begin{theorem}\label{2theorem}
Let $(X,\Lambda_0)$ be a compact holomorphic Poisson manifold with $HP^1(X,\Lambda_0)= 0$ and $J$ be the Jacobi complex associated with the differential graded Lie algebra 
\begin{align*}
\mathfrak{g}=\bigoplus_i g_i=(\bigoplus_{p+q-1=i,p\geq 0,q\geq 1} A^{0,p}(X,\wedge^q T), L:=\bar{\partial}+[\Lambda_0,-], [-,-])
\end{align*} where $[-,-]$ is the Schouten bracket. Then
\begin{enumerate}
\item For each $n\geq 1$, $R_n^u=\mathbb{C}\oplus \mathbb{H}^0(J_n(\mathfrak{g}))^*$ is a local artinian $\mathbb{C}$-algebra with the residue $\mathbb{C}$ in a canonical way. The maximal ideal of $R_n^u$ is given by $\mathfrak{m}_n^u=\mathbb{H}^0(J_n(\mathfrak{g}))^*$ and have exponent $n$ (which means $\mathfrak{m}_n^{u\, n+1}=0$).

\item There is a $n$-th order universal Poisson deformation $P_n^u$ of $(X,\Lambda_0)$ over $R_n^u$ in the following sense:
for any artinian local $\mathbb{C}$-algebra $(R,\mathfrak{m})$ of exponent $n$ (which means $\mathfrak{m}^{n+1}=0$) and infinitesimal Poisson deformation $P$ of $(X,\Lambda_0)$ over $R$, there is a canonical homomorphism
$r:R_n^u\to R$
and an isomorphism of Poisson analytic spaces over $R$
\begin{align*}
P/R\xrightarrow{\sim} r^* P_n^u:=P_n^u\times_{Spec(R_n^u)} Spec(R);
\end{align*}

\item For each $n\geq 1$, $P_n^u/R_n^u$ fit together to form a direct system with limit, which give an universal  formal Poisson doformation $\hat{P}^u/ \hat{R}^u:=\varinjlim_n P_n^u/R_n^u$ of $(X,\Lambda_0)$ over $\hat{R}^u$ in the following sense: if $\hat{R}= \varprojlim_n R_n$ is a complete local noetheiran $\mathbb{C}$-algebra and $\hat{P}/\hat{R}= \varinjlim_n P_n/R_n$, then $\hat{r}=\varprojlim_n r_n:\hat{R}^u \to \hat{R}$ exists and $\hat{P}/\hat{R}\cong \hat{r}^*(\hat{P}^u/\hat{R}^u):=\hat{P}^u\times_{Spec(\hat{R}^u)} Spec(\hat{R})$.
\end{enumerate}
\end{theorem}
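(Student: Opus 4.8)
The plan is to follow the strategy of \cite{Ran00} adapted to the differential graded Lie algebra $\mathfrak{g}=\bigoplus_i g_i=(\bigoplus_{p+q-1=i,p\geq 0,q\geq 1} A^{0,p}(X,\wedge^q T),L=\bar{\partial}+[\Lambda_0,-],[-,-])$, using the Jacobi (Quillen standard) complex as the combinatorial engine that manufactures the universal base ring. The overall architecture is: build the Jacobi complex $J(\mathfrak{g})$ and its truncations $J_n(\mathfrak{g})$; observe that $J$ carries a natural differential-graded-coalgebra structure so that $\mathbb{H}^0(J_n(\mathfrak{g}))$ is a coalgebra and hence its linear dual $\mathfrak{m}_n^u:=\mathbb{H}^0(J_n(\mathfrak{g}))^*$ is a finite-dimensional nilpotent algebra; set $R_n^u:=\mathbb{C}\oplus\mathfrak{m}_n^u$; and then check that the ``tautological'' Maurer--Cartan element built out of the identity of $\mathbb{H}^0(J_n(\mathfrak{g}))$ gives a Poisson deformation $P_n^u/R_n^u$ that is universal among exponent-$n$ Poisson deformations. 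The identification $Def_\mathfrak{g}\cong PDef_{(X,\Lambda_0)}$ (the Theorem quoted just before \ref{2theorem}) lets me work entirely at the level of Maurer--Cartan elements of $\mathfrak{g}\otimes\mathfrak{m}$ modulo gauge equivalence, which is where the hypothesis $HP^1(X,\Lambda_0)=0$ will be used crucially: it forces $H^0$ of the relevant gauge/obstruction complexes to control things rigidly enough that the map $r$ is \emph{canonical}, i.e. unique, not merely existent (this is exactly the difference between a versal and a universal family).

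More concretely, for part (1): I would recall that for a DGLA $\mathfrak{g}$ the Jacobi complex $J(\mathfrak{g})$ is the Chevalley--Eilenberg-type complex computing Lie algebra homology shifted appropriately, with $J_n$ the quotient by terms of word-length $>n$; its degree-$0$ cohomology $\mathbb{H}^0(J_n(\mathfrak{g}))$ inherits a coproduct from the shuffle/deconcatenation structure, which is co-nilpotent of order $n$ because we truncated. Dualizing turns the coproduct into an associative (here commutative, since $\mathfrak{g}$ is a DGLA of multivector fields and the bracket is graded-symmetric in the right sense) product, with $\mathfrak{m}_n^{u\,n+1}=0$ coming directly from co-nilpotency order $n$. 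This gives $R_n^u$ a canonical local artinian $\mathbb{C}$-algebra structure with residue field $\mathbb{C}$ and maximal ideal $\mathfrak{m}_n^u$. For part (2): the element $[\epsilon(\alpha)]\in\mathbb{H}^0(J_n(\mathfrak{g}))\otimes\mathfrak{m}$ attached to a Maurer--Cartan element $\alpha=\phi+\Lambda$ defines, by adjunction $\mathrm{Hom}(\mathbb{H}^0(J_n(\mathfrak{g}))^*,\mathfrak{m})=\mathbb{H}^0(J_n(\mathfrak{g}))\otimes\mathfrak{m}$ (finite dimensionality), a $\mathbb{C}$-algebra homomorphism $r:R_n^u\to R$; one then checks $r^*$ of the tautological family $P_n^u$ recovers $P/R$ up to the canonical isomorphism of Poisson analytic spaces over $R$, by unwinding that the Jacobi complex computes precisely the obstruction-free ``moduli of MC elements mod gauge'' in the presence of $HP^1=0$. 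For part (3): the truncations form a direct system $P_n^u/R_n^u\to P_{n+1}^u/R_{n+1}^u$ (equivalently an inverse system $R_{n+1}^u\twoheadrightarrow R_n^u$), whose limit $\hat{R}^u=\varprojlim_n R_n^u$ is a complete local noetherian $\mathbb{C}$-algebra (noetherian because $\hat{\mathfrak{m}}^u/\hat{\mathfrak{m}}^{u\,2}\cong HP^2(X,\Lambda_0)^*$ is finite dimensional, $X$ being compact), and the universal property in the formal category follows by passing (2) to the limit, since a homomorphism $\hat{R}^u\to\hat{R}$ is the same as a compatible family $R_n^u\to R_n$.

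The key steps, in order, are: (i) set up $J(\mathfrak{g})$, $J_n(\mathfrak{g})$ explicitly and verify the DG-coalgebra structure and the co-nilpotency of $\mathbb{H}^0(J_n)$ — here the actual sign bookkeeping in the Schouten bracket on $\bigoplus A^{0,p}(X,\wedge^q T)$ is where ``actual computations'' are needed, as the author flags in the footnote; (ii) dualize to get $R_n^u$ with $\mathfrak{m}_n^{u\,n+1}=0$; (iii) construct the tautological MC element over $R_n^u$ and hence $P_n^u$ via the equivalence $Def_\mathfrak{g}\cong PDef_{(X,\Lambda_0)}$; (iv) given arbitrary $(R,\mathfrak{m})$ of exponent $n$ and $P/R$, produce $\epsilon(\alpha)$, hence $r$, and prove $P/R\xrightarrow{\sim} r^*P_n^u$; (v) prove \emph{uniqueness} of $r$ — this is the step where $HP^1(X,\Lambda_0)=0$ enters decisively, ensuring the gauge group acts without enough freedom to spoil rigidity, so that two ring maps inducing isomorphic families must coincide; (vi) take limits for the formal statement and check noetherianness. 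The main obstacle I expect is step (v) combined with carefully matching the two notions of ``induced family'' (base change of the tautological family versus the family cut out by $r$-pushforward of the MC element), i.e. making the universality genuinely canonical rather than versal — the bookkeeping of how gauge equivalences over $R$ lift/descend along $r$, and the verification that $\epsilon$ is well-defined on gauge-equivalence classes, is the delicate heart of the argument. The convergence/analytic integrability issues that haunted Part I do not arise here because everything is over artinian rings, so formal power series truncate; that is precisely why the author could complete this theorem.
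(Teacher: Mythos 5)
Your overall architecture is the same as the paper's (Jacobi complex, coalgebra structure on $\mathbb{H}^0(J_n(\mathfrak{g}))$ dualized to give $R_n^u=\mathbb{C}\oplus\mathfrak{m}_n^u$, the tautological/identity morphic element defining $P_n^u$, the class $[\epsilon(\alpha)]$ of a given deformation defining $r$, and limits for part (3)), and parts (1) and (3) of your plan match the paper essentially step for step. The problem is where you locate, and how you discharge, the hypothesis $HP^1(X,\Lambda_0)=0$. In the paper this hypothesis is \emph{not} what makes $r$ canonical or unique: the canonicity of $r$ comes from the fact that $[\epsilon(\alpha)]\in\mathbb{H}^0(J_n(\mathfrak{g}))\otimes\mathfrak{m}$ is independent of the locally trivializing cover, the local trivializations and the $C^{\infty}$-trivialization, which is proved in Chapter \ref{chapter5} by the homotopy argument $\frac{d}{dt}[\epsilon(v_t)]=0$ with no assumption on $HP^1$; and uniqueness of $r$ is neither asserted in the theorem nor where the hypothesis is spent. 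The hypothesis is spent exactly at your step (iv), which you dismiss as ``unwinding'': one must show that if two Maurer--Cartan elements $v,w\in g_1\otimes\mathfrak{m}$ satisfy $[\epsilon(v)]=[\epsilon(w)]$, then $\mathcal{O}(v)$ and $\mathcal{O}(w)$ are equivalent Poisson deformations, for then the MC element of $P$ and the pullback $\tilde v_u=(1\otimes r)(v_u)$ of the tautological element (which have the same class by functoriality of $\epsilon$ under $r$) give the asserted isomorphism $P/R\xrightarrow{\sim}r^*P_n^u$.

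Your proposal contains no argument for this crux, and it is genuinely nontrivial: equality of classes in $\mathbb{H}^0(J_n(\mathfrak{g}))\otimes\mathfrak{m}$ only produces a cobounding element $(u_0,u_1,\dots,u_{n-1})\in\bigoplus_{i\ge 0}g_0\otimes \mathrm{sym}^i g_1\otimes\mathfrak{m}^{i+1}$, not directly a gauge transformation. The paper handles this by induction on the exponent of $\mathfrak{m}$: reducing modulo $\mathfrak{m}^n$, lifting the gauge element, and then for the remaining discrepancy in $g_1\otimes\mathfrak{m}^n$ using $H^0(\mathfrak{g})=HP^1(X,\Lambda_0)=0$ to force $u_1=\dots=u_{n-1}=0$ and to force $u_0\in g_0\otimes\mathfrak{m}^n$, after which $\exp(u_0)$ conjugates $\bar{\partial}+\phi$ to $\bar{\partial}+\psi$ and carries $\Lambda_0+\Lambda$ to $\Lambda_0+\Pi$. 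Without this argument (or an equivalent one), your step (iv) is an unproven assertion and the universality claim reduces to versality; conversely, your step (v), on which you concentrate the use of $HP^1=0$, is both unnecessary for the theorem as stated and, if wanted, follows from the gauge-invariance of $[\epsilon]$ rather than from the vanishing hypothesis. So the proposal follows the paper's route but has a gap precisely at the step the paper devotes its main inductive argument to.
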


In chapter \ref{chapter4}, we present the construction of $n$-th Jacobi complex or Quillen standard complex associated to a differential graded Lie algebra $\mathfrak{g}$. We show that we can canonically define a local artinian $\mathbb{C}$-algebra structure on $\mathbb{C}\oplus \mathbb{H}^0(J_n(\mathfrak{g}))^*$ with residue $\mathbb{C}$ and exponent $n$, where $\mathbb{H}^0(J_n(\mathfrak{g}))^*$ is the dual space of $0$-th Jacobi cohomology group $\mathbb{H}^0(J_n(\mathfrak{g}))$ associated with the Jacobi complex $J_n(\mathfrak{g})$. We also describe a morphic element in $\mathbb{H}^0(J_n(\mathfrak{g}))\otimes \mathfrak{m}$ which gives an $\mathbb{C}$-algebra homomorphism $\mathbb{C}\oplus \mathbb{H}^0(J_n(\mathfrak{g}))^*\to R$, where $(R,\mathfrak{m})$ is a local artinian $\mathbb{C}$-algebra with residue $\mathbb{C}$ and exponent $n$.

In chapter \ref{chapter5}, we study infinitesimal Poisson deformations of compact holomorphic Poisson manifolds. We define infinitesimal deformations of compact holomorphis Poisson manifolds over local artinian $\mathbb{C}$-algebras with residue $\mathbb{C}$ which is an infinitesimal version of Poisson analytic families. We deduce the same integrability condition as in the Part \ref{part1} of the thesis. The idea is essentially same to the Part \ref{part1} of the thesis. However, we use the infinitesimal language.

In chapter \ref{chapter6}, we show that the differential graded Lie algebra $\mathfrak{g}$ controls infinitesimal Poisson deformations in the language of functor of Artin rings. We complete the proof of Theorem \ref{2theorem} on universal Poisson deformations.

\chapter{Jacobi complex}\label{chapter4}

We present the full details of the construction of the Jacobi complex or Quillen standard complex associated with a differential graded Lie algebra since we need actual computations for our infinitesimal Poisson deformations. See also \cite{Hin97} 2.2 Quillen standard complex.
\section{Preliminaries}\

Let $\mathfrak{g}=\bigoplus_{i\geq 0} g_i$ be a graded complex with a differential $d$ where $g_i$ is a vector space over a field $\mathbb{C}$. In other words, we have the following complex $\mathfrak{g}:g_0\xrightarrow{d} g_1\xrightarrow{d} g_2\xrightarrow{d} \cdots$.

\begin{definition}
The symmetric algebra of a graded complex $(\mathfrak{g},d)$ are defined as a graded complex $S(\mathfrak{g})=T(\mathfrak{g})/I$, where $T(\mathfrak{g})=\sum_{n\geq 0} \mathfrak{g}^{\otimes n}$ is the tensor algebra of $\mathfrak{g}$ and $I$ is the two sided ideal generated by elements of the form $a\otimes b-(-1)^{|a||b|}b\otimes a$ where $a,b$ are homogeneous elements of $\mathfrak{g}$. We denote $\overline{S(\mathfrak{g})}=\overline{T(\mathfrak{g})}/I$, where $\overline{T(\mathfrak{g})}=\sum_{n\geq 1} \mathfrak{g}^{\otimes n}$. We will denote by $x_1\odot \cdots \odot x_n$ the image of $x_1\otimes \cdots \otimes x_n$.
\end{definition}

\begin{definition}
The exterior algebra of a graded vector space $\mathfrak{g}$ are defined as $\bigwedge \mathfrak{g}=T(\mathfrak{g})/I$ where $T(\mathfrak{g})=\sum_{n\geq 0} \mathfrak{g}^{\otimes n}$ is the tensor algebra of $\mathfrak{g}$ and $I$ is the two sided ideal generated by elements of the form $a\otimes b +(-1)^{|a||b|}b\otimes a$ where $a,b$ are homogeneous elements of $\mathfrak{g}$. We denote $\overline{\bigwedge \mathfrak{g}}=\overline{T(\mathfrak{g})}/I$, where $\overline{T(\mathfrak{g})}=\sum_{n\geq 1} \mathfrak{g}^{\otimes n}$. We will denote by $x_1\wedge \cdots \wedge x_n$ the image of $x_1\otimes \cdots \otimes x_n$.

\end{definition}

\begin{remark}
we have
\begin{align*}
\wedge^n \mathfrak{g} &=\wedge ^n (g_0\oplus g_1\oplus g_2\oplus \cdots)\\
&=\bigoplus_{r_0+r_1+\cdots=n, r_i\geq 0} (\wedge^{r_0} g_0)\otimes (sym^{r_1} g_1)\otimes \cdots \otimes (\wedge^{r_{2k}} g_{2k})\otimes (sym^{r_{2k+1}} g_{2k+1}) \otimes \cdots
\end{align*}
where $\wedge^k V$ is the usual $k$-th anti-symmetric product of a vector space $V$ and $sym^k V$ is the usual $k$-th symmetric product of a vector space $V$ when we ignore the grading.
\end{remark}

\begin{definition}
We can define the coalgebra structure $\Delta'$ and $\Delta$ on $\overline{S(\mathfrak{g})}$ and $\overline{\bigwedge \mathfrak{g}}$ in the following way:
\begin{align*}
\Delta'(x_1\odot \cdots \odot x_n)=\sum_I (-1)^{s(I)} x_I\otimes x_{\bar{I}}\\
\Delta(v_1\wedge \cdots \wedge v_n)=\sum_I (-1)^{t(I)} v_I\otimes v_{\bar{I}}
\end{align*}
where the summation is over all subsets $I=\{r_1,...,r_p\}$, $r_1<\cdots <r_p$ and $\bar{I}=\{s_1,...,s_q\}$ such that $s_1<\cdots <s_q$ with $I\cup \bar{I}=\{1,...,n\}$, $x_I=x_{r_1}\odot \cdots \odot x_{r_p}$, $x_{\bar{I}}=x_{s_1}\odot \cdots\odot x_{s_q}$ and similary $v_I=v_{r_1}\wedge \cdots \wedge v_{r_p}, v_{\bar{I}}=v_{s_1}\wedge \cdots \wedge v_{s_q}$. Here $s(I)$ and $t(I)$ are determined in the following way:
\begin{align*}
x_1\odot\cdots \odot x_n&=(-1)^{s(I)}x_I\odot x_{\bar{I}}\\
v_1\wedge \cdots \wedge v_n&=(-1)^{t(I)}v_I\wedge v_{\bar{I}}
\end{align*}
\end{definition}

Let $(\mathfrak{g},d)$ be a graded complex and we denote $(\mathfrak{g}[n],d)$ be a graded complex by shifting the degree by $n$, i.e. $\mathfrak{g}[n]^i=g_{n+i}$.

\begin{remark}[d\'ecalage  isomorphism]
We have an isomorphism 
\begin{align*}
dec:S^n(\mathfrak{g}[1])&\cong (\bigwedge^n \mathfrak{g})[n]\\
\bar{x}_1\odot\cdots\odot\bar{x}_n&\mapsto (-1)^{\sum_{i=1}^{n-1} (n-i)p_i} x_1\wedge \cdots \wedge x_n
\end{align*}
where $x_i$ is an element of $\mathfrak{g}$, $\bar{x}_i$ is an element of $\mathfrak{g}[1]$ via the natural map $\mathfrak{g}\to \mathfrak{g}[1]$, and $p_i$ is the degree of $x_i$
\end{remark}

\begin{notation}
Let $I=\{p_1,..,p_r\}$. $A_I$ is defined by the following relation: $dec(\bar{x}_{p_1}\odot \cdots \odot \bar{x}_{p_r})=(-1)^{A_I}x_{p_1}\wedge \cdots \wedge x_{p_r}$. We will denote $\bar{x}_I=x_{p_1}\odot\cdots \odot x_{p_r}$ and $x_I=x_{p_1}\wedge \cdots \wedge x_{p_r}$. We denote $|I|=r$ the cadinality of $I$ and $|x_I|:=deg(x_{p_1})+\cdots +deg(x_{p_r})$
\end{notation}

Via this isomorphism $dec$, we have the following commutative diagram
\begin{center}
$\begin{CD}
\overline{S(\mathfrak{g}[1])}@>dec>> \overline{\bigwedge\mathfrak{g}}\\
@V\Delta' VV @VV\Delta V\\
\overline{S(\mathfrak{g}[1])}\otimes \overline{S(\mathfrak{g}[1])} @>\tilde{dec}>> \overline{\bigwedge\mathfrak{g}}\otimes \overline{\bigwedge\mathfrak{g}}
\end{CD}$
\end{center}
where $\tilde{dec}$ is defined in the following way: $\tilde{dec}(\bar{x}_I\otimes \bar{x}_J)=(-1)^{A_I+A_J+|x_I||J|}x_I\otimes x_J$. 

\subsection{Induced differential}\

Let $\mathfrak{g}$ be a differential graded Lie algebra.
We define the map $Q_n:\wedge^n \mathfrak{g}\to \wedge^{n-1} \mathfrak{g}$ by
\begin{align*}
Q_n(x_1\wedge \cdots \wedge x_n)=\sum_{i<j} (-1)^a[x_i,x_j]\wedge x_1\wedge \cdots \wedge \hat{x}_i\wedge \cdots \wedge \hat{x}_j\cdots x_n
\end{align*}
where $a$ is defined in the following way
\begin{align*}
x_1\wedge \cdots \wedge x_n=(-1)^ax_i\wedge x_j\wedge x_1\wedge \cdots \wedge \hat{x}_i\cdots \wedge \hat{x}_j\cdots\wedge x_n
\end{align*}
More precisely,
\begin{align*}
a=\underbrace{i-1+p_i(p_1+\cdots +p_{i-1})}_{\text{first moving $x_i$}}+\underbrace{j-2+p_j(p_1+\cdots +\hat{p}_i+\cdots +p_{j-1})}_{\text{second moving $x_j$}}
\end{align*}

And we define $d_n:\wedge^n \mathfrak{g}\to \wedge^n \mathfrak{g}$ inductively on $n$ by
\begin{align*}
d_n(x_1\wedge \cdots \wedge x_n)=dx_1\wedge x_2\wedge \cdots \wedge x_n+\sum_{i=2}^n(-1)^{p_1+\cdots+p_{i-1}}x_1\wedge\cdots \wedge x_{i-1}\wedge dx_i\wedge x_{i+1}\cdots \wedge x_n
\end{align*}

First we show that $d^2=0$ and $Q^2=0$. We show $d^2=0$ by induction on $k$, where $x_1\wedge \cdots \wedge x_k$. For $k=1$, the statement is true by the definition of $d$. Assume that the statement is true for $k=n-1$.
\begin{align*}
d\circ d(x_1\wedge\cdots \wedge  x_n)&=d(dx_1\wedge x_2\cdots\wedge x_n+(-1)^{p_1}x_1\wedge d(x_2\wedge \cdots \wedge x_n))\\
                                                             &=ddx_1\wedge x_2\cdots \wedge x_n +(-1)^{p_1+1}dx_1\wedge d(x_2\wedge \cdots \wedge x_n)\\
                                                             &+(-1)^{p_1}dx_1\wedge d(x_2\wedge \cdots \wedge x_n)+(-1)^{p_1+p_1}x_1\wedge dd(x_2\wedge \cdots \wedge x_n)=0
\end{align*}
by the induction hypothesis. For $Q^2=0$, we recall the definition of $Q$. Then $Q\circ Q(x_1\wedge \cdots \wedge x_n)$ is of the following form
\begin{align*}
\sum_{i<j<k} ((-1)^a [[x_i,x_j],x_k]+(-1)^b [[x_j,x_k],x_i]+(-1)^c[[x_i,x_k],x_j])x_1\wedge \cdots \wedge \hat{x}_i \cdots \wedge \hat{x}_j\cdots \wedge \hat{x}_j \cdots \wedge \hat{x}_k\wedge \cdots \wedge x_n
\end{align*}
where
\begin{align*}
a&=\underbrace{i-1+p_i(p_1+\cdots +p_{i-1})}_{\text{first moving $x_i$}}+\underbrace{j-2+p_j(p_1+\cdots +\hat{p}_i+\cdots +p_{j-1})}_{\text{second moving $x_j$}}+\\
      &\,\,\,\,\,\,\,\,\,\,\,\,\,\,\, \,\,\,\,\,\,\,\,\,\,\,\,\,\,\,\,\,\,\,\, \underbrace{k-3+p_k(p_1+\cdots +\hat{p}_i+\cdots+\hat{p}_j+\cdots +p_{k-1})}_{\text{third moving $x_k$}}\\
b&=\underbrace{j-1+p_j(p_1+\cdots+p_{j-1})}_{\text{first moving $x_j$}}+\underbrace{k-2+p_k(p_1+\cdots+\hat{p}_j+\cdots+p_k)}_{\text{second moving $x_k$}}+\underbrace{i-1+p_i(p_1+\cdots+p_{i-1})}_{\text{third moving $x_i$}}\\
c&=\underbrace{i-1+p_i(p_1+\cdots+p_{i-1})}_{\text{first moving $x_i$}}+\underbrace{k-2+p_k(p_1+\cdots+\hat{p}_i+\cdots +p_k)}_{\text{second moving $x_k$}}+\underbrace{j-2+p_j(p_1+\cdots +\hat{p}_i+\cdots +p_{j-1})}_{\text{third moving $x_j$}}   
\end{align*}

Set
\begin{align*}
d=i+j+k+p_i(p_1+\cdots +p_{i-1})+p_j(p_1+\cdots+p_{j-1})+p_k(p_1+\cdots+p_{k-1})
\end{align*}
Then
\begin{align*}
 (-1)^d&=(-1)^{a+p_jp_i+p_kp_i+p_kp_j}&(-1)^a&=(-1)^{d+p_jp_i+p_kp_i+p_kp_j}\\
 (-1)^d&=(-1)^{b+p_kp_j}&(-1)^b&=(-1)^{d+p_kp_j}\\
 (-1)^d&=(-1)^{c+1+p_kp_i+p_jp_i}&(-1)^c&=(-1)^{d+1+p_kp_i+p_jp_i}
\end{align*}
So we have
\begin{align*}
&(-1)^a [[x_i,x_j],x_k]+(-1)^b [[x_j,x_k],x_i]+(-1)^c[[x_i,x_k],x_j]\\
&=(-1)^d\left((-1)^{p_jp_i+p_kp_i+p_kp_j} [[x_i,x_j],x_k]+(-1)^{p_kp_j} [[x_j,x_k],x_i]+(-1)^{1+p_kp_i+p_jp_i}[[x_i,x_k],x_j]\right)=0
\end{align*}
by the following relations and graded Jocobi identity
\begin{align*}
(-1)^{p_jp_i+p_kp_i+p_kp_j} [[x_i,x_j],x_k]=(-1)^{p_jp_i+p_kp_j+p_kp_j} [x_k,[x_i,x_j]]\\
(-1)^{p_kp_j} [[x_j,x_k],x_i]=(-1)^{p_kp_j+p_ip_j+p_ip_k} [x_i,[x_j,x_k]]\\
(-1)^{1+p_kp_i+p_jp_i}[[x_i,x_k],x_j]=(-1)^{p_kp_j+p_jp_i+p_jp_i}[x_j,[x_k,x_i]]
\end{align*}
\begin{align*}
(-1)^{p_ip_k}[x_i,[x_j,x_k]]+(-1)^{p_jp_i}[x_j,[x_k,x_i]]+(-1)^{p_kp_j}[x_k,[x_i,x_j]]=0
\end{align*}

Hence we have $Q^2=0$.

Next we will show that $Qd=dQ$ by induction on $k$ where $x_1\wedge \cdots\wedge x_k$. For $k=2$, 
\begin{align*}
Q\circ d(x_1\wedge x_2)=Q(dx_1\wedge x_2+(-1)^{p_1}x_1\wedge dx_2)=[dx_1,x_2]+(-1)^{p_1}[x_1,dx_2]\\
d \circ Q(x_1\wedge x_2)=d[x_1,x_2]=[dx_1,x_2]+(-1)^{p_1}[x_1,dx_2]
\end{align*}
So induction holds for $k=2$. Now we assume that the statement holds for $k=n-1$. We will prove that it holds for $k=n$.

\begin{align*}
d(x_1\wedge\cdots\wedge x_n)&=dx_1\wedge(x_2\wedge\cdots\wedge x_n)+(-1)^{p_1} x_1\wedge d(x_2\wedge\cdots \wedge x_n)\\
dx_1\wedge(x_2\wedge\cdots\wedge x_n)&= (-1)^{n-1+(p_1+1)(p_2+\cdots+p_n)}(x_2\wedge\cdots \wedge x_n)\wedge dx_1\\
(-1)^{p_1}x_1\wedge d(x_2\wedge \cdots\wedge x_n)
                                                                                     &=(-1)^{p_1+ i-2+p_i(p_2+\cdots p_{i-1})} x_1\wedge d(x_i\wedge x_2\wedge \cdots \wedge \hat{x}_i\wedge \cdots\wedge x_n)\\
                                                                                     &=(-1)^{p_1+i-2+p_i(p_2+\cdots+p_{i-1})} x_1\wedge dx_i\wedge x_2\wedge \cdots\wedge \hat{x}_i\wedge \cdots\wedge x_n\\
                                                                                     &\,\,\,\,\,\,\,\,\,\,+(-1)^{p_1+i-2+p_i(p_2+\cdots+p_{i-1})+p_i} x_1\wedge x_i\wedge d(x_2\wedge \cdots\wedge \hat{x}_i\wedge \cdots \wedge x_n)\\
                                                                                     &=(-1)^{p_1+n-2+p_1(p_2+\cdots+p_n+1)}d(x_2\wedge \cdots \wedge x_n)\wedge x_1\\
\end{align*}
Then
\begin{align*}
Qd(x_1\wedge \cdots \wedge x_n)&=\sum_{i=2}^n (-1)^{i-2+p_i(p_2+\cdots +p_{i-1})}[dx_1,x_i]\wedge x_2\wedge \cdots \hat{x}_i\wedge \cdots \wedge x_n\\
                                                      &\,\,\,\,\,\,\,\,\,\,+(-1)^{n-1+(p_1+1)(p_2+\cdots +p_n)}Q(x_2\wedge \cdots\wedge x_n)\wedge dx_1\\
                                                      &+\sum_{i=2}^n(-1)^{p_1+i-2+p_i(p_2+\cdots+p_{i-1})}[x_1,dx_i]\wedge x_2\wedge \cdots \wedge \hat{x}_i\wedge \cdots \wedge x_n\\
                                                      &\,\,\,\,\,\,\,\,\,\,+\sum_{i=2}^n (-1)^{p_1+i-2+p_i(p_2+\cdots+p_{i-1})+p_i}[x_1,x_i]\wedge d(x_2 \wedge \cdots \wedge \hat{x}_i\wedge \cdots \wedge x_n)\\
                                                      &+(-1)^{p_1+n-2+p_1(p_2+\cdots+p_n+1)}Qd(x_2\wedge \cdots \wedge x_n)\wedge x_1
\end{align*}

Now we compute $dQ$. First we note the following relations
\begin{align*}
x_1\wedge(x_2\wedge \cdots x_n)&=(-1)^{i-2+p_i(p_1+\cdots +p_{i-1})} x_1\wedge x_i\wedge x_2 \wedge \cdots \wedge \hat{x}_i\wedge \cdots \wedge x_n\\
                                                      &=(-1)^{n-1+p_1(p_2+\cdots +p_n)}(x_2\wedge \cdots \wedge x_n)\wedge x_1
\end{align*}
Then
\begin{align*}
Q(x_1\wedge(x_2\wedge \cdots \wedge x_n))&=\sum_{i=2}^n (-1)^{i-2+p_i(p_1+\cdots +p_{i-1})} [x_1, x_i]\wedge x_2 \wedge \cdots \wedge \hat{x}_i\wedge \cdots \wedge x_n\\
                                                                        &\,\,\,\,\,\,\,\,\,\,+(-1)^{n-1+p_1(p_2+\cdots +p_n)}Q(x_2\wedge \cdots \wedge x_n)\wedge x_1
\end{align*}
Hence we have
\begin{align*}
dQ(x_1\wedge \cdots \wedge x_n)&=\sum_{i=2}^n (-1)^{i-2+p_i(p_1+\cdots +p_{i-1})} [dx_1, x_i]\wedge x_2 \wedge \cdots \wedge \hat{x}_i\wedge \cdots \wedge x_n\\
                                          &+\sum_{i=2}^n (-1)^{i-2+p_i(p_1+\cdots +p_{i-1})+p_1} [x_1, dx_i]\wedge x_2 \wedge \cdots \wedge \hat{x}_i\wedge \cdots \wedge x_n\\
                                          &+\sum_{i=1}^n (-1)^{i-2+p_i(p_1+\cdots +p_{i-1})+p_1+p_i} [x_1, x_i]\wedge d(x_2 \wedge \cdots \wedge \hat{x}_i\wedge \cdots \wedge x_n)\\
                                          &+(-1)^{n-1+p_1(p_2+\cdots +p_n)}dQ(x_2\wedge \cdots \wedge x_n)\wedge x_1\\
                                          &\,\,\,\,\,+(-1)^{n-1+p_1(p_2+\cdots +p_n)+(p_2+\cdots +p_n)}Q(x_2\wedge \cdots \wedge x_n)\wedge dx_1
\end{align*}

By induction hypothesis for $k=n-1$, we have $Qd(x_2\wedge \cdots \wedge x_n)=dQ(x_2\wedge \cdots \wedge x_n)$. Hence we have
\begin{align*}
Qd(x_1\wedge \cdots \wedge x_n)=dQ(x_1\wedge \cdots \wedge x_n)
\end{align*}
So the statement holds for $k=n$. Hence $Qd=dQ$. So if we we define $Q'$
\begin{align*}
Q'(x_1\wedge \cdots \wedge x_n)=((-1)^n d+Q)(x_1\wedge \cdots \wedge x_n)
\end{align*}

Then $Q'\circ Q'=0$.

\section{Jacobi complex}\

\begin{definition}[un-degree shifted complex]\label{2un}
Let $\mathfrak{g}$ be a differential graded Lie algebra. Let's consider the total complex of the following bicomplex with the differential $Q'$ defined as above,
\begin{center}
\tiny{$\begin{CD}
g_0 @>-d>> g_1@>-d>>g_2 @>-d>> g_3  @>>>\cdots\\
@AQAA @AQAA @AQAA @AQAA \\ 
\bigwedge^2 g_0 @>d>> (g_0\otimes g_1) @>d>> (g_0\otimes g_2) \oplus sym^2g_1@>d>> (g_0\otimes g_3)   \oplus (g_1\otimes g_2) @>>> \cdots \\
@AQAA @AQAA @AQAA @AQAA \\ 
\bigwedge^3 g_0 @>-d>> (\bigwedge^2 g_0)\otimes g_1 @>-d>> (\bigwedge^2 g_0 \otimes g_2)  \oplus(g_0\otimes sym^2 g_1) @>-d>> (\bigwedge^2 g_0\otimes g_3)\oplus (g_0\otimes g_1\otimes g_2) @>>> \cdots \\ 
@. @. @. \oplus sym^3 g_1 \\ 
@AQAA @AQAA @AQAA @AQAA \\
\bigwedge^4 g_0  @>d>> (\bigwedge^3 g_0) \otimes g_1@>d>> (\bigwedge^3 g_0\otimes g_2) @>d>> (\bigwedge^3 g_0\otimes g_3) @>>>\cdots\\
@.@. \oplus (\bigwedge^2 g_0 \otimes sym^2 g_1)@. \oplus (\bigwedge^2 g_0\otimes g_1\otimes g_2) \\
@. @. @. \oplus (g_0\otimes sym^3 g_1)  \\ 
@AQAA @AQAA @AQAA @AQAA \\
\cdots @>>> \cdots @>>> \cdots @>>> \cdots \\
@AQAA @AQAA @AQAA @AQAA\\
\wedge^n g_0 @> (-1)^nd>>(\wedge^{n-1}g_0) \otimes g_1 @>(-1)^nd >>\cdots @>(-1)^nd>> \cdots@>>> \cdots
\end{CD}$}
\end{center}
\end{definition}
Now we will define $n$-th  order Jacobi complex $J_n(\mathfrak{g})$. First we note that we can consider the direct sum of each components in the above whole complex as a subset $S_n$ of $S=\overline{S(\mathfrak{g}[1])}$ via $dec$. 

\begin{definition}
We define a differential $\bar{Q}$ on $\overline{S(\mathfrak{g}[1])}$ in a way that the following commutative diagram commutes.
\begin{center}
$\begin{CD}
\bar{x}_I@> dec >> (-1)^{A_I} x_I\\
@V \bar{Q} VV @V Q VV\\
\bar{Q}(\bar{x}_I) @>dec >> (-1)^{A_I} Q(x_I)
\end{CD}$
\end{center}
In other words, $\bar{Q}:=dec^{-1}\circ Q\circ dec$.
\end{definition}

\begin{definition}
 we define a differential $\bar{d}$ in a way that the following diagram commutes

\begin{center}
$\begin{CD}
\bar{x}_I @> dec >> (-1)^{A_I} x_I\\
@V \bar{d} VV @V (-1)^{|I|}d VV\\
\bar{d}(\bar{x}_I) @>dec >> (-1)^{A_I+|I|} d(x_I)
\end{CD}$
\end{center}
\end{definition}
We translate the above complex $(\bigwedge \mathfrak{g},d,Q)$ (un degree shifted complex in Definition \ref{2un}) in terms of $(\overline{S(\mathfrak{g}[1])},\bar{d},\bar{Q})$ via $dec$ to define Jacobi complex.
\begin{definition}[Jacobi complex]
Let $\mathfrak{g}$ be a differential graded Lie algebra. The $n$-th order Jacobi complex $J_n(\mathfrak{g})$ is the total complex of the following bicomplex $($here we have to be careful of the grading: we are working on $S_n\subset \overline{S(\mathfrak{g}[1])}$, hence here the grading of $g_i$ is actually $i-1)$
\begin{center}
\tiny{$\begin{CD}
g_0 @>\bar{d}>> g_1@>\bar{d}>>g_2 @>\bar{d}>> g_3  @>>>\cdots\\
@A\bar{Q}AA @A\bar{Q}AA @A\bar{Q}AA @A\bar{Q}AA \\ 
\bigwedge^2 g_0 @>\bar{d}>> (g_0\otimes g_1) @>\bar{d}>> (g_0\otimes g_2) \oplus sym^2g_1@>\bar{d}>> (g_0\otimes g_3)   \oplus (g_1\otimes g_2) @>>> \cdots \\
@A\bar{Q}AA @A\bar{Q}AA @A\bar{Q}AA @A\bar{Q}AA \\ 
\bigwedge^3 g_0 @>\bar{d}>> (\bigwedge^2 g_0)\otimes g_1 @>\bar{d}>> (\bigwedge^2 g_0 \otimes g_2)  \oplus(g_0\otimes sym^2 g_1) @>\bar{d}>> (\bigwedge^2 g_0\otimes g_3)\oplus (g_0\otimes g_1\otimes g_2) @>>> \cdots \\ 
@. @. @. \oplus sym^3 g_1 \\ 
@A\bar{Q}AA @A\bar{Q}AA @A\bar{Q}AA @A\bar{Q}AA \\
\bigwedge^4 g_0  @>\bar{d}>> (\bigwedge^3 g_0) \otimes g_1@>\bar{d}>> (\bigwedge^3 g_0\otimes g_2) @>\bar{d}>> (\bigwedge^3 g_0\otimes g_3) @>>>\cdots\\
@.@. \oplus (\bigwedge^2 g_0 \otimes sym^2 g_1)@. \oplus (\bigwedge^2 g_0\otimes g_1\otimes g_2) \\
@. @. @. \oplus (g_0\otimes sym^3 g_1)  \\ 
@A\bar{Q}AA @A\bar{Q}AA @A\bar{Q}AA @A\bar{Q}AA \\
\cdots @>>> \cdots @>>> \cdots @>>> \cdots \\
@A\bar{Q}AA @A\bar{Q}AA @A\bar{Q}AA @A\bar{Q}AA\\
\wedge^n g_0 @> \bar{d}>>(\wedge^{n-1}g_0) \otimes g_1 @>\bar{d} >>\cdots @>\bar{d}>> \cdots@>>> \cdots
\end{CD}$}
\end{center}

We give a bigrading on $S_n$ by $S_n^{-r,s}\cong_{dec} (\wedge^r \mathfrak{g})^s$. In other words, if $x\in S_n^{-r,s}$, then $x$ is of the form $x=\sum_i \bar{x}_{i_1}\odot \cdots \odot \bar{x}_{i_r}$ where $deg(x_{i_1})+\cdots +deg(x_{i_r})=s$ in $\mathfrak{g}$. Then we set $J_n(\mathfrak{g})^i=\bigoplus_{-r+s=i} S_n^{-r,s}$. For example, $J_n(\mathfrak{g})^0=g_1\oplus(g_0\otimes g_2)\oplus sym^2 g_1\oplus(\wedge^2 g_0\otimes g_3)\oplus (g_0\otimes g_1\otimes g_2)\oplus sym^3 g_1\oplus \cdots$. Then we define the Jacobi complex to be the following complex
\begin{align*}
\cdots \xrightarrow{\bar{d}+\bar{Q}} J_n(\mathfrak{g})^{i-1}\xrightarrow{\bar{d}+\bar{Q}} J_n(\mathfrak{g})^i\xrightarrow{\bar{d}+\bar{Q}} J_n(\mathfrak{g})^{i+1} \xrightarrow{\bar{d}+\bar{Q}}\cdots
\end{align*}

\end{definition}

 We also note natural inclusions $J_1(\mathfrak{g})\hookrightarrow J_2(\mathfrak{g})\hookrightarrow \cdots \hookrightarrow J_n(\mathfrak{g})\hookrightarrow \cdots$, which induces $\mathbb{H}^i(J_1(\mathfrak{g}))\to \mathbb{H}^i(J_2(\mathfrak{g}))\to \cdots \to \mathbb{H}^i(J_n(\mathfrak{g}))\to \cdots$.

\begin{definition}
We denote $i$-th cohomology group of the $n$-th order Jacobi complex associated with a differential graded Lie algebra $\mathfrak{g}$ by $\mathbb{H}^i(J_n(\mathfrak{g}))$. 
\end{definition}

\begin{remark}
We can modify the Jacobi complex $J_n(\mathfrak{g})$ by tensoring $\otimes \mathfrak{m}$ for some local artinian $\mathbb{C}$-algebra $(R,\mathfrak{m})$ with residue $\mathbb{C}$ to get $J_n(\mathfrak{g})\otimes \mathfrak{m}$. Then its cohomology groups coincide with $\mathbb{H}^i(J_n(\mathfrak{g}))\otimes \mathfrak{m}$.
\end{remark}

\begin{remark}
In practice, when we compute the Jacobi cohomology for our infinitesimal Poisson deformations, we use the first complex $($un-degree shifted complex in Definition $\ref{2un})$. In other words, we will work on $\overline{\bigwedge \mathfrak{g}}$ for actual computation of Jacobi cohomology groups. The reason why we pass from $\overline{\bigwedge\mathfrak{g}}$ to $\overline{S(\mathfrak{g}[1])}$ is that we want to give a commutative $\mathbb{C}$-algebra structure on $\mathbb{C}\oplus \mathbb{H}^0(J_n(\mathfrak{g}))^*$. We will explain this below.
\end{remark}

Let $\mathfrak{g}$ be a differential graded Lie algebra. Then $\overline{S(\mathfrak{g}[1])}$ has a symmetric coalgebra structure. Let $Q$ be the differential induced from the bracket $[-,-]$ as above. 

\begin{lemma}
We have $dec(\bar{Q}(\bar{x}_I)\odot \bar{x}_J)=(-1)^{A_I+A_J+|J||x_I|} Q(x_I)\wedge x_J$ and $dec(\bar{x}_I\odot \bar{Q}(\bar{x}_J))=(-1)^{A_I+A_J+|x_I|(|J|-1)}x_I\wedge Q(x_J).$
\end{lemma}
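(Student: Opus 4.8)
The plan is to unwind both sides of each claimed identity using only the definition of the d\'ecalage isomorphism $dec$ and the bookkeeping of Koszul signs already fixed in the Notation. The key observation is that $\bar Q = dec^{-1}\circ Q\circ dec$ intertwines the two differentials summand by summand, so it suffices to verify each formula on a single decomposable tensor $\bar x_I\odot\bar x_J$, where $I=\{p_1,\dots,p_r\}$ and $J=\{q_1,\dots,q_s\}$ are index sets. First I would record the elementary facts: $dec(\bar x_I\odot\bar x_J) = (-1)^{A_I+A_J+|J|\,|x_I|}\, x_I\wedge x_J$ — this is exactly the formula for $\widetilde{dec}$ read off the commutative square relating $\Delta'$ and $\Delta$, once one remembers that $|x_I| = \deg(x_{p_1})+\cdots+\deg(x_{p_r})$ in $\mathfrak g$ while the shifted degrees in $\mathfrak g[1]$ differ by $|I|$ and $|J|$ respectively. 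I would also note that $\bar Q$ preserves the coalgebra grading in the sense that $\bar Q(\bar x_I)$ lies in $S^{-(r-1),|x_I|}$, i.e. applying $\bar Q$ drops the wedge-length by one but shifts the total $\mathfrak g$-degree by $-1$ because the bracket $[-,-]\colon g_a\times g_b\to g_{a+b-1}$ lowers degree by one in the un-shifted grading (it preserves degree in the shifted grading). Keeping track of this $-1$ in the exponent is the crux of where the two formulas differ from each other.

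For the first identity, I would write $dec(\bar Q(\bar x_I)\odot\bar x_J)$ using the $\widetilde{dec}$-type formula with $\bar Q(\bar x_I)$ in place of $\bar x_I$: since $\bar Q(\bar x_I)$ has wedge-length $r-1$ and $\mathfrak g$-degree $|x_I|$ (the bracket being degree $-1$ in un-shifted terms, but here $|x_I|$ already denotes the target degree $|x_I|$... more precisely $|Q(x_I)| = |x_I| - 1$, and one must be careful which of these enters), the sign picked up in moving $\bar x_J$ past $\bar Q(\bar x_I)$ is $|J|$ times the $\mathfrak g[1]$-degree of $\bar Q(\bar x_I)$. Chasing the defining square $\bar x_I \stackrel{dec}{\mapsto} (-1)^{A_I}x_I$, $\bar Q(\bar x_I)\stackrel{dec}{\mapsto}(-1)^{A_I}Q(x_I)$ then gives $dec(\bar Q(\bar x_I)\odot \bar x_J) = (-1)^{A_I + A_J + |J|\,|x_I|}\, Q(x_I)\wedge x_J$, because the degree shift by $-1$ on the $I$-factor is exactly compensated by the parity of $|J|\cdot(\text{shift})$ terms — this is the computation I expect to be slightly delicate, and I would carry it out by comparing the exponent $A_I$ on $\bar Q(\bar x_I)$ (which by definition equals $A$ for the index set $I$ with one fewer slot, but note $\bar Q$ was \emph{defined} so that the square commutes with the \emph{same} $A_I$) against the exponent one would naively assign. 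For the second identity, the same chase applies but now the factor $\bar Q(\bar x_J)$ sits on the right, so the sign from permuting it past $\bar x_I$ involves $|x_I|$ times the $\mathfrak g[1]$-degree of $\bar Q(\bar x_J)$, which is $|x_J| - 1 + (\text{length shift})$; this produces the $|x_I|(|J|-1)$ in the exponent rather than $|J|\,|x_I|$, the discrepancy of $|x_I|$ being precisely the degree-lowering effect of the bracket on the $J$-side.

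The main obstacle, as usual in this kind of statement, is purely the sign bookkeeping: one must pin down once and for all whether $|x_I|$ refers to the degree before or after applying $Q$, and whether the d\'ecalage shift is absorbed into $A_I$ or appears separately. I would resolve this by writing everything in terms of the explicit exponent formulas for $A_I$ and for the Koszul signs in $\Delta$ and $\Delta'$ given above, substituting, and checking that the two sides agree as polynomials in the $\deg(x_{p_i})$, $\deg(x_{q_j})$, $r$, $s$ modulo $2$. No conceptual input beyond the definitions is needed; once the lemma is in hand it is used exactly to verify that $\bar Q$ is a coderivation of the symmetric coalgebra $\overline{S(\mathfrak g[1])}$, which in turn is what lets one dualize $\mathbb H^0(J_n(\mathfrak g))$ into a commutative $\mathbb C$-algebra. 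I would therefore present the proof as a two-line reduction to the $\widetilde{dec}$ formula followed by a short explicit sign check, relegating the full exponent arithmetic to the reader.
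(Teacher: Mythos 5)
Your overall strategy is the same as the paper's: reduce to the two-block d\'ecalage formula $dec(\bar{x}_I\odot\bar{x}_J)=(-1)^{A_I+A_J+|x_I||J|}x_I\wedge x_J$ and the defining relations $dec\circ\bar{Q}(\bar{x}_I)=(-1)^{A_I}Q(x_I)$, $dec\circ\bar{Q}(\bar{x}_J)=(-1)^{A_J}Q(x_J)$, $dec(\bar{x}_J)=(-1)^{A_J}x_J$. That is literally all the paper uses, so the skeleton of your argument is fine. The problem is that the only substantive content of the lemma is the sign, and at exactly that point you both defer the computation ("relegating the full exponent arithmetic to the reader") and commit to a degree convention that is wrong for this paper and incompatible with the statement you are proving. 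You assert that the bracket is of degree $-1$ in the unshifted grading, so that $|Q(x_I)|=|x_I|-1$, and you explain the asymmetry between the exponents $|J|\,|x_I|$ and $|x_I|(|J|-1)$ as "the degree-lowering effect of the bracket on the $J$-side." In the paper's conventions the bracket of the DGLA $\mathfrak g$ satisfies $[g_a,g_b]\subset g_{a+b}$ (see the bicomplex: $Q$ maps $\wedge^2 g_0\to g_0$, $g_0\otimes g_1\to g_1$, etc.), so $Q(x_I)$ has the \emph{same} total $\mathfrak g$-degree $|x_I|$ and only its word length drops from $|I|$ to $|I|-1$. If your convention were right, the first identity would come out as $(-1)^{A_I+A_J+|J|(|x_I|-1)}Q(x_I)\wedge x_J$, contradicting the lemma; your proposal papers over this tension with the vague remark that the degree shift is "exactly compensated," which is not a proof and is in fact false as stated.

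The correct sign analysis is short once the convention is pinned down. For a two-block word the d\'ecalage sign is (total $\mathfrak g$-degree of the first block) times (length of the second block): this follows from $dec(\bar y_1\odot\cdots\odot\bar y_m)=(-1)^{\sum_i(m-i)\deg y_i}y_1\wedge\cdots\wedge y_m$, since concatenating a length-$s$ block on the right adds $s\cdot(\text{degree of each left factor})$ to the exponent. Writing $\bar Q(\bar x_I)=\sum_\alpha c_\alpha\bar w_\alpha$ with each $w_\alpha$ of length $|I|-1$ and degree $|x_I|$, one gets $dec(\bar w_\alpha\odot\bar x_J)=(-1)^{A_{w_\alpha}+A_J+|x_I||J|}w_\alpha\wedge x_J$, and summing against $\sum_\alpha c_\alpha(-1)^{A_{w_\alpha}}w_\alpha=(-1)^{A_I}Q(x_I)$ gives the first identity with exponent $A_I+A_J+|J|\,|x_I|$; neither the degree of the first block nor the length of the second block changed. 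For the second identity the same argument applies with $\bar Q(\bar x_J)=\sum_\beta c_\beta\bar v_\beta$, and now it is the \emph{length} of the right block that has dropped to $|J|-1$ while its degree is irrelevant to the sign, which is where $|x_I|(|J|-1)$ comes from. So the asymmetry is a length effect, not a degree effect. With that correction your proof closes and coincides with the paper's; without it, the step you flagged as "slightly delicate" is exactly where the argument fails.
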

\begin{proof}
We simply note that $dec(\bar{x}_J)=(-1)^{A_J}x_J$, $dec\circ \bar{Q}(\bar{x}_I)=(-1)^{A_I}Q(x_I)$ and $dec\circ \bar{Q}(\bar{x}_J)=(-1)^{A_J}Q(x_J)$.
\end{proof}

By the definition of $\bar{Q}$ on $\overline{S(\mathfrak{g}[1])}$, we would like to show the following diagram commutes, (which means $\bar{Q}$ is a coderivation)

\begin{center}
$\begin{CD}
\overline{S(\mathfrak{g}[1])}@> \bar{Q} >> \overline{S(\mathfrak{g}[1])}\\
@V \triangle' VV @V \triangle' VV\\
\overline{S(\mathfrak{g}[1])}\otimes \overline{S(\mathfrak{g}[1])} @>\bar{Q}\otimes 1+1\otimes \bar{Q}>>\overline{S(\mathfrak{g}[1])}\otimes \overline{S(\mathfrak{g}[1])}
\end{CD}$
\end{center}

 Since $(\bar{Q}\otimes 1+1\otimes \bar{Q})\circ \triangle'(\bar{x}_1\odot\cdots \odot\bar{x}_n)=\sum_{I,J} (-1)^{S(I,J)}(\bar{Q}(\bar{x}_I)\otimes \bar{x}_J+(-1)^{|x_I|+|I|}\bar{x}_I\otimes \bar{Q}(\bar{x}_J))$, where $ \bar{x}_1\odot\cdots\odot\bar{x}_n=(-1)^{S(I,J)}\bar{x}_I\odot\bar{x}_J$.
First we note the following commutative diagram
\begin{center}
$\begin{CD}
\overline{S(g[1])} @>\bar{Q}>> \overline{S(g[1])} @>\triangle' >> \overline{S(g[1])}\otimes \overline{S(g[1])}\\
@V dec VV @V dec VV @V \tilde{dec} VV\\
\overline{\bigwedge g} @> Q >>  \overline{\bigwedge g} @> \triangle >> \overline{\bigwedge g} \otimes \overline{\bigwedge  g}
\end{CD}$
\end{center}

By this commutativity, we are working on $\overline{\bigwedge \mathfrak{g}}$ instead of $\overline{S(\mathfrak{g}[1])}$ to show that $\bar{Q}$ is a coderivation on $\overline{S(\mathfrak{g}[1])}$.

so our claim is equivalent to

\begin{equation}\label{2e}
\sum_{I,J} (-1)^{S(I,J)}\tilde{dec}(\bar{Q}(\bar{x}_I)\otimes \bar{x}_J+(-1)^{|x_I|+|I|}\bar{x}_I\otimes \bar{Q}(\bar{x}_J))=\Delta\circ Q\circ dec(\bar{x}_1\odot\cdots\odot \bar{x}_n)
\end{equation}

\begin{remark}
Via the above commutative diagram, for any $I$ and $J$, where $I \cup J=\{1,...,n\}$, $Q(x_I)\otimes x_J$ and $x_I\otimes Q(x_J)$ appear (up to sign) as terms in $\Delta\circ Q\circ dec(\bar{x}_1\odot\cdots\odot\bar{x}_n)$. Conversely, since
$Q(x_1\wedge \cdots \wedge x_n)=\sum (-1)^{P(i,j)}[x_i,x_j]\wedge x_1\wedge \cdots \wedge \hat{x}_i\wedge \cdots \wedge \hat{x}_j\cdots \wedge x_n$, $Q(x_I)\otimes x_J$  and $x_I\otimes Q(x_J)$ for all the pairs $I,J$ (up to sign) exhaust all the terms in $\Delta\circ Q\circ dec(\bar{x}_1\odot \cdots \odot \bar{x}_n)$. Hence in order to prove our claim $(\ref{2e})$, we only need to check that the signs for $Q(x_I)\otimes x_J$ and $x_I\otimes Q(x_J)$ in $\Delta\circ Q\circ dec(\bar{x}_1\odot\cdots \odot\bar{x}_n)$ equal to the signs for $Q(x_I)\otimes x_J$ and $x_I\otimes Q(x_J)$ in $\tilde{dec}(\bar{Q}(\bar{x}_I)\otimes \bar{x}_J)$ and $(-1)^{|x_I|+|I|}\tilde{dec}(\bar{x}_I\otimes\bar{Q}(\bar{x}_J))$.
\end{remark}


We now prove the claim (\ref{2e}). We note that $\bar{x}_I\odot \bar{x}_J=(-1)^{(|I|+|x_I|)(|J|+|x_J|)} \bar{x}_J\odot \bar{x}_I$

\begin{center}
$\begin{CD}
\bar{x}_I\odot \bar{x}_J@> dec >> (-1)^{A_I+A_J+|x_I||J|} x_I\wedge x_J\\
@V \bar{Q} VV @V Q VV\\
\bar{Q}(\bar{x}_I\odot \bar{x}_J) @>dec >> (-1)^{A_I+A_J+|x_I||J|} Q(x_I) \wedge x_J+\cdots
\end{CD}$
\end{center}

Hence $(-1)^{A_I+A_J+|x_I||J|} Q(x_I)\otimes x_J$ corresponds to $\bar{Q}(\bar{x}_I)\otimes \bar{x}_J$ via $\tilde{dec}$.

\begin{center}
$\begin{CD}
(-1)^{(|I|+|x_I|)(|J|+|x_J|)} \bar{x}_J\odot \bar{x}_I@> dec >> (-1)^{(|I|+|x_I|)(|J|+|x_J|)+A_I+A_J+|x_J||I|} x_J\wedge  x_I\\
@V \bar{Q} VV @V Q VV\\
\bar{Q}(\bar{x}_I\odot \bar{x}_J)=(-1)^{(|I|+|x_I|)(|J|+|x_J|)}\bar{Q}(\bar{x}_J\odot \bar{x}_I) @>dec >> (-1)^{(|I|+|x_I|)(|J|+|x_J|)+A_I+A_J+|x_J||I|} Q(x_J) \wedge  x_I +\cdots
\end{CD}$
\end{center}

We note that
\begin{align*}
(-1)^{(|I|+|x_I|)(|J|+|x_J|)+A_I+A_J+|x_J||I|} Q(x_J) \wedge  x_I &=(-1)^{(|I|+|x_I|)(|J|+|x_J|)+A_I+A_J+|x_J||I|+|I|(|J|-1)+|x_I||x_J|} x_I\wedge  Q(x_J)\\
                                                                                                    &=(-1)^{|x_I||J|+A_I+A_J+|I|} x_I\wedge Q(x_J)
\end{align*}

Hence $(-1)^{|x_I||J|+A_I+A_J+|I|} x_I\otimes Q(x_J)$  corresponds via $\tilde{dec}$ to 

\begin{align*}
(-1)^{|x_I||J|+A_I+A_J+|I|+ A_I+A_J+|x_I|(|J|-1)} \bar{x}_I\otimes \bar{Q}(\bar{x}_J)=(-1)^{|I|+|x_I|} \bar{x}_I\otimes \bar{Q}(\bar{x}_J)
\end{align*}
This completes the claim (\ref{2e}). Hence $\bar{Q}$ is a coderivation of $\overline{S(\mathfrak{g}[1])}$.

On the other hand, would like to show that

\begin{align*}
\bar{d}(\bar{x}_I\odot \bar{x}_J)=\bar{d}(\bar{x}_I)\odot \bar{x}_J +(-1)^{|I|+|x_I|}\bar{x}_I\odot \bar{d}(\bar{x}_J)
\end{align*} 
which implies that $\bar{d}$ defines a coderivation of $\overline{S(\mathfrak{g}[1])}$. In other words, the following commutative diagram holds

\begin{center}
$\begin{CD}
 \overline{S(\mathfrak{g}[1])}@> \bar{d} >> \overline{S(\mathfrak{g}[1])}\\
@V \triangle' VV @V \triangle' VV\\
\overline{S(\mathfrak{g}[1])}\otimes \overline{S(\mathfrak{g}[1])} @>\bar{d}\otimes 1+1\otimes \bar{d}>> \overline{S(\mathfrak{g}[1])}\otimes \overline{S(\mathfrak{g}[1])}
\end{CD}$
\end{center}

First we note the following relations

\begin{center}
$\begin{CD}
\bar{x}_I\odot \bar{x}_J@> dec >> (-1)^{A_I+A_J+|x_I||J|}x_I\wedge x_J\\
@V \bar{d} VV @V (-1)^{|I|+|J|}d VV\\
\bar{d}(\bar{x}_I\odot\bar{x}_J) @>dec >> (-1)^{A_I+A_J+|x_I||J|+|I|+|J|} d(x_I\wedge x_J)
\end{CD}$
\end{center}

\begin{align*}
\bar{d}(\bar{x}_I)\odot \bar{x}_J\xrightarrow{dec} (-1)^{A_I+|I|+A_J+|J|(|x_I|+1)}dx_I\wedge x_J
\end{align*}

\begin{align*}
\bar{x}_I\odot \bar{d}(\bar{x}_J) \xrightarrow{dec} (-1)^{A_I+A_J+|J|+|J||x_I|}x_I\wedge dx_J
\end{align*}

\begin{align*}
(-1)^{|I|+|x_I|}\bar{x}_I\odot \bar{d}(\bar{x}_J) \xrightarrow{dec} (-1)^{A_I+A_J+|J|+|J||x_I|+|I|+|x_I|}x_I\wedge dx_J
\end{align*}

\begin{align*}
(-1)^{A_I+A_J+|x_I||J|+|I|+|J|} d(x_I\wedge x_J)&=(-1)^{A_I+A_J+|x_I||J|+|I|+|J|} dx_I\wedge x_J+(-1)^{A_I+A_J+|x_I||J|+|I|+|J|+|x_I|} x_I\wedge dx_J\\
\end{align*}

Hence $\bar{d}$ defines a coderivation of $\overline{S(\mathfrak{g}[1])}$. 

In conclusion, $\bar{Q}'=\bar{d}+\bar{Q}$ is a coderivation of $\overline{S(\mathfrak{g}(1))}$. In other words, the following commutative diagram holds

\begin{center}
$\begin{CD}
 \overline{S(\mathfrak{g}[1])}@> \bar{Q}' >> \overline{S(\mathfrak{g}[1])}\\
@V \triangle' VV @V \triangle' VV\\
\overline{S(\mathfrak{g}[1])}\otimes \overline{S(\mathfrak{g}[1])} @>\bar{Q}'\otimes 1+1\otimes \bar{Q}'>> \overline{S(\mathfrak{g}[1])}\otimes \overline{S(\mathfrak{g}[1])}
\end{CD}$
\end{center}

\section{Morphic elements}\

We have a comultiplication map $\Delta'$ and coderivation $\bar{Q}'$ which is induced from differential and bracket from a differential graded Lie algebra $\mathfrak{g}$.
\begin{center}
$\begin{CD}
\overline{S(\mathfrak{g}[1])}@>\bar{Q}' >> \overline{S(\mathfrak{g}[1])}\\
@V\Delta' VV @VV\Delta' V \\
\overline{S(\mathfrak{g}[1])}\otimes \overline{S(\mathfrak{g}[1])}@>\bar{Q}'\otimes id+id \otimes \bar{Q}'>>\overline{S(\mathfrak{g}[1])}\otimes \overline{S(\mathfrak{g}[1])}
\end{CD}$
\end{center}

The following diagram commutes with the coderivation $\bar{Q}'$
\begin{center}
$\begin{CD}
\overline{S(\mathfrak{g}[1])}@>\Delta'>> \overline{S(\mathfrak{g}[1])}\otimes \overline{S(\mathfrak{g}[1])}\\
@V\Delta' VV @VV\Delta'\otimes id V \\
\overline{S(\mathfrak{g}[1])}\otimes \overline{S(\mathfrak{g}[1])}@>id\otimes \Delta'>>\overline{S(\mathfrak{g}[1])}\otimes \overline{S(\mathfrak{g}[1])} \otimes \overline{S(\mathfrak{g}[1])}
\end{CD}$
\end{center}
 Hence we have the following commutative diagram

\begin{center}
\[\begindc{\commdiag}[50]
\obj(0,2)[a]{$\mathbb{H}^0$}
\obj(3,2)[b]{$\bigoplus_{i+j=0} \mathbb{H}^i\otimes \mathbb{H}^j$}
\obj(0,1)[c]{$\bigoplus_{i+j=0} \mathbb{H}^i\otimes \mathbb{H}^j$}
\obj(0,0)[d]{$\mathbb{H}^0\otimes \mathbb{H}^0$}
\obj(3,1)[e]{$\bigoplus_{a+b+c=0}\mathbb{H}^a\otimes \mathbb{H}^b\otimes \mathbb{H}^c$}
\obj(6,2)[f]{$\mathbb{H}^0\otimes \mathbb{H}^0$}
\obj(6,0)[g]{$\mathbb{H}^0\otimes \mathbb{H}^0\otimes \mathbb{H}^0$}
\mor{a}{b}{}
\mor{a}{c}{}
\mor{c}{d}{}
\mor{c}{e}{}
\mor{b}{e}{}
\mor{b}{f}{}
\mor{f}{e}{}
\mor{d}{e}{}
\mor{e}{g}{}
\mor{f}{g}{}
\mor{d}{g}{}
\enddc\]
\end{center}

This induces a comultiplication map $\mathbb{H}^0(J_n(\mathfrak{g}))\to \sum_{i+j=0} \mathbb{H}^i(J_n(\mathfrak{g}))\otimes \mathbb{H}^j(J_n(\mathfrak{g}))\to \mathbb{H}^0(J_n(\mathfrak{g}))\otimes \mathbb{H}^0(J_n(\mathfrak{g}))$, where the last map is a projection.

And we have
\begin{center}
\[\begindc{\commdiag}[50]
\obj(0,1)[a]{$\overline{S(\mathfrak{g}[1])}$}
\obj(2,1)[b]{$\overline{S(\mathfrak{g}[1])}\otimes \overline{S(\mathfrak{g}[1])}$}
\obj(1,0)[d]{$\overline{S(\mathfrak{g}[1])}\otimes \overline{S(\mathfrak{g}[1])}$}
\mor{a}{b}{$\Delta'$}
\mor{a}{d}{$\Delta'$}
\mor{b}{d}{$\tau$}
\enddc\]
\end{center}
where $\tau(a\otimes b)=(-1)^{|a||b|}b\otimes a$.

This induces

\begin{center}
\[\begindc{\commdiag}[50]
\obj(0,2)[a]{$\mathbb{H}^0$}
\obj(0,1)[b]{$\bigoplus_{i+j=0} \mathbb{H}^i\otimes \mathbb{H}^j$}
\obj(2,2)[c]{$\bigoplus_{i+j=0} \mathbb{H}^i \otimes \mathbb{H}^j$}
\obj(0,0)[d]{$\mathbb{H}^0\otimes \mathbb{H}^0$}
\obj(4,2)[e]{$\mathbb{H}^0\otimes \mathbb{H}^0$}
\mor{a}{b}{}
\mor{a}{c}{}
\mor{c}{b}{}
\mor{b}{d}{}
\mor{c}{e}{}
\mor{e}{d}{}
\enddc\]
\end{center}

So the comultiplication map induces $\mathbb{H}^0(J_n(\mathfrak{g}))\to sym^2(\mathbb{H}^0(J_n(\mathfrak{g})))\cong sym^2(\mathbb{H}^0(J_n(\mathfrak{g}))^*)^*$, where $V^*$ is the dual space of a vector space $V$ over $\mathbb{C}$.

\begin{remark}
When we assume that $H^0(\mathfrak{g})=0$ which is the main assumption for universal Poisson deformation, we have $\mathbb{H}^i=0$ for $i<0$. In this case, we have simply $\bigoplus_{i+j=0} \mathbb{H}^i\otimes \mathbb{H}^j=\mathbb{H}^0\otimes \mathbb{H}^0$ and $\bigoplus_{a+b+c=0}\mathbb{H}^a\otimes \mathbb{H}^b\otimes \mathbb{H}^c=\mathbb{H}^0\otimes \mathbb{H}^0\otimes \mathbb{H}^0$.
\end{remark}

\begin{remark}
The commultiplication $\Delta'$ of $S_n\subset \overline{S(\mathfrak{g}[1])}$ induces a map 
\begin{align*}
*:\mathbb{H}^0(J_n(\mathfrak{g}))^*\times \mathbb{H}^0(J_n(\mathfrak{g}))^*\to \mathbb{H}^0(J_n(\mathfrak{g}))^*
\end{align*}
satisfying the associative and commutative laws. Hence $\mathbb{C}\oplus \mathbb{H}^0(J_n(\mathfrak{g}))^*$ is a commuative $\mathbb{C}$-algebra. Moreover, $\underbrace{\Delta'\circ \cdots \circ \Delta'}_{n+1}$ on $S_n$ is $0$. We have $\underbrace{\mathbb{H}^0(J_n(\mathfrak{g}))^**\cdots *\mathbb{H}^0(J_n(\mathfrak{g}))^*}_{n+1}=0$. 

In conclusion, set $\mathfrak{m}_n^u=\mathbb{H}^0(J_n(\mathfrak{g}))^*$. Then $\mathbb{C}\oplus \mathfrak{m}_n^u$ is a local commutative $\mathbb{C}$-algebra with the maximal ideal $\mathfrak{m}_n^u$ and residue $\mathbb{C}$ such that $\mathfrak{m}_n^{u\,n+1}=0$. We note that for our infinitesimal Poisson deformation, $\mathfrak{m}_n^u$ is finite dimensional. Hence $\mathbb{C}\oplus \mathfrak{m}_n^u$ is artinian. This proves our main Theorem $\ref{2theorem}$ $(1)$ in the Introduction of the part II of the thesis.
\end{remark}

\subsection{Morphic elements and ring homomorphism}


\begin{lemma}\label{2le}
Let $A\xrightarrow{v} B\xrightarrow{w} C$ be a complex of vector spaces over $\mathbb{C}$ and $C^*\xrightarrow{w^*} B^*\xrightarrow{v^*} A^*$ be induced complex of dual vector spaces. Then $[b_1]=[b_2]\in H(B)$ and $[f]=[g]\in H(B^*)$, where $[t]$ is a cohomology class for $t\in B$ or $t\in B^*$. Then $f(b_1)=g(b_2)$.
\end{lemma}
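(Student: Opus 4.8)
The statement is a purely linear-algebraic fact about a three-term complex $A \xrightarrow{v} B \xrightarrow{w} C$ and its dual $C^* \xrightarrow{w^*} B^* \xrightarrow{v^*} A^*$, where $H(B) = \ker w / \operatorname{im} v$ and $H(B^*) = \ker v^* / \operatorname{im} w^*$. Given $[b_1] = [b_2]$ in $H(B)$ and $[f] = [g]$ in $H(B^*)$, I want to show $f(b_1) = g(b_2)$. The plan is to reduce the two ``equalities of cohomology classes'' to two difference relations and then show each difference contributes nothing to the pairing.

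First I would write $b_1 - b_2 = v(a)$ for some $a \in A$ (this is what $[b_1]=[b_2]$ in $H(B)$ means, after checking both $b_1, b_2 \in \ker w$), and similarly $f - g = w^*(h)$ for some $h \in C^*$ (from $[f]=[g]$ in $H(B^*)$, with $f,g \in \ker v^*$). Then I would expand the difference $f(b_1) - g(b_2)$ by the telescoping identity
\begin{align*}
f(b_1) - g(b_2) = f(b_1 - b_2) + (f - g)(b_2) = f(v(a)) + (w^*(h))(b_2).
\end{align*}
Now $f(v(a)) = (v^* f)(a) = 0$ since $f \in \ker v^*$, and $(w^*(h))(b_2) = h(w(b_2)) = 0$ since $b_2 \in \ker w$. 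Hence $f(b_1) - g(b_2) = 0$, which is the claim.

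The argument is essentially a one-line computation once the definitions are unwound, so there is no real obstacle; the only point requiring a little care is bookkeeping the dualization conventions — namely that $v^* = (-)\circ v$ and $w^* = (-)\circ w$, so that $w^*(h) = h \circ w$ is indeed an element of $B^*$ lying in $\ker v^*$, and that the class $[f] \in H(B^*)$ is taken with respect to the complex $C^* \xrightarrow{w^*} B^* \xrightarrow{v^*} A^*$ (cohomology at the middle term $B^*$). I would also remark that the conclusion shows the pairing $H(B) \times H(B^*) \to \mathbb{C}$, $([b],[f]) \mapsto f(b)$, is well-defined, which is the reason this lemma is needed later: it lets one evaluate a cohomology class in $H(B^*)$ (coming from the dual Jacobi complex) on a cohomology class in $H(B)$ (coming from a morphic element) independently of representatives.
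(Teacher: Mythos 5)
Your proof is correct and follows essentially the same argument as the paper: both telescope $f(b_1)-g(b_2)$ into a difference handled by $[b_1]=[b_2]$ (killed because $f$, resp.\ $g$, lies in $\ker v^*$) plus a difference handled by $[f]=[g]$ (killed because $b_1$, resp.\ $b_2$, lies in $\ker w$); the only difference is which cross term you add and subtract, which is immaterial.
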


\begin{proof}
First we note that $w(b_1)=w(b_2)=0$ and $v^*(f)=f\circ v=0, v^*(g)= g\circ v=0$. Let $v(a)=b_1-b_2$ and $w^*(h)=h\circ w=f-g$. $f(b_1)-g(b_2)=f(b_1)-g(b_1)+g(b_1)-g(b_2)=(f-g)(b_1)+g(b_1-b_2)=h\circ w(b_1)+g\circ v(a)=0$
\end{proof}

Let $[\phi]\in \mathbb{H}^0(J_n(\mathfrak{g}))\otimes \mathfrak{m}$ where $\mathfrak{m}$ is the maximal ideal of some artinian local $\mathbb{C}$-algebra $(R,\mathfrak{m})$ with residue $\mathbb{C}$ and $\mathfrak{m}^{n+1}=0$. Then this $[\phi]$ defines a linear map $f:\mathbb{H}^0(J_n(\mathfrak{g}))^*\to \mathfrak{m}$ by $a\to a(\phi)$. By Lemma \ref{2le}, the linear map $f$ is independent of choices of $a$ and $\phi$. Let's consider a bilinear map $\mathbb{H}^0(J_n(\mathfrak{g}))^*\times \mathbb{H}^0(J_n(\mathfrak{g}))^*\to \mathfrak{m}^2$ defined by $(a,b)\mapsto f(a)f(b)=a(\phi)b(\phi)$. Since $\mathfrak{m}$ is commutative, this induces a map
\begin{align*}
f\times f:sym^2(\mathbb{H}^0(J_n(\mathfrak{g}))^*)\to \mathfrak{m}^2
\end{align*}

\begin{definition}[morphic elements]
We call $[\phi]\in\mathbb{H}^0(J_n(\mathfrak{g}))^*$ a morphic element if $f\times f$ defines a ring homormophism.
\end{definition}

In order for $f\times f$ to be a ring homomorphism (equivalently for $[\phi]$ to be a morphic element), we have to have $f(a) f(b)=f(a \cdot b)$ where $\cdot$ is induced from $\Delta':\mathbb{H}^0(J_n(\mathfrak{g}))\to sym^2 (\mathbb{H}^0(J_n(\mathfrak{g}))^*)^*$. More pricesly, by taking the dual of $\Delta'$, we have the map $\cdot:sym^2(\mathbb{H}^0(J_n(\mathfrak{g}))^*) \cong sym^2(\mathbb{H}^0(J_n(\mathfrak{g})))^* \to \mathbb{H}^0(J_n(\mathfrak{g}))^*$ by $a\cdot b=(a\otimes b)\circ \Delta'$. Hence $f(a)f(b)=f(a\cdot b)$ means that

\begin{align*}
(a\otimes b)(\phi\otimes \phi)=a(\phi)b(\phi)=(a \otimes b)(\Delta'(\phi))=(b\otimes a)(\Delta'(\phi))
\end{align*}

Hence $(a\otimes b)(\Delta'(\phi)-\phi\otimes \phi)=0$ for all $a,b\in \mathbb{H}^0(J_n(\mathfrak{g}))^*$. This implies that $\Delta'(\phi)=\phi \otimes \phi$. Hence for any morphic element $[\phi]\in \mathbb{H}^0(J_n(\mathfrak{g}))\otimes \mathfrak{m}$, we have $\Delta'(\phi)=\phi\otimes \phi$.

\subsection{Explicit description of a morphic element}\

We describe a morphic element $v$ in $\mathbb{H}^0(J_n(\mathfrak{g}))\otimes \mathfrak{m}$ for some local artinian $\mathbb{C}$-algebra $(R,\mathfrak{m})$ with residue $\mathbb{C}$ and $\mathfrak{m}^{n+1}=0$. When we consider the Jacobi bicomplex, a $0$-th cohomology class $v$ is of the form
\begin{align*}
v=v_1+\cdots +v_n
\end{align*}
where $v_i\in  S^i(\mathfrak{g}[1])\otimes \mathfrak{m}$. In particular $v_1\in g_1\otimes \mathfrak{m}$. For $v$ to be a morphic element, we have to have $\Delta'(v)=v\otimes v$ where $\Delta'$ is the comultiplication map. So $\sum \Delta'(v_i)=\sum v_i\otimes v_j$. So we have the following relations
\begin{align*}
\Delta'(v_1)&=0\\
\Delta'(v_2)&=v_1\otimes v_1\\
\Delta'(v_3)&=v_1\otimes v_2+v_2\otimes v_1\\
\Delta'(v_4)&=v_1\otimes v_3+v_2\otimes v_2+v_3\otimes v_1\\
\cdots\\
\Delta'(v_n)&=v_1\otimes v_{n-1}+\cdots +v_{n-1}\otimes v_1
\end{align*}
So $v_i$ is determined by $v_1,...,v_{i-1}$ inductively, hence completely determined by $v_1$. Since $\Delta'(v_2)=v_1\otimes v_1$, we see that $v_2=\frac{1}{2}v_1\odot v_1$. Since $\Delta'(v_3)=\frac{1}{2}v_1\otimes (v_1\odot v_1)+\frac{1}{2} (v_1\odot v_1)\otimes v_1$, we have $v_3=\frac{1}{3!}v_1\odot v_1\odot v_1$. Inductively, since $\Delta'(v_n)=\frac{1}{(n-1)!}v_1\otimes (v_1\odot \cdots \odot v_1)+\frac{1}{2!(n-2)!} (v_1\odot v_1)\otimes(v_1\odot \cdots \odot v_n)+\frac{1}{3!(n-3)!}(v_1\odot v_1\odot v_1)\otimes (v_1\odot \cdots \odot v_1)+\cdots +\frac{1}{(n-2)!2!}(v_1\odot\cdots \odot v_1)\otimes (v_1\odot v_1)+\frac{1}{(n-1)!}(v_1\odot \cdots \odot v_1)\otimes v_1$, and $\binom{n}{i}=\frac{n!}{i!(n-i)!}$, we have $v_n=\frac{1}{n!}v_1\odot \cdots \odot v_1$. Hence a morphic element is of the form
\begin{align*}
v_1+\frac{1}{2!}v_1\odot v_1+\cdots+\frac{1}{n!}v_1\odot \cdots\odot v_1
\end{align*}
where $\frac{1}{i!}\underbrace{v_1\odot \cdots \odot v_1}_i\in sym^i g_1\otimes \mathfrak{m}^i$.

\chapter{Infinitesimal Poisson deformations}\label{chapter5}

\section{Bracket calculus}\

Let $(\mathfrak{g}=\bigoplus_i g_i,\bar{\partial},[-,-])$ be a differential graded Lie algebra. Let $R$ is a local artinian $\mathbb{C}$-algebra with residue $\mathbb{C}$.
Then $\mathfrak{g}\otimes R$ is a differential graded Lie algebra with $(\mathfrak{g}\otimes R)_i=g_i\otimes R$, $\bar{\partial}(a\otimes r)=\bar{\partial}a\otimes r$ and $[a\otimes r_1,b\otimes r_2]=[a,b]\otimes r_1r_2$ for $a,b\in \mathfrak{g}$. Let $X$ be a complex manifold.  Since $ (\bigoplus_{p+q-1=i, p\geq 0, q\geq 1} A^{0,p}(X, \wedge^q T),\bar{\partial}, [-,-])$ is a differential graded Lie algebra (see Appendix \ref{appendixc}), this induces a differential graded Lie algebra structure on $\bigoplus_{p+q-1=i,p\geq1, q\geq 0} A^{0,p}(X,\wedge^q T)\otimes_{\mathbb{C}} R$.

\begin{definition}
Let $X$ be a compact complex manifold. We consider any element of the differential graded Lie algebra $A=\bigoplus_{p+q-1=i,p\geq 0,q\geq 1} A^{0,p}(X,\wedge^q T)\otimes_{\mathbb{C}} R$ as an operator acting on itself in the following way. Let $\psi\in A$. We define
\begin{align*}
\psi=[\psi,-]:A&\to A\\
a &\mapsto [\psi,a]
\end{align*}
\end{definition}

\begin{definition}
Let $\bar{\partial}$ be the differential as an operator acting on $A$ in $\mathfrak{g}$. We formally define that $[\bar{\partial},a]:=\bar{\partial}a$, i.e 
\begin{align*}
\bar{\partial}:=[\bar{\partial},-]:A&\to A\\
                    a&\mapsto [\bar{\partial},a]:=\bar{\partial}a
\end{align*}
\end{definition}

\begin{proposition}\label{2pro}
As an operator acting on $A$, we have
$\bar{\partial}a=\bar{\partial}\circ a-(-1)^{|a|} a\circ \bar{\partial}$
\end{proposition}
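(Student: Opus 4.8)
The statement $\bar{\partial}a = \bar{\partial}\circ a - (-1)^{|a|} a\circ \bar{\partial}$ is an identity between operators on $A = \bigoplus_{p+q-1=i,\,p\ge 0,\,q\ge 1} A^{0,p}(X,\wedge^q T)\otimes_{\mathbb{C}} R$, so the plan is simply to apply both sides to an arbitrary element $b\in A$ and compare. The left-hand side evaluated on $b$ is, by the two preceding definitions, $(\bar{\partial}a)(b) = [\bar{\partial}a, b]$; the right-hand side evaluated on $b$ is $\bar{\partial}(a\circ b) - (-1)^{|a|}\, a\circ(\bar{\partial}b) = [\bar{\partial}, [a,b]] - (-1)^{|a|}[a,[\bar{\partial},b]] = \bar{\partial}[a,b] - (-1)^{|a|}[a,\bar{\partial}b]$. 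So the whole claim reduces to the Leibniz-type identity $[\bar{\partial}a, b] = \bar{\partial}[a,b] - (-1)^{|a|}[a,\bar{\partial}b]$, i.e. the compatibility of the differential with the bracket in a differential graded Lie algebra (axiom (3) listed in the excerpt, $\bar{\partial}[a,b] = [\bar{\partial}a,b] + (-1)^{p+q+1}[a,\bar{\partial}b]$ — note that in the grading convention where $|a| = p+q-1$, the sign $(-1)^{p+q+1} = (-1)^{p+q-1} = (-1)^{|a|}$, so the two forms agree up to rearrangement).

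Concretely I would organize it as: first recall the definition of the operator $\psi = [\psi,-]$ and the formal convention $[\bar{\partial},-] = \bar{\partial}(-)$; second, expand $(\bar{\partial}\circ a - (-1)^{|a|} a\circ \bar{\partial})(b)$ using these definitions to get $\bar{\partial}[a,b] - (-1)^{|a|}[a,\bar{\partial}b]$; third, invoke the dGLA axiom $\bar{\partial}[a,b] = [\bar{\partial}a, b] + (-1)^{|a|}[a,\bar{\partial}b]$ for $A$ (which is stated in the excerpt just before Theorem \ref{m}, and holds for $A\otimes R$ since the $R$-coefficients are inert) to rewrite this as $[\bar{\partial}a, b]$; fourth, observe $[\bar{\partial}a, b] = (\bar{\partial}a)(b)$ by the definition of the operator attached to $\bar{\partial}a$. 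Since $b$ was arbitrary, the operator identity follows.

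There is essentially no obstacle here — the only thing to be careful about is the bookkeeping of the degree sign. The subtlety is that $|a|$ in the operator formalism refers to the total degree of $a$ as an element of the graded vector space $\mathfrak{g}$ (which for $a\in A^{0,p}(X,\wedge^q T)$ is $p+q-1$), and one must check that this is the exponent that makes the sign in the Leibniz rule come out consistently with the sign $(-1)^{p+q+1}$ appearing in axiom (3) as written. Since $(-1)^{p+q+1} = (-1)^{p+q-1}$, these coincide, so no discrepancy arises. I would also note in passing that the same computation, with $\bar{\partial}$ replaced by $L = \bar{\partial} + [\Lambda_0,-]$, gives the analogous statement for the dGLA $(\mathfrak{g}, L, [-,-])$, which is the version actually used later; but for this proposition the plain $\bar{\partial}$ version suffices and the argument is a one-line unwinding of definitions plus the dGLA compatibility axiom.
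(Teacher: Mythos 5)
Your proposal is correct and follows essentially the same route as the paper: evaluate both sides on an arbitrary element, use the convention $[\bar{\partial},b]=\bar{\partial}b$, and reduce to the dGLA compatibility $\bar{\partial}[a,b]=[\bar{\partial}a,b]+(-1)^{|a|}[a,\bar{\partial}b]$, which is exactly the one-line identity the paper's proof invokes. Your extra remarks on the sign convention and the $L=\bar{\partial}+[\Lambda_0,-]$ variant are consistent with the paper but not needed.
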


\begin{proof}
$[\bar{\partial},[a,b]]=\bar{\partial}[a,b]=[\bar{\partial}a,b]+(-1)^{|a|}[a,\bar{\partial}b]$
\end{proof}

\begin{proposition}\label{2pk}
As an operator acting on $A$, we have $[a,b]=a\circ b-(-1)^{|a||b|} b\circ a$
\end{proposition}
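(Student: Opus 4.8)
The statement to prove is the operator identity $[a,b] = a\circ b - (-1)^{|a||b|} b\circ a$, where $a,b \in A$ are viewed as operators acting on $A$ via the adjoint action $a = [a,-]$, $b = [b,-]$. This is the Poisson/holomorphic-bracket analogue of the standard fact that the adjoint representation of a (graded) Lie algebra turns the bracket into the graded commutator of operators. The plan is to apply both sides to an arbitrary element $c \in A$ and show that the resulting elements of $A$ coincide, using only the graded Jacobi identity for the Schouten bracket recorded earlier in the excerpt (property (2) in the list preceding Theorem \ref{m}).

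First I would unwind the definitions: the left-hand side applied to $c$ is $[a,b](c) = [[a,b],c]$, while the right-hand side applied to $c$ is $(a\circ b)(c) - (-1)^{|a||b|}(b\circ a)(c) = [a,[b,c]] - (-1)^{|a||b|}[b,[a,c]]$. So the identity to be checked is exactly
\begin{align*}
[[a,b],c] = [a,[b,c]] - (-1)^{|a||b|}[b,[a,c]],
\end{align*}
which is precisely the graded Jacobi identity $[a,[b,c]] = [[a,b],c] + (-1)^{|a||b|}[b,[a,c]]$ from the differential graded Lie algebra structure on $A$ (note the degree conventions: the relevant sign exponent is the product of the shifted degrees, matching property (2) cited above, since $A[1]$ is the dgla). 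Thus the proof is essentially a one-line rearrangement of the Jacobi identity, once the notational bookkeeping for the degree shift is pinned down correctly.

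The only genuine care needed — and the step I would flag as the main (minor) obstacle — is making sure the sign $(-1)^{|a||b|}$ in the statement is consistent with the sign appearing in the Jacobi identity as stated in the excerpt, which uses the exponent $(p+q+1)(p'+q'+1)$ for elements of bidegree $(p,q)$ and $(p',q')$. Since in this section the degree $|a|$ of an element $a \in A$ is the shifted degree (as in Proposition \ref{2pro}, where $\bar{\partial}$ is treated as a degree-one operator), $|a|$ here plays the role of $p+q-1$ shifted appropriately, and one checks $(-1)^{|a||b|}$ matches $(-1)^{(p+q+1)(p'+q'+1)}$ modulo $2$ under this convention. I would state this compatibility explicitly and then conclude. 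No convergence or analytic input is required; this is purely formal algebra, parallel to Proposition \ref{2pro} whose proof simply quotes the compatibility of $\bar\partial$ with the bracket.
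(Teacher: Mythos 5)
Your proposal is correct and matches the paper's proof, which is exactly the one-line observation $[[a,b],c]=[a,[b,c]]-(-1)^{|a||b|}[b,[a,c]]$, i.e.\ the graded Jacobi identity applied to an arbitrary $c$. The extra remarks on reconciling $|a||b|$ with the exponent $(p+q+1)(p'+q'+1)$ are fine but not needed beyond noting that $|a|$ denotes the degree in the (shifted) graded Lie algebra used throughout this section.
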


\begin{proof}
$[[a,b],c]=[a,[b,c]]-(-1)^{|a||b|}[b,[a,c]]$.
\end{proof}

\begin{definition}
We would like to define a differential $($which we will denote by still same $\bar{\partial}$$)$ which generalizes Proposition $\ref{2pro}$ on the space of operators on $L_A$ generated by $\{L_a=[a,-]|a\in A\}$, $identity$, and $\bar{\partial}:=[\bar{\partial},-]$ in compository manner. So $L_A$ has an identity element, which we denote by $id$. We set $\deg(\bar{\partial})=1$ and $deg(id)=0$. We define $\bar{\partial} X:= \bar{\partial}\circ X-(-1)^{|X|}X\circ \bar{\partial}$ for $X\in L_A$. Then $\bar{\partial}(id)=0$ and $\bar{\partial}(\bar{\partial})=\bar{\partial}([\bar{\partial},-])=0$$($First $\bar{\partial}$ is the differential on $L$ and second $\bar{\partial}$ is an operator in $L_A$$)$. We would like to define the grading on $L_A$ in the following way: $deg(a_1\circ\cdots \circ a_n)=|a_1|+\cdots +|a_n|$ for $a_i\in A$ as operators where $|a|$ is the degree of $a$ in $A$. Then $|\bar{\partial}X|=|X|+1$ since the degree of $|\bar{\partial}|=1$ and $\bar{\partial}(\bar{\partial}X)=0$ for $X\in L_A$. We define the bracket on $L_A$ in the following way: $[X,Y]=X\circ Y-(-1)^{|X||Y|}Y\circ X$.
\end{definition}

So we have $[\bar{\partial},a]=\bar{\partial} a=\bar{\partial}\circ a-(-1)^{|a|}a\circ \bar{\partial}$ and $[a,b]=a\circ b-(-1)^{|a||b|}b\circ a$ for $a,b\in A$ as operators. This coincides with Proposition \ref{2pro} and Proposition \ref{2pk}.

\begin{remark}
We have an embedding 
\begin{align*}
A&\hookrightarrow L_A\\
a&\mapsto a:=[a,-]
\end{align*}
which is bracket $([-,-])$ preserving, and differential $(\bar{\partial})$ preserving, in other words, 
\begin{align*}
[a,b]&\mapsto [[a,b],-]=[[a,[b,-]]-(-1)^{|a||b|}[b,[a,-]]=a\circ b-(-1)^{|a||b|}b\circ a\\
\bar{\partial}a &\mapsto [\bar{\partial}a,-]=\bar{\partial}\circ a-(-1)^{|a|}a\circ \bar{\partial}
\end{align*}
\end{remark}

\begin{proposition}[Product Rule]\label{2p}
For $X,Y\in L_A$, we have $\bar{\partial}(X\circ Y)=\bar{\partial}X\circ Y+(-1)^{|X|}X\circ \bar{\partial}Y.$
\end{proposition}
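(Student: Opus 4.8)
The statement to prove is the product rule $\bar{\partial}(X\circ Y)=\bar{\partial}X\circ Y+(-1)^{|X|}X\circ \bar{\partial}Y$ for $X,Y\in L_A$. The plan is to unwind the definition of the differential on $L_A$, which by construction is $\bar{\partial}Z=\bar{\partial}\circ Z-(-1)^{|Z|}Z\circ\bar{\partial}$ for any $Z\in L_A$, where on the right $\bar{\partial}$ denotes the honest operator $[\bar{\partial},-]$ acting by pre- and post-composition. So the first step is simply to write $\bar{\partial}(X\circ Y)=\bar{\partial}\circ(X\circ Y)-(-1)^{|X\circ Y|}(X\circ Y)\circ\bar{\partial}$, and recall that $|X\circ Y|=|X|+|Y|$ by the definition of the grading on $L_A$.

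Next I would massage the right-hand side by inserting a telescoping middle term $(-1)^{|X|}X\circ\bar{\partial}\circ Y$. Concretely, write
\begin{align*}
\bar{\partial}\circ X\circ Y-(-1)^{|X|+|Y|}X\circ Y\circ\bar{\partial}
&=\bigl(\bar{\partial}\circ X-(-1)^{|X|}X\circ\bar{\partial}\bigr)\circ Y\\
&\quad+(-1)^{|X|}X\circ\bigl(\bar{\partial}\circ Y-(-1)^{|Y|}Y\circ\bar{\partial}\bigr).
\end{align*}
The first parenthesized expression is exactly $\bar{\partial}X$ and the second is exactly $\bar{\partial}Y$, again by the defining formula for the differential on $L_A$. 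This immediately yields $\bar{\partial}(X\circ Y)=\bar{\partial}X\circ Y+(-1)^{|X|}X\circ\bar{\partial}Y$, which is the claim.

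The one point that needs a word of care — and the only place where anything could go wrong — is the bookkeeping of signs and gradings: one must check that $|X\circ Y|=|X|+|Y|$ is the grading actually used (it is, by definition: $\deg(a_1\circ\cdots\circ a_n)=|a_1|+\cdots+|a_n|$, extended to include $\bar{\partial}$ of degree $1$ and $id$ of degree $0$), and that the sign $(-1)^{|X|+|Y|}$ splits as $(-1)^{|X|}\cdot(-1)^{|Y|}$ correctly in the telescoping step. Since associativity of $\circ$ is automatic (these are genuine operators on $A$), there is no subtlety beyond sign tracking. I expect this to be essentially a one-line computation once the definitions are spelled out; there is no real obstacle, only the routine verification that the graded Leibniz sign conventions are internally consistent with the earlier propositions (Proposition \ref{2pro} and Proposition \ref{2pk}), which they are by construction.
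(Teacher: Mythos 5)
Your proof is correct and follows essentially the same route as the paper: unwind the definition $\bar{\partial}Z=\bar{\partial}\circ Z-(-1)^{|Z|}Z\circ\bar{\partial}$ on both sides, use $|X\circ Y|=|X|+|Y|$, and observe that the terms $(-1)^{|X|}X\circ\bar{\partial}\circ Y$ telescope. Nothing is missing; the sign bookkeeping you flag is indeed the only content of the argument, and you handle it correctly.
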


\begin{proof}
$\bar{\partial}(X\circ Y)=\bar{\partial}\circ X\circ Y-(-1)^{|X|+|Y|}X\circ Y\circ \bar{\partial}$. On the other hand $\bar{\partial}X\circ Y=\bar{\partial}\circ X\circ Y-(-1)^{|X|}X\circ \bar{\partial}\circ Y$, and $(-1)^{|X|}X\circ \bar{\partial}Y=(-1)^{|X|}X\circ \bar{\partial}\circ Y-(-1)^{|X|+|Y|}X\circ Y\circ \bar{\partial}$.
\end{proof}

\begin{example}
$exp(tu)\circ u=u\circ exp(tu), t\in \mathbb{R}$. Here $exp(x)=id+x+\frac{x\circ x}{2!}+\cdots$ for $x\in L_A$.
\end{example}

\begin{example}
$\frac{d}{dt} exp(tu)=exp(tu)u$ for $t\in \mathbb{R}$. Indeed, $\frac{d}{dt}exp(tu)=\frac{d}{dt}(id+tu+\frac{(tu)\circ (tu)}{2!}+\cdots)=\frac{d}{dt}(id+tu+\frac{1}{2}t^2 u\circ u+\cdots)=u+tu\circ u+\frac{1}{2}t^2u\circ u\circ u+\cdots=u(id+tu+\frac{1}{2}tu\circ tu+\cdots)=uexp(tu)$
\end{example}

\begin{example}
If $a\in A$ is holomorphic, then as a operator $\bar{\partial}(exp(a))=0$. Indeed, since $\bar{\partial}a=0$, we have $\bar{\partial}(exp(a))=\bar{\partial}(id+a+\frac{1}{2}a\circ a+\cdots)=0$ by applying the product rule.
\end{example}

\begin{example}
Let $deg(a)=0$. Then as an operator 
\begin{align*}
exp(a)\circ \bar{\partial}(exp(a))=[D(ad(a))(\bar{\partial}a)),-]=[\bar{\partial}a-\frac{[a,\bar{\partial}a]}{2!}+\frac{[a,[a,\bar{\partial}a]]}{3!}+\cdots,-]
\end{align*}
where
\begin{align*}
D(x)=\frac{exp(x)-1}{x}=\sum_{i=0}^{\infty} \frac{x^i}{(i+1)!}
\end{align*}

\end{example}

\section{Deformations of a compact holomorphic Poisson manifold}

Let $(X,\Lambda_0)$ be a compact holomorphic Poisson manifold. In other words, the structure sheaf $\mathcal{O}_X$ is a sheaf of Poisson algebras induced by $\Lambda_0$. We define an infinitesimal Poisson deformation of $(X,\Lambda_0)$ over $Spec\,R$, where $R$ is a local artinian $\mathbb{C}$-algebra with residue $\mathbb{C}$.
\begin{definition}
Let $(X,\Lambda_0)$ be a compact holomorphic Poisson manifold. An infinitesimal Poisson deformation of $(X,\Lambda_0)$ is a cartesian diagram

\begin{center}
$\begin{CD}
X@>i>> \mathcal{X}\\
@VVV @VV\pi V\\
Spec(\mathbb{C})@>>> Spec(R)
\end{CD}$
\end{center}
where $\pi$ is a proper flat morphism, $A$ is an local artinian $\mathbb{C}$-algebra with the residue $\mathbb{C}$, $X\cong \mathcal{X}\times_{Spec(A)} Spec(\mathbb{C})$, and $\mathcal{O}_{\mathcal{X}}$ is a sheaf of Poisson $R$-algebra, which induces the Poisson $\mathbb{C}$-algebra $\mathcal{O}_X$ given by $\Lambda_0$.
\end{definition}

\begin{remark}
When we ignore Poisson structures, an infinitesimal Poisson deformation is simply a infinitesimal deformation of a underlying compact complex manifold. As complex analytic spaces, $X$ is a closed subspace of $\mathcal{X}$. Since $Spec(R)$ is a fat point $($one point set with structure sheaf $R$ itself $)$ and $\mathcal{X}\times_{Spec(A)} Spec(\mathbb{C})$, we have $\mathcal{X}\cong X$ topologically.
\end{remark}

\begin{remark}
We can assume that $i:X\to \mathcal{X}$ is an identity map as topological spaces. In other words, $i$ is simply equivalent to that $\mathcal{O}_{\mathcal{X}}(U)\to \mathcal{O}_X(U)$ is surjective for any open set $U$ of $X$. In the sequel, I assume that $i$ is identity as topological spaces.
\end{remark}

\begin{remark}
The cartesian diagram can be seen in the sheaf theoretic setting which we mainly use in the part II of the thesis. The cartesian diagram is equivalent to the following cartesian diagram of sheaves
\begin{center}
$\begin{CD}
\mathcal{O}_{\mathcal{X}}@>>> \mathcal{O}_X\\
@AAA @AAA\\
R@>>> R/\mathfrak{m}\cong \mathbb{C}
\end{CD}$
\end{center}
 where a sheaf morphism $\mathcal{O}_{\mathcal{X}}\to \mathcal{O}_X$ over $\mathbb{C}$ on $X$ such that $\mathcal{O}_{\mathcal{X}}$ is a sheaf of flat Poisson $A$-algebra with $\mathcal{O}_{\mathcal{X}}\otimes_R  R/\mathfrak{m}\cong \mathcal{O}_X$ as sheaves of Poisson $\mathbb{C}$-algebras in the following sense:
for any open set $U$ of $X$, the Poisson bracket $\{-,-\}_{\mathcal{X}}:\mathcal{O}_{\mathcal{X}}(U)\times \mathcal{O}_{\mathcal{X}}(U)\to \mathcal{O}_{\mathcal{X}}(U)$ induces the Poisson bracket on $\mathcal{O}_{X}(U)$ in the following way.  $\{-,-\}_X:(\mathcal{O}_{\mathcal{X}}(U)\otimes_{\mathbb{C}} R/\mathfrak{m}) \times (\mathcal{O}_{\mathcal{X}}(U)\otimes_{\mathbb{C}} R/\mathfrak{m}) \to \mathcal{O}_{\mathcal{X}}(U)\otimes_{\mathbb{C}} R/\mathfrak{m}$ by $\{f\otimes r_1,g\otimes r_2 \}_X=\{f,g\}_{\mathcal{X}}\otimes r_1r_2$. 
\end{remark}

\begin{proposition}\label{2proq}
Let $U$ be an open ball containing $0\in \mathbb{C}^n$ with a coordinate $(z_1,...,z_n)$. An infinitesimal deformation $\mathcal{X}$ of $U$ over $Spec(R)$ is rigid. In other words, $\mathcal{X}\cong U \times Spec(R)$.
\end{proposition}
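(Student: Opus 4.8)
The statement has two layers: first, the underlying infinitesimal deformation of the polydisk (or open ball) $U$ is trivial, i.e. $\mathcal{X}\cong U\times\operatorname{Spec}(R)$ as complex analytic spaces over $\operatorname{Spec}(R)$; second, this trivialization can be chosen compatibly with the Poisson structure, so that the Poisson $R$-algebra $\mathcal{O}_{\mathcal X}$ becomes the trivial product Poisson structure. The plan is to establish the first layer by the standard argument (a contractible Stein domain such as $U$ has no nontrivial infinitesimal deformations because $H^1(U,\Theta_U)=0$, or equivalently by a direct obstruction-theory/Artin-induction argument lifting coordinates order by order up the filtration of $\mathfrak m$), and then to upgrade it in the Poisson direction using the differential graded Lie algebra formalism set up in this chapter together with the local Poincaré-type vanishing for the operator $\bar\partial+[\Lambda_0,-]$ on a polydisk.

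Here is the order I would carry this out. First I would reduce to the local model: by the previous remarks $i:X\to\mathcal X$ is the identity on the underlying topological space $U$, so an infinitesimal Poisson deformation of $(U,\Lambda_0)$ over $R$ is the same as a sheaf of flat Poisson $R$-algebras $\mathcal O_{\mathcal X}$ on $U$ lifting $\mathcal O_U$. Next, following the identification of families with Maurer–Cartan elements developed in Part I (and reproved infinitesimally in Chapter~5), such a deformation is encoded, after choosing $C^\infty$ coordinates, by an element $\varphi+\Lambda\in (A^{0,1}(U,T)\oplus A^{0,0}(U,\wedge^2 T))\otimes\mathfrak m$ solving $L(\varphi+\Lambda)+\frac12[\varphi+\Lambda,\varphi+\Lambda]=0$ with $L=\bar\partial+[\Lambda_0,-]$, up to the gauge action of $\exp\big((A^{0,0}(U,T))\otimes\mathfrak m\big)$ (automorphisms fixing $U$), and up to the freedom in $\Lambda_0+\Lambda$ of modifying the holomorphic bivector by a Hamiltonian/coordinate change. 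I would then argue by induction on the length of $\mathfrak m$: writing $R'=R/(\mathfrak m^{k}\cdot I)$ for a principal small extension $0\to(t)\to R\to R'\to 0$, a deformation over $R$ restricting to the trivial one over $R'$ is classified by an obstruction-free class in $HP^2(U,\Lambda_0)$ and then by $HP^1(U,\Lambda_0)$ for the choice of trivialization; since $U$ is a polydisk these truncated holomorphic Poisson cohomology groups vanish (the complex $0\to\Theta_U\xrightarrow{[\Lambda_0,-]}\wedge^2\Theta_U\to\cdots$ has a $\bar\partial$-acyclic Dolbeault resolution on $U$, and the induced double complex has vanishing hypercohomology in positive degrees because each row is exact by the Dolbeault lemma on the polydisk and the relevant edge terms are reached). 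Hence at each stage the deformation and its trivialization lift, and in the limit (finitely many steps, $R$ artinian) one obtains $\mathcal X\cong U\times\operatorname{Spec}(R)$ as Poisson $R$-spaces.

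Concretely, the inductive step amounts to solving, for a given $L$-closed cocycle $(\beta,\pi)\in A^{0,1}(U,T)\oplus A^{0,0}(U,\wedge^2 T)$ representing the order-$k$ obstruction/difference, the equation $L(\xi)= (\beta,\pi)$ for some $\xi\in A^{0,0}(U,T)$; this is exactly $\bar\partial\xi+[\Lambda_0,\xi]=\beta+\pi$, which splits by type into $\bar\partial\xi=\beta$ (solvable on $U$ by the Dolbeault lemma, giving $\xi$) and then $[\Lambda_0,\xi]=\pi$ for the remaining freedom, the latter being handled by absorbing $\pi$ into a redefinition of the bivector part via the $(2,0)$-component of $\Lambda_0+\Lambda$ exactly as in Theorem~\ref{m}. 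One should check that the gauge transformation $\exp(\xi)$ thus produced is well-defined and invertible over $R$ (automatic since $\xi$ has entries in $\mathfrak m$, so $\exp(\xi)$ is a finite sum and a unit), and that it carries the given Poisson structure to the product one; this is where the bracket calculus of Propositions~\ref{2pro}–\ref{2p} does the bookkeeping.

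The main obstacle, and the step I would be most careful about, is the interplay between the complex-analytic triviality and the Poisson triviality: it is not enough to trivialize the underlying deformation and then separately trivialize the bivector, because a coordinate change that kills $\varphi$ will in general move $\Lambda$. The clean way around this is precisely to work with $L=\bar\partial+[\Lambda_0,-]$ as a single elliptic complex and to trivialize the full Maurer–Cartan element $\varphi+\Lambda$ at once, using that on the polydisk the $L$-complex is exact in the relevant degrees (which follows from Dolbeault exactness on $U$ plus the fact that $[\Lambda_0,-]$ only shifts the bivector degree and contributes no new cohomology on a Stein polydisk). A secondary technical point is justifying that flatness of $\mathcal O_{\mathcal X}$ over $R$ together with the identification over the closed point forces $\mathcal O_{\mathcal X}$ to be, as an $R$-module sheaf, $\mathcal O_U\otimes_{\mathbb C}R$, so that the whole deformation really is captured by the finite-dimensional-over-$\mathfrak m$ data above; this is standard once one notes $R$ is artinian and $\mathfrak m$ is nilpotent. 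Modulo these points the argument is a routine induction.
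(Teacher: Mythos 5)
You have misread what Proposition \ref{2proq} asserts. It is a statement about the \emph{underlying} infinitesimal deformation of the complex manifold $U$ only: $\mathcal{X}\cong U\times Spec(R)$ as a deformation of analytic spaces over $Spec(R)$, with no claim of compatibility with any Poisson structure. The paper's proof is a one-line citation to \cite{Ser06}, Theorem 1.2.4 (rigidity of a nonsingular affine/Stein model), and your ``first layer'' --- $H^1(U,\Theta_U)=0$ on the ball plus induction up the filtration of $\mathfrak{m}$ to lift the trivialization through each small extension --- is exactly that argument, so for the statement actually being proved your proposal is correct and follows the standard route.

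The genuine problem is your ``second layer,'' which is not part of the statement and is in fact false, and the cohomological input you invoke for it is wrong. On an open ball the Stein property kills only the $\bar\partial$ (equivalently \v{C}ech) direction; the hypercohomology of the truncated complex $0\to\Theta_U\xrightarrow{[\Lambda_0,-]}\wedge^2\Theta_U\to\cdots$ then reduces to the cohomology of the complex of \emph{global holomorphic} multivector fields $\Gamma(U,\Theta_U)\xrightarrow{[\Lambda_0,-]}\Gamma(U,\wedge^2\Theta_U)\to\cdots$, which does not vanish in general: for $\Lambda_0=0$ one gets $HP^2(U,0)=\Gamma(U,\wedge^2\Theta_U)\neq 0$, and more generally $\Lambda_0+\epsilon\Lambda_1$ with $[\Lambda_0,\Lambda_1]=0$ but $\Lambda_1\notin\operatorname{im}[\Lambda_0,-]$ is a nontrivial first-order Poisson deformation of the ball with trivial underlying complex deformation. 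So local Poisson rigidity fails, and your claim that ``$[\Lambda_0,-]$ only shifts the bivector degree and contributes no new cohomology on a Stein polydisk'' is precisely the mistake. This is also why the paper needs the Remark following the proposition: after trivializing the complex structure on $U_i$, the Poisson $R$-algebra structure is recorded as a holomorphic bivector $\Lambda_i'\in\Gamma(U_i,\mathscr{A}^{0,0}(\wedge^2 T_X))\otimes R$ which genuinely varies; the gauge-theoretic machinery of Chapters \ref{chapter5}--\ref{chapter6} (the Maurer--Cartan element $\phi+\Lambda$ and the gluing data) would be vacuous if the local Poisson deformations were already trivial. Drop the second layer entirely and the proof is fine.
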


\begin{proof}
We refer to \cite{Ser06} Theorem 1.2.4.
\end{proof}

Let $\mathcal{X}$ be an infinitesimal Poisson deformation of a compact holomorphic Poisson manifold $(X,\Lambda_0)$ over a local artinian $\mathbb{C}$-algebra $(R,\mathfrak{m})$ with residue $\mathbb{C}$. Let $\{U_i\}$ be an open covering of $X$, where $U_i$ is isomorphic to an open ball. By Proposition \ref{2proq}, locally an infinitesimal deformation $\mathcal{X}$ of the underlying complex manifold $X$ is 
\begin{center}
$\begin{CD}
U_i@>>> \mathcal{X}|_{U_i}\cong U_i\times Spec(R)\\
@VVV @VV\pi V\\
Spec(\mathbb{C})@>>> Spec(R)
\end{CD}$
\end{center} 
We call such an open cover $\{U_i\}$  locally trivial  open covering of $\mathcal{X}$.
\begin{remark}
Let $\{U_i\}$ be a locally trivial open covering of $\mathcal{X}$. Since $\mathcal{O}_{\mathcal{X}}(U_i)\cong \mathcal{O}_X(U_i)\otimes_{\mathbb{C}} R$, the Poisson $R$-algebra structure on $\mathcal{O}_{\mathcal{X}}(U_i)$ is encoded in some holomorphic bivector field $\Lambda_i' \in \Gamma(U_i,\mathscr{A}^{0,0}(\wedge^2 T_X))\otimes_{\mathbb{C}} R$ with $\bar{\partial} \Lambda_i'=0$, $[\Lambda_i',\Lambda_i']=0$, where $\bar{\partial}$ on $\Gamma(U_i,\mathscr{A}^{0,0}(\wedge^2 T_X))\otimes_{\mathbb{C}} R$ is induced from $\bar{\partial}$ on $\Gamma(U_i,\mathscr{A}^{0,0}(\wedge^2 T_X))$ and $[-,-]$ is induced from the bracket  on $U_i$ by extending $R$-linearly. More precisely, let $U_i$ be an open ball with coordinate $z=(z_1,...,z_n)$. Then the Poisson structure on $\mathcal{O}_\mathcal{X}(U_i)$ over $R$, where $R$ is generated by $<1, m_1,...,m_r>$ over $\mathbb{C}$ is encoded in the bivector field 

\begin{align*}
\Lambda_i'=\sum_{\alpha,\beta} \Lambda_{\alpha\beta} \frac{\partial}{\partial z_{\alpha}}\wedge \frac{\partial}{\partial z_{\beta}}= \sum_{\alpha,\beta} (\Lambda_{\alpha\beta}^0+m_1\Lambda_{\alpha\beta}^1+\cdots m_r\Lambda_{\alpha\beta}^r)\frac{\partial}{\partial z_{\alpha}}\wedge \frac{\partial}{\partial z_{\beta}}
\end{align*}
with $[\Lambda_i',\Lambda_i']=0$, where $\Lambda_{\alpha\beta}^k(z)$ is holomorphic on $U_i$ for each $k$. Then

\begin{align*}
\{f,g\}=<\Lambda_i',df\wedge dg>=<\sum_{\alpha,\beta} \Lambda_{\alpha\beta}\frac{\partial}{\partial z_{\alpha}} \wedge \frac{\partial}{\partial z_{\beta}}, df\wedge dg>=\sum_{\alpha,\beta} \Lambda_{\alpha\beta}\left(\frac{\partial f}{\partial z_{\alpha}}\frac{\partial g}{\partial z_{\beta}}-\frac{\partial g}{\partial z_{\alpha}}\frac{\partial f}{\partial z_{\beta}}\right)
\end{align*}

On the other hand,

\begin{align*}
[[\Lambda_i',f],g]&=\sum_{\alpha,\beta} [[\Lambda_{\alpha\beta}\frac{\partial}{\partial z_{\alpha}}\wedge \frac{\partial}{\partial z_{\beta}},f],g]=\sum_{\alpha,\beta}[\Lambda_{\alpha\beta}\frac{\partial f}{\partial z_{\alpha}}\frac{\partial}{\partial z_{\beta}}-\Lambda_{\alpha\beta}\frac{\partial f}{\partial z_{\beta}}\frac{\partial}{\partial z_{\alpha}},g]\\
&=\sum_{\alpha,\beta}\Lambda_{\alpha\beta} \left(\frac{\partial f}{\partial z_{\alpha}}\frac{\partial g}{\partial z_{\beta}}-\frac{\partial g}{\partial z_{\alpha}}\frac{\partial f}{\partial z_{\beta}}\right)
\end{align*}

So we have the expression

\begin{align*}
\{f,g\}=[[\Lambda_i',f],g]
\end{align*}
Note that since $\Lambda_i'$ induces the Poisson structure of $(U_i,\Lambda_0)$, the Poisson structure of $(X,\Lambda_0)$ on $U_i$ is given by
\begin{align*}
\Lambda_0=\sum_{\alpha\beta}\Lambda_{\alpha\beta}^0\frac{\partial}{\partial z_{\alpha}}\wedge \frac{\partial}{\partial z_{\beta}}
\end{align*}

\end{remark}

\begin{definition}[equivalent Poisson deformations]
We say that an infinitesimal Poisson deformation $\mathcal{X}$ and an infinitesimal Poisson deformation $\mathcal{Y}$ of a holmorphic Poisson manifold $(X,\Lambda_0)$ over a local artinian $\mathbb{C}$-algebra $R$ are equivalent if the following diagram is commutative.
\begin{center}
\[\begindc{\commdiag}[50]
\obj(1,2)[aa]{$(X,\Lambda_0)$}
\obj(0,1)[bb]{$\mathcal{X}$}
\obj(2,1)[cc]{$\mathcal{Y}$}
\obj(1,0)[dd]{$Spec\,R$}
\mor{aa}{bb}{}
\mor{aa}{cc}{}
\mor{bb}{cc}{$f \cong $}
\mor{bb}{dd}{}
\mor{cc}{dd}{}
\enddc\]
\end{center}
where $f:\mathcal{X}\to \mathcal{Y}$ is a Poisson isomophism. In other words, $f^\sharp:\mathcal{O}_{\mathcal{Y}}\to f_*\mathcal{O}_{\mathcal{X}}$ is an isomorphism of Poisson sheaves.
\end{definition}

\section{Integrability condition}\
 
 Let $\mathcal{X}$ be a infinitesimal Poisson deformation of a compact holomorphic Poisson manifold $(X,\Lambda_0)$ over $R$, where $(R,\mathfrak{m})$ is an local artinian $\mathbb{C}$-algebra with residue $\mathbb{C}$.
 
Let $\mathcal{U}=\{U_{\alpha}\}$ be a locally trivial open covering of $\mathcal{X}$. Then we have a set of isomorphisms of Poisson $R$-algebra
\begin{align*}
\varphi_{\alpha}:\mathcal{O}_{\mathcal{X}}(U_{\alpha})\xrightarrow{\thicksim} \mathcal{O}_X(U_{\alpha})\otimes R
\end{align*}
where the Poisson $R$-structure on $\mathcal{O}_X(U_{\alpha})\otimes R$, which we denote by $\Lambda_{\alpha}'$, is induced from the Poisson structure of $\mathcal{O}_{\mathcal{X}}(U_{\alpha})$ via $\varphi_{\alpha}$. 
\begin{align*}
\theta_{\alpha\beta}:=\varphi_{\alpha}\varphi_{\beta}^{-1}:(\mathcal{O}_X(U_{\alpha}\cap U_{\beta})\otimes R,\Lambda_{\beta}')\to (\mathcal{O}_X(U_{\alpha}\cap U_{\beta})\otimes R,\Lambda_{\alpha}')
\end{align*}
are Poisson isomorphisms and satisfying cocycle condition $\theta_{\gamma\alpha}\circ \theta_{\alpha\beta}=\theta_{\gamma\beta}$.
\begin{lemma}\label{2len}
In the above, if $(R,\mathfrak{m})$ has the exponent $n$ (i.e $\mathfrak{m}^{n+1}=0)$, then $\theta_{\alpha\beta}=exp(t_{\alpha\beta})=I+t_{\alpha\beta}+\frac{1}{2}(t_{\alpha\beta})^2+\cdots +\frac{1}{n!}(t_{\alpha\beta})^n$ for some $t_{\alpha\beta}\in C^1(\mathcal{U},\Theta_X)\otimes \mathfrak{m}$.
\end{lemma}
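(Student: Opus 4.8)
The plan is to produce $t_{\alpha\beta}$ as a truncated logarithm of $\theta_{\alpha\beta}$, using the nilpotence $\mathfrak{m}^{n+1}=0$ to make all the relevant power series terminate, and then to recognize that the logarithm of an $R$-algebra automorphism congruent to the identity is an $R$-derivation, hence a vector field. Note that the Poisson structure plays no role here: $\theta_{\alpha\beta}$ is used only as a ring isomorphism, and $t_{\alpha\beta}$ is claimed to be an ordinary holomorphic vector field.

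First I would record the normalization that makes everything work. By Proposition \ref{2proq} the local trivializations $\varphi_\alpha\colon\mathcal{O}_{\mathcal{X}}(U_\alpha)\xrightarrow{\sim}\mathcal{O}_X(U_\alpha)\otimes R$ may be chosen compatibly with the closed embedding $i\colon X\hookrightarrow\mathcal{X}$, i.e. so that each $\varphi_\alpha$ reduces modulo $\mathfrak{m}$ to the identity of $\mathcal{O}_X(U_\alpha)$. Then $\theta_{\alpha\beta}=\varphi_\alpha\varphi_\beta^{-1}$ is an $R$-algebra automorphism of $\mathcal{O}_X(U_{\alpha}\cap U_\beta)\otimes R$ which reduces to the identity modulo $\mathfrak{m}$, so that $\delta_{\alpha\beta}:=\theta_{\alpha\beta}-I$ sends $\mathcal{O}_X(U_\alpha\cap U_\beta)\otimes 1$ into $\mathcal{O}_X(U_\alpha\cap U_\beta)\otimes\mathfrak{m}$. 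Since $\theta_{\alpha\beta}$ is $R$-linear, an easy induction shows $\delta_{\alpha\beta}^k$ maps $\mathcal{O}_X(U_\alpha\cap U_\beta)\otimes\mathfrak{m}^{j}$ into $\mathcal{O}_X(U_\alpha\cap U_\beta)\otimes\mathfrak{m}^{j+k}$; in particular $\delta_{\alpha\beta}^{\,n+1}=0$.

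Next I would set
\[
t_{\alpha\beta}:=\log\theta_{\alpha\beta}=\sum_{k=1}^{n}\frac{(-1)^{k+1}}{k}\,\delta_{\alpha\beta}^{\,k},
\]
a finite sum of mutually commuting $R$-linear endomorphisms, so that the formal identity $\exp\log(1+x)=1+x$ is valid in this nilpotent setting and gives $\exp(t_{\alpha\beta})=\theta_{\alpha\beta}$; moreover $\exp(t_{\alpha\beta})=I+t_{\alpha\beta}+\tfrac12 t_{\alpha\beta}^2+\cdots+\tfrac1{n!}t_{\alpha\beta}^{\,n}$ because $t_{\alpha\beta}^{\,n+1}$ already maps into $\mathcal{O}_X(U_\alpha\cap U_\beta)\otimes\mathfrak{m}^{n+1}=0$. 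It then remains to see that $t_{\alpha\beta}$ is an $R$-derivation valued in $\mathcal{O}_X(U_\alpha\cap U_\beta)\otimes\mathfrak{m}$. Since $\operatorname{Der}_R(\mathcal{O}_X(U_\alpha\cap U_\beta)\otimes R,\,\mathcal{O}_X(U_\alpha\cap U_\beta)\otimes R)=\Gamma(U_\alpha\cap U_\beta,\Theta_X)\otimes R$ and the condition of landing in $\mathcal{O}_X(U_\alpha\cap U_\beta)\otimes\mathfrak{m}$ cuts this down to $\Gamma(U_\alpha\cap U_\beta,\Theta_X)\otimes\mathfrak{m}$, collecting over the indices yields $\{t_{\alpha\beta}\}\in C^1(\mathcal{U},\Theta_X)\otimes\mathfrak{m}$, as desired.

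The main point to pin down carefully is the claim that $\log\theta_{\alpha\beta}$ is a derivation; I expect that to be the only substantive step. One route is the standard nilpotent $\exp/\log$ dictionary: on the group of filtered $R$-algebra automorphisms congruent to the identity, $\exp$ and $\log$ are mutually inverse bijections onto the Lie algebra of $R$-derivations sending $\mathcal{O}_X\otimes\mathfrak{m}^{j}$ into $\mathcal{O}_X\otimes\mathfrak{m}^{j+1}$ (cf. \cite{Man04}, \cite{Ser06}). A self-contained alternative, which I would sketch, is to observe that $\theta_{\alpha\beta}^{\,k}=\exp(k\,t_{\alpha\beta})$ is a ring homomorphism for every integer $k$; writing the homomorphism property through the multiplication map and comparing the two sides, which are polynomials in $k$ of bounded degree agreeing for all integers $k$, one gets $\mu\circ\exp(t\,\Delta)=\exp(t\,t_{\alpha\beta})\circ\mu$ as polynomial identities in a formal variable $t$, where $\Delta=t_{\alpha\beta}\otimes 1+1\otimes t_{\alpha\beta}$; differentiating at $t=0$ gives exactly the Leibniz rule for $t_{\alpha\beta}$. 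The remaining verifications — that the $\varphi_\alpha$ can be so normalized, and that the truncated $\exp$ and $\log$ series are genuinely mutually inverse — are routine and would only be indicated.
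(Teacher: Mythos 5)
Your proposal is correct, but it follows a genuinely different route from the paper. The paper proves the lemma by induction on the exponent of $\mathfrak{m}$: the base case $\mathfrak{m}^2=0$ is done by hand (writing $\theta_{\alpha\beta}(f)=f+\sum_i e_i h_i(f)$ and checking directly that $\sum_i e_i h_i$ is a derivation), and in the inductive step one applies the hypothesis over $R/\mathfrak{m}^n$ to get $\bar{\theta}_{\alpha\beta}=\exp(\bar{t}_{\alpha\beta})$, lifts $\bar{t}_{\alpha\beta}$ arbitrarily to $t_{\alpha\beta}$, and then shows the error $\eta_{\alpha\beta}=\theta_{\alpha\beta}-\exp(t_{\alpha\beta})$ is itself a derivation with values in $\mathfrak{m}^n$ (because it vanishes mod $\mathfrak{m}^n$ and $\mathfrak{m}^{n+1}=0$), so that $\theta_{\alpha\beta}=\exp(t_{\alpha\beta}+\eta_{\alpha\beta})$. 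You instead produce $t_{\alpha\beta}$ in one stroke as the truncated logarithm $\sum_{k=1}^n\frac{(-1)^{k+1}}{k}\delta_{\alpha\beta}^{\,k}$, using the $\mathfrak{m}$-adic filtration to truncate the series, and reduce the whole lemma to the single claim that the logarithm of a unipotent $R$-algebra automorphism is an $R$-derivation, which you justify either by the standard nilpotent $\exp/\log$ dictionary or by the self-contained polynomial-in-$k$ interpolation argument ($\theta^k=\exp(k\,t_{\alpha\beta})$ is a homomorphism for all $k$, both sides of the homomorphism identity are polynomials in $k$ of bounded degree, so comparing linear coefficients yields the Leibniz rule); that argument is sound over $\mathbb{C}$, where the needed denominators exist. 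Your approach buys an explicit closed formula for $t_{\alpha\beta}$ and avoids the induction and choice of liftings, at the cost of invoking (or proving) the exp/log correspondence; the paper's induction is more elementary and matches the small-extension style of argument it reuses later (e.g.\ in the proof of Proposition \ref{2prot} and in the uniqueness statements of Chapter \ref{chapter6}). Your observation that the Poisson structure plays no role here, and your normalization that the $\varphi_\alpha$ reduce to the identity modulo $\mathfrak{m}$, both agree with what the paper assumes implicitly, as does your identification of $R$-linear derivations of $\mathcal{O}_X(U_{\alpha}\cap U_{\beta})\otimes R$ landing in $\mathcal{O}_X(U_{\alpha}\cap U_{\beta})\otimes\mathfrak{m}$ with $\Gamma(U_{\alpha}\cap U_{\beta},\Theta_X)\otimes\mathfrak{m}$.
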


\begin{proof}
We prove this by induction on the exponent $n$ of the maximal ideal $\mathfrak{m}$ of a local artinian $\mathbb{C}$-algebra $(R,\mathfrak{m})$. 

Let's prove that the proposition holds for $n=1$. We assume that $\mathfrak{m}^2=0$. Let $f:=f\otimes 1\in \mathcal{O}_X(U_{\alpha}\cap U_{\beta})\otimes R$, which we consider an element in $\mathcal{O}_X(U_{\alpha}\cap U_{\beta})$. Note that $\theta_{\alpha\beta}$ is completely determined by $\mathcal{O}_X(U_{\alpha}\cap U_{\beta})$ since $\theta$ is $R$-algebra map. Let $dim_{\mathbb{C}}\, \mathfrak{m}=r$ with $\mathfrak{m}=<e_1,...,e_r>$. Since $R=\mathbb{C}\oplus \mathfrak{m}$, $\theta_{\alpha\beta}(f)=f+e_1 h_1(f)+\cdots +e_r h_r(f)$. Then $t_{\alpha\beta}:=\sum_i e_ih_i$ is a derivation. Indeed, for $f:=f\otimes 1, g:=g\otimes 1\in \mathcal{O}_X(U_{\alpha}\cap U_{\beta})\otimes R$, we have $\theta_{\alpha\beta}(fg)=\theta_{\alpha\beta}(f)\theta_{\alpha\beta}(g)=fg+e_1h_1(fg)+\cdots+e_rh_r(fg)=(f+e_1h_1(f)+\cdots +e_rh_r(f))(g+e_1h_1(g)+ \cdots + e_rh_r(g))=fg+e_1(fh_1(g)+gh_1(f))+\cdots + e_r(fh_r(g)+gh_r(f))$ since $\mathfrak{m}^2=0$, so we have $\{t_{\alpha\beta}=\sum_i h_ie_i\}\in C^1(\mathcal{U},\Theta_X)\otimes \mathfrak{m}$ and $\theta_{\alpha\beta}=exp(t_{\alpha\beta})=exp(\sum h_ie_i)$. 

Now we assume that the proposition holds for up to $n-1$. Let $(R,\mathfrak{m})$ with $\mathfrak{m}^{n+1}=0$. Then $(R/\mathfrak{m}^n,\mathfrak{m}/\mathfrak{m}^{n})$ is a local artinian $\mathbb{C}$-algebra with residue $\mathbb{C}$ and exponent $n-1$.
We note that $\theta_{\alpha\beta}:\mathcal{O}_X(U_{\alpha}\cap U_{\beta})\otimes R \to \mathcal{O}_X(U_{\alpha}\cap U_{\beta})\otimes R$ induces $\bar{\theta}_{\alpha\beta}:\mathcal{O}(U_{\alpha}\cap U_{\beta})\otimes (R/\mathfrak{m}^n) \to \mathcal{O}(U_{\alpha}\cap U_{\beta})\otimes (R/\mathfrak{m}^n)$. Then by the induction hypothesis, $\bar{\theta}_{\alpha\beta}=exp(\bar{t}_{\alpha\beta})$ for some $\bar{t}_{\alpha\beta}\in C^1(\mathcal{U},\Theta_X)\otimes (\mathfrak{m}/\mathfrak{m}^n)$.
Let $t_{\alpha\beta}$ be an arbitrary lifting of $\bar{t}_{\alpha\beta}$ via $\mathfrak{m}\to \mathfrak{m}/\mathfrak{m}^n$. Let $\eta_{\alpha\beta}:=\theta_{\alpha\beta}-exp(t_{\alpha\beta})$. We claim that $\{\eta_{\alpha\beta}\} \in C^1(\mathcal{U},\Theta_X)\otimes \mathfrak{m}^n$. Indeed, let $f,g$ as above. Then $\eta_{\alpha\beta}(fg)=\theta_{\alpha\beta}(f)\theta_{\alpha\beta}(g)-(exp(t_{\alpha\beta})f)(exp(t_{\alpha\beta})g)=\theta_{\alpha\beta}(f)\theta_{\alpha\beta}(g)-\theta_{\alpha\beta}(f)(exp(t_{\alpha\beta})g)+\theta_{\alpha\beta}(f)(exp(t_{\alpha\beta})g)-(exp(t_{\alpha\beta})f)(exp(t_{\alpha\beta})g)=\theta_{\alpha\beta}(f)(\theta_{\alpha\beta}-exp(t_{\alpha\beta}))(g)+(exp(t_{\alpha\beta})g)(\theta_{\alpha\beta}-exp(t_{\alpha\beta}))(f)=f\eta_{\alpha\beta}(g)+g\eta_{\alpha\beta}(f)$ since $\eta_{\alpha\beta}(g),\eta_{\alpha\beta}(f)\in \mathcal{O}(U_{\alpha}\cap U_{\beta})\otimes \mathfrak{m}^n$ and $\mathfrak{m}^{n+1}=0$. So $\eta_{\alpha\beta}\in C^1(\mathcal{U},\Theta_X)\otimes \mathfrak{m}^n$. Hence we have 
\begin{align*}
\theta_{\alpha\beta}=exp(t_{\alpha\beta})+\eta_{\alpha\beta}=exp(t_{\alpha\beta}+\eta_{\alpha\beta})
\end{align*}
\end{proof}

\begin{proposition}[Compare \cite{Kod05} Theorem 2.4 page 64]\label{2prot}
Let $\mathcal{X}\to Spec\,R$ be an infinitesimal deformation of a compact complex manifold $X$. Then we have an isomorphism of sheaves $\mathscr{A}^{0,0}_{\mathcal{X}}\cong \mathscr{A}^{0,0}_X\otimes_{\mathbb{C}} R$. In other words, a locally trivial infinitesimal $C^{\infty}$-deformation of a compact complex manifold $X$ over a local artinian $\mathbb{C}$-algebra $R$ with residue $\mathbb{C}$ is $($globally$)$ trivial. \footnote{The proposition is the infinitesimal version of \cite{Kod05} Theorem 2.4 (page 64). It took a lot of time for me to prove this proposition. Eventually I got the idea of the proof of the proposition from \cite{Kod05} Theorem 2.4. } We call a map defining $\mathscr{A}^{0,0}_{\mathcal{X}}\cong \mathscr{A}_X^{0,0}\otimes R$ a $C^{\infty}$-trivialization.
\end{proposition}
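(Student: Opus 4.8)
The plan is to imitate Kodaira's proof of \cite{Kod05} Theorem 2.4, working infinitesimally. Fix a locally trivial open covering $\mathcal{U}=\{U_\alpha\}$ of $\mathcal{X}$, so that we have $C^\infty$-trivializations $\psi_\alpha:\mathscr{A}^{0,0}_{\mathcal{X}}|_{U_\alpha}\cong \mathscr{A}^{0,0}_X|_{U_\alpha}\otimes_{\mathbb{C}} R$ of the underlying (locally trivial) $C^\infty$-deformation. The transition maps $g_{\alpha\beta}:=\psi_\alpha\psi_\beta^{-1}$ are $C^\infty$ automorphisms of $\mathscr{A}^{0,0}_X|_{U_\alpha\cap U_\beta}\otimes_{\mathbb{C}} R$ which are the identity modulo $\mathfrak{m}$, so exactly as in Lemma \ref{2len} (the argument there only used that $\theta_{\alpha\beta}$ is an $R$-algebra automorphism reducing to the identity mod $\mathfrak{m}$, not holomorphicity) we may write $g_{\alpha\beta}=\exp(c_{\alpha\beta})$ for a $1$-cochain $\{c_{\alpha\beta}\}\in C^1(\mathcal{U},\mathscr{A}^{0,0}(T_X))\otimes \mathfrak{m}$ (derivations of the $C^\infty$ structure sheaf, i.e. $C^\infty$ vector fields). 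The goal is to modify the $\psi_\alpha$ by local $C^\infty$ automorphisms so that the new transition maps are all the identity; this is a $\check{\mathrm{C}}$ech coboundary problem, and the only input needed is that $H^1(X,\mathscr{A}^{0,0}(T_X)\otimes \mathfrak{m})=0$, which holds because $\mathscr{A}^{0,0}(T_X)$ is a fine sheaf (admits partitions of unity) and tensoring with the finite-dimensional $\mathbb{C}$-vector space $\mathfrak{m}$ preserves this.

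The key steps, in order: (1) Reduce to the case $\mathfrak{m}^2=0$ by induction on the exponent $n$ of $\mathfrak{m}$, exactly as in Lemma \ref{2len}: given the result for $R/\mathfrak{m}^n$, one obtains a trivialization over $R/\mathfrak{m}^n$, lifts it arbitrarily to a $C^\infty$ isomorphism $\mathscr{A}^{0,0}_{\mathcal{X}}\to \mathscr{A}^{0,0}_X\otimes R$ which is an isomorphism modulo $\mathfrak{m}^n$, and the discrepancy lives in $\mathscr{A}^{0,0}(T_X)\otimes \mathfrak{m}^n$ with $(\mathfrak{m}^n)^2=0$. (2) In the square-zero case, the cocycle condition $g_{\gamma\alpha}g_{\alpha\beta}=g_{\gamma\beta}$ together with $g_{\alpha\beta}=\exp(c_{\alpha\beta})$ and $\mathfrak{m}^2=0$ (so $\exp(a)\exp(b)=\exp(a+b)$) gives $\delta\{c_{\alpha\beta}\}=0$, i.e. $\{c_{\alpha\beta}\}\in Z^1(\mathcal{U},\mathscr{A}^{0,0}(T_X)\otimes\mathfrak{m})$. (3) Since $\mathscr{A}^{0,0}(T_X)\otimes\mathfrak{m}$ is fine, $H^1=0$, so there exist $C^\infty$ vector fields $c_\alpha\in \Gamma(U_\alpha,\mathscr{A}^{0,0}(T_X))\otimes\mathfrak{m}$ with $c_{\alpha\beta}=c_\beta-c_\alpha$ (sign conventions to be fixed against Lemma \ref{2len}); then replacing $\psi_\alpha$ by $\exp(-c_\alpha)\circ\psi_\alpha$ makes all transition maps the identity, hence the $\psi_\alpha$ glue to a global isomorphism $\mathscr{A}^{0,0}_{\mathcal{X}}\cong \mathscr{A}^{0,0}_X\otimes_{\mathbb{C}} R$. (4) Assemble the induction.

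I would expect the main obstacle to be purely bookkeeping rather than conceptual: namely making precise, in the square-zero step, that the automorphisms $\exp(c_{\alpha\beta})$ really do compose additively and that the modified trivializations patch (this is the infinitesimal analogue of Kodaira's careful gluing argument, where over $\mathbb{C}$ one needs the vector field flows to converge, but over an artinian base everything is polynomial and automatically convergent, so the analysis disappears). One subtlety worth flagging is that the $c_{\alpha\beta}$ here are genuine $C^\infty$ vector fields on $U_\alpha\cap U_\beta$ (not holomorphic), so the relevant cohomology is that of the fine sheaf $\mathscr{A}^{0,0}(T_X)$, which vanishes in positive degree; this is precisely why the $C^\infty$ deformation trivializes globally whereas the holomorphic one need not. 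The statement that a \emph{locally trivial} $C^\infty$-deformation trivializes globally is exactly what is needed downstream to pull the holomorphic Poisson structure $\Lambda'$ back to a single $C^\infty$ bivector field on $X\times\operatorname{Spec}R$, in parallel with the analytic discussion of Chapter \ref{chapter2}.
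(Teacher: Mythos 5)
Your proposal is correct and follows essentially the same route as the paper: induction on the exponent of $\mathfrak{m}$, with the transition maps written as $\exp$ of vector-field-valued cochains (the Lemma \ref{2len}-type argument), the square-zero/lowest-order discrepancy cocycle split by a partition of unity, and the lifted lower-order trivialization corrected by an $\mathfrak{m}^n$-valued coboundary at each stage. The only cosmetic difference is that you invoke vanishing of $H^1$ of the fine sheaf $\mathscr{A}^{0,0}(T_X)\otimes\mathfrak{m}$ where the paper writes the explicit splitting $s_\alpha=\sum_\gamma\rho_\gamma t_{\gamma\alpha}$ (and $e_\alpha=\sum_\gamma\rho_\gamma r_{\alpha\gamma}$ in the inductive step), which is the same mechanism; your flagged sign issue is harmless.
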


\begin{remark}\label{2re}
Before the proof of Proposition $\ref{2prot}$, we will clarify the meaning of $\mathscr{A}_{\mathcal{X}}^{0,0}\cong \mathscr{A}_X^{0,0}\otimes_{\mathbb{C}} R$. We define a sheaf, denoted by $\mathscr{A}^{0,0}_{\mathcal{X}}$ on $X$ in the following way. Let $\mathcal{U}=\{U_{\alpha}\}$ be a locally trivial open covering of $\mathcal{X}$. So we have $\varphi_{\alpha}:\mathcal{O}_{\mathcal{X}}(U_{\alpha})\cong \mathcal{O}_X(U_{\alpha})\otimes R$ and $\varphi_{\alpha}\varphi_{\beta}^{-1}=exp(t_{\alpha\beta})$ on $\mathcal{O}_X(U_\alpha\cap U_\beta)$ satisfying cocycle conditions where $\{t_{\alpha\beta}\}\in C^1(\mathcal{U},\Theta_X)\otimes \mathfrak{m}$. Let $\mathscr{A}^{0,0}_X$ be a sheaf of complex valued $C^{\infty}$-functions on $X$. We will denote its section on $U$ by $\mathscr{A}^{0,0}_X(U):=\Gamma(U,\mathscr{A}^{\infty}_X)$ for an open set $U$ of $X$. Then $exp(t_{\alpha\beta}): \mathcal{O}_X(U_{\alpha}\cap U_{\beta})\otimes R\to \mathcal{O}_X(U_{\alpha}\cap U_{\beta})\otimes R$  induces $exp(t_{\alpha\beta}):\mathscr{A}^{0,0}_X(U_{\alpha}\cap U_{\beta})\otimes R\to \mathscr{A}^{0,0}_X(U_{\alpha}\cap U_{\beta})\otimes R$. Since $\{exp(t_{\alpha\beta})\}$ satisfies cocycle conditions, this defines a sheaf locally isomorphic to $\mathscr{A}^{0,0}_X|_{U_{\alpha}}\otimes R$ on $U_{\alpha}$, which is the sheaf $\mathscr{A}^{0,0}_{\mathcal{X}}$. We will denote its section on $U$ by $\mathscr{A}^{0,0}_\mathcal{X}(U):=\Gamma(U,\mathscr{A}^{0,0}_\mathcal{X})$. We will use the same $\varphi_{\alpha}$ for local trivialization of $\mathscr{A}^{0,0}_{\mathcal{X}} ($i.e $\varphi_{\alpha}:\mathscr{A}^{0,0}_\mathcal{X}(U_{\alpha})\to \mathscr{A}_X^{0,0}(U_{\alpha})\otimes R)$. We would like to construct an explicit morphism of sheaves $\mathscr{A}^{0,0}_{\mathcal{X}}\to \mathscr{A}^{0,0}_X\otimes R$.

\end{remark}
\begin{proof}[Proof of Proposition $\ref{2prot}$]

We will prove by induction on the exponent of the maximal ideal $\mathfrak{m}$ of $R$. First we show that the proposition holds for $(R,\mathfrak{m})$ with $\mathfrak{m}^2=0$. Let $\{U_{\alpha}\}$ be an locally trivial open covering of $\mathcal{X}$. Then locally we have $\mathscr{A}^{0,0}_{\mathcal{X}}(U_{\alpha})\cong \mathscr{A}^{0,0}_X(U_{\alpha})\otimes_\mathbb{C} R$. There exist $exp(t_{\alpha\beta}):\mathscr{A}_X^{0,0}(U_{\beta}\cap U_{\alpha})\otimes_\mathbb{C} R\to \mathscr{A}_X^{0,0}(U_{\alpha}\cap U_{\beta})\otimes_\mathbb{C} R$ and we have $exp(t_{\alpha\gamma})=exp(t_{\alpha\beta})\circ exp(t_{\beta\gamma})$ which is same to $1+t_{\alpha\gamma}=(1+t_{\alpha\beta})\circ (1+t_{\beta\gamma})=1+t_{\alpha\beta}+t_{\beta\gamma}$ on $U_{\alpha}\cap U_{\beta}\cap U_{\gamma}$. We also note that $t_{\alpha\alpha}=0$ and $t_{\alpha\beta}=-t_{\beta\alpha}$. We will show that $ \mathscr{A}^{0,0}_X\otimes_\mathbb{C} R\cong \mathscr{A}^{0,0}_{\mathcal{X}}$ by defining a map which is compatible with $exp(t_{\alpha\beta})$. Let $\{\rho_\alpha\}$ be a partition of unity subordinate to the open covering $\{U_{\alpha}\}$. Then for each $\alpha$, we set $s_\alpha:=\sum_\gamma \rho_{\gamma}t_{\gamma\alpha}\in \Gamma(U_{\alpha},\mathscr{A}^{0,0}(T_X))\otimes \mathfrak{m}$ $(U_\gamma\cap U_\alpha\ne \emptyset)$. Then we have 
\begin{align*}
exp(s_{\alpha})\circ exp(t_{\alpha\beta})&=1+t_{\alpha\beta}+\sum_\gamma \rho_\gamma t_{\gamma\alpha}=1+\sum_\gamma \rho_\gamma t_{\alpha\beta}+\sum_\gamma \rho_\gamma t_{\gamma\alpha}\\
&=1+\sum_\gamma \rho_\gamma t_{\gamma\beta}=exp(s_\beta)
\end{align*}
So we have the following commutative diagram
\begin{center}
\[\begindc{\commdiag}[100]
\obj(1,2)[aa]{$\mathscr{A}_X^{0,0}(U_{\alpha}\cap U_{\beta})\otimes_\mathbb{C} R$}
\obj(0,1)[bb]{$\mathscr{A}_X^{0,0}(U_{\beta}\cap U_{\alpha})\otimes_\mathbb{C} R$}
\obj(2,1)[cc]{$\mathscr{A}_X^{0,0}(U_{\alpha}\cap U_{\beta})\otimes_\mathbb{C} R$}
\mor{aa}{bb}{$exp(s_{\beta})$}[\atright,\solidarrow]
\mor{aa}{cc}{$exp(s_{\alpha})$}
\mor{bb}{cc}{$exp(t_{\alpha\beta})=1+t_{\alpha\beta} $}
\enddc\]
\end{center}
This compatibly shows that $\mathscr{A}^{0,0}_{\mathcal{X}}\cong \mathscr{A}^{0,0}_X\otimes_\mathbb{C} R$. So we proved the proposition for $R$ with the exponent $1$. 

Now we assume that the proposition holds for $R$ with exponent up to $n-1$. Let $(R,\mathfrak{m})$ be an artinian local $\mathbb{C}$-algebra with the residue $\mathbb{C}$ and exponent $n$ (i.e. $\mathfrak{m}^{n+1}$), and $\mathcal{X}$ be an infinitesimal deformation of $X$ over $R$. 
Then $(R/\mathfrak{m}^n,\mathfrak{m}/\mathfrak{m}^n)$ is a local artinian $\mathbb{C}$-algebra with exponent $n-1$.  Let $\{U_{\alpha}\}$ be a locally trivializing open covering of $\mathcal{X}$. There exist $exp(t_{\alpha\beta}):\mathscr{A}_X^{0,0}(U_{\beta}\cap U_{\alpha})\otimes_\mathbb{C} R_1\to \mathscr{A}_X^{0,0}(U_{\alpha}\cap U_{\beta})\otimes_\mathbb{C} R$ and we have $exp(t_{\alpha\gamma})=exp(t_{\alpha\beta})\circ exp(t_{\beta\gamma})$. Then by the induction hypothesis, we have $\mathscr{A}^{0,0}_{\mathcal{X}}\otimes_{R} R/\mathfrak{m}^n \cong \mathscr{A}^{0,0}_X \otimes_{\mathbb{C}} R/\mathfrak{m}^n$ and so there exist $\bar{s}_{\alpha}\in \Gamma(U_{\alpha}, \mathscr{A}^{0,0}(T_X))\otimes \mathfrak{m}/\mathfrak{m}^n$ such that $exp(\bar{s}_\alpha)=exp(\bar{t}_{\alpha\beta})\circ exp(\bar{s}_{\beta})$, where $\{\bar{t}_{\alpha\beta}\}$ is the image of the natural map $C^1(\mathcal{U},\Theta_X)\otimes \mathfrak{m}\to C^1(\mathcal{U},\Theta_X)\otimes R/\mathfrak{m}^n$.
 Let $s_\alpha$ be a lifting of $\bar{s}_\alpha$ by the natural map $\Gamma(U_{\alpha},\mathscr{A}^{0,0}(T_X))\otimes \mathfrak{m}\to \Gamma(U_{\alpha},\mathscr{A}^{0,0}(T_X))\otimes \mathfrak{m}/\mathfrak{m}^n$. Then $exp(t_{\alpha\beta})\circ exp(s_\beta)-exp(s_\alpha)$ is $0$ modulo $\mathfrak{m}^n$ and is a derivation. Indeed, set $A=exp(t_{\alpha\beta})\circ exp(s_\beta)$ and $B=exp(s_\alpha)$. Then $(A-B)(fg)=A(f)(A-B)(g)+B(g)(A-B)(f)=f(A-B)(g)+g(A-B)(f)$ since $A-B=0$ modulo $\mathfrak{m}^n$ and $\mathfrak{m}^{n+1}=0$. Set $ r_{\alpha\beta}:=exp(t_{\alpha\beta})\circ exp(s_\beta)-exp(s_\alpha)$. Since $exp(-t_{\alpha\beta})=exp(t_{\beta\alpha})$, we have $exp(t_{\beta\alpha})\circ r_{\alpha\beta}=exp(s_\beta)-exp(t_{n\beta\alpha})\circ exp(s_\alpha)$. So we get $exp(t_{\beta\alpha})\circ r_{\alpha\beta}=-r_{\beta\alpha}$. Since $r_{\alpha\beta}$ is $0$ modulo $\mathfrak{m}^n$, we have $r_{\alpha\beta}=-r_{\beta\alpha}$. Next we claim that $r_{\alpha\beta}=r_{\gamma\beta}+r_{\alpha\gamma}$. Indeed, since $r_{\alpha\beta}=exp(t_{\alpha\beta})\circ exp(s_{\beta})-exp(s_{\alpha})$ and $r_{\alpha\beta}=0$ modulo $\mathfrak{m}^n$, by applying $exp(t_{\gamma\alpha})$ on both sides,
\begin{align*}
 r_{\alpha\beta}&=exp(t_{\gamma\alpha})\circ r_{\alpha\beta}= exp(t_{\gamma\alpha})\circ exp(t_{\alpha\beta})\circ exp(s_{\beta})-exp(t_{\gamma\alpha})\circ exp(s_{\alpha})\\
 &=exp(t_{\gamma\beta})\circ exp(s_{\beta})-r_{\gamma\alpha}-exp(s_{\gamma})=r_{\gamma\beta}-r_{\gamma\alpha}=r_{\gamma\beta}+r_{\alpha\gamma}
\end{align*}

  Let $\{\rho_\alpha\}$ be a partition of unity subordinate to the open covering $\{U_{\alpha}\}$. Then for each $\alpha$, we set $e_\alpha:=\sum_\gamma \rho_{\gamma}r_{\alpha\gamma}\in \Gamma(U_{\alpha},\mathscr{A}^{0,0}(T_X))\otimes \mathfrak{m}^n$. Then we have
\begin{align*}
exp(t_{\alpha\beta})\circ exp(s_{\beta}+e_{\beta})&=exp(t_{\alpha\beta})\circ exp(s_\beta)+e_\beta=exp(s_\alpha)+r_{\alpha\beta}+e_\beta\\
&=exp(s_{\alpha}) +\sum_\gamma \rho_{\gamma} r_{\alpha\beta}+\sum_{\gamma} \rho_{\gamma} r_{\beta\gamma}=exp(s_{\alpha})+\sum_{\gamma}\rho_{\gamma}(r_{\alpha\beta}+r_{\beta\gamma})\\
&=exp(s_{\alpha})+\sum_{\gamma}\rho_{\gamma}r_{\alpha\gamma}=exp(s_{\alpha})+e_\alpha=exp(s_{\alpha}+e_\alpha)
\end{align*}
So we have the following commutative diagram
\begin{center}
\[\begindc{\commdiag}[100]
\obj(1,2)[aa]{$\mathscr{A}_X^{0,0}(U_{\alpha}\cap U_{\beta})\otimes_\mathbb{C} R$}
\obj(0,1)[bb]{$\mathscr{A}_X^{0,0}(U_{\beta}\cap U_{\alpha})\otimes_\mathbb{C} R$}
\obj(2,1)[cc]{$\mathscr{A}_X^{0,0}(U_{\alpha}\cap U_{\beta})\otimes_\mathbb{C} R$}
\mor{aa}{bb}{$exp(s_{\beta}+e_\beta)$}[\atright,\solidarrow]
\mor{aa}{cc}{$exp(s_{\alpha}+e_\alpha)$}
\mor{bb}{cc}{$exp(t_{\alpha\beta})$}
\enddc\]
\end{center}
This compatibly shows that $\mathscr{A}^{0,0}_{\mathcal{X}}\cong \mathscr{A}^{0,0}_X\otimes_\mathbb{C} R$. This completes Proposition $\ref{2prot}$.

\end{proof}

As in the proof of Proposition \ref{2prot}, there is a global $C^{\infty}$ trivialization $C:\mathscr{A}^{0,0}_X\otimes R\cong \mathscr{A}^{0,0}_{\mathcal{X}}$. This induces $C:\mathscr{A}^{0,0}(U_{\alpha})\otimes R \to \mathscr{A}^{0,0}_{\mathcal{X}}(U_{\alpha})$ such that $\varphi_{\alpha} \circ C$ is of the form $exp(s_{\alpha})$, where $s_{\alpha}\in \Gamma(U_{\alpha},\mathscr{A}^{0,0}(T_X))\otimes \mathfrak{m}$. Here $\varphi_{\alpha}:\mathscr{A}^{0,0}_\mathcal{X}(U_{\alpha})\cong \mathscr{A}^{0,0}_X(U_{\alpha}) \otimes R $ is a local trivialization of $\mathscr{A}^{0,0}_{\mathcal{X}}(U_{\alpha})$. (See Remark \ref{2re})

So we have 
\begin{align*}
exp(s_{\alpha})&=\varphi_{\alpha}\circ C:\mathscr{A}^{0,0}_X(U_{\alpha})\otimes R\to \mathscr{A}^{0,0}_X(U_{\alpha})\otimes R\\
exp(-s_{\beta})&=C^{-1}\circ \varphi_{\beta}^{-1}
\end{align*}
So we have $exp(s_{\alpha})exp(-s_{\beta})=\varphi_{\alpha} \circ C \circ C^{-1}\circ \varphi_{\beta}^{-1}=exp(t_{\alpha\beta})$.

So we have the following commutative diagram

\begin{center}
\[\begindc{\commdiag}[140]
\obj(0,2)[aa]{$\mathscr{A}^{0,0}_X(U_{\beta}\cap U_{\alpha})\otimes R$}
\obj(2,2)[bb]{$\mathscr{A}^{0,0}_X(U_{\alpha}\cap U_{\beta})\otimes R$}
\obj(1,1)[cc]{$\mathscr{A}^{0,0}_\mathcal{X}(U_{\alpha}\cap U_{\beta})$}
\obj(1,0)[dd]{$\mathscr{A}^{0,0}_X(U_{\alpha}\cap U_{\beta})\otimes R$}
\mor{aa}{bb}{$exp(t_{\alpha\beta})$}
\mor{cc}{aa}{$\varphi_{\beta}$}
\mor{cc}{bb}{$\varphi_{\alpha}$}[\atright,\solidarrow]
\mor{dd}{cc}{$C$}
\mor{dd}{aa}{$exp(s_{\beta})$}
\mor{dd}{bb}{$exp(s_{\alpha})$}[\atright,\solidarrow]
\enddc\]
\end{center}

\subsection{Integrability Condition}\

We set 
\begin{center}
$\phi_{\alpha}:=exp(-s_{\alpha})\bar{\partial}(exp(s_{\alpha}))=[D(ad(-s_{\alpha}))(\bar{\partial}s_{\alpha}),-]$ as an operator acting on $A$
\end{center}
where $D$ is the function
\begin{align*}
D(x)=\frac{exp(x)-1}{x}=\sum_{i=0}^{\infty} \frac{x^i}{(i+1)!}
\end{align*}
We have 
\begin{align*}
D(ad(s_{\alpha}))(\bar{\partial}s_{\alpha})=\bar{\partial}s_{\alpha}-\frac{[s_{\alpha},\bar{\partial} s_{\alpha}]}{2!}+\frac{[s_{\alpha},[s_{\alpha},\bar{\partial}s_{\alpha}]]}{3!}+\cdots \in \Gamma(U_{\alpha},\mathscr{A}^{0,1}(T))\otimes \mathfrak{m}
\end{align*}
Note that we have
\begin{align*}
0=\bar{\partial} exp(t_{\alpha\beta})=\bar{\partial}(exp(s_{\alpha})exp(-s_{\beta}))=\bar{\partial}(exp(s_{\alpha}))exp(-s_{\beta})+exp(s_{\alpha})\bar{\partial}(exp(-s_{\beta}))
\end{align*}
So we have
\begin{align*}
exp(-s_{\alpha})\bar{\partial}(exp(s_{\alpha}))=-\bar{\partial}(exp(-s_{\beta}))exp(s_{\beta})
\end{align*}
Since $0=\bar{\partial}(exp(-s_{\beta})exp(s_{\beta}))=\bar{\partial}(exp(-s_{\beta}))exp(s_{\beta})+exp(-s_{\beta})\bar{\partial}(exp(s_{\beta}))$, we have
\begin{align*}
-\bar{\partial}(exp(-s_{\beta}))exp(s_{\beta})=exp(-s_{\beta})\bar{\partial}(exp(s_{\beta}))
\end{align*}
Hence
\begin{align*}
exp(-s_{\alpha})\bar{\partial}(exp(s_{\alpha}))= exp(-s_{\beta})\bar{\partial}(exp(s_{\beta}))
\end{align*}

So $D(ad(s_{\alpha}))(\bar{\partial}s_{\alpha})$ glue together to give a global section
\begin{align*}
\phi \in A^{0,1}(X,T)\otimes \mathfrak{m}
\end{align*}

Since
\begin{align*}
\bar{\partial}\phi_{\alpha}=\bar{\partial}exp(-s_{\alpha})\bar{\partial}exp(-s_{\alpha})=\bar{\partial}exp(-s_{\alpha})exp(s_{\alpha})exp(-s_{\alpha})\bar{\partial}exp(s_{\alpha})=-\phi_{\alpha}\circ \phi_{\alpha}=-\frac{1}{2}[\phi_{\alpha},\phi_{\alpha}]
\end{align*}
We have
\begin{align*}
\bar{\partial}\phi=-\frac{1}{2}[\phi,\phi]
\end{align*}

Now we consider the Poisson structures. We need the following lemma.

\begin{lemma}\label{2lemp}
We have $[exp(a) f,exp(a) g]=exp(a)[f,g]$, where $deg(a)=0$.
\end{lemma}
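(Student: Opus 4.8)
\textbf{Proof proposal for Lemma \ref{2lemp}.}

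The plan is to reduce the identity $[\exp(a)f,\exp(a)g]=\exp(a)[f,g]$ to the fact that $\exp(a)$, viewed as an operator in $L_A$, is a derivation-free \emph{algebra} automorphism of the Poisson bracket, which in turn follows from the single infinitesimal statement that $a=[a,-]$ is a derivation of the bracket $[-,-]$. Recall from the bracket calculus of this chapter that $a\in A^{0,0}(X,\wedge^2 T)\otimes\mathfrak{m}$ with $\deg(a)=0$ acts on $A$ as $[a,-]$, and that the graded Jacobi identity gives $a[u,v]=[au,v]+[u,av]$ for all $u,v$ (this is exactly Proposition \ref{2pk} / the defining property of the embedding $A\hookrightarrow L_A$, specialized to degree-zero $a$, where all sign factors are $+1$). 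So the first step is to record that $a$ is an (ordinary, ungraded because $\deg a=0$) derivation of the Poisson bracket on the relevant graded pieces.

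Next I would show by induction on $k$ that the iterated operator $a^{\circ k}$ satisfies the Leibniz-type expansion
\begin{align*}
a^{\circ k}[f,g]=\sum_{i=0}^{k}\binom{k}{i}[a^{\circ i}f,\,a^{\circ(k-i)}g].
\end{align*}
The base case $k=0$ is trivial, and the inductive step applies the derivation property once more to each term and reassembles the binomial coefficients via Pascal's rule. Summing $\tfrac{1}{k!}$ times this identity over $k\geq 0$ and using $\binom{k}{i}/k!=\tfrac{1}{i!}\tfrac{1}{(k-i)!}$ together with the Cauchy product formula yields
\begin{align*}
\exp(a)[f,g]=\Bigl[\Bigl(\sum_{i\geq0}\tfrac{1}{i!}a^{\circ i}\Bigr)f,\ \Bigl(\sum_{j\geq0}\tfrac{1}{j!}a^{\circ j}\Bigr)g\Bigr]=[\exp(a)f,\exp(a)g].
\end{align*}
Here all sums are finite because $a$ has entries in the maximal ideal $\mathfrak{m}$ of the artinian ring $R$ and $\mathfrak{m}$ is nilpotent, so $a^{\circ k}=0$ for $k$ large; this also justifies the rearrangement of the double sum.

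The only genuine subtlety—and the step I would be most careful about—is bookkeeping of signs and the degree hypothesis: the identity is stated for $\deg(a)=0$, and it is precisely this hypothesis that makes all the Koszul sign factors $(-1)^{|a||b|}$ in Proposition \ref{2pk} and in the graded Leibniz rule equal to $1$, so that the ordinary (unsigned) binomial expansion above is valid. For $f,g$ of arbitrary degree the derivation identity $a[f,g]=[af,g]+[f,ag]$ still has no sign because $|a|=0$, so the argument goes through verbatim on each bidegree; I would simply remark this once rather than track it term by term. With that in hand the lemma is complete, and it is exactly the ingredient needed to transport the local Poisson structures $\Lambda'_\alpha$ through the $C^\infty$-trivializations $\exp(s_\alpha)$ in the construction of the integrability condition for $\Lambda(t)$ that follows.
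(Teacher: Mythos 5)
Your proposal is correct and follows essentially the same route as the paper: the paper also proves, by induction on $n$ using the Jacobi identity $[a,[f,g]]=[[a,f],g]+[f,[a,g]]$, the identity $\frac{1}{n!}(\operatorname{ad}a)^n[f,g]=\sum_{l+m=n}\frac{1}{l!\,m!}[(\operatorname{ad}a)^l f,(\operatorname{ad}a)^m g]$, which is exactly your binomial Leibniz expansion after multiplying by $n!$, and then sums the exponential series. Your additional remarks on the vanishing of Koszul signs for $\deg(a)=0$ and on finiteness of the sums via nilpotency of $\mathfrak{m}$ are consistent with, and slightly more explicit than, the paper's argument.
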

\begin{proof}
We claim that 
\begin{align*}
\sum_{l+m=n} \frac{1}{l!m!}[[\underbrace{a,[a,[\cdots[a}_l,f]\cdots],[\underbrace{a,[a,[\cdots[a}_m,g]\cdots]]=\frac{1}{n!}[\underbrace{a,[a,[\cdots[a}_n,[f,g]]\cdots]
\end{align*}
Then this implies the lemma. We will prove this by induction. 
First we note that $[[a,f],g]=[a,[f,g]]-[f,[a,g]]$. So we have
\begin{align*}
[a,[f,g]]=[[a,f],g]+[f,[a,g]]
\end{align*}
Hence the claim holds for $n=1$. Now let's assume that the claim holds for $n-1$. We will prove that it holds for $n$.

\begin{align*}
&\frac{1}{n!}[\underbrace{a,[a,[\cdots[a}_n,[f,g]]\cdots]=\frac{1}{n}[a,\frac{1}{(n-1)!}[\underbrace{a,[a,[\cdots[a}_{n-1},[f,g]]\cdots]]\\
&=\frac{1}{n}[a,\sum_{p+q=n-1} \frac{1}{p!q!}[[\underbrace{a,[a,[\cdots[a}_p,f]\cdots],[\underbrace{a,[a,[\cdots[a}_q,g]\cdots]](\text{by induction hypothesis})\\
&=\frac{1}{n}\sum_{p+q=n-1}\frac{1}{p!q!}[[\underbrace{a,[a,[\cdots[a}_{p+1},f]\cdots],[\underbrace{a,[a,[\cdots[a}_q,g]\cdots]]\\
\end{align*}

\begin{align*}
&+\frac{1}{n}\sum_{p+q=n-1}\frac{1}{p!q!}[[\underbrace{a,[a,[\cdots[a}_p,f]\cdots],[\underbrace{a,[a,[\cdots[a}_{q+1},g]\cdots]]\\
&=\frac{1}{n}\sum_{l+m=n,l\geq 1}\frac{l}{l!m!}[[\underbrace{a,[a,[\cdots[a}_{l},f]\cdots],[\underbrace{a,[a,[\cdots[a}_m,g]\cdots]](\text{by taking $p+1=l,q=m$})\\
&+\frac{1}{n}\sum_{l+m=n,m\geq 1}\frac{m}{l!m!}[[\underbrace{a,[a,[\cdots[a}_l,f]\cdots],[\underbrace{a,[a,[\cdots[a}_{m},g]\cdots]](\text{by taking $p=l,q+1=m$})\\
&=\frac{1}{n}\sum_{l+m=n,l,m\geq 1} \frac{l+m}{l!m!}[[\underbrace{a,[a,[\cdots[a}_l,f]\cdots],[\underbrace{a,[a,[\cdots[a}_{m},g]\cdots]]\\
&+\frac{1}{n}\frac{n}{n!}[[\underbrace{a,[a,[\cdots[a}_{n},f]\cdots],g]+\frac{1}{n}\frac{n}{n!}[f,[\underbrace{a,[a,[\cdots[a}_{n},g]\cdots]]\\
&=\sum_{l+m=n} \frac{1}{l!m!}[[\underbrace{a,[a,[\cdots[a}_l,f]\cdots],[\underbrace{a,[a,[\cdots[a}_m,g]\cdots]]
\end{align*}
\end{proof}

\begin{definition}
Let $\mathcal{X}\to Spec\,R$ be an infinitesimal deformation of $X$, where $R$ is generated by $<1,m_1,...,m_r>$. We define a sheaf $\mathscr{A}^{0,p}(\wedge^q T_{\mathcal{X}/R})$ which is locally isomorphic to $\mathscr{A}^{0,p}(\wedge^q T_X)|_{U_{\alpha}}\otimes R$ on $U_{\alpha}$ in the following way: we define this sheaf by gluing sheaves $\mathscr{A}^{0,p}(\wedge^q T_X)|_{U_{\alpha}}\otimes R$ locally defined on each $U_{\alpha}$. On $U_{\alpha}\cap U_{\beta}$, we have isomorphisms $exp(t_{\alpha\beta}):(\mathscr{A}^{0,p}(\wedge^q T_X)|_{U_{\beta}})|_{U_{\alpha}\cap U_{\beta}}\to (\mathscr{A}^{0,p}(\wedge^q T_X)|_{U_{\alpha}})|_{U_{\alpha}\cap U_{\beta}}$. Since $exp(t_{\alpha\beta})=\exp(t_{\alpha \gamma})exp(t_{\gamma\beta})$ on $U_{\alpha}\cap U_{\beta}\cap U_{\gamma}$, we can glue these sheaves. By Lemma \ref{2lemp}, we can define a bracket $[-,-]$ on $\oplus_{p\geq 1,q\geq 0}\mathscr{A}^{0,p}(\wedge^q T_{\mathcal{X}/R})$ locally induced from the bracket on $[-,-]$ on $\oplus_{p\geq 1,q\geq 0}\Gamma(U_{\alpha},\mathscr{A}^{0,p}(\wedge^q T_X))\otimes R$.

Now we define the dolbeault differential  $\bar{\partial}_\mathcal{X}:\mathscr{A}^{0,p}(\wedge^q T_{\mathcal{X}/{R}})\to \mathscr{A}^{0,p+1}(\wedge^q T_{\mathcal{X}/R})$ of morphism of sheaves. Locally for $f \in \Gamma(U_{\alpha},\mathscr{A}^{0,p}(\wedge^q T_X))\otimes R$, which is of the form $f=f_0+m_1f_1+\cdots +m_rf_r$, where $f_i\in \Gamma(U_{\alpha},\mathscr{A}^{0,p}(\wedge^q T_X))$, we define $\bar{\partial}_\mathcal{X} f=\bar{\partial}f_0+m_1\bar{\partial}f_1+\cdots + m_r\bar{\partial}f_r$. We show that this is glued together. It is equivalent to show that $exp(t_{\alpha\beta})\circ\bar{\partial}_\mathcal{X}f=\bar{\partial}_\mathcal{X} \circ exp(t_{\alpha\beta})f$. It is enough to show that $t_{\alpha\beta}\circ \bar{\partial}_\mathcal{X}=\bar{\partial}_\mathcal{X}\circ t_{\alpha\beta}$, which is equivalent to $\bar{\partial}_\mathcal{X} t_{\alpha\beta}=0$ $($more precisely, let $t_{\alpha\beta}=t_0+m_1t_1+\cdots +m_rt_r$, then above condition is equivlent to $ \bar{\partial} t_i=0$, which is equivalent to $\{t_{\alpha\beta}\}\in C^1(\mathcal{U},\Theta_X)\otimes \mathfrak{m})$. So $\bar{\partial}_\mathcal{X}:\mathscr{A}^{0,p}(\wedge^q T_{\mathcal{X}/R})\to \mathscr{A}^{0,p+1}(\wedge^qT_{\mathcal{X}/R})$ is well defined with $\bar{\partial}_\mathcal{X}\circ \bar{\partial}_\mathcal{X}=0$. We will denote $\bar{\partial}_{\mathcal{X}}$ simply by $\bar{\partial}$.
\end{definition}

\begin{remark}\label{2remark}
Let $\mathcal{X}\to Spec\,R$ be an infinitesimal Poisson deformation of $(X,\Lambda_0)$. Let $\{U_{\alpha}\}$ be a locally trivial open covering of $\mathcal{X}$. Then $\mathcal{O}_{\mathcal{X}}(U_{\alpha})\cong \mathcal{O}_X(U_{\alpha})\otimes R$. The Poisson structure is encoded in $\Lambda_{\alpha}' \in \Gamma(U_{\alpha}, \mathscr{A}^{0,0}(\wedge^2 T))\otimes R$ with $[\Lambda_{\alpha}',\Lambda_{\alpha}']=0$ and $\bar{\partial}\Lambda_{\alpha}'=0$. On $U_{\alpha}\cap U_{\beta}$, $exp(t_{\alpha\beta})[[\Lambda_{\beta}',f],g]=[[\Lambda_{\alpha}',exp(t_{\alpha\beta})f],exp(t_{\alpha\beta})g]$. Since $exp(t_{\alpha\beta})[[\Lambda_{\beta}',f],g]=[[exp(t_{\alpha\beta})\Lambda_{\beta}',exp(t_{\alpha\beta}) f], exp(t_{\alpha\beta})g]$ by Lemma $\ref{2lemp}$,  we have $exp(t_{\alpha\beta})\Lambda_{\beta}'=\Lambda_{\alpha}'$. Hence the Poisson $R$-structure of $\mathcal{X}$ can be identified with the existence of a global section $\Lambda'$ of the sheaf $\mathscr{A}^{0,0}(\wedge^2 T_{\mathcal{X}/R})$ with $[\Lambda',\Lambda']=0$ and $\bar{\partial} \Lambda'=0$ such that $\Lambda'$ induces $\Lambda_0$.
\end{remark}

Going back to our discussion, since $exp(-s_{\alpha})=C^{-1}\circ (\varphi_{\alpha})^{-1}$ induces an isomorphism 
\begin{align*}
\Gamma(U_{\alpha},\mathscr{A}^{0,0}(\wedge^2T_X))\otimes R\cong \Gamma(U_{\alpha},\mathscr{A}^{0,0}(\wedge^2 T_X))\otimes R
\end{align*}
which is compatible with each $\alpha$, 
\begin{center}
\[\begindc{\commdiag}[130]
\obj(1,2)[aa]{$\Gamma(U_{\alpha}\cap U_{\beta},\mathscr{A}^{0,0}(\wedge^2 T_X))\otimes_\mathbb{C} R$}
\obj(0,1)[bb]{$\Gamma(U_{\alpha}\cap U_{\beta},\mathscr{A}^{0,0}(\wedge^2 T_X))\otimes_\mathbb{C} R$}
\obj(2,1)[cc]{$\Gamma(U_{\alpha}\cap U_{\beta},\mathscr{A}^{0,0}(\wedge^2 T_X))\otimes_\mathbb{C} R$}
\mor{bb}{aa}{$exp(-s_{\beta})$}[\atleft,\solidarrow]
\mor{cc}{aa}{$exp(-s_{\alpha})$}[\atright,\solidarrow]
\mor{bb}{cc}{$exp(t_{\alpha\beta})$}
\enddc\]
\end{center}
we have an isomorphism of sheaves $\mathscr{A}^{0,0}(\wedge^2 T_{\mathcal{X}/R})\cong \mathscr{A}^{0,0}(\wedge^2 T)\otimes R$. Since $\mathcal{O}_{\mathcal{X}}(U_{\alpha})\cong \mathcal{O}_X(U_{\alpha})\otimes R$ locally, the Poisson structure is locally encoded in $\Lambda'_{\alpha} \in A^{0,0}(\wedge^2 T)(U_{\alpha})\otimes R$  with $[\Lambda'_{\alpha},\Lambda'_{\alpha}]=0, \bar{\partial} \Lambda'_{\alpha}=0$. We define $\Lambda_{\alpha}'':=exp(-s_{\alpha})\Lambda'_{\alpha}$. Since $\Lambda_{\alpha}'=exp(s_{\alpha})exp(-s_{\beta})\Lambda_{\beta}'$, we have $\Lambda_{\alpha}''=exp(-s_{\alpha})\Lambda_{\alpha}'=exp(-s_{\beta})\Lambda_{\beta}'=\Lambda_{\beta}''$. So $\Lambda_{\alpha}''$ glue together to make a global section
\begin{align*}
\Lambda''\in A^{0,0}(X,\wedge^2 T)\otimes R
\end{align*}

By Lemma $\ref{2lemp}$, $[\Lambda_{\alpha}'',\Lambda_{\alpha}'']=[exp(-s_{\alpha})\Lambda_{\alpha}',exp(-s_{\alpha})\Lambda_{\alpha}']=exp(-s_{\alpha})[\Lambda_{\alpha}',\Lambda_{\alpha}']=0$.

\begin{align*}
&\bar{\partial}(\Lambda_{\alpha}'')+[D(ad(-s_{\alpha}))(\bar{\partial}s_{\alpha}),\Lambda_{\alpha}'']\\
&=\bar{\partial}(exp(-s_{\alpha})\Lambda_{\alpha}')+exp(-s_{\alpha})\circ \bar{\partial}(exp(s_{\alpha}))exp(-s_{\alpha})\Lambda_{\alpha}'\\
&=\bar{\partial}(exp(-s_{\alpha})\Lambda_{\alpha}')+(exp(-s_{\alpha})\circ \bar{\partial}\circ exp(s_{\alpha})-\bar{\partial})exp(-s_{\alpha})\Lambda_{\alpha}'\\
&=\bar{\partial}(exp(-s_{\alpha})\Lambda_{\alpha}')+exp(-s_{\alpha})\circ \bar{\partial}\Lambda_{\alpha}'-\bar{\partial}(exp(-s_{\alpha})\Lambda_{\alpha}')=0
\end{align*}
since $\bar{\partial}\Lambda_{\alpha}'=0$.

In conclusion, we have

\begin{enumerate}
\item $\bar{\partial}\phi+\frac{1}{2}[\phi,\phi]=0$
\item $[\Lambda'',\Lambda'']=0$
\item $\bar{\partial} \Lambda''+[\phi,\Lambda'']=0$
\end{enumerate}

By taking $\Lambda =\Lambda''-\Lambda_0\in A^{0,0}(X,\wedge^2 T)\otimes \mathfrak{m}$, the above equations are equivalent to 
\begin{align*}
L(\phi+\Lambda)+\frac{1}{2}[\phi+\Lambda,\phi+\Lambda]=0
\end{align*}
where $L=\bar{\partial} +[\Lambda_0,-]$, which is a solution of Maurer Cartan equation of the differential graded Lie algebra $\mathfrak{g}=(\bigoplus_{p+q-1=i,p\geq 0,q \geq 1} A^{0,p}(X,\wedge^q T)\otimes \mathfrak{m},L=\bar{\partial} +[\Lambda_0,-],[-,-])$

Now set $\alpha=\phi+\Lambda$. Then we have $L \alpha =-\frac{1}{2}[\alpha,\alpha]$ and $\alpha \wedge L\alpha=-L\alpha\wedge \alpha$.(note that $deg(\alpha)=1$.). We denote $\bar{\alpha}$ be the corresponding element in $\mathfrak{g}[1]$. Now we assume that $\mathfrak{m}^{n+1}=0$. We claim that
\begin{equation}\label{2eq}
\epsilon(\alpha):=(\bar{\alpha},\frac{1}{2}\bar{\alpha}\odot \bar{\alpha},\cdots, \frac{1}{n!}\underbrace{\bar{\alpha}\odot \cdots \odot \bar{\alpha}}_n)\in \bigoplus_{i=1}^n sym^i g_1\otimes \mathfrak{m}\subset J_n(\mathfrak{g})\otimes \mathfrak{m}
\end{equation}
is a hypercocycle in $S_n\subset \overline{S(\mathfrak{g}[1])}$, which corresponds to via $dec$
\begin{align*}
(\alpha,\cdots, (-1)^{\frac{i(i-1)}{2}}\frac{1}{2!}\underbrace{\alpha \wedge \cdots \wedge \alpha}_i,\cdots, (-1)^{\frac{n(n-1)}{2}}\frac{1}{n!}\underbrace{\alpha\wedge \cdots \wedge \alpha}_n)
\end{align*}
in $\overline{\bigwedge \mathfrak{g}}$.
For the claim (\ref{2eq}), we have to show that $(-1)^i L((-1)^{\frac{(i-1)i}{2}}\frac{1}{i!}\underbrace{\alpha \wedge \cdots \wedge \alpha}_i)+(-1)^{\frac{i(i+1)}{2}}\frac{1}{(i+1)!}\binom{i+1}{2}[\alpha,\alpha]\wedge \underbrace{\alpha\wedge \cdots \wedge \alpha}_{i-1}=0$. In other words, $L(\frac{1}{i!}\underbrace{\alpha \wedge \cdots \wedge \alpha}_i)+\frac{1}{(i+1)!}\binom{i+1}{2}[\alpha,\alpha]\wedge \underbrace{\alpha\wedge \cdots \wedge \alpha}_{i-1}=0$. Indeed,

\begin{align*}
&L(\frac{1}{i!}\underbrace{\alpha \wedge \cdots \wedge \alpha}_i)
=\frac{1}{i!}(L \alpha\wedge \alpha \wedge \cdots \wedge \alpha-\alpha\wedge L\alpha\wedge \alpha\wedge \cdots \wedge \alpha+\cdots +(-1)^{i-1}\alpha\wedge\cdots \wedge \alpha\wedge L \alpha)\\
&=\frac{1}{(i-1)!} L \alpha\wedge \alpha \wedge \cdots \wedge \alpha\\
&\frac{1}{(i+1)!}\frac{(i+1)i}{2}[\alpha,\alpha] \wedge \alpha \wedge \cdots \wedge \alpha=\frac{1}{(i-1)!}\frac{1}{2}[\alpha,\alpha]\wedge \alpha \wedge \cdots \wedge \alpha
\end{align*}
So we get the claim $(\ref{2eq})$ by the condition $L\alpha+\frac{1}{2}[\alpha,\alpha]=0$. So $\epsilon(\alpha)$ is a hypercocycle in $S_n\subset \overline{S(\mathfrak{g}[1])}$. So $[\epsilon(\alpha)]\in \mathbb{H}^0(J_n(\mathfrak{g}))\otimes \mathfrak{m}.$

\subsection{$[\epsilon(\alpha)]\in \mathbb{H}^0(J_n(\mathfrak{g}))$ as a canonical element associated with $\mathcal{X}$}\

For given an infinitesimal Poisson deformation $\mathcal{X}$ of $(\mathcal{X},\Lambda_0)$ over $(R,\mathfrak{m})$ with $\mathfrak{m}^{n+1}=0$ , we could find a cohomology class $[\epsilon(\alpha)]\in \mathbb{H}^0(J_n(\mathfrak{g}))\otimes \mathfrak{m}$, where $\alpha=\phi+\Lambda\in A^{0,1}(X,T)\oplus A^{0,0}(X,\wedge^2 T)$ for given an locally trivial open covering $\{U_{\alpha}\}$, $\varphi_{\alpha}$ and $C^{\infty}$-trivialization. In this subsection, we show that the cohomology class 
\begin{align*}
[\epsilon(\alpha)]\in \mathbb{H}^0(J_n(\mathfrak{g}))\otimes \mathfrak{m}
\end{align*}
is independent of choices of $C^{\infty}$-trivialization and local trivialization for fixed locally trivial open covering $\{U_{\alpha}\}$. Then for given two locally trivial open covering, by choosing refinement of these two open coverings, we conclude that the cohomology class $[\epsilon(\alpha)]$ is canonically associated to the infinitesimal Poisson deformation $\mathcal{X}$ of $(X,\Lambda_0)$.

\subsubsection{Independence of choices of local trivialization}\

For given an locally trivial open covering $\mathcal{U}=\{U_{\alpha}\}$, let's assume that we have two local trivialization $\varphi_{\alpha}:\mathcal{O}_\mathcal{X}(U_{\alpha})\to \mathcal{O}_X(U_{\alpha})\otimes R$ and $\varphi_{\alpha}':\mathcal{O}_n(U_{\alpha})\to \mathcal{O}_X(U_{\alpha})\otimes R $. We will show that $v=\phi+\Lambda \in A^{0,1}(X,T)\oplus A^{0,0}(X,\wedge^2 T)$ associated with $\varphi_{\alpha}$ and $C^{\infty}$-trivialization $C$ is same to $w=\psi+\Pi \in A^{0,1}(X,T)\oplus A^{0,0}(X,\wedge^2 T)$ associated with $\varphi_{\alpha}^{'}$ and same $C$. We have the following commutative diagram

\begin{center}
\[\begindc{\commdiag}[100]
\obj(0,1)[aa]{$\mathscr{A}^{0,0}_X(U_{\alpha})\otimes R$}
\obj(1,1)[bb]{$\mathscr{A}^{0,0}_\mathcal{X}(U_{\alpha})$}
\obj(2,1)[cc]{$\mathscr{A}_X^{0,0}(U_{\alpha}) \otimes R$}
\obj(2,0)[dd]{$\mathscr{A}^{0,0}_X(U_{\alpha})\otimes R$}
\mor{aa}{bb}{$C$}
\mor{bb}{cc}{$\varphi_{\alpha}$}
\mor{bb}{dd}{$\varphi_{\alpha}^{'}$}[\atright,\solidarrow]
\mor{cc}{dd}{$exp(t_{\alpha})$}
\enddc\]
\end{center}
for some $\{t_{\alpha}\}\in C^1(\mathcal{U},\Theta_X)\otimes \mathfrak{m}$ and $exp(s_{\alpha})=\varphi_{\alpha}\circ C$, $exp(s_{\alpha}')=\varphi_{\alpha}^{'}\circ C$. Then we have, by our construction as above, 
\begin{align*}
\psi_{\alpha}&=exp(-s_{\alpha}')\bar{\partial}(exp(s_{\alpha}'))=exp(-s_{\alpha})\circ exp(-t_{\alpha})\bar{\partial}(exp(t_{\alpha})\circ exp(s_{\alpha}))\\
                    &=exp(-s_{\alpha})\circ exp(-t_{\alpha})\circ \bar{\partial}(exp(t_{\alpha}))\circ exp(s_{\alpha})+exp(-s_{\alpha})\circ exp(-t_{\alpha})\circ exp(t_{\alpha})\circ \bar{\partial}exp(s_{\alpha})\\
                    &=exp(-s_{\alpha})\bar{\partial} (exp(s_{\alpha}))=\phi_{\alpha}
\end{align*}
since $\bar{\partial} t_{\alpha}=0$. On the other hand,
\begin{align*}
\Lambda_0+\Pi_{\alpha}=exp(-s_{\alpha}')\omega_{\alpha}'=exp(-s_{\alpha})\circ exp(t_{\alpha})\omega'_{\alpha}=exp(-s_{\alpha})\Lambda_{\alpha}'=\Lambda_0+\Lambda_{\alpha}.
\end{align*}
Hence we have $\Pi=\Lambda$. So we have  $v=w$. So $[\epsilon(v)]=[\epsilon(w)]$.
\subsubsection{Independence of choices of $C^{\infty}$-trivialization}\

Now, we show that the class $[\epsilon(v)]\in \mathbb{H}^0(J_n(\mathfrak{g}))\otimes \mathfrak{m}$ is independent of choice of $C^{\infty}$-trivialization. Let's assume that we have two $C^{\infty}$-trivialization $C:\mathscr{A}_X^{0,0}\otimes R\to \mathscr{A}_{\mathcal{X}}^{0,0}$ and $\tilde{C}:\mathscr{A}_X^{0,0}\otimes R\to \mathscr{A}_{\mathcal{X}}^{0,0}$. Then we have the following commutative diagram
\begin{center}
\[\begindc{\commdiag}[70]
\obj(0,1)[a]{$\mathscr{A}^{0,0}_X(X)\otimes R$}
\obj(2,1)[b]{$\mathscr{A}^{0,0}_\mathcal{X}(X)$}
\obj(1,0)[d]{$\mathscr{A}^{0,0}_X(X)\otimes R$}
\mor{a}{b}{$\tilde{C}$}
\mor{a}{d}{$exp(u)=C^{-1}\circ \tilde{C}$}[\atright,\solidarrow]
\mor{d}{b}{$C$}[\atright,\solidarrow]
\enddc\]
\end{center}
$C^{-1}\circ \tilde{C}$ is a homomorphism inducing identity up to $\mathfrak{m}$. As in the proof of Lemma \ref{2len}, we can show that there is an element $u\in A^{0,0}(T)\otimes \mathfrak{m}$ such that $C^{-1}\circ \tilde{C}=exp(u)$.

For given an locally trivial open covering $\{U_{\alpha}\}$ and local trivializations $\varphi_{\alpha}:\mathcal{O}_\mathcal{X}(U_{\alpha})\to \mathcal{O}_X(U_{\alpha})\otimes R$, let $\tilde{v}=\tilde{\phi}+\tilde{\Lambda}$ be the element of $(A^{0,1}(X,T)\oplus A^{0,0}(X,\wedge^2 T))\otimes \mathfrak{m}$ induced from $\tilde{C}$ and $v=\phi+\Lambda$ be the element of $(A^{0,1}(X,T)\oplus A^{0,0}(X,\wedge^2 T))\otimes \mathfrak{m}$ induced from $C$. We want to show that $[\epsilon(v_1)]=[\epsilon(v_0)]$.

Since $exp(s_{\alpha})=\varphi_{\alpha}\circ C$, we have $exp(s_{\alpha})\circ exp(u)=\varphi_{\alpha}\circ \tilde{C}$. Hence we have
\begin{align*}
\tilde{\phi}&=[exp(s_{\alpha})\circ exp(u)]^{-1}\bar{\partial}(exp(s_{\alpha})\circ exp(u))\\
          &=exp(-u)\circ exp(-s_{\alpha})\bar{\partial}exp(s_{\alpha})exp(u)+exp(-u)\circ exp(-s_{\alpha})exp(s_{\alpha})\bar{\partial}exp(u)\\
          &=exp(-u)\bar{\partial}exp(u)+exp(-u)\phi exp(u)\\
\Lambda_0+ \tilde{\Lambda}&=exp(-u)\circ exp(-s_{\alpha})(\Lambda_{\alpha}')\\
                                  &=exp(-u)(\Lambda_0+\Lambda)          
\end{align*}

We set for $t\in [0,1]\subset \mathbb{R}$,
\begin{align*}
\phi_t&=exp(-tu)\bar{\partial}exp(tu)+exp(-tu)\phi exp(tu)\\
\Lambda^t&=exp(-tu)(\Lambda_0+\Lambda)-\Lambda_0
\end{align*}
Note that $\phi_0=\phi, \Lambda^0=\Lambda$ and $\phi_1=\tilde{\phi},\Lambda^1=\tilde{\Lambda}$. We would like to show that $[\epsilon(v)]=[\epsilon(\phi+\Lambda)]=[\epsilon(\tilde{\phi}+\tilde{\Lambda})]=[\epsilon(\tilde{v})]$ by showing that $[\epsilon(v_t)]:=[\epsilon(\phi_t+\Lambda^t)]$ is constant independent of $t$. To this end, we note that since the operator $L=\bar{\partial}+[\Lambda_0,-]$ is elliptic, $\mathbb{H}^0(J_n(\mathfrak{g}))$ is finite dimensional vector space over $\mathbb{C}$. Since $\mathfrak{g}$ is a global section of a complex vector bundle over $X$, $\mathfrak{g}$ is endowed with a suitable metric, which induces a metric on $\mathbb{H}^0(J_n(\mathfrak{g}))$ by hodge theory which coincides with the standard Euclidean topology and likewise for $\mathbb{H}^0(J_n(\mathfrak{g}))\otimes \mathfrak{m}$. So differentiation of a function $\mathbb{R}\to \mathbb{H}^0(J_n(\mathfrak{g}))\otimes \mathfrak{m}$ makes sense if the derivative exists with respect to the metric on $\mathbb{H}^0(J_n(\mathfrak{g}))\otimes \mathfrak{m}$. Now we claim that $\frac{d}{dt}([\epsilon(v_t)])=\frac{d}{dt} [(\epsilon(\phi_t+\Lambda^t)]=[\frac{d}{dt} (\epsilon(\phi_t+\Lambda^t))]=0$, which implies that $[\epsilon(v_t)]$ is constant, and so $[\epsilon(\tilde{v})]=[\epsilon(v)]$. We note that

\begin{align*}
\phi_t':=\frac{d}{dt}(\phi_t)&=-uexp(-tu)\bar{\partial}(exp(tu))+exp(-tu)\bar{\partial}(exp(tu)u)\\
                                        &-uexp(-tu)\phi exp(tu)+exp(-tu)\phi exp(tu)u\\
                                        &=-uexp(-tu)\bar{\partial}(exp(tu))+exp(-tu)\bar{\partial}(exp(tu))u+\bar{\partial}u\\
                                        &-uexp(-tu)\phi exp(tu)+exp(-tu)\phi exp(tu)u\\
                                        &=-u\phi_t+\phi_t u+\bar{\partial} u=\bar{\partial}u-[u,\phi_t]\\
(\Lambda^{t})':=\frac{d}{dt}(\Lambda^t)&=-uexp(-tu)(\Lambda_0+\Lambda)=-u (\Lambda^t+\Lambda_0)\\
                                                             &=-[u,\Lambda^t]+[\Lambda_0,u]                                                                          
\end{align*}
So $v_t':=\frac{d}{dt} v_t=\frac{d}{dt}(\phi_t+\Lambda^t)=\phi_t'+(\Lambda^{t})'=L u-[u,\phi_t+\Lambda^t]=Lu-[u,v_t]$. And we have

\begin{align*}
\frac{d}{dt}(\epsilon(v_t))=(\bar{v}_t',\bar{v}_t'\odot \bar{v}_t,...,\frac{1}{(n-1)!}\bar{v}_t'\odot \underbrace{\bar{v}_t \odot \cdots \odot \bar{v}_t}_{n-1})\in \oplus_{i=1}^n sym^ig_1\otimes \mathfrak{m}
\end{align*}
which corresponds to
\begin{align*}
(v_t',\cdots, (-1)^{\frac{(i-1)i}{2}}\frac{1}{(i-1)!}v_t' \wedge \underbrace{v_t\wedge \cdots \wedge v_t}_{i-1}, \cdots, (-1)^{\frac{(n-1)n}{2}}\frac{1}{(n-1)!}v_t'\wedge\underbrace{v_t\wedge \cdots \wedge v_t}_{n-1})
\end{align*}
in $\overline{\bigwedge \mathfrak{g}}$.

 Note that $L v_t=-\frac{1}{2}[v_t,v_t].$\footnote{For given the locally trivial open covering $\{U_{\alpha}\}$ and local trivialization $\varphi_{\alpha}:\mathcal{O}_\mathcal{X}(U_{\alpha})\to \mathcal{O}_X(U_{\alpha})\otimes R$, let $C_t:\mathscr{A}^{0,0}_X \otimes R \to \mathscr{A}^{0,0}_{\mathcal{X}}$ be $C^{\infty}$-trivialization defined by $C_t=C\circ exp(tu)$. Then we have $\varphi_{\alpha}\circ C=exp(s_{\alpha})\circ exp(tu)$. Then we have
 \begin{align*}
 \phi_t&=(exp(s_{\alpha})\circ exp(tu))^{-1}\bar{\partial}(exp(s_{\alpha})\circ exp(tu))\\
 \Lambda_0+\Lambda_t&=(exp(s_{\alpha})\circ exp(tu))^{-1}\Lambda_{\alpha}'
 \end{align*}
 Hence by the construction, we have $Lv_t+\frac{1}{2}[v_t,v_t]=0$.} Let
\begin{align*}
(-\bar{u},-\bar{u}\odot \bar{\alpha}_t,...,\frac{1}{(n-1)!} (-\bar{u})\odot \underbrace{\bar{\alpha}_t\odot ...\odot\bar{\alpha}_t}_{n-1}) \in \bigoplus_{i=1}^{n-1} g_0\otimes sym^i g_1\otimes \mathfrak{m}
\end{align*}
which corresponds to
\begin{align*}
(-u,\cdots, (-1)^{\frac{(i-1)i}{2}} \frac{1}{i!}(-u)\wedge \underbrace{\alpha_t\wedge \cdots \wedge \alpha_t}_i, \cdots, (-1)^{\frac{(n-2)(n-1)}{2}} \frac{1}{(n-1)!} (-u) \wedge \underbrace{\alpha_t\wedge \cdots \wedge \alpha_t}_{n-1})
\end{align*}
in $\overline{\bigwedge \mathfrak{g}}$.

We claim that this is the coboundary of $\frac{d}{dt}(\epsilon(v_t))$. Indeed, (note that $deg(u)=0$ and $deg(v_t)=1$.)
\begin{align*}
&(-1)^{i+1}L((-1)^{\frac{(i-1)i}{2}}\frac{1}{i!}(-u)\wedge \underbrace{v_t\wedge \cdots \wedge v_t}_i)+(-1)^{\frac{i(i+1)}{2}}\frac{i+1}{(i+1)!}[-u,v_t]\wedge \underbrace{v_t\wedge \cdots \wedge v_t}_i
\\&+(-1)^{\frac{i(i+1)}{2}}\frac{1}{(i+1)!}\binom{i+1}{2}[v_t, v_t] \wedge (-u) \wedge \underbrace{v_t\wedge \cdots \wedge v_t}_{i-1}\\
&=(-1)^{\frac{i(i+1)}{2}}\frac{1}{i!}(Lu\wedge \underbrace{v_t\wedge \cdots \wedge v_t}_i +u\wedge Lv_t\wedge v_t\wedge \cdots\wedge v_t-u\wedge v_t\wedge Lv_t\wedge v_t\wedge \cdots \wedge v_t+\cdots\\
&+(-1)^{i-1}u\wedge \underbrace{v_t\wedge \cdots \wedge v_t}_{i-1}\wedge Lv_t)
-(-1)^{\frac{i(i+1)}{2}}\frac{1}{i!}[u,v_t]\wedge \underbrace{v_t\wedge \cdots \wedge v_t}_i\\
&+(-1)^{\frac{i(i+1)}{2}}\frac{1}{(i-1)!}\frac{1}{2}u\wedge [v_t,v_t]\wedge \underbrace{v_t\wedge \cdots \wedge v_t}_{i-1}\\
&=(-1)^{\frac{i(i+1)}{2}}(\frac{1}{i!}Lu\wedge \underbrace{v_t\wedge \cdots \wedge v_t}_i+\frac{1}{(i-1)!}u\wedge L v_t\wedge \underbrace{v_t \wedge \cdots \wedge v_t}_{i-1}-\frac{1}{i!}[u,v_t]\wedge \underbrace{v_t\wedge\cdots \wedge v_t}_i)\\
&+(-1)^{\frac{i(i+1)}{2}}\frac{1}{(i-1)!}\frac{1}{2}u\wedge [v_t,v_t]\wedge \underbrace{v_t\wedge \cdots \wedge v_t}_{i-1}\\
&=(-1)^{\frac{i(i+1)}{2}}\frac{1}{i!}(L u-[u,v_t])\wedge \underbrace{v_t\wedge \cdots \wedge v_t}_{i-1}\\
&=(-1)^{\frac{i(i+1)}{2}}\frac{1}{i!}v_t'\wedge \underbrace{v_t\wedge \cdots \wedge v_t}_i.
\end{align*}
So we have $[\frac{d}{dt}(\epsilon(v_t))]=0$.

 In conclusion, for given an infinitesimal Poisson deformation $\mathcal{X}$ of $(X,\Lambda_0)$, we can canonically associate the Poisson deformation $\mathcal{X}$ up to equivalence with the cohomology class $[\epsilon(v)]$. In the next chapter, we will show that under the assumption $HP^1(X,\Lambda_0)=0$, for given a choice $v,w\in (A^{0,1}(X,T)\oplus A^{0,0}(X,\wedge^2 T))\otimes \mathfrak{m}$ with $[\epsilon(v)]=[\epsilon(w)]$, the Poisson deformation associated with $v$ is equivalent to the Poisson deformation associated with $w$.
 
\chapter{Universal Poisson deformations}\label{chapter6}

\section{Isomorphism of two deformation functors $Def_\mathfrak{g}\cong PDef_{(X,\Lambda_0)}$}
In this section, we will show that two functors of Artin rings are isomorphic: namely the Poisson deformation functor $Def_{(X,\Lambda_0)}$ is isomorphic to the deformation functor $Def_{\mathfrak{g}}$ associated to the differential graded Lie algebra $\mathfrak{g}=(\bigoplus_{p+q-1=i,q\geq 1} A^{0,p}(X,\wedge^q T),L=\bar{\partial}+[\Lambda_0,-],[-,-])$. 
So this shows that deformations of a compact holomorphic Poisson manifold are controlled by the differential graded Lie algebra $\mathfrak{g}$. For deformation functors associated with a differential graded Lie algebra, we refer to \cite{Man04}.
\subsection{Deformation functors}

\begin{definition}
A functor of Artin rings is a covariant functor 
\begin{align*}
F:\bold{Art}\to \bold{Sets}
\end{align*}
such that $F(\mathbb{C})$ has only one element, where $\bold{Art}$ is the category of local artinian $\mathbb{C}$-algebra with residue $\mathbb{C}$, and $\bold{Sets}$ is the category of sets.
\end{definition}

\begin{definition}[functors associated with a DGLA $L$]
Let $L=(\bigoplus_{i\geq0}L_i,d,[-,-])$ be a differential graded Lie algebra and $(R,\mathfrak{m})\in \bold{Art}$. Let $MC(L)(R)$ be the set of all Maurer-Cartan elements of $L\otimes \mathfrak{m}$, i.e
\begin{align*}
MC_L(R)=\{x\in L_1\otimes \mathfrak{m} |dx+\frac{1}{2}[x,x]=0\}
\end{align*}
Then $MC_L:\bold{Art}\to \bold{Sets}$ is a functor.
\end{definition}

Let $g=L\otimes \mathfrak{m}$ be the induced differential graded Lie algebra from $L$. Then $g_0=L_0\otimes \mathfrak{m}$ is a nilpotent Lie algebra. Then the set $exp(g_0)=\{e^x|x\in g_0\}$ forms a group by the Campbell-Hausdorff formula. We have a group action of $exp(g_0)$ on $g_1=L_1\otimes \mathfrak{m}$ given by
\begin{align*}
e^a\cdot x:=x+\sum_{n\geq 1} \frac{(ad\,a)^{n-1}}{n!}([a,x]-da)\,\,\,\,\,\,\,\,\,\text{where}\,\,\,ad\,a:g_1\to g_1\,\,\,\text{defined by} \,\,b\mapsto [a,b]
\end{align*}
The action is known as the gauge action of $exp(g_0)$ on $g_1$ and the set of Maurer-Cartan elements is stable under the gauge action.

\begin{definition}
Let $x,y\in MC_L(R)$. We say that $x$ is gauge equivalent to $y$ if there exists $e^a\in exp(g_0)$ such that $e^a\cdot x=y$. Let $Def_L(R)$ be the set of all gauge equivalence classes of elements of $MC_L(R)$. Then the functor $Def_L:\bold{Art}\to \bold{Sets}$ is called the deformation functor associated to the differential graded Lie algebra $L$.
\end{definition}

\begin{definition}
We say that a functor of Artin rings $F$ is controlled by a differential graded Lie algebra $L$ if $F\cong Def_L$.
\end{definition}

\begin{definition}[Poisson deformation functor]
Let $(X,\Lambda_0)$ be a compact holomorphic Poisson manifold. The Poisson deformation functor $Def_{(X,\Lambda_0)}:\bold{Art}\to \bold{Sets}$ is defined by
\begin{align*}
PDef_{(X,\Lambda_0)}(R)=\{\text{equivalent classes of infinitesimal Poisson deformations of $(X,\Lambda_0)$ over $R$}\}
\end{align*}
\end{definition}

In the next subsection, we will prove that $Def_\mathfrak{g}\cong PDef_{(X,\Lambda_0)}$.
\subsection{The Poisson deformation functor $Def_{(X,\Lambda_0)}$ is controlled by the differential graded Lie algebra $\mathfrak{g}=(\bigoplus_{p+q-1=i,q\geq1} A^{0,p}(X,\wedge^q T),L=\bar{\partial}+[\Lambda_0,-],[-,-])$}\

Let $\mathcal{X}\to Spec\,R$ be an infinitesimal Poisson deformation of $(X,\Lambda_0)$, where $(R,\mathfrak{m})$ is generated by $<1,m_1,...,m_r>$ with exponent $n$. Let $\{U_{\alpha}\}$ be a locally trivial open cover of $\mathcal{X}$. We defined a sheaf $\mathscr{A}^{0,p}(\wedge^q T_{\mathcal{X}/R})$, a bracket $[-,-]$ and the deaulbault differential $\bar{\partial}_\mathcal{X}$. For $q=0$, we will denote this sheaf by $\mathscr{A}^{0,p}_\mathcal{X}$ and similarly for $\mathscr{A}^{0,p}_X$.

Given an element $v:=\phi+\Lambda\in (A^{0,1}(X,T)\oplus A^{0,0}(X,\wedge^2 T))\otimes \mathfrak{m}$ with $L(\phi+\Lambda)=-\frac{1}{2}[\phi+\Lambda,\phi+\Lambda]$. In particular, we have $\bar{\partial} \phi=-\frac{1}{2}[\phi,\phi]$.  We define an operator $\bar{\partial}+\phi:=\bar{\partial}\otimes 1+\phi$ on $\mathscr{A}^{0,p}\otimes R$ and a sequence
\begin{equation}\label{2se}
 0\to \mathscr{A}_X^{0,0}\otimes R\xrightarrow{\bar{\partial}+\phi} \mathscr{A}^{0,1}_X\otimes R \xrightarrow{\bar{\partial}+\phi}\cdots \xrightarrow{\bar{\partial}+\phi}  \mathscr{A}^{0,p}_X\otimes R\xrightarrow{\bar{\partial}+\phi}
\end{equation}

which is a complex by the condition $\bar{\partial}\phi=-\frac{1}{2}[\phi,\phi]$. By tensoring $\otimes_{R} R/\mathfrak{m}$, we have

\begin{align*}
0\to \mathscr{A}^{0,0}_X\otimes \mathbb{C}\xrightarrow{\bar{\partial}} \mathscr{A}^{0,1}_X\otimes \mathbb{C} \xrightarrow{\bar{\partial}}\cdots
\end{align*}
which is a acyclic resolution of $\mathcal{O}_X$. Hence the complex $(\ref{2se})$ is exact in positive degree.

We define
\begin{align*}
\mathcal{O}(v):=ker(\bar{\partial}+\phi:\mathscr{A}_X^{0,0}\otimes R\to \mathscr{A}_X^{0,1}\otimes R)
\end{align*}
which is a flat $R$-sheaf on $X$.\footnote{In general, let $R$ be an artinian local ring with residue field $k$ and $M$ be an $R$-module. Let
\begin{align*}
M\to N^0\to N^1\to \cdots
\end{align*} be a flat resolution such that this induces a resolution
\begin{align*}
M\otimes k\to N^0\otimes k\to N^1\otimes k\cdots
\end{align*} Then $M$ is a $R$-flat.} $\mathcal{O}(v)$ has a Poisson bracket induced by $\Lambda_0+\Lambda$. We define on $\{-,-\}$ on $\mathcal{O}(v)$ by
\begin{align*}
\{f,g\}:=[[\Lambda_0+\Lambda,f],g]
\end{align*}
for local sections $f,g\in \mathcal{O}(v)$. Then $\{-,-\}$ defines a biderivation since $d(gh)=gdh+hdg$ and $R$-bilinear since for $a=a_0+a_1m_1+\cdots a_rm_r\in R$ where $a_i\in \mathbb{C}$, we have $da=0$. $[\Lambda_0+\Lambda,\Lambda_0+\Lambda]=0$ shows that $\{-,-\}$ satisfies the Jacobi identity. So it remains to show that $\mathcal{O}(v)$ is closed under $\{-,-\}$. Note that for $f,g\in \mathcal{O}(v)$, we have $\bar{\partial}f+[\phi,f]=0$, $\bar{\partial}g+[\phi,g]=0$. We set $\omega=\Lambda_0+\Lambda$. Then we have $\bar{\partial} \omega+ [\phi,\omega]=0$
\begin{align*}
\bar{\partial}[[\omega,f],g]+[\phi,[[\omega,f],g]]&=[\bar{\partial}[\omega,f],g]+[[\omega,f],\bar{\partial}g]+[[\phi,[\omega,f]],g]+[[\omega,f],[\phi,g]]\\
                                                                        &=[[\bar{\partial}\omega,f],g]-[[\omega,\bar{\partial}f],g]+[[[\phi,\omega],f],g]-[[\omega,[\phi,f]],g]\\
                                                                        &=0
\end{align*}
So we have $\{f,g\}\in \mathcal{O}(v)$. Hence $\mathcal{O}(v)$ is a sheaf of Poisson $R$-algebras. We also have $\mathcal{O}(v)\otimes_{R} R/\mathfrak{m}\cong \mathcal{O}_X$ as Poisson sheaves over $\mathbb{C}$. So $\mathcal{O}(v)$ defines an infinitesimal Poisson deformation of $(X,\Lambda_0)$ over $R$.

Now we define the map 
\begin{align*}
\mathcal{O}:Def_{\mathfrak{g}}(R)&\to PDef_{(X,\Lambda_0)}(R)\\
v=\phi+\Lambda&\mapsto \mathcal{O}(v)
\end{align*}

We claim that $v$ is gauge equivalent to $w$ if and only if $\mathcal{O}(v)\cong\mathcal{O}(w)$ as Poisson $R$-sheaves. This claim shows that the map $\mathcal{O}$ is well-defined and injective. Let $v=\phi+\Lambda,w=\psi+\Pi\in (A^{0,1}(X, T)\oplus A^{0,0}(X,\wedge^2 T))\otimes \mathfrak{m}$. Since $v$ is gauge equivalent to $w$,  for some $a\in A^{0,0}(X,T)\otimes \mathfrak{m}$, we have
\begin{align*}
(1)\psi&=\phi+\sum_{n\geq 1} \frac{(ad\,a)^{n-1}}{n!}([a,\phi]-\bar{\partial}a)\\
(2)\Pi&=\Lambda+\sum_{n\geq 1} \frac{(ad\,a)^{n-1}}{n!}([a,\Lambda]-[\Lambda_0,a])=exp(a)(\Lambda_0+\Lambda)-\Lambda_0
\end{align*}

$(1)$ is equivalent that the following commutative diagram commutes:

\begin{center}
$\begin{CD}
\mathscr{A}^{0,0}_X\otimes R @>\bar{\partial}+\phi >> \mathscr{A}_X^{0,1}\otimes R\\
@V exp(a) VV @VV exp(a)V\\
\mathscr{A}^{0,0}_X\otimes R @>\bar{\partial}+\phi'>> \mathscr{A}^{0,1}_X\otimes R
\end{CD}$
\end{center}
which implies $\mathcal{O}(v)\cong \mathcal{O}(w)$ as sheaves of $R$-algebras.

$(2)$ means $\Lambda_0+\Pi=exp(a)(\Lambda_0+\Lambda)$ which means $\mathcal{O}(v)\cong \mathcal{O}(w)$ as sheaves of Poisson $R$-algebras. So we get the claim.

Now we show that $\mathcal{O}:Def_{\mathfrak{g}}(R) \to Def_{(X,\Lambda_0)}(R)$ is surjective. For given an infinitesimal Poisson deformation of $(X,\Lambda_0)$ over $(R,\mathfrak{m})$, we showed that there is a canonically associated element $v=\phi+\Lambda \in (A^{0,1}(X,T)\oplus A^{0,0}(X,\wedge^2 T))\otimes \mathfrak{m}$ with $L(\phi+\Lambda)+\frac{1}{2}[\phi+\Lambda,\phi+\Lambda]=0$.
We claim that for each $\alpha$, the following diagram is commutative.
\begin{center}
$\begin{CD}
\mathscr{A}^{0,0}_X(U_{\alpha})\otimes R @>\bar{\partial}+\phi >> \mathscr{A}^{0,1}_X(U_{\alpha})\otimes R\\
@V exp(s_{\alpha}) VV @VV exp(s_{\alpha})V\\
\mathscr{A}^{0,0}_X(U_{\alpha})\otimes R@>\bar{\partial}_{\mathcal{X}}=\bar{\partial}>> \mathscr{A}^{0,1}_X(U_{\alpha}) \otimes R
\end{CD}$
\end{center}
Note that $exp(s_{\alpha})=\varphi_{\alpha}\circ C$. Indeed, the commutativity means that 
\begin{align*}
\bar{\partial}f+\phi (f)&=\bar{\partial} f+exp(-s_{\alpha})\circ \bar{\partial}(exp(s_{\alpha}))f\\
                                 &=\bar{\partial}f+exp(-s_{\alpha})\circ (\bar{\partial}\circ exp(s_{\alpha})-exp(s_{\alpha})\circ \bar{\partial})f\\
                                 &=exp(-s_{\alpha})\circ \bar{\partial}\circ exp(s_{\alpha})f
\end{align*}

Since the diagram is compatible with each $\alpha$, we have the following commutative diagram of sheaves

\begin{center}
$\begin{CD}
\mathscr{A}^{0,0}_X\otimes R@>\bar{\partial}+\phi>> \mathscr{A}^{0,1}_X\otimes R\\
@V\cong VV @VV\cong V\\
\mathscr{A}^{0,0}_\mathcal{X}@>\bar{\partial}_\mathcal{X}>> \mathscr{A}^{0,1}_\mathcal{X}
\end{CD}$
\end{center}
So we have isomorphism of sheaves
\begin{align*}
\mathcal{O}(v):=ker(\bar{\partial}+\phi:\mathscr{A}_X^{0,0}\otimes R\to \mathscr{A}_X^{0,1}\otimes R)\cong \mathcal{O}_{\mathcal{X}}=ker(\bar{\partial}_\mathcal{X}:\mathcal{A}^{0,0}_\mathcal{X}\to \mathcal{A}^{0,1}_\mathcal{X})
\end{align*}

$\mathcal{O}(v)$ is a sheaf of Poisson $R$-algebras as above defined by
\begin{align*}
\{f,g\}:=[[\Lambda_0+\Lambda,f],g]\,\,\,\,\,\,\,\,\,\text{for local sections $f,g\in \mathcal{O}(v)$}
\end{align*}
Now we claim that as Poisson $R$-sheaves, we have
\begin{align*}
\mathcal{O}(v)\cong \mathcal{O}_{\mathcal{X}}
\end{align*}
We check this locally on $U_{\alpha}$: for $f,g\in \Gamma(U_{\alpha},\mathcal{O}(v))=ker(\bar{\partial}+\phi:\mathscr{A}^{0,0}_X(U_{\alpha})\otimes R\to \mathscr{A}_X^{0,1}(U_{\alpha})\otimes R)$,
\begin{align*}
exp(s_{\alpha})\{f,g\}&=exp(s_{\alpha})[[\Lambda_0+\Lambda,f],g]=exp(s_{\alpha})[[\Lambda_{\alpha}'',f],g]=[[exp(s_{\alpha})\Lambda_{\alpha}'',exp(s_{\alpha})f],exp(s_{\alpha})g]\\
&=[[exp(s_{\alpha})exp(-s_{\alpha})\Lambda_{\alpha}',exp(s_{\alpha})f],exp(s_{\alpha})g]=[[\Lambda_{\alpha}',exp(s_{\alpha})f],exp(s_{\alpha})g]\\
\end{align*}
where $\Lambda_{\alpha}'\in \Gamma(U_{\alpha}, \mathscr{A}^{0,0}(\wedge^2 T_X))\otimes R$ is the Poisson structure on $\mathcal{O}_X(U_{\alpha})\otimes R\cong \mathcal{O}_{\mathcal{X}}(U_{\alpha})$ and $\Lambda_{\alpha}''= exp(-s_{\alpha})\Lambda_{\alpha}'$. (See Remark \ref{2remark} for notations)

Hence the infinitesimal Poisson deformation $\mathcal{X}$ of $(X,\Lambda_0)$ over $R$ is equivalent to $\mathcal{O}(v):=ker(\bar{\partial}+\phi:\mathscr{A}_X^{0,0}\otimes R\to \mathscr{A}_X^{0,1}\otimes R)$ equipped with the Poisson structure $\Lambda_0+\Lambda$. This shows that the map $\mathcal{O}:Def_{\mathfrak{g}}(R)\to Def_{(X,\Lambda_0)}(R)$ is surjective. 

So we proved that for an artinian local $\mathbb{C}$-algebra $R$ with residue $\mathbb{C}$, we have an isomorphism $\mathcal{O}:MC_{\mathfrak{g}}(R)\to PDef_{(X,\Lambda_0)}(R)$. To show that $Def_{\mathfrak{g}}\cong PDef_{(x,\Lambda_0)}$, we have to show that $\mathcal{O}$ is a morphism of functors of Artin rings, in other words, $\mathcal{O}$ is compatible with any local homomorphism $R\to S$ in $\bold{Art}$.
 
 \begin{definition}[Base change]
Given an infinitesimal Poisson deformation $\mathcal{X}$ of $(X,\Lambda)$ over $R$, and a local $\mathbb{C}$-algebra homomorphism $(R,\mathfrak{m}_R)\to (S,\mathfrak{m}_S)$, we can define an infinitesimal Poisson deformation $\mathcal{X}\times_{Spec\,R} Spec\,S$ of $(X,\Lambda)$ over $S$ by base change.
\begin{center}
$\begin{CD}
X@>>>\mathcal{X}\times_{Spec\,R} Spec\,S @>>> \mathcal{X}\\
@VVV@VVV @VVV\\
Spec\,\mathbb{C}@>>>Spec\,S@>>> Spec\,R
\end{CD}$
\end{center}
We only need to explain the induced Poisson structure of $\mathcal{X}_S:=\mathcal{X}\times_{Spec\,R} Spec\,S$ over $S$. For any open set $U$ of $X$, $\mathcal{O}_{\mathcal{X}_S}(U)=\mathcal{O}_{\mathcal{X}}(U)\otimes_R S$. We define the Poisson bracket $\{-,-\}_S$ on $\mathcal{O}_{\mathcal{X}_S}(U)$ by
\begin{align*}
\{f\otimes s_1,g\otimes s_2\}_S=\{f,g\}_R\otimes s_1s_2
\end{align*}
where $\{-,-\}_R$ is the Poisson bracket on $\mathcal{O}_{\mathcal{X}}(U)$.
\end{definition}
 
We note that the induced infinitesimal Poisson deformation by the base change $(R,\mathfrak{m}_R)\to (S,\mathfrak{m}_S)$ can be interpreted in terms of a Mauer-Cartan element of $\mathcal{X}$. Let $\phi+\Lambda \in (A^{0,1}(X,T)\oplus A^{0,0}(X,\wedge^2 T))\otimes \mathfrak{m}_R$ be a Mauer Cartan element of $\mathcal{X}$. Hence $\mathcal{O}_{\mathcal{X}}$ is equivalent to $ker(\bar{\partial}+\phi:\mathscr{A}^{0,0}\otimes R\to \mathscr{A}^{0,1}\otimes R)$ with the Poisson structure $\Lambda_0+\Lambda$. The homomorphism $g:(R,m_R)\to (S, m_S)$ induces the homomorphisms $A^{0,p}(X,\wedge^q T)\otimes \mathfrak{m}_R\to A^{0,p}(X,\wedge^q T)\otimes \mathfrak{m}_S$. Let $\phi_S+\Lambda_S$ be the image of $\phi+\Lambda$, which also satisfy $L_S(\phi_S+\Lambda_S)+\frac{1}{2}[\phi_S+\Lambda_S,\phi_S+\Lambda_S]=0$. We have the following commutative diagram
\begin{center}
$\begin{CD}
(\mathscr{A}_X^{0,0}\otimes R,\Lambda_0+\Lambda_R)@>\bar{\partial}+\phi>> \mathscr{A}_X^{0,1}\otimes R\\
@VVV @VVV\\
(\mathscr{A}_X^{0,0}\otimes S, \Lambda_0+\Lambda_S) @>\bar{\partial}+\phi_S>> \mathscr{A}_X^{0,1}\otimes S
\end{CD}$
\end{center}
We claim that 
\begin{proposition}
$\mathcal{O}_{\mathcal{X}_S}$ is equivalent to $(\phi_S+\Lambda_S)$.
\end{proposition}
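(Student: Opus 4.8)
The plan is to verify that the two infinitesimal Poisson deformations --- the one obtained by base change, $\mathcal{O}_{\mathcal{X}_S}$, and the one attached to the Maurer--Cartan element $\phi_S+\Lambda_S\in (A^{0,1}(X,T)\oplus A^{0,0}(X,\wedge^2 T))\otimes\mathfrak{m}_S$ via the functor $\mathcal{O}$ --- coincide as sheaves of Poisson $S$-algebras. First I would unwind both constructions into explicit descriptions of the structure sheaves. On the one hand, by definition of base change, $\mathcal{O}_{\mathcal{X}_S}=\mathcal{O}_{\mathcal{X}}\otimes_R S$ with Poisson bracket $\{f\otimes s_1,g\otimes s_2\}_S=\{f,g\}_R\otimes s_1 s_2$; using the identification $\mathcal{O}_{\mathcal{X}}\cong \ker(\bar\partial+\phi\colon \mathscr{A}_X^{0,0}\otimes R\to \mathscr{A}_X^{0,1}\otimes R)$ established in the surjectivity part of the previous section, together with the Poisson structure coming from $\Lambda_0+\Lambda_R$. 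On the other hand, $\mathcal{O}(\phi_S+\Lambda_S)=\ker(\bar\partial+\phi_S\colon \mathscr{A}_X^{0,0}\otimes S\to \mathscr{A}_X^{0,1}\otimes S)$ with Poisson bracket $\{f,g\}=[[\Lambda_0+\Lambda_S,f],g]$, which is exactly the $\mathcal{O}$-construction applied to the image of $\phi+\Lambda$ under the map induced by $g\colon R\to S$.

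The key step is the commutative diagram already written down in the excerpt: the ring homomorphism $g$ induces maps $\mathscr{A}_X^{0,p}\otimes R\to \mathscr{A}_X^{0,p}\otimes S$ intertwining $\bar\partial+\phi$ with $\bar\partial+\phi_S$, because $\phi_S$ is by construction the image of $\phi$. Hence there is a natural map $\mathcal{O}_{\mathcal{X}}\otimes_R S\to \mathcal{O}(\phi_S+\Lambda_S)$. To see it is an isomorphism of $S$-sheaves I would argue locally on a locally trivial open cover $\{U_\alpha\}$: there $\mathscr{A}_X^{0,0}(U_\alpha)\otimes R$ is a free $R$-module, the complex $(\mathscr{A}_X^{0,\bullet}(U_\alpha)\otimes R,\bar\partial+\phi)$ resolves $\Gamma(U_\alpha,\mathcal{O}_{\mathcal{X}})$, and tensoring a bounded-below complex of flat $R$-modules whose higher cohomology vanishes with $S$ over $R$ commutes with taking the kernel in degree $0$ --- this is the same flatness observation used in the footnote of the previous section (``$M\otimes k\to N^0\otimes k\to N^1\otimes k\cdots$ resolution implies $M$ flat''), applied now to the change of base $R\to S$ rather than $R\to k$. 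Thus $\ker(\bar\partial+\phi)\otimes_R S\xrightarrow{\sim}\ker(\bar\partial+\phi_S)$, i.e. $\mathcal{O}_{\mathcal{X}}\otimes_R S\cong \mathcal{O}(\phi_S+\Lambda_S)$ as sheaves of $S$-algebras.

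Finally I would check that this isomorphism is Poisson, i.e. intertwines the two brackets. On sections over $U_\alpha$ the bracket on $\mathcal{O}_{\mathcal{X}}\otimes_R S$ is $\{f\otimes s_1,g\otimes s_2\}=[[\Lambda_0+\Lambda_R,f],g]\otimes s_1 s_2$ while the bracket on $\mathcal{O}(\phi_S+\Lambda_S)$ is $[[\Lambda_0+\Lambda_S,f],g]$; since $\Lambda_S$ is the image of $\Lambda_R$ under $A^{0,0}(X,\wedge^2 T)\otimes\mathfrak{m}_R\to A^{0,0}(X,\wedge^2 T)\otimes\mathfrak{m}_S$ and the Schouten bracket is $R$-, resp. $S$-, bilinear with $da=0$ for $a\in R$, the two expressions agree under the natural map. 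This is essentially the same computation as the local compatibility check $exp(s_\alpha)\{f,g\}=[[\Lambda_\alpha',exp(s_\alpha)f],exp(s_\alpha)g]$ done earlier, and it is routine. The only genuinely substantive point --- and the step I expect to require the most care --- is the flat-base-change statement $\ker(\bar\partial+\phi)\otimes_R S\cong\ker(\bar\partial+\phi_S)$: one must be careful that the exactness of $(\mathscr{A}_X^{0,\bullet}\otimes R,\bar\partial+\phi)$ in positive degrees (proved by reduction mod $\mathfrak{m}_R$ to the Dolbeault resolution of $\mathcal{O}_X$) is what makes taking $H^0$ commute with $-\otimes_R S$; everything else is formal bookkeeping. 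Once this is in hand, $\mathcal{O}_{\mathcal{X}_S}$ is equivalent to $\phi_S+\Lambda_S$, which establishes that $\mathcal{O}$ is a morphism of functors of Artin rings and hence, combined with the bijectivity shown in the previous subsection, that $Def_{\mathfrak{g}}\cong PDef_{(X,\Lambda_0)}$.
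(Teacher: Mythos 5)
Your route differs from the paper's, and its key homological step is not justified as stated. The paper's own proof is essentially one line: for $\mathcal{X}_S$ one takes the \emph{same} locally trivial cover $\{U_\alpha\}$, with local trivializations $\varphi_\alpha\otimes_R S$ and $C^\infty$-trivialization $C\otimes_R S$, and observes that the defining formulas $\phi=\exp(-s_\alpha)\bar\partial\exp(s_\alpha)$ and $\Lambda_0+\Lambda=\exp(-s_\alpha)\Lambda'_\alpha$ are visibly compatible with $\otimes_R S$, so the canonical Maurer--Cartan element attached to $\mathcal{X}_S$ with these choices is exactly $\phi_S+\Lambda_S$; equivalence then follows from the already-proved independence of choices. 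No base-change homological algebra is needed. You instead reduce everything to the claim $\ker(\bar\partial+\phi)\otimes_R S\cong\ker(\bar\partial+\phi_S)$, and you justify it by the principle that tensoring a bounded-below complex of flat $R$-modules with vanishing higher cohomology commutes with taking $H^0$. That principle is false. Take $R=k[\epsilon]$, $S=k$, and the complex $R\xrightarrow{\epsilon}R\xrightarrow{\epsilon}R\to\cdots$: all terms are free, the cohomology vanishes in positive degrees, $H^0=\epsilon R\cong k$, yet the natural map $H^0\otimes_R k\to H^0$ of the reduced complex is zero, not an isomorphism. So ``exactness of $(\mathscr{A}_X^{0,\bullet}\otimes R,\bar\partial+\phi)$ in positive degrees'' is not by itself what makes $H^0$ commute with $-\otimes_R S$, and your appeal to the footnote only yields flatness of $\mathcal{O}(v)$, which is likewise insufficient.

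The step can be repaired, in two ways. The homological repair: use the stronger fact that the \emph{augmented} complex $0\to\mathcal{O}(v)\to\mathscr{A}_X^{0,0}\otimes R\to\mathscr{A}_X^{0,1}\otimes R\to\cdots$ is exact and stays exact after $\otimes_R k$ (it becomes the Dolbeault resolution of $\mathcal{O}_X$); then, by the local criterion for flatness over the artinian local ring $R$ applied stalkwise and an induction over the degrees, each image sheaf $\operatorname{im}(\bar\partial+\phi)$ is $R$-flat, so tensoring the short exact sequences $0\to Z^p\to \mathscr{A}_X^{0,p}\otimes R\to Z^{p+1}\to 0$ with $S$ preserves exactness and gives $\mathcal{O}(v)\otimes_R S\cong\ker(\bar\partial+\phi_S)$. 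The cheap repair, which is in substance the paper's argument: on each $U_\alpha$ the identity $\exp(s_\alpha)\circ(\bar\partial+\phi)=\bar\partial\circ\exp(s_\alpha)$ identifies $\mathcal{O}(v)|_{U_\alpha}$ with $\mathcal{O}_X|_{U_\alpha}\otimes_{\mathbb{C}}R$, a free $R$-module sheaf, for which base change along $R\to S$ is immediate and glues over the cover. Your final Poisson-compatibility check (matching $[[\Lambda_0+\Lambda,f],g]\otimes s_1s_2$ with $[[\Lambda_0+\Lambda_S,f],g]$) is fine and routine once the isomorphism of $S$-algebra sheaves is in place.
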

\begin{proof}
 We recall that for given locally trivial open covering $\{U_{\alpha}\}$, and local trivialization $\varphi_{\alpha}:\mathcal{O}_{\mathcal{X}}(U_{\alpha})\to \mathcal{O}_X(U_{\alpha})\otimes R$ and $C^{\infty}$-trivialization $C:\mathscr{A}^{0,0}_X\otimes R\to \mathscr{A}^{0,0}_{\mathcal{X}}$ for the family $\mathcal{X}$, we have $\phi=exp(-s_{\alpha})\bar{\partial}exp(s_{\alpha})\in A^{0,1}(X,T)\otimes \mathfrak{m}_R$ and $\Lambda=exp(-s_{\alpha})(\Lambda_0+\Lambda_{\alpha})-\Lambda_0\in A^{0,0}(X,\wedge^2 T)\otimes \mathfrak{m}_R$. Now we consider the family $\mathcal{X}_S$ over $S$. For the same open covering $\{U_{\alpha}\}$, the local trivialization is induced from $\psi_{\alpha}$ by tensoring $\otimes_R S$ and $C^{\infty}$ trivialization is also induced from $C$ by tensoring $\otimes_R S$. This observation gives the proposition.
\end{proof}

Hence we have the following commutative diagram
\begin{center}
$\begin{CD}
Def_{\mathfrak{g}}(R)@>\mathcal{O}>> PDef_{(X,\Lambda_0)}(R)\\
@VVV @VVV\\
Def_{\mathfrak{g}}(S)@>\mathcal{O}>> PDef_{(X,\Lambda_0)}(S)
\end{CD}$
\end{center}

Hence we proved the following theorem.

\begin{thm}
Let $(X,\Lambda_0)$ be a compact holomorphic Poisson manifold. Then the Poisson deformation functor $Def_{(X,\Lambda_0)}$ is controlled by the differential graded Lie algebra $\mathfrak{g}=(\bigoplus_{p+q-1=i,p\geq 1,q\geq 1}$
$A^{0,p}(X,\wedge^q T),L=\bar{\partial}+[\Lambda_0,-],[-,-])$. In other words, we have an isomorphism of two functors
\begin{align*}
Def_\mathfrak{g}\cong PDef_{(X,\Lambda_0)}
\end{align*}
\end{thm}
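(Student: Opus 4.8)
The plan is to construct an explicit natural transformation $\mathcal{O}\colon \mathrm{Def}_{\mathfrak{g}} \to \mathrm{PDef}_{(X,\Lambda_0)}$ of functors of Artin rings and show it is an isomorphism for every $(R,\mathfrak{m}) \in \mathbf{Art}$. Fix $(R,\mathfrak{m})$. Given a Maurer--Cartan element $v = \phi + \Lambda \in (A^{0,1}(X,T) \oplus A^{0,0}(X,\wedge^2 T)) \otimes \mathfrak{m}$, i.e.\ $L(\phi+\Lambda) + \tfrac12[\phi+\Lambda,\phi+\Lambda] = 0$, decompose this into its bidegree components: $\bar{\partial}\phi = -\tfrac12[\phi,\phi]$, $\bar{\partial}\Lambda + [\Lambda_0,\Lambda] + \tfrac12[\Lambda,\Lambda] = -[\phi,\Lambda]$ (equivalently $\bar{\partial}\Lambda'' + [\phi,\Lambda''] = 0$ for $\Lambda'' = \Lambda_0 + \Lambda$), and $[\Lambda'',\Lambda''] = 0$. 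The first relation makes $(\bar{\partial}+\phi)^2 = 0$ on $\mathscr{A}_X^{0,\bullet} \otimes R$, so I set $\mathcal{O}(v) := \ker(\bar{\partial}+\phi\colon \mathscr{A}_X^{0,0}\otimes R \to \mathscr{A}_X^{0,1}\otimes R)$; reducing mod $\mathfrak{m}$ recovers the acyclic Dolbeault resolution of $\mathcal{O}_X$, which by the standard flatness-from-resolution criterion (footnoted in the excerpt) shows $\mathcal{O}(v)$ is $R$-flat with $\mathcal{O}(v)\otimes_R R/\mathfrak{m} \cong \mathcal{O}_X$. The bracket $\{f,g\} := [[\Lambda'',f],g]$ is $R$-bilinear and a biderivation because $R$-scalars are $\bar{\partial}$-closed; the Jacobi identity follows from $[\Lambda'',\Lambda''] = 0$; and the computation $\bar{\partial}[[\Lambda'',f],g] + [\phi,[[\Lambda'',f],g]] = 0$ (using $\bar{\partial}\Lambda''+[\phi,\Lambda'']=0$ and $\bar{\partial}f+[\phi,f]=0$, $\bar{\partial}g+[\phi,g]=0$, and the graded Jacobi identity of the DGLA $A[1]$) shows $\mathcal{O}(v)$ is closed under $\{-,-\}$, so $\mathcal{O}(v)$ is a sheaf of Poisson $R$-algebras, hence an infinitesimal Poisson deformation of $(X,\Lambda_0)$.

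Next I would prove well-definedness and injectivity simultaneously by showing $v$ is gauge equivalent to $w$ if and only if $\mathcal{O}(v) \cong \mathcal{O}(w)$ as Poisson $R$-sheaves. If $w = e^a \cdot v$ with $a \in A^{0,0}(X,T)\otimes \mathfrak{m}$, the Lie-part of the gauge relation, $\psi = \phi + \sum_{n\geq 1}\frac{(\mathrm{ad}\,a)^{n-1}}{n!}([a,\phi]-\bar{\partial}a)$, is precisely the statement that $\exp(a)$ conjugates $\bar{\partial}+\phi$ to $\bar{\partial}+\psi$, giving an isomorphism of the underlying $R$-algebra sheaves; and the bivector-part, $\Lambda_0+\Pi = \exp(a)(\Lambda_0+\Lambda)$ (using Lemma \ref{2lemp}, $[\exp(a)f,\exp(a)g] = \exp(a)[f,g]$), shows this isomorphism intertwines the Poisson brackets. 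The converse is obtained by running the same computation backward. For surjectivity I would use the construction already carried out in Chapter \ref{chapter5}: given an arbitrary infinitesimal Poisson deformation $\mathcal{X}/R$, choose a locally trivial open covering $\{U_\alpha\}$, local trivializations $\varphi_\alpha$ and a $C^\infty$-trivialization $C$ (exists by Proposition \ref{2prot}), obtain $s_\alpha$ with $\exp(s_\alpha) = \varphi_\alpha \circ C$, and form $\phi = \exp(-s_\alpha)\bar{\partial}\exp(s_\alpha)$ (well-defined globally because the $\exp(t_{\alpha\beta})$ are $\bar{\partial}$-closed, Lemma \ref{2len}) and $\Lambda'' = \exp(-s_\alpha)\Lambda_\alpha'$ (well-defined globally by Remark \ref{2remark}). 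The commutative square relating $(\bar{\partial}+\phi)$ on $\mathscr{A}_X^{0,\bullet}\otimes R$ to $\bar{\partial}_{\mathcal{X}}$ on $\mathscr{A}_{\mathcal{X}}^{0,\bullet}$ via $\exp(s_\alpha)$ identifies $\mathcal{O}(v)$ with $\mathcal{O}_{\mathcal{X}}$ as sheaves of algebras, and one more application of Lemma \ref{2lemp} to $\exp(s_\alpha)\{f,g\} = [[\Lambda_\alpha',\exp(s_\alpha)f],\exp(s_\alpha)g]$ shows the identification is Poisson, so $\mathcal{X}$ is equivalent to $\mathcal{O}(v)$.

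Finally I would check that $\mathcal{O}$ is a morphism of functors, i.e.\ compatible with any local $\mathbb{C}$-algebra homomorphism $g\colon (R,\mathfrak{m}_R) \to (S,\mathfrak{m}_S)$. On the deformation-functor side, $g$ induces $A^{0,p}(X,\wedge^q T)\otimes \mathfrak{m}_R \to A^{0,p}(X,\wedge^q T)\otimes \mathfrak{m}_S$, sending $\phi+\Lambda$ to $\phi_S+\Lambda_S$, which still satisfies Maurer--Cartan. On the Poisson-deformation side, base change $\mathcal{X} \mapsto \mathcal{X}\times_{\mathrm{Spec}\,R}\mathrm{Spec}\,S$ with the Poisson bracket $\{f\otimes s_1, g\otimes s_2\}_S = \{f,g\}_R \otimes s_1 s_2$. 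The point is that for a fixed locally trivial covering $\{U_\alpha\}$, the local trivializations and the $C^\infty$-trivialization for $\mathcal{X}_S$ are obtained from those for $\mathcal{X}$ simply by $\otimes_R S$, so the canonically associated Maurer--Cartan element of $\mathcal{X}_S$ is exactly $\phi_S+\Lambda_S$; this makes the naturality square commute. Combining the bijection $\mathcal{O}\colon \mathrm{Def}_{\mathfrak{g}}(R) \xrightarrow{\sim} \mathrm{PDef}_{(X,\Lambda_0)}(R)$ for all $R$ with naturality gives the functor isomorphism $\mathrm{Def}_{\mathfrak{g}} \cong \mathrm{PDef}_{(X,\Lambda_0)}$. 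I expect the main obstacle to be the bookkeeping in surjectivity: carefully verifying that all the locally defined objects ($\phi$, $\Lambda''$, the comparison isomorphisms) glue, and that the gluing is compatible with both the $\bar{\partial}$-complex structure and the Poisson bracket — but this is essentially assembled from the results of Chapters \ref{chapter5} and the bracket calculus there, so it is a matter of organizing rather than discovering.
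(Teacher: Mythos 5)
Your proposal follows essentially the same route as the paper's own proof: the same construction $\mathcal{O}(v)=\ker(\bar{\partial}+\phi)$ with bracket $[[\Lambda_0+\Lambda,f],g]$, the same identification of gauge equivalence with Poisson isomorphism of sheaves (via $\exp(a)$ and the bracket-compatibility lemma) for well-definedness and injectivity, the same use of the Chapter \ref{chapter5} canonical Maurer--Cartan element and $\exp(s_\alpha)$ comparison diagrams for surjectivity, and the same base-change argument for naturality. The proposal is correct as an outline of that argument.
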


\section{Universal Poisson deformations}

Now we assume that for a holomorphic Poisson manifold $(X,\Lambda_0)$, $HP^1(X,\Lambda_0)=0$.

\subsection{Independence of choices of morphic elements giving the same cohomology class }\

Let $\mathfrak{m}$ be the maximal ideal of a local artinian $\mathbb{C}$-algebra $R$ with residue such that $\mathfrak{m}^{n+1}=0$. Our goal is that for given $v=\phi+\Lambda$, $w=\psi+\Pi \in g_1\otimes \mathfrak{m}$ such that $[\epsilon(v)]=[\epsilon(w)]$ in $\mathbb{H}^0(J_n(\mathfrak{g}))\otimes \mathfrak{m}$, where $\epsilon(v)=(\bar{v},\frac{1}{2}\bar{v}\odot \bar{v},...,\frac{1}{n!}\underbrace{\bar{v}\odot \cdots \odot \bar{v}}_n)\cong_{dec}(v,\cdots, (-1)^{\frac{(n-1)n}{2}} \frac{1}{n!}\underbrace{v\wedge \cdots \wedge v}_n)\in \bigoplus_{i=1}^n sym^ig_1\otimes \mathfrak{m}^i$ and $L v = -\frac{1}{2}[v,v]$, and same for $w$, we want to show that the Poisson deformation $\mathcal{O} (v)$ is equivalent to the Poisson deformation $\mathcal{O}(w)$. In other words, we want to show that 

\begin{enumerate}
\item there exists $u_0\in g_0\otimes \mathfrak{m}$ such that $exp(u_0)(\bar{\partial}+\phi)exp(-u_0)=\bar{\partial}+\psi$.
\begin{center}
$\begin{CD}
\mathscr{A}^{0,0}_X\otimes R @>\bar{\partial} +\phi >> \mathscr{A}^{0,1}_X\otimes R\\
@V exp(u_0) VV  @VV exp(u_0) V \\
\mathscr{A}^{0,0}_X\otimes R @>\bar{\partial}+\psi >> \mathscr{A}^{0,1}_X\otimes R
\end{CD}$
\end{center}
\item $exp(u_0)(\Lambda_0+\Lambda)=\Lambda_0+\Pi$
\end{enumerate}

We will prove the statement by induction on the exponent $k$ of maximal ideal of artinian local $\mathbb{C}$-algebra with residue $\mathbb{C}$. Let $k=1$. So we have $\mathfrak{m}^2=0$. Let $[\epsilon(v)]=[\epsilon(w)]\in \mathbb{H}^0(J_1(\mathfrak{g}))\otimes \mathfrak{m}$. Then there exists $u_0\in g_0\otimes \mathfrak{m}$ such that $(-1)^1Lu_0=w-v$. So we have $Lu_0=\bar{\partial}u_0+[\Lambda_0,u_0]=v-w$. In other words,
\begin{align*}
\bar{\partial}u_0=\phi-\psi\\
[\Lambda_0,u_0]=\Lambda-\Pi
\end{align*}

We note that $exp(u_0)=1+u_0$ and $exp(-u_0)=1-u_0$. Then since $u_0\in g_0\otimes \mathfrak{m}$, $\phi,\psi, \Lambda,\Pi \in g_1\otimes \mathfrak{m}$ and $\mathfrak{m}^2=0$, we have

\begin{enumerate}
\item \begin{align*}
\exp(u_0)(\bar{\partial}+\phi)exp(-u_0)&=(1+u_0)(\bar{\partial}+\phi)(1-u_0)=(\bar{\partial}+\phi+u_0\bar{\partial}+u_0\phi)(1-u_0)\\
                                                           &=\bar{\partial}+\phi+u_0\bar{\partial}+u_0\phi-\bar{\partial}\cdot u_0-\phi u_0-u_0\bar{\partial}\cdot u_0-u_0\phi u_0\\
                                                           &=\bar{\partial}+\phi+u_0\bar{\partial}-\bar{\partial}\cdot u_0=\bar{\partial}+\phi-\bar{\partial}u_0\\
                                                           &=\bar{\partial}+\psi
\end{align*}
\item \begin{align*}
exp(u_0)(\Lambda_0+\Lambda)&=(1+u_0)(\Lambda_0+\Lambda)=\Lambda_0+\Lambda+[u_0,\Lambda_0+\Lambda]\\
                                                  &=\Lambda_0+\Lambda-[\Lambda_0,u_0]=\Lambda_0+\Lambda-(\Lambda-\Pi)\\
                                                  &=\Lambda_0+\Pi
\end{align*}
\end{enumerate}

So the statement holds for $k=1$.

Now let's assume that the statement holds for $k=n-1$. Now let $(R,\mathfrak{m})$ be a local artinian $\mathbb{C}$-algebra with exponent $n$ and let $[\epsilon(v)]=[\epsilon(w)]\in \mathbb{H}^0(J_n(\mathfrak{g}))\otimes \mathfrak{m}$ where $v,w \in g_1\otimes \mathfrak{m}$. We have the following exact sequence of finite dimensional vector spaces
\begin{align*}
0\to \mathfrak{m}^{n}\to R \to  R/\mathfrak{m}^n \to 0
\end{align*}
So we have the following splitting as vector spaces 
\begin{align*}
R\cong (R/\mathfrak{m}^n) \oplus\mathfrak{m}^n
\end{align*}

Note that $(R/\mathfrak{m}^n, \mathfrak{m}/\mathfrak{m}^n)$ is a local artinian $\mathbb{C}$-algebra with exponent $n-1$. Now let $v=v_1+v_2=(\phi_1+\Lambda_1)+(\phi_2+\Lambda_2)$ and $w=w_1+w_2=(\psi_1+\Pi_1)+(\psi_2+\Pi_2)$, where $v_1,w_1\in g_1\otimes (R/\mathfrak{m}^n)$ and $v_2,w_2\in \mathfrak{m}^n$. Then we have $[\epsilon(v_1)]=[\epsilon(w_1)]\in \mathbb{H}^0(J_{n-1}(\mathfrak{g}))\otimes \mathfrak{m}/\mathfrak{m}^n$, where $v_1,w_1\in g_1\otimes \mathfrak{m}/\mathfrak{m}^n$. By the induction hypothesis, 
\begin{enumerate}
\item we have the following commutative diagram.

\begin{center}
$\begin{CD}
\mathscr{A}_X^{0,0}\otimes R/\mathfrak{m}^n@>\bar{\partial}+\phi_1 >> \mathscr{A}_X^{0,1}\otimes R/\mathfrak{m}^n\\
@V exp(u) VV @VV exp(u) V \\
\mathscr{A}_X^{0,0}\otimes R/\mathfrak{m}^n @>\bar{\partial}+\psi_1>> \mathscr{A}_X^{0,1}\otimes R/\mathfrak{m}^n
\end{CD}$
\end{center}
where some $u \in g_0\otimes \mathfrak{m}/\mathfrak{m}^n$. 
\item $exp(u)(\Lambda_0+\Lambda_1)=\Lambda_0+\Pi_1$.
\end{enumerate}

Let's consider the natural projection $g_0\otimes R \cong g_0\otimes (R/\mathfrak{m}^n\oplus \mathfrak{m}^n)\to g_0\otimes R/\mathfrak{m}^n$.
Choose the lifting of $u$ to be $u+0\in g_0 \otimes (R/\mathfrak{m}^n\oplus \mathfrak{m}^n)\cong g_0\otimes R$. Then
\begin{enumerate}
\item we have the following commutative diagram.
\begin{center}
$\begin{CD}
\mathscr{A}_X^{0,0}\otimes R@>\bar{\partial}+\phi_1+\phi_2 >> \mathscr{A}_X^{0,1}\otimes R\\
@V exp(u+0) VV @VV exp(u+0) V \\
\mathscr{A}_X^{0,0}\otimes R @>\bar{\partial}+\psi_1+\phi_2>> \mathscr{A}_X^{0,1}\otimes R
\end{CD}$
\end{center}

since $\phi_2=\phi_2\circ exp(u)=exp(u)\circ \phi_2$. (note that $\phi_2\in A^{0,1}(X,T)\otimes \mathfrak{m}^n$, $u\in g_0\otimes \mathfrak{m}$ and $\mathfrak{m}^{n+1}=0$.)
\item $exp(u)(\Lambda_0+\Lambda_1+\Lambda_2)=\Lambda_0+\Pi_1+\Lambda_2$ since $exp(u)(\Lambda_2)=\Lambda_2$ (note that $\Lambda_2\in A^{0,0}(X,\wedge^2 T)\otimes \mathfrak{m}^n$).
\end{enumerate}
Since $\mathcal{O}(v_1+v_2)$ and $\mathcal{O}(w_1+v_2)$ are equivalent Poisson deformations, we have $[\epsilon(v_1+v_2)]=[\epsilon(w_1+v_2)]$. If we show that $\mathcal{O}(w_1+v_2)$ is equivalent to $\mathcal{O}(w_1+w_2)$, then this means that $\mathcal{O}(v)$ is equivalent to $\mathcal{O}(w)$.

Since $[\epsilon(w_1+v_2)]=[\epsilon(w_1+w_2)]$, there exists $(u_0,...,u_{n-1})\in \bigoplus_{i=0}^{n-1}g_0\otimes sym^ig_1\otimes \mathfrak{m}^{i+1}$ such that 
\begin{center}
\tiny{$\begin{CD}
u_0 @>(-1)^1L>> w_2-v_2 \\
@. @A\delta AA\\
@. u_1 @>(-1)^2 L>> -\frac{1}{2}((w_1+w_2)^2-(w_1+v_2)^2)=0 \\
@. @. @A\delta AA\\
@. @. \cdots @>>> \cdots\\
@. @. @. @A\delta AA\\
@. @. @. u_{n-1} @>(-1)^n L>> (-1)^{\frac{(n-1)n}{2}}\frac{1}{n!} ((w_1+ w_2)^n - (w_1+v_2)^n) =0
\end{CD}$}
\end{center}

Since $v_2, w_2\in g_1\otimes \mathfrak{m}^n$ and $w_1\in g_1\otimes \mathfrak{m}$, we have 
\begin{align*}
(-1)^{\frac{(i-1)i}{2}}\frac{1}{i!}((w_1+ w_2)^i-(w_1+ v_2)^i)=0\,\,\,\text{ for } \,\,\, i>1.
\end{align*}
Let's consider $u_{n-1}\in  g_0\otimes sym^{n-1}g_1\otimes \mathfrak{m}^n$. Write $u_{n-1}=\sum_k a_k\otimes b_k$ where $a_k\in g_0$ and $b_k$ are linearly independent in $sym^{n-1}g_1\otimes \mathfrak{m}^n$. Then since $(-1)^nLu_{n-1}=0$, we have $\sum_k La_k\otimes b_k+a_k\otimes L b_k$=0. So $L a_k=0$. Since $b_k$ are linearly independent and, $L a_k\otimes b_k$ and $a_k\otimes Lb_k$ live in different spaces, we have $La_k=0$. Since $H^0(\mathfrak{g})=HP^1(X,\Lambda_0)=0$, we have $a_k=0$. $u_{n-1}=0$. In this way we can show that $u_1=...=u_{n-1}=0$. Hence we have $(-1)^1Lu_0=w_2-v_2\in g_1\otimes \mathfrak{m}^n$. So $Lu_0=v_2- w_2$. In other words, we have
\begin{align*}
\bar{\partial}u_0=\phi_2-\psi_2\\
[\Lambda_0,u_0]=\Lambda_2-\Pi_2
\end{align*}
Let $u_0:=x_1+x_2\in g_0\otimes R\cong g_0\otimes (R/\mathfrak{m}^n\oplus \mathfrak{m}^n)$. Since $Lx_1=0$ and $Lx_2=v_2-w_2$, we have $x_1=0$ by $H^0(\mathfrak{g})=0$. So $u_0\in g_0\otimes \mathfrak{m}^n$. Then we have the following commutative diagram.
\begin{center}
$\begin{CD}
\mathscr{A}_X^{0,0}\otimes R@>\bar{\partial}+\psi_1+\phi_2 >> \mathscr{A}_X^{0,1}\otimes R\\
@V exp(u_0) VV @VV exp(u_0) V \\
\mathscr{A}_X^{0,0}\otimes R@>\bar{\partial}+\psi_1+\psi_2>> \mathscr{A}_X^{0,1}\otimes R
\end{CD}$
\end{center}
Indeed,
\begin{align*}
exp(u_0)(\bar{\partial}+\psi_1+\phi_2)exp(-u_0)&=(1+u_0)(\bar{\partial}+\psi_1+\phi_2)(1-u_0)\\
                                                           &=\bar{\partial}+\psi_1+\phi_2+u_0\bar{\partial}-\bar{\partial}u_0\\
                                                           &=\bar{\partial}+\psi_1+\phi_2-\bar{\partial}u_0\\
                                                           &=\bar{\partial}+\psi_1+\psi_2
                                                           \end{align*}
And 
\begin{align*}
exp(u_0)(\Lambda_0+\Pi_1+\Lambda_2)&=(1+u_0)(\Lambda_0+\Pi_1+\Lambda_2)\\
                                                                         &=\Lambda_0+\Pi_1+\Lambda_2+[u_0,\Lambda_0]=\Lambda_0+\Pi_1+\Lambda_2-[\Lambda_0,u_0]\\
                                                                         &=\Lambda_0+\Pi_1+\Pi_2
\end{align*}                                                        
So the induction holds for $k=n$.

\subsection{$n$-th Universal Poisson deformations}\
Recall that $R_n^u=\mathbb{C}\oplus \mathfrak{m}_n^u:=\mathbb{C}\oplus \mathbb{H}^0(J_n(\mathfrak{g}))^*$ is a local artinian $\mathbb{C}$-algebra with residue $\mathbb{C}$ and expoent $n$ (i.e. $\mathfrak{m}_n^{u\,n+1}=0$).
\begin{definition}[$n$-th universal Poisson deformation]
Since the identity map $\mathbb{H}^*(J_n(\mathfrak{g}))\to\mathbb{H}^*(J_n(\mathfrak{g}))$ is a homomorphism,  it corresponds to a morphic element 
\begin{align*}
[\epsilon(v_u)]=[(\bar{v}_u,\frac{1}{2}\bar{v}_u\odot v_u,....,\frac{1}{n!}\underbrace{\bar{v}_u\odot\cdots \odot \bar{v}_u}_n)]\in \mathbb{H}^0(J_n(\mathfrak{g}))\otimes \mathfrak{m}_n^u
\end{align*}
where $v_u:=\phi_u+\Lambda_u\in g_1\otimes \mathbb{H}^0(J_n(\mathfrak{g}))^*$. Then $v_u$ defines an infinitesimal Poisson deformaiton $P_n^u:=\mathcal{O}(v_u)$ over a local artinian $\mathbb{C}$-algebra $R_n^u:=\mathbb{C}\oplus \mathbb{H}^0(J_n(\mathfrak{g}))^*$. We will call $P_n^u$ be a $n$-th order universial Poisson deformation of $(X,\Lambda_0)$ over $R_n^u$.
\end{definition}
Let $P$ be an infinitesimal Poisson deformation of $(X,\Lambda_0)$ over $(R,\mathfrak{m})$ with $\mathfrak{m}^{n+1}=0$. Assume that $HP^1(X,\Lambda_0)=0$.
Let $v=\phi+\Lambda$ be an Maurer Cartan element corresponding to the infinitesimal Poisson deformation $P$ of $(X,\Lambda_0)$ over $R$. Then $[\epsilon(v)]=[(\bar{v},\frac{1}{2}\bar{v}\odot \bar{v},...,\frac{1}{n!}\bar{v}\odot \cdots \odot \bar{v})]\in \mathbb{H}^0(J_n(\mathfrak{g}))\otimes \mathfrak{m}$, which induces a homomorphism $[\epsilon(v)]:\mathfrak{m}_n^u=\mathbb{H}^0(J_n(\mathfrak{g}))^* \to \mathfrak{m}$. Via the morphism $r:=[\epsilon(v)]$, $v_u\in g_1\otimes \mathfrak{m}_n^{u}$ is sent to $\tilde{v}_u\in g_1 \otimes \mathfrak{m}$. Then $\tilde{v}_u$ satisfies the Maurer Cartan equation since $v_u$ does. Hence $[\epsilon(\tilde{v}_u)]\in \mathbb{H}^0(J_n(\mathfrak{g}))\otimes \mathfrak{m}$ defines a morphic element and we have the corresponding homomorphism  $[\epsilon(\tilde{v}_u)]:\mathbb{H}^0(J_n(\mathfrak{g}))^*\xrightarrow{id=[\epsilon(v_u)]} \mathbb{H}^0(J_n(\mathfrak{g}))^*\xrightarrow{r} \mathfrak{m}$, which is exactly $[\epsilon(v)]$. Hence we have $[\epsilon(v)]=[\epsilon(\tilde{v}_u)]$. Hence by the assumption of $HP^1(X,\Lambda_0)=0$, the induced deformation $\tilde{v}_u=\tilde{\phi}_u+\tilde{\Lambda}_u$ from the deformation $v_u$ by the base change $[\epsilon(v)]:\mathbb{H}^0(J_n(\mathfrak{g}))^* \to \mathfrak{m}$

\begin{center}
$\begin{CD}
(\mathscr{A}_X^{0,0}\otimes R_n^u,\Lambda_0+\Lambda_u)@>\bar{\partial}+\phi_u>> \mathscr{A}_X^{0,1}\otimes R_n^u\\
@VVV @VVV\\
(\mathscr{A}_X^{0,0}\otimes R,\Lambda_0+\tilde{\Lambda}_u) @>\bar{\partial}+\tilde{\phi}_u>> \mathscr{A}_X^{0,1}\otimes R
\end{CD}$
\end{center}
 is equivalent to $v=\phi+\Lambda$, which represents the infinitesimal Poisson deformation $P$ of $(X,\Lambda_0)$ over $R$. 
 Then we have $P/R\cong r^* P_n^u=P_n^u\times_{Spec(R_n^u)} Spec(R)$. This proves our main Theorem \ref{2theorem} (2) in the Introduction of the part II of the thesis.
 
\subsection{Formal Completition}\

The natural map $\mathbb{H}^0(J_{n-1}(\mathfrak{g}))\to \mathbb{H}^0(J_n (\mathfrak{g}))$ gives dually the homomorphism $\mathbb{H}^0(J_n(\mathfrak{g}))^*\to \mathbb{H}^0(J_{n-1}(\mathfrak{g}))^*$. Set $\mathfrak{m}_n^u=\mathbb{H}^0(J_n(\mathfrak{g}))^*$ and $R_n^u:=\mathbb{C}\oplus \mathfrak{m}_n^u$. Take the inverse limit $\mathfrak{m}^u:=\varprojlim_n \mathfrak{m}_n^u$. Then we have 

\begin{align*}
\hat{R}^u:=\mathbb{C}\oplus \mathfrak{m}^u=\mathbb{C}\oplus \varprojlim_n \mathfrak{m}_n^u=\varprojlim_n (\mathbb{C}\oplus \mathfrak{m}_n^u)=\varprojlim_n R_n^u
\end{align*}

By our construction of $J_n(\mathfrak{g})$, we have $\mathbb{C}\oplus \mathfrak{m}^u/\mathfrak{m}^{u\,n+1}=\mathbb{C}\oplus \mathfrak{m}_n^u$. Hence $(\hat{R}^u,\mathfrak{m}^u)=\varprojlim(R_n^u,\mathfrak{m}_n^u)$ is a complete local noetherian $\mathbb{C}$-algebra with respect to the $\mathfrak{m}^u$-$adic$ topology.

From $\mathfrak{m}_n^u=\mathbb{H}^0(J_n(\mathfrak{g}))^*\to \mathfrak{m}_{n-1}^u=\mathbb{H}^0(J_{n-1}(\mathfrak{g}))^*$, the morphic element $[\epsilon(v_u=\phi_u+\Lambda_u)]=[(\bar{v}_u,\frac{1}{2}\bar{v}_u\odot \bar{v}_u,\cdots ,\frac{1}{n!}\bar{v}_u\odot \cdots \odot \bar{v}_u)]\in \mathbb{H}^0(J_n(\mathfrak{g}))\otimes \mathfrak{m}_n^u$ inducing the identity map on $\mathbb{H}^0(J_n(\mathfrak{g}))^*$,  gives a morphic element $[\epsilon(\tilde{v}_u=\tilde{\phi}_u+\tilde{\Lambda}_u)]=[(\bar{\tilde{v}}_u,\cdots, \frac{1}{(n-1)!}\bar{\tilde{v}}_u\odot \cdots \odot \bar{\tilde{v}}_u,0)]\in \mathbb{H}^0(J_n(\mathfrak{g}))\otimes \mathfrak{m}_{n-1}^u$ (via $\mathfrak{m}_n^u\to \mathfrak{m}_{n-1}^u$) which can be considered as an element in $\mathbb{H}^0(J_{n-1}(\mathfrak{g}))\otimes \mathfrak{m}_{n-1}^u$ inducing the identity map on $\mathbb{H}^0(J_{n-1}(\mathfrak{g}))^*$ and so we have the following commutative diagram:

\begin{center}
$\begin{CD}
\mathbb{H}^0(J_{n}(\mathfrak{g}))^* @>id=[\epsilon(v_u)] >> \mathbb{H}^0(J_{n}(\mathfrak{g}))^*\\
@VVV @VVV\\
\mathbb{H}^0(J_{n-1}(\mathfrak{g}))^*@>id=[\epsilon(\tilde{v}_u)]>> \mathbb{H}^0(J_{n-1}(\mathfrak{g}))^*
\end{CD}$
\end{center}
 Hence we have the following commutative diagram
 
 \begin{center}
$\begin{CD}
(\mathscr{A}_X^{0,0}\otimes R_n^u,\Lambda_0+\Lambda_u)@>\bar{\partial}+\phi_u>> \mathscr{A}_X^{0,1}\otimes R_n^u\\
@VVV @VVV\\
(\mathscr{A}_X^{0,0}\otimes R_{n-1}^u,\Lambda_0+\tilde{\Lambda}_u))@>\bar{\partial}+\tilde{\phi}_u>>\mathscr{A}_X^{0,1}\otimes R_{n-1}^u
\end{CD}$
\end{center}

So $n$-th universal Poisson  deformations $P_n^u/R_n^u$ fit together to form a direct system with limit

\begin{align*}
\hat{P}^u/\hat{R}^u=\varinjlim P_n^u/R_n^u
\end{align*}

Now we set $\hat{R}=\varprojlim (R_n,\mathfrak{m}_n)$ is a complete local noetherian $\mathbb{C}$-algebra where $(R_n,\mathfrak{m}_n)$ is a local artinian $\mathbb{C}$-algebra with residue $\mathbb{C}$ and $\hat{P}/\hat{R}= \varinjlim_n P_n/R_n$ is a formal Poisson analytic space over $\hat{R}$, where $P_n/R_n$ is an infinitesimal Poisson deformation of $(X,\Lambda_0)$, which can be interpreted as a sequence $\{r_n\}$ of morphic elements where $r_n \in \mathbb{H}^0(J_n(\mathfrak{g})) \otimes \mathfrak{m}_n$ such that $r_n$ induces $r_{n-1}\in \mathbb{H}^0(J_{n-1}(\mathfrak{g}))\otimes \mathfrak{m}_{n-1}$ by the natural map $\mathfrak{m}_n\to \mathfrak{m}_{n-1}$. Hence we have the following commutative diagram:
\begin{center}
$\begin{CD}
\mathbb{H}^0(J_n(\mathfrak{g}))^* @>r_n >> \mathfrak{m}_n\\
@VVV @VVV\\
\mathbb{H}^0(J_{n-1}(\mathfrak{g}))^*@>r_{n-1}>>  \mathfrak{m}_{n-1}
\end{CD}$
\end{center}
So we have the map $\hat{r}=lim_n r_n:\hat{R}^u\to \hat{R}$ which induces $\hat{P}/\hat{R}=\hat{r}^*(\hat{P}^u/\hat{R}^u)$. This proves our main Theorem \ref{2theorem} (3) in the Introduction of the part II of the thesis. So we complete the Theorem \ref{2theorem}.

\part{Deformations of algebraic Poisson schemes}\label{part3}

In the third part of the thesis, we study deformations of algebraic Poisson schemes over an algebraic closed field $k$, which is an algebraic version of the first part of the thesis. In chapter \ref{chapter7}, we discuss the definition of Poisson schemes, morphisms and cohomology. A Poisson scheme $X$ is a scheme whose structure sheaf $\mathcal{O}_X$ is a sheaf of Poisson $k$-algebras. Equivalently, a Poisson structure on a scheme $X$ is characterized by an element $\Lambda_0\in \Gamma(X,\mathscr{H}om_{\mathcal{O}_X}(\wedge^2\Omega_{\mathcal{O}_{X}/k}^1,\mathcal{O}_X))$ with $[\Lambda_0,\Lambda_0]=0$. By a deformation of a Poisson scheme $(X,\Lambda_0)$ we mean a commutative diagram
\begin{center}
$\xi:$
$\begin{CD}
(X,\Lambda_0) @>>> (\mathcal{X},\Lambda)\\
@VVV @VV{\pi}V\\
Spec(k) @>s>>S
\end{CD}$
\end{center}
where $\pi$ is flat and surjective, and $S$ is connected, $(\mathcal{X},\Lambda)$ is a Possoin scheme over $S$ defined by $\Lambda\in \Gamma(\mathcal{X}, \mathscr{H}om(\wedge^2 \Omega_{\mathcal{X}/S}^1,\mathcal{O}_{\mathcal{X}}))$ with $X \cong \mathcal{X} \times_S Spec(k)$ as a Poisson isomorphism. Note that when we ignore Poisson structures, a Poisson deformation is an usual flat deformation of an algebraic scheme $X$.  By following Sernesi's book \cite{Ser06}, we extend the formalism of ordinary flat deformations to Poisson deformations. We show that given a Poisson scheme $(X,\Lambda_0)$, first order Poisson deformation (i.e Poisson deformations over a dual number $k[\epsilon]$) whose underlying flat deformation (when we ignore Poisson structures) is locally trivial, is naturally in one to one correspondence with $HP^2(X,\Lambda_0)$ which is the second (truncated) Lichnerowicz-Poisson cohomology group, in other words $2$nd hypercohomology of the following complex of sheaves induced by $[\Lambda_0,-]$.
\begin{align*}
0\to \mathscr{H}om_{\mathcal{O}_X}(\Omega_{X/k}^1,\mathcal{O}_X)\xrightarrow{[\Lambda_0,-]} \mathscr{H}om_{\mathcal{O}_X}(\wedge^2 \Omega^1_{X/k},\mathcal{O}_X)\xrightarrow{[\Lambda_0,-]}\mathscr{H}om_{\mathcal{O}_X}(\wedge^3\Omega^1_{X/k},\mathcal{O}_X)\xrightarrow{[\Lambda_0,-]}\cdots
\end{align*}
We also show that for a smooth Poisson algebraic scheme over $k$, any small extension $e:0\to (t)\to \tilde{A}\to A\to 0$ (i.e $(A,\mathfrak{m}),(\tilde{A},\tilde{\mathfrak{m}})$ are local artinian $k$-algebras with residue $k$ and $t\cdot \tilde{\mathfrak{m}}=0$), and an infinitesimal Poisson deformation $\xi$ of $(X,\Lambda_0)$ over $Spec(A)$, we can associate 
an element $o_{\mathfrak{\xi}}(e)\in HP^3(X,\Lambda_0)$ such that $o_{\mathfrak{\xi}}(e)$ is $0$ if and only if a lifting of $\xi$ to $\tilde{A}$ exists. So $HP^3(X,\Lambda_0)$ is an obstruction space. We also show that if $HP^2(X,\Lambda_0)=0$, then $(X,\Lambda_0)$ is rigid, which means that any infinitesimal Poisson deformation of $(X,\Lambda_0)$ over $A$ is trivial for all local artinian $k$-agebra $A$. 

In chapter \ref{chapter8}, we discuss Poisson deformation functor $PDef_{(X,\Lambda_0)}$ which is a functor of Artin rings. For a local artinian $k$-algebra $A$ with residue $k$, $PDef_{(X,\Lambda_0)}(A)$ is the set of Poisson deformations over $Spec(A)$ up to Poisson equivalence. We show that for a smooth projective Poisson scheme $(X,\Lambda_0)$, $PDef_{(X,\Lambda_0)}$ satisfies Schlessinger's criterion $(H_0),(H_1),(H_2),(H_3)$ and so $PDef_{(X,\Lambda_0)}$ has a miniversal family. We also show that in addition if $HP^1(X,\Lambda_0)=0$, $PDef_{(X,\Lambda_0)}$ is pro-representable.

 In chapter \ref{chapter9}, we extend the construction of a cotangent complex (\cite{Sch67}) to Poisson cases. Let $A\to B$ be a Poisson homomorphism of Poisson $k$-algebras, and $M$ be a Poisson $B$-module. We construct $PT^i(B/A,M)$ in a similar way to construct $T^i(B/A,M)$ in \cite{Sch67}. As an application to Poisson deformation, we show that  for a Poisson algebra $B_0$, $PDef_{Spec(B_0)}(k[\epsilon])$ is a natural one to one correspondence with $PT^1(B_0/k,B_0)$. We also show that given a Poisson algebra $B_0$ and an Poisson ideal $I$ of $B_0$, deformations of a Poisson subscheme $Spec(C)$ of $Spec(B_0)$ over $Spec(k[\epsilon])$ is one to one correspondence with $PT^1(C/B_0,C)$ where $C=B_0/I$.

\chapter{Deformations of algebraic Poisson schemes}\label{chapter7} 

\section{Definitions of Poisson schemes, morphisms and cohomology}
In this section, every algebra is a commutative $k$-algebra, where $k$ is a field. Our reference is \cite{Lau13} Chapter 3. For algebraic geometry, we refer to \cite{Har77}, \cite{Liu02}.

\subsection{Characterization of a Poisson bracket $\{-,-\}$ of a Poisson algebra $A$ over $R$} 
In this subsection, we will characterize a Poisson structure of a commutative algebra $A$ over $R$ in terms of an element $\Lambda \in Hom_A(\Omega_{R/A}^1,A)$ with $[\Lambda,\Lambda]=0$ where $[-,-]$ is the Schouten bracket on $\bigoplus_{p\geq 1}Hom_A(\wedge^p \Omega_{A/R}^1,A)$. 
\begin{definition}
Let $A$ be a commutative $R$-algebra and let $p\geq 1$. A skew symmetric $p$-linear map $P\in Hom_R(\wedge^p A,A)$ is called a skew symmetric $p$-derivation of $A$ over $R$, if $P$ is a derivation in each of its components.

\end{definition}

Let $\Omega_{A/R}^1$ be the $A$-module of relative k\"{a}hler differential forms of $A$ over $R$. Then the $R$-module of all skew symmetric $p$-linear maps in $Hom_R(\wedge^p,A,A)$ is identified with $Hom_A(\wedge^p \Omega_{A/R}^1,A)$. Let $P$ be a skew symmetric $p$ linear map in $Hom_R(\wedge^p A,A)$. Then the associated $\tilde{P} \in Hom_A(\wedge^p \Omega_{A/R}^1,A)$ is defined in the following way: $\tilde{P}(da_1\wedge\cdots \wedge da_p):=P(a_1,\cdots a_p)$ where $d:A\to \Omega_{A/R}^1$ is the canonical map.

\subsubsection{The Shouten bracket on $\bigoplus_{p\geq 1} Hom_A(\wedge^p \Omega_{A/R}^1,A)$ and characterization of a Poisson bracket on $A$}

\begin{definition}
For $p,q \in \mathbb{N}$, a $(p,q)$-shuffle is a permutation $\sigma$ of the set $\{1,...,p+q\}$, such that $\sigma(1)< \cdots < \sigma(p)$ and $\sigma(p+1) < \cdots <\sigma(p+q)$. The set of all $(p,q)$-shuffles is denoted by $S_{p,q}$. For a shuffle $\sigma \in S_{p,q}$, we denote the signature of $\sigma$ by $sgn(\sigma)$. By convention, $S_{p,-1}:=\emptyset$ and $S_{-1,q}:= \emptyset $ for $p,q\in \mathbb{N}$.
\end{definition}

\begin{definition}
We define the Schouten bracket $[-,-]$ on $\bigoplus_{p\geq 1} Hom_A(\wedge^p \Omega_{A/R}^1,A)$, namely a family of maps
\begin{align*}
[-,-]:Hom_A(\wedge^p \Omega_{A/R}^1,A) \times Hom_A(\wedge^q \Omega_{A/R}^1,A) \to Hom_A(\wedge^{p+q-1}\Omega_{A/R}^1,A)
\end{align*}
for $p,q\in \mathbb{N}$ in the following way: let $P\in Hom_A(\wedge^p \Omega_{A/R}^1,A)$ and $Q\in Hom_A(\wedge^q \Omega_{A/R}^1,A)$, and for $F_1,...,F_{p+q-1}\in A$ by
\begin{align*}
[P,Q](dF_1\wedge \cdots\wedge dF_{p+q-1})=\sum_{\sigma\in S_{q,p-1}}sgn(\sigma)P(d(Q(dF_{\sigma(1)}\wedge...\wedge dF_{\sigma(q)}))\wedge dF_{\sigma(q+1)}\cdots\wedge dF_{\sigma(q+p-1)})\\
                                     -(-1)^{(p-1)(q-1)}\sum_{\sigma\in S_{p,q-1}}sgn(\sigma)Q(d(P(dF_{\sigma(1)}\wedge...\wedge dF_{\sigma(p)}))\wedge dF_{\sigma(p+1)}\wedge\cdots \wedge dF_{\sigma(p+q-1)})
\end{align*} 
\end{definition}

\begin{example}\label{3ex}
Let $P\in Hom_A(\wedge^2 \Omega_{A/R}^1,A)$ and $Q\in Hom_A(\Omega_{A/R}^1,A)$. Then
\begin{align*}
[P,Q](dF_1\wedge dF_2)=P(dQ(F_1)\wedge dF_2)-P(d(Q(F_2))\wedge dF_1)-Q(d(P(dF_1\wedge dF_2)))
\end{align*}
\end{example}

\begin{proposition}
Let $A$ be a commutative algebra over $R$. If $\Lambda$ is a skew symmetric biderivation of $A$ over $R$, i.e $\Lambda\in Hom_A(\wedge^2 \Omega_{A/R}^1,A)$, then $P$ defines a Poisson bracket $($i.e Jaocbi identity holds$)$ if and only if $[\Lambda,\Lambda]=0$.
\end{proposition}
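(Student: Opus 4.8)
The plan is to reduce the Jacobi identity for the bracket $\{f,g\} := \Lambda(df\wedge dg)$ to the vanishing of the trilinear map $(f,g,h)\mapsto [\Lambda,\Lambda](df\wedge dg\wedge dh)$ by a direct computation, using the explicit formula for the Schouten bracket in the case $p=q=2$. First I would write out $[\Lambda,\Lambda](dF_1\wedge dF_2\wedge dF_3)$ from the definition: since $\Lambda\in Hom_A(\wedge^2\Omega^1_{A/R},A)$, the sum over $(2,1)$-shuffles produces (up to the sign $(-1)^{(p-1)(q-1)}=(-1)^1=-1$, which here combines with the two terms of the bracket to give an overall factor $2$) a sum of three terms of the shape $\Lambda\bigl(d(\Lambda(dF_{\sigma(1)}\wedge dF_{\sigma(2)}))\wedge dF_{\sigma(3)}\bigr)$ over the three $(2,1)$-shuffles $\sigma$. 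Concretely I expect
\begin{align*}
\tfrac12[\Lambda,\Lambda](dF_1\wedge dF_2\wedge dF_3)
&= \Lambda\bigl(d(\Lambda(dF_1\wedge dF_2))\wedge dF_3\bigr)
+ \Lambda\bigl(d(\Lambda(dF_2\wedge dF_3))\wedge dF_1\bigr)\\
&\quad + \Lambda\bigl(d(\Lambda(dF_3\wedge dF_1))\wedge dF_2\bigr),
\end{align*}
which in Poisson-bracket notation is exactly
\[
\tfrac12[\Lambda,\Lambda](dF_1\wedge dF_2\wedge dF_3)
= \{\{F_1,F_2\},F_3\} + \{\{F_2,F_3\},F_1\} + \{\{F_3,F_1\},F_2\}.
\]

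The second step is to verify the identity $\Lambda\bigl(d(\{f,g\})\wedge dh\bigr) = \{\{f,g\},h\}$ that is implicit above: this is just the fact that $\{a,h\} = \Lambda(da\wedge dh)$ applied to $a = \{f,g\}\in A$, so it is immediate from the definition of the bracket and the $A$-linearity of $\Lambda$ together with the compatibility of $d$ with the $A$-module structure. The mild subtlety worth checking carefully is that $\Lambda$, as an element of $Hom_A(\wedge^2\Omega^1_{A/R},A)$, really does encode the biderivation uniformly, i.e. that the correspondence $P\leftrightarrow \tilde P$ recalled in the text is a bijection, so that the Jacobiator computed via $\Lambda$ agrees with the one computed via the $R$-multilinear map; this is standard but should be invoked explicitly.

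Combining these two steps, the Jacobiator of $\{-,-\}$ equals $\tfrac12[\Lambda,\Lambda]$ evaluated on exact forms $dF_1\wedge dF_2\wedge dF_3$. Now I would finish as follows: $\{-,-\}$ is a Poisson bracket precisely when the Jacobiator vanishes on all triples of elements of $A$, i.e. when $[\Lambda,\Lambda](dF_1\wedge dF_2\wedge dF_3)=0$ for all $F_1,F_2,F_3\in A$. Since $\Omega^1_{A/R}$ is generated as an $A$-module by the exact differentials $dF$, $F\in A$, and $[\Lambda,\Lambda]$ is $A$-multilinear and skew-symmetric, vanishing on all wedges $dF_1\wedge dF_2\wedge dF_3$ is equivalent to $[\Lambda,\Lambda]=0$ as an element of $Hom_A(\wedge^3\Omega^1_{A/R},A)$. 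This gives the "if and only if". The main obstacle is purely bookkeeping: getting the shuffle signs and the coefficient $2$ in front of $[\Lambda,\Lambda]$ exactly right, and handling the Leibniz-rule terms $d(\Lambda(dF_i\wedge dF_j))$ correctly — but the conceptual content is entirely the identification of the Schouten self-bracket with the Jacobiator, which Example~\ref{3ex} already sets up. No deeper input (nothing about smoothness, finiteness, or the scheme structure) is needed here.
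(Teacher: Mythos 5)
Your proposal is correct, and since the paper's own ``proof'' of this proposition is nothing more than the citation to \cite{Lau13}, the computation you outline is exactly the standard argument carried out there: the three $(2,1)$-shuffles give $\tfrac12[\Lambda,\Lambda](dF_1\wedge dF_2\wedge dF_3)=\{\{F_1,F_2\},F_3\}+\{\{F_2,F_3\},F_1\}+\{\{F_3,F_1\},F_2\}$ with the signs as you state, and vanishing on wedges of exact differentials forces $[\Lambda,\Lambda]=0$ because $\Omega^1_{A/R}$ is generated as an $A$-module by the $dF$, $F\in A$. The only point worth flagging is that your equivalence passes through the overall factor $2$, so it needs $2$ to be invertible (or at least a nonzerodivisor) in $A$ --- harmless in this paper's setting of algebras over a field of characteristic zero, but worth stating if you want the proposition in full generality.
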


\begin{proof}
See \cite{Lau13} Proposition 3.5  page 80.
\end{proof}

\begin{notation}
Let $A$ be a Poisson algebra over $R$ with a Poisson bracket $\{-,-\}$. Let $\Lambda$ be the associated biderivation with the Poisson bracket $\{-,-\}$. Then we will denote by $(A,\Lambda)$ the Poisson algebra $A$ over $R$ with the Poisson bracket $\{-,-\}$.
\end{notation}

\begin{remark}
Let $(A,\Lambda)$ be a Poisson algebra over $R$ with the Poisson structure $\Lambda \in Hom_A(\wedge^2, \Omega_{A/R}^1,A)$ with $[\Lambda,\Lambda]=0$. If we let $\mathfrak{g}=\oplus_{i\geq 0} g_i$, where $g_i=Hom_A(\wedge^{i+1}\Omega_{A/R}^1,A)$. Then $\mathfrak{g}=(\bigoplus_{i\geq 0} g_i,[-,-], [\Lambda,-])$ is a differential graded Lie algebra with the differential $[\Lambda,-]$. In other words, we have the following properties: for $P\in Hom_A(\wedge^p \Omega_{A/R}^1,A)$ and $Q\in Hom_A(\wedge^q \Omega_{A/R}^1,A)$ and $S\in Hom_A(\wedge^r \Omega_{A/R}^1,A) $,
\begin{enumerate}
\item $[\Lambda,[\Lambda,P]]]=0$ and $[\Lambda,P]\in Hom_A(\wedge^{p+1} \Omega_{A/S}^1,A)$
\item $[P,Q]=-(-1)^{(p-1)(q-1)}[Q,P]$
\item $[[P,Q],S]=[P,[Q,S]]-(-1)^{(p-1)(q-1)}[Q,[P,S]]$
\item $[\Lambda,[P,Q]]=[[\Lambda,P],Q]+(-1)^{p-1}[P,[\Lambda,Q]]$
\end{enumerate}

\begin{definition}
Let $(A,\Lambda)$ be a Poisson algebra over $R$. We define $i$-th truncated Lichnerowicz Poisson cohomology of $(A,\Lambda)$ to be the $i$-th cohomology group of the following complex
\begin{align*}
0\to  Hom_A(\Omega_{A/R}^1,A)\xrightarrow{[\Lambda,-]} Hom_A (\wedge^2 \Omega_{A/R}^1,A)\xrightarrow{[\Lambda,-]} Hom_A(\Omega_{A/R}^1,A)\xrightarrow{[\Lambda,-]} \cdots
\end{align*}
We will denote $i$-th Lichnerowicz Poisson cohomology group by $HP^i(A,\Lambda)$.
\end{definition}

\end{remark}

\subsubsection{Characterization of Poisson morphisms}\

Let $f:A\to B$ be a $R$-homomorphism. Then we have the following commutative diagram
\begin{center}
$\begin{CD}
 \Omega_{A/R}\otimes_A B@>>> \Omega_{B/R}\\
@Ad_{A} AA @Ad_{B} AA\\
A@>f>>B
\end{CD}$
\end{center}

So we have a canonical homomorphism $\wedge^2 \Omega_{A/R}\otimes_A B\to \wedge^2\Omega_{B/R}$.  This induces $f^*:Hom_B(\wedge^2 \Omega_{B/R},B)\to Hom_B(\wedge^2 \Omega_{A/R}\otimes_A B,B)\cong Hom_A(\wedge^2 \Omega_{A/R},B)$

\begin{proposition}
Let $(A,P)$ and $(B,Q)$ be two Poisson $R$-algebras. Then a homomorphism $A\to B$ of $R$-algebras is a Poisson homomorphism if and only if $f^*Q=f\circ P$.
\end{proposition}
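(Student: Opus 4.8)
The plan is to unwind both sides of the claimed equivalence into statements purely about the behavior of the bracket $\{-,-\}$ under $f$, and then check these coincide. Recall that the Poisson bracket on $A$ is recovered from $P$ by $\{a_1,a_2\}_A = P(da_1 \wedge da_2)$, and similarly $\{b_1,b_2\}_B = Q(db_1 \wedge db_2)$; the condition that $f$ is a Poisson homomorphism means $f(\{a_1,a_2\}_A) = \{f(a_1),f(a_2)\}_B$ for all $a_1,a_2 \in A$. So the first step is to rewrite this as $f(P(da_1\wedge da_2)) = Q(df(a_1)\wedge df(a_2))$.

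Next I would identify the right-hand side with the image of $P$ and $Q$ under the natural maps already set up in the excerpt. The canonical commutative square relating $\Omega_{A/R}\otimes_A B$ and $\Omega_{B/R}$ sends $da_i \otimes 1 \mapsto df(a_i)$; hence the induced map $\wedge^2\Omega_{A/R}\otimes_A B \to \wedge^2\Omega_{B/R}$ sends $da_1\wedge da_2 \otimes 1 \mapsto df(a_1)\wedge df(a_2)$. Composing $Q$ with this map and using the adjunction isomorphism $Hom_B(\wedge^2\Omega_{A/R}\otimes_A B, B)\cong Hom_A(\wedge^2\Omega_{A/R},B)$ yields exactly $f^*Q$, which by definition satisfies $(f^*Q)(da_1\wedge da_2) = Q(df(a_1)\wedge df(a_2))$. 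On the other side, $f\circ P$ is the element of $Hom_A(\wedge^2\Omega_{A/R},B)$ given by $(f\circ P)(da_1\wedge da_2) = f(P(da_1\wedge da_2)) = f(\{a_1,a_2\}_A)$. Thus the equation $f^*Q = f\circ P$ in $Hom_A(\wedge^2\Omega_{A/R},B)$ is literally the assertion that $f(\{a_1,a_2\}_A) = \{f(a_1),f(a_2)\}_B$ for all generators $da_1\wedge da_2$.

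The remaining step is to promote agreement on these generators to agreement as $A$-module homomorphisms on all of $\wedge^2\Omega_{A/R}$. Since $\Omega_{A/R}$ is generated as an $A$-module by exact differentials $da$, the wedge square $\wedge^2\Omega_{A/R}$ is generated by elements $da_1 \wedge da_2$; both $f^*Q$ and $f\circ P$ are $A$-linear (viewing $B$ as an $A$-module via $f$), so if they agree on a generating set they agree everywhere. This is the one point requiring a small argument — one must check that $f^*Q$ is genuinely $A$-linear, which follows because it is a composite of the $B$-linear map $Q\circ(\wedge^2\Omega_{A/R}\otimes_A B\to \wedge^2\Omega_{B/R})$ with the adjunction, restricted along $A\to B$ — and that $f\circ P$ is $A$-linear, which is immediate. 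Both directions of the biconditional then follow from this single identification, so no separate "only if" argument is needed.

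\textbf{Main obstacle.} I expect the only genuine subtlety to be bookkeeping: making the adjunction isomorphism $Hom_B(\wedge^2\Omega_{A/R}\otimes_A B,B)\cong Hom_A(\wedge^2\Omega_{A/R},B)$ explicit enough that one can literally read off $(f^*Q)(da_1\wedge da_2) = Q(df(a_1)\wedge df(a_2))$, and confirming that the canonical map $\wedge^2\Omega_{A/R}\otimes_A B\to\wedge^2\Omega_{B/R}$ really does send $da_1\wedge da_2\otimes 1$ to $df(a_1)\wedge df(a_2)$ (this is just functoriality of K\"ahler differentials together with the universal property of $d$). Once those identifications are pinned down, the proof is a one-line translation; there is no hard analytic or homological input.
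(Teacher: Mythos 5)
Your proposal is correct and follows essentially the same route as the paper: unwind the definition of $f^*Q$ via the canonical map $\wedge^2\Omega_{A/R}\otimes_A B\to\wedge^2\Omega_{B/R}$ and the adjunction, so that the equation $f^*Q=f\circ P$ evaluated on $da_1\wedge da_2$ becomes literally $f(\{a_1,a_2\}_A)=\{f(a_1),f(a_2)\}_B$. The only difference is that you spell out the reduction to generators and the $A$-linearity needed for both directions, which the paper's very terse proof (it only writes out the forward computation) leaves implicit.
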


\begin{proof}
Let $f$ be a Poisson $R$-homomorphism. In other words, $f(\{a,b\})=\{f(a),f(b)\}$. Then $f(P(d_A a,d_A b))=Q(d_B f(a),d_B f(b))=f^* Q(d_A a,d_B b)$. Hence we get $f^*Q=f\circ P$.
\end{proof}

\begin{example}[Poisson ideals]
Let $I$ be an ideal of a commutative $R$-algebra $A$ and set $B=A/I$. Let $\Lambda\in Hom_A(\wedge^2 \Omega_{A/R},A)$ be a Poisson structure on $A$ over $R$. The map $A\to B$ induces $Hom_{B}(\wedge^2 \Omega_{B/R},B)\to Hom_A(\wedge^2 \Omega_{A/R},B)$ which is injective since $\Omega^1_{A/R}\otimes_A B\to \Omega^1_{B/R}$ is surjective. Let $\bar{\Lambda}$ be the composition of $\Lambda$ followed by $A\to B$. If $\bar{\Lambda}$ has pre image $P$, then $P$ defines a Poisson structure on $B$, which makes $I$ to be a Poisson ideal of $A$. Indeed, we show that $[P,P]=0$. Since $Hom_{B}(\wedge^3\Omega_{B/R},B)\to Hom_A(\wedge^3 \Omega_{A/R}, B)$ is injective and $[P,P]$ is sent to $\overline{[\Lambda,\Lambda]}=0$, where $\overline{[\Lambda,\Lambda]}$ is the composition of $[\Lambda,\Lambda]$ followed by $A\to B$. We have $[P,P]=0$.

\end{example}

\subsection{Affine Poisson Schemes}

\subsubsection{Poisson $(k)$-sheaves on a topological space $X$}

\begin{definition}
Let $X$ be a topological space and let $k$ be a field. A Poisson presheaf $\mathcal{F}$ on $X$ consists of the following data:

\begin{enumerate}
\item An Poisson $k$-algebra $\mathcal{F}(U)$ for every open subset $U$ of $X$, and
\item a Poisson $k$-algebra homomorphism $\rho_{UV}:\mathcal{F}(U)\to \mathcal{F}(V)$ for every inclusion  wof open subset $V\subset U$. 
\end{enumerate}
which satisfy the following conditions:
\begin{enumerate}
\item $\mathcal{F}(\emptyset)=0$ for the empty set $\emptyset$.
\item $\rho_{UU}$ is the identity map $\mathcal{F}(U)\to \mathcal{F}(U)$
\item If we have three open subsets $W\subset V \subset U$, then $\rho_{UW}=\rho_{VW}\circ \rho_{UV}$.
\end{enumerate}
\end{definition}
We call $\rho_{UV}$ restriction maps, and we write $s|_V$ instead of $\rho_{UV}(s)$ for $s\in \mathcal{F}(U)$. We refer to $\mathcal{F}(U)$ as the sections of $\mathcal{F}$ over $U$.
\begin{definition}
 We say that a Poisson presheaf $\mathcal{F}$ is a Poisson sheaf if we have the following properties:
 \begin{enumerate}
 \item $($Uniqueness$)$ Let $s\in \mathcal{F}(U)$ for an open subset $U$ of $X$, and $\{ U_i \}$ be a open covering of $U$.  If $s|_{U_i}=0$ for every $i$, then $s=0$.
 \item $($Glueing local sections$)$ Let $U$  be an open subset of $X$ and $\{U_i\}$ be a open covering of $U$. Let $s_i\in \mathcal{F}(U_i)$ such that $s_i|_{U_i\cap U_j} =s_j|_{U_i\cap U_j}$. Then there exists $s\in \mathcal{F}(U)$ such that $s|_{U_i}=s_i$. 
 \end{enumerate}
 \end{definition}

\begin{definition}
Let $\mathcal{F}$ and $\mathcal{G}$ be Poisson presheaves on $X$. A morphism $f:\mathcal{F}\to \mathcal{G}$ is called Poisson morphism if $f(U):\mathcal{F}(U)\to \mathcal{G}(U)$ is a Poisson homomorphism for any open set $ U \subset X $.
\end{definition}

\begin{remark}\
\begin{enumerate}
\item Let $\mathcal{F}$ be Poisson presheaf on $X$, and let $x\in X$. The stalk $\mathcal{F}_x$ at $x$ is a Poisson $k$-algebra.
\item Let $\mathcal{F}$ be a Poisson presheaf on $X$. There exists a Poisson sheaf $\mathcal{F}^{+}$ associated to $\mathcal{F}$ and a morphism of Poisson presheaves $\theta:\mathcal{F}\to \mathcal{F}^+$ verifying the following universal property: for every Poisson morphism $\alpha:\mathcal{F}\to \mathcal{G}$, where $G$ is a Poisson sheaf, there exists a unique Poisson morphism $\tilde{\alpha}:\mathcal{F}^{+}\to \mathcal{G}$ such that $\alpha=\tilde{\alpha}\circ \theta$.
\end{enumerate}
\end{remark}

\begin{definition}[Poisson locally ringed spaces]
A Poisson ringed topological space consists of a topological space $X$ endowed with a Poisson sheaf $($a sheaf of Poisson $k$-algebras$)$ $\mathcal{O}_X$ on $X$ such that $\mathcal{O}_{X,x}$ is a local ring for every $x\in X$ which is a Poisson $k$-algebra. We denote it by $(X,\mathcal{O}_X)$.
\end{definition}

\begin{definition}
A Poisson morphism of Poisson ringed topological spaces
\begin{align*}
(f,f^{\sharp}):(X,\mathcal{O}_X)\to (Y,\mathcal{O}_Y)
\end{align*}
consists of a continuous map $f:X\to Y$ and a morphism of Poisson sheaves $f^\sharp:\mathcal{O}_Y\to f_* \mathcal{O}_X$ such that for every $x\in X$, the induced Poisson homomorphism $f^\sharp_x:\mathcal{O}_{Y,f(x)}\to \mathcal{O}_{X,x}$ is a local Poisson homomorphism. We define the compositions of two Poisson morphisms of Poisson ringed topological spaces in an obvious manner.
\end{definition}

\subsubsection{Affine Poisson schemes}\

We recall the following facts. 
\begin{lemma}
Let $A$ be an commutative $R$-algebra, $S\subset A$ a multiplicatively closed systems, and $A_S$ the corresponding localization of $A$. Then the module of relative differential forms $\Omega_{A_S/R}^1$ is given by the localization $(\Omega_{A/R}^1)_S$ and that the map
\begin{align*}
d:A_S\to (\Omega_{A/R}^1)_S,\,\,\,\,\, \frac{f}{s}\mapsto \frac{sd_{A/R}(f)-fd_{A/R}(s)}{s^2}
\end{align*}
serves as the exterior differential of $A_S$ over $R$ where $d_{A/R}:A\to \Omega_{A/R}^1$ denotes the exterior differential of $A$. And we have $\Omega_{A/R}^1\otimes_A A_S\xrightarrow{} \Omega_{A_S/R}^1$ and $\Omega_{A_S/A}^1=0$.
\end{lemma}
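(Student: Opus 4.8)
The plan is to verify the three assertions---that $\Omega^1_{A_S/R} \cong (\Omega^1_{A/R})_S$ with the stated exterior differential, that $\Omega^1_{A/R} \otimes_A A_S \to \Omega^1_{A_S/R}$ is an isomorphism, and that $\Omega^1_{A_S/A} = 0$---by exploiting the universal property of K\"ahler differentials. First I would recall that $\Omega^1_{A/R}$ together with $d_{A/R}\colon A \to \Omega^1_{A/R}$ is the universal $R$-derivation out of $A$: every $R$-derivation $A \to M$ into an $A$-module factors uniquely through $d_{A/R}$. The strategy is to show that $(\Omega^1_{A/R})_S$, equipped with the map $d\colon A_S \to (\Omega^1_{A/R})_S$ given by the quotient rule $d(f/s) = (s\,d_{A/R}(f) - f\,d_{A/R}(s))/s^2$, satisfies the universal property that characterizes $\Omega^1_{A_S/R}$.

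The key steps, in order, are: (1) Check that the proposed formula for $d(f/s)$ is well-defined, i.e.\ independent of the representative $f/s$; this is a direct computation using that $fs' = f's$ in $A_S$ for equivalent fractions, combined with the Leibniz rule for $d_{A/R}$. (2) Check that $d\colon A_S \to (\Omega^1_{A/R})_S$ is an $R$-derivation, i.e.\ $R$-linear and satisfies the Leibniz rule; again a routine verification from the quotient-rule formula. (3) Establish universality: given any $R$-derivation $D\colon A_S \to M$ into an $A_S$-module $M$, restrict along $A \to A_S$ to get an $R$-derivation $A \to M$, which factors uniquely through $d_{A/R}$ via some $A$-linear $\varphi\colon \Omega^1_{A/R} \to M$; since $M$ is an $A_S$-module, $\varphi$ extends uniquely to an $A_S$-linear map $(\Omega^1_{A/R})_S \to M$, and one checks this extension intertwines $d$ with $D$ using that $D$ is determined on $A_S$ by its values on $A$ via the quotient rule. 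By uniqueness in the universal property, $(\Omega^1_{A/R})_S \cong \Omega^1_{A_S/R}$. (4) Deduce the isomorphism $\Omega^1_{A/R} \otimes_A A_S \to \Omega^1_{A_S/R}$: this is just the canonical identification $(\Omega^1_{A/R})_S = \Omega^1_{A/R} \otimes_A A_S$ for localization of modules, composed with the isomorphism from step (3); one should note the map is the natural one induced by $d_{A/R}$ followed by $A \to A_S$. (5) For $\Omega^1_{A_S/A} = 0$: use the second fundamental (conormal) exact sequence or simply observe that any $A$-derivation $A_S \to M$ must vanish, since $d(s \cdot (f/s)) = d(f)$ forces $s\,d(f/s) = 0$ (as $d(s)=0$ over $A$ and $df = 0$), hence $d(f/s) = 0$ because $s$ acts invertibly on the $A_S$-module $M$; universality then gives $\Omega^1_{A_S/A} = 0$.

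I expect step (3), the verification of the universal property, to be the main obstacle---not because any single computation is hard, but because one must be careful to handle the bookkeeping: checking well-definedness of the extension of $\varphi$ to the localization, and confirming that the extended map genuinely recovers $D$ on all of $A_S$ rather than merely on the image of $A$. The cleanest route is probably to package steps (1)--(3) by instead invoking the general principle that differentials commute with localization (a standard fact, e.g.\ available in the references the paper already cites for algebraic geometry such as \cite{Har77}, \cite{Liu02}), and then only spell out the explicit quotient-rule formula for $d$ as the concrete description of the isomorphism. Either way, steps (4) and (5) are then short formal consequences.
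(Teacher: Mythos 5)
Your proposal is correct, but it is worth pointing out that the paper does not actually prove this lemma at all: its entire ``proof'' is a citation to \cite{Bos13}, page 354, exercise 2. So your universal-property argument supplies exactly the content the paper delegates to a reference, and it is the standard route: show that $(\Omega^1_{A/R})_S$ with the quotient-rule map $d$ satisfies the universal property of $\Omega^1_{A_S/R}$, deduce the tensor-product formulation from the canonical identification $(\Omega^1_{A/R})_S \cong \Omega^1_{A/R}\otimes_A A_S$, and kill $\Omega^1_{A_S/A}$ by observing that any $A$-derivation $D$ on $A_S$ satisfies $0 = D(f) = s\,D(f/s)$ with $s$ acting invertibly on the target. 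Your closing remark---that one could instead just invoke the general fact that differentials commute with localization, citing a standard text---is precisely what the paper does, so in that sense the two treatments coincide; what your write-up buys is an explicit, self-contained verification. One small point to be careful about in step (1): equality $f/s = f'/s'$ in $A_S$ only means $t(fs'-f's)=0$ in $A$ for some $t\in S$, not $fs'=f's$ in $A$; the extra factor $t$ is harmless because your computation lands in the localized module $(\Omega^1_{A/R})_S$, where elements of $S$ act invertibly, but the verification should carry $t$ along explicitly. Likewise, in step (3) the extension of $\varphi$ to $(\Omega^1_{A/R})_S$ is just the universal property of localization of modules, so the bookkeeping you worry about reduces to checking the single identity $\varphi_S(d(f/s)) = D(f/s)$, which follows from $D(f) = D(s\cdot(f/s)) = s\,D(f/s) + (f/s)D(s)$.
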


\begin{proof}
See \cite{Bos13} page 354 exercise 2.
\end{proof}

Let $A$ be a Poisson algebra over $k$. Now let $\Lambda\in Hom_A(\wedge^2 \Omega_{A/k}, A)$ be the Poisson $k$-structure on $A$, denoted by $\{-,-\}$. Let $S$ be a multiplicative system of $A$. Then $\Lambda$ induces a Poisson structure on $A_S$, denoted by $\{-,-\}_S$ from the natural map $Hom_A(\wedge^2 \Omega_{A/k}^1,A)\to Hom_A(\wedge^2 \Omega_{A/k}^1,A_S)\cong Hom_{A_S}(\wedge^2 A_S/k,A_S)$ . More precisely, we have
\begin{align*}
\{\frac{a_1}{s_1},\frac{a_2}{s_2}\}_S=\Lambda(d(\frac{a_1}{s_1}),d(\frac{a_2}{s_2}))=\Lambda(\frac{s_1da_1-a_1ds_1}{s_1^2},\frac{s_2da_2-a_2ds_2}{s_2^2})\\
=\frac{\{a_1,a_2\}}{s_1s_2}-\frac{a_2\{a_1,s_2\}}{s_1s_2^2}-\frac{a_1\{s_1,a_2\}}{s_1^2s_2}+\frac{a_1a_2\{s_1,s_2\}}{s_1^2s_2^2}
\end{align*}

\begin{proposition}
 $X=Spec (A)$ for a Poisson $k$-algebra $(A,\Lambda)$ is a Poisson ringed topological space.
 
\end{proposition}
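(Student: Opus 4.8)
The plan is to construct the Poisson sheaf of rings $\mathcal{O}_X$ on $X = \operatorname{Spec}(A)$ in exactly the same way one builds the ordinary structure sheaf, and then check that the Poisson bracket descends compatibly. First I would recall that as a topological space $X = \operatorname{Spec}(A)$ has a basis given by the distinguished open sets $D(f) = \{\mathfrak{p} : f \notin \mathfrak{p}\}$ for $f \in A$, and that for such sets $\Gamma(D(f), \mathcal{O}_X) = A_f$, the localization at the multiplicative system $\{1, f, f^2, \dots\}$. By the localization lemma quoted just above (the one referencing \cite{Bos13}), the module $\Omega^1_{A_f/k}$ is canonically $(\Omega^1_{A/k})_f$, so the Poisson bivector $\Lambda \in \operatorname{Hom}_A(\wedge^2 \Omega^1_{A/k}, A)$ induces, via the natural map $\operatorname{Hom}_A(\wedge^2\Omega^1_{A/k}, A) \to \operatorname{Hom}_{A_f}(\wedge^2 \Omega^1_{A_f/k}, A_f)$, a canonical element $\Lambda_f$ on $A_f$. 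The explicit localization formula displayed above the statement shows this $\Lambda_f$ is precisely the skew biderivation underlying the bracket $\{-,-\}_f$ on $A_f$, and since $[\Lambda,\Lambda]=0$ maps to $[\Lambda_f,\Lambda_f]=0$ under the corresponding (injective-enough) map on $\operatorname{Hom}_{A_f}(\wedge^3\Omega^1_{A_f/k},A_f)$ — by the same localization argument applied to degree $3$ — each $A_f$ is a genuine Poisson $k$-algebra.

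Next I would verify the compatibility of these brackets under restriction. If $D(g) \subset D(f)$, then the canonical ring homomorphism $A_f \to A_g$ is a localization map, and one checks directly from the localization formula (or from functoriality of $\Lambda \mapsto \Lambda_f$ through $\operatorname{Hom}_A(\wedge^2\Omega^1_{A/k}, -)$ applied to $A_f \to A_g$) that it is a Poisson homomorphism: $\{a/s, b/t\}$ computed in $A_f$ maps to $\{a/s, b/t\}$ computed in $A_g$. This is the statement that the restriction maps of the usual structure sheaf $\mathcal{O}_X$ are Poisson $k$-algebra homomorphisms. Since $\mathcal{O}_X$ is already known to be a sheaf of rings, and the bracket on sections over a general open $U$ is obtained by the standard gluing/limit construction over a basis of distinguished opens (using the uniqueness and gluing axioms for sheaves), the bracket propagates to all of $\mathcal{O}_X$: for arbitrary open $U$, write $\mathcal{O}_X(U) = \varprojlim_{D(f) \subset U} A_f$ and define $\{-,-\}_U$ componentwise, which is well-defined and Poisson by the compatibility just checked.

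Finally I would confirm the stalk condition: for $\mathfrak{p} \in X$, the stalk $\mathcal{O}_{X,\mathfrak{p}} = A_\mathfrak{p}$ is the localization at the multiplicative system $A \setminus \mathfrak{p}$, which by the localization lemma again carries an induced Poisson $k$-algebra structure, and $A_\mathfrak{p}$ is a local ring; the induced brackets on the $A_f$ for $f \notin \mathfrak{p}$ pass to the colimit compatibly, so $\mathcal{O}_{X,\mathfrak{p}}$ is a local Poisson $k$-algebra. This gives all the data required in the definition of a Poisson ringed topological space (indeed a Poisson locally ringed space).

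The only real subtlety — and the step I expect to be the main obstacle — is making the ``Poisson structure localizes and is compatible with restriction'' claim genuinely clean rather than a long index chase: one wants a conceptual argument that $\Lambda \mapsto \Lambda_S$ is functorial in the localization, using that $\Omega^1$ localizes and that $\operatorname{Hom}_A(\wedge^p\Omega^1_{A/k}, -)$ is a functor, together with the vanishing $[\Lambda_S, \Lambda_S] = 0$ inherited from $[\Lambda,\Lambda]=0$ (which requires knowing the Schouten bracket is itself compatible with localization, essentially because it is built from $d$ and composition, both of which localize). Once that functoriality is isolated as a lemma, the sheaf axioms and the stalk statement are formal consequences of the corresponding facts for the ordinary affine scheme $\operatorname{Spec}(A)$.
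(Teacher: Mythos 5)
Your proposal is correct and follows essentially the same route as the paper: localize $\Lambda$ through the canonical map $\operatorname{Hom}_A(\wedge^2\Omega^1_{A/k},A)\to \operatorname{Hom}_{A_S}(\wedge^2\Omega^1_{A_S/k},A_S)$ (using $\Omega^1_{A_S/k}\cong(\Omega^1_{A/k})_S$ and the explicit bracket formula given just before the statement), and then observe that the standard structure sheaf inherits the bracket because sections are locally given by elements of the $A_f$. The only cosmetic difference is that the paper defines the bracket of two sections stalkwise (as a compatible family of germs $\{a_\mathfrak{p},b_\mathfrak{p}\}_\mathfrak{p}$) rather than via your basis-plus-inverse-limit description over distinguished opens; the content is identical.
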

\begin{proof}
Let $\mathfrak{p}$ be a prime ideal of $A$. Then $A_{\mathfrak{p}}$ has a natural Poisson structure induced from $(A,\Lambda)$ with the Poisson bracket $\{-,-\}_{\mathfrak{p}}$.  For any open set $U$ of $Spec(A)$ and $f,g\in \mathcal{O}_X(U)$. Then $a,b $ can be identified with $a,b:U\to \bigcup_{\mathfrak{p}\in U} A_{\mathfrak{p}}$ locally defined by an element of $A_f$ for $D(f)\subset U$. We define $\{a,b\}:U\to\bigcup_{\mathfrak{p}\in U} A_{\mathfrak{p}}$ by $\mathfrak{p}\to \{a_{\mathfrak{p}},b_{\mathfrak{p}}\}_{\mathfrak{p}}$. Since for each principle open set of the from $D(f)$, the Poisson structure on $D(f)$ are all induced from $\Lambda$, we have $\{a,b\}\in \mathcal{O}_X(U)$. Hence the structure sheaf $\mathcal{O}_X$ is a Poisson sheaf. Hence $X$ is a Poisson ringed topological space.

\end{proof}

\begin{definition}[affine Poisson schemes]
We define an affine Poisson scheme to be a Poisson ringed topological space isomorphic to some $(Spec\,A,\mathcal{O}_{Spec\,A})$ for a Poisson $k$-algebra $(A,\Lambda)$.
\end{definition}

\begin{definition}[Poisson schemes]
A Poisson scheme is a Poisson ringed topological space $(X,\mathcal{O}_X)$ admitting an open covering $\{U_i\}$ of $X$ such that $(U_i,\mathcal{O}_X|_{U_i})$ is an affine Poisson $k$-scheme for every $i$. 
\end{definition}

Note that any $k$-scheme can be considered to be a Poisson scheme since any $k$-algebra $A$ has trivial Poisson structure, i.e. $\{f,g\}=0$ for any $f,g \in A$. We will consider a scheme without Poisson structure to be a scheme with trivial Poisson structure.

\subsection{Poisson Schemes}

\begin{definition}
Let $X\to S$ be a morphism of schemes. There is an operation 
\begin{align*}
[-,-]:\mathscr{H}om_{\mathcal{O}_X}(\wedge^p \Omega_{X/S},\mathcal{O}_X )\times \mathscr{H}om_{\mathcal{O}_X}(\wedge^q \Omega_{X/S},\mathcal{O}_X )\to\mathscr{H}om_{\mathcal{O}_X}(\wedge^{p+q-1} \Omega_{X/S},\mathcal{O}_X )
\end{align*}
 which is called the Schouten bracket on a scheme $X$ over $S$.
\end{definition}
The bracket $[-,-]$ is defined in the following way: $\Gamma(U, \mathscr{H}om_{\mathcal{O}_X}(\wedge^p\Omega_{X/S},\mathcal{O}_X))$ is the set of elements of the form $\beta:U\to \bigcup_{x\in U} Hom_{\mathcal{O}_{X,x}}(\wedge^p\Omega_{\mathcal{O}_{X,x}/\mathcal{O}_{S,s}},\mathcal{O}_{X,x})$ such that for any $x$, there exists an affine open neighborhood $V$ of $s=f(x)$ and an affine open neighborhood $U\subset f^{-1}(V)$ of $x$ and $\alpha\in Hom_{\mathcal{O}_X(U_x)}(\wedge^p \Omega_{\mathcal{O}_X(U)/\mathcal{O}_S(V)}^1,\mathcal{O}_X(U))$ with $\beta(x)=\alpha_x$. So on $U$, we define $[\beta_1,\beta_2]:=U\to \bigcup_{x\in U} Hom_{\mathcal{O}_{X,x}}(\wedge^p\Omega_{\mathcal{O}_{X,x}/\mathcal{O}_{S,s}},\mathcal{O}_{X,x}), x\mapsto [\beta_1(x),\beta_2(x)]_x$, where $[-,-]_x$ is the Schouten bracket on $\oplus_p Hom_{\mathcal{O}_{X,x}}(\wedge^p\Omega_{\mathcal{O}_{X,x}/\mathcal{O}_{S,s}},\mathcal{O}_{X,x})$. Hence to show the existence of Schouten bracket on $X$ over $S$, we only need to check the following lemma.

\begin{lemma}
Let $A$ be a commutative $R$-algebra $(f:R\to A)$, and $\mathfrak{p}$ be a prime ideal of $A$. Let $\mathfrak{q}=f^{-1}(\mathfrak{p})$. The following diagram commutes
\begin{center}
$\begin{CD}
Hom_A(\wedge^p \Omega_{A/R}^1,A)\times Hom_A(\wedge^q \Omega_{A/R}^1,A)@>[-,-]>> Hom_A(\wedge^{p+q-1}\Omega_{A/R}^1,A)\\
@VVV @VVV\\
Hom_A(\wedge^p \Omega_{A/R}^1,A_{\mathfrak{p}})\times Hom_A(\wedge^q \Omega_{A/R}^1, A_{\mathfrak{p}})@>[-,-]>> Hom_A(\wedge^{p+q-1}\Omega_{A/R}^1,A_{\mathfrak{p}})\\
@V\cong VV @VV\cong V \\
Hom_{A_{\mathfrak{p}}}(\wedge^p \Omega_{A_{\mathfrak{p}}/R}^1,A_{\mathfrak{p}})\times Hom_{A_{\mathfrak{p}}}(\wedge^q \Omega_{A_{\mathfrak{p}}/R}^1,A_{\mathfrak{p}})@>[-,-]>> Hom_{A_{\mathfrak{p}}}(\wedge^{p+q-1}\Omega_{A_{\mathfrak{p}}/R}^1,A_{\mathfrak{p}})\\
@V\cong VV @VV\cong V\\
Hom_{A_{\mathfrak{p}}}(\wedge^p \Omega_{A_{\mathfrak{p}}/R_{\mathfrak{q}}}^1,A_{\mathfrak{p}})\times Hom_{A_{\mathfrak{p}}}(\wedge^q \Omega_{A_{\mathfrak{p}}/R_{\mathfrak{q}}}^1,A_{\mathfrak{p}})@>[-,-]>> Hom_{A_{\mathfrak{p}}}(\wedge^{p+q-1}\Omega_{A_{\mathfrak{p}}/R_{\mathfrak{q}}}^1,A_{\mathfrak{p}})
\end{CD}$
\end{center}
\end{lemma}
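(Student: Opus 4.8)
The claim is a compatibility statement: the Schouten bracket defined via polyderivations on a ring $A$ over $R$ commutes, under localization and under the identifications $\Omega^1_{A_{\mathfrak p}/R}\cong (\Omega^1_{A/R})_{\mathfrak p}$ and $\Omega^1_{A_{\mathfrak p}/R}\cong \Omega^1_{A_{\mathfrak p}/R_{\mathfrak q}}$, with the Schouten bracket on the localized ring. The strategy is purely a diagram-chase at the level of explicit formulas. First I would reduce the whole diagram to the single square
\begin{center}
$\begin{CD}
Hom_A(\wedge^p \Omega_{A/R}^1,A)\times Hom_A(\wedge^q \Omega_{A/R}^1,A)@>[-,-]>> Hom_A(\wedge^{p+q-1}\Omega_{A/R}^1,A)\\
@VVV @VVV\\
Hom_{A_{\mathfrak p}}(\wedge^p \Omega_{A_{\mathfrak p}/R}^1,A_{\mathfrak p})\times Hom_{A_{\mathfrak p}}(\wedge^q \Omega_{A_{\mathfrak p}/R}^1, A_{\mathfrak p})@>[-,-]>> Hom_{A_{\mathfrak p}}(\wedge^{p+q-1}\Omega_{A_{\mathfrak p}/R}^1,A_{\mathfrak p})
\end{CD}$
\end{center}
since the last two vertical isomorphisms are induced by the canonical maps $A\to A_{\mathfrak p}$, $R\to R_{\mathfrak q}$ (the latter using $\Omega^1_{A_{\mathfrak p}/A}=0$ together with the base-change exact sequence, which forces $\Omega^1_{A_{\mathfrak p}/R}\cong\Omega^1_{A_{\mathfrak p}/R_{\mathfrak q}}$), and the Schouten bracket is manifestly natural with respect to such canonical ring maps once this first square is settled.

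Next I would verify the first square directly from the defining formula for $[P,Q]$ in terms of $(q,p-1)$- and $(p,q-1)$-shuffles. The key point is that localization is compatible with three operations appearing in that formula: (i) the exterior differential, via the cited lemma giving $d(f/s)=(s\,d_{A/R}f-f\,d_{A/R}s)/s^2$ and the identification $d_{A_{\mathfrak p}/R}=$ localization of $d_{A/R}$; (ii) the wedge product $\wedge^\bullet\Omega^1_{A_{\mathfrak p}/R}=(\wedge^\bullet\Omega^1_{A/R})_{\mathfrak p}$; and (iii) evaluation of an $A$-linear (hence $A_{\mathfrak p}$-linear after localization) polyderivation on a localized form. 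Concretely, given $P\in Hom_A(\wedge^p\Omega^1_{A/R},A)$ and $Q\in Hom_A(\wedge^q\Omega^1_{A/R},A)$ with localizations $P_{\mathfrak p}, Q_{\mathfrak p}$, and arbitrary $F_i=a_i/s_i\in A_{\mathfrak p}$, I would expand $[P_{\mathfrak p},Q_{\mathfrak p}](dF_1\wedge\cdots\wedge dF_{p+q-1})$ using the quotient rule for $d(a_i/s_i)$, pull scalars out using $A_{\mathfrak p}$-multilinearity, and observe that each resulting term is the localization of the corresponding term in $[P,Q]$ evaluated on the numerators—so the two agree in $A_{\mathfrak p}$. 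Because $\wedge^{p+q-1}\Omega^1_{A_{\mathfrak p}/R}$ is generated over $A_{\mathfrak p}$ by elements of the form $dF_1\wedge\cdots\wedge dF_{p+q-1}$, this pointwise agreement on generators gives equality of the two $A_{\mathfrak p}$-linear maps, which is exactly commutativity of the square.

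The main obstacle, such as it is, is purely bookkeeping: keeping track of signs from the shuffle sums and of the denominators $s_i$ produced by the quotient rule when they get differentiated again inside nested expressions like $P(d(Q(\cdots))\wedge\cdots)$. I expect no conceptual difficulty—everything follows from the universal property of $\Omega^1$ and the flatness of localization—but the cleanest writeup would probably avoid the fully general $(p,q)$ case and instead treat the low-degree case needed in the sequel (as in Example \ref{3ex}, i.e.\ $\Lambda\in Hom_A(\wedge^2\Omega^1_{A/R},A)$ and $Q\in Hom_A(\Omega^1_{A/R},A)$), then remark that the general case is identical term by term. Having established the lemma, the existence of the Schouten bracket on $\mathscr{H}om_{\mathcal O_X}(\wedge^\bullet\Omega_{X/S},\mathcal O_X)$ follows immediately: the local formulas on affine opens agree on stalks, hence glue to a well-defined global operation.
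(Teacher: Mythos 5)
Your plan is sound, and in fact the paper itself offers no argument here at all: the lemma is stated and immediately used to glue the bracket into a sheaf map, so you are supplying a verification the author left implicit. Your reduction is the right one — the bottom square is trivial because $\Omega^1_{A_{\mathfrak p}/R}=\Omega^1_{A_{\mathfrak p}/R_{\mathfrak q}}$ (an $R$-linear derivation of $A_{\mathfrak p}$ is automatically $R_{\mathfrak q}$-linear, since $D(s^{-1}a)=s^{-1}D(a)$ for $s\in R\setminus\mathfrak q$), and everything then rests on the square comparing $[P,Q]$ with $[P_{\mathfrak p},Q_{\mathfrak p}]$ under $\Omega^1_{A_{\mathfrak p}/R}\cong(\Omega^1_{A/R})_{\mathfrak p}$.

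One suggestion on the verification itself: working with general fractions $F_i=a_i/s_i$ is heavier than needed, and the sentence ``each resulting term is the localization of the corresponding term in $[P,Q]$ evaluated on the numerators'' is not literally true term by term, because after you pull the scalars $1/s_i$ out of $Q_{\mathfrak p}(dF_{\sigma(1)}\wedge\cdots)$ the nested differential $d$ hits that fraction and produces extra $ds_i$-terms that have no counterpart upstairs; the identity survives only after recombining terms. You can avoid this entirely: both $[P,Q]_{\mathfrak p}$ and $[P_{\mathfrak p},Q_{\mathfrak p}]$ are $A_{\mathfrak p}$-linear on $\wedge^{p+q-1}\Omega^1_{A_{\mathfrak p}/R}$, and this module is generated over $A_{\mathfrak p}$ by the elements $da_1\wedge\cdots\wedge da_{p+q-1}$ with $a_i\in A$ (by the quotient formula $d(a/s)=(s\,da-a\,ds)/s^2$), so it suffices to compare the two maps on these generators. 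There no denominators occur, and the only input is that $d_{A_{\mathfrak p}/R}$ restricted to the image of $A$ is the localization map composed with $d_{A/R}$ — exactly the cited lemma on localization of K\"ahler differentials — so the shuffle formulas match verbatim, signs included. With that adjustment your argument is complete and the sheaf-theoretic conclusion follows as you say.
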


\begin{example}
Let $B\otimes_k A$ be a $A$-algebra where $k$ is a field and $B$ is a finitely generated $k$-algebra $($Hence $\Omega_{B/k}^1$ is finitely presented$)$. Then $Hom_{B\otimes_kA}(\wedge^p \Omega_{B\otimes_k A/A}^1,B\otimes_k A)\cong Hom_{B\otimes_k A}(\wedge^p \Omega_{B/k}^1 \otimes A,B\otimes_k A)\cong Hom_B(\wedge^p \Omega_{B/k},B\otimes_k A)\cong Hom_k(\wedge^p \Omega_{B/k}^1, B)\otimes _k A$. So the Schouten bracket $[-,-]_{B\otimes_k A}$ on $Hom_B(\wedge^p\Omega_{B/k}^1,B)\otimes A$ over $A$ can be seen as
\begin{align*}
[P\otimes a,Q\otimes b]_{B\otimes_k A}=[P,Q]_B\otimes ab
\end{align*}
\end{example}

Let $X$ be a scheme over $k$. We would like to characterize a Poisson structure on $X$ by an element $\Lambda\in \Gamma(X, \mathscr{H}om_{\mathcal{O}_X}(\wedge^2 \Omega_{\mathcal{O}_X/k}^1,\mathcal{O}_X))$ with $[\Lambda,\Lambda]=0$.

\begin{proposition}
Let $X$ be a scheme over $k$. The following are equivalent
\begin{enumerate}
\item $X$ is a Poisson scheme over $k$.
\item There exists a global section $\Lambda\in \Gamma(X, \mathscr{H}om_{\mathcal{O}_X}(\wedge^2 \Omega_{X/k}^1,\mathcal{O}_X))$ with $[\Lambda,\Lambda]=0$
\end{enumerate}
\end{proposition}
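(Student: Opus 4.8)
The plan is to establish both implications by reducing to the affine case, where the characterization of a Poisson bracket in terms of a biderivation $\Lambda$ with $[\Lambda,\Lambda]=0$ has already been proved (see the proposition attributed to \cite{Lau13}). The key technical input is that both the sheaf $\mathscr{H}om_{\mathcal{O}_X}(\wedge^2 \Omega_{X/k}^1,\mathcal{O}_X)$ and the Schouten bracket on $X$ over $\operatorname{Spec}(k)$ are built by a local-to-global gluing procedure that is compatible with localization, which is exactly the content of the lemma on commutativity of the Schouten bracket with localization stated just above.

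For the implication $(2)\Rightarrow(1)$: given a global section $\Lambda$ with $[\Lambda,\Lambda]=0$, I would take an affine open covering $\{U_i=\operatorname{Spec}(A_i)\}$ of $X$. On each $U_i$ the section $\Lambda|_{U_i}$ is (by the definition of sections of $\mathscr{H}om$ and the fact that $\Omega^1_{A_i/k}$-valued homomorphisms are determined by their values on affine opens) represented by an element $\Lambda_i\in \operatorname{Hom}_{A_i}(\wedge^2\Omega^1_{A_i/k},A_i)$; here I would either assume each $A_i$ is finitely generated so that $\Omega^1_{A_i/k}$ is finitely presented and the identification $\Gamma(U_i,\mathscr{H}om_{\mathcal{O}_X}(\wedge^2\Omega^1_{X/k},\mathcal{O}_X))\cong \operatorname{Hom}_{A_i}(\wedge^2\Omega^1_{A_i/k},A_i)$ holds, or argue via the fact that a global section of a $\mathscr{H}om$ sheaf over an affine is a genuine module homomorphism. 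The condition $[\Lambda,\Lambda]=0$, being a statement about a global section of $\mathscr{H}om_{\mathcal{O}_X}(\wedge^3\Omega^1_{X/k},\mathcal{O}_X)$, restricts to $[\Lambda_i,\Lambda_i]=0$ on each $U_i$, so by the affine characterization $\Lambda_i$ defines a Poisson bracket $\{-,-\}_i$ on $A_i$, making each $(U_i,\mathcal{O}_X|_{U_i})$ an affine Poisson scheme. To conclude that $X$ itself is a Poisson scheme I need these brackets to glue into a Poisson sheaf structure on $\mathcal{O}_X$; this follows because on overlaps $U_i\cap U_j$, covered by principal opens $D(f)$, both $\{-,-\}_i$ and $\{-,-\}_j$ are the localizations of the corresponding affine brackets, and by the localization-compatibility lemma these agree with the bracket determined by $\Lambda$ on the stalks, hence agree as sheaf maps. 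The local rings $\mathcal{O}_{X,x}$ inherit Poisson $k$-algebra structures in the obvious way, so $(X,\mathcal{O}_X)$ is a Poisson ringed space locally isomorphic to affine Poisson schemes.

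For the implication $(1)\Rightarrow(2)$: if $X$ is a Poisson scheme, then over any affine open $U=\operatorname{Spec}(A)$ the Poisson bracket $\{-,-\}$ on $\mathcal{O}_X(U)=A$ is characterized by a biderivation $\Lambda_U\in\operatorname{Hom}_A(\wedge^2\Omega^1_{A/k},A)$, again by the affine result, and $[\Lambda_U,\Lambda_U]=0$. I would check that these local biderivations are compatible on overlaps — using once more that the bracket on a localization $A_f$ is the localization of the bracket on $A$, which forces $\Lambda_{D(f)}$ to be the image of $\Lambda_U$ under $\operatorname{Hom}_A(\wedge^2\Omega^1_{A/k},A)\to\operatorname{Hom}_{A_f}(\wedge^2\Omega^1_{A_f/k},A_f)$. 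Hence the $\Lambda_U$ glue to a global section $\Lambda\in\Gamma(X,\mathscr{H}om_{\mathcal{O}_X}(\wedge^2\Omega^1_{X/k},\mathcal{O}_X))$, and since $[\Lambda,\Lambda]$ is a global section restricting to $0$ on each member of an affine cover it vanishes globally.

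The main obstacle I expect is the bookkeeping around the identification of global sections of $\mathscr{H}om_{\mathcal{O}_X}(\wedge^2\Omega^1_{X/k},\mathcal{O}_X)$ over an affine with honest module homomorphisms and, relatedly, making the gluing of the local Schouten-bracket data precise: one must verify that the abstract definition of $[-,-]$ on $X$ (defined stalkwise, then checked to be locally representable) really does restrict on affines to the algebraic Schouten bracket, and that localization intertwines everything. All of this is essentially the localization-compatibility lemma quoted above plus standard sheaf-theoretic arguments, so the work is routine but must be assembled carefully; the genuinely new content — that $[\Lambda,\Lambda]=0$ is equivalent to the Jacobi identity — is already in hand affine-locally.
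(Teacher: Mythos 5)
Your proposal is correct and follows essentially the same route as the paper: both directions reduce to the affine characterization (a skew biderivation $\Lambda$ with $[\Lambda,\Lambda]=0$ is the same as a Poisson bracket) and rely on compatibility of the Schouten bracket with localization to pass between local affine data and the global section. The only difference is packaging: the paper defines sections of $\mathscr{H}om_{\mathcal{O}_X}(\wedge^2\Omega^1_{X/k},\mathcal{O}_X)$ and the bracket stalkwise, as functions into stalks that are locally induced by affine module homomorphisms, so the identification-and-gluing bookkeeping you flag as the main obstacle is essentially built into its definitions rather than argued separately.
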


\begin{proof}
Let $X$ be a Poisson scheme over $k$. Then for each $x$,  $\mathcal{O}_{X,x}$ is a Poisson $k$-algebra. So we have $\Lambda_x\in Hom_{\mathcal{O}_{X,x}}(\wedge^2 \Omega_{\mathcal{O}_{X,x}/k}^1,\mathcal{O}_{X,x})$ with $[\Lambda_x,\Lambda_x]=0$. We define $\Lambda:X\to \bigcup_{x\in X} Hom_{\mathcal{O}_{X,x}}(\wedge^2 \Omega_{\mathcal{O}_{X,x}/k}^1,\mathcal{O}_{X,x})$. Since $X$ is locally defined by affine Poisson schemes, for each $x\in X$, there exists an affine neighborhood $Spec(A)$ of $x$ with $\Lambda_A \in Hom_A(\wedge^2 \Omega_{A/k}^1,A)$ with $[\Lambda_A,\Lambda_A]=0$ which induces $\Lambda_x$ for $x\in Spec(A)$. Hence $\Lambda\in \Gamma(X, \mathscr{H}om_{\mathcal{O}_X}(\wedge^2 \Omega_{X/k}^1,\mathcal{O}_X))$ with $[\Lambda,\Lambda]=0$.

Conversely, we assume that we have a global section $\Lambda\in \Gamma(X, \mathscr{H}om_{\mathcal{O}_X}(\wedge^2 \Omega_{X/k}^1,\mathcal{O}_X))$ with $[\Lambda,\Lambda]=0$. Then $\Lambda$ can be identified with $\Lambda:X\to \bigcup_{x\in X} Hom_{\mathcal{O}_{X,x}}(\wedge^2 \Omega_{\mathcal{O}_{X,x}/k}^1,\mathcal{O}_{X,x})$ as above. Hence $\mathcal{O}_{X,x}$ is a Poisson $k$-algebra induced from $\Lambda_x$ with the Poisson bracket $\{-,-\}_x$. We show that $\mathcal{O}_X$ is a sheaf of Poisson $k$-algebra. Let $U$ be open set of $X$. Let $f,g\in \mathcal{O}_X(U)$. Then $f,g$ can be identified with $f,g:U\to \bigcup_{x\in U} \mathcal{O}_{X,x}$ which are locally defined by elements of sections of affine open sets. We define $\{f,g\}$ by $U\to \bigcup_{x\in X}\mathcal{O}_{X,x}, x\mapsto \{f_x,g_x\}_x$. This makes $\mathcal{O}_X$ to be a sheaf of Poisson $k$-algebras.  For each $x$, there exists an affine open set $Spec(A)$ of $x$ such that $\Lambda$ on $Spec(A)$ is induced from a $\Lambda_A\in Hom_A(\wedge^2\Omega_{A/k}^1,A)$ with $[\Lambda_A,\Lambda_A]=0$. So $\Lambda_A$ defines a Poisson structure on $Spec(A)$. Hence $X$ is locally defined by affine Poisson schemes. Hence $X$ is a Poisson scheme over $k$.
\end{proof}

\begin{definition}
Let $X$ be a Poisson scheme over $k$. Let $f:X\to S$ be a morphism of schemes. We say that $X$ is Poisson over $S$ or a Poisson $S$-scheme if for any open set $U$ of $S$ and $\mathcal{O}_S(U)\to \mathcal{O}_X(f^{-1}(U))$, $\mathcal{O}_X(f^{-1}(U))$ is a Poisson $\mathcal{O}_S(U)$-algebra. In other words, $\{s,a\}=0 $ for any $s\in \mathcal{O}_S(U)$ and $a\in \mathcal{O}_X(f^{-1}(U))$
\end{definition}
\begin{proposition}
Let $X$ be a scheme over $k$ and $f:X\to S$ be a morphsim of schemes. The following are equivalent.
\begin{enumerate}
\item $X$ is a Poisson scheme over $S$.
\item There exists a global section $P\in \Gamma(X,\mathscr{H}om_{\mathcal{O}_X}(\wedge^2 \Omega_{X/S},\mathcal{O}_X))$ with $[P,P]=0$
\end{enumerate}
\end{proposition}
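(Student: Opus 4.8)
The statement to prove is the relative analogue of the previous proposition: for a scheme $X$ over $k$ equipped with a morphism $f\colon X\to S$, being a Poisson $S$-scheme is equivalent to the existence of a global section $P\in\Gamma(X,\mathscr{H}om_{\mathcal{O}_X}(\wedge^2\Omega_{X/S}^1,\mathcal{O}_X))$ with $[P,P]=0$. The plan is to mimic the proof of the absolute case (Poisson over $k$), replacing $\Omega_{X/k}^1$ by the relative sheaf $\Omega_{X/S}^1$ and keeping careful track of the fact that the Schouten bracket now involves derivations relative to $\mathcal{O}_S$. The two ingredients I would invoke are: first, the existence of the relative Schouten bracket on $X$ over $S$ (established earlier in the excerpt via the localization lemma comparing $Hom_A(\wedge^p\Omega_{A/R}^1,A)$ with its stalks $Hom_{A_{\mathfrak p}}(\wedge^p\Omega_{A_{\mathfrak p}/R_{\mathfrak q}}^1,A_{\mathfrak p})$); and second, the affine characterization, namely that for a commutative $R$-algebra $A$, a biderivation $\Lambda\in Hom_A(\wedge^2\Omega_{A/R}^1,A)$ defines a Poisson $R$-algebra structure on $A$ if and only if $[\Lambda,\Lambda]=0$ (cited from Laurent-Gengoux et al.).

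For the implication $(1)\Rightarrow(2)$: suppose $X$ is Poisson over $S$. For each $x\in X$ with $s=f(x)$, the stalk $\mathcal{O}_{X,x}$ carries a Poisson $\mathcal{O}_{S,s}$-algebra structure (the Poisson bracket is $\mathcal{O}_{S,s}$-bilinear and $\mathcal{O}_{S,s}$ sits in the Poisson center), so its associated biderivation lies in $Hom_{\mathcal{O}_{X,x}}(\wedge^2\Omega_{\mathcal{O}_{X,x}/\mathcal{O}_{S,s}}^1,\mathcal{O}_{X,x})$ and satisfies $[\Lambda_x,\Lambda_x]=0$. I would then define $P\colon X\to\bigcup_{x\in X}Hom_{\mathcal{O}_{X,x}}(\wedge^2\Omega_{\mathcal{O}_{X,x}/\mathcal{O}_{S,s}}^1,\mathcal{O}_{X,x})$ by $x\mapsto\Lambda_x$. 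To see $P$ is a genuine section, use that $X$ is a Poisson $S$-scheme locally: around any $x$ there is an affine open $V=\operatorname{Spec}R\ni s$ and an affine open $U=\operatorname{Spec}A\subset f^{-1}(V)\ni x$ with $A$ a Poisson $R$-algebra, hence a biderivation $\Lambda_A\in Hom_A(\wedge^2\Omega_{A/R}^1,A)$ with $[\Lambda_A,\Lambda_A]=0$ inducing $\Lambda_x$ at every point of $\operatorname{Spec}A$; this exhibits $P$ locally as coming from an element of $Hom_A(\wedge^2\Omega_{\mathcal{O}_X(U)/\mathcal{O}_S(V)}^1,\mathcal{O}_X(U))$, so $P\in\Gamma(X,\mathscr{H}om_{\mathcal{O}_X}(\wedge^2\Omega_{X/S}^1,\mathcal{O}_X))$, and $[P,P]=0$ because this holds stalkwise and the relative Schouten bracket is compatible with passage to stalks.

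For the implication $(2)\Rightarrow(1)$: given $P$ with $[P,P]=0$, identify $P$ with the point function $x\mapsto P_x\in Hom_{\mathcal{O}_{X,x}}(\wedge^2\Omega_{\mathcal{O}_{X,x}/\mathcal{O}_{S,s}}^1,\mathcal{O}_{X,x})$; since $[P_x,P_x]=0$ (again using stalk-compatibility of the bracket), each $\mathcal{O}_{X,x}$ becomes a Poisson $\mathcal{O}_{S,s}$-algebra with bracket $\{-,-\}_x$. Define the bracket on sections $\{f,g\}(x):=\{f_x,g_x\}_x$; this is well-defined and produces a sheaf of Poisson $k$-algebras because, by the localization lemma, on each affine $U=\operatorname{Spec}A\subset f^{-1}(V)$ with $V=\operatorname{Spec}R$ the section $P|_U$ is induced from a single $\Lambda_A\in Hom_A(\wedge^2\Omega_{A/R}^1,A)$ with $[\Lambda_A,\Lambda_A]=0$, so $\Lambda_A$ defines a Poisson $R$-algebra structure on $A$ compatible with localization to principal opens $D(f)$. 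This simultaneously shows $X$ is covered by affine Poisson schemes and that the Poisson bracket is $\mathcal{O}_S$-linear (i.e. $\{s,a\}=0$ for $s\in\mathcal{O}_S(V)$ and $a\in\mathcal{O}_X(f^{-1}(V))$), which is precisely the condition that $X$ be a Poisson $S$-scheme.

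\textbf{Main obstacle.} The only genuinely delicate point is the gluing/descent step: verifying that the pointwise-defined $P$ (resp. the pointwise bracket) actually arises from sections over affine opens, and that the local descriptions via $\Lambda_A\in Hom_A(\wedge^2\Omega_{A/R}^1,A)$ patch on overlaps $D(f)$. This rests entirely on the relative version of the localization lemma already proven in the excerpt, namely that the formation of $Hom_A(\wedge^p\Omega_{A/R}^1,A)$ and of the Schouten bracket commutes with localization both in $A$ and in $R$; once that lemma is in hand, the argument is a routine transcription of the absolute case, with $k$ replaced by the (varying) base ring and $\Omega_{X/k}^1$ by $\Omega_{X/S}^1$. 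I expect no further difficulty, since the relative Schouten bracket and its stalk-compatibility were set up precisely for this purpose.
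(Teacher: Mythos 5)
Your proposal is correct, but it follows a different mechanism than the paper's own argument. The paper exploits the fact that, by its definition, a Poisson $S$-scheme already comes with a $k$-Poisson bivector $\Lambda\in\Gamma(X,\mathscr{H}om_{\mathcal{O}_X}(\wedge^2\Omega_{X/k}^1,\mathcal{O}_X))$, and it uses the injection $\Gamma(X,\mathscr{H}om_{\mathcal{O}_X}(\wedge^2\Omega_{X/S}^1,\mathcal{O}_X))\hookrightarrow\Gamma(X,\mathscr{H}om_{\mathcal{O}_X}(\wedge^2\Omega_{X/k}^1,\mathcal{O}_X))$ induced by the surjection $\Omega_{X/k}^1\to\Omega_{X/S}^1$: for $(1)\Rightarrow(2)$ it only has to check that each stalk $\Lambda_x$ is $\mathcal{O}_{S,f(x)}$-linear, hence factors through the relative differentials, so the same $\Lambda$ is the desired $P$; for $(2)\Rightarrow(1)$ it reads $P$ through the injection as an absolute bivector, invokes the previously proven absolute proposition to get the Poisson $k$-scheme structure, and notes the stalks are Poisson $\mathcal{O}_{S,f(x)}$-algebras. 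You instead rebuild everything from scratch in the relative setting: you construct $P$ pointwise from the stalk biderivations and verify it is a section via the local affine description $\Lambda_A\in Hom_A(\wedge^2\Omega_{A/R}^1,A)$, and conversely recover the bracket stalkwise and glue, i.e.\ you transcribe the proof of the absolute proposition with $\Omega_{X/S}^1$ in place of $\Omega_{X/k}^1$. Your route is self-contained and makes the reliance on the localization lemma explicit, at the cost of redoing work; the paper's route is shorter because it reduces to the absolute case, at the cost of invoking the exactness of the Hom sheaves. Note also that both arguments quietly use the same unproved step, namely passing from the centrality of $f^{\sharp}\mathcal{O}_S(U)$ against sections over $f^{-1}(U)$ to the statement that each stalk $\mathcal{O}_{X,x}$ (or each small affine $\mathcal{O}_X(U)$ with $U\subset f^{-1}(V)$) is a Poisson $\mathcal{O}_{S,f(x)}$-algebra (resp.\ Poisson $R$-algebra); the paper asserts this without proof, so you are on equal footing there, but if you want full rigor that is the one point you should spell out.
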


\begin{proof}
Let $\Lambda \in \Gamma(X,\mathscr{H}om_{\mathcal{O}_X}(\wedge^2 \Omega_{X/k}^1,\mathcal{O}_X))$ be the $(k)$-Poisson structure on $X$. Now we assume that $X$ is a Poisson scheme over $S$ via $f:X\to S$. We note that we have an exact sequence 
\begin{align*}
0\to \Gamma(X,\mathscr{H}om_{\mathcal{O}_X}(\wedge^2 \Omega_{X/S},\mathcal{O}_X))\to \Gamma(X,\mathscr{H}om_{\mathcal{O}_X}(\wedge^2 \Omega_{X/k}^1,\mathcal{O}_X)).
\end{align*}
We will show that $\Lambda$ is actually in $\Gamma(X,\mathscr{H}om_{\mathcal{O}_X}(\wedge^2 \Omega_{X/S},\mathcal{O}_X))$. For $x\in X$, via $f_*:\mathcal{O}_{S,f(x)}\to \mathcal{O}_{X,x}$, $\mathcal{O}_{X,x}$ is a Poisson $\mathcal{O}_{S,f(x)}$-algebra. Since $\Lambda_x$ is $\mathcal{O}_{S,f(x)}$-linear,  we have actually  
\begin{align*}
\Lambda_x\in Hom_{\mathcal{O}_{X,x}}(\Omega_{\mathcal{O}_{X,x}/\mathcal{O}_{S,f(x)}}^1,\mathcal{O}_{X,x})
\end{align*}
with $[\Lambda_x,\Lambda_x]=0$. Hence $P=:\Lambda\in \Gamma(X,\mathscr{H}om_{\mathcal{O}_X}(\wedge^2 \Omega_{X/S},\mathcal{O}_X))$ with $[P,P]=0$.

Conversely, assume there exists a global section $P\in \Gamma(X,\mathscr{H}om_{\mathcal{O}_X}(\wedge^2 \Omega_{X/S},\mathcal{O}_X))$ with $[P,P]=0$. Then $P$ defines a Poisson scheme over $k$ by the above exactness. Since for each $x\in X$, $\mathcal{O}_{X,x}$ is a Poisson $\mathcal{O}_{s,f(x)}$-algebra, $X$ is a Poisson scheme over $S$.

\end{proof}

\begin{definition}
Let $(X,P)$ and $(Y,Q)$ be Poisson schemes over $S$ with $g:X\to S$ and $h:Y\to S$. Then a morphism $f:X\to Y$ of schemes over $S$ is called a morphism of Poisson schemes over $S$ if for any open set  $U$ of $S$ and any open set $V$ of $h^{-1}(U)$ and any open set $W$ of $f^{-1}(V)$, and $f^\sharp:\mathcal{O}_Y(V)\to \mathcal{O}_X(W)$ is a Poisson $\mathcal{O}_S(U)$-homomorphism.
\end{definition}

Let $f:X\to Y$ be a morphism of schemes over $S$. Then we have $f^*\Omega_{Y/S}^1\to \Omega_{X/S}^1$. Since $\Omega_{Y/S}^1$ is quasi-coherent, we have $f^*(\wedge^2 \Omega_{Y/S}^1)\cong \wedge^2 f^* \Omega_{Y/S}^1$. So we have $\mathscr{H}om_{\mathcal{O}_X}(\wedge^2 \Omega_{X/S}^1,\mathcal{O}_X)\to \mathscr{H}om_{\mathcal{O}_X}(f^*(\wedge^2 \Omega_{Y/S}),f^*\mathcal{O}_Y)$. On the other hand, we have a natural sheaf morphism 
\begin{align*}
\mathscr{H}om_{\mathcal{O}_Y}(\wedge^2 \Omega^1_{Y/S},\mathcal{O}_Y)\to f_*f^*\mathscr{H}om_{\mathcal{O}_Y}(\wedge^2 \Omega_{Y/S}^1,\mathcal{O}_Y)\to f_*\mathscr{H}om_{\mathcal{O}_X}(f^*(\wedge^2 \Omega^1_{Y/S}),f^* \mathcal{O}_Y)
\end{align*}
 By taking the global sections, we have two morphsims
\begin{align*}
\alpha:&\Gamma(X, \mathscr{H}om_{\mathcal{O}_X}(\wedge^2\Omega^1_{X/S},\mathcal{O}_X))\to \Gamma(X,\mathscr{H}om_{\mathcal{O}_X}(f^*(\wedge^2 \Omega_{Y/S}),f^*\mathcal{O}_Y))\\
\beta:&\Gamma(Y,\mathscr{H}om_{\mathcal{O}_Y}(\wedge^2\Omega_{Y/S}^1,\mathcal{O}_Y))\to \Gamma(X,\mathscr{H}om_{\mathcal{O}_X}(f^*(\wedge^2 \Omega_{Y/S}),f^*\mathcal{O}_Y)
\end{align*}
If $f:(X,P)\to (Y,Q)$ is a morphism of Poisson schemes over $S$,  we have $\alpha(P)=\beta(Q)$.

\begin{proposition}[Glueing Poisson schemes]
Let $S$ be a $k$-scheme. Let us consider a family $\{X_i\}$ of Poisson schemes over $S$. We suppose given open subschemes $X_{ij}$ of $X_i$$($which is necessarily Poisson $S$-scheme$)$ and Poisson isomorphisms of $S$-schemes $f_{ij}:X_{ij}\to X_{ji}$ such that $f_{ii}=Id_{X_i}, f_{ij}(X_{ij}\cap X_{ji})=X_{ji}\cap X_{jk}$, and $f_{ik}=f_{jk}\circ f_{ij}$ on $X_{ij}\cap X_{ik}$. Then there exists an Poisson $S$-scheme $X$, unique up to isomorphism, with Poisson open immersion of $S$-schemes $g_i:X_i\to X$ such that $g_i:X_i\to X$ such that $g_i=g_j\circ f_{ij}$ on $X_{ij}$, and $X=\cup_i g_i(X_i)$.
\end{proposition}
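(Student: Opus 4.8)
The statement is the Poisson analogue of the standard glueing lemma for schemes over a base. The plan is to first apply the ordinary glueing construction for $S$-schemes to the underlying data $\{X_i\}$, $\{X_{ij}\}$, $\{f_{ij}\}$ (forgetting Poisson structures) to obtain an $S$-scheme $X$ together with open immersions $g_i : X_i \to X$ satisfying $g_i = g_j \circ f_{ij}$ on $X_{ij}$ and $X = \bigcup_i g_i(X_i)$; this is classical (see \cite{Liu02}, \cite{Har77}) and I would invoke it rather than reprove it. The remaining work is to equip this $X$ with a Poisson structure over $S$, i.e.\ a global section $\Lambda \in \Gamma(X, \mathscr{H}om_{\mathcal{O}_X}(\wedge^2 \Omega^1_{X/S}, \mathcal{O}_X))$ with $[\Lambda,\Lambda] = 0$, and to check that each $g_i$ becomes a morphism of Poisson $S$-schemes.

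First I would record that each $X_i$, being a Poisson $S$-scheme, carries a section $\Lambda_i \in \Gamma(X_i, \mathscr{H}om_{\mathcal{O}_{X_i}}(\wedge^2 \Omega^1_{X_i/S}, \mathcal{O}_{X_i}))$ with $[\Lambda_i, \Lambda_i] = 0$, by the characterization proven earlier in this chapter. Via the open immersion $g_i$ I regard $\Lambda_i$ as a local section of $\mathscr{H}om_{\mathcal{O}_X}(\wedge^2 \Omega^1_{X/S}, \mathcal{O}_X)$ over the open set $g_i(X_i) \subset X$; here I use that $\Omega^1$ and $\mathscr{H}om$ into $\mathcal{O}$ localize correctly on open subschemes, so $\Omega^1_{g_i(X_i)/S} \cong (g_i)_* \Omega^1_{X_i/S}$ canonically. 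The key compatibility to verify is that on the overlap $g_i(X_{ij}) = g_i(X_i) \cap g_j(X_j)$ the two sections $\Lambda_i$ and $\Lambda_j$ agree. This is precisely where the hypothesis that the $f_{ij}$ are \emph{Poisson} isomorphisms of $S$-schemes enters: using the two morphisms $\alpha, \beta$ attached to a morphism of $S$-schemes (constructed just above this proposition, sending Poisson structures on source and target to a common $\mathscr{H}om$ group of the pullback), the Poisson condition on $f_{ij}$ says exactly that $f_{ij}$ pulls $\Lambda_j|_{X_{ji}}$ back to $\Lambda_i|_{X_{ij}}$. Translating this identity through the identifications $g_i|_{X_{ij}} = g_j|_{X_{ij}} \circ f_{ij}$ gives $\Lambda_i = \Lambda_j$ on $g_i(X_{ij})$. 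Since $\{g_i(X_i)\}$ is an open cover of $X$ and $\mathscr{H}om_{\mathcal{O}_X}(\wedge^2 \Omega^1_{X/S}, \mathcal{O}_X)$ is a sheaf, the sections $\Lambda_i$ glue to a global $\Lambda$.

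Next I would check $[\Lambda, \Lambda] = 0$: this is an identity in $\Gamma(X, \mathscr{H}om_{\mathcal{O}_X}(\wedge^3 \Omega^1_{X/S}, \mathcal{O}_X))$, and since the Schouten bracket on a scheme over $S$ is defined stalkwise and is compatible with restriction to opens (the lemma on commutativity of $[-,-]$ with localization, stated earlier), it suffices to check it on each $g_i(X_i)$, where $[\Lambda,\Lambda]|_{g_i(X_i)} = [\Lambda_i, \Lambda_i] = 0$ by hypothesis. Thus $X$ is a Poisson $S$-scheme, and by the same local-to-global argument applied to the morphisms $\alpha, \beta$, each $g_i : X_i \to X$ is a morphism of Poisson $S$-schemes since it is one locally (indeed $\Lambda$ restricts to $\Lambda_i$ through $g_i$ by construction). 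Uniqueness up to isomorphism follows from the uniqueness in the underlying scheme glueing together with the fact that a Poisson structure is a global section of a sheaf, hence determined by its restrictions to the cover. I do not expect a serious obstacle here; the only point requiring care is the bookkeeping of the pullback identifications, i.e.\ making sure the maps $\alpha$ and $\beta$ and the canonical isomorphism $g_i^* \Omega^1_{X/S} \cong \Omega^1_{X_i/S}$ are used consistently so that ``Poisson morphism $f_{ij}$'' really yields the sheaf-theoretic equality $\Lambda_i = \Lambda_j$ on overlaps rather than merely an abstract compatibility.
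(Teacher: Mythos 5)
Your argument is correct, and in fact the paper states this proposition without proof (it is treated as the routine Poisson analogue of classical glueing), so there is no written proof to compare against; your write-up supplies exactly the bookkeeping the paper leaves implicit. The route you take — glue the underlying $S$-schemes classically, transport each bivector $\Lambda_i$ through the open immersion $g_i$ using $\Omega^1_{X/S}|_{g_i(X_i)}\cong\Omega^1_{g_i(X_i)/S}$, use the Poisson condition $\alpha(P)=\beta(Q)$ on the isomorphisms $f_{ij}$ to get agreement on overlaps, glue by the sheaf property, and check $[\Lambda,\Lambda]=0$ and the Poisson property of the $g_i$ stalk-locally — is fully consistent with the paper's framework, which characterizes Poisson $S$-structures as global sections $\Lambda\in\Gamma(X,\mathscr{H}om_{\mathcal{O}_X}(\wedge^2\Omega^1_{X/S},\mathcal{O}_X))$ with $[\Lambda,\Lambda]=0$ and defines the Schouten bracket stalkwise. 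One remark worth noting: since the paper defines a Poisson scheme equivalently as a ringed space whose structure sheaf is a sheaf of Poisson $k$-algebras, you could shortcut the bivector bookkeeping entirely by running the usual glueing of ringed spaces with ``sheaf of Poisson algebras'' in place of ``sheaf of rings'': the $f_{ij}^{\sharp}$ are isomorphisms of Poisson sheaves satisfying the cocycle condition, so the glued structure sheaf is automatically a sheaf of Poisson $\mathcal{O}_S$-algebras, and the bivector description of the result is then recovered from the equivalence proved earlier in the chapter. Both arguments are valid; yours makes the $S$-linearity of the bracket visible through $\Omega^1_{X/S}$, while the sheaf-theoretic one is shorter. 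The only caution in your version is the one you already flag: the overlap identity must be the on-the-nose equality of sections under the canonical identifications $g_i|_{X_{ij}}=g_j\circ f_{ij}$, which holds because $f_{ij}$ is an isomorphism, so $\alpha(P)=\beta(Q)$ is equivalent to $f_{ij}^{*}(\Lambda_j|_{X_{ji}})=\Lambda_i|_{X_{ij}}$.
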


\subsection{(truncated) Lichnerowicz-Poisson cohomology}

\begin{definition}\label{3def}
Let $(X,\Lambda)$ be a Poisson scheme over $S$. Then we define $i$-th (truncated) Lichnerowicz-Poisson cohomology is the $i$-th hypercohomology group of the following complex of sheaves
\begin{align*}
0\to \mathscr{H}om_{\mathcal{O}_X}(\Omega_{X/S}^1,\mathcal{O}_X)\xrightarrow{[\Lambda,-]} \mathscr{H}om_{\mathcal{O}_X}(\wedge^2 \Omega^1_{X/S},\mathcal{O}_X)\xrightarrow{[\Lambda,-]}\mathscr{H}om_{\mathcal{O}_X}(\wedge^3\Omega^1_{X/S},\mathcal{O}_X)\xrightarrow{[\Lambda,-]}\cdots
\end{align*}
We denote $i$-th cohomology group by $HP^i(X,\Lambda)$.
\end{definition}

\begin{remark}\label{3remark2}
Let $X=Spec(A)$ be an affine scheme with a Poisson structure $\Lambda\in Hom_A(\wedge^2 \Omega_{A/S},A)=\Gamma(X,\mathscr{H}om_{\mathcal{O}_X}(\wedge^2\Omega^1_{X/S},\mathcal{O}_X))$. Since $\mathscr{H}om_{\mathcal{O}_X}(\wedge^i\Omega_{X/S}^1,\mathcal{O}_X)$ are quasi coherent, and so its higher cohomology vanishes. Hence (truncated) Lichnerowicz-Poisson cohomology of $(X,\Lambda)$ is same to the (truncated) Lichnerowicz-Poisson cohomology of a Poisson algebra $(A,\Lambda)$.
\end{remark}

\section{Deformations of algebraic Poisson schemes}

\subsection{Basic materials on deformations of algebraic Poisson schemes}\

In this section, we discuss deformations of algebraic Poisson schemes by following \cite{Ser06}(see Chapter 1) in the Poisson context. 
We will always denote by $k$ a fixed algebraically closed field. All schemes will be assumed to be defined over $k$, locally noetherian and separated. If $S$ is a scheme and $s\in S$, we denote $k(s)=\mathcal{O}_{S,s}/\mathfrak{m}_s$ the residue field of $S$ at $s$. We denote by $\bold{Art}$ the category of local artinian $k$-algebras with residue field $k$.

\begin{definition}
Let $(X,\Lambda_0)$ be an algebraic Poisson scheme. A cartesian diagram of morphisms of schemes
\begin{center}
$\eta:$
$\begin{CD}
(X,\Lambda_0) @>i>> (\mathcal{X},\Lambda)\\
@VVV @VV{\pi}V\\
Spec(k) @>s>>S
\end{CD}$
\end{center}
is called a family of Poisson deformations or a Poisson deformation of $X$ parametrized by $S$ where $\pi$ is flat and surjective, and $S$ is connected, $(\mathcal{X},\Lambda)$ is a Possoin $S$-scheme with $\Lambda\in \Gamma(\mathcal{X}, \mathscr{H}om_{\mathcal{O}_{\mathcal{X}}}(\wedge^2 \Omega_{\mathcal{X}/S}^1, \mathcal{O}_{\mathcal{X}}))$ and $X \cong \mathcal{X} \times_S Spec(k)$ as Poisson isomorphism: in other words, $\Lambda_0$ is induced from $\Lambda$. We call $S$ and $(\mathcal{X},\Lambda)$ respectively the parameter scheme and the total Poisson $S$-scheme of the Poisson deformation $\eta$. If $S$ is algebraic, for each $k$-rational point $t\in S$ the scheme theoretic fiber $(\mathcal{X}(t),\Lambda(t))$ with the induced Poisson structure $\Lambda(t)$ from $\Lambda$ is also called a Poisson deformation of $(X,\Lambda_0)$. 
 a Poisson deformations  $\eta$ over $Spec(A)$ is called infinitesimal $($reps. first-order$)$ if $A\in \bold{Art}$ $($reps. if $A=k[\epsilon]$$)$. 
\end{definition} 

\begin{remark}
We will explain more in detail that $\Lambda_0$ is induced from $\Lambda$. Since $\Omega_{X/k}^1\cong i^*\Omega_{\mathcal{X}/S}^1$, canonical map $\mathscr{H}om_{\mathcal{O}_\mathcal{X}}(\wedge^2 \Omega_{\mathcal{X}/S}^1,\mathcal{O}_{\mathcal{X}})\to i_*i^*\mathscr{H}om_{\mathcal{O}_{\mathcal{X}}}(\wedge^2 \Omega_{\mathcal{X}/S}^1,\mathcal{O}_{\mathcal{X}})\to i_*\mathscr{H}om_{\mathcal{O}_X}(\wedge^2 \Omega_{X/k}^1,\mathcal{O}_X)$ induces $\Gamma(\mathcal{X},\mathscr{H}om_{\mathcal{O}_{\mathcal{X}}}(\wedge^2 \Omega_{\mathcal{X}/S}^1,\mathcal{O}_{\mathcal{X}}))\to \Gamma(X,\mathscr{H}om_{\mathcal{O}_X}(\wedge^2 \Omega_{X/k}^1,\mathcal{O}_X))$. Via this map $\Lambda$ is sent to $\Lambda_0$. So $(X,\Lambda_0)$ is a closed Poisson subscheme of $(\mathcal{X},\Lambda)$ since $i$ is a Poisson morphism.
\end{remark}
 
 Given another Poisson deformation 
\begin{center}
$\xi:$
$\begin{CD}
(X,\Lambda_0) @>>> (\mathcal{Y},\Lambda')\\
@VVV @VVV\\
Spec(k) @>>>S
\end{CD}$
\end{center}
of $(X,\Lambda_0)$ over $S$, an isomorphism of $\eta$ with $\xi$ is an Poisson $S$-isomorphism $\phi:(\mathcal{X},\Lambda)\to (\mathcal{Y},\Lambda')$ inducing the identity on $(X,\Lambda_0)$, i.e. such that the following diagram is commutative.

\begin{center}
\[\begindc{\commdiag}[50]
\obj(0,1)[a]{$(\mathcal{X},\Lambda)$}
\obj(1,2)[b]{$(X,\Lambda_0)$}
\obj(1,0)[c]{$S$}
\obj(2,1)[d]{$(\mathcal{Y},\Lambda')$}
\mor{b}{a}{}
\mor{b}{d}{}
\mor{a}{c}{}
\mor{d}{c}{}
\mor{a}{d}{$\phi$}
\enddc\]
\end{center}

By a pointed scheme, we will mean a pair $(S,s)$ where $S$ is a scheme and $s\in S$. If $K$ is a field, we call $(S,s)$ a $K$-pointed scheme if $K\cong k(s)$.

\begin{definition}[trivial Poisson deformation]
Let $(X,\Lambda_0)$ be a algebraic Poisson scheme, and $(S,s)$ be a $k$-pointed scheme $(S,s)$. We define a trivial Poisson family induced  by $(X,\Lambda_0)$ and $(S,s)$ to be the following Poisson deformation of $(X,\Lambda_0)$,
\begin{center}
$\begin{CD}
(X,\Lambda_0) @>>> (X\times_{Spec(k)} S,\Lambda_0\oplus 0)\\
@VVV @VV{\pi}V\\
Spec(k) @>s>>S
\end{CD}$
\end{center}
Here $(\Lambda_0\oplus 0)$ is the Poisson structure on $X\times_{Spec(k)} S$ over $S$ induced from the Poisson structure $\Lambda_0$ on $X$ via the following diagram.
\begin{center}
$\begin{CD}
X\times_{Spec(k)} S @>>> X\\
@VVV @VVV\\
S@>>> Spec(k)
\end{CD}$
\end{center}
 Poisson deformation of $(X,\Lambda_0)$ over $S$ is called trivial if it is isomorphic to the trivial Poisson family as above. 
\end{definition}

\begin{definition}[rigid Poisson deformations]
An algebraic Poisson scheme $(X,\Lambda_0)$ is called rigid if every infinitesimal Poisson deformations of $X$ over $A$ is trivial for every $Spec(A)$ in $\bold{Art}$.
\end{definition}

Given a Poisson deformation $\eta$ of $(X,\Lambda_0)$ over $S$ as above and a morphism $(S',s')\to (S,s)$ of $k$-pointed schemes there is induced a commutative diagram by base change
\begin{center}
$\eta':$
$\begin{CD}
(X,\Lambda_0) @>>> (\mathcal{X}\times_S S',\Lambda\oplus 0)\\
@VVV @VVV\\
Spec(k) @>>>S'
\end{CD}$
\end{center}
which is a Poisson deformation of $(X,\Lambda_0)$ over $S'$. This operation is functorial, in the sense that it commutes with composition of morphisms and the identity morphism does not change $\eta$. Moreover, it carries isomorphic Poisson deformations to isomorphic ones.

\begin{definition}[locally trivial Poisson deformations]
An infinitesimal Poisson deformation $\eta$ of $(X,\Lambda_0)$ is called locally trivial if for every point $x\in X$ has an open neighborhood $U_x\subset X$ such that 
\begin{center}
$\begin{CD}
(U_x,\Lambda_0|_{U_x}) @>>> (\mathcal{X}|_{U_x},\Lambda|_{U_x})\\
@VVV @VV{\pi}V\\
Spec(k) @>s>>S
\end{CD}$
\end{center}
is a trivial Poisson deformation of $U_x$. In other words, $(\mathcal{X}|_{U_x},\Lambda|_{U_x})\cong (U_x\times_{spec(k)} S,\Lambda_0\oplus 0)$ as Poisson schemes. 
\end{definition}

\subsection{Infinitesimal Poisson deformations}

\begin{definition}[small extension]
We say that for $(\tilde{A},\tilde{\mathfrak{m}}), (A,\mathfrak{m})\in \bold{Art}$, an exact sequence of the form $0\to (t)\to \tilde{A}\to A\to 0$ is a small extension if $t\in \tilde{\mathfrak{m}}$ is annihilated by $\tilde{\mathfrak{m}}. ($i.e $t\cdot \tilde{\mathfrak{m}}=0)$ so that $(t)$ is an one dimensional $k$-vector space.
\end{definition}

\begin{lemma}[compare $\cite{Ser06}$ Lemma 1.2.6 page 26]\label{3l}
Let $B_0$ be a Poisson $k$-algebra with the Poisson structure $\Lambda\in Hom_{B_0}(\wedge^2 \Omega_{B_0/k},B_0)$, and
\begin{align*}
e:0\to(t)\to \tilde{A}\to A\to 0
\end{align*}
a small extension in $\bold{Art}$. Let $\Lambda_0\in Hom_{B_0}(\wedge^2 \Omega_{B_0/k},B_0)\otimes_k A$ be a Poisson structure on $B_0\otimes_k A$ over $A$  inducing $\Lambda$. Let $\Lambda_1,\Lambda_2\in Hom_{B_0}(\wedge^2 \Omega_{B_0/k},B_0)\otimes_k \tilde{A}$ be Poisson structures on $B_0\otimes_k \tilde{A}$ over $\tilde{A}$ which induces $\Lambda_0$. This implies that there exists a $\Lambda'\in Hom_{B_0}(\wedge^2 \Omega_{B_0/k},B_0)$ such that $\Lambda_1-\Lambda_2=t\Lambda'$. Then there is one to one correspondence
\begin{align*}
\{\text{Poisson isomorphisms between} \,\,\,(B_0\otimes_k \tilde{A},\Lambda_1)\,\,\,\text{and}\,\,\,(B_0\otimes_k\tilde{A},\Lambda_2)\\\,\,\,\text{inducing the identity on}\,\,\,(B_0\otimes_k A,\Lambda_0)\}\\\to \{P\in Der_k(B_0,B_0)=Hom_{B_0}(\Omega_{B_0/k},B_0)| \Lambda'-[\Lambda, P]=\Lambda'+[P,\Lambda]=0\}
\end{align*}
In particular, when $\Lambda_1=\Lambda_2$, there is a canonical isomorphism of groups
\begin{align*}
\{\text{Poisson automorphisms between} \,\,\,(B_0\otimes_k \tilde{A},\Lambda_1)\,\,\,\text{and}\,\,\,(B_0\otimes_k\tilde{A},\Lambda_1)\\\,\,\,\text{inducing the identity on}\,\,\,(B_0\otimes_k A,\Lambda_0)\}\to PDer_k(B_0,B_0)=HP^1(B_0,\Lambda)
\end{align*}
\end{lemma}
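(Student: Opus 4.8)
The plan is to follow the classical computation of \cite{Ser06} Lemma 1.2.6 for ordinary flat deformations, carrying the Poisson data along at every stage. An isomorphism $\phi:(B_0\otimes_k\tilde A,\Lambda_1)\to(B_0\otimes_k\tilde A,\Lambda_2)$ of $\tilde A$-algebras inducing the identity on $B_0\otimes_k A$ must be of the form $\phi=\mathrm{id}+tP$ for a unique $k$-linear map $P:B_0\to B_0$; the fact that $\phi$ is an $\tilde A$-algebra homomorphism forces $P$ to be a $k$-derivation of $B_0$ (here one uses $t\cdot\tilde{\mathfrak m}=0$, so $t^2=0$ and $t\cdot A$-scalars act trivially), giving $P\in \operatorname{Der}_k(B_0,B_0)=\operatorname{Hom}_{B_0}(\Omega_{B_0/k},B_0)$. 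This is precisely the content of \cite{Ser06} Lemma 1.2.6, so the first step is to quote that and record $\phi\leftrightarrow P$.

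Next I would impose the Poisson condition. The map $\phi$ is a Poisson isomorphism iff $\phi$ intertwines the two brackets, i.e. $\phi^{*}\Lambda_2=\phi\circ\Lambda_1$ as elements of $\operatorname{Hom}_{B_0}(\wedge^2\Omega_{B_0/k},B_0)\otimes_k\tilde A$ (using the characterization of Poisson morphisms recalled earlier in the excerpt). Writing $\phi=\mathrm{id}+tP$, $\Lambda_2=\Lambda_1-t\Lambda'$, and expanding modulo $t^2=0$, the degree-$t$ part of this identity should reduce, after a direct bracket computation, to $\Lambda'-[\Lambda,P]=0$; the degree-$0$ part is automatic since $\phi$ induces the identity on $B_0\otimes_k A$. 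The key combinatorial input is Example \ref{3ex}: for $P\in\operatorname{Hom}_{B_0}(\Omega_{B_0/k},B_0)$ and $\Lambda\in\operatorname{Hom}_{B_0}(\wedge^2\Omega_{B_0/k},B_0)$, one has $[\Lambda,P](df_1\wedge df_2)=\Lambda(dP(f_1)\wedge df_2)-\Lambda(dP(f_2)\wedge df_1)-P(d\Lambda(df_1\wedge df_2))$, and this is exactly the $t$-linear term produced by transporting $\Lambda_1$ through $\mathrm{id}+tP$ and comparing with $\Lambda_1-t\Lambda'$. Since $[\Lambda,P]=-[P,\Lambda]$ (degrees $p=2$, $q=1$, sign $(-1)^{(p-1)(q-1)}=+1$, so actually $[\Lambda,P]=[P,\Lambda]$ — one must check the sign convention in the paper's Schouten bracket), the condition is stated equivalently as $\Lambda'+[P,\Lambda]=0$; I would double-check this sign against the bracket definition to make sure the statement is reproduced verbatim.

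For the "in particular" clause, specialize to $\Lambda_1=\Lambda_2$, i.e. $\Lambda'=0$. Then Poisson automorphisms of $(B_0\otimes_k\tilde A,\Lambda_1)$ over $(B_0\otimes_k A,\Lambda_0)$ correspond to $P\in\operatorname{Der}_k(B_0,B_0)$ with $[\Lambda,P]=0$, i.e. $P\in\ker\big(\operatorname{Hom}_{B_0}(\Omega_{B_0/k},B_0)\xrightarrow{[\Lambda,-]}\operatorname{Hom}_{B_0}(\wedge^2\Omega_{B_0/k},B_0)\big)$, which is by definition $HP^1(B_0,\Lambda)$ (the $0$-th term of the truncated Lichnerowicz--Poisson complex, in the indexing convention of the paper), and which one checks is naturally identified with the group $\operatorname{PDer}_k(B_0,B_0)$ of Poisson derivations. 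Composition of automorphisms $(\mathrm{id}+tP)(\mathrm{id}+tQ)=\mathrm{id}+t(P+Q)$ (again $t^2=0$) translates into addition in $HP^1$, so the bijection is a group isomorphism. The main obstacle is the bookkeeping in the second step: one must verify that the $t$-linear term of $\phi^{*}\Lambda_2-\phi\circ\Lambda_1$ is genuinely $\Lambda'-[\Lambda,P]$ with the correct sign, which requires unwinding the definition of $\phi^{*}$ (the pullback on $\operatorname{Hom}_{B_0}(\wedge^2\Omega_{B_0/k},-)$ induced by $\phi$) and matching it term-by-term with Example \ref{3ex}; everything else is formal once the derivation statement from \cite{Ser06} is granted.
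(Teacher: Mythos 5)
Your proposal is correct and follows essentially the same route as the paper: write the isomorphism as $\mathrm{id}+tP$ with $P\in \operatorname{Der}_k(B_0,B_0)$, expand the bracket-compatibility condition to first order in $t$ (using $t\cdot\tilde{\mathfrak m}=0$) and match the $t$-linear term with $\Lambda'-[\Lambda,P]$ via Example \ref{3ex}, then obtain the group isomorphism in the automorphism case from $(\mathrm{id}+tP)\circ(\mathrm{id}+tQ)=\mathrm{id}+t(P+Q)$. The sign question you flag resolves in the statement's favor: with the paper's convention $[P,Q]=-(-1)^{(p-1)(q-1)}[Q,P]$ one gets $[\Lambda,P]=-[P,\Lambda]$, so $\Lambda'-[\Lambda,P]=\Lambda'+[P,\Lambda]$ as claimed.
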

\begin{proof}
Let $\theta:(B_0\otimes_k \tilde{A},\Lambda_1)\to (B_0\otimes_k \tilde{A}, \Lambda_2)$ be a Poisson isomorphism. Then $\theta$ is $\tilde{A}$-linear and induces the identity modulo by $t$. We have $\theta(x)=x+tPx$, where $P\in Der_{\tilde{A}}(B_0\otimes_k \tilde{A},B_0)=Der_k(B_0,B_0)=Hom_{B_0}(\Omega_{B_0/k}^1,B_0)$. When we think of $P$ as an element of $Hom_{B_0}(\Omega_{B_0/k}^1,B_0)$, we have $\theta(x)=x+tP(dx)$. We define the correspondence by $\theta \mapsto P$. Now we check that $\Lambda'-[\Lambda,P]=0$. Since $\theta$ is a Poisson map, for $x,y\in B_0$, we have by Example $(\ref{3ex})$,
\begin{align*}
&\theta(\Lambda_1(dx\wedge dy))=\Lambda_2(d(\theta x)\wedge d (\theta y))\\
&\Lambda_1(dx\wedge dy)+t P(d(\Lambda_1(dx\wedge dy)))=\Lambda_2((dx+td(P(dx)))\wedge (dy+td(P(dy))))\\
&\Lambda_1(dx\wedge dy)+t P(d(\Lambda(dx\wedge dy)))=\Lambda_2(dx\wedge dy)+t\Lambda(dx\wedge d(P(dy)))+t\Lambda(d(P(dx))\wedge dy)\\
&t[\Lambda'(dx\wedge dy)+P(d(\Lambda(dx\wedge dy)))-\Lambda(dx\wedge d(P(dy)))-\Lambda(d(P(dx))\wedge dy)]=0\\
&\Lambda'-[\Lambda,P]=0
\end{align*}
Since $\theta$ is determined by $P$, the correspondence is one to one.

Now we assume that $\Lambda_1=\Lambda_2$. So $\theta$ corresponds to $P$ with $[\Lambda,P]=0$. First we note that $P\in Hom_{B_0}(\Omega_{B_0/k}^1,B_0)$ with $[\Lambda,P]=0$ is a Poisson derivation. In other words, $P(\{x,y\})=\{Px,y\}+\{x,Py\}$. Indeed, $0=[\Lambda,P](dx\wedge dy)=\Lambda(d(Px)\wedge dy)-\Lambda(d(Py)\wedge dx)-P(d(\Lambda(dx\wedge dy))$.

Now we show that the correspondence is a group isomorphism. Indeed, let $\theta(x)=x+tPx$ and $\sigma(x)=x+tQx$ with $[\Lambda,P]=[\Lambda,Q]=0$. Then $\sigma(\theta(x))=\theta(x)+tQ(\theta(x))=x+tPx+tQ(x+tPx))=x+tPx+tQx=x+t(P+Q)x$. Hence $\theta+\sigma$ corresponds to $P+Q$. Since $[\Lambda,P+Q]=0$ and identity map corresponds to $0$, the correspondence is group isomorphism.
\end{proof}

\begin{proposition}[compare $\cite{Ser06}$ Proposition 1.2.9 page 29 and see also $\cite{Nam09}$ Proposition 8]\label{3q}
Let $(X,\Lambda_0)$ be an Poisson algebraic variety with $\Lambda_0\in \Gamma(X,\mathscr{H}om_{\mathcal{O}_X}(\wedge^2 \Omega_{X/k},\mathcal{O}_X))$. There is a $1-1$ correspondence:
\begin{align*}
&\kappa:\{\text{Poisson isomorphism classes of first order Poisson deformations of $(X,\Lambda_0)$}\\&\text {whose underlying flat deformation of $X$ is locally trivial}\}\to HP^2(X,\Lambda_0)
\end{align*}
such that $\kappa(\xi)=0$ if and only $\xi$ is the trivial Poisson deformation class. In particular, if $X$ is nonsingular, then we have $1-1$ correspondence 
\begin{align*}
\kappa:\{\text{Poisson isomorphism classes of first order Poisson deformations of $(X,\Lambda_0)$}\} \to HP^2(X,\Lambda_0)
\end{align*}
\end{proposition}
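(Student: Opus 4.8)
The plan is to construct the bijection $\kappa$ by the standard \v{C}ech-hypercohomology argument adapted to the Poisson setting, mirroring the proof of $\cite{Ser06}$ Proposition 1.2.9 but keeping track of the bivector. First I would fix an affine open cover $\mathcal{U}=\{U_i\}$ of $X$ on which both the underlying flat deformation and its trivializations are available. Given a first-order Poisson deformation $\xi$ over $k[\epsilon]$ whose underlying flat deformation is locally trivial, I would choose Poisson trivializations $\varphi_i:\mathcal{O}_{\mathcal{X}}|_{U_i}\xrightarrow{\sim}\mathcal{O}_{U_i}\otimes_k k[\epsilon]$; over $U_i$ the Poisson structure $\Lambda$ is carried to $\Lambda_0 + \epsilon\,\Lambda_i'$ for some $\Lambda_i'\in\Gamma(U_i,\mathscr{H}om(\wedge^2\Omega^1_{X/k},\mathcal{O}_X))$, and flatness together with $[\Lambda,\Lambda]=0$ forces $[\Lambda_0,\Lambda_i']=0$. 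On overlaps $U_{ij}$ the comparison $\varphi_i\varphi_j^{-1}$ is $\mathrm{id}+\epsilon\theta_{ij}$ with $\theta_{ij}\in\Gamma(U_{ij},\mathscr{H}om(\Omega^1_{X/k},\mathcal{O}_X))=\Gamma(U_{ij},\Theta_X)$, and the cocycle condition gives $\delta\{\theta_{ij}\}=0$; the Poisson compatibility computation (exactly the $t$-linearization in Lemma \ref{3l}, applied over $U_{ij}$) gives $\Lambda_i'-\Lambda_j' = [\Lambda_0,\theta_{ij}]$, i.e. $\delta\{\Lambda_i'\} + [\Lambda_0,\{\theta_{ij}\}]=0$ in the \v{C}ech resolution of Definition \ref{3def} / Remark \ref{3remark2}. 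Thus $(\{\theta_{ij}\},\{\Lambda_i'\})$ is a 2-cocycle for the double complex computing $HP^2(X,\Lambda_0)$, and I set $\kappa(\xi)$ to be its class.

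Next I would check that $\kappa$ is well defined, i.e. independent of the chosen cover and of the trivializations $\varphi_i$. Refinement is routine. Changing $\varphi_i$ to $\varphi_i'=(\mathrm{id}+\epsilon\psi_i)\varphi_i$ with $\psi_i\in\Gamma(U_i,\Theta_X)$ changes $\theta_{ij}$ by $\delta\{\psi_i\}$ and changes $\Lambda_i'$ by $[\Lambda_0,\psi_i]$ (again the linearized Poisson-map computation), which is exactly adding the hypercoboundary of $\{\psi_i\}\in C^0(\mathcal{U},\Theta_X)$; hence the class in $HP^2$ is unchanged. Isomorphic Poisson deformations give the same class by the same bookkeeping applied to a global Poisson isomorphism. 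For injectivity, if $\kappa(\xi)=0$ then after refining and adjusting trivializations one may take $\theta_{ij}=0$ and $\Lambda_i'=0$, which says the $\varphi_i$ glue to a global Poisson isomorphism of $\mathcal{X}$ with the trivial Poisson deformation $X\times\mathrm{Spec}\,k[\epsilon]$, so $\xi$ is trivial; this simultaneously establishes the ``$\kappa(\xi)=0 \iff \xi$ trivial'' clause. For surjectivity, given a 2-cocycle $(\{\theta_{ij}\},\{\Lambda_i'\})$ I would build $\mathcal{X}$ by glueing the affine pieces $\mathrm{Spec}(\mathcal{O}_X(U_i)\otimes k[\epsilon])$, with transition maps $\mathrm{id}+\epsilon\theta_{ij}$ (the cocycle condition $\delta\theta=0$ makes these glue, and flatness is automatic over $k[\epsilon]$), and equip the pieces with the bivectors $\Lambda_0+\epsilon\Lambda_i'$; the relations $[\Lambda_0,\Lambda_i']=0$ give $[\Lambda,\Lambda]=0$ locally, and $\Lambda_i'-\Lambda_j'=[\Lambda_0,\theta_{ij}]$ guarantees the local bivectors are identified by the transition maps, yielding a global $\Lambda\in\Gamma(\mathcal{X},\mathscr{H}om(\wedge^2\Omega^1_{\mathcal{X}/k[\epsilon]},\mathcal{O}_{\mathcal{X}}))$. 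The underlying flat deformation is locally trivial by construction, and $\kappa$ of the result is the given class. Finally, for the nonsingular case I would invoke Proposition \ref{2proq} (every infinitesimal deformation of a smooth affine, in particular of an open ball / affine chart, is trivial) to conclude that for $X$ nonsingular \emph{every} first-order Poisson deformation has locally trivial underlying flat deformation, so the restricted correspondence becomes the full one.

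The main obstacle I anticipate is not conceptual but organizational: carefully carrying out the linearized Poisson-compatibility computations on overlaps and under change of trivialization so that the signs and the terms land precisely in the \v{C}ech hyperresolution of the truncated Lichnerowicz complex (the bottom-row $\delta$, the column differentials $[\Lambda_0,-]$, and the alternating signs between them, as in the diagram following Definition \ref{3def}). One must also be slightly careful that the trivializations $\varphi_i$ are genuinely Poisson isomorphisms onto $(\mathcal{O}_{U_i}\otimes k[\epsilon], \Lambda_0+\epsilon\Lambda_i')$ rather than mere $k[\epsilon]$-algebra isomorphisms — this is what pins the second coordinate $\{\Lambda_i'\}$ of the cocycle and is exactly the content encoded by Lemma \ref{3l}. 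A secondary point is to note that in Remark \ref{3remark2} the sheaves $\mathscr{H}om(\wedge^i\Omega^1_{X/k},\mathcal{O}_X)$ are quasi-coherent, so that on the affine cover there is no higher \v{C}ech cohomology of the individual terms and the hypercohomology is genuinely computed by the displayed double complex; this is what makes the two-step (cover, then double complex) argument legitimate.
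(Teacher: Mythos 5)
Your proposal is correct and follows essentially the same route as the paper's proof: local Poisson trivializations, linearization of the overlap isomorphisms via Lemma \ref{3l} to produce the hypercocycle $(\{\theta_{ij}\},\{\Lambda_i'\})$ in the \v{C}ech resolution of the truncated Lichnerowicz complex, well-definedness under change of trivialization and Poisson equivalence, and the inverse map by glueing affine pieces with transition maps $\mathrm{id}+\epsilon\theta_{ij}$ and bivectors $\Lambda_0+\epsilon\Lambda_i'$. The only cosmetic difference is that you spell out the injectivity/triviality clause and the nonsingular reduction (local triviality of first-order deformations of smooth affines, which in the algebraic setting is Sernesi's Theorem 1.2.4 rather than Proposition \ref{2proq} verbatim) a bit more explicitly than the paper does.
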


\begin{proof}
Given a first-order Poisson deformation of $(X,\Lambda_0)$ whose underlying flat deformation is locally trivial,
\begin{center}
$\begin{CD}
(X,\Lambda_0)@>>> (\mathcal{X},\Lambda)\\
@VVV @VVV\\
Spec(k)@>>> Spec(k[\epsilon])
\end{CD}$
\end{center}
we choose an affine open cover $\mathcal{U}=\{U_i=Spec(B_i)\}_{i\in I}$ of $X$ such that $\mathcal{X}|_{U_i}\cong U_i\times Spec(k[\epsilon])=Spec(B_i)\times Spec(k[\epsilon])$ is trivial for all $i$ with the induced Poisson structure $\Lambda_0+\epsilon\Lambda_i=\in Hom_{B_i}(\wedge^2 \Omega_{B_i/k}^1,B_i)\otimes_k k[\epsilon]$ on $U_i\times Spec(k[\epsilon])$ from $\Lambda$. For each $i$, we have a Poisson isomorphism
\begin{align*}
\theta_i:(U_i\times Spec(k[\epsilon]),\Lambda_i)\to (\mathcal{X}|_{U_i},\Lambda|_{U_i})
\end{align*}
Then for each $i,j\in I$, $\theta_{ij}:=\theta_i^{-1}\theta_j:(U_{ij}\times Spec(k[\epsilon]),\Lambda_j)\to (U_{ij}\times Spec(k[\epsilon]), \Lambda_i)$ is an Poisson isomorphism inducing the identity on $(U_{ij},\Lambda_0)$ by modulo $\epsilon$. Hence by Lemma \ref{3l}, $\theta_{ij}$ corresponds to a $p_{ij}\in \Gamma(U_{ij}, T_X)$ where $T_X=\mathscr{H}om(\Omega_X^1,\mathcal{O}_X)=Der_k(\mathcal{O}_X,\mathcal{O}_X)$ such that $\Lambda_i-\Lambda_j-[\Lambda_0,p_{ij}]=0$. We claim that $(\{p_{ij}\},\{\Lambda_i'\})\in C^1(\mathcal{U},\mathscr{H}om_{\mathcal{O}_X}(\Omega_{X/k}^1,\mathcal{O}_X)))\oplus C^0(\mathcal{U},\mathscr{H}om_{\mathcal{O}_X}(\wedge^2 \Omega_{X/k},\mathcal{O}_X))$ represents a cohomology class in the following diagram (see Appendix \ref{appendixb}):
\begin{center}
$\begin{CD}
@A[\Lambda_0,-]AA\\
C^0(\mathcal{U},\mathscr{H}om_{\mathcal{O}_X}(\wedge^3 \Omega_{X/k},\mathcal{O}_X)))@>-\delta>>\cdots\\
@A[\Lambda_0,-]AA @A[\Lambda_0,-]AA\\
C^0(\mathcal{U},\mathscr{H}om_{\mathcal{O}_X}(\wedge^2 \Omega_{X/k},\mathcal{O}_X)))@>\delta>> C^1(\mathcal{U},\mathscr{H}om_{\mathcal{O}_X}(\wedge^2 \Omega_{X/k},\mathcal{O}_X)))@>-\delta>>\cdots\\
@A[\Lambda_0,-]AA @A[\Lambda_0,-]AA @A[\Lambda_0,-]AA\\
C^0(\mathcal{U},\mathscr{H}om_{\mathcal{O}_X}(\Omega_{X/k}^1,\mathcal{O}_X)))@>-\delta>>C^1(\mathcal{U},\mathscr{H}om_{\mathcal{O}_X}(\Omega_{X/k}^1,\mathcal{O}_X)))@>\delta>>C^2(\mathcal{U},\mathscr{H}om_{\mathcal{O}_X}(\Omega_{X/k}^1,\mathcal{O}_X)))\\
@AAA @AAA @AAA \\
0@>>>0 @>>> 0 
\end{CD}$
\end{center}

Since $[\Lambda_0+\epsilon \Lambda_i,\Lambda_0+\epsilon\Lambda_i]=0$, we have $[\Lambda_0,\Lambda_i]=0$. Since on each $U_{ijk}$ we have $\theta_{ij}\theta_{jk}\theta_{ik}^{-1}=1_{U_{ijk}\times Spec(k[\epsilon])}$, we have $p_{ij}+p_{jk}-p_{ik}=0$, and so $\delta(\{p_{ij}\})=0$. Since $\Lambda_i-\Lambda_j-[\Lambda_0,p_{ij}]=0$, we have $\delta(\{\Lambda_i\})+[\Lambda_0,p_{ij}]=0$. Hence $(\{p_{ij}\},\{\Lambda_i\})$ defines a cohomology class.

Now we show that for two equivalent Poisson deformations of $(X,\Lambda_0)$, the cohomology class is same. If we have another Poisson deformation
\begin{center}
$\begin{CD}
(X,\Lambda_0)@>>> (\mathcal{X}',\Lambda')\\
@VVV @VVV\\
Spec(k)@>>> Spec(k[\epsilon])
\end{CD}$
\end{center}
and $\Phi:(\mathcal{X},\Lambda)\to (\mathcal{X'},\Lambda')$ is an Poisson isomorphism of deformations, then for each $i\in I$ there is an induced Poisson isomorphism:
\begin{align*}
\alpha_i:(U_i\times Spec(k[\epsilon]),\Lambda_0+\epsilon\Lambda_i) \xrightarrow{\theta_i} (\mathcal{X}|_{U_i},\Lambda|_{U_i} )\xrightarrow{\Phi|_{U_i}} (\mathcal{X}'|_{U_i},\Lambda'|_{U_i} )\xrightarrow{\theta_i^{'-1}} (U_i\times Spec(k[\epsilon]),\Lambda_0+\epsilon\Lambda_i')
\end{align*}
So $\alpha_i$ corresponds to $a_i\in \Gamma(U_i,T_X)$ such that $\Lambda_i'-\Lambda_i-[\Lambda_0,a_i]=0$ by Lemma \ref{3l}. Since $-\delta(\{a_i\})=a_i-a_j=p_{ij}'-p_{ij}$ and $\Lambda'_i - \Lambda_i=[\Lambda_0,a_i]$, $(\{p_{ij}\},\{\Lambda_i\})$ and $(\{p_{ij}'\},\{\Lambda_i'\})$ are cohomologous. 

Now we define an inverse map. Given an element in $HP^2(X,\Lambda_0)$, we represent it by a hyper Cech $1$-cocylce $(\{p_{ij}\}, \{\Lambda_i\})$ for an affine open cover $\mathcal{U}=\{U_i\}$ of $X$. So we have $[\Lambda_0,\Lambda_i]=0$, $p_{ij}+p_{jk}-p_{ik}=0$ and $\Lambda_j-\Lambda_i=[\Lambda_0,p_{ij}]=0$. By reversing the above process, the cohomology class gives a glueing condition to make a Poisson deformation of $(X,\Lambda_0)$ whose underlying deformation is a  locally trivial flat deformation.
\end{proof}

\begin{definition}[Poisson Kodaria-Spencer map in an algebraic Poisson family]
For every first-order Poisson deformation $\xi$ of a Poisson algebraic variety $(X,\Lambda_0)$ whose underlying flat deformation is locally trivial, the cohomology class $\kappa(\xi)\in HP^2(X,\Lambda_0)$ is called the Poisson Kodaira-Spencer class of $\xi$. Now we assume that $(X,\Lambda_0)$ is a nonsingular Poisson variety. Let's consider a Poisson deformation of $(X,\Lambda_0)$
\begin{center}
$\xi:$$\begin{CD}
(X,\Lambda_0)@>>> (\mathcal{X},\Lambda)\\
@VVV @VVfV\\
Spec(k)@>s>> S
\end{CD}$
\end{center}
where the base space $S$ is a connected algebraic $k$-scheme and $\mathcal{X}$ is a Poisson scheme over $S$ defined by $\Lambda\in \Gamma(\mathcal{X}, \mathscr{H}om_{\mathcal{O}_{\mathcal{X}}}(\wedge^2\Omega_{\mathcal{X}/S}^1,\mathcal{O}_{\mathcal{X}}))$ We define $k$-linear map, called Poisson Kodaira-Spencer map of the family $\xi$ at $s\in S$,
\begin{align*}
\kappa_{\xi,s}:T_{S,s}\to HP^2(X,\Lambda_0)
\end{align*}
in the following way: let $U$ be an affine open neighborhood of $s\in S$ and $d\in T_{S,s}=Der_k(\mathcal{O}_{S,s},\mathcal{O}_{S,s})$. Let $\bar{d}:\mathcal{O}_{S,s}\to \mathcal{O}_{S,s}/\mathfrak{m}_s$ induced by $d$ and the canonical surjection $\mathcal{O}_{S,s}\to \mathcal{O}_{S,s}/\mathfrak{m}_s,a\to \bar{a}$. Let's consider the following homomorphisms
\begin{align*}
\mathcal{O}_S(U)\to (\mathcal{O}_{S,s}/\mathfrak{m}_s)\oplus \epsilon(\mathcal{O}_{S,s}/\mathfrak{m}_s)\cong k[\epsilon]\to \mathcal{O}_{S,s}/\mathfrak{m}_s\cong k,(a\mapsto \bar{a}+\epsilon \bar{d}(a)\mapsto \bar{a})
\end{align*}
This defines a morphism $Spec(k)\to Spec(k[\epsilon])\to U\hookrightarrow S$. We pullback $(\mathcal{X},\Lambda)$ over $S$ to a first order Poisson deformation over $Spec(k[\epsilon])$ via the map $Spec(k[\epsilon])\to S$. Then by Proposition \ref{3q}, we can find a cohomology class in $HP^2(X,\Lambda_0)$.
\end{definition}

\subsection{Higher-order Poisson deformation-obstructions}\

Let $(X,\Lambda_0)$ be a nonsingular Poisson algebraic variety. Consider a small extension
\begin{align*}
e:0\to (t)\to \tilde{A}\to A\to 0
\end{align*}
in $\bold{Art}$. let
\begin{center}
$\xi:
\begin{CD}
(X,\Lambda_0)@>>> (\mathcal{X},\Lambda)\\
@VVV @VVV\\
Spec(k)@>>> Spec(A)
\end{CD}$
\end{center}
be an infinitesimal Poisson deformation of $(X,\Lambda_0)$ over $A$. A lifting of $\xi$ to $\tilde{A}$ is a infinitesimal Poisson deformation $\tilde{\xi}$ over $\tilde{A}$ inducing $\xi$. In other words,
\begin{center}
$\tilde{\xi}:
\begin{CD}
(X,\Lambda_0)@>>> (\tilde{\mathcal{X}},\tilde{\Lambda})\\
@VVV @VVV\\
Spec(k)@>>> Spec(\tilde{A})
\end{CD}$
\end{center}
and an Poisson isomorphism $\phi$ of Poisson deformations such that the following diagram commutes

\begin{center}
\[\begindc{\commdiag}[50]
\obj(0,1)[a]{$(\mathcal{X},\Lambda)$}
\obj(1,2)[b]{$(X,\Lambda_0)$}
\obj(1,0)[c]{$Spec(A)$}
\obj(2,1)[d]{$(\tilde{\mathcal{X}},\tilde{\Lambda})\times_{Spec(\tilde{\mathcal{A}})} Spec(A)$}
\mor{b}{a}{}
\mor{b}{d}{}
\mor{a}{c}{}
\mor{d}{c}{}
\mor{a}{d}{$\phi$}
\enddc\]
\end{center}

\begin{proposition}[compare $\cite{Ser06}$ Proposition 1.2.12]
Let $(X,\Lambda_0)$ be a nonsingular Poisson variety. Let $A\in \bold{Art}$ and an infinitesimal Poisson deformation $\xi$ of $(X,\Lambda_0)$ over $A$. To every small extension $e:0\to (t)\to \tilde{A}\to A\to 0$,  there is associated an element $o_{\xi}(e)\in HP^3(X,\Lambda_0)$ called the obstruction lifting $\xi$ to $\tilde{A}$, which is $0$ if and only if a lifting of $\xi$ to $\tilde{A}$ exists.
\end{proposition}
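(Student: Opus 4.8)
The plan is to transport, to the Poisson setting, Sernesi's obstruction theory for flat deformations (\cite{Ser06} Proposition 1.2.12), replacing the tangent sheaf $\Theta_X=\mathscr{H}om_{\mathcal O_X}(\Omega_{X/k}^1,\mathcal O_X)$ by the whole truncated Lichnerowicz--Poisson complex of Definition \ref{3def} and ordinary cohomology by its hypercohomology $HP^\bullet$, computed through the hyper-\v{C}ech double complex with columns $C^p(\mathcal U,\wedge^q\Theta_X)$ and differentials $\pm\delta$ and $[\Lambda_0,-]$ exhibited in the proof of Proposition \ref{3q}. First I would choose, using that $X$ is nonsingular and separated, an affine open cover $\mathcal U=\{U_i=\operatorname{Spec}B_i\}$ fine enough that each $\xi|_{U_i}$ is the trivial Poisson deformation (a smooth affine $k$-scheme is rigid, see \cite{Ser06}) and that all $U_{ij}$ and $U_{ijk}$ are affine. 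This presents $\xi$ by local Poisson structures $\Lambda_i\in\operatorname{Hom}_{B_i}(\wedge^2\Omega_{B_i/k}^1,B_i)\otimes_k A$ with $[\Lambda_i,\Lambda_i]=0$ and $\Lambda_i\equiv\Lambda_0\ (\mathrm{mod}\ \mathfrak m_A)$, together with Poisson $A$-automorphisms $\theta_{ij}$ of $B_{ij}\otimes_k A$ with $(\theta_{ij})_*\Lambda_j=\Lambda_i$, reducing to the identity, and satisfying $\theta_{ij}\theta_{jk}=\theta_{ik}$.

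Next I would pick arbitrary lifts: each $\Lambda_i$ lifts to a (not necessarily Poisson) bivector $\widetilde\Lambda_i^{0}$ over $\widetilde A$, and, by smoothness of $B_{ij}\otimes_k\widetilde A$ over $\widetilde A$, each $\theta_{ij}$ lifts to an $\widetilde A$-algebra automorphism $\widetilde\theta_{ij}^{0}$ of $B_{ij}\otimes_k\widetilde A$; these exist but need not be Poisson nor satisfy the cocycle identity. Since $t\cdot\widetilde{\mathfrak m}=0$, the three defects
$$\gamma_i:=[\widetilde\Lambda_i^{0},\widetilde\Lambda_i^{0}],\qquad d_{ij}:=(\widetilde\theta_{ij}^{0})_*\widetilde\Lambda_j^{0}-\widetilde\Lambda_i^{0},\qquad c_{ijk}:=\widetilde\theta_{ij}^{0}\widetilde\theta_{jk}^{0}(\widetilde\theta_{ik}^{0})^{-1}-\mathrm{id}$$
take values, after the canonical identification $(t)\cong k$, in $\Gamma(U_i,\wedge^3\Theta_X)$, $\Gamma(U_{ij},\wedge^2\Theta_X)$ and $\Gamma(U_{ijk},\Theta_X)$ respectively, where for $d_{ij}$ and $c_{ijk}$ I use the identification of $t$-order corrections of automorphisms with derivations, exactly as in Lemma \ref{3l}. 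I would then check, by the graded Jacobi identity for the Schouten bracket (applied to $[\widetilde\Lambda_i^{0},[\widetilde\Lambda_i^{0},\widetilde\Lambda_i^{0}]]=0$) together with the identities satisfied by the $\theta$'s, that the triple $(\{c_{ijk}\},\{d_{ij}\},\{\gamma_i\})$ is a $2$-cocycle of the total hyper-\v{C}ech complex, and I would set
$$o_\xi(e):=\big[(\{c_{ijk}\},\{d_{ij}\},\{\gamma_i\})\big]\in HP^3(X,\Lambda_0).$$

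It then remains to establish the two standard facts. For well-definedness I would show that changing the lifts by $\widetilde\Lambda_i^{0}\rightsquigarrow\widetilde\Lambda_i^{0}+t\mu_i$ and $\widetilde\theta_{ij}^{0}\rightsquigarrow\widetilde\theta_{ij}^{0}\circ(\mathrm{id}+t\nu_{ij})$ with $\mu_i\in\Gamma(U_i,\wedge^2\Theta_X)$ and $\nu_{ij}\in\Gamma(U_{ij},\Theta_X)$, and passing to a refinement of $\mathcal U$, alters the triple exactly by a coboundary and by restriction; here the differential $[\Lambda_0,-]$ appears when one differentiates the defects with respect to these $t$-order corrections. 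For the criterion: if $o_\xi(e)=0$ the cocycle is a coboundary, which says precisely that one can choose $\mu_i,\nu_{ij}$ so that the corrected data $\widetilde\Lambda_i:=\widetilde\Lambda_i^{0}+t\mu_i$, $\widetilde\theta_{ij}:=\widetilde\theta_{ij}^{0}\circ(\mathrm{id}+t\nu_{ij})$ satisfy $[\widetilde\Lambda_i,\widetilde\Lambda_i]=0$, $(\widetilde\theta_{ij})_*\widetilde\Lambda_j=\widetilde\Lambda_i$ and $\widetilde\theta_{ij}\widetilde\theta_{jk}=\widetilde\theta_{ik}$; gluing the $(U_i\times\operatorname{Spec}\widetilde A,\widetilde\Lambda_i)$ along the $\widetilde\theta_{ij}$ (using the glueing proposition for Poisson schemes) produces a lifting $\widetilde\xi$ of $\xi$ over $\widetilde A$. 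Conversely, a lifting $\widetilde\xi$ restricts over each $U_i$ to a Poisson deformation over $\widetilde A$ lifting the trivial $\xi|_{U_i}$, hence has underlying scheme $U_i\times\operatorname{Spec}\widetilde A$ carrying a Poisson structure $\widetilde\Lambda_i$ lifting $\Lambda_i$ and transition Poisson automorphisms $\widetilde\theta_{ij}$ lifting $\theta_{ij}$; using these as the lifts makes all three defects vanish, so $o_\xi(e)=0$ by well-definedness.

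The hardest part will be the cocycle verification and the choice-independence: keeping track of how the three layers of the total complex interact under changes of the local Poisson structures and of the transition automorphisms, and matching the hyper-\v{C}ech coboundary formula with the effect of the $t$-order corrections $(\mu_i,\nu_{ij})$, while repeatedly invoking the graded Jacobi identity and the dictionary of Lemma \ref{3l}. I expect no genuinely new phenomenon beyond the flat case treated in \cite{Ser06}, but the bookkeeping is substantial and must be carried out with the same care as the computation underlying Proposition \ref{3q}.
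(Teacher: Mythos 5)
Your proposal is correct and follows essentially the same route as the paper's proof: trivialize $\xi$ on an affine cover, lift the local Poisson bivectors and transition automorphisms arbitrarily, measure the three defects (failure of $[\cdot,\cdot]=0$, of compatibility of the lifted transitions with the lifted bivectors, and of the cocycle identity), assemble them into a degree-$3$ hyper-\v{C}ech cocycle whose class in $HP^3(X,\Lambda_0)$ is independent of choices, and show it vanishes exactly when corrected lifts can be glued to a lifting over $\widetilde A$. The only differences are bookkeeping conventions (the paper's cocycle is $(\{-\Pi_i\},\{2\Lambda'_{ij}\},\{2\tilde d_{ijk}\})$, with the transition-direction and sign/factor normalizations you deferred), which your planned verification would fix.
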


\begin{proof}
Let $\mathcal{U}=\{U_i=Spec(B_i)\}_{i\in I}$ be an affine open cover of $X$. We have Poisson isomorphisms $\theta_i:(U_i\times Spec(A),\Lambda_i)\to (\mathcal{X}|_{U_i},\Lambda|_{U_i})$ and $\theta_{ij}:=\theta_i^{-1}\theta_j$ is a Poisson isomorphism. We have $\theta_{ij}\theta_{jk}=\theta_{ik}$ on $U_{ijk}\times Spec(A)$. To give a lifting $\tilde{\xi}$ of $\xi$ to $\tilde{A}$ is equivalent to give a collection of $\{\tilde{\Lambda}_i\}$ where $\tilde{\Lambda}_i \in Hom_{B_i}(\Omega_{B_i /k}^1,B_i)\otimes_k \tilde{A}$ with $[\tilde{\Lambda}_i,\tilde{\Lambda}_i]=0$ is a Poisson structure on $U_i\times Spec(\tilde{A})$ and a collection of Poisson isomorphisms $\{\tilde{\theta}_{ij}\}$ where $\tilde{\theta}_{ij}:(U_{ij}\times Spec(\tilde{A}),\tilde{\Lambda}_j)\to (U_{ij}\times Spec(\tilde{A}),\tilde{\Lambda}_i)$ such that
\begin{enumerate}
\item $\tilde{\theta}_{ij}\tilde{\theta}_{jk}=\tilde{\theta}_{ik}$ as a Poisson isomorphism.
\item $\tilde{\theta}_{ij}$ restricts to $\theta_{ij}$ on $U_{ij}\times Spec(A)$.
\item $\tilde{\Lambda}_i$ restrits to $\Lambda_i$.
\end{enumerate}

From such data, we can glue together $(U_i\times Spec(\tilde{A}),\tilde{\Lambda}_i)$ to make a Poisson deformation $(\tilde{\mathcal{X}},\tilde{\Lambda})$. Now given a Poisson deformation $\xi=(\mathcal{X},\Lambda)$ over $A$ and a small extension $e:0\to (t)\to \tilde{A}\to A\to0$, we associate an element $o_{\xi}(e)\in HP^3(X,\Lambda_0)$. Choose arbitrary automorphisms $\{\tilde{\theta}_{ij}\}$ satisfying $(2)$ (for the existence of lifting, see \cite{Ser06} Lemma 1.2.8) and arbitrary $\tilde{\Lambda}_i\in Hom_{B_i}(\Omega_{B_i/k}^1,B_i)\otimes \tilde{A}$ satisfying $(3)$ (not necessarily $[\tilde{\Lambda}_i,\tilde{\Lambda}_i]=0$). The lifting exists since $Hom_{B_i}(\Omega_{B_i/k}^1,B_i)\otimes_k \tilde{A} \to Hom_{B_i}(\Omega_{B_i/k}^1,B_i)\otimes_k A$ is surjective. Let $\tilde{\theta}_{ijk}=\tilde{\theta}_{ij}\tilde{\theta}_{jk}\tilde{\theta}_{ik}^{-1}$. Since $\tilde{\theta}_{ijk}$ is an automorphism on $U_{ijk}\times Spec(\tilde{A})$ inducing the identity on $U_{ijk}\times Spec(A)$, $\tilde{\theta}_{ijk}$ corresponds to $\tilde{d}_{ijk}\in \Gamma(U_{ijk},T_X)$ and $d_{jkl}-d_{ikl}+d_{ijl}-d_{jkl}=0$. So we have $-\delta(\{d_{ijk}\})=0$. Since $[\tilde{\Lambda}_i,\tilde{\Lambda}_i]$ is zero  modulo $(t)$ by $[\Lambda_i,\Lambda_i]=0$, there exists $\Pi_i\in Hom_{B_i}(\wedge^3 \Omega_{B_i/k}^1, B_i)$ such that $[\tilde{\Lambda}_i,\tilde{\Lambda}_i]=t\Pi_i$. Since $0=[\tilde{\Lambda}_i,[\tilde{\Lambda}_i,\tilde{\Lambda}_i]]=[\tilde{\Lambda}_i,t\Pi_i]=t[\Lambda_0,\Pi_i]=0$, we have $[\Lambda_0,\Pi_i]=0$. 

 Let $\tilde{f}_{ij}: \mathcal{O}_X(U_{ij})\otimes_k \tilde{A} \to \mathcal{O}_X(U_{ji})\otimes_k \tilde{A}$ be the ring homomorphism corresponding to $\tilde{\theta}_{ij}$. We will denote by $\tilde{f}_{ij}\Lambda_i$ be the induced biderivation structure on $\mathcal{O}_X(U_{ji})\otimes_k \tilde{A}$ such that $\tilde{f}_{ij}:(\mathcal{O}_X(U_{ij})\otimes_k\tilde{A},\tilde{\Lambda}_i)\to (\mathcal{O}_X(U_{ji})\otimes_k\tilde{A}, \tilde{f}_{ij}\Lambda_i)$ is biderivation-preserving.  Since $\tilde{f}_{ij}\tilde{\Lambda}_i$ and $\tilde{\Lambda}_j$ are same modulo $(t)$ by $(3)$, there exists $\Lambda_{ij}'\in \Gamma(U_{ij}, \mathscr{H}om_{\mathcal{O}_X}(\wedge^2 \Omega_{X/k}^1, \mathcal{O}_X))$ such that $t\Lambda_{ij}'=\tilde{f}_{ij}\tilde{\Lambda}_i-\tilde{\Lambda}_j$. Then $t\Lambda_{ji}'=\tilde{f}_{ji}\Lambda_j-\Lambda_i$. By applying $\tilde{f}_{ij}$ on both sides, we have $t\Lambda_{ji}'=\tilde{\Lambda}_j-\tilde{f}_{ij}\tilde{\Lambda}_i=-t\Lambda_{ij}'$. Hence $\Lambda_{ji}'=-\Lambda_{ij}'$. Then $t\Pi_i-t\Pi_j=\tilde{f}_{ij}(t\Pi_i)-t\Pi_j=\tilde{f}_{ij}[\tilde{\Lambda}_i,\tilde{\Lambda}_i]-[\tilde{\Lambda}_j,\tilde{\Lambda}_j]=[\tilde{f}_{ij}\Lambda_i,\tilde{f}_{ij}\Lambda_i]-[\tilde{\Lambda}_j,\tilde{\Lambda}_j]=[\tilde{\Lambda}_j+t\Lambda_{ij}',\tilde{\Lambda}_j+t\Lambda_{ij}']-[\tilde{\Lambda}_j,\tilde{\Lambda}_j]=t[\Lambda_0,2\Lambda_{ij}']$. Hence we have $-\Pi_i-(-\Pi_j)+[\Lambda_0, 2\Lambda_{ij}']=0$. So we have $-\delta(\{-\Pi_i\})+[\Lambda_0,\{2\Lambda_{ij}'\}]=0$. In the following isomorphism
\begin{align*}
\tilde{\alpha}_{ijk}:U_{ijk}\times Spec(\tilde{A}) \xrightarrow{\tilde{\theta}_{ki}}U_{ijk}\times Spec(\tilde{A}) \xrightarrow{\tilde{\theta}_{jk}}U_{ijk}\times Spec(\tilde{A})\xrightarrow{\tilde{\theta}_{ij}}U_{ijk}\times Spec(\tilde{A})
\end{align*}
which corresponds to a $\tilde{d}_{ijk} \in \Gamma(U_{ijk},T_X)$. Then we have
\begin{align*}
Id+t\tilde{d}_{ijk}:\mathcal{O}_X(U_{ijk})\otimes_k \tilde{A} \xrightarrow{\tilde{f}_{ij}}\mathcal{O}_X(U_{ijk})\otimes_k \tilde{A} \xrightarrow{\tilde{f}_{jk}}\mathcal{O}_X(U_{ijk})\otimes_k \tilde{A} \xrightarrow{\tilde{f}_{ki}} \mathcal{O}_X(U_{ijk})\otimes_k \tilde{A}
\end{align*}
 $Id+t\tilde{d}_{ijk}: (\mathcal{O}_X(U_{ijk})\otimes \tilde{A} ,\tilde{\Lambda}_i) \to (\mathcal{O}_X(U_{ijk})\otimes_k \tilde{A} ,\tilde{f}_{ki}\tilde{f}_{jk}\tilde{f}_{ij}\tilde{\Lambda}_i)$ is an isomorphism compatible with bidervations. We note that $\tilde{\Lambda}_i-\tilde{f}_{ki}\tilde{f}_{jk}\tilde{f}_{ij}\tilde{\Lambda}_i=\tilde{\Lambda}_i-\tilde{f}_{ki}\tilde{f}_{jk}(\tilde{\Lambda}_j+t\Lambda_{ij}')=\tilde{\Lambda}_i-\tilde{f}_{ki}(\tilde{\Lambda}_k+t\Lambda'_{jk}+\Lambda_{ij}')=\tilde{\Lambda}_i-(\tilde{\Lambda}_i+t\Lambda_{ki}'+t\Lambda_{jk}'+t\Lambda_{ij}')=-t(\Lambda_{ki}'+\Lambda_{jk}'+\Lambda_{ij}')$.  Hence by Lemma \ref{3l}, we have $\Lambda_{ki}'+\Lambda_{jk}'+\Lambda_{ij}'-[\Lambda_0, \tilde{d}_{ijk}]=0$. So we have $-\delta(\{\Lambda_{ij}'\})+[\Lambda_0,\{\tilde{d}_{ijk}\}]=0$. Hence $\alpha=(\{-\Pi_i\}, \{2\Lambda_{ij}'\},\{2\tilde{d}_{ijk}\})\in C^0(\mathcal{U},\mathscr{H}om_{\mathcal{O}_X}(\wedge^3 \Omega_{X/k},\mathcal{O}_X)))\oplus C^1(\mathcal{U},\mathscr{H}om_{\mathcal{O}_X}(\wedge^2 \Omega_{X/k},\mathcal{O}_X))\oplus C^2(\mathcal{U},\mathscr{H}om_{\mathcal{O}_X}(\Omega_{X/k}^1,\mathcal{O}_X)))$ is a cocyle in the following diagram (see Appendix \ref{appendixb}).
\begin{center}
$\begin{CD}
@A[\Lambda_0,-]AA\\
C^0(\mathcal{U},\mathscr{H}om_{\mathcal{O}_X}(\wedge^3 \Omega_{X/k},\mathcal{O}_X)))@>-\delta>>\cdots\\
@A[\Lambda_0,-]AA @A[\Lambda_0,-]AA\\
C^0(\mathcal{U},\mathscr{H}om_{\mathcal{O}_X}(\wedge^2 \Omega_{X/k},\mathcal{O}_X)))@>\delta>> C^1(\mathcal{U},\mathscr{H}om_{\mathcal{O}_X}(\wedge^2 \Omega_{X/k},\mathcal{O}_X)))@>-\delta>>\cdots\\
@A[\Lambda_0,-]AA @A[\Lambda_0,-]AA @A[\Lambda_0,-]AA\\
C^0(\mathcal{U},\mathscr{H}om_{\mathcal{O}_X}(\Omega_{X/k}^1,\mathcal{O}_X)))@>-\delta>>C^1(\mathcal{U},\mathscr{H}om_{\mathcal{O}_X}(\Omega_{X/k}^1,\mathcal{O}_X)))@>\delta>>C^2(\mathcal{U},\mathscr{H}om_{\mathcal{O}_X}(\Omega_{X/k}^1,\mathcal{O}_X)))\\
@AAA @AAA @AAA \\
0@>>>0 @>>> 0 
\end{CD}$
\end{center}

We claim that given a different choice $\{\tilde{\theta}_{ij}' \}$ and $\{\tilde{\Lambda}_i'\}$ satisfying $(1),(2),(3)$, the associated cocyle $\beta=(\{-\Pi_i'\}, \{2\Lambda_{ij}''\},\{2\tilde{d}'_{ijk}\})\in C^0(\mathcal{U},\mathscr{H}om_{\mathcal{O}_X}(\wedge^3 \Omega_{X/k},\mathcal{O}_X)))\oplus C^1(\mathcal{U},\mathscr{H}om_{\mathcal{O}_X}(\wedge^2 \Omega_{X/k},\mathcal{O}_X))\oplus C^2(\mathcal{U},\mathscr{H}om_{\mathcal{O}_X}(\Omega_{X/k}^1,\mathcal{O}_X)))$ is cohomologous to the cocyle associated with $\{\tilde{\theta}_{ij}\}$ and $\{\tilde{\Lambda}_i\}$. Let $\tilde{f}_{ij}:\mathcal{O}_X(U_{ij})\otimes\tilde{A}\to \mathcal{O}_X(U_{ij})\otimes \tilde{A}$ corresponding to $\tilde{\theta}_{ij}$, and $\tilde{f}'_{ij}:\mathcal{O}_X(U_{ij})\otimes \tilde{A}\to \mathcal{O}_X(U_{ij})\otimes \tilde{A}$ corresdpong to $\theta_{ij}'$. Then $\tilde{f}_{ij}'=\tilde{f}_{ij}+tp_{ij}$ for some $p_{ij}\in \Gamma(U_{ij},T_X)$ \footnote{Since $\tilde{f}'_{ij}-\tilde{f}_{ij}$ is zero modulo $t$, we have $(\tilde{f}'_{ij}-\tilde{f}_{ij})(x)=0+tp_{ij}(x)$ for some map $p_{ij}$. We show that $p_{ij}$ is a derivation. Indeed $tp_{ij}(xy)=(\tilde{f}_{ij}-f_{ij})(xy)=\tilde{f}_{ij}(x)(\tilde{f}_{ij}-f_{ij})(y)+(\tilde{f}_{ij}-f_{ij})(x)f_{ij}(y)=\tilde{f}_{ij}(x)tp_{ij}(y)+tp_{ij}(y)f_{ij}(y)=t(xp_{ij}(y)+yp_{ij}(x))$. So $p_{ij}$ is a derivation and so an element in $\Gamma(U_{ij},T_X)$.} and $\tilde{\Lambda}'_i=\tilde{\Lambda}_i+t\Lambda_i'$ for some $\Lambda_i'\in Hom_{B_i}(\wedge^2 \Omega_{B_i/k}^1,B_i)$. For each $i,j,k$, $\tilde{\theta}_{ij}'\tilde{\theta}_{jk}'\tilde{\theta'}_{ik}^{-1}$ corresponds to the derivation $\tilde{d}'_{ijk}=\tilde{d}_{ijk}+(p_{ij}+p_{jk}-p_{ik})$. Hence $\delta(2\{p_{ij}\})=\{2\tilde{d}_{ijk}'-(2\tilde{d}_{ijk})\}$. We also note that  $t\Pi_i'=[\tilde{\Lambda}'_i,\tilde{\Lambda}'_i]=[\tilde{\Lambda}_i+t\Lambda_i',\tilde{\Lambda}_i+t\Lambda_i']=[\tilde{\Lambda}_i,\tilde{\Lambda}_i]+t[2\Lambda_i',\Lambda_0]=t\Pi_i+t[2\Lambda_i',\Lambda_0]$. Hence we have $[\Lambda_0,2\Lambda_i']=-\Pi_i-(-\Pi_i')$. Since $t\Lambda_{ij}'=\tilde{f}_{ij}\tilde{\Lambda}_i-\tilde{\Lambda}_j, t\Lambda_{ij}''=\tilde{f}_{ij}'\tilde{\Lambda}_i'-\tilde{\Lambda}_j'=\tilde{f}_{ij}\tilde{\Lambda}_i'+t[p_{ij},\tilde{\Lambda}_i']-\tilde{\Lambda}_j'=\tilde{f}_{ij}\tilde{\Lambda}_i+t\Lambda_i'+t[p_{ij},\Lambda_0]-\tilde{\Lambda}_j-t\Lambda_j'$, we have $\Lambda_{ij}'-\Lambda_{ij}''=-\Lambda_i'+[\Lambda_0,p_{ij}]+\Lambda_j'$. So $\delta(\{2\Lambda_i'\})+[\Lambda_0,\{2p_{ij}\}]=\Lambda_{ij}'-\Lambda_{ij}''.$ Hence $(\{2\Lambda_i'\}, \{-2p_{ij}\})$ is mapped to $\alpha-\beta$. Hence $\alpha$ and $\beta$ are cohomologous. So given a deformation $\xi$ and a small extension $e:0\to (t)\to \tilde{A}\to A\to 0$, we can associate an element $o_{\xi}(e):=$ the cohomology class of $\alpha \in HP^3(X,\Lambda_0)$. We also note that $o_{\xi}(e)=0$ if and only if there exists a collection of $\{\tilde{\theta}_{ij}\}$ and $\{\tilde{\Lambda}_i\}$ satisfying $(2),(3)$ with $[\tilde{\Lambda}_i,\tilde{\Lambda}_i]=0$ (which means $\tilde{\Lambda}_i$ defines a Poisson structure), $\Lambda_{ij}'=0$ (which implies $\tilde{f}_{ij}\tilde{\Lambda}_i=\tilde{\Lambda}_j$) and $\tilde{d}_{ijk}=0$ (which means $(1)$) if and only if there is a lifting $\tilde{\xi}$.

\end{proof}

\begin{definition}
The Poisson deformation $\xi$ is called unobstructed if $o_{\xi}$ is the zero map, otherwise $\xi$ is called obstructed. $(X,\Lambda_0)$ is unobstructed if every infinitesimal deformation of $(X,\Lambda_0)$ is unobstructed, otherwise $(X,\Lambda_0)$ is obstructed.
\end{definition}

\begin{corollary}
A nonsingular Poisson variety $(X,\Lambda_0)$ is unobstructed if $HP^3(X,\Lambda_0)=0$.
\end{corollary}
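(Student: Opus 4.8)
The plan is to combine the obstruction map construction of the preceding Proposition with the vanishing hypothesis $HP^3(X,\Lambda_0)=0$ to produce, step by step, a lifting of any given infinitesimal Poisson deformation over an arbitrary local artinian $k$-algebra. First I would fix an arbitrary $A\in \bold{Art}$ and an infinitesimal Poisson deformation $\xi$ of $(X,\Lambda_0)$ over $Spec(A)$, and recall that, since $\kappa$ as in the definition of unobstructedness concerns all small extensions, it suffices to show: for every small extension $e:0\to (t)\to \tilde{A}\to A\to 0$ in $\bold{Art}$, a lifting of $\xi$ to $Spec(\tilde{A})$ exists. This is exactly the statement that $o_\xi(e)=0$ for all such $e$, which is the content of the previous Proposition; but $o_\xi(e)\in HP^3(X,\Lambda_0)=0$ forces the obstruction to vanish, hence the lifting exists.

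The second step is to observe that "unobstructed" in the sense of the definition above means precisely that the obstruction map $o_\xi$ is the zero map for every infinitesimal Poisson deformation $\xi$; since its target $HP^3(X,\Lambda_0)$ is the zero group by hypothesis, $o_\xi$ is automatically the zero map. Thus there is essentially nothing left to prove beyond invoking the earlier Proposition: for a nonsingular Poisson variety, every infinitesimal deformation $\xi$ and every small extension $e$ give an obstruction class $o_\xi(e)$ lying in a trivial group, so $o_\xi \equiv 0$, which by definition says $(X,\Lambda_0)$ is unobstructed.

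The only genuinely substantive point — and the main thing to be careful about — is making sure the logical reduction is clean: one must check that the notion of "unobstructed variety" quantifies over all $A\in \bold{Art}$ and all small extensions, and that every surjection in $\bold{Art}$ factors as a composition of small extensions, so that vanishing of obstructions to small extensions is indeed enough. This factorization is standard (any surjection $\tilde{A}\twoheadrightarrow A$ in $\bold{Art}$ can be refined into a chain of small extensions by taking the $\tilde{\mathfrak m}$-adic filtration of the kernel), and I would state it explicitly before concluding. No further computation is needed: the corollary is a one-line consequence of the Proposition together with $HP^3(X,\Lambda_0)=0$.

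I expect no serious obstacle here; the work has all been done in the preceding Proposition, where the obstruction class $o_\xi(e)\in HP^3(X,\Lambda_0)$ was constructed and shown to vanish if and only if a lifting exists. The "hard part," if any, is purely expository — making sure the reader sees that unobstructedness follows formally from the target cohomology group being zero — rather than mathematical.
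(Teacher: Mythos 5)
Your proposal is correct and is essentially the argument the paper intends: the obstruction class $o_\xi(e)$ of the preceding Proposition lies in $HP^3(X,\Lambda_0)=0$, so $o_\xi$ vanishes identically and, by the paper's definition of unobstructedness (which is stated directly in terms of the obstruction map on small extensions), the corollary follows at once. Your extra remark about refining arbitrary surjections in $\bold{Art}$ into chains of small extensions is standard and harmless, but it is not needed for the definition used here.
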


\begin{proposition}\label{3rigid}
A nonsingular Poisson variety $(X,\Lambda_0)$ is rigid if and only if $HP^2(X,\Lambda_0)=0$.
\end{proposition}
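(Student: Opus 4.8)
The plan is to prove both implications using the infinitesimal deformation theory developed earlier in this chapter. For the direction that $HP^2(X,\Lambda_0)=0$ implies rigidity, I would argue by induction on the length of the local artinian $k$-algebra $A$, or equivalently by peeling off small extensions. Given an infinitesimal Poisson deformation $\xi$ of $(X,\Lambda_0)$ over $A$, and a small extension $e: 0\to (t)\to \tilde A\to A\to 0$, suppose inductively that $\xi$ restricted over $A$ is already known to be trivial (the base case being $A=k$). The first point to note is that on a nonsingular Poisson variety, the underlying flat deformation of any infinitesimal Poisson deformation is automatically locally trivial, since smooth affine schemes are rigid as flat deformations (Proposition \ref{2proq} / \cite{Ser06} Theorem 1.2.4). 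So we only ever need to track the Poisson structure. Then Proposition \ref{3q}, applied at the level of first-order deformations, together with the small-extension obstruction machinery of the previous proposition, tells us that the "difference" between two liftings of the trivial deformation over $A$ to $\tilde A$ is measured by $HP^2(X,\Lambda_0)$: more precisely, two liftings differ by a class in $HP^2(X,\Lambda_0)\otimes (t)$, and the trivial lifting always exists. Hence if $HP^2(X,\Lambda_0)=0$ the lifting is unique up to Poisson isomorphism, so $\xi$ over $\tilde A$ is Poisson isomorphic to the trivial one. This completes the induction and gives rigidity.

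For the converse, that rigidity implies $HP^2(X,\Lambda_0)=0$, I would use the case $A=k[\epsilon]$. Proposition \ref{3q} gives a bijection between Poisson isomorphism classes of first-order Poisson deformations of $(X,\Lambda_0)$ (which on a nonsingular $X$ need no local-triviality hypothesis) and $HP^2(X,\Lambda_0)$, under which the zero class corresponds to the trivial deformation. If $(X,\Lambda_0)$ is rigid, then every first-order Poisson deformation is trivial, so the set of isomorphism classes is a single point, forcing $HP^2(X,\Lambda_0)=0$.

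The main technical step — and the place where a little care is needed — is making precise the claim in the forward direction that the set of liftings of a given Poisson deformation across a small extension is either empty or a torsor under $HP^2(X,\Lambda_0)$ (tensored with the one-dimensional kernel), together with the fact that the obstruction to lifting lives in $HP^3$. In fact here we do not even need $HP^3=0$: we lift the trivial deformation over $A$, and the trivial lift over $\tilde A$ manifestly exists, so the obstruction vanishes for free; the only content is that any two lifts are Poisson-equivalent when $HP^2=0$. To see this one reuses essentially the computation in the proof of the obstruction proposition: choosing an affine cover $\{U_i\}$, any lift over $\tilde A$ is given by a collection $\{\tilde\Lambda_i\}$ of Poisson structures and gluing Poisson isomorphisms $\{\tilde\theta_{ij}\}$ extending the trivial data over $A$, and comparing two such collections produces a hyper-\v{C}ech $1$-cocycle for the complex $0\to \mathscr{H}om_{\mathcal{O}_X}(\Omega^1_{X/k},\mathcal{O}_X)\xrightarrow{[\Lambda_0,-]}\mathscr{H}om_{\mathcal{O}_X}(\wedge^2\Omega^1_{X/k},\mathcal{O}_X)\to\cdots$ with coefficients in $(t)$; its class in $HP^2(X,\Lambda_0)\otimes(t)$ is the only obstruction to the two lifts being isomorphic, and conversely a coboundary exhibits the isomorphism (this uses Lemma \ref{3l} to convert \v{C}ech coboundary data into Poisson automorphisms on overlaps). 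I would phrase this as a lemma of "$HP^2$ controls lifting of Poisson deformations across small extensions, up to equivalence", stated in parallel with the obstruction proposition, and then the induction is immediate.

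One should also record the small bookkeeping point that an arbitrary $(A,\mathfrak m)\in\mathbf{Art}$ can be reached from $k$ by a finite chain of small extensions (e.g. by a composition series of $\mathfrak m$ as an $A$-module), so that the inductive argument indeed establishes triviality over every $A$, which is exactly the definition of rigidity. With these pieces in place the proof is short; the only genuine obstacle is writing the lifting-torsor lemma cleanly, and that is a routine variant of the $HP^3$-obstruction computation already carried out in this section.
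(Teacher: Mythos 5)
Your proposal is correct and follows essentially the same route as the paper: the converse via the bijection of Proposition \ref{3q} applied to first-order deformations, and the forward direction via an induction on the length of $A$ through small extensions, where the key step is that any two liftings of a Poisson deformation across a small extension are compared by a hyper-\v{C}ech $1$-cocycle whose class lies in $HP^2(X,\Lambda_0)\otimes(t)$, so that when $HP^2(X,\Lambda_0)=0$ a coboundary (converted into Poisson isomorphisms via Lemma \ref{3l}, i.e.\ maps of the form $Id+ta_i$) makes the two liftings equivalent; since the trivial lifting always exists, the deformation is trivial. The paper states this key step simply as ``there is at most one lifting'' rather than as a torsor statement, but the computation and the inductive bookkeeping are the same as yours.
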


\begin{proof}
Assume that $(X,\Lambda_0)$ is rigid. Since any infinitesimal Poisson deformation  (in particular, any first order Poisson deformations) are trivial, $HP^2(X,\Lambda_0)=0$ by Proposition \ref{3q}. Assume that $HP^2(X,\Lambda_0)=0$. First we claim that given an infinitesimal Poisson deformation $\eta$ of $(X,\Lambda_0)$ over $A\in \bold{Art}$ and a small extension $e:0\to (t)\to \tilde{A}\to A\to 0$, any two liftings $ \xi, \tilde{\xi}$ to $\tilde{A}$ are equivalent. Let $\{U_i\}$ be an affine open covering of $\xi=(\mathcal{X},\Lambda)$ and $\tilde{\xi}=(\tilde{\mathcal{X}},\tilde{\Lambda})$. Let $\{\theta_i\}$ where $\theta_i :U_i\times Spec(\tilde{A})\to \mathcal{X}|_{U_i} $, $\{\Lambda_i\}$ where $\Lambda_i$ is the Poisson structure on $U_i\times Spec(\tilde{A})$ induced from from $\Lambda|_{U_i}$ and let $\theta_{ij}=\theta_i^{-1}\theta_j$ which corresponds to a $d_{ij}\in \Gamma(U_{ij},T_X)$. Let $\{\tilde{\theta}_i\}$  where $\tilde{\theta}_i :U_i\times Spec(\tilde{A})\to \mathcal{\tilde{X}}|_{U_i} $, $\{\tilde{\Lambda}_i\}$ where the induced Poisson structure from $\tilde{\Lambda}$ on $U_i\times Spec(\tilde{A})$ and let $\tilde{\theta}_{ij}=\tilde{\theta}_i^{-1}\tilde{\theta}_j$. Let $f_{ij}:(\mathcal{O}_X({U_{ij}})\otimes\tilde{A},{\Lambda}_i )\to (\mathcal{O}_X(U_{ij})\otimes \tilde{A},\Lambda_j)$ be the homomorphism corresponding to $\theta_{ij}$ and $\tilde{f}_{ij}:(\mathcal{O}_X(U_{ij})\otimes \tilde{A},\tilde{\Lambda}_i)\to (\mathcal{O}_X(U_{ij})\otimes \tilde{A},\tilde{\Lambda}_j)$ corresponding to $\tilde{\theta}_{ij}$. Since $\xi,\tilde{\xi}$ induce same Poisson deformation $\eta$ over $A$, we have
\begin{center}
$\tilde{f}_{ij}=f_{ij}+tp_{ij}$,$\,\,\,\,\,\,\,\,\,\,\, \tilde{\Lambda}_i=\Lambda_i+t\Lambda_i'$
\end{center}
for some $p_{ij}\in \Gamma(U_{ij},T_X)$.
Since for all $i,j,k$ we have $0=\tilde{d}_{ijk}=p_{ij}+p_{jk}-p_{ik}$. Since $0=[\tilde{\Lambda}_i,\tilde{\Lambda}_i]=[\Lambda_i+t\Lambda_i',\Lambda_i+t\Lambda_i']=2t[\Lambda'_i,\Lambda_0]$. Since $f_{ij}\Lambda_i=\Lambda_j$ and $\tilde{f}_{ij}\tilde{\Lambda}_i=\tilde{\Lambda}_j$,  we have ${\Lambda}_j+t\Lambda_j'=\tilde{\Lambda}_j=\tilde{f}_{ij}\tilde{\Lambda}_i=(f_{ij}+tp_{ij})(\Lambda_i+t\Lambda_i')=\Lambda_j-t[\Lambda_0,p_{ij}]+t\Lambda_i'$. Hence we have $\Lambda_j'-\Lambda_i'+[\Lambda_0,p_{ij}]=0$. Hence $(\{\Lambda_i'\},\{ p_{ij}\})$ defines a cocylce. Since $HP^2(X,\Lambda_0)=0$, there exists $\{a_i\}\in \mathcal{C}^0(\mathcal{U},T_X)$ such that $[\Lambda_0,a_i]=\Lambda_i'$ and $a_i-a_j=p_{ij}$. Now we explicitly construct a Poisson isomorphism $(\tilde{\mathcal{X}},\tilde{\Lambda}) \cong (\mathcal{X},\Lambda)$. We define a Poisson isomorphism locally on $U_i\times Spec(\tilde{A})$, and show that each map glue together to give an Poisson isomorphism $(\tilde{\mathcal{X}},\tilde{\Lambda}) \cong (\mathcal{X},\Lambda)$. We claim that $(U_i\times Spec(\tilde{A}),\Lambda_i)\to ( U_i\times Spec(\tilde{A}), \tilde{\Lambda}_i)$ is a Poisson isomorphism induced from $Id+ta_i:(\mathcal{O}_X(U_i)\otimes_k \tilde{A},\tilde{\Lambda}_i)\to (\mathcal{O}_X(U_i)\otimes_k \tilde{A},\Lambda_i)$. The inverse map is $Id-ta_{i}$. Since $\tilde{\Lambda}_i+t[a_i,\tilde{\Lambda}_i]=\tilde{\Lambda}_i+t[a_i,\Lambda_0]=\Lambda_i+t\Lambda_i'+t[a_i,\Lambda_0]=\Lambda_i$, $Id+ta_i$ is Poisson. We show that each Poisson isomorphism $\{Id+ta_i\}$ glues together to give a Poisson isomorphism $(\tilde{\mathcal{X}},\tilde{\Lambda}) \cong (\mathcal{X},\Lambda)$. Indeed, it is sufficient to show that the following diagram commutes.
\begin{center}
$\begin{CD}
(\mathcal{O}_X(U_{ij})\otimes_k Spec(\tilde{A}), \tilde{\Lambda}_j )@> Id+ta_j >> (\mathcal{O}_X(U_{ij})\otimes_k \tilde{A},\Lambda_j)\\
@A\tilde{f}_{ij}AA@AA f_{ij}A\\
(\mathcal{O}_X(U_{ij})\otimes Spec(\tilde{A}), \tilde{\Lambda}_i)@> Id+ta_i >> (\mathcal{O}_X(U_{ij})\otimes_k \tilde{A},\Lambda_i)
\end{CD}$
\end{center}
Indeed, the diagram commutes if and only if $(Id+ta_j)\circ \tilde{f}_{ij}=f_{ij}\circ (Id+ta_i)$ if and only if $\tilde{f}_{ij}+ta_j=f_{ij}+ta_i$ if and only if $p_{ij}=a_i-a_j$. Hence there is at most one lifting of $\xi$.

Now we prove that if $HP^2(X,\Lambda_0)=0$, then $(X,\Lambda_0)$ is rigid. We will prove by induction on the dimension on $(A,\mathfrak{m})\in \bold{Art}$. For $A$ with $dim_k A=2$, then any first order Poisson deformation is trivial. Let' assume that any infinitesimal Poisson deformation of $(X,\Lambda_0)$ over $A$ with $dim_k A\leq n-1$ is trivial.  Let $\xi$ be an infinitesimal Poisson deformation of $(X,\Lambda_0)$ over $A$ with  $dim_k A=n$ such that $\mathfrak{m}^{p-1}\ne 0$ and $\mathfrak{m}^p=0$. Choose an element $t\ne 0\in \mathfrak{m}^{p-1}$. Then $0\to (t)\to A\to A/(t)\to 0$ is a small extension and $dim_k A/(t)\leq n-1$. Hence induced Poisson deformation $\bar{\xi}$ over $A/(t)$ from $\xi$ is trivial by induction hypothesis. Since $\xi$ is a lifting of $\bar{\xi}$, and trivial Poisson deformation over $A$ is also a lifting of $\bar{\xi}$, $\xi$ is trivial since we have at most one lifting of $\bar{\xi}$.
\end{proof}

\chapter{Poisson deformation functors}\label{chapter8}

\section{Schlessigner's criterion}\footnote{For details, see $\cite{Har10}$ page 106-117.}
We discuss Functor of Artin rings in more detail and Schlessinger's criterions. We recall that $\bold{Art}$ is the category of local artinian $k$-algebras with residue field $k$. Before the discussion, we note that following: let $f:(A',\mathfrak{m}')\to( A,\mathfrak{m})$ and $g:(A'',\mathfrak{m}'')\to (A,\mathfrak{m})$ be two local homomorphisms of local artinian $k$-algebras with the residue $k$. So we have $f^{-1}(\mathfrak{m})=\mathfrak{m}'$ and $g^{-1}(\mathfrak{m})=\mathfrak{m}''$. Let's consider the fiber product $\bar{A}=A'\times_A A''=\{(a,b)|a\in A', b\in A'', f(a)=g(b)\}$ is also a local artinian $k$-algebra, which is defined by $(a_1,b_1)\cdot (a_2,b_2)=(a_1a_2,b_1b_2)$, $(a_1,b_1)+(a_2,b_2)=(a_1+b_1, a_2+b_2)$. We will show that $A'\times_A A''$ is a local Artininan $k$-algebra with the residue $k$. The maximal ideal is given by $\bar{\mathfrak{m}}=\{(m,n)|m\in \mathfrak{m}', n\in \mathfrak{m}'', f(n)=g(m)\}$. We will show that $\bar{m}$ is the unique maximal ideal. It is enough to show that $(a,b)\in \bar{A}-\bar{\mathfrak{m}}$ is an unit. Indeed, we have $a\in  A-\mathfrak{m}'$ and $b\in A'-\mathfrak{m}''$. So $a,b$ are units. Hence $(a^{-1},b^{-1})$ is a inverse of $(a,b)$. Now we show that $\bar{A}/\bar{\mathfrak{m}}\cong k$. We define a map $\varphi:\bar{A}\to A'/\mathfrak{m}'\times_{A/\mathfrak{m}} A''/\mathfrak{m}''$ by $(a,b)\mapsto (\bar{a},\bar{b})\cong k\times_k k\cong k$. Let $\varphi(a,b)=(\bar{a},\bar{b})=0$. Then $a\in \mathfrak{m}'$ and $b\in \mathfrak{m}''$. Hence $ker \varphi=\bar{\mathfrak{m}}$. So the natural maps $f':(\bar{A},\bar{\mathfrak{m}})\to (A',\mathfrak{m}')$ and $g':(\bar{A},\bar{\mathfrak{m}})\to (A'',\mathfrak{m}'')$  are local homomorphisms of local Artininan $k$-algebras and $f'^{-1}(\mathfrak{m'})=\bar{\mathfrak{m}}$, $g'^{-1}(\mathfrak{m}'')=\bar{\mathfrak{m}}$.

\begin{definition}
Let $R$ be a complete local $k$-algebra, and for each $A\in \bold{Art}$, we define $h_R$ be a functor of Artin rings in the following way
\begin{align*}
h_R:\bold{Art}&\to \bold{Sets}\\
         A&\mapsto h(R):=Hom_k(R,A)
\end{align*}
A covariant functor $F:\bold{Art}\to \bold{Set}$ that is isomorphic to a functor of the form $h_R$ for some complete local $k$-algebra $R$ is called pro-representable.
\end{definition}

Let $(R,\mathfrak{m})$ be a complete local $k$-algebra. Let $\varphi:h_R\to F$ be a morphism of functors of Artin rings, then for each $n$, we have the following commutative diagram from the canonical map $R/\mathfrak{m}^{n+1}\to R/\mathfrak{m}^n$.
\begin{center}
$\begin{CD}
Hom(R,R/\mathfrak{m}^{n+1})@>\varphi_{n+1}>> F(R/\mathfrak{m}^{n+1})\\
@VVV @VVV\\
Hom(R,R/\mathfrak{m}^n)@>\varphi_n>> F(R/\mathfrak{m}^n)
\end{CD}$
\end{center}
Let $\pi_n:R\to R/\mathfrak{m}^n$ be the canonical surjection. Then we have $\xi=\{\xi_n\}:=\{\varphi_n(\pi_n)\}\in \varprojlim F(R/\mathfrak{m}^n)$. We call $\xi=\{\xi_n\}$ a formal family of $F$ over $R$.

\begin{definition}
Let $\bold{C}$ be the category of complete local $k$-algebras with residue field $k$. Let $F$ be a functor of Artin rings. We define 
\begin{align*}
\hat{F}:\bold{C}&\to \bold{Sets}\\
        (R,\mathfrak{m}) &\mapsto \hat{F}(R)=\varprojlim F(R/\mathfrak{m}^n)
\end{align*}
\end{definition}

Let $\xi=\{\xi_n\}\in \hat{F}(R)$ be a formal family. Then from this, we can define a morphism of functors $h_R\to F$ of functor of Artin rings as follows. For $A\in \bold{Art}$, we define
\begin{align*}
h_R(A)=Hom_k(R,A)&\to F(A)\\
f&\mapsto F(g)(\xi_n)
\end{align*}
Here $g$ is defined in the following way. Since $A$ is artinian, $f:R\to A$ factor through $R/\mathfrak{m}^n$ by $g:R/\mathfrak{m}^n\to A$ for some $n$.

\begin{remark}
If $F$ is a functor of Artin rings, and $R$ is a complete local $k$-algebra with residue $k$, then there is a natural bijection between $\hat{F}(R)$ of formal families $\{\xi_n|\xi_n\in F(R/\mathfrak{m}^n)\}$ and the set of homomorphisms of functors $h_R\to F$. So if $F$ is pro-representable, there is an isomorphism $\xi:h_R\to F$ for some $R$, we can think of $\xi$ as an element of $\hat{F}(R)$. We say that the pair $(R,\xi)$ pro-represents the functor $F$.
\end{remark}

\begin{definition}
Let $F$ be a functor of Artin rings.
\begin{enumerate}
\item A pair $(R,\xi)$ with $R\in \bold{C}$ and $\xi\in \hat{F}(R)$ is called a versal family for $R$ if the associated map $h_R\to F$ is smooth. In other words, for every surjection $B\to A$ in $\bold{Art}$, the natural map $h_R(B)\to h_R(A)\times_{F(A)} F(B)$ is surjective. This implies that given a map $R\to A$ inducing an element $\eta\in F(A)$, given $\theta\in F(B)$ mapping to $\eta$, one can lift the map $R\to A$ to a map $R\to B$ inducing $\theta$.
\item A versal family $(R,\xi)$ with  $R\in \bold{C}$ and $\xi\in \hat{F}(R)$ is called a miniversal family or $F$ has a pro-representable hull $(R,\xi)$ if $h_R(k[\epsilon])\to F(k[\epsilon])$ is bijective.
\item A pair $(R,\xi)$ with  $R\in \bold{C}$ and $\xi\in \hat{F}(R)$ is called a universal family if it pro-represents the functor $F$.
\end{enumerate}
\end{definition}

\begin{thm}[Schlessinger's criterion]\label{sch1}
A functor of Artin rings has a miniversal family if and only if
\begin{itemize}
\item $(H_0)$ $F(k)$ has one element.
\item $(H_1)$ The natural map $F(A'\times_A A'')\to F(A')\times_{F(A)} F(A'')$ is surjective for every small extension $A''\to A$.
\item $(H_2)$ The natural map $F(A'\times_A A'')\to F(A')\times_{F(A)} F(A'')$ is bijective when $A''=k[\epsilon]$ and $A=k$.
\item $(H_3)$ $t_F:=F(k[\epsilon])$ is a finite-dimensional $k$-vector space.
\end{itemize}

\end{thm}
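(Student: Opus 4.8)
The plan is to prove both implications, the substance lying in the ``if'' direction, which produces the pro-representable hull by an explicit inverse-limit construction in the style of Schlessinger's original argument (see also \cite{Ser06} and \cite{Har10}).

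For the ``only if'' direction, suppose $(R,\xi)$ is a miniversal family, so the associated map $h_R\to F$ is smooth and bijective on $k[\epsilon]$-points. Then $(H_0)$ is immediate since $h_R(k)=\{*\}$. For $(H_3)$, the bijection $t_F=F(k[\epsilon])\cong h_R(k[\epsilon])=\mathrm{Hom}_k(R,k[\epsilon])\cong(\mathfrak{m}_R/\mathfrak{m}_R^2)^\vee$ shows $t_F$ is finite-dimensional because $R$ is a Noetherian complete local ring. For $(H_1)$ and $(H_2)$ one first checks directly that $h_R$ itself satisfies the corresponding statements: in fact $h_R(A'\times_A A'')=h_R(A')\times_{h_R(A)}h_R(A'')$ holds for all fiber products by the universal property, so the analogue of $(H_2)$ is an equality. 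One then transfers these across the map $h_R\to F$; the surjectivity in $(H_1)$ uses smoothness applied to the small extension $A''\to A$, and $(H_2)$ follows by combining the equality for $h_R$ with bijectivity of $h_R(k[\epsilon])\to F(k[\epsilon])$.

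For the ``if'' direction, one first uses $(H_1)$ and $(H_2)$ to put a canonical $k$-vector space structure on $t_F=F(k[\epsilon])$: applying $(H_2)$ with $A''=A'=k[\epsilon]$, $A=k$ identifies $F(k[\epsilon]\times_k k[\epsilon])$ with $t_F\times t_F$, which yields addition, while scalar multiplication comes from the $k$-algebra endomorphisms of $k[\epsilon]$; this also makes the set of lifts along any small extension into a (possibly empty) pseudo-torsor under a quotient of $t_F$. By $(H_3)$, $d:=\dim_k t_F<\infty$; fixing a dual basis, set $S=k[[t_1,\dots,t_d]]$ with maximal ideal $\mathfrak{m}$. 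The hull will be $R=S/J$ with $J=\bigcap_q J_q$ for a decreasing sequence of ideals satisfying $\mathfrak{m}J_{q-1}\subseteq J_q\subseteq\mathfrak{m}J_{q-1}+\mathfrak{m}^{q+1}$, built inductively together with compatible $\xi_q\in F(S/J_q)$, so $\xi:=\{\xi_q\}\in\hat F(R)$. One starts with $J_1=\mathfrak{m}^2$ and $\xi_1\in F(S/\mathfrak{m}^2)$ the element corresponding under $(H_2)$ to the chosen identification $(\mathfrak{m}/\mathfrak{m}^2)^\vee\cong t_F$. At stage $q$, among all ideals $J$ with $\mathfrak{m}J_{q-1}\subseteq J\subseteq\mathfrak{m}J_{q-1}+\mathfrak{m}^{q+1}$ for which $\xi_{q-1}$ lifts to $F(S/J)$, one shows there is a smallest one, which becomes $J_q$, and picks any lift $\xi_q$.

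Finally one checks that $(R,\xi)$ is \emph{versal}, i.e. $h_R\to F$ is smooth: it suffices to lift along small extensions $B\to A$, and given $R\to A$ inducing the relevant image of $\xi$ together with $\theta\in F(B)$ over it, the minimality of the $J_q$ forces the induced map $S\to B$ to kill the appropriate $J_q$, producing $R\to B$ inducing $\theta$. \emph{Miniversality}, $h_R(k[\epsilon])\xrightarrow{\sim}F(k[\epsilon])$, holds because $J_1=\mathfrak{m}^2$ gives $h_R(k[\epsilon])\cong\mathrm{Hom}_k(\mathfrak{m}/\mathfrak{m}^2,(\epsilon))\cong t_F$, and the map is precisely the chosen identification. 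The main obstacle is the inductive construction of $J_q$: showing that the set of admissible ideals $J$ is closed under intersection, so that a smallest element exists. This is exactly where $(H_1)$ is needed --- it lets one merge a lift of $\xi_{q-1}$ over $S/J'$ and one over $S/J''$ into a lift over $S/(J'\cap J'')$, via the fiber product $(S/J')\times_{S/(J'+J'')}(S/J'')$ decomposed into a chain of small extensions --- and one must observe that finitely many intersections suffice, because $S/\mathfrak{m}J_{q-1}$ is Artinian, keeping the whole argument inside the Noetherian setting and guaranteeing that $\xi_q$ exists.
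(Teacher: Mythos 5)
Your route is the standard Schlessinger construction, which is exactly what the paper defers to (its ``proof'' is the citation to Hartshorne, Theorem 16.2); the ``only if'' direction, the vector-space structure on $t_F$, and the general shape of the hull construction are fine. However, the inductive step that actually builds the hull is stated incorrectly, and as written it fails. The admissible ideals at stage $q$ must be those $J$ with $\mathfrak{m}J_{q-1}\subseteq J\subseteq J_{q-1}$ for which $\xi_{q-1}$ lifts to $F(S/J)$; you instead require $\mathfrak{m}J_{q-1}\subseteq J\subseteq \mathfrak{m}J_{q-1}+\mathfrak{m}^{q+1}$. With $J_1=\mathfrak{m}^2$ this window degenerates already at $q=2$: $\mathfrak{m}J_1=\mathfrak{m}^3$ and $\mathfrak{m}J_1+\mathfrak{m}^3=\mathfrak{m}^3$, so the only candidate is $J=\mathfrak{m}^3$ and admissibility demands that $\xi_1$ lift to $F(S/\mathfrak{m}^3)$. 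Whenever $F$ is obstructed this is impossible and your admissible set is empty. A concrete failure occurs for the pro-representable functor $F=h_{k[x]/(x^2)}$: here $d=1$, $S=k[[t]]$, $J_1=(t^2)$, and $\xi_1$ is $x\mapsto t$ in $k[t]/(t^2)$, which does not lift to $k[t]/(t^3)$ since $(t+at^2)^2=t^2\neq 0$ there; the correct recursion returns $J_2=(t^2)$ (the upper bound $J_{q-1}$ permitting ``no progress'') and ultimately $R=k[[t]]/(t^2)$, the right hull. Moreover, when your recursion does not break it forces $J_q=\mathfrak{m}^{q+1}$ at every stage, hence $J=0$ and $R=S$: it can never record relations, i.e.\ obstructions, which is precisely the content the hull must carry. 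So this is not a cosmetic slip; the upper bound $J_{q-1}$ is what makes the minimality step meaningful.

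Once the window is corrected, the surrounding points you make are sound: closure of the admissible family under intersection via $(H_1)$ applied to $S/(J'\cap J'')\cong (S/J')\times_{S/(J'+J'')}(S/J'')$ after factoring the surjection into small extensions, and existence of a smallest element because the admissible ideals correspond to subspaces of the finite-dimensional space $J_{q-1}/\mathfrak{m}J_{q-1}$. Two further places are glossed more than they should be. First, your one-sentence versality check hides the real argument: given a small extension $B\to A$, one lifts $S\to A$ to $S\to B$ by formal smoothness of $S$, then uses $(H_1)$ on the fiber product $B\times_A(S/J_q)$ together with the minimality of $J_{q+1}$ to arrange that the lift kills $J$ and induces $\theta$. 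Second, you implicitly use that any map $R\to A$ with $A$ Artinian factors through some $R_q=S/J_q$; this needs the images of the $J_q$ in $R$ to be cofinal among ideals of finite colength (e.g.\ by Chevalley's theorem for the complete local ring $R$, since $\bigcap_q J_q/J=0$). With the corrected sandwich and these points supplied, your proof matches the argument the paper cites.
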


\begin{proof}
See \cite{Har10} Theorem 16.2.
\end{proof}

\begin{remark}
We explain in $(H_3)$ why $t_F:=F(k[\epsilon])$ is a $k$-vector space. Let $F$ be a functor of Artin rings satisfying $(H_0)$ and $(H_2)$. Then $F(k[\epsilon])$ can be considered to be $k$-vector space in the following way. Let's consider the following map 
\begin{align*}
\alpha:k[\epsilon]\times_k k[\epsilon]&\to k[\epsilon]\\
(a+b\epsilon,a+b'\epsilon)&\mapsto a+(b+b')\epsilon
\end{align*}
Then $\alpha((a+b\epsilon,a+b'\epsilon)\cdot (c+d\epsilon,c+d'\epsilon))=\alpha(ac+(ad+bc)\epsilon, ac+(ad'+b'c)\epsilon)=ac+(ad+bc+ad'+b'c)\epsilon$. On the other hand, $\alpha((a+b\epsilon,a+b'\epsilon))\cdot \alpha((c+d\epsilon,c+d'\epsilon))=(a+(b+b')\epsilon)(c+(d+d')\epsilon)=ac+(ad+ad'+bc+b'c)\epsilon$. Hence $\alpha$ is a homomorphism. So $\alpha$ induces $F(\alpha):F(k[\epsilon]\times_k k[\epsilon])\to F(k[\epsilon])$. Since $F$ satisfies $(H_3)$, we have $F(k[\epsilon])\times F(k[\epsilon])\cong F(k[\epsilon]\times_k k[\epsilon])$. So We have a map $F(k[\epsilon])\times F(k[\epsilon])\to F(k[\epsilon])$. This defines an addition. By the following commutativity of homomorphisms and the property $(H_3)$, the operation satisfies associativity$:$
\begin{center}
$\begin{CD}
k[\epsilon]\times_k k[\epsilon] \times_k k[\epsilon]((a+b\epsilon,a+b'\epsilon,a+b''\epsilon))@>>>  k[\epsilon]\times_k k[\epsilon]((a+(b+b')\epsilon,a+b''\epsilon))\\
@VVV @VVV\\
k[\epsilon]\times_k k[\epsilon]((a+b\epsilon,a+(b+b')\epsilon))@>>> k[\epsilon](a+(b+b'+b'')\epsilon)
\end{CD}$
\end{center}
The zero element is the image of $F(k)\to F(k[\epsilon])$ induced from $k\to k[\epsilon],k\to k+\epsilon\cdot 0$. The scalar multiplication by $c\in k$ is defined by $F(k[\epsilon])\to F(k[\epsilon])$ induced from $k[\epsilon]\to k[\epsilon], a+b\epsilon\mapsto a+(cb)\epsilon$. Then the inverse is defined by the map $F(k[\epsilon])\to F(k[\epsilon])$ induced by $k[\epsilon]\to k[\epsilon], a+b\epsilon\mapsto a-b\epsilon$.
\end{remark}

Let's assume that the functor $F$ satisfies $H_0$, $H_1$ and $H_2$. We claim that  for any small extension $0\to(t)\to A'\xrightarrow{p} A\to 0$ and any element $\eta\in F(A)$, there is a transitive group action of the vector space $t_F$ on the set $p^{-1}(\eta)$ if it is nonempty. Here $p:=F(p):F(A')\to F(A)$. Indeed,  we have an isomorphism 
\begin{align*}
\gamma:k[\epsilon]\times_k A'&\to A'\times_A A'\\
                 (x+y\epsilon,a') &\mapsto (a'+yt,a')\\
\gamma^{-1}:A'\times_A A' &\to k[\epsilon]\times_k A'\\
                    (b',a') &\mapsto (\overline{a'}+\overline{(b'-a')}\epsilon,a')
\end{align*}
where $\overline{a'}$ and $\overline{b'-a'}\in k$ are the residues of $a'$ and $b'-a'$ modulo by the maximal ideal of $A'$. Let $\beta:k[\epsilon]\times_k A'\to A, (x+y\epsilon,a')\mapsto a'+yt$. Then we have the following commutative diagram
\begin{center}
\[\begindc{\commdiag}[50]
\obj(0,1)[aa]{$k[\epsilon]\times_k A'$}
\obj(1,0)[bb]{$A'$}
\obj(2,1)[cc]{$A'\times_A A'$}
\mor{aa}{cc}{$\gamma$}
\mor{aa}{bb}{$\beta$}[\atright,\solidarrow]
\mor{cc}{bb}{$pr_1$}
\enddc\]
\end{center}
 Since $F$ satisfies $H_2$, we have a bijection $\alpha^{-1}:F(k[\epsilon])\times F(A')\to F(k[\epsilon]\times_k A')$, so a bijection $F(k[\epsilon]) \times F(A')\to F(A'\times_{A} A')$ induced from $F(\gamma)\circ \alpha^{-1}$.
Since we have the following commutative diagram
{\tiny{\begin{center}
\[\begindc{\commdiag}[70]
\obj(0,1)[aa]{$F(A')$}
\obj(1,0)[bb]{$F(A')\times_{F(A)} F(A')$}
\obj(2,1)[cc]{$F(A')$}
\obj(1,2)[dd]{$F(A'\times_A A')$}
\obj(1,3)[ee]{$F(k[\epsilon]\times_k A')$}
\obj(1,4)[ff]{$t_F\times F(A')$}
\mor{aa}{bb}{}
\mor{cc}{bb}{}
\mor{dd}{aa}{}
\mor{dd}{cc}{}
\mor{ee}{dd}{$F(\gamma)$}
\mor{ff}{ee}{$\alpha^{-1}$}
\mor{ff}{aa}{$F(\beta)\circ \alpha^{-1}$}[\atright,\solidarrow]
\mor{ff}{cc}{$pr$}
\mor{dd}{bb}{}
\enddc\]
\end{center}}}
The map $t_F\times F(A')\to F(A')\times_{F(A)} F(A')$ is surjective and an isomorphism on the second factor since $F$ satisfies $H_1$. If we take $\eta\in F(A)$ and fix $\eta'\in p^{-1}(\eta)$, then we get a surjective map $t_F\times \{\eta'\}\to p^{-1}(\eta) \times \{\eta'\}$, and hence there is a transitive group action of $t_F$ on $p^{-1}(\eta)$.
\begin{thm}[Schlessinger's criterion]\label{sch2}
Let $F$ be a functor of Artin rings. The functor $F$ is prorepresentable if and only if $F$ satisfies $(H_0),(H_1),(H_2),(H_3)$ and 
\begin{itemize}
\item $(H_4)$ For every small extension $p:A''\to A$ and every $\eta\in F(A)$ for which $p^{-1}(\eta)$ is nonempty, the group action of $t_F$ on $p^{-1}(\eta)$ is bijective.
\end{itemize}
\end{thm}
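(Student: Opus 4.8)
The plan is to obtain Theorem \ref{sch2} from Theorem \ref{sch1} by upgrading a pro-representable hull to a universal family, with $(H_4)$ as the extra input. For the substantive direction, assume $F$ satisfies $(H_0),(H_1),(H_2),(H_3),(H_4)$. By Theorem \ref{sch1} there is a complete local $k$-algebra $R$ with residue field $k$ together with a formal family $\xi\in\hat F(R)$ such that the associated morphism of functors $h_R\to F$ is smooth and the induced map $h_R(k[\epsilon])\to F(k[\epsilon])=t_F$ is bijective. I would then prove that $h_R(A)\to F(A)$ is bijective for every $A\in\bold{Art}$, which is exactly the assertion that $(R,\xi)$ pro-represents $F$. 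Surjectivity is immediate from smoothness: any $\eta\in F(A)$ lifts trivially along $A\to k$, and smoothness of $h_R\to F$ then produces a preimage of $\eta$ in $h_R(A)$.

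For injectivity I would induct on the length $\ell(A)$ of $A$ over $k$. The cases $A=k$ and $A=k[\epsilon]$ hold by $(H_0)$ and by the hull property. For the inductive step, choose a small extension $0\to(t)\to A\xrightarrow{p}\bar A\to 0$ with $\ell(\bar A)<\ell(A)$. Given $\varphi_1,\varphi_2\in h_R(A)$ with common image $\eta_A\in F(A)$, their restrictions $\bar\varphi_1,\bar\varphi_2\in h_R(\bar A)$ have common image in $F(\bar A)$, so $\bar\varphi_1=\bar\varphi_2=:\bar\varphi$ by induction. Writing $\eta_{\bar A}$ for the image of $\eta_A$ in $F(\bar A)$, both $\varphi_i$ lie in the fibre $P$ of $h_R(A)\to h_R(\bar A)$ over $\bar\varphi$, and the morphism $h_R\to F$ carries $P$ into the fibre $Q=p^{-1}(\eta_{\bar A})$ of $F(A)\to F(\bar A)$, sending each $\varphi_i$ to $\eta_A$. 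Now $P$ is nonempty, hence a torsor under $Der_k(R,k)\cong t_F$ by the elementary theory of lifting $k$-algebra homomorphisms along the square-zero extension $p$ (here $(t)$ is one-dimensional over $k$ with $\bold m_A$ acting as zero); and $Q$ is nonempty, so by $(H_4)$ the action of $t_F$ on $Q$ is free and transitive, making $Q$ a torsor under $t_F$ as well. The comparison map $P\to Q$ is $t_F$-equivariant, and an equivariant map between torsors under the same group is automatically a bijection; hence $\varphi_1=\varphi_2$. This completes the induction, so $h_R\to F$ is an isomorphism of functors.

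For the converse, suppose $F\cong h_R$ for a complete local $k$-algebra $(R,\bold m_R)$ with residue field $k$. Then $(H_0)$ holds since $Hom_k(R,k)$ is a singleton; $(H_1)$ and $(H_2)$ hold because $Hom_k(R,-)$ sends fibre products to fibre products, so $F(A'\times_A A'')\to F(A')\times_{F(A)}F(A'')$ is in fact always bijective; $(H_3)$ holds because $t_F=Hom_k(R,k[\epsilon])=Der_k(R,k)\cong(\bold m_R/\bold m_R^2)^{\vee}$ is finite-dimensional (as $R$ is noetherian, which one builds into pro-representability, and which is automatic for the ring produced by Theorem \ref{sch1}); and $(H_4)$ holds because, for a small extension $p:A''\to A$, the set of liftings $R\to A''$ of a given homomorphism $R\to A$ is, when nonempty, a principal homogeneous space under $Der_k(R,(t))\cong Hom_k(\bold m_R/\bold m_R^2,k)=t_F$, the action being free and transitive.

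The main obstacle I anticipate is verifying the $t_F$-equivariance of the comparison map $P\to Q$ used in the inductive step: one must check that the torsor structure on the fibre of $h_R(A)\to h_R(\bar A)$ and the torsor structure on the fibre of $F(A)\to F(\bar A)$ supplied by $(H_4)$ are intertwined by the morphism $h_R\to F$. Both structures are obtained by transporting the additive structure of $t_F$ along the isomorphism $k[\epsilon]\times_k A\xrightarrow{\sim}A\times_{\bar A}A$ and the bijection of $(H_2)$, exactly as in the discussion preceding Theorem \ref{sch2}; so the verification reduces to unwinding those definitions and invoking functoriality of $F$ together with the fact that $h_R\to F$ is a morphism of functors. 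Once this equivariance is established, the remaining arguments are formal, and one may alternatively simply cite \cite{Har10} for the detailed write-up.
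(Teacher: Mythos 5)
Your argument is correct and is essentially the standard Schlessinger--Hartshorne proof: the paper itself gives no argument for Theorem \ref{sch2} but simply cites \cite{Har10}, Theorem 16.2, which proceeds exactly as you do (hull from Theorem \ref{sch1}, surjectivity from smoothness, injectivity by induction on length via the $t_F$-equivariant comparison of the two torsors, and the formal verification of $(H_0)$--$(H_4)$ for $h_R$ in the converse). The two points you flag --- compatibility of the derivation-torsor structure on fibres of $h_R$ with the action defined via $(H_1),(H_2)$, and finite-dimensionality of $\bold m_R/\bold m_R^2$ in the converse --- are indeed the only delicate spots, and your treatment of them is adequate.
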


\begin{proof}
See \cite{Har10} Theorem 16.2.
\end{proof}

\section{Poisson Deformation functors}

\begin{definition}
Let $(X,\Lambda_0)$ be a Poisson algebraic scheme. For $A\in \bold{Art}$, we let
\begin{center}
$PDef_{(X,\Lambda_0)}=\{\text{infinitesimal Poisson deformations of $(X,\Lambda_0)$ over $A$}\}/\text{Poisson isomorphisms}$
\end{center}
Then $PDef_{(X,\Lambda_0)}$ is a functor of Artin rings.
\end{definition}

We will prove that $PDef_{(X,\Lambda_0)}$ has a miniversal family when $(X,\Lambda_0)$ is a smooth projective Poisson scheme. Before the proof, we note the following: let $B,B',B''$ are Poisson algebras over local Artininan $k$-algebras $A,A',A''$ with the residue $k$ as above. If $A'$-Poisson algebra homomoprhisms $p:B'\to B$, and $A''$-Poisson algebra homomoprhism $q:B''\to B$, then $\bar{B}=B'\times_B B''=\{(m,n)|m\in B',n\in B'', p(m)=q(n)\}$ is a Poisson $\bar{A}$-algebra defined in the following way: $\bar{A}\times \bar{B}\to \bar{B}, (a,b)\times (m,n)\mapsto (am,bn)$. Bracket is defined by $\{(m,n),(r,s)\}=(\{m,r\},\{n,s\})$. Then $\bar{B}\to B'$ is a Poisson homomorphism over $\bar{A}$ and $\bar{B}\to B''$ is a Poisson homomorphism over $\bar{A}$. The Poisson algebra satisfies an universal mapping property in the following sense: let $C$ be a Poisson algebra over $\bar{A}$ and assume that we have a Poisson $\bar{A}$-homomorphism $f:C\to B'$ and $g:C\to B''$ such that $p\circ f=q\circ g$. Then there is a unique Poisson homomorphism $h:C\to B'\times_B B''$ defined by $h(c)=(f(c),g(c))$ which is $\bar{A}$-Poisson homomorphism since $h(\{c_1,c_2\})=(f(\{c_1,c_2\}),g(\{c_1,c_2\})=(\{f(c_1),f(c_2)\},\{g(c_1),g(c_2)\})=\{(f(c_1),g(c_1)),(f(c_2),g(c_2))\}=\{h(c_1),h(c_2)\}$, and for $(a,b)\in \bar{A}$, we have $h((a,b)c)=(f((a,b)c),g((a,b)c))=(af(c),bg(c))=(a,b)(f(c),g(c))$

\begin{lemma}\label{3ll}
Let $A,A',A''$ be local artininan $k$-algebras, and let $\bar{A}=A'\times_A A''$. Let $B,B',B''$ be algebras over $A,A',A''$ with compatible maps $B'\to B$ and $B''\to B$, and assume that $B'\otimes_{A'} A\to B$ and $B''\otimes_{A''} A\to B$ are isomorphisms. Let $\bar{B}=B'\times_B B''$.
\begin{enumerate}
\item Assume $A''\to A$ is surjective. Then $\bar{B}\otimes_{\bar{A}} A'\to B'$ is an isomorphism and so $\bar{B}\otimes_{\bar{A}} A\to B$ is an isomorphism.
\item Now assume furthermore that $J=ker(A''\to A)$ is an ideal of square zeros and that $B',B''$ are flat over $A',A''$ respectively. Then $\bar{B}$ is flat over $\bar{A}$ and also $\bar{B}\otimes_{\bar{A}} A'' \to B''$ is an isomorphism.
\end{enumerate}
\end{lemma}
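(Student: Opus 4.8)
The plan is to deduce (1) from the tautological exact sequence attached to a fibre product, and (2) from the local criterion for flatness along a square-zero extension together with a five-lemma argument; throughout, the substance is keeping the various base-change identifications consistent, which is where I expect the only real friction.

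For (1), write $I'=\ker(A'\to A)$ and $J=\ker(A''\to A)$. First note that the hypotheses $B'\otimes_{A'}A\xrightarrow{\sim}B$ and $B''\otimes_{A''}A\xrightarrow{\sim}B$ already force the structure maps $B'\to B$ and $B''\to B$ to be surjective, since $B'\to B'\otimes_{A'}A=B'/I'B'$ is surjective and likewise for $B''$. Consequently the canonical sequence of $\bar A$-modules $0\to\bar B\to B'\oplus B''\to B\to 0$ with last map $(b',b'')\mapsto\bar{b'}-\bar{b''}$ is exact, the projection $\bar B\to B'$ is surjective with kernel $\{0\}\times\ker(B''\to B)=\{0\}\times JB''$, and surjectivity of $B'\to B$ also makes $\bar B\to B''$ surjective. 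Since $A''\to A$ is surjective, $\bar A\to A'$ is surjective with kernel $K':=\{0\}\times J$, and one checks $A'\otimes_{\bar A}A''=A''/K'A''=A''/J=A$. Hence $\bar B\otimes_{\bar A}A'=\bar B/K'\bar B$, and the remaining point is that $K'\bar B$ equals $\{0\}\times JB''$, the kernel of $\bar B\to B'$: the inclusion $\subseteq$ is immediate, and $\supseteq$ follows because every element $(0,jb'')$ with $j\in J$, $b''\in B''$ already lies in $\bar B$ (both coordinates die in $B$) and $\bar B\to B''$ is surjective. This gives $\bar B\otimes_{\bar A}A'\cong B'$, and then $\bar B\otimes_{\bar A}A=(\bar B\otimes_{\bar A}A')\otimes_{A'}A=B'\otimes_{A'}A=B$.

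For (2), assume now $J^2=0$ and $B',B''$ are flat over $A',A''$. Then $K'=\{0\}\times J$ satisfies $K'^2=0$, so by the local criterion for flatness applied to $0\to K'\to\bar A\to A'\to 0$ it suffices to show that $\bar B\otimes_{\bar A}A'$ is $A'$-flat and $\operatorname{Tor}_1^{\bar A}(\bar B,A')=0$. The first holds by (1), since $\bar B\otimes_{\bar A}A'\cong B'$. For the second, $\operatorname{Tor}_1^{\bar A}(\bar B,\bar A/K')=\ker(K'\otimes_{\bar A}\bar B\to\bar B)$; using $K'^2=0$ one identifies $K'\otimes_{\bar A}\bar B\cong K'\otimes_{A'}B'\cong J\otimes_A B$, and $J$-flatness of $B''$ identifies $JB''\cong J\otimes_{A''}B''\cong J\otimes_A B$, so the map $K'\otimes_{\bar A}\bar B\to\bar B$ becomes the inclusion $JB''\hookrightarrow\{0\}\times JB''\subseteq\bar B$, hence injective. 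Thus $\bar B$ is $\bar A$-flat. Finally, tensoring the exact sequence $0\to J\to A''\to A\to 0$ of $\bar A$-modules (via $\bar A\to A''$) with the flat module $\bar B$ gives $0\to\bar B\otimes_{\bar A}J\to\bar B\otimes_{\bar A}A''\to B\to 0$; since $J$ is an $A$-module on which $\bar A$ acts through $\bar A\to A'\to A$, the left term is $B'\otimes_{A'}J\cong J\otimes_A B\cong JB''$, and the resulting ladder against $0\to JB''\to B''\to B\to 0$ is a map of short exact sequences which is the identity on the outer terms, so by the five lemma $\bar B\otimes_{\bar A}A''\to B''$ is an isomorphism.

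The main obstacle is purely bookkeeping: checking that all the identifications above (especially $A'\otimes_{\bar A}A''\cong A$, $K'\otimes_{\bar A}\bar B\cong J\otimes_A B$, and $\bar B\otimes_{\bar A}J\cong JB''$) are induced by the evident maps, and that the square-zero flatness criterion is genuinely being applied to square-zero extensions, so that the ladder diagrams commute on the nose. No deep input beyond right-exactness of tensor, the flatness criterion along a nilpotent extension, and the five lemma is needed.
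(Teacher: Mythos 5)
The paper itself offers no argument for this lemma (it just cites Hartshorne, Prop.\ 16.4), so your proof has to stand on its own; your part (2) is essentially the standard argument and is fine, but part (1) has a genuine gap. You claim at the outset that $B'\to B$ is surjective ``since $B'\to B'\otimes_{A'}A=B'/I'B'$ is surjective''; the identification $B'\otimes_{A'}A=B'/I'B'$ presupposes that $A'\to A$ is surjective, which is not assumed --- only $A''\to A$ is. The claim is false in general: take $A'=k\to A=k[x]/(x^2)$, $A''=k[x]/(x^3)\to A$, $B'=k$, $B''=k[x]/(x^3)$, so $B=k[x]/(x^2)$; then $B'\to B$ is not surjective, and neither is $\bar B\to B''$ (its image is $k\oplus(x^2)$). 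This matters because surjectivity of $\bar B\to B''$ is exactly what you use for the key containment $\{0\}\times JB''\subseteq K'\bar B$, i.e.\ the injectivity of $\bar B\otimes_{\bar A}A'\to B'$: since $K'\bar B=\{0\}\times J\,\pi''(\bar B)$ with $\pi'':\bar B\to B''$ the projection, you need $JB''=J\,\pi''(\bar B)$, and without surjectivity of $\pi''$ this is not formal.

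The statement is nevertheless true, but the repair needs an extra idea rather than bookkeeping. From $B\cong B'\otimes_{A'}A$ one only gets that $B$ is generated as a module over $\mathrm{im}(B'\to B)$ by $\mathrm{im}(A\to B)$; lifting through the surjections $A''\to A$ and $B''\to B$ gives $B''=\pi''(\bar B)\cdot\mathrm{im}(A''\to B'')+JB''$, hence $JB''\subseteq J\,\pi''(\bar B)+J^2B''$, and one concludes $JB''=J\,\pi''(\bar B)$ because $J$ is nilpotent ($A''$ is Artinian local; in the situation of part (2) one can simply quote $J^2=0$). With part (1) repaired in this way, your part (2) --- the local flatness criterion along the square-zero ideal $K'$, the identifications $K'\otimes_{\bar A}\bar B\cong J\otimes_A B\cong JB''$, and the five-lemma ladder giving $\bar B\otimes_{\bar A}A''\cong B''$ --- goes through as written.
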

\begin{proof}
See \cite{Har10} Proposition 16.4.
\end{proof}

\begin{thm}\label{3thm1}
Let $(X,\Lambda_0)$ be a smooth projective Poisson scheme over $k$. Then the Poisson deformation functor $PDef_{(X,\Lambda_0)}$ of Poisson deformations of $(X,\Lambda_0)$ over local artinian rings has a miniversal family.

\end{thm}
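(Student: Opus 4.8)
The plan is to verify Schlessinger's criteria $(H_0)$--$(H_3)$ of Theorem \ref{sch1} for the functor $F = PDef_{(X,\Lambda_0)}$ and then invoke that theorem to produce the miniversal family. Criterion $(H_0)$ is immediate, since $PDef_{(X,\Lambda_0)}(k)$ consists of the single class of $(X,\Lambda_0)$ itself. For $(H_3)$ I would use Proposition \ref{3q}: because $X$ is smooth, $PDef_{(X,\Lambda_0)}(k[\epsilon]) \cong HP^2(X,\Lambda_0)$, the second truncated Lichnerowicz--Poisson hypercohomology group, computed from the bounded complex of coherent sheaves $\mathscr{H}om_{\mathcal{O}_X}(\wedge^\bullet \Omega^1_{X/k},\mathcal{O}_X)$ induced by $[\Lambda_0,-]$; since $X$ is projective over $k$, the hypercohomology of a bounded complex of coherent sheaves is finite-dimensional over $k$, so $t_F = HP^2(X,\Lambda_0)$ is a finite-dimensional $k$-vector space. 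Along the way one should note that the vector-space structure on $t_F$ dictated by $(H_2)$ coincides with the natural one on $HP^2(X,\Lambda_0)$.

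The heart of the argument is $(H_1)$ and $(H_2)$. Given a small extension $A'' \to A$ in $\bold{Art}$ and Poisson deformations $\xi' = (\mathcal{X}',\Lambda') \in F(A')$ and $\xi'' = (\mathcal{X}'',\Lambda'') \in F(A'')$ whose images in $F(A)$ both equal a common $\xi = (\mathcal{X},\Lambda)$, I would build a Poisson deformation over $\bar A = A' \times_A A''$ by forming the sheaf-theoretic fibre product $\mathcal{O}_{\bar{\mathcal{X}}} := \mathcal{O}_{\mathcal{X}'} \times_{\mathcal{O}_{\mathcal{X}}} \mathcal{O}_{\mathcal{X}''}$ on the common underlying topological space $X$. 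As observed immediately before the theorem, $\mathcal{O}_{\bar{\mathcal{X}}}$ is a sheaf of Poisson $\bar A$-algebras, its multiplication and bracket being the componentwise ones; by Lemma \ref{3ll} it is flat over $\bar A$ and the natural maps $\mathcal{O}_{\bar{\mathcal{X}}} \otimes_{\bar A} A' \to \mathcal{O}_{\mathcal{X}'}$ and $\mathcal{O}_{\bar{\mathcal{X}}} \otimes_{\bar A} A'' \to \mathcal{O}_{\mathcal{X}''}$ are isomorphisms. It then remains to check that $(X, \mathcal{O}_{\bar{\mathcal{X}}})$ is genuinely a scheme, which is a local question: on an affine open $U = \mathrm{Spec}(B_0) \subset X$ the restrictions of $\xi',\xi'',\xi$ are trivial, because a smooth affine scheme over $k$ admits no nontrivial infinitesimal deformations, so locally $\mathcal{O}_{\bar{\mathcal{X}}}$ is the structure sheaf of $\mathrm{Spec}(B_0 \otimes_k \bar A)$ equipped with a Poisson $\bar A$-structure and is affine. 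This yields the surjectivity required in $(H_1)$. For $(H_2)$, with $A'' = k[\epsilon]$ and $A = k$, injectivity follows from the dual construction: a Poisson $A'$-isomorphism and a Poisson $k[\epsilon]$-isomorphism between two deformations over $\bar A$ that agree over $k$ glue, through the same fibre product on structure sheaves, to a Poisson $\bar A$-isomorphism; combined with the finiteness of $t_F$ this makes $F(\bar A) \to F(A') \times_{F(k)} F(k[\epsilon])$ bijective. Throughout one must check that all gluing isomorphisms respect the Poisson brackets, but this is automatic from the componentwise description of the fibre product together with the criterion $\alpha(P) = \beta(Q)$ characterizing Poisson morphisms.

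The main obstacle I anticipate is the globalization step inside $(H_1)$: patching the local affine descriptions into an honest Poisson $\bar A$-scheme while simultaneously tracking the Poisson structures $\Lambda',\Lambda'',\Lambda$ on overlaps. Since $X$ is projective and the local pieces are rigid, this reduces to a \v{C}ech-type argument for the transition cocycles with values in $\mathscr{H}om_{\mathcal{O}_X}(\Omega^1_{X/k},\mathcal{O}_X)$, refined by the Poisson condition exactly as in the proof of Proposition \ref{3q}; the flatness input of Lemma \ref{3ll} is what ensures the patched object is again a flat Poisson deformation inducing $\xi'$ and $\xi''$. Once $(H_0)$--$(H_3)$ are established, Theorem \ref{sch1} furnishes a miniversal family for $PDef_{(X,\Lambda_0)}$, completing the proof; moreover, under the additional hypothesis $HP^1(X,\Lambda_0) = 0$ one can upgrade this to pro-representability by checking $(H_4)$ of Theorem \ref{sch2}, using Lemma \ref{3l} to identify the $t_F$-action on lifts with the torsor structure governed by $HP^1(X,\Lambda_0)$.
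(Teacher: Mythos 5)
Your proposal is correct and follows the same overall strategy as the paper: verify Schlessinger's conditions $(H_0)$--$(H_3)$ of Theorem \ref{sch1}, construct the lift over $\bar{A}=A'\times_A A''$ as the fibre product sheaf $\mathcal{O}_{\mathcal{X}'}\times_{\mathcal{O}_{\mathcal{X}}}\mathcal{O}_{\mathcal{X}''}$ on the common underlying space $X$, use Lemma \ref{3ll} for flatness and the base-change isomorphisms, and obtain $(H_3)$ from Proposition \ref{3q} together with finite-dimensionality of $HP^2(X,\Lambda_0)$ for $X$ smooth projective. The one place where you genuinely diverge is the verification that the fibre-product ringed space is a scheme: you invoke rigidity of infinitesimal deformations of a smooth affine scheme to identify the local model with $\mathrm{Spec}(B_0\otimes_k\bar{A})$, whereas the paper does not use smoothness at this step at all --- it takes an affine open $U=\mathrm{Spec}(B_0)$ (which remains affine in $\mathcal{X},\mathcal{X}',\mathcal{X}''$ by the nilpotent-thickening lemma), forms $C=\mathcal{O}_{\mathcal{X}'}(U)\times_{\mathcal{O}_{\mathcal{X}}(U)}\mathcal{O}_{\mathcal{X}''}(U)$, and checks $\mathcal{O}_{\mathrm{Spec}(C)}\cong\mathcal{O}_{\bar{\mathcal{X}}}|_U$ stalkwise, using Lemma \ref{3ll} applied to the surjection $A''\to A$ and the homeomorphism $\mathrm{Spec}(C)\cong U$ to see that localization commutes with the fibre product; this makes the paper's argument independent of local triviality and hence a bit more robust. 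Your route works under the smoothness hypothesis, but two points deserve care: to conclude that the local model is literally $B_0\otimes_k\bar{A}$ you must first adjust the trivializations of $\xi'|_U$ and $\xi''|_U$ so that the comparison maps to $\xi|_U$ become the standard projections (otherwise the fibre product is only abstractly of this form, after composing with an automorphism of the trivial deformation); and the \v{C}ech-patching ``obstacle'' you anticipate does not actually arise, since the fibre product sheaf is already globally defined and the only local statement needed is affineness. Finally, for $(H_2)$ the decisive step (as in the paper) is that any Poisson deformation $\mathcal{Y}$ over $\bar{A}$ inducing $\xi'$ and $\xi''$ maps to the fibre product by the universal property, and this map is a stalkwise isomorphism; the finiteness of $t_F$ plays no role in that bijectivity.
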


\begin{proof}
We check Schlessinger's Criterion in Theorem \ref{sch1}. First $PDef_{(\mathcal{X},\Lambda_0)}(k)$ is one point set. So $(H_0)$ is satisfied. Now we prove $(H_2)$.  Let's consider the following commutative diagram of local artinian $k$-algebras with residue $k$. Assume that $A''\to A$ is a small extension.
\begin{center}
$\begin{CD}
\bar{A}=A'\times_A A''@>>> A'\\
@VVV @VVV\\
A''@>>> A
\end{CD}$
\end{center}
Let $(\mathcal{X},\Lambda)$ be an infinitesimal Poisson deformation of $(X,\Lambda_0)$ over $A$. Let $(\mathcal{X}',\Lambda')$ be an infinitesimal Poisson deformation of $(X,\Lambda_0)$ over $A'$ and $(\mathcal{X}'',\Lambda'')$ be an infinitesimal Poisson deformation of $(X,\Lambda_0)$ inducing $(\mathcal{X},\Lambda)$ via the above diagram. So we have the following fiber product of Poisson algebraic schemes.
\begin{center}
$\begin{CD}
(\mathcal{X},\Lambda)@>>> (\mathcal{X}',\Lambda')\\
@VVV @VVV\\
Spec(A)@>>> Spec(A')
\end{CD}$
\,\,\,\,\,\,\,\,\,
$\begin{CD}
(\mathcal{X},\Lambda)@>>> (\mathcal{X}'',\Lambda'')\\
@VVV @VVV\\
Spec(A)@>>> Spec(A')
\end{CD}$
\end{center}
Then we will define an infinitesimal  Poisson deformation $(\bar{\mathcal{X}},\bar{\Lambda})$ of $(X,\Lambda_0)$ inducing $(\mathcal{X},\Lambda)$ and $(\mathcal{X},\Lambda')$, which implies $(H_1)$. Since $Spec(A),Spec(A')$ and $Spec(A'')$ are one point sets, $\mathcal{X}$, $\mathcal{X}'$ and $\mathcal{X}''$ have the same topological space of $X$. For any open set $U\subset X$, $\mathcal{O}_\mathcal{X'}(U)\to \mathcal{O}_\mathcal{X}(U)$ is a Poisson $A'$-algebra homomoprhism and $\mathcal{O}_\mathcal{X''}(U)\to \mathcal{O}_\mathcal{X}(U)$ is a Poisson $A''$-algebra homomorphism. Choose an affine open cover $\{U_i=Spec(B_i)\}$ of $X$. Then $U_i$ are all affine open sets of $\mathcal{X},\mathcal{X}',\mathcal{X}''$.\footnote{Such open set exists: 
let $Z_0$ be a closed subscheme of a scheme $Z$, defined by a sheaf of nilpotent ideals $N\subset \mathcal{O}_Z$. If $Z_0$ is affine, then $Z$ is affine as well (See \cite{Ser06} Lemma 1.2.3 page 23)} For affine open set $U$ of $X$, let $\mathcal{O}_{\mathcal{X}}(U)=B, \mathcal{O}_{\mathcal{X}'}(U)=B'$ and $\mathcal{O}_{\mathcal{X}''}(U)=B''$. Then we have a Poisson homomorphism $p:B'\to B$ and $q:B''\to B$. Then $B'\otimes_{A'} A\to B$ is a Poisson isomorphism and $B''\otimes_{A''} A\to B$ is a Poisson isomorphism.  Let $B'\times_B B''$ which is a Poisson algebra over $\bar{A}$. Since $Spec(B),Spec(B')$ and $Spec(B'')$ has same topological spaces. $Spec(B)\to Spec(B')$ induced from $p:B'\to B$ is bijective and $Spec(B)\to Spec(B'')$ induced from $q:B''\to B$ is bijective. We would like to describe prime ideals of $\bar{B}=B'\times_B B''$. Since $A''\to A$ is surjective, by Lemma \ref{3ll}, $\varphi:\bar{B}\otimes_{\bar{A}} A'\to B'$ induced from $\bar{B}\to B'$ is an Poisson isomorphism. Then $Spec(B')$ is topologically homeomorphic to $Spec(\bar{B})$. Hence all prime ideal of $Spec(\bar{B})$ is induced from $Spec(B)$ from the map $\bar{B}\to B$. Let $\mathfrak{p}$ be the prime ideal of $\bar{B}$. Then $\mathfrak{p}=p'^{-1}q^{-1}(\mathfrak{q})$ for the unique prime ideal $\mathfrak{q}\in B$. Then we have $\bar{B}_{\mathfrak{p}}\cong B'_{p^{-1}(\mathfrak{q})}\times_{B_{\mathfrak{q}}} B''_{q^{-1}(\mathfrak{q})}$ which is a Poisson isomorphism induced from the natural fibered sum of Poisson algebras
\begin{center}
$\begin{CD}
\bar{B}_{\mathfrak{p}}@>>> B'_{p^{-1}(\mathfrak{q})}\\
@VVV @VVV\\
B''_{q^{-1}(\mathfrak{q})}@>>> B_{\mathfrak{q}}
\end{CD}$
\end{center}

Now we globalize our arguments. We would like to construct a Poisson scheme $\bar{\mathcal{X}}$ which is an infinitesimal Poisson deformation over $\bar{A}$. Our local model of $\mathcal{X}$ for affine open set $U$ of $X$ will be isomorphic to Poisson affine scheme $\mathcal{O}_{\mathcal{X}'}(U)\times_{\mathcal{O}_{\mathcal{X}}(U)} \mathcal{O}_{\mathcal{X}''}(U)$ with naturally induced Poisson structure. We assume that $\mathcal{X}\to \mathcal{X}'$ and $\mathcal{X}\to \mathcal{X}''$ are identity maps on $X$ when we ignore sheaf structures. Now we will define a Poisson sheaf $\mathcal{O}_{\bar{\mathcal{X}}}$ on $X$ which is a Poisson scheme $\bar{\mathcal{X}}$ over $\bar{A}$ in the following way: for open set $U$, $\mathcal{O}_{\bar{\mathcal{X}}}(U)$ is a set of elements $\phi:U\to \bigcup_{x\in U} \mathcal{O}_{\mathcal{X}',x}\times_{\mathcal{O}_{\mathcal{X},x}} \mathcal{O}_{\mathcal{X}'',x}$ such that for each $x\in X$, there exists an affine neighborhood $U_x$ of $x$ and an element $(a,b)\in \mathcal{O}_{\mathcal{X}''}(U_x)\times_{\mathcal{O}_{\mathcal{X}}(U_x)}\mathcal{O}_{\mathcal{X}''}(U_x)$ such that $\phi$ is canonically induced from $(a,b)$ from the natural Poisson map $\mathcal{O}_{\mathcal{X}'}(U_x)\times_{\mathcal{O}_{\mathcal{X}}(U_x)}\mathcal{O}_{\mathcal{X}''}(U_x)\to \mathcal{O}_{\mathcal{X}',x}\times_{\mathcal{O}_{\mathcal{X},x}} \mathcal{O}_{\mathcal{X}'',x}$. Then $\mathcal{O}_{\bar{\mathcal{X}}}$ is a Poisson sheaf where the Poisson structure is induced from the Poisson structure $\mathcal{O}_{\mathcal{X}',x}\times_{\mathcal{O}_{\mathcal{X},x}} \mathcal{O}_{\mathcal{X}'',x}$. Then $\mathcal{O}_{\bar{\mathcal{X}}}$ defines a Poisson scheme $\bar{\mathcal{X}}$. Indeed, for any affine open set $U$ of $X$, let $C=\mathcal{O}_{\mathcal{X}'}(U)\times_{\mathcal{O}_{\mathcal{X}'}(U)}\mathcal{O}_{\mathcal{X}''}(U)$. Then we have a natural sheaf homomorphism $\mathcal{O}_{Spec(C)}\to \mathcal{O}_{\bar{\mathcal{X}}}|_U$ which is a Poisson isomorphism since it is isomorphic at each stalk by the compatibility of localization as shown above.

We claim that $\bar{\mathcal{X}}\times_{Spec(\bar{A})} Spec(A')\cong \mathcal{X}'$ and $\bar{\mathcal{X}}\times_{Spec(\bar{A})}Spec(A'')\cong \mathcal{X}''$ as Poisson schemes. We simply note that $(\mathcal{O}_{\mathcal{X}',x}\times_{\mathcal{O}_{\mathcal{X},x}}\mathcal{O}_{\mathcal{X}'',x})\otimes_{\bar{A}} A'\cong \mathcal{O}_{\mathcal{X}',x}$ and $(\mathcal{O}_{\mathcal{X}',x}\times_{\mathcal{O}_{\mathcal{X},x}}\mathcal{O}_{\mathcal{X}'',x})\otimes_{\bar{A}} A''\cong \mathcal{O}_{\mathcal{X}'',x}$. $\bar{\mathcal{X}}$ is flat by Lemma \ref{3ll}.

Now we prove $(H_2)$. We assume that $A''=k[\epsilon]$ and $A=k$. Let $\mathcal{Y}$ be a infinitesimal Poisson deformation of $(X,\Lambda_0)$ over $\bar{A}$ inducing $(\mathcal{X},\Lambda')$ and $(\mathcal{X}'',\Lambda'')$, i.e $\mathcal{Y}\times_{Spec(\bar{A})}Spec(A')\cong \mathcal{X}'$ and $\mathcal{Y}\times_{Spec(\bar{A})} Spec(A'')\cong \mathcal{X}''$. We will construct an Poisson isomorphism $\bar{\mathcal{X}}\to \mathcal{Y}$. Equivalently, we will show an isomorphism of Poisson sheaves $\mathcal{O}_{\mathcal{Y}}\to \mathcal{O}_{\bar{\mathcal{X}}}$. Since $A'\to k$ and $k[\epsilon]\to k$ are surjective, $\mathcal{X}'\to \mathcal{Y}$ and $\mathcal{X}''\to \mathcal{Y}$ are closed immersions. Since $\mathcal{Y}$ is also an infinitesimal Poisson deformation of $\mathcal{X}=(X,\Lambda_0)$, we have the following commutative diagram.
\begin{center}
$\begin{CD}
\mathcal{Y}@<<< \mathcal{X}'\\
@AAA @AAA\\
\mathcal{X}'' @<<< \mathcal{X}= (X,\Lambda_0)
\end{CD}$
\end{center}
For each affine open set of $X$, we have the following commutative diagram of Poisson homomorphisms
\begin{center}
$\begin{CD}
\mathcal{O}_{\mathcal{Y}}(U)@>>> \mathcal{O}_{\mathcal{X}'}(U)\\
@VVV @VVV\\
\mathcal{O}_{\mathcal{X}''}(U) @>>> \mathcal{O}_{X}(U)
\end{CD}$
\end{center}
which is compatible with localization at each $x\in U$. So we have the following commutative diagram
\begin{center}
$\begin{CD}
\mathcal{O}_{\mathcal{Y}}(U)@>>> \mathcal{O}_{\mathcal{X}}(U)\times_{\mathcal{O}_X(U)}\mathcal{O}_{\mathcal{X}''}(U)\\
@VVV @VVV\\
\mathcal{O}_{\mathcal{Y},x} @>>> \mathcal{O}_{\mathcal{X}',x}\times_{\mathcal{O}_{\mathcal{X},x}}\mathcal{O}_{\mathcal{X}'',x}
\end{CD}$
\end{center}
This induces a natural Poisson homomrphism $\mathcal{O}_{\mathcal{Y},x}\to\mathcal{O}_{\mathcal{X}',x} \times_{\mathcal{O}_{\mathcal{X},x}}\mathcal{O}_{\mathcal{Y},x}$ which is necessarily isomorphism. Now we define a sheaf map. For each open set $U$, we can identity $\mathcal{O}_{\mathcal{Y}}(U)$ is the set of elements $\alpha:U\to \bigcup_{x\in U} \mathcal{O}_{\mathcal{Y},x}$, locally coming from an element of section of affine open set of $U$. Then the map $\mathcal{O}_{\mathcal{Y}}(U)\to \mathcal{O}_{\bar{\mathcal{X}}}(U)$ induced from $\mathcal{O}_{\mathcal{X}'',x}\to\mathcal{O}_{\mathcal{X}',x}\times_{\mathcal{O}_{\mathcal{X},x}}\mathcal{O}_{\mathcal{X}'',x}$ is well defined. $\mathcal{O}_{\mathcal{Y}}\to \mathcal{O}_{\mathcal{X}}$ is isomorphism since it is isomorphic at each stalk.

Lastly, since $(X,\Lambda_0)$ is a smooth projective Poisson scheme, $HP^2(X,\Lambda_0)$ is a finite dimensional $k$-vector space. We have $\kappa:t_{PDef_{(X,\Lambda_0)}}\cong HP^2(X,\Lambda_0)$ as a map in Proposition \ref{3q}. We show that $\kappa$ is isomorphism as $k$-vector spaces. We simply note that for given $\xi,\eta\in PDef(k[\epsilon])$ represented by $(Id+\epsilon p_{ij},\Lambda_0+t\Lambda_i')$ and $(Id+ \epsilon p_{ij}',\Lambda_0+t\Lambda_i'')$, $\xi+\eta$ is given by the data $(Id+\epsilon(p_{ij}+p_{ij}')\epsilon, \Lambda_0+\epsilon(\Lambda_i'+\Lambda_i''))$. So $t_{PDef_{(X,\Lambda_0)}}=PDef_{(X,\Lambda_0)}(k[\epsilon])$ is a finite dimensional $k$-vector space. Hence by Schlessinger's criterion (Theorem \ref{sch1}), $PDef_{(X,\Lambda_0)}$ has a miniversal family.

\end{proof}

\begin{lemma}\label{3lm}
Let $(X,\Lambda_0)$ be smooth projective Poisson scheme with $HP^1(X,\Lambda_0)=0$. Then for any infinitesimal Poisson deformation $(\mathcal{X},\Lambda)$ of $(X,\Lambda_0)$ over $A$ for any $A\in \bold{Art}$, we have $PAut((\mathcal{X},\Lambda)/(X,\Lambda_0))=Id$, where $PAut((\mathcal{X},\Lambda)/(X,\Lambda_0)):=$ the set of Poisson automorphisms of $(\mathcal{X},\Lambda)$ restricting to the identity Poisson automorphism of $(X,\Lambda_0)$.
\end{lemma}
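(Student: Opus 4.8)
The plan is to argue by induction on $\dim_k A$, reducing via small extensions to the case where $\theta$ is the identity modulo a square-zero principal ideal, and then to glue the local descriptions supplied by Lemma \ref{3l}. For $A=k$ the special fiber \emph{is} the deformation and the statement is trivial. For general $A$, first I would choose $t\in\mathfrak{m}_A$ with $t\,\mathfrak{m}_A=0$, giving a small extension $0\to(t)\to A\xrightarrow{p} A'\to 0$ with $\dim_k A'<\dim_k A$. If $\theta\in PAut((\mathcal{X},\Lambda)/(X,\Lambda_0))$, base change along $p$ produces an element of $PAut\big((\mathcal{X},\Lambda)\times_{\operatorname{Spec} A}\operatorname{Spec} A'\,/\,(X,\Lambda_0)\big)$, which is the identity by the inductive hypothesis; hence $\theta$ restricts to the identity modulo $t$, and it remains to show such a $\theta$ is itself the identity.

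Next I would produce the local picture. Since $X$ is smooth, fix a finite affine open cover $\{U_i=\operatorname{Spec} B_i\}$ of $X$ with each $\mathcal{X}|_{U_i}$ a trivial flat deformation of $U_i$ (infinitesimal deformations of smooth affine schemes are trivial, cf. \cite{Ser06}); choose algebra trivializations $\sigma_i\colon U_i\times\operatorname{Spec} A\xrightarrow{\sim}\mathcal{X}|_{U_i}$ reducing to the identity on $U_i$, write $\Lambda_i$ for the biderivation on $B_i\otimes_k A$ pulled back from $\Lambda|_{U_i}$, and $\sigma_{ij}=\sigma_i^{-1}\sigma_j$ for the transition isomorphisms. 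Then $\theta_i:=\sigma_i^{-1}\theta\sigma_i$ is a Poisson automorphism of $(B_i\otimes_k A,\Lambda_i)$ over $A$ inducing the identity on $(B_i\otimes_k A',\Lambda_i\otimes_A A')$, so by Lemma \ref{3l} (case $\Lambda_1=\Lambda_2$) we have $\theta_i=\mathrm{Id}+t\,\tilde d_i$ for a unique Poisson derivation $\tilde d_i\in PDer_k(B_i,B_i)$, i.e. a section over $U_i$ of the sheaf $\mathscr{K}:=\ker\!\big([\Lambda_0,-]\colon \mathscr{H}om_{\mathcal{O}_X}(\Omega^1_{X/k},\mathcal{O}_X)\to\mathscr{H}om_{\mathcal{O}_X}(\wedge^2\Omega^1_{X/k},\mathcal{O}_X)\big)$.

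Then I would glue: on $U_{ij}$ the relation $\theta_i\sigma_{ij}=\sigma_{ij}\theta_j$ coming from the globality of $\theta$, together with the facts that $\sigma_{ij}$ is the identity modulo $\mathfrak{m}_A$ and $t\,\mathfrak{m}_A=0$, forces $\tilde d_i|_{U_{ij}}=\tilde d_j|_{U_{ij}}$; hence the $\tilde d_i$ patch to a global section $\tilde d\in H^0(X,\mathscr{K})$. By the grading convention of Definition \ref{3def} (the term $\mathscr{H}om(\Omega^1_{X/k},\mathcal{O}_X)$ appearing in cohomological degree $1$, the degree-$0$ term being zero, as in the \v{C}ech double complex of the proof of Proposition \ref{3q}), the hypercohomology spectral sequence identifies $HP^1(X,\Lambda_0)$ with $H^0(X,\mathscr{K})$, the space of global Poisson vector fields. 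Therefore $\tilde d\in HP^1(X,\Lambda_0)=0$, so every $\theta_i=\mathrm{Id}$, and hence $\theta=\mathrm{Id}$, closing the induction.

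I expect the gluing step to be the main point requiring care: one must verify that conjugating the local automorphisms by the deformation's own transition maps $\sigma_{ij}$ leaves the first-order terms $\tilde d_i$ unchanged, which is precisely where the hypotheses $t\,\mathfrak{m}_A=0$ and $\sigma_{ij}\equiv\mathrm{Id}\pmod{\mathfrak{m}_A}$ enter, in the same spirit as the cocycle computations in the proofs of Propositions \ref{3q} and \ref{3rigid}. The only other bookkeeping is the identification $HP^1(X,\Lambda_0)=H^0(X,\mathscr{K})$, which is immediate once the degree conventions are pinned down; everything else is formal.
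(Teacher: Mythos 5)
Your proof is correct and follows essentially the same route as the paper: induction on $\dim_k A$ via a small extension $0\to(t)\to A\to A/(t)\to 0$, local trivializations together with Lemma \ref{3l} to write the automorphism locally as $\mathrm{Id}+t\,d_i$ with $[\Lambda_0,d_i]=0$, and the vanishing of $HP^1(X,\Lambda_0)$ to conclude $d_i=0$ and hence that the automorphism is the identity. The only difference is that you make explicit the gluing of the $d_i$ into a global Poisson vector field (using $t\,\mathfrak{m}_A=0$ and that the transition maps are the identity modulo $\mathfrak{m}_A$) and the identification $HP^1(X,\Lambda_0)\cong H^0\bigl(X,\ker([\Lambda_0,-])\bigr)$, steps the paper leaves implicit in the assertion $\{d_i\}\in HP^1(X,\Lambda_0)$.
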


\begin{proof}
We prove by induction on the dimension of $A$. Let $dim_k\,A=1$. Then $A=k$. So we have nothing to prove. Let's assume that the lemma holds for $A$ with $dim_k\,A\leq n-1$. Let $dim_k A=n$ and $(\mathcal{X},\Lambda)$ be an infinitesimal Poisson deformation of $(X,\Lambda_0)$ over $A$. Assume that the maximal ideal $\mathfrak{m}$ of $A$ satisfies $\mathfrak{m}^{p-1}\ne 0$ and $\mathfrak{m}^p=0$. Choose $t\ne 0\in \mathfrak{m}^{p-1}$. Then $A/(t) \in \bold{Art}$ with $dim_k\,A/(t)\leq n-1$ and $0\to (t)\to A\to A/(t)\to 0$ is a small extension. Now let $g:(\mathcal{X},\Lambda)\to (\mathcal{X},\Lambda)$ be a Poisson automorphism restricting to the identity Poisson automorphism of $(X,\Lambda_0)$. Let $\{U_i=Spec(B_i)\}$ be an affine open cover of $X$. Let $\{\theta_i\}$ where $\theta_i:U_i\times Spec({A})\to \mathcal{X}|_{U_i}$, $\{\Lambda_i\}$ where $\Lambda_i$ is the Poisson structure on $U_i\times Spec(A)$ induced from $\Lambda|_{U_i}$ via $\theta$ and let $\theta_{ij}={\theta}_i^{-1}{\theta}_j$ which corresponds to a Poisson homomorphism ${f}_{ij}:(\mathcal{O}_X(U_{ij})\otimes_k {A},{\Lambda}_i)\to (\mathcal{O}_X(U_{ij})\otimes_k {A}, {\Lambda}_j)$. Then $f$ can be described by the data $g_{i}: (\mathcal{O}_X(U_i)\otimes A,\Lambda_i)  \to (\mathcal{O}_X(U_i)\otimes A,\Lambda_i)$ which is a Poisson automorphism and the following commutative diagram
\begin{center}
$\begin{CD}
(\mathcal{O}_X(U_{ij}) \otimes A ,\Lambda_j)@>g_j>> (\mathcal{O}_X(U_{ij})\otimes A,\Lambda_j)\\
@Af_{ij}AA @AAf_{ij}A\\
(\mathcal{O}_X(U_{ij})\otimes A,\Lambda_i)@>g_i>> (\mathcal{O}_X(U_{ij})\otimes A,\Lambda_i)
\end{CD}$
\end{center}
Since by the induction hypothesis $g_i$ induce the identity on $\mathcal{O}_X(U_i)\otimes A/(t)$. $g_i$ is of the form $g_i=Id+td_ix$, where $d_i\in Der_k(\mathcal{O}_X(U_i),\mathcal{O}_X(U_i))$ with $[\Lambda_0,d_i]=0$ by Lemma \ref{3l}. Hence $\{d_i\}\in HP^1(X,\Lambda_0)$. Since $HP^1(X,\Lambda_0)=0$, we have $d_i=0$. Hence $g_i$ is the identity. So $g$ is the identity. This proves the lemma.
\end{proof}

\begin{proposition}
Let $(X,\Lambda_0)$ be a projective smooth Poisson scheme with $HP^1(X,\Lambda_0)=0$. Then the functor $PDef_{(X,\Lambda_0)}$ is pro-representable.
\end{proposition}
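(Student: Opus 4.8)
The plan is to verify Schlessinger's criterion for pro-representability, namely condition $(H_4)$ from Theorem \ref{sch2}, since $(H_0),(H_1),(H_2),(H_3)$ have already been established for a smooth projective Poisson scheme in Theorem \ref{3thm1}. So the only remaining work is to show: for every small extension $p:A''\to A$ and every $\eta\in PDef_{(X,\Lambda_0)}(A)$ for which $p^{-1}(\eta)$ is nonempty, the transitive group action of $t_F=PDef_{(X,\Lambda_0)}(k[\epsilon])\cong HP^2(X,\Lambda_0)$ on $p^{-1}(\eta)$ is free (hence bijective). The transitivity is automatic from $(H_1)$ and $(H_2)$ as explained in the discussion preceding Theorem \ref{sch2}, so everything comes down to freeness of the action.

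First I would recall, from the proof of Proposition \ref{3rigid}, the explicit description of liftings: fix an affine open cover $\{U_i=Spec(B_i)\}$ of $X$ and a Poisson deformation $\eta=(\mathcal{X},\Lambda)$ over $A$ with trivializations $\theta_i$ and transition data $\theta_{ij}$. Any lifting $\tilde\eta$ to $A''$ is given by a collection of Poisson structures $\tilde\Lambda_i$ on $U_i\times Spec(A'')$ (lifting $\Lambda_i$, with $[\tilde\Lambda_i,\tilde\Lambda_i]=0$) together with Poisson isomorphisms $\tilde\theta_{ij}$ lifting $\theta_{ij}$ and satisfying the cocycle condition. Given two liftings $\tilde\eta'$ and $\tilde\eta''$ over $A''$, the difference data $\tilde f_{ij}''=\tilde f_{ij}'+tp_{ij}$ and $\tilde\Lambda_i''=\tilde\Lambda_i'+t\Lambda_i'$ produces (by Lemma \ref{3l}) a cocycle $(\{\Lambda_i'\},\{p_{ij}\})$ representing a class in $HP^2(X,\Lambda_0)$, and this class is precisely the element of $t_F$ relating the two liftings. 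Thus the action is free if and only if: whenever this difference class is zero, the two liftings $\tilde\eta'$ and $\tilde\eta''$ are Poisson isomorphic over $A''$ (via an isomorphism restricting to the identity on $\eta$). The argument in Proposition \ref{3rigid} constructs exactly such an isomorphism from a coboundary trivialization $\{a_i\}$ of the difference cocycle, giving the local maps $Id+ta_i$ and checking they glue. So freeness follows by reusing that construction verbatim — but one must also check that the isomorphism so produced restricts to the identity on $\eta$, which follows since $a_i\equiv 0$ modulo the augmentation and the $Id+ta_i$ reduce to the identity on $U_i\times Spec(A)$.

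The point where the hypothesis $HP^1(X,\Lambda_0)=0$ enters, and which I expect to be the main subtlety, is showing that the action is free as a set-theoretic action and not merely that any two liftings differing by a trivial class are isomorphic as objects: one needs that the isomorphism class of a lifting determines, and is determined by, the orbit, with no extra collapsing coming from automorphisms of $\eta$ itself. This is handled by Lemma \ref{3lm}: since $HP^1(X,\Lambda_0)=0$, every infinitesimal Poisson deformation of $(X,\Lambda_0)$ has no nontrivial Poisson automorphisms inducing the identity on $(X,\Lambda_0)$, so distinct liftings in $p^{-1}(\eta)$ that are isomorphic in $PDef_{(X,\Lambda_0)}(A'')$ are in fact connected by a unique isomorphism, and the $t_F$-torsor structure on $p^{-1}(\eta)$ is genuine. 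Concretely, if $\tilde\eta'\cong\tilde\eta''$ over $A''$ and the difference class vanishes, then composing the given isomorphism with the inverse of the $Id+ta_i$-isomorphism gives a Poisson automorphism of $\tilde\eta'$ restricting to the identity on $\eta$ — hence on $(X,\Lambda_0)$ — which by Lemma \ref{3lm} must be the identity, forcing the two liftings to be equal, not merely isomorphic. This establishes $(H_4)$.

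With $(H_0)$–$(H_4)$ all verified, Theorem \ref{sch2} applies and yields that $PDef_{(X,\Lambda_0)}$ is pro-representable, completing the proof. I would close by remarking that the pro-representing complete local $k$-algebra is the one whose existence as a hull is guaranteed by Theorem \ref{3thm1}, now upgraded to a universal family because of the rigidity of automorphisms; this is consistent with the Part II construction, where under $HP^1(X,\Lambda_0)=0$ one obtains the universal formal Poisson deformation $\hat P^u/\hat R^u$ of Theorem \ref{2theorem}, and $\hat R^u$ is the pro-representing ring.
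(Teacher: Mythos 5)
Your overall strategy is the paper's: with $(H_0)$--$(H_3)$ already supplied by Theorem \ref{3thm1}, everything reduces to $(H_4)$, the action of $t_F\cong HP^2(X,\Lambda_0)$ on $p^{-1}(\eta)$ is described by the cocycle data $(\{\Lambda_i'\},\{p_{ij}\})$, and Lemma \ref{3lm} (hence $HP^1(X,\Lambda_0)=0$) is the input for freeness. But your freeness argument establishes the wrong implication. Freeness means: if the lifting obtained by twisting $\tilde\eta$ by a class $v$ is isomorphic, as an element of $PDef_{(X,\Lambda_0)}(A'')$ over $\eta$, to $\tilde\eta$ itself, then $v=0$ in $HP^2(X,\Lambda_0)$. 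The equivalence you assert (``the action is free if and only if: whenever the difference class is zero, the two liftings are Poisson isomorphic'') is not freeness --- that direction is the injectivity already contained in the argument of Proposition \ref{3rigid} --- and your ``concretely'' step assumes both that the liftings are isomorphic \emph{and} that the difference class vanishes, then concludes the liftings coincide; this is the converse of what $(H_4)$ needs (and being related by the maps $Id+ta_i$ does not make liftings equal in any case). The missing derivation, which is precisely where the paper spends $HP^1=0$, runs as follows: suppose $G(v,\tilde\eta)\cong\tilde\eta$ via Poisson isomorphisms $g_i$ compatible with the gluing data. Reducing modulo $(t)$ gives a Poisson automorphism of $\eta$ inducing the identity on $(X,\Lambda_0)$, which is the identity by Lemma \ref{3lm}; hence $g_i=Id+tq_i$ with $q_i$ a derivation, and Lemma \ref{3l} plus commutativity with the transition maps force $\Lambda_i'=-[\Lambda_0,q_i]$ and $p_{ij}=q_j-q_i$, so $v$ is a coboundary and $v=0$. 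Without this reduction-mod-$t$ step your write-up never shows that an isomorphism between the twisted and untwisted liftings kills the class.

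A secondary, smaller point: you import transitivity from the canonical $t_F$-action built out of $(H_1)$ and $(H_2)$, but then reason about freeness using the cocycle-twisting action, silently identifying the two. That identification is a genuine verification --- it is the final paragraph of the paper's proof, where the explicit action $G$ is compared with the action of Section 8.1 through the isomorphism $k[\epsilon]\times_k\tilde A\cong \tilde A\times_A\tilde A$ --- and it should be stated and checked rather than assumed, since $(H_4)$ is a condition on the canonical action.
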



\begin{proof}
Since $PDef_{(X_0,\Lambda_0)}$ satisfies $(H_0),(H_1),(H_2),(H_3)$ by Theorem \ref{3thm1}, we will check $(H_4)$ to show that the Poisson deformation functor $PDef_{(X,\Lambda_0)}$ is pro-representable by Theorem \ref{sch2}. Let $0\to (t)\to \tilde{A}\xrightarrow{\mu} A\to 0$ be a small extension in $\bold{Art}$. Let $\xi=(\mathcal{X},\Lambda)\in PDef_{(X,\Lambda_0)}(A)$ be an infinitesimal Poisson deformation of $(X,\Lambda_0)$ over $A$. Let $p:=PDef_{(X,\Lambda_0)}(\mu):PDef_{(X,\Lambda_0)}(\tilde{A})\to PDef_{(X,\Lambda_0)}(A)$ induced from $\mu:\tilde{A}\to A$. We will define a map $G:HP^2(X,\Lambda_0)\times p^{-1}(\xi)\to p^{-1}(\xi)$ which is  a group action of $HP^2(X,\Lambda_0)$ acting on $p^{-1}(\xi)$. Let $\tilde{\xi}=(\tilde{\mathcal{X}},\tilde{\Lambda})\in p^{-1}(\xi)$ be a lifting of $\xi$ over $\tilde{A}$. Let $\{U_i=Spec(B_i)\}$ be an affine cover of ${\xi}=(\tilde{\mathcal{X}},\tilde{\Lambda})$. Let $\{\tilde{\theta}_i\}$ where $\tilde{\theta}_i:U_i\times Spec(\tilde{A})\to \tilde{\mathcal{X}}|_{U_i}$, $\{\tilde{\Lambda}_i\}$ where $\tilde{\Lambda}_i$ is the Poisson structure on $U_i\times Spec(\tilde{A})$ induced from $\tilde{\Lambda}|_{U_i}$ via $\tilde{\theta}_i$ and let $\tilde{\theta}_{ij}=\tilde{\theta}_i^{-1}\tilde{\theta}_j$ which corresponds to a Poisson homomorphism $\tilde{f}_{ij}:(\mathcal{O}_X(U_{ij})\otimes_k \tilde{A},\tilde{\Lambda}_i)\to (\mathcal{O}_X(U_{ij})\otimes_k \tilde{A},\tilde{\Lambda}_j)$. We note that $t\tilde{f}_{ij}:\mathcal{O}_X(U_{ij})\otimes\tilde{A}\to \mathcal{O}_X(U_{ij})\otimes \tilde{A}$ is same to $tId$ since $\tilde{f}_{ij}$ is identity map modulo by maximal ideal $\tilde{\mathfrak{m}}$ of $\tilde{A}$ (i.e $\tilde{f}_{ij}(x)-Id(x)\in \tilde{\mathfrak{m}}$), and the maximal ideal $\tilde{\mathfrak{m}}$ is killed by $(t)$, and so we have $t(f_{ij}-Id)(x)=0$.  Let $(\{\Lambda_i'\}, \{p_{ij}\})\in HP^2(X,\Lambda_0)$, where $\Lambda_i'\in Hom_{B_i}(\wedge^2 \Omega_{B_i/k},B_i)$ and $p_{ij}\in \Gamma(U_i,T_X)=Der_k(B_i,B_i)$. So we have $[\Lambda_0,\Lambda_i']=0$, $\Lambda_j'-\Lambda_i'+[\Lambda_0,p_{ij}]=0$ and $p_{ij}+p_{jk}-p_{ik}=0$. Then we will define another lifting $\bar{\xi}$ of $\xi$ from $(\{\Lambda_i'\}, \{ p_{ij}\})$ and $\tilde{\xi}$ by gluing $U_i\times Spec(\tilde{A})$ equipped with $\tilde{\Lambda}_i+t\Lambda_i'$ in the following way: $\tilde{f}_{ij}+tp_{ij}:\mathcal{O}_X(U_{ij})\otimes_k \tilde{A} \to \mathcal{O}_X(U_{ij})\otimes_k \tilde{A}$ which is an isomorphism with the inverse $\tilde{f}_{ij}-tp_{ij}$. Indeed, $(\tilde{f}_{ji}-tp_{ij})\circ (\tilde{f}_{ij}+tp_{ij}):\mathcal{O}_X(U_i)\otimes_k \tilde{A}\to \mathcal{O}_X(U_i)\otimes_k\tilde{A}$, $Id+t(\tilde{f}_{ji}p_{ij}-p_{ij}\tilde{f}_{ij})=Id+t(p_{ij}-p_{ij})=Id$. $\{\tilde{f}_{ji}+tp_{ij}\}$ satisfies cocylce condition:
\begin{align*}
(\tilde{f}_{ki}+tp_{ki})\circ (\tilde{f}_{jk}+tp_{jk})\circ (\tilde{f}_{ij}+tp_{ij})=\tilde{f}_{ki}\circ\tilde{f}_{jk}\circ\tilde{f}_{ij}+t(p_{ij}+p_{jk}+p_{ki})=Id\\
\end{align*}
Now let's consider the Poisson structures. Since $[\Lambda_0,\Lambda_i']=0$, we have $[\tilde{\Lambda}_i+t\Lambda_i',\tilde{\Lambda}_i+t\Lambda_i']=0$. Hence $\tilde{\Lambda}_i+t\Lambda_i'$ defines a Poisson structure on $\mathcal{O}_X(U_i)\otimes_k \tilde{A}$. We claim that $\tilde{f}_{ij}+tp_{ij}:(\mathcal{O}_X(U_{ij})\otimes_k \tilde{A},\tilde{\Lambda}_i+t\Lambda_i')\to \mathcal{O}_X(U_{ij})\otimes_k \tilde{A},\tilde{\Lambda}_j+t\Lambda_j')$ is a Poisson isomorphism. First we note that
\begin{align*}
&(\Lambda_j'-\Lambda_i'+[\Lambda_0,p_{ij}])(dx\wedge dy)\\
=&\Lambda_{j}'(dx\wedge dy)-\Lambda_{i}'(dx\wedge dy)+\Lambda_0(d(p_{ij}(x))\wedge dy)-\Lambda_0(d(p_{ij}(y)))\wedge dx)-p_{ij}(\Lambda_0(dx\wedge dy)))=0
\end{align*}
\begin{align*}
&(\tilde{f}_{ij}+tp_{ij})((\tilde{\Lambda}_i+t\Lambda_i')(dx\wedge dy))=(\tilde{f}_{ij}+tp_{ij})(\tilde{\Lambda}_i(dx\wedge dy)+t\Lambda_i'(dx\wedge dy))\\
&=\tilde{f}_{ij}(\tilde{\Lambda}_i(dx\wedge dy))+t\Lambda_i'(dx\wedge dy) +tp_{ij}(\Lambda_0(dx\wedge dy))=\tilde{\Lambda}_j(d\tilde{f}_{ij}(x)\wedge d\tilde{f}_{ij}(y))
+t\Lambda_{j}'(dx\wedge dy)\\&+t\Lambda_0(d(p_{ij}(x))\wedge dy)-t\Lambda_0(d(p_{ij}(y)))\wedge dx)-tp_{ij}(\Lambda_0(dx\wedge dy))+tp_{ij}(\Lambda_0(dx\wedge dy))\\
&=\tilde{\Lambda}_j(d\tilde{f}_{ij}(x)\wedge d\tilde{f}_{ij}(y))+t\Lambda_{j}'(dx\wedge dy)+t\Lambda_0(d(p_{ij}(x))\wedge dy)-t\Lambda_0(d(p_{ij}(y)))\wedge dx)
\end{align*}
On the other hand,
\begin{align*}
&(\tilde{\Lambda}_j+t\Lambda_j')(d(\tilde{f}_{ij}+tp_{ij})(x))\wedge d((\tilde{f}_{ij}+tp_{ij})(y))\\
=& \tilde{\Lambda}_j(d(\tilde{f}_{ij}+tp_{ij})(x))\wedge d((\tilde{f}_{ij}+tp_{ij})(y))
+t\Lambda_j'(d(\tilde{f}_{ij}(x))\wedge d(\tilde{f}_{ij}(y))\\=& \tilde{\Lambda}_j(d\tilde{f}_{ij}(x)\wedge d\tilde{f}_{ij}(y))+t\Lambda_0(d(p_{ij}(x))\wedge dy)+t\Lambda_0(dx\wedge d(p_{ij}(y)))+t\Lambda_j'(dx\wedge dy)
\end{align*}
Hence we can define another lifting $\bar{\xi}$ from $\tilde{\xi}$ and ($\{\Lambda_i'\},\{p_{ij}\}$)$\in HP^2(X,\Lambda_0)$. Now we show that the map 
\begin{align*}
G:HP^2(X,\Lambda_0)\times p^{-1}(\xi)&\to p^{-1}(\xi)\\
((\{\Lambda_i'\},\{p_{ij}\}),\tilde{\xi})&\mapsto \bar{\xi}
\end{align*}
 is well-defined. Let $(\{\Lambda_i''\},\{p_{ij}'\})$ represent the same cohomology class with $(\{\Lambda_i\},\{p_{ij}\})$ in $HP^2(X,\Lambda_0)$. Then we have to show that the lifting $\bar{\xi}$ defined by $(\{\Lambda_i\},\{p_{ij}\})$ is an equivalent infinitesimal Poisson deformation to the lifting $\bar{\xi}'$ defined by $(\{\Lambda_i\},\{p_{ij}'\})$. There exists $\{a_i\}$ where $a_i\in \Gamma(U_i, T_X)$ such that $[\Lambda_0,a_i]=\Lambda_i'-\Lambda_i''$ and $-\delta(\{a_i\})=a_i-a_j=p_{ij}-p_{ij}'$. To show $\bar{\xi}\cong \bar{\xi}'$, it is sufficient to show that the following diagram commutes and $Id+ta_i$ defines Poisson isomorphisms.
 
\begin{center}
$\begin{CD}
(\mathcal{O}_X(U_{ij})\otimes_k Spec(\tilde{A}), \tilde{\Lambda}_j+t\Lambda_j' )@> Id+ta_j >> (\mathcal{O}_X(U_{ij})\otimes_k \tilde{A},\tilde{\Lambda}_j+t\Lambda_j'')\\
@A \tilde{f}_{ij}+tp_{ij} AA@AA\tilde{f}_{ij}+tp_{ij}' A\\
(\mathcal{O}_X(U_{ij})\otimes Spec(\tilde{A}), \tilde{\Lambda}_i+t\Lambda_i')@> Id+ta_i >> (\mathcal{O}_X(U_{ij})\otimes_k \tilde{A},\tilde{\Lambda}_i+t\Lambda_i'')
\end{CD}$
\end{center}
Indeed, $(Id+ta_j)\circ (\tilde{f}_{ij}+tp_{ij})-(\tilde{f}_{ij}+tp_{ij}')\circ (Id+ta_i)=\tilde{f}_{ij}+tp_{ij}+ta_j-(\tilde{f}_{ij}+ta_i+tp_{ij}')=t(a_j-a_i+p_{ij}-p_{ij}')=0$. $\tilde{\Lambda}_i+t\Lambda_i'-(\tilde{\Lambda}_i+t\Lambda_i'')=t(\Lambda_i-\Lambda_i'')$ and $\Lambda_i-\Lambda_i''-[\Lambda_0,a_i]=0$. So by Lemma \ref{3l}, $\{Id+ta_i\}$ are Poisson isomorphisms.

Next,  we show that the group action $HP^2(X,\Lambda_0)\times p^{-1}(\xi)\to p^{-1}(\xi)$ is transitive. Let $\tilde{\xi}\in p^{-1}(\xi)$ be a lifting of $\xi$ as above. Choose arbitrary lifting $\eta\in p^{-1}(\xi)$ of $\xi$. We have to find $(\{\Lambda_i'\},\{p_{ij}\})\in HP^2(X,\Lambda_0)$ such that $((\{\Lambda_i'\},\{p_{ij}\}),\tilde{\xi})$ is mapped to $\eta$ under the action. Let $\eta$ consist of the data $f_{ij}':(\mathcal{O}_X(U_{ij}),\otimes \tilde{A},\tilde{\Lambda}_i')\to (\mathcal{O}_X(U_{ij})\otimes \tilde{A},\tilde{\Lambda}_j')$. Since $\tilde{\xi}$ and $\eta$ both induce $\xi$, we have $\tilde{f}_{ij}'=f_{ij}+tp_{ij}$ and $\tilde{\Lambda}'_i=\tilde{\Lambda}_i+t\Lambda_i'$ for some $p_{ij}\in \Gamma(U_{ij}, T_X)$ and $\Lambda_i'\in Hom_{B_0}(\wedge^2 \Omega_{B_i/k}^1,B_i)$. Then $(\{\Lambda_i'\},\{p_{ij}\})$ defines a cohomology class in $HP^2(X,\Lambda_0)$. So the group action is transitive.

Next, we show that the group action is free. Assume that for given $v=(\{\Lambda_i'\},\{p_{ij}\})\in HP^2(X,\Lambda_0)$, we have $G(v,\tilde{\xi})=\tilde{\xi}$. We have to show that $v=0 \in HP^2(X,\Lambda_0)$. Let $\tilde{\xi}$ and $G(v,\tilde{\xi})$ as above. Then $G(v,\tilde{\xi})=\tilde{\xi}$ implies that we have Poisson isomorphisms $g_i:(\mathcal{O}_X(U_i)\otimes_k \tilde{A},\tilde{\Lambda}_i) \to (\mathcal{O}_X(U_i)\otimes_k \tilde{A},\tilde{\Lambda}_i+t\Lambda_i')$ such that the following diagram is commutative.
\begin{center}
$\begin{CD}
(\mathcal{O}_X(U_{ij})\otimes_k \tilde{A},\tilde{\Lambda}_j)@>g_j>> (\mathcal{O}_X(U_{ij})\otimes_k \tilde{A},\tilde{\Lambda}_j+t\Lambda_j')\\
@A\tilde{f}_{ij}AA @AA\tilde{f}_{ij}+tp_{ij}A\\
(\mathcal{O}_X(U_{ij})\otimes_k \tilde{A},\tilde{\Lambda}_i)@>g_i>> (\mathcal{O}_X(U_{ij})\otimes_k \tilde{A},\tilde{\Lambda}_i+t\Lambda_i')
\end{CD}$
\end{center}
Since $g_i$ induces a Poisson automorphism on $\xi=(\mathcal{X},\Lambda)$ by modulo $(t)$, which is necessarily identity by Lemma \ref{3lm}.  Hence $g_i$ is the form of $g_i=Id+tq_i$, where $q_i\in Der_k(\mathcal{O}_X(U_i),\mathcal{O}_X(U_i))$ with $-\Lambda_i'-[\Lambda_0,q_i]=0$ by Lemma \ref{3l}. Since the diagram commutes, we have $0=g_j\tilde{f}_{ij}-(\tilde{f}_{ij}+tp_{ij})g_i=(Id+tq_j)\tilde{f}_{ij}-(\tilde{f}_{ij}+tp_{ij})(Id+tq_i)=t(q_j-p_{ij}-q_i)$. Hence $p_{ij}=q_j-q_i$. Hence $v=0 \in HP^2(X,\Lambda_0)$.

Lastly, we claim that $G$ is exactly same to $t_{PDef_{(\mathcal{X},\Lambda_0)}}\times p^{-1}(\xi)\to p^{-1}(\xi)$ that we defined in section 8.1. We set $F:=PDef_{(X,\Lambda_0)}$. Let's describe $t_F\times p^{-1}(\xi)\to p^{-1}(\xi)$ by following the definition in section 8.1. We note that here $A'=\tilde{A}$.
We now describe $\alpha^{-1}:t_F \times F(\tilde{A})\to F(k[\epsilon]\times_k \tilde{A})$. Given an element $v=(\{\Lambda_i'\},\{p_{ij}\})\in HP^2(X,\Lambda_0)$ gives a first order Poisson deformation $(\mathcal{X}_\epsilon,\Lambda_\epsilon)$ over $k[\epsilon]$ by Proposition \ref{3q}. Let $(\mathcal{X},\Lambda)\in F(\tilde{A})$ which is a lifting of $\xi \in F(A)$. Let $\{U_i\}$ be an affine open set of $X$. $(\mathcal{X}_{\epsilon},\Lambda_\epsilon)$ can be described by the data $(\{Id+\epsilon p_{ij}\},\{\Lambda_0+\epsilon\Lambda_i'\})$, where $\Lambda_0+\epsilon\Lambda_i'$ is a Poisson structure on $\mathcal{O}_X(U_i)\otimes_k k[\epsilon]$ and $Id+ \epsilon p_{ij}:\mathcal{O}_X(U_{ij})\otimes_k k[\epsilon]\to \mathcal{O}_X(U_{ij})\otimes_k k[\epsilon]$ an Poisson isomorphism. Similarly $(\mathcal{X},\Lambda)$ can be described by the data $(\{f_{ij}\},\{\Lambda_i\})$, where $\Lambda_i$ is a Poisson structure on $\mathcal{O}_X(U_i)\otimes_k \tilde{A}$ and $f_{ij}:\mathcal{O}_{X}(U_{ij})\otimes \tilde{A} \to \mathcal{O}_{X}(U_{ij})\otimes \tilde{A}$ an Poisson isomorphism. Then $\alpha^{-1}((\mathcal{X}_\epsilon,\Lambda_\epsilon), (\mathcal{X},\Lambda))$ is a fibered sum $\mathcal{X}_\epsilon\times_k \mathcal{X}$ which can be described as $(\{(Id+\epsilon p_{ij}, f_{ij})\},\{(\Lambda_0+\epsilon \Lambda_i',\Lambda_i)\})$.
We note that 
\begin{center}
$\begin{CD}
k[\epsilon]\otimes_k \tilde{A},(x+y\epsilon,a') @>>> \tilde{A}\times_A \tilde{A}, (a'+yt,a')@>>> \tilde{A},(a'+yt)\\
@.@VVV \\
@. \tilde{A},(a')
\end{CD}$
\end{center}
In the map $t_F\times F(\tilde{A})\to F(\tilde{A})\times_{F(A)} F(\tilde{A}), (v,\xi)\mapsto(\tau(v,\xi),\xi)$, $\tau(v, (\mathcal{X},\Lambda))=(\mathcal{X}_{\epsilon}\times_k \mathcal{X})\otimes_{k[\epsilon]\times_k \tilde{A}} \tilde{A}$ is induced from $k[\epsilon]\times_k \tilde{A}\to \tilde{A}, (x+y\epsilon,a')\mapsto (a'+yt)$. Hence $(\mathcal{X}_{\epsilon}\times_k \mathcal{X})\otimes_{k[\epsilon]\times_k \tilde{A}} \tilde{A}$ can be described as $(f_{ij}+tp_{ij}, \Lambda_i+t\Lambda_i')$ which is exactly same to $G(v,(\mathcal{X},\Lambda))$.

\end{proof}

\chapter{Poisson cotangent complex}\label{chapter9}

In this chapter, we extend the construction of Schlessinger and Lichtenbaum's cotangent complex (See \cite{Sch67}) in terms of Poisson algebras.\footnote{My original goal was to generalize Hartshorne's construction of `$T^i$ Functors' presented in \cite{Har10} page 18-25 and apply to deformation problems for not necessarily smooth Poisson schemes. Hartshorne's book \cite{Har10} led me to Lichtenbaum and Schlessinger's original paper \cite{Sch67}. I followed Shclessinger and Lichtenbaum \cite{Sch67} in the context of Poisson algebras in this chapter. However I could not succeed in globalization. See Remark \ref{3remarks}. There is also a general approach to cotangent complex for algebras over an operad (see \cite{Lod12}). If our construction turns out to be correct, I expect that our construction $PT^i$ for $i=0,1,2$ is equivalent to the general construction in the language of operads.} We follow their arguments in  Poisson context. In this chapter, every algebra is a $k$-algebra for a field $k$.

\section{Poisson modules and Poisson enveloping algebras}(See \cite{Fre06})
\begin{definition}
Let $A$ be a Poisson algebra. A Poisson module over $A$  is a $A$-module $M$  equipped with a bracket $\{-,-\}:A\otimes_k M\to M$ such that
\begin{enumerate}
\item $\{a,bm\}=\{a,b\}+b\{a,m\}$ 
\item $\{ab,m\}=a\{b,m\}+b\{a,m\}$
\item $\{a,\{b,m\}\}=\{\{a,b\},m\}+\{b,\{a,m\}\}$
\end{enumerate}
for $a,b\in A$ and $m\in M$.
\end{definition}

\begin{definition}
Let $M$ and $N$ be Poisson modules of a Poisson algebra $A$. A map $\phi:M\to N$ is a morphism of Poisson modules if
\begin{align*}
 \phi(am)&=a\phi(m)\\
 \phi(\{a,x\})&=\{a,\phi(m)\}
\end{align*}
for $a\in A$ and $m\in M$. We denote by $Hom_{\mathcal{U}_{Pois(A)}}(M,N)$ or $PHom_A(M,N)$ the $k$-module of morphisms of Poisson modules from $M$ to $N$.
\end{definition}

We will construct the Poisson enveloping algebra $\mathcal{U}_{Pois}(A)$ of a Poisson $k$-algebra $A$. This is a associative $k$-algebra which is characterized by the following property: ``The category of left $\mathcal{U}_{Pois(A)}$-modules is equivalent to the category of Poisson modules over $A$" (\cite{Fre06}).
\begin{definition}
The Poisson enveloping algebra $\mathcal{U}_{Pois(A)}$ is the associated $A$-algebra with unit generated by the symbols $X_a,a\in A$ by the quotient of the following relations
\begin{enumerate}
\item $X_a\cdot b=\{a,b\}+b\cdot X_a$
\item $X_{ab}=a\cdot X_b+b\cdot X_a$
\item $X_a\cdot X_b=X_{\{a,b\}}+X_b\cdot X_a$
\end{enumerate}
for $a,b\in A$
\end{definition}

Let $M$ be a Poisson module over $A$ (so $M$ is a left $\mathcal{U}_{Pois(A)}$-module), then $M$ is also a right $\mathcal{U}_{Pois(A)}$-module defined by
\begin{align*}
m\cdot a:=a\cdot m,\,\,\,\,\,\,\,\,\, m\cdot X_a=-\{a,m\}
\end{align*}

So given a Poisson module $M$ over $A$, by abuse of notation, we define $\{m,a\}:=m\cdot X_a$. Then in practice, we treat the bracket $\{-,-\}$ on $M$ like a bracket of a Poisson algebra.

\begin{definition}
Let $S\to A$ be a homomorphism of Poisson algebras. A Poisson $S$-derivation is a map $d:A\to M$ from a Poisson $k$-algebra $A$ to a Poisson $A$-module $M$ which is a $S$-linear derivation with respect to multiplication and with respect to the bracket $\{-,-\}$ of $A$. Explicitly, a Poisson $S$-derivation $d$ satisfies the following identites
\begin{align*}
d(sf)&=sd(f)\\
d(fg)&=df\cdot g+f\cdot dg\\
d\{f,g\}&=\{df,g\}+\{f,dg\}=-\{g,df\}+\{f,dg\}
\end{align*}
for $s\in S$ and $f,g\in A$.
We denote $PDer_S(A,M)$ the $A$-module of Poisson $S$-derivations from $A\to M$. The functor $PDer_S(A,-)$ is representable : there exists a Poisson representation denoted by $\Omega_{Pois(A)/S}^1$ such that
\begin{align*}
PDer_S(A,M)=Hom_{\mathcal{U}_{Pois(A)}}(\Omega_{\mathcal{U}_{Pois(A)/S}}^1,M)
\end{align*}
To construct $\Omega_{\mathcal{U}_{Pois(A)/S}}^1$, let $F$ be the free left  $\mathcal{U}_{Pois}(A)$-module generated by the symbols $df,f\in A$. Let $E$ be the $\mathcal{U}_{Pois(A)}$-submodule of $F$ generated by the elements of the form $ds,s\in S$, $d(f+g)-df-dg, d(fg)=df\cdot g+f\cdot dg$ and $d\{f,g\}=-X_g\cdot df +X_f\cdot dg$. $\Omega_{\mathcal{U}_{Pois(A)/S}}^1$ is defined to be $E/F$. We have a natural map $d:A\to \Omega_{\mathcal{U}_{Pois(A)/S}}^1$. Via this map, the functor $PDer_S(A,-)$ is represented by $\Omega_{Pois(A)/S}^1$.
\end{definition}

Let $\rho:B\to C$ be a Poisson homomorphism compatible with $A$. In other words, $A\to B\to C$ is a Poisson homomorphism.  Then there exist canonical homomorphism of $\mathcal{U}_{Pois(C)}$-modules
\begin{align*}
\alpha&:\mathcal{U}_{Pois(C)}\otimes_{\mathcal{U}_{Pois(B)}}\Omega_{Pois(B)/A}^1 \to \Omega_{Pois(C)/A}^1
\end{align*}

\begin{proposition}
Let $A\to B$ be a homomorphism of  Poisson $k$-algebras. Let $I$ be a Poisson ideal of $B$.

If $C=B/I$, we have an exact sequence of $\mathcal{U}_{Pois(C)}$-modules
\begin{equation}\label{3equ}
I/(I^2\oplus \{I,I\})\xrightarrow{\delta} \mathcal{U}_{Pois(C)}\otimes_{\mathcal{U}_{Pois(B)}} \Omega_{Pois(B)/A}^1\to \Omega_{{Pois(C)}/A}^1\to 0
\end{equation}
where for any $i\in I$, let $\bar{i}$ be the image of $i$ in $I/(I^2\oplus \{I,I\})$. We define $\delta(\bar{i})=1\otimes di$.
\end{proposition}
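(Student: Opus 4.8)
The plan is to imitate the classical proof of the conormal/cotangent exact sequence (as in Schlessinger--Lichtenbaum \cite{Sch67} and the standard construction of $\Omega^1$), but carried out uniformly in the Poisson enveloping algebra $\mathcal{U}_{Pois(-)}$ so that all three maps are morphisms of $\mathcal{U}_{Pois(C)}$-modules. First I would check that $\delta$ is well defined. The assignment $i\mapsto 1\otimes di$ is $B$-linear modulo $I$ on nose: if $i\in I^2$, say $i=\sum i_j i_j'$ with $i_j,i_j'\in I$, then $1\otimes d(i_ji_j')=1\otimes (i_j\,di_j'+i_j'\,di_j)= \bar i_j\otimes di_j' + \bar i_j'\otimes di_j = 0$ in $\mathcal{U}_{Pois(C)}\otimes_{\mathcal{U}_{Pois(B)}}\Omega^1_{Pois(B)/A}$ because $\bar i_j=\bar i_j'=0$ in $C$; and if $i=\{i_j,i_j'\}\in\{I,I\}$, then $1\otimes d\{i_j,i_j'\}= 1\otimes(-X_{i_j'}\cdot di_j + X_{i_j}\cdot di_j') = -X_{\bar i_j'}\otimes di_j + X_{\bar i_j}\otimes di_j'=0$ in the same tensor product, since $X_{\bar i_j}=X_{\bar i_j'}=0$ in $\mathcal{U}_{Pois(C)}$ (here I use that $I$ being a Poisson ideal makes $C$ a Poisson algebra, so $\mathcal{U}_{Pois(C)}$ exists and $\mathcal{U}_{Pois(B)}\to\mathcal{U}_{Pois(C)}$ sends $X_i$ with $i\in I$ into the left ideal generated by $I$). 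So $\delta$ descends to $I/(I^2\oplus\{I,I\})$, and $\mathcal{U}_{Pois(C)}$-linearity is immediate from the formula since the module structure on the source is the one induced by $C=B/I$.

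Next I would treat surjectivity of $\mathcal{U}_{Pois(C)}\otimes_{\mathcal{U}_{Pois(B)}}\Omega^1_{Pois(B)/A}\to\Omega^1_{Pois(C)/A}$: this is the Poisson analogue of $\alpha$ discussed just before the proposition, and surjectivity holds because $\Omega^1_{Pois(C)/A}$ is generated as a $\mathcal{U}_{Pois(C)}$-module by the symbols $d\bar b$ for $b\in B$ (as $B\to C$ is surjective), and $d\bar b$ is the image of $1\otimes db$. Then comes the heart of the argument: exactness in the middle, i.e. $\ker(\alpha)=\mathrm{im}(\delta)$. The inclusion $\mathrm{im}(\delta)\subseteq\ker(\alpha)$ is clear since $\alpha(1\otimes di)=d\bar i=0$ for $i\in I$. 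For the reverse inclusion I would use the universal property: form $M:=\bigl(\mathcal{U}_{Pois(C)}\otimes_{\mathcal{U}_{Pois(B)}}\Omega^1_{Pois(B)/A}\bigr)/\mathrm{im}(\delta)$, which is a $\mathcal{U}_{Pois(C)}$-module, and construct an $A$-Poisson derivation $D:C\to M$ by $D(\bar b)=$ class of $1\otimes db$. One must check $D$ is well defined (independence of the lift $b$ of $\bar b$ is exactly killing $I$, i.e. dividing by $\mathrm{im}(\delta)$) and that it is a Poisson $A$-derivation (Leibniz for the product and for the bracket, each reducing to the corresponding identity in $\Omega^1_{Pois(B)/A}$ together with $S=A$-linearity — note the paper writes the base as $A$ here though elsewhere $S$). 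By the representing property of $\Omega^1_{Pois(C)/A}$, $D$ factors through a $\mathcal{U}_{Pois(C)}$-linear map $\Omega^1_{Pois(C)/A}\to M$, and a diagram chase shows this is a two-sided inverse to the map $M\to\Omega^1_{Pois(C)/A}$ induced by $\alpha$; hence $\alpha$ induces an isomorphism $M\xrightarrow{\sim}\Omega^1_{Pois(C)/A}$, which is precisely $\ker\alpha=\mathrm{im}\,\delta$.

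The main obstacle I expect is purely bookkeeping of the $\mathcal{U}_{Pois}$-module structures: one has to be careful that the left/right module conventions (the paper defines $m\cdot a=a\cdot m$ and $m\cdot X_a=-\{a,m\}$) are applied consistently when verifying that $\delta$ is $\mathcal{U}_{Pois(C)}$-linear and that $D$ is a bona fide Poisson derivation, and that the base change map $\mathcal{U}_{Pois(B)}\to\mathcal{U}_{Pois(C)}$ behaves well on the generators $X_b$. The only genuinely Poisson-specific new input beyond the classical proof is that the conormal module must be quotiented not just by $I^2$ but also by $\{I,I\}$ to kill the bracket-Leibniz obstruction; once that is in place, every step is the $\mathcal{U}_{Pois}$-linear shadow of the usual argument. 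I would also remark (as in \cite{Sch67}) that when $B$ is a polynomial algebra over $A$ — or more generally Poisson-smooth — the sequence extends to a short exact sequence on the left, but that refinement is not needed for the statement as given.
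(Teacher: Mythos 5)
Your proposal is correct, but it follows a somewhat different route than the paper. Where you check well-definedness of $\delta$ on $I/(I^2\oplus\{I,I\})$ by direct computation with the relations $d(fg)=f\cdot dg+g\cdot df$ and $d\{f,g\}=-X_g\cdot df+X_f\cdot dg$, the paper instead proves as a preliminary lemma that $I/(I^2\oplus\{I,I\})\cong \mathcal{U}_{Pois(C)}\otimes_{\mathcal{U}_{Pois(B)}} I$ as $\mathcal{U}_{Pois(C)}$-modules (after first verifying the Poisson $C$-module structure on the conormal quotient), so that $\delta$ becomes the obviously well-defined map $\bar b\otimes i\mapsto \bar b\otimes di$. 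For exactness in the middle, you test against the single module $M=\operatorname{coker}\delta$: you build a Poisson $A$-derivation $D\colon C\to M$, factor it through $\Omega^1_{Pois(C)/A}$ by the universal property, and exhibit a two-sided inverse to the map induced by $\alpha$. The paper instead dualizes into an arbitrary left $\mathcal{U}_{Pois(C)}$-module $N$: using the representing property and the adjunction coming from the conormal identification, exactness of the displayed sequence is equivalent to exactness of $0\to PDer_A(C,N)\to PDer_A(B,N)\to Hom_{\mathcal{U}_{Pois(B)}}(I,N)$, which reduces to the observation that a Poisson derivation of $B$ vanishing on $I$ factors through $C$. The two arguments are close cousins (both rest on the universal property of $\Omega^1_{Pois}$), but yours avoids the conormal-identification lemma and the adjunction at the cost of the explicit well-definedness computation and the explicit inverse construction, while the paper's version packages those computations into the lemma and gets the middle exactness almost for free from the derivation-factoring observation. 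Both are complete; I see no gap in your argument, and your aside about the left/right $\mathcal{U}_{Pois}$-module conventions and about $X_{\bar\imath}=0$ in $\mathcal{U}_{Pois(C)}$ for $i\in I$ is exactly the bookkeeping one must keep straight.
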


\begin{proof}

We show that $I/(I^2\oplus \{I,I\})$ is a Poisson $B/I$-module, defined by $\{\bar{b},\bar{i}\}=\overline{\{b,i\}}$. Let $b_1,b_2\in B$ with $b_1-b_2\in I$ and  $i_1,i_2\in I$ with $i_1-i_2\in I^2\oplus \{I,I\}$. Then $\{b_1,i_1\}-\{b_2,i_2\}=\{b_1,i_1-i_2\}+\{b_1-b_2,i_2\}\in I^2\oplus\{I,I\}$. Hence the bracket is well-defined. We claim that 
\begin{align*}
I/(I^2\oplus \{I,I\}) \cong\mathcal{U}_{Pois(C)}\otimes_{\mathcal{U}_{Pois(B)}} I
\end{align*}
as $\mathcal{U}_{Pois(C)}$-modules. We define a map $\varphi: I/I^2\oplus \{I,I\}\to \mathcal{U}_{Pois(C)}\otimes \mathcal{U}_{Pois(B)} I$ where $\varphi(\bar{i})=1\otimes i$. First we show that this map is well-defined. Let $i,j\in I$ with $i-j\in I^2\oplus \{I,I\}$. Then $i-j=\sum_n f_n\cdot g_n$, where $f_n\in I$ or $f_n$ is of the from $b\cdot X_a$ where $a\in I, b\in \mathcal{U}_{Pois(B)}$ ,and $g_n\in I$ as $\mathcal{U}_{Pois(B)}$-module action. Since $\bar{f_n}=0$ for $f_n\in I$ and $X_{\bar{a}}=0$ for $a\in I$, we have $1\otimes (i-j)=0$. Hence $\varphi$ is well-defined. Second, we show that $\varphi$ is a $\mathcal{U}_{Pois(C)}$-module homomorphism. Let $c\in \mathcal{U}_{Pois(C)}$. Then there exists  $b\in \mathcal{U}_{Pois(B)}$ with $\bar{b}=c$. Then $\varphi(c\cdot \bar{i})=\varphi(\bar{b}\cdot \bar{i})=\varphi(\overline{b\cdot i})=1\otimes b\cdot i=\bar{b}\otimes i=\bar{b}(1\otimes i)=\bar{b}\varphi(\bar{i})=c\varphi(\bar{i})$. Lastly we can define an inverse map $\varphi^{-1}:\mathcal{U}_{Pois(C)}\otimes_{\mathcal{U}_{Pois(B)}} I\to I/(I^2\oplus \{I,I\})$ by $\varphi^{-1}(c\otimes i)=c\cdot \bar{i}$. 

Now we prove the proposition. The sequence in (\ref{3equ})  is equivalent to 
\begin{align*}
\mathcal{U}_{Pois(C)}\otimes_{\mathcal{U}_{Pois(B)}} I\xrightarrow{\delta} \mathcal{U}_{Pois(C)}\otimes_{\mathcal{U}_{Pois(B)}} \Omega_{Pois(B)/A}^1\xrightarrow{\alpha} \Omega_{{Pois(C)}/A}^1\to 0
\end{align*}
where $\delta(\bar{b}\otimes i):=\bar{b}\otimes di$. Since $B\to B/I$ is surjective, $\alpha$ is surjective. Exactness of the sequence in (\ref{3equ}) is equivalent to the exactness of the following sequence
\begin{align*}
0\to Hom_{\mathcal{U}_{Pois(C)}}(\Omega_{Pois(C)/A}^1,N)\to Hom_{\mathcal{U}_{Pois(C)}}(\mathcal{U}_{Pois(C)}\otimes_{\mathcal{U}_{Pois(B)}} \Omega_{Pois(B)/A}^1,N)\\
\to Hom_{\mathcal{U}_{Pois(C)}}(I/(I^2\oplus \{I,I\}),N)
\end{align*}
for any left $\mathcal{U}_{Pois(C)}$-module $N$. Equivalently we have to show that the following sequence is exact.
\begin{align*}
0\to PDer_A(C,N)\to PDer_A(B,N) \xrightarrow{\phi} Hom_{\mathcal{U}_{Pois(B)}}(I,N)
\end{align*}
where for $D\in PDer_A(B,N)$, $\phi(D)$ is defined by restricting $D$ to $I$. Now we assume that $\phi(D)$ is zero, then $D(I)=0$ so $D$ factor through $B/I$, hence $D\in  PDer_A(C,N)$. This proves the proposition.

\end{proof}

\section{Poisson cotangent complex}

\begin{definition}[See $\cite{Umi12}$]
We construct a free Poisson algebra generated by $\{x_i\}$ over $k$. Let $g$ be a free Lie algebra generated by $\{x_i\}$ with the Lie bracket $[-,-]$. Free Poisson algebra generated by $\{x_i\}$ with the Poisson bracket $\{-,-\}$ is the polynomial algebra $k[g]$ with the Poisson bracket defined by $\{x_i,x_j\}:=[x_i,x_j]$. We will denote the free Poisson algebra by $k\{x_i\}$.
\end{definition}

\begin{definition}
Let $A$ be a Poisson algebra with the bracket $\{-,-\}_A$ over $k$. Let's consider the free Poisson algebra $P$ generated by $A$ and $\{x_i\}$. We denote by $A\{x_i\}$ the quotient of $P$ by  Poisson ideals generated by $\{a,b\}-\{a,b\}_A$ and $a\cdot b= a*b$ for $a,b\in A$ where $\cdot$ is the multiplication in $P$ and $*$ is the multiplication in $A$. We call $A\{x_i\}$ a free Poisson algebra over $A$ generated by $\{x_i\}$.  We also note that $\Omega_{\mathcal{U}_{Pois(A\{x_i\})/A}}^1$ is a free $\mathcal{U}_{Pois(A\{x_i\})}$-module generated by $dx_i$.
\end{definition}

\begin{remark}
We have the following universal mapping property: let $X=\{x_i\}$ and $A\to B$ be a Poisson homomorphism. let $j:X\to B$ be a map. Then there exists an unique Poisson homomorphism $A\{x_i\}\to B$ such that the following diagram commutes
\begin{center}
\[\begindc{\commdiag}[70]
\obj(0,1)[ee]{$A$}
\obj(1,1)[aa]{$A\{x_i\}$}
\obj(2,1)[bb]{$B$}
\obj(2,0)[cc]{$X$}
\mor{ee}{aa}{}
\mor{aa}{bb}{$u$}
\mor{cc}{aa}{}
\mor{cc}{bb}{$j$}
\enddc\]
\end{center}
\end{remark}

\begin{definition}
Let $A\to B$ be a Poisson homomorphism. By a Poisson extension of $B$ over $A$ we mean an exact sequence
\begin{align*}
(\mathscr{E}):0\to E_2\xrightarrow{e_2} E_1\xrightarrow{e_1}R\xrightarrow{e_0}B\to 0
\end{align*}
where $R$ is a Poisson algebra and $e_0:R\to B$ is a Poisson homomorphism such that $A\to B$ factor through $e_0:R\to B$, $E_1$ and $E_2$ are $\mathcal{U}_{Pois(R)}$-modules, $e_1$ and $e_2$ are $\mathcal{U}_{Pois(R)}$-module homomorphisms, and we have the following relation 
\begin{align*}
e_1(x)y=e_1(y)x,\,\,\,\,\,\,\,\,\,\ X_{e_1(x)}y+X_{e_1(y)}x=0
\end{align*}
for $x,y\in E_1$. We claim that $E_2$ is a Poisson $B$-module. Indeed, let $I$ be a kernel of $e_0$, which is a Poisson ideal of $R$. Now we give a Poisson $R/I$-module structure on $E_2$. First we note that $E_2$ is a $\mathcal{U}_{Pois(R)}$-module, and so Poisson $R$-module. Let $a\in B$ and $x\in E_2$. We define a Poisson $B$-module structure on $E_2$ by setting $b\cdot x:=r\cdot x$ and $\{b,x\}=:\{r,x\}$ where $r$ is a lifting of $b$ under $e_0:R\to R/I\cong B$. This is well-defined since given two lifting $r_1$ and $r_2$ of $b$ $(i.e \,\,\,\,r_1-r_2\in I)$, choose $y\in E_1$ with $e_1(y)=r_1-r_2$. Then  $e_2(r_1x-r_2x)=(r_1-r_2)e_2(x)=e_1(y)e_2(x)=e_1(e_2(x))y=0$ and so we have $r_1x=r_2x$. On the other hand, $e_2(\{r_1,x\}-\{r_2,x\})=X_{r_1-r_2}e_2(x)=X_{e_1(y)} e_2(x)=-X_{e_1(e_2(yx))}y=0$. So we have $\{r_1,x\}=\{r_2,x\}$. Then other property of a Poisson module trivially follows. 

Let $A\to A'\to B'$ be a homomorphism of Poisson algebras, and $\mathscr{E}'$ an Poisson extension of $B'$ over $A'$. By a homomorphism $\alpha:\mathscr{E}\to \mathscr{E}'$ of Poisson extensions we mean a collection $(b,\alpha_0,\alpha_1,\alpha_2)$ with the following commutative diagram
\begin{center}
$\begin{CD}
0@>>> E_2'@>e_2'>> E_1'@>e_1'>> R'@>e_0'>> B'@>>> 0\\
@. @AA\alpha_2A @AA\alpha_1A @AA\alpha_0A @AAbA @.\\
0@>>> E_2@>e_2>> E_1@>e_1>> R @>e_0>> B @>>>0
\end{CD}$
\end{center}
where $b$, $\alpha_0$ is a Poisson homomorphism such that the following diagram commutes as Poisson homomorphisms
\begin{center}
$\begin{CD}
A'@>>> R'@>e_0'>> B'\\
@AAA @AA\alpha_0A@AAbA\\
A@>>> R@>>e_0> B
\end{CD}$
\end{center}
and $\alpha_1$ and $\alpha_2$ are homomorphisms of $\mathcal{U}_{Pois(R)}$-modules. We consider $E_1'$ and $E_2'$ are $\mathcal{U}_{Pois(R)}$-modules via $\alpha_0$.
\end{definition}

\begin{definition}
Let $\mathscr{E}$ be an Poisson extension of $B$ over $A$, we define a complex $PL^\bullet(\mathscr{E})$ of $\mathcal{U}_{Pois(B)}$-modules or equivalently Poisson $B$-modules:
\begin{align*}
PL^\bullet(\mathscr{E}): 0\to E_2\xrightarrow{d_2}\mathcal{U}_{Pois(B)}\otimes_{\mathcal{U}_{Pois(R)}} E_1\xrightarrow{d_1} \mathcal{U}_{Pois(B)}\otimes_{\mathcal{U}_{Pois(R)}} \Omega_{\mathcal{U}_{Pois(R)/A}}^1\to 0
\end{align*}
where $d_2$ is induced from $e_2$, and $d_1$ is defined in the following way: let $I=Ker(e_0) ($a Poisson ideal of $R$ defining $B)$. We also note that we have the canonical map $d:R\to \Omega_{\mathcal{U}_{Pois(R)}/A}^1$, and by restricting $d$ on $I$, we have $d:I\to \Omega_{\mathcal{U}_{Pois(R)/A}}^1$ and then by tensoring $\mathcal{U}_{Pois(B)}$, we have $d:I/I^2\oplus \{I,I\}\cong \mathcal{U}_{Pois(B)}\otimes_{\mathcal{U}_{Pois(R)}} I\to \mathcal{U}_{Pois(B)}\otimes_{\mathcal{U}_{Pois(R)}} \Omega_{\mathcal{U}_{Pois(R)/A}}^1$. We define $d_2=d\circ(\mathcal{U}_{Pois(B)}\otimes_{\mathcal{U}_{Pois(R)}} e_1)$. This is well-defined since $im(E_1)=I$. $PL^\bullet(\mathscr{E})$ is a complex since $e_1\circ e_2=0$
\end{definition}

\begin{remark}
Any Poisson extension of $B$ over $A$ are all obtained in the following way. Choose a surjection $e:R\to B$ with $A\to R\to B$ Poisson homomorphisms. Let $I=Ker\,e_0$ be a Poisson ideal of $R$. Then choose an exact sequence of $\mathcal{U}_{Pois(R)}$-modules $0\to U\xrightarrow{i} F\xrightarrow{j} I\to 0$. Let $U_0$ be $\mathcal{U}_{Pois(R)}$-submodule of $F$ generated by $j(x)y-j(y)x$ and $X_{j(x)}y+X_{j(x)}y$, where $x,y\in F$. Then $j(U_0)=0$ since $j(x)j(y)-j(y)j(x)=0, X_{j(x)}j(y)+X_{j(y)}j(x)=\{j(x),j(y)\}+\{j(y),j(x)\}=0$. Hence $U_0$ is also a submodule of $U$. We take $e_2:U/U_0\to F/U_0$ and $e_1:F/U_0\to R$ which is well-defined since $j(U_0)=0$. Then $0\to U/U_0\to F/U_0\to R\to B\to 0$ is a Poisson extension of $B$ over $A$. Conversely, given a Poisson extension $(\mathscr{E}):0\to E_2\xrightarrow{e_2} E_1\xrightarrow{e_1}R\xrightarrow{e_0}B\to 0$, we have $U_0=0$.
\end{remark}

\begin{definition}[Free Poisson extension]
A Poisson extension of $B$ over $A$ is of the from $0\to U/U_0\to F/U_0\to R\to B\to 0$ where $R$ is a free Poisson algebra over $A$ and $F$ is a free $\mathcal{U}_{Pois(R)}$-module is called a free Poisson extension of $B$ over $A$.
\end{definition}

\begin{remark}
For a free Poisson extension of $B$ over $A$, $\mathscr{E}:0\to U/U_0\to F/U_0\xrightarrow{j} R=A\{x_i\}\xrightarrow{e_0} B\to 0$, we have $\mathcal{U}_{Pois(B)}\otimes_{\mathcal{U}_{Pois(R)}} (F/U_0) \cong \mathcal{U}_{Pois(B)}\otimes_{\mathcal{U}_{Pois(R)}}F$. Hence $PL^1(\mathscr{E})$ is free. Indeed, let's consider the natural map $\alpha:\mathcal{U}_{Pois(B)}\otimes_{\mathcal{U}_{Pois(R)}} F\to \mathcal{U}_{Pois(B)}\otimes_{\mathcal{U}_{Pois(R)}} F/U_0$, defined by $\sum b_i\otimes f_i\mapsto \sum b_i\otimes \bar{f}_i$, which is surjective. Let $\sum b_i\otimes \bar{f}_i=0$. Since $\mathcal{U}(e_0):\mathcal{U}_{Pois(R)}\to \mathcal{U}_{Pois(B)}$ is surjective, we have $\sum b_i\otimes \bar{f}=\sum 1\otimes a_i\bar{f}=0$, where $\mathcal{U}(e_0)(a_i)=b_i$. Hence $\sum a_i\bar{f}_i=0$. So $\sum a_if_i\in U_0$. Hence $\sum_i a_if_i= \sum a_j' g_j$, where $g_j$ is of the form $j(x)y-j(y)x$ or $X_{j(x)}y+X_{j(x)}x$ where $x,y \in F$. Let $t\in \mathcal{U}_{Pois(R)}$. Since $1\otimes t(j(x)y-j(y)x)=\mathcal{U}(e_0)(t)e_0(j(x))\otimes y -\mathcal{U}(e_0)(t)e_0(j(y))\otimes x=0$ and $1 \otimes t(X_{j(x)}y+X_{j(y)}x)= \mathcal{U}(e_0)(t)X_{e_0(j(x))}\otimes y+\mathcal{U}(e_0)(t)X_{e_0(j(y))} \otimes x=0$, we have $\sum 1\otimes a_if_i=\sum 1\otimes a_j'g_j=0$. Hence $\alpha$ is an isomorphism.
\end{remark}

Consider the following commutative diagram of Poisson homomoprhisms,
\begin{center}
$\begin{CD}
B@>b>> B'\\
@AAA @AAA\\
A@>a>> A'
\end{CD}$
\end{center}
Let $\mathscr{E}$ be a free Poisson extension of $B$ over $A$ and $\mathscr{E}'$ arbitrary Poisson extension of $B'$ over $A'$. Then there exists a homomorphism $\alpha: \mathscr{E}\to \mathscr{E}'$ extending $b$.
\begin{center}
$\begin{CD}
0@>>> E_2'@>e_2'>> E_1'@>e_1'>> R'@>e_0'>> B'@>>> 0\\
@. @AA\alpha_2A @AA\alpha_1A @AA\alpha_0A @AAbA @.\\
0@>>> U/U_0@>e_2>> F/U_0=(\oplus_k \mathcal{U}_{Pois(R)})/U_0@>e_1=j>> R=A\{x_i\} @>e_0>> B @>>>0
\end{CD}$
\end{center}
where $\alpha_0,\alpha_1$ and $\alpha_2$ are defined in the following way: for $\alpha_0$, we send $x_i$ to an arbitrary lifting of $b(e_0(x_i))$. Let $\{v_k\}$ be the canonical basis of $F$ (i.e $v_k$ has 1 in the $k$-th component, and $0$ in other components). For $\alpha_1$, first we define a map $\alpha_1':F\to E_1'$ by sending $v_k$ to an arbitrary lifting $w_k'$ of $\alpha_0(e_1(v_k))$. So we have $e_1'(w_k')=\alpha_0(e_1(v_k))=\alpha_0(j(v_k)).$ We show that $\alpha_2'(U_0)=0$, and so $\alpha_1$ is well-defined. Indeed, we claim that $\alpha_1'(j(x)y-j(y)x)=j(x)\alpha_1'(y)-j(y)\alpha_1'(x)=\alpha_0(j(x))\alpha_1'(y)-\alpha_0((j(y))\alpha_1'(x)= 0$. Let $x=\sum_i a_iv_i$ and $y=\sum_kb_k v_k$. Then $j(x)y-j(y)x=\sum_{i,k}a_ij(v_i)b_kv_k-b_kj(v_k)a_iv_i$. Hence $\alpha_1'(j(x)y-j(y)x)=\sum_{i,k}\alpha_0(a_i)\alpha_0(j(v_i))\alpha_0(b_k)\alpha_1'(v_k)-\alpha_0(b_k)\alpha_0(j(v_k))\alpha_0(a_i)\alpha_1'(v_i)$. It is sufficient to show that 
\begin{align*}
\alpha_0(a_i)\alpha_0(j(v_i))\alpha_0(b_k)\alpha_1'(v_k)-\alpha_0(b_k)\alpha_0(j(v_k))\alpha_0(a_i)\alpha_1'(v_i)=0.
\end{align*}
 Since $e_1'(\alpha_0(b_k)\alpha_1'(v_k))=\alpha_0(b_k)\alpha_0(j(v_k))$ and $e_1'(\alpha_0(a_i)\alpha_1'(v_i))=\alpha_0(a_i)\alpha_0(j(v_i))$, we get the claim. On the other hand, we claim that $\alpha_1'(X_{j(x)}y+X_{j(y)}x)=0$. Indeed, $X_{j(x)}y+X_{j(y)}x=\sum_{i,k} X_{a_ij(v_i)}b_kv_k+X_{b_kj(v_k)}a_iv_i$. So we have $\alpha_1'(X_{j(x)}y+X_{j(y)}x)=\sum_{i,k} X_{\alpha_0(a_i)\alpha_0(j(v_i))}\alpha_0(b_k)\alpha_1'(v_k)+X_{\alpha_0(b_k)\alpha_0(j(v_k))}\alpha_0(a_i)\alpha_1'(v_i)$. So it is sufficient to show that 
 \begin{align*}
 X_{\alpha_0(a_i)\alpha_0(j(v_i))}\alpha_0(b_k)\alpha_1'(v_k)+X_{\alpha_0(b_k)\alpha_0(j(v_k))}\alpha_0(a_i)\alpha_1'(v_i)=0.
 \end{align*}
  Since $e_1'(\alpha_0(b_k)\alpha_1'(v_k))=\alpha_0(b_k)\alpha_0(j(v_k))$ and $e_1'(\alpha_0(a_i)\alpha_1'(v_i))=\alpha_0(a_i)\alpha_0(j(v_i))$, we get the claim. For $\alpha_2$, we simply see that $U/U_0$ is sent to $E_2'$ via $\alpha_1$.

Next we claim that we have a homomorphism $\mathcal{U}_{Pois(B')}\otimes_{\mathcal{U}_{Pois(B)}}PL^\bullet(\mathscr{E})\to PL^\bullet(\mathscr{E}')$.
\begin{center}
\tiny{$\begin{CD}
0@>>>E_2'@>1\otimes d_2>>\mathcal{U}_{Pois(B')}\otimes_{\mathcal{U}_{Pois(R')}} E_1'@>1\otimes d_1>>\mathcal{U}_{Pois(B')}\otimes_{\mathcal{U}_{Pois(R')}} \Omega_{\mathcal{U}_{Pois(R')/A'}}^1@>>> 0\\
@. @AA\alpha_2'A @AA\alpha_1'A @AA\alpha_0'A \\
0@>>>\mathcal{U}_{Pois(B')}\otimes_{\mathcal{U}_{Pois(B)}} U/U_0@>1\otimes d_2>>\mathcal{U}_{Pois(B')}\otimes_{\mathcal{U}_{Pois(R)}} F/U_0@>1\otimes d_1>>\mathcal{U}_{Pois(B')}\otimes_{\mathcal{U}_{Pois(R)}} \Omega_{\mathcal{U}_{Pois(R)/A}}^1@>>> 0\end{CD}$}
\end{center}

 where $\alpha_2'$ and $\alpha_1'$ are canonical induced from $\alpha_2,\alpha_1$. For $\alpha_0'$, we note that we have a canonical commutative diagram
\begin{center}
$\begin{CD}
\Omega_{\mathcal{U}_{Pois(R)/A}}^1@>>> \Omega_{\mathcal{U}_{Pois(R')/A'}}^1\\
@AdAA @AAd'A\\
R@>\alpha_0>> R'
\end{CD}$
\end{center}
where $d$ and $d'$ are the canonical map. Let $I'=Ker(e_0')$ and $I=Ker(e_0)$. Then $\alpha_0(I)\subset I'$. Hence we have
\begin{center}
$\begin{CD}
\Omega_{\mathcal{U}_{Pois(R)/A}}^1@>>> \Omega_{\mathcal{U}_{Pois(R')/A'}}^1\\
@AdAA @AAd'A\\
I@>\alpha_0>> I'\\
@AAA @AAA\\
F/U_0@>\alpha_1>> E_1'
\end{CD}$
\end{center}
By tensoring $\mathcal{U}_{Pois(B')}$, we get $\alpha_0'$.

\begin{definition}
a complex of the form $PL^\bullet(\mathscr{E})$, where $\mathscr{E}$ is a free Poisson extension of $B$ over $A$ is called a Poisson cotangent complex of $B$ over $A$.

\end{definition}

\begin{definition}
We say that a Poisson homomorphism $A\to R$ has property $(L)$ if the following condition holds: let $A\to S$ be a Poisson homomorphism and $u:M\to S$ a homomorphism of $\mathcal{U}_{Pois(S)}$-modules such that $u(x)y=u(y)x$ and $X_{u(x)}y+X_{u(y)}x=0$ for $x,y\in M$. Then for any Poisson homomorphism $f,g:R\to S$ such that the following commutative diagram holds and $Im(f-g)\subset Im(u)$, there exists a Poisson biderivation $\lambda:R\to M$ such that $u\circ \lambda =f-g$.

\begin{center}
\[\begindc{\commdiag}[5]
\obj(0,10)[aa]{$M$}
\obj(10,10)[bb]{$S$}
\obj(10,0)[cc]{$R$}
\mor{aa}{bb}{$u$}
\mor{cc}{aa}{$\lambda$}
\mor(9,0)(9,10){$f$}
\mor(11,0)(11,10){$g$}[\atright,\solidarrow]
\enddc\]
\end{center}
Here a Poisson biderivation means $\lambda(xy)=\lambda(x)g(y)+f(g)\lambda(y)$ and $\lambda(\{x,y\})=\{\lambda(x),g(y)\}+\{f(x),\lambda(y)\}$. Recall that $M$ is a $(\mathcal{U}_{Pois(S)}-\mathcal{U}_{Pois(S)})$-bimodule, where right-module structure is defined by $m\cdot s:=s\cdot m$ and $m\cdot X_s:=-X_s\cdot m$ for $s\in S$.

\end{definition}

\begin{proposition}
Consider the following commutative diagram of Poisson homomorphisms,
\begin{center}
$\begin{CD}
B@>b>> B'\\
@AAA @AAA\\
A@>a>>A'
\end{CD}$
\end{center}
Let $\mathscr{E}$ be an Poisson extension of $B$ over $A$ and $\mathscr{E}$ be a Poisson extension of $B'$ over $A'$. Let  $\alpha,\beta:\mathscr{E}\to\mathscr{E}'$ be a homomorphism extending $b$:

\begin{center}
\[\begindc{\commdiag}[5]
\obj(0,10)[aa]{$0$}
\obj(10,10)[bb]{$E_2'$}
\obj(20,10)[cc]{$E_1'$}
\obj(30,10)[dd]{$R'$}
\obj(40,10)[ee]{$B'$}
\obj(50,10)[ff]{$0$}
\obj(0,0)[gg]{$0$}
\obj(10,0)[hh]{$E_2$}
\obj(20,0)[ii]{$E_1$}
\obj(30,0)[jj]{$R$}
\obj(40,0)[kk]{$B$}
\obj(50,0)[ll]{$0$}
\mor{aa}{bb}{}
\mor{bb}{cc}{$e_2'$}
\mor{cc}{dd}{$e_1'$}
\mor{dd}{ee}{$e_0'$}
\mor{ee}{ff}{}
\mor{gg}{hh}{}
\mor{hh}{ii}{$e_2$}
\mor{ii}{jj}{$e_1$}
\mor{jj}{kk}{$e_0$}
\mor{kk}{ll}{}
\mor(9,0)(9,10){$\beta_2$}
\mor(11,0)(11,10){$\alpha_2$}[\atright,\solidarrow]
\mor(19,0)(19,10){$\beta_1$}
\mor(21,0)(21,10){$\alpha_1$}[\atright,\solidarrow]
\mor(29,0)(29,10){$\beta_0$}
\mor(31,0)(31,10){$\alpha_0$}[\atright,\solidarrow]
\mor(40,0)(40,10){$b$}
\enddc\]
\end{center}

If $R$ has property $(L)$, then $\bar{\alpha}$ and $\bar{\beta}$ are homotopic maps of  $\mathcal{U}_{Pois(B')} \otimes_{\mathcal{U}_{Pois(B)}} PL^\bullet(\mathscr{E})\to PL^{\bullet}(\mathscr{E}')$.
\end{proposition}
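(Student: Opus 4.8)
The plan is to mimic the classical argument of Schlessinger--Lichtenbaum (\cite{Sch67}) that constructs a homotopy between two morphisms of cotangent complexes, transporting every step into the Poisson setting using the language of the Poisson enveloping algebra $\mathcal{U}_{Pois}$ developed above. The point is that $\alpha$ and $\beta$ agree after passing to $B'$ (they both extend $b$), so their difference lands in the image of the relevant maps, and property $(L)$ of the free Poisson algebra $R$ is precisely what lets us produce the diagonal homotopy operators as Poisson biderivations. So first I would record that since $e_0'\circ\alpha_0=b\circ e_0=e_0'\circ\beta_0$, the difference $\alpha_0-\beta_0:R\to R'$ has image inside $I'=\ker(e_0')=\operatorname{im}(e_1')$; applying property $(L)$ to the pair $f=\alpha_0$, $g=\beta_0$, the module $M=E_1'$ viewed as a $\mathcal{U}_{Pois(R')}$-module (hence a $\mathcal{U}_{Pois(R)}$-module via $\beta_0$), and the map $u=e_1'$, which satisfies the required symmetry $e_1'(x)y=e_1'(y)x$ and $X_{e_1'(x)}y+X_{e_1'(y)}x=0$ by the definition of a Poisson extension, I obtain a Poisson biderivation $\lambda:R\to E_1'$ with $e_1'\circ\lambda=\alpha_0-\beta_0$.

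The second step is to use $\lambda$ to build the homotopy operators $s_0:PL^1=\mathcal{U}_{Pois(B')}\otimes_{\mathcal{U}_{Pois(R)}}\Omega^1_{\mathcal{U}_{Pois(R)}/A}\to PL^0$ is not what we want; rather $s^0:\mathcal{U}_{Pois(B')}\otimes_{\mathcal{U}_{Pois(R)}}\Omega^1_{\mathcal{U}_{Pois(R)}/A}\to \mathcal{U}_{Pois(B')}\otimes_{\mathcal{U}_{Pois(R')}}E_1'$ and $s^1:\mathcal{U}_{Pois(B')}\otimes_{\mathcal{U}_{Pois(R)}}E_1/U_0\to E_2'$. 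Since $\Omega^1_{\mathcal{U}_{Pois(R)}/A}$ represents Poisson $A$-derivations, the Poisson biderivation $\lambda$ (which is an $(\alpha_0,\beta_0)$-biderivation) corresponds, after the standard change-of-ring bookkeeping, to a $\mathcal{U}_{Pois(B')}$-linear map $s^0$ defined on generators by $s^0(1\otimes dr)=1\otimes\lambda(r)$; the Poisson-biderivation identities for $\lambda$ are exactly the compatibilities needed for $s^0$ to be well defined on $\Omega^1$. Then $s^1$ is obtained by restricting $\lambda$ (composed with $\alpha_1$ or $\beta_1$ as appropriate) to the submodule generated by the image of $E_1$, which is well defined because $U_0$ is killed, as in the remark preceding the proposition. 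The final step is the verification that $\bar\alpha-\bar\beta = d\circ s + s\circ d$ on each of the three terms of $\mathcal{U}_{Pois(B')}\otimes_{\mathcal{U}_{Pois(B)}}PL^\bullet(\mathscr{E})$; this is a direct diagram chase degree by degree, using $e_1'\circ\lambda=\alpha_0-\beta_0$ in degree $0$ and the corresponding identity $e_2'\circ(\alpha_2-\beta_2)=\alpha_1-\beta_1-\lambda\circ e_1$ in degree $1$, which follows because $\alpha_1-\beta_1-\lambda e_1$ maps into $\ker(e_1')=\operatorname{im}(e_2')$ and $E_2'\hookrightarrow E_1'$ is injective.

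The main obstacle I anticipate is the degree-$1$ compatibility: one must show that $\alpha_1-\beta_1-\lambda\circ e_1:E_1\to E_1'$ actually lands in $\operatorname{im}(e_2')$ and then lift it through the injection $e_2'$ to a map $E_1/U_0\to E_2'$ (call it the second homotopy operator) in a way that is $\mathcal{U}_{Pois}$-linear and compatible with the already-chosen $s^0$. Checking $e_1'\circ(\alpha_1-\beta_1-\lambda e_1)=0$ is the key computation: $e_1'\alpha_1=\alpha_0 e_1$, $e_1'\beta_1=\beta_0 e_1$, and $e_1'\lambda e_1=(\alpha_0-\beta_0)e_1$, so the three terms cancel; but one must be careful that all of these identities hold on the nose (not just up to $U_0$) and that the resulting lift is unique, which uses injectivity of $e_2'$ together with the fact that $E_2'$ has no $U_0'$-ambiguity for a free extension. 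A secondary subtlety is bookkeeping the change-of-rings isomorphisms $\mathcal{U}_{Pois(B')}\otimes_{\mathcal{U}_{Pois(R)}}(-)\cong\mathcal{U}_{Pois(B')}\otimes_{\mathcal{U}_{Pois(R')}}(-\otimes\cdots)$ so that all the maps $\bar\alpha$, $\bar\beta$, $s^0$, $s^1$ live in the same diagram; this is routine but must be done consistently. Once these two points are settled the homotopy identity is forced, and the proposition follows exactly as in the non-Poisson case.
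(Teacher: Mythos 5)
Your proposal follows essentially the same route as the paper: property $(L)$ produces the Poisson biderivation $\lambda:R\to E_1'$ with $e_1'\circ\lambda=\alpha_0-\beta_0$, then $\theta=(\alpha_1-\beta_1)-\lambda\circ e_1$ is shown to land in $\operatorname{im}(e_2')\cong E_2'$ via exactly the computation $e_1'\alpha_1=\alpha_0e_1$, $e_1'\beta_1=\beta_0e_1$, and these two maps (after base change to $\mathcal{U}_{Pois(B')}$) give the chain homotopy, which is the paper's argument. The only point you gloss over with ``standard change-of-ring bookkeeping'' is the one the paper works hardest on: the $\alpha_0$- and $\beta_0$-actions of $R$ coincide on $E_2'$ and on $\mathcal{U}_{Pois(B')}\otimes_{\mathcal{U}_{Pois(R')}}E_1'$ (since $e_0'\circ\alpha_0=e_0'\circ\beta_0$), which is what makes $\theta$ genuinely $\mathcal{U}_{Pois(R)}$-linear and lets the biderivation $\lambda$ factor through $\Omega^1_{\mathcal{U}_{Pois(R)}/A}$ after tensoring.
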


\begin{proof}
There exists a Poisson biderivation $\lambda:R\to E_1'$ such that $e_1'\circ \lambda=\beta_0-\alpha_0$. Let $\theta:E_1\to E_1'$ be the map $\theta=(\beta_1-\alpha_1)-\lambda\circ e_1$. We note that $e_1'\circ \theta=e_1'\circ (\beta_1-\alpha_1-\lambda\circ e_1)=(\beta_0-\alpha_0)\circ e_1-(\beta_0-\alpha_0)\circ e_1=0$. Thus $Im(\theta)$ is in the $\mathcal{U}_{Pois(B')}$-module $Im(e_2')\cong E_2'$. Then $(e_0'\circ \alpha_0(r)-e_0'\circ \beta_0(r))x=(e_0\circ b(r)-e_0\circ b(r))x=0 $ and $\{e_0'\circ\alpha_0(r)-e_0'\circ \beta_0(r),x\}=0$ for $x\in E_2'$. Hence on $E_2'$ (so $Im(\theta)$) as an $\mathcal{U}_{Pois(R)}$-module, the action of $R$ via $\alpha_0$ and $\beta_0$ coincides. We claim that $\theta$ is Poisson $R$-linear or equivalently $\mathcal{U}_{Pois(R)}$-linear. In other words, $\theta(rx)=\beta_0(r)\theta(x)=\alpha_0(r)\theta(x)$ and $\theta(\{r,x\})=\{\beta_0(r),\theta(x)\}=\{\alpha_0(r),\theta(x)\}$ for $r\in R$ and $x\in E_1$. Indeed,
\begin{align*}
\theta(rx)&=\beta_0(r)\beta_1(x)-\alpha_0(r)\alpha_1(x)-\lambda(re_1(x))\\
             &=\beta_0(r)\beta_1(x)-\alpha_0(r)\alpha_1(x)-\lambda(r)\alpha_0(e_1(x))-\beta_0(r)\lambda(e_1(x))\\
\beta_0(r)\theta(x)&=\beta_0(r)\beta_1(x)-\beta_0(r)\alpha_1(x)-\beta_0(r)\lambda(e_1(x)) \\
\theta(rx)-\beta_0(r)\theta(x)&= (\beta_0(r)-\alpha_0(r))\alpha_1(x) -\lambda(r)\alpha_0(e_1(x))=e_1'(\lambda(r))\alpha_1(x)-\lambda(r)e_1'(\alpha_1(x))=0         
\end{align*}
On the other hand,
\begin{align*}
\theta(\{r,x\})&=\{\beta_0(r),\beta_1(x)\}-\{\alpha_0(r),\alpha_1(x)\}-\lambda(\{r,e_1(x)\})\\
                   &=\{\beta_0(r),\beta_1(x)\}-\{\alpha_0(r),\alpha_1(x)\}-\{\lambda r,\alpha_0(e_1(x))\}+\{\beta_0(r),\lambda(e_1(x))\}\\
\{\beta_0(r), \theta(x)\}&=\{\beta_0(r),\beta_1(x)\}-\{\beta_0(r),\alpha_1(x)\} -\{\beta_0(r), \lambda(e_1(x))\}\\
\theta(\{r,x\})-\{\beta_0(r),\theta(x)\}&=\{\beta_0(r)-\alpha_0(r),\alpha_1(x)\}-\{\lambda r,\alpha_0(e_1(x))\}=X_{e_1'(\lambda r)}\alpha_1(x)+X_{e_1'(\alpha_1(x))}\lambda r =0               
\end{align*}
Note that the Poisson biderivation $\lambda:R\to E_1'$ induces a Poisson derivation 
\begin{align*}
1\otimes \lambda:R\to \mathcal{U}_{Pois(B')}\otimes_{\mathcal{U}_{Pois(R')}} E_1'
\end{align*}
since the action induced by $\alpha_0$ and $\beta_0$ coincides. Then by the universal mapping property of $\Omega_{Pois(R)/A}^1$, there is a $\bar{\lambda}:\mathcal{U}_{Poiss(B')}\otimes_{\mathcal{U}_{Pois(R)}}\Omega_{\mathcal{U}_{Pois(R)/A}}^1\to \mathcal{U}_{Pois(B')}\otimes_{\mathcal{U}_{Pois(R')}} E_1'$ such that $\bar{\lambda}(b\otimes dx)=b\otimes \lambda x$. On the other hand, the Poisson $R$-module map $\theta:E_1\to Im (e_2')\cong E_2'$ induces a $\bar{\theta}:\mathcal{U}_{Pois(B')}\otimes_{\mathcal{U}_{Pois(R)}} E_1\to E_2'$

\begin{center}
\tiny{
\[\begindc{\commdiag}[170]
\obj(0,1)[aa]{$E_2'$}
\obj(1,1)[bb]{$\mathcal{U}_{Pois(B')}\otimes_{\mathcal{U}_{Pois(R')}} E_1'$}
\obj(2,1)[cc]{$\mathcal{U}_{Pois(B')}\otimes_{\mathcal{U}_{Pois(R')}} \Omega_{\mathcal{U}_{Pois(R')/A}}^1$}
\obj(0,0)[dd]{$\mathcal{U}_{Pois(B')}\otimes_{\mathcal{U}_{Pois(R)}} E_2$}
\obj(1,0)[ee]{$\mathcal{U}_{Pois(B')}\otimes_{\mathcal{U}_{Pois(R)}} E_1$}
\obj(2,0)[ff]{$\mathcal{U}_{Pois(B')}\otimes_{\mathcal{U}_{Pois(R)}} \Omega_{\mathcal{U}_{Pois(R)/A}}^1$}
\mor{aa}{bb}{$e_2'$}
\mor{ff}{bb}{$\bar{\lambda}$}
\mor{ee}{aa}{$\bar{\theta}$}
\mor{dd}{aa}{$\bar{\beta}_2-\bar{\alpha}_2$}
\mor{ee}{bb}{$\bar{\beta}_1-\bar{\alpha}_1$}
\mor{ff}{cc}{$\bar{\beta}_0-\bar{\alpha}_0$}
\mor{dd}{ee}{$1\otimes e_2$}
\mor{bb}{cc}{$f':=1\otimes d'\circ e_1'$}
\mor{ee}{ff}{$f:=1\otimes d\circ e_1$}
\enddc\]}
\end{center}
where $d:R\to \mathcal{U}_{Pois(R)/A}$ and $d':R'\to \mathcal{U}_{Pois(R')/A}$ are the canonical maps. Now we claim that $\bar{\beta}_2-\bar{\alpha}_2=\bar{\theta}\circ(1\otimes e_2)$, $\bar{\beta}_1-\bar{\alpha}_1=e_2'\circ \bar{\theta}+\bar{\lambda}\circ f'$ and $\bar{\beta}_0-\bar{\alpha}_0=f'\circ \bar{\lambda}$. For $\bar{\beta}_2-\bar{\alpha}_2=\bar{\theta} \circ (1\otimes e_2)$, we note that for $x\in E_2$, $\theta(e_2(x))=(\beta_1-\alpha_1)(e_2(x))-\lambda(e_1(e_2(x)))=e_2'\circ (\beta_2-\alpha_2)(x)$. For $\bar{\beta}_0-\bar{\alpha}_0=f'\circ \bar{\lambda}$, we note that since $e_1'\circ \lambda=\beta_0-\alpha_0$, we have $f'\circ \bar{\lambda}(b\otimes dx)=f'(b\otimes \lambda x)=b\otimes (d'\circ e'(\lambda x))=b\otimes d'((\beta_0-\alpha_0)(x))=(\bar{\beta}_0-\bar{\alpha}_0)(b\otimes dx)$. For $\bar{\beta}_1-\bar{\alpha}_1=e_2'\circ \bar{\theta}+\bar{\lambda}\circ f$, we note that for $x\in E_1$, $\theta(x)=(\beta_1-\alpha_1)(x)- \lambda(e_1(x))$.
\end{proof}

\begin{lemma}
A free Poisson algebra $A\{x_i\}$ over a Poisson algebra $A$ generated by $\{x_i\}$ satisfies the property $(L)$.
\end{lemma}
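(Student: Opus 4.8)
The plan is to verify property $(L)$ directly from the universal mapping property of the free Poisson algebra $R = A\{x_i\}$ and from the fact that the module of relative differentials $\Omega^1_{\mathcal{U}_{Pois(R)}/A}$ is free over $\mathcal{U}_{Pois(R)}$ on the symbols $dx_i$. So suppose we are given a Poisson homomorphism $A \to S$, a homomorphism $u\colon M \to S$ of $\mathcal{U}_{Pois(S)}$-modules satisfying $u(x)y = u(y)x$ and $X_{u(x)}y + X_{u(y)}x = 0$ for $x,y \in M$, and two Poisson homomorphisms $f,g\colon R \to S$ over $A$ with $\operatorname{Im}(f - g) \subset \operatorname{Im}(u)$. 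I want to produce a Poisson biderivation $\lambda\colon R \to M$ with $u \circ \lambda = f - g$, where $M$ carries the $(\mathcal{U}_{Pois(S)}\text{--}\mathcal{U}_{Pois(S)})$-bimodule structure described before the lemma and the biderivation identities are taken relative to $f$ on the left and $g$ on the right.

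\textbf{Key steps.} First I would use $\operatorname{Im}(f - g) \subset \operatorname{Im}(u)$ to choose, for each generator $x_i$, an element $m_i \in M$ with $u(m_i) = f(x_i) - g(x_i)$; this is possible because $u$ is surjective onto a submodule containing all the differences $f(x_i)-g(x_i)$. Next, I would invoke the universal property of the free Poisson algebra: since $R = A\{x_i\}$ is free on $\{x_i\}$ over $A$, the assignment $x_i \mapsto m_i$ extends to a well-defined $A$-linear map $\lambda\colon R \to M$ that is simultaneously a derivation for the multiplication (with the $f$-left, $g$-right bimodule action, i.e.\ $\lambda(xy) = \lambda(x)g(y) + f(x)\lambda(y)$) and a derivation for the Poisson bracket ($\lambda(\{x,y\}) = \{\lambda(x),g(y)\} + \{f(x),\lambda(y)\}$). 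Concretely, one defines $\lambda$ on monomials in the $x_i$ and on iterated brackets by the Leibniz rules and checks consistency against the defining relations of the free Poisson algebra (associativity, Jacobi, and the compatibility $\{a,bc\} = \{a,b\}c + b\{a,c\}$), using exactly that $M$ is a Poisson bimodule. Finally I would verify $u \circ \lambda = f - g$: both sides are maps $R \to S$ agreeing on the generators $x_i$ and on elements of $A$ (where both give $0$), and both satisfy the same Leibniz-type identities — $u\circ\lambda$ because $u$ is a $\mathcal{U}_{Pois(S)}$-bimodule map, and $f - g$ because the difference of two Poisson homomorphisms obeys precisely these biderivation rules once one records that $u(m)s = su(m)$ and $X_{u(m)}\cdot(-) $ corresponds to the bracket — hence they coincide on all of $R$ by induction on word length.

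\textbf{Main obstacle.} The delicate point is the well-definedness of $\lambda$ in step two: the free Poisson algebra is a quotient of the free commutative algebra on a free Lie algebra, so one must confirm that the two Leibniz rules (for product and for bracket) are mutually compatible with the relation $\{a,bc\}=\{a,b\}c+b\{a,c\}$ and with graded Jacobi, and that no inconsistency arises when an element of $R$ is expressed in two different normal forms. This is where the Poisson-bimodule axioms on $M$ are used in full force, and it is essentially a bookkeeping argument paralleling the construction of $\Omega^1_{\mathcal{U}_{Pois(R)}/A}$ as the representing object of $PDer_A(R,-)$; indeed, an alternative and cleaner route is to observe that $f - g$, being constrained to land in $\operatorname{Im}(u)$ on generators and being a biderivation $R \to S$ "twisted" into $M$, factors through $\Omega^1_{\mathcal{U}_{Pois(R)}/A}$, which is free, so the lift exists by freeness. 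I expect the freeness of $\Omega^1_{\mathcal{U}_{Pois(R)}/A}$ on $\{dx_i\}$ — already noted in the excerpt — to be the structural fact that makes the whole verification routine rather than subtle.
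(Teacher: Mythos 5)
Your overall strategy coincides with the paper's: set $\lambda(x_i) = m_i$ with $u(m_i) = f(x_i)-g(x_i)$, set $\lambda|_A = 0$, and extend by the twisted Leibniz rules $\lambda(xy)=\lambda(x)g(y)+f(x)\lambda(y)$ and $\lambda(\{x,y\})=\{\lambda(x),g(y)\}+\{f(x),\lambda(y)\}$. But there is a genuine gap in how you justify well-definedness. The universal mapping property of $A\{x_i\}$ only produces Poisson homomorphisms out of the free algebra (or, via $\Omega^1_{\mathcal{U}_{Pois(R)}/A}$, genuine Poisson $A$-derivations into a Poisson $R$-module with a \emph{single} $R$-action); it does not produce an $(f,g)$-twisted biderivation. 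Likewise your ``cleaner route'' fails: $f-g$ is only a biderivation with left action through $f$ and right action through $g$, and since $f\neq g$ these two actions on $M$ (or $S$) need not coincide, so $f-g$ does not lie in $PDer_A(R,-)$ and does not factor through $\Omega^1_{\mathcal{U}_{Pois(R)}/A}$; the freeness of $\Omega^1$ on the $dx_i$ therefore buys you nothing here. (In the paper, it is only \emph{after} the lemma, when passing to $\mathcal{U}_{Pois(B')}\otimes E_1'$ where the two actions are forced to agree, that $\lambda$ becomes an honest Poisson derivation and $\Omega^1$ enters.)

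Moreover, the compatibility checks you call routine bookkeeping are not consequences of the Poisson-bimodule axioms on $M$ alone; they use the special hypotheses on $u$ in an essential way. Already antisymmetry needs it: consistency of $\lambda$ with $\{x,x\}=0$ forces $\{\lambda(x),g(x)\}+\{f(x),\lambda(x)\}=[f(x)-g(x),\lambda(x)]=0$, which follows precisely from $X_{u(m)}m'+X_{u(m')}m=0$ applied with $m=m'=\lambda(x)$, and would fail for a general Poisson bimodule. The same hypotheses $u(m)m'=u(m')m$ and $X_{u(m)}m'+X_{u(m')}m=0$ are what make $\lambda$ compatible with the Jacobi identity and with the relation $\{x,yz\}=y\{x,z\}+z\{x,y\}$; the paper's proof consists exactly of these (lengthy) verifications, first on the free Lie algebra generated by $A$ and the $x_i$, then on the free commutative algebra over it (citing Lichtenbaum--Schlessinger for the purely commutative part), and finally for the Poisson compatibility relation. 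So your proposal identifies the right construction but attributes its consistency to the wrong structural facts; to close the gap you must carry out the relation-by-relation verification using the two symmetry hypotheses on $u$, which is the actual content of the lemma.
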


\begin{proof}
Let $A\to S$ be a Poisson homomorphism of Poisson algebras and $u:M\to S$ be a homomorphism of $\mathcal{U}_{Pois(S)}$ such that $u(x)y=u(y)x$ and $X_{u(x)}y+X_{u(y)}x=0$, for all $x,y\in M$. Let $f,g:R\to S$ be Poisson homomorphism compatible with $A$ such that $Im(f-g)\subset Im(u)$. We would like to define a Poisson biderivation $d:R\to M$ such that $u\circ d=f-g$. Let $R=A\{x_i\}$. Since $Im(f-g)\subset Im(u)$, we define $d(x_i)$ in $M$ satisfying $u(d(x_i))=f(x_i)-g(x_i)$ and $d(a)=0$. In general we define $d$ in the following way: for example
\begin{align*}
d(x_1x_2[x_3, ax_4x_5]):=d(x_2)g(x_2)[g(x_3),g(a)g(x_4)g(x_5)]+f(x_1)d(x_2)[g(x_3),g(a)g(x_4)g(x_5)]\\
+f(x_1)f(x_2)[d(x_3),g(a)g(x_4)g(x_5)]+f(x_1)f(x_2)[f(x_3),d(a),g(x_4)g(x_5)]\\
+f(x_1)f(x_2)[f(x_3),f(a)d(x_2)g(x_5)]+f(x_1)f(x_2)[f(x_3),f(a)f(x_4)d(x_5)]
\end{align*}
where $[-,-]$ is the Poisson bracket on $A\{x_i\}$. We will show that this is well-defined and so by definition, $d$ is a Poisson biderivation. Let $L$ to be the free Lie algebra generated by $A$ and $x_i$. First we show that $d$ is well-defined on $L$. Simply we define $d$ on free algebra generated by $A$ and $x_i$ by the above relation, where the operation is $[-,-]$. We show that $d([x,x])=0$ and $d([x,[y,z]]+[y,[z,x]]+[z,[x,y]])=0$ where $x,y,z\in A\cup \{x_i\}$. Then $d$ is well-defined on $L$. Indeed, for $[x,x]$, we have to show $[dx,g(x)]+[f(x),dx]=0$. Since $X_{u(dx)}dx+X_{ud(x)}dx=0$, we have $[f(x)-g(x),dx]+[f(x)-g(x),dx]=0$. Hence we have $[f(x)-g(x),dx]=0$. So $[dx,g(x)]+[f(x),dx]=0$. 

For $[x,[y,z]]+[y,[z,x]]+[z,[x,y]]$, we want to show that $d([x,[y,z]]+[y,[z,x]]+[z,[x,y]])=0$, equivalently,
\begin{align*}
&[dx,[g(y),g(z)]]+[f(x),[dy,g(z)]]+[f(x),[f(y),dz]]\\
+&[dy,[g(z),g(x)]]+[f(y),[dz,g(x)]]+[f(y),[f(z),dx]]\\
+&[dz,[g(x),g(y)]]+[f(z),[dx,g(y)]]+[f(z),[f(x),dy]]=0
\end{align*}

For $[x,[y,z]]$, we note that $X_{u(d(x))}d([y,z])+X_{u(d([y,z]))}d(x)=[f(x)-g(x),[dy,g(z)]+[f(y),dz]]+[[f(y)-g(y),g(z)],dx]+[[f(y),f(z)-g(z)],dx]=[f(x),[dy,g(z)]]-[g(x),[dy,g(z)]]+[f(x),[f(y),dz]]-[g(x),[f(y),dz]]+[[f(y),g(z)],dx]-[[g(y),g(z)],dx]+[[f(y),f(z)],dx]-[[f(y),g(z)]dx]=[f(x),[dy,g(z)]]-[g(x),[dy,g(z)]]+[f(x),[f(y),dz]]-[g(x),[f(y),dz]]-[[g(y),g(z)],dx]+[[f(y),f(z)],dx]$. Hence we get the following. (however, we do not use this relationship for $[x,[y,z]]$. We do this for the symmetric arguments $[y,[z,x]]$ and $[z,[x,y]]$ in the below.)

\begin{align*}
X_{u(d(x))}d([y,z])+X_{u(d([y,z]))}d(x)&=[f(x),[dy,g(z)]]-[g(x),[dy,g(z)]]\\
                                                                     &+[f(x),[f(y),dz]]-[g(x),[f(y),dz]]\\
                                                                     &-[[g(y),g(z)],dx]+[[f(y),f(z)],dx]=0
\end{align*}
For $[y,[z,x]]$, by symmetry we have
\begin{align*}
X_{u(d(y))}d([z,x])+X_{u(d([z,x]))}d(y)&=[f(y),[dz,g(x)]]-[g(y),[dz,g(x)]]\\
                                                                     &+[f(y),[f(z),dx]]-[g(y),[f(z),dx]]\\
                                                                     &-[[g(z),g(x)],dy]+[[f(z),f(x)],dy]=0
\end{align*}
For $[z,[x,y]]$, by symmetry we have
\begin{align*}
X_{u(d(z))}d([z,y])+X_{u(d([x,y]))}d(x)&=[f(z),[dx,g(y)]]-[g(z),[dx,g(y)]]\\
                                                                     &+[f(z),[f(x),dy]]-[g(z),[f(x),dy]]\\
                                                                     &-[[g(x),g(y)],dz]+[[f(x),f(y)],dz]=0
\end{align*}

Then we have

\begin{align*}
&[dx,[g(y),g(z)]]+[f(x),[dy,g(z)]]+[f(x),[f(y),dz]]\\
+&[dy,[g(z),g(x)]]+[f(y),[dz,g(x)]]+[f(y),[f(z),dx]]\\
+&[dz,[g(x),g(y)]]+[f(z),[dx,g(y)]]+[f(z),[f(x),dy]]\\
=&[dx,[g(y),g(z)]]+[f(x),[dy,g(z)]]+[f(x),[f(y),dz]]\\
 +&[g(y),[dz,g(x)]]+[g(y),[f(z),dx]]-[[f(z),f(x)],dy]\,\,\,\text{from}\,\,\, X_{u(d(z))}d([z,y])+X_{u(d([x,y]))}d(x)=0\\
 +&[g(z),[dx,g(y)]]+[g(z),[f(x),dy]]-[[f(x),f(y)],dz]\,\,\,\text{from}\,\,\,X_{u(d(z))}d([z,y])+X_{u(d([x,y]))}d(x)=0\\
 =&-[g(y),[g(z),dx]]+[g(z),[g(y),dx]]-[dy,[g(z),f(x)]]-[g(z),[f(x),dy]]-[f(y),[dz,f(x)]]-[dz,[f(x),f(y)]]\\
 +&[g(y),[dz,g(x)]]+[g(y),[f(z),dx]]-[[f(z),f(x)],dy]\\
  +&[g(z),[dx,g(y)]]+[g(z),[f(x),dy]]-[[f(x),f(y)],dz]\\
  =&-[g(y),[g(z),dx]]-[dy,[g(z),f(x)]]-[f(y),[dz,f(x)]]\\
+&[g(y),[dz,g(x)]]+[g(y),[f(z),dx]]-[[f(z),f(x)],dy]\\
  =&[g(y),[f(z)-g(z),dx]]+[dy,[f(z)-g(z),f(x)]]-[[f(y),[dz,f(x)]]+[g(y),[dz,g(x)]]\\
  =&[g(y),[f(z)-g(z),dx]]-[f(z)-g(z),[f(x),dy]]-[f(x),[dy,f(z)-g(z)]]-[[f(y),[dz,f(x)]]+[g(y),[dz,g(x)]]
\end{align*}

On the other hand, from $[g(y), X_{u(dz)}dx+X_{u(dx)}dz]=0$, we have
\begin{align*}
[g(y),[f(z)-g(z),dx]]+[g(y),[f(x)-g(x),dz]]=0
\end{align*}
From $[f(x),X_{u(dz)}dy+X_{u(dy)}dz]=0$, we have
\begin{align*}
[f(x),[f(z)-g(z),dy]]+[f(x),[f(y)-g(y),dz]]=0
\end{align*}
Hence we have
\begin{align*}
&[g(y),[f(z)-g(z),dx]]-[f(z)-g(z),[f(x),dy]]-[f(x),[dy,f(z)-g(z)]]-[[f(y),[dz,f(x)]]+[g(y),[dz,g(x)]]\\
=&-[g(y),[f(x)-g(x),dz]]-[f(z)-g(z),[f(x),dy]]-[f(x),[f(y)-g(y),dz]]\\
&-[[f(y),[dz,f(x)]]+[g(y),[dz,g(x)]]\\
=&-[g(y),[f(x),dz]]-[f(z)-g(z),[f(x),dy]]-[f(x),[f(y)-g(y),dz]]-[[f(y),[dz,f(x)]]\\
=&-[g(y),[f(x),dz]]-[f(z)-g(z),[f(x),dy]]-[f(x),[f(y),dz]]+[f(x),[g(y),dz]]-[[f(y),[dz,f(x)]]\\
=&[[f(x),g(y)],dz]-[f(z)-g(z),[f(x),dy]]-[[f(x),f(y)],dz]
\end{align*}

From $X_{u(dz)}[f(x),dy]+X_{u([f(x),dy])}dz=0$, we have
\begin{align*}
&[f(z)-g(z),[f(x),dy]]+[[f(x),f(y)-g(y)],dz]\\
=&[f(z)-g(z),[f(x),dy]]+[[f(x),f(y)],dz]-[[f(x),g(y)],dz]=0
\end{align*}

Hence we have
\begin{align*}
[[f(x),g(y)],dz]-[f(z)-g(z),[f(x),dy]]-[[f(x),f(y)],dz]=0
\end{align*}

Hence $d$ is well-defined on $L$. Since $(f-g)([x,y])=[f(x),(f-g)(x)]+[(f-g)(x),g(x)]$, we have $u\circ d=f-g$ on $L$. Now we show that $d$ is well-defined on the free commutative algebra $S$ generated by $L$. Let $V=\{T_j\}$ be a basis of $L$ over $k$. Then we define $d$ on $S$ by the following formula, for a monomial $T_{i_1}\cdots T_{i_n}$,
\begin{align*}
d(T_{i_1}\cdots T_{i_n})=\sum_{k=1}^n f(T_{i_1}\cdots T_{i_{k-1}})d(T_{i_k})g(T_{i_{k+1}}\cdots T_{i_n}).
\end{align*}
Then $d$ is well-defined on $S$ and we have $u\circ d=f-g$ on $S$ (See \cite{Sch67} Lemma 2.1.6). We define a bracket $[-,-]_S$ on $S$ such that $[T_i,T_j]_S:=[T_i,T_j]$ and we use the relation $[x,yz]=y[x,z]+z[x,y]$ for $x,y,z\in L$. Then from $[x,yz]-y[x,z]-z[x,y]$ for $x,y,z\in S$, we want to show that $d([x,yz])=d(y[x,z]+z[x,y])$, equivalently,




\begin{align*}
[dx,g(y)g(z)]+[f(x),dyg(z)]+[f(x),f(y)dz]=dy[g(x),g(z)]+f(y)[dx,g(z)]+f(y)[f(x),dz]\\
+dz[g(x),g(y)]+f(z)[dx,g(y)]+f(z)[f(x),dy]
\end{align*}
Equivalently,
\begin{align*}
g(y)[dx,g(z)]+g(z)[dx,g(y)]+dy[f(x),g(z)]+g(z)[f(x),dy]+dz[f(x),f(y)]+f(y)[f(x),dz]\\
=dy[g(x),g(z)]+f(y)[dx,g(z)]+f(y)[f(x),dz]
+dz[g(x),g(y)]+f(z)[dx,g(y)]+f(z)[f(x),dy]
\end{align*}


We note that
\begin{align*}
u(dy)d[x,z]-u(d[z,x])dy=(f(y)-g(y))([dx,g(z)]+[f(x),dz])-([f(x)-g(x),g(z)]+[f(x),f(z)-g(z)])dy\\
=f(y)[dx,g(z)]-g(y)[dx,g(z)]+f(y)[f(x),dz]-g(y)[f(x),dz]+[g(x),g(z)]dy-[f(x),f(z)]dy=0
\end{align*}

We also note that
\begin{align*}
u(dz)d[x,y]-u(d[y,x])dz=(f(z)-g(z))([dx,g(y)]+[f(x),dy])-([f(x)-g(x),g(y)]+[f(x),f(y)-g(y)])dz\\
=f(z)[dx,g(y)]-g(z)[dx,g(y)]+f(z)[f(x),dy]-g(z)[f(x),dy]+[g(x),g(y)]dz-[f(x),f(y)]dz=0
\end{align*}
So we have
\begin{align*}
dy[g(x),g(z)]+f(y)[dx,g(z)]+f(y)[f(x),dz]+dz[g(x),g(y)]+f(z)[dx,g(y)]+f(z)[f(x),dy]\\
=g(y)[dx,g(z)]+g(y)[f(x),dz]+[f(x),f(z)]dy+g(z)[dx,g(y)]+g(z)[f(x),dy]+[f(x),f(y)]dz
\end{align*}

\begin{align*}
g(y)[dx,g(z)]+g(z)[dx,g(y)]+dy[f(x),g(z)]+g(z)[f(x),dy]+dz[f(x),f(y)]+f(y)[f(x),dz]\\
-g(y)[dx,g(z)]-g(y)[f(x),dz]-[f(x),f(z)]dy-g(z)[dx,g(y)]-g(z)[f(x),dy]-[f(x),f(y)]dz\\
=dy[f(x),g(z)]+f(y)[f(x),dz]-g(y)[f(x),dz]-[f(x),f(z)]dy\\
=(f(y)-g(y))[f(x),dz]-[(f(x),(f(z)-g(z))]dy\\
=u(dy)[f(x),dz]-u([f(x),dz])dy=0
\end{align*}

Lastly since $da=0$ for all $a\in A$, $d$ is well-defined on $A\{x_i\}$, and $d$ is a Poisson biderivation and $u\circ d=f-g$.

\end{proof}

\begin{corollary}
Consider the following commutative diagram of Poisson homomorphisms
\begin{center}
$\begin{CD}
B@>b>> B'\\
@AAA @AAA\\
A@>a>>A'
\end{CD}$
\end{center}
Let $\mathscr{E}$ an free Poisson extension of $B$ over $A$ and $\mathscr{E}'$ be a Poisson extension of $B'$ over $A'$. Then there exists a homomrphim $\alpha:\mathscr{E}\to \mathscr{E}'$ extending $b$. If $\beta:\mathscr{E}\to \mathscr{E}'$ is any other homomorphism extending $b$, then $\bar{\alpha}$ and $\bar{\beta}$ are homotopic maps of $\mathcal{U}_{Pois(B')}\otimes_{\mathcal{U}_{Pois(B)}} PL^\bullet(\mathscr{E})\to PL^\bullet (\mathscr{E}')$.
\end{corollary}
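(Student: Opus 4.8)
The plan is to reduce the corollary to the preceding proposition by first establishing the existence of a homomorphism $\alpha:\mathscr{E}\to\mathscr{E}'$ extending $b$, and then invoking the homotopy statement. First I would construct $\alpha$. Since $\mathscr{E}$ is a \emph{free} Poisson extension, write it as $0\to U/U_0\to F/U_0\xrightarrow{e_1=j} R=A\{x_i\}\xrightarrow{e_0} B\to 0$ with $F=\bigoplus_k\mathcal U_{\mathrm{Pois}(R)}$ a free $\mathcal U_{\mathrm{Pois}(R)}$-module. This is exactly the situation already treated in the excerpt in the paragraph following the definition of free Poisson extension: one defines $\alpha_0:R=A\{x_i\}\to R'$ by sending each generator $x_i$ to an arbitrary lift of $b(e_0(x_i))$ along $e_0':R'\to B'$ (using the universal mapping property of the free Poisson algebra), then defines $\alpha_1':F\to E_1'$ on the canonical basis $v_k$ by sending $v_k$ to an arbitrary lift $w_k'$ of $\alpha_0(e_1(v_k))$ along $e_1':E_1'\to R'$, checks $\alpha_1'(U_0)=0$ (this is the computation already carried out in the excerpt, using the bimodule relations $e_1'(x)y=e_1'(y)x$ and $X_{e_1'(x)}y+X_{e_1'(y)}x=0$ in $E_1'$), so it descends to $\alpha_1:F/U_0\to E_1'$, and finally observes that $\alpha_1$ carries $U/U_0$ into $\operatorname{Im}(e_2')\cong E_2'$, giving $\alpha_2$. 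So the first half of the statement is essentially a citation of the construction already in the text; I would just assemble it into a clean lemma-style paragraph.

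Next, for the homotopy assertion, I would apply the preceding proposition with $\alpha$ the homomorphism just built and $\beta$ the given one. The proposition requires that the source Poisson algebra $R$ have property $(L)$; here $R=A\{x_i\}$ is a free Poisson algebra over $A$, and the lemma immediately preceding the corollary states precisely that a free Poisson algebra over $A$ satisfies $(L)$. Hence the hypotheses of the proposition are met, and it yields that $\bar\alpha,\bar\beta:\mathcal U_{\mathrm{Pois}(B')}\otimes_{\mathcal U_{\mathrm{Pois}(B)}}PL^\bullet(\mathscr{E})\to PL^\bullet(\mathscr{E}')$ are homotopic. That completes the proof.

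The only genuine content beyond quotation is verifying that the construction of $\alpha$ in the free-extension paragraph really does produce a \emph{homomorphism of Poisson extensions} in the sense of the definition — i.e.\ that the square with $\alpha_0$ commutes as Poisson homomorphisms over $A\to A'$, that $\alpha_1,\alpha_2$ are $\mathcal U_{\mathrm{Pois}(R)}$-linear, and that the whole diagram commutes. The commutativity $e_0'\alpha_0=b\,e_0$ holds because $\alpha_0(x_i)$ was chosen as a lift of $b(e_0(x_i))$ and $e_0$ is determined on generators; $e_1'\alpha_1=\alpha_0 e_1$ holds by the choice of the $w_k'$; and $e_2'\alpha_2=\alpha_1 e_2$ is then automatic. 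The $\mathcal U_{\mathrm{Pois}(R)}$-linearity of $\alpha_1$ (and hence $\alpha_2$) is forced once one knows $\alpha_1$ is well defined on $F/U_0$ and $\alpha_0$-semilinear on $F$, which is built into the construction. None of this is a real obstacle; the one place to be slightly careful is the vanishing $\alpha_1'(U_0)=0$, but as noted this computation is already displayed in the excerpt and reduces to the defining relations $e_1'(x)y=e_1'(y)x$ and $X_{e_1'(x)}y+X_{e_1'(y)}x=0$ together with the semilinearity of $\alpha_1'$. So the expected main difficulty is purely bookkeeping: matching the construction paragraph to the formal definition of ``homomorphism of Poisson extensions'' and to the hypothesis format of the proposition, after which the corollary is immediate.
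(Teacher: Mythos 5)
Your proposal is correct and follows the same route the paper intends: existence of $\alpha$ is the explicit construction already given for a free Poisson extension mapping to an arbitrary Poisson extension (choice of lifts of $b(e_0(x_i))$ and of the basis images $w_k'$, with the verification $\alpha_1'(U_0)=0$), and the homotopy claim is the preceding proposition applied via the lemma that the free Poisson algebra $A\{x_i\}$ satisfies property $(L)$. Nothing essential is missing; the bookkeeping you flag (commutativity of the squares and $\mathcal{U}_{Pois(R)}$-linearity of $\alpha_1,\alpha_2$) is exactly what the paper's construction paragraph supplies.
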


\begin{definition}
Let $A\to B$ a Poisson homomoprhism, $\mathscr{E}$ be a free extension of $B$ over $A$, and $M$ be a Poisson $B$-module. We define $PT^i(B/A,M):=H^i(Hom_{\mathcal{U}_{Pois(B)}}(PL^{\bullet}(\mathscr{E}),M)), i=0,1,2$. Since $Hom_{\mathcal{U}_{Pois(B)}}(-,M)$ is an contravariant additive functor and any two Poisson cotangent complexes $PL^\bullet(\mathscr{E})$ and $PL^\bullet(\mathscr{F})$ induced from two free Poisson extensions $\mathscr{E}$ and $\mathscr{F}$ of $B$ over $A$ are homotopically equivalent, and so $Hom_{\mathcal{U}_{Pois(B)}}(PL^\bullet(\mathscr{E}),M)$ is homotopically equivalent to $Hom_{\mathcal{U}_{Pois(B)}}(PL^\bullet(\mathscr{F}),M)$. Hence $PT^i(B/A,M)$ is well-defined.
\end{definition}

\begin{proposition}
Let $A\to B$ be Poisson homomorphism of Poisson algebras over $k$. Then for $i=0,1,2$, $PT^i(B/A,\cdot)$ is a covariant, additive functor from the category of Poisson $B$-modules to category of $B$-modules. If $0\to M'\to M\to M''\to 0$ is a short exact sequence of Poisson $B$-modules, then there is a long exact sequence
\begin{align*}
0&\to PT^0(B/A,M')\to PT^0(B/A,M)\to PT^0(B/A,M'')\to\\
&\to PT^1(B/A,M')\to PT^1(B/A,M)\to PT^1(B/A,M'')\to\\
&\to PT^2(B/A,M')\to PT^2(B/A,M)\to PT^2(B/A,M'')
\end{align*}
\end{proposition}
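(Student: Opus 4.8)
The statement is the standard ``functoriality plus long exact sequence'' package for a cohomology theory defined as the cohomology of $\mathrm{Hom}_{\mathcal{U}_{Pois(B)}}(PL^\bullet(\mathscr{E}),-)$ applied to a fixed complex of projectives. My plan is to exploit exactly this: fix once and for all a free Poisson extension $\mathscr{E}:0\to E_2\to E_1\to R\to B\to 0$ of $B$ over $A$, and recall that $PL^\bullet(\mathscr{E})$ is the three-term complex $0\to E_2\to \mathcal{U}_{Pois(B)}\otimes_{\mathcal{U}_{Pois(R)}} E_1\to \mathcal{U}_{Pois(B)}\otimes_{\mathcal{U}_{Pois(R)}} \Omega^1_{\mathcal{U}_{Pois(R)/A}}\to 0$ of Poisson $B$-modules. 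Because $\mathscr{E}$ is \emph{free}, each of these three terms is a free $\mathcal{U}_{Pois(B)}$-module (for the middle and right terms this was noted in the excerpt, where $\mathcal{U}_{Pois(B)}\otimes_{\mathcal{U}_{Pois(R)}}(F/U_0)\cong \mathcal{U}_{Pois(B)}\otimes_{\mathcal{U}_{Pois(R)}}F$ and $\Omega^1_{\mathcal{U}_{Pois(R)/A}}$ is free on the $dx_i$; for $E_2=U/U_0$ one takes $U$ free, or one only needs that $\mathrm{Hom}_{\mathcal{U}_{Pois(B)}}(E_2,-)$ is exact, which holds as long as $U/U_0$ is projective, and in the free-extension setup one can arrange $U$ free so $U/U_0$ is a direct summand of a free module). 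Hence applying $\mathrm{Hom}_{\mathcal{U}_{Pois(B)}}(PL^\bullet(\mathscr{E}),-)$ to a short exact sequence of Poisson $B$-modules preserves exactness termwise.

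\textbf{Key steps.} First I would record functoriality: a morphism $M\to N$ of Poisson $B$-modules induces, termwise, $\mathrm{Hom}_{\mathcal{U}_{Pois(B)}}(PL^j(\mathscr{E}),M)\to \mathrm{Hom}_{\mathcal{U}_{Pois(B)}}(PL^j(\mathscr{E}),N)$, compatibly with the differentials, hence maps $PT^i(B/A,M)\to PT^i(B/A,N)$ for $i=0,1,2$; additivity in $M$ is immediate because $\mathrm{Hom}$ is additive and cohomology commutes with finite direct sums. Independence of the chosen free extension (so that the functors are well defined on the nose) is already in the excerpt via the homotopy equivalence of cotangent complexes, so I would just cite that. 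Second, given a short exact sequence $0\to M'\to M\to M''\to 0$ of Poisson $B$-modules, I would form the sequence of cochain complexes
\begin{align*}
0\to \mathrm{Hom}_{\mathcal{U}_{Pois(B)}}(PL^\bullet(\mathscr{E}),M')\to \mathrm{Hom}_{\mathcal{U}_{Pois(B)}}(PL^\bullet(\mathscr{E}),M)\to \mathrm{Hom}_{\mathcal{U}_{Pois(B)}}(PL^\bullet(\mathscr{E}),M'')\to 0
\end{align*}
and argue it is exact: left exactness of $\mathrm{Hom}$ gives exactness except at the right end, and surjectivity on the right follows from projectivity of each $PL^j(\mathscr{E})$ over $\mathcal{U}_{Pois(B)}$. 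Third, I would invoke the long exact cohomology sequence of a short exact sequence of cochain complexes; since $PL^\bullet(\mathscr{E})$ is concentrated in degrees $0,1,2$, the resulting long exact sequence terminates after $PT^2$, giving exactly the nine-term sequence in the statement, with the connecting maps $PT^i(B/A,M'')\to PT^{i+1}(B/A,M')$ the usual snake-lemma boundary maps. Finally I would check naturality of the connecting homomorphisms in the obvious way, which is automatic from the snake lemma.

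\textbf{Main obstacle.} The only genuine point to pin down is the projectivity (or at least $\mathrm{Hom}(-,\text{exact})$-exactness) of the terms of $PL^\bullet(\mathscr{E})$ as $\mathcal{U}_{Pois(B)}$-modules, and in particular of $PL^2(\mathscr{E})=U/U_0=E_2$. For the middle term $\mathcal{U}_{Pois(B)}\otimes_{\mathcal{U}_{Pois(R)}}(F/U_0)$ and the right term $\mathcal{U}_{Pois(B)}\otimes_{\mathcal{U}_{Pois(R)}}\Omega^1_{\mathcal{U}_{Pois(R)/A}}$ this is settled by the base-change computations already in the text (they are free $\mathcal{U}_{Pois(B)}$-modules). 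For $E_2$ one builds the free Poisson extension starting from a \emph{free} presentation $0\to U\to F\to I\to 0$ of the Poisson ideal $I$ with $U$ (and $F$) free $\mathcal{U}_{Pois(R)}$-modules; then $U_0\subset U$ is the submodule of ``Koszul-type'' relations, and one has to verify that $\mathcal{U}_{Pois(B)}\otimes_{\mathcal{U}_{Pois(R)}}(U/U_0)$ is a direct summand of a free $\mathcal{U}_{Pois(B)}$-module, equivalently that $\mathrm{Hom}_{\mathcal{U}_{Pois(B)}}(E_2,-)$ is exact. In the affine associative case this is Schlessinger--Lichtenbaum's Lemma~2.1.2; here it transfers verbatim once one observes that $\mathcal{U}_{Pois}$ of a free Poisson algebra behaves like a free associative algebra on the generators (the $\mathcal{U}_{Pois(R)}$-module $\Omega^1$ and $F/U_0$ being free reflect this). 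So the ``hard part'' is really just bookkeeping: making the free-extension data concrete enough that each $PL^j(\mathscr{E})$ is visibly projective over $\mathcal{U}_{Pois(B)}$, after which the long exact sequence is formal. I expect no new ideas beyond what is needed for the already-stated homotopy-invariance of $PL^\bullet(\mathscr{E})$.
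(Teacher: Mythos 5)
Your overall skeleton---fix a free Poisson extension, apply $Hom_{\mathcal{U}_{Pois(B)}}(PL^{\bullet}(\mathscr{E}),-)$, and run a long-exact-sequence argument---is the same as the paper's, but the step you yourself flag as the ``main obstacle'' is resolved incorrectly, and this is a genuine gap. You claim that $PL^2(\mathscr{E})=U/U_0$ is projective over $\mathcal{U}_{Pois(B)}$ (a direct summand of a free module) ``because one can arrange $U$ free.'' Freeness of $U$ does not make the quotient $U/U_0$ projective, and in general it is not: already in the Lichtenbaum--Schlessinger associative setting the analogous $L_2=U/U_0$ fails to be projective. For instance, take $A=k$, $R=k[x,y]$, $I=(x^2,xy,y^2)$, $B=R/I$, with $F$ free of rank $3$ and $U\cong R^2$ the (free) syzygy module; then $U/U_0$ is a $3$-dimensional $B$-module minimally generated by $2$ elements, hence not free over the local ring $B$, hence not projective, even though $U$ itself is free. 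Nothing about passing to Poisson enveloping algebras improves this, and Lichtenbaum--Schlessinger neither prove nor use such a projectivity statement (their sequence, like the one here, stops at $T^2(B/A,M'')$ precisely because $L_2$ need not be projective). Consequently your claimed short exact sequence of Hom-complexes, whose surjectivity in degree $2$ rests on this projectivity, is not available.

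The gap is repairable, and the repair is exactly what the paper does: one only needs the rows in degrees $0$ and $1$ to be short exact (this follows from the freeness of $PL^0=\mathcal{U}_{Pois(B)}\otimes_{\mathcal{U}_{Pois(R)}}\Omega^1_{\mathcal{U}_{Pois(R)/A}}$ and $PL^1=\mathcal{U}_{Pois(B)}\otimes_{\mathcal{U}_{Pois(R)}}F$, which you justified correctly), together with mere left exactness of $Hom_{\mathcal{U}_{Pois(B)}}(PL^2,-)$ in degree $2$. Since the asserted nine-term sequence terminates at $PT^2(B/A,M'')$ with no surjectivity claim there, the two connecting homomorphisms only require lifting cochains in degrees $0$ and $1$, and exactness at the two $PT^2$ spots uses only injectivity and exactness in the middle of the degree-$2$ row; surjectivity of $Hom(PL^2,M)\to Hom(PL^2,M'')$ is never invoked. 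Insisting on projectivity of $PL^2$ would amount to proving the stronger assertion that $PT^2(B/A,-)$ is right exact (the sequence could then be continued by $\to 0$), which is neither claimed nor to be expected.
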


\begin{proof}
By construction, $PT^i(B/A,\cdot)$ is a covariant additive functor. Let $\mathcal{E}:0\to U/U_0\to F/U_0\to R=A\{x_i\}\to B\to 0$ be a free Poisson extension of $B$ over $A$. Then $PL^0=\mathcal{U}_{Pois(B)}\otimes_{\mathcal{U}_{Pois(R)}}\Omega_{\mathcal{U}_{Pois(R)/A}}^1$ which is free $\mathcal{U}_{Pois(B)}$-module and $PL^1=\mathcal{U}_{Pois(B)}\otimes_{\mathcal{U}_{Pois(R)}} F$ which is a free $\mathcal{U}_{Pois(B)}$-module and $PL^2=U/U_0$. Let's consider the induced diagram.
\begin{center}
\tiny{$\begin{CD}
0@>>> Hom_{\mathcal{U}_{Pois(B)}}(PL^0,M')@>>>  Hom_{\mathcal{U}_{Pois(B)}}(PL^0,M')@>>> Hom_{\mathcal{U}_{Pois(B)}}(PL^0,M'')@>>> 0\\
@.@VVV @VVV @VVV\\
0@>>>Hom_{\mathcal{U}_{Pois(B)}}(PL^1,M')@>>> Hom_{\mathcal{U}_{Pois(B)}}(PL^1,M')@>>>Hom_{\mathcal{U}_{Pois(B)}}(PL^1,M')@>>>0\\\
@.@VVV@VVV @VVV\\
0@>>> Hom_{\mathcal{U}_{Pois(B)}}(PL^2,M')@>>> Hom_{\mathcal{U}_{Pois(B)}}(PL^2,M')@>>>  Hom_{\mathcal{U}_{Pois(B)}}(PL^2,M')
\end{CD}$}
\end{center}
First row and second row are exact since $PL^0$ and $PL^1$ are free, and third row is exact for the first two terms by the right exactness of $Hom_{\mathcal{U}_{Pois(B)}}(PL^2,\cdot)$. Hence we get the proposition.
\end{proof}

 \begin{proposition}\label{3pro}
For any Poisson homomorphism $A\to B$ and and Poisson $B$-module $M$, $PT^0(B/A,M)=Hom_{\mathcal{U}_{Pois(B)}}(\Omega_{\mathcal{U}_{Pois(B)/A}}^1,M)=PDer_A(B,M)$. In particular, $PT^0(B/A,B)=Hom_{\mathcal{U}_{Pois(B)}}(\Omega_{\mathcal{U}_{Pois(B)/A}}^1,B)=PDer_A(B,B)$
 \end{proposition}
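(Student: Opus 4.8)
The statement to prove is that for any Poisson homomorphism $A\to B$ and any Poisson $B$-module $M$ one has a canonical isomorphism $PT^0(B/A,M)\cong Hom_{\mathcal{U}_{Pois(B)}}(\Omega^1_{\mathcal{U}_{Pois(B)/A}},M)=PDer_A(B,M)$. The plan is to unwind the definition of $PT^0$ directly from the Poisson cotangent complex. Choose a free Poisson extension of $B$ over $A$, say
\begin{align*}
\mathscr{E}:\quad 0\to U/U_0\xrightarrow{e_2} F/U_0\xrightarrow{e_1} R=A\{x_i\}\xrightarrow{e_0} B\to 0,
\end{align*}
with associated Poisson cotangent complex
\begin{align*}
PL^\bullet(\mathscr{E}):\quad 0\to U/U_0\xrightarrow{d_2} \mathcal{U}_{Pois(B)}\otimes_{\mathcal{U}_{Pois(R)}} F/U_0 \xrightarrow{d_1} \mathcal{U}_{Pois(B)}\otimes_{\mathcal{U}_{Pois(R)}}\Omega^1_{\mathcal{U}_{Pois(R)/A}}\to 0,
\end{align*}
where (in the indexing used in the excerpt) $PL^0=\mathcal{U}_{Pois(B)}\otimes_{\mathcal{U}_{Pois(R)}}\Omega^1_{\mathcal{U}_{Pois(R)/A}}$, $PL^1=\mathcal{U}_{Pois(B)}\otimes_{\mathcal{U}_{Pois(R)}}F/U_0\cong\mathcal{U}_{Pois(B)}\otimes_{\mathcal{U}_{Pois(R)}}F$, and $PL^2=U/U_0$. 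Then $PT^0(B/A,M)=H^0(Hom_{\mathcal{U}_{Pois(B)}}(PL^\bullet(\mathscr{E}),M))=\ker\big(Hom_{\mathcal{U}_{Pois(B)}}(PL^0,M)\xrightarrow{d_1^*} Hom_{\mathcal{U}_{Pois(B)}}(PL^1,M)\big)$, so the task reduces to identifying this kernel with $PDer_A(B,M)$.

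The key steps I would carry out are: (1) Use the adjunction/extension-of-scalars identity $Hom_{\mathcal{U}_{Pois(B)}}(\mathcal{U}_{Pois(B)}\otimes_{\mathcal{U}_{Pois(R)}}\Omega^1_{\mathcal{U}_{Pois(R)/A}},M)\cong Hom_{\mathcal{U}_{Pois(R)}}(\Omega^1_{\mathcal{U}_{Pois(R)/A}},M)$, where $M$ is viewed as a $\mathcal{U}_{Pois(R)}$-module via $\mathcal{U}(e_0):\mathcal{U}_{Pois(R)}\to\mathcal{U}_{Pois(B)}$, and then the representing property $Hom_{\mathcal{U}_{Pois(R)}}(\Omega^1_{\mathcal{U}_{Pois(R)/A}},M)=PDer_A(R,M)$ stated in the excerpt. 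Similarly $Hom_{\mathcal{U}_{Pois(B)}}(PL^1,M)\cong Hom_{\mathcal{U}_{Pois(R)}}(F/U_0,M)$. (2) Trace through the definition of $d_1=d\circ(\mathcal{U}_{Pois(B)}\otimes e_1)$: a Poisson $A$-derivation $\delta:R\to M$ lies in $\ker d_1^*$ exactly when $\delta\circ e_1=0$ on $F/U_0$, i.e. when $\delta$ vanishes on $I=\operatorname{im}(e_1)=\ker(e_0)$. (3) Invoke the analogue of the exact sequence (the Poisson first fundamental exact sequence proved earlier in the excerpt) which gives that a Poisson $A$-derivation $R\to M$ that kills $I$ factors uniquely through $B=R/I$; conversely any Poisson $A$-derivation $B\to M$ pulls back to one on $R$ killing $I$. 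This yields $\ker d_1^*\cong PDer_A(B,M)=Hom_{\mathcal{U}_{Pois(B)}}(\Omega^1_{\mathcal{U}_{Pois(B)/A}},M)$, which is precisely $PT^0(B/A,M)$; setting $M=B$ gives the ``in particular'' clause. (4) Finally I would note that the isomorphism is independent of the chosen free Poisson extension, since any two Poisson cotangent complexes are homotopy equivalent and $Hom_{\mathcal{U}_{Pois(B)}}(-,M)$ is additive, so $PT^0$ is well defined and the identification is canonical.

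I would expect the main obstacle to be a careful bookkeeping point rather than a deep one: one must be sure that the $\mathcal{U}_{Pois(R)}$-module structure on $M$ used in the adjunction is compatible with the $\mathcal{U}_{Pois(B)}$-module structure of $M$ (this is exactly where ``$A\to B$ factors through $e_0$'' and the relation $e_1(x)y=e_1(y)x$, $X_{e_1(x)}y+X_{e_1(y)}x=0$ enter, so that the image of $I$ acts by zero and the derivation condition on $R$ descends cleanly to $B$). A related subtlety is the passage from $F/U_0$ to $F$ in $PL^1$: one uses the fact, recorded in the excerpt, that for a free Poisson extension $\mathcal{U}_{Pois(B)}\otimes_{\mathcal{U}_{Pois(R)}}(F/U_0)\cong\mathcal{U}_{Pois(B)}\otimes_{\mathcal{U}_{Pois(R)}}F$, which must be combined correctly with the description of $d_1$. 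Once these compatibilities are pinned down, the proof is a short chase through definitions; no hard calculation is needed beyond verifying that the exterior differential $d:R\to\Omega^1_{\mathcal{U}_{Pois(R)/A}}$ restricted to $I$ and tensored up with $\mathcal{U}_{Pois(B)}$ coincides with the map appearing in the Poisson first fundamental exact sequence.
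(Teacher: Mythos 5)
Your proposal is correct and follows essentially the same route as the paper: both arguments rest on the free Poisson extension with $R=A\{x_i\}$, the cotangent complex $PL^\bullet$, and the Poisson conormal exact sequence $I/(I^2\oplus\{I,I\})\to \mathcal{U}_{Pois(B)}\otimes_{\mathcal{U}_{Pois(R)}}\Omega^1_{\mathcal{U}_{Pois(R)/A}}\to \Omega^1_{\mathcal{U}_{Pois(B)/A}}\to 0$. The paper simply performs the dualization in the other order — it first forms the exact sequence $PL^1\to PL^0\to\Omega^1_{\mathcal{U}_{Pois(B)/A}}\to 0$ using the surjection $F\to I$ and then applies left exactness of $Hom_{\mathcal{U}_{Pois(B)}}(-,M)$, whereas you identify $\ker d_1^*$ directly at the level of Poisson derivations — but these are the same argument.
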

\begin{proof}
Let's consider a exact sequence $0\to I\to A\{x_i\}\xrightarrow{\phi} B\to 0$ for some free Poisson algebra $A\{x_i\}$ over $A$ and $\phi$ is a Poisson homomorphism compatible with $A$. Then we have an exact sequence
\begin{align*}
\mathcal{U}_{Pois(B)}\otimes_{\mathcal{U}_{Pois(A\{x_i\})/A}} I\cong I/(I^2\oplus \{I,I\}) \to \mathcal{U}_{Pois(B)}\otimes_{\mathcal{U}_{Pois(A\{x_i\})}}\Omega_{\mathcal{U}_{Pois(A\{x_i\}/A)}}^1\to \Omega_{\mathcal{U}_{Pois(B)/A}}^1
\end{align*}
Let's choose a free Poisson $\mathcal{U}_{Pois(A\{x_i\})}$-module $F$ such that $F\to I\to 0$. Then we have an exact sequence
\begin{align*}
PL^1\to PL^0\to \Omega_{\mathcal{U}_{Pois(B)/A}}^1
\end{align*}
By taking $Hom_{\mathcal{U}_{Pois(B)}}(\cdot, M)$ which is a right exact functor, we have 
\begin{align*}
PT^0(B/A,M)=Hom_{\mathcal{U}_{Pois(B)}}(\Omega_{\mathcal{U}_{Pois(B)/A}}^1,M)=PDer_A(B,M)
\end{align*}
\end{proof}
 
\begin{proposition}\label{3prot}
If $A\to B$ is a surjective Poisson algebra homomorphism with kernel $I$, then $PT^0(B/A,M)=0$ for all $M$, and $PT^1(B/A,M)=Hom_{\mathcal{U}_{Pois(B)}}(I/I^2\oplus \{I,I\},M)$. In particular $PT^1(B/A,B)=Hom_{\mathcal{U}_{Pois(B)}}(I/I^2\oplus\{I,I\},B)$ 

\end{proposition}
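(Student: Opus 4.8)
\textbf{Proof plan for Proposition \ref{3prot}.} The strategy is to write down an explicit free Poisson extension of $B$ over $A$ that is adapted to the surjectivity of $A \to B$, and then read off the two cohomology groups directly from the associated Poisson cotangent complex. The key observation is that when $A \to B$ is surjective, the ``$R$-layer'' of a free extension can be taken to be $A$ itself, which forces the $\Omega$-term of the complex to collapse.

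First I would choose a free Poisson extension whose middle Poisson algebra is $R = A$. Concretely, with $I = \ker(A \to B)$, pick a free $\mathcal{U}_{Pois(A)}$-module $F$ surjecting onto $I$ via $j\colon F \to I$, let $F_0 \subset F$ be the submodule generated by the elements $j(x)y - j(y)x$ and $X_{j(x)}y + X_{j(y)}x$, and set $U = \ker(F/F_0 \to I)$. This gives a free Poisson extension
\[
\mathscr{E}\colon\quad 0 \to U \xrightarrow{e_2} F/F_0 \xrightarrow{e_1} A \xrightarrow{e_0} B \to 0,
\]
since $A \to A$ (the identity) is trivially a free Poisson algebra over $A$ (it is $A\{x_i\}$ with no generators $x_i$). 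The associated Poisson cotangent complex is
\[
PL^{\bullet}(\mathscr{E})\colon\quad 0 \to U \xrightarrow{d_2} \mathcal{U}_{Pois(B)} \otimes_{\mathcal{U}_{Pois(A)}} F/F_0 \xrightarrow{d_1} \mathcal{U}_{Pois(B)} \otimes_{\mathcal{U}_{Pois(A)}} \Omega^1_{\mathcal{U}_{Pois(A)/A}} \to 0.
\]

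The crucial simplification is that $\Omega^1_{\mathcal{U}_{Pois(A)/A}} = 0$, since there are no $A$-relative Poisson differentials of $A$ itself; hence $PL^0(\mathscr{E}) = 0$. Therefore $PT^0(B/A,M) = H^0(Hom_{\mathcal{U}_{Pois(B)}}(PL^{\bullet}(\mathscr{E}),M))$ sits inside $Hom_{\mathcal{U}_{Pois(B)}}(PL^0(\mathscr{E}),M) = 0$, giving $PT^0(B/A,M) = 0$ for all $M$. For $PT^1$, with $PL^0(\mathscr{E}) = 0$ the complex computing cohomology in degree $1$ is just $Hom_{\mathcal{U}_{Pois(B)}}(\mathcal{U}_{Pois(B)} \otimes_{\mathcal{U}_{Pois(A)}} F/F_0, M)$ modulo the image coming from $PL^2$-differential, i.e. $PT^1(B/A,M) = \operatorname{coker}$ is not quite it — rather $PT^1(B/A,M) = \ker\big(Hom(PL^1,M) \to Hom(PL^2,M)\big) / \operatorname{im}\big(Hom(PL^0,M) \to Hom(PL^1,M)\big)$, and since $Hom(PL^0,M) = 0$, this is simply $\ker\big(Hom_{\mathcal{U}_{Pois(B)}}(\mathcal{U}_{Pois(B)} \otimes F/F_0, M) \to Hom_{\mathcal{U}_{Pois(B)}}(U, M)\big)$. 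Dualizing the exact sequence $U \to F/F_0 \to I \to 0$ — which after tensoring with $\mathcal{U}_{Pois(B)}$ and using $\mathcal{U}_{Pois(B)} \otimes_{\mathcal{U}_{Pois(A)}} I \cong I/(I^2 \oplus \{I,I\})$ (this is exactly the isomorphism established in the proof of the exact sequence \eqref{3equ}) — identifies this kernel with $Hom_{\mathcal{U}_{Pois(B)}}(I/(I^2 \oplus \{I,I\}), M)$. Taking $M = B$ yields the final assertion $PT^1(B/A,B) = Hom_{\mathcal{U}_{Pois(B)}}(I/(I^2 \oplus \{I,I\}), B)$.

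The main obstacle I anticipate is verifying carefully that the candidate sequence $\mathscr{E}$ above genuinely qualifies as a free Poisson extension — in particular that $A$ with the identity map really is allowed as the $R$-layer (it is, being the free Poisson $A$-algebra on the empty set of generators, so it has property $(L)$) and that $U$ inherits the required Poisson $B$-module structure from the general construction. A secondary technical point is the compatibility of the two identifications $\mathcal{U}_{Pois(B)} \otimes_{\mathcal{U}_{Pois(A)}}(F/F_0) \cong \mathcal{U}_{Pois(B)} \otimes_{\mathcal{U}_{Pois(A)}} F$ (the freeness remark proved earlier) and $\mathcal{U}_{Pois(B)} \otimes_{\mathcal{U}_{Pois(A)}} I \cong I/(I^2 \oplus \{I,I\})$, so that the map $PL^1(\mathscr{E}) \to PL^2(\mathscr{E})$ dualizes to exactly the presentation of $I/(I^2 \oplus \{I,I\})$ by $U \to F/F_0$. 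Once these bookkeeping points are in place, the computation is formal, paralleling the classical (non-Poisson) statement in \cite{Sch67}.
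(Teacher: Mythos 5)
Your proposal is correct and follows essentially the same route as the paper: take $R=A$ as the free Poisson extension with no generators, so $\Omega^1_{\mathcal{U}_{Pois(A)}/A}=0$ forces $PT^0(B/A,M)=0$, and then read off $PT^1$ from the presentation $U/U_0 \to F/U_0 \to I \to 0$ tensored with $\mathcal{U}_{Pois(B)}$, using the identification $\mathcal{U}_{Pois(B)}\otimes_{\mathcal{U}_{Pois(A)}} I \cong I/(I^2\oplus\{I,I\})$ and left exactness of $Hom$. The bookkeeping points you flag (that $A$ qualifies as the free Poisson $A$-algebra on no generators, and the compatibility of the two identifications) are exactly what the paper's proof implicitly relies on, so there is no gap.
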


\begin{proof}
We take $R=A$ as a free Poisson extension with no generating set over $A$. Since $\Omega_{\mathcal{U}_{Pois(A)/A}}^1=0$, we have $PT^0(B/A,M)=0$. Let's consider the exact sequence $0\to U\to F\xrightarrow{j} I\to 0$. Let $U_0$ be $\mathcal{U}_{Pois(R)}$-submodule of $F$ generated by $j(x)y-j(y)x$ and $X_{j(x)}y+X_{j(y)}x$, where $x,y \in F$. Then $0\to U/U_0\to F/U_0\to I\to 0$ is exact. So we have an exact sequence.
\begin{align*}
\mathcal{U}_{Pois(B)}\otimes_{\mathcal{U}_{Pois(A)}}U/U_0\to \mathcal{U}_{Pois(B)}\otimes_{\mathcal{U}_{Pois(A)}} F/U_0\to I/I^2\oplus \{I,I\}\to 0
\end{align*}
Hence we have $PT^1(B/A,M)=Hom_{\mathcal{U}_{Pois(B)}}(I/I^2\oplus \{I,I\}, M)$.
\end{proof}
 
\begin{proposition}\label{3pror}
Given a commutative diagram
\begin{center}
$\begin{CD}
B @>b>> B'\\
@AAA @AAA\\
A @>a>> A'
\end{CD}$
\end{center}
where every morphisms are Poisson homomorphisms. Let $M'$ be a Poisson $B'$-module. Then there are natural homomorphism
\begin{align*}
PT^i(B'/A',M')\to PT^i(B/A,M')
\end{align*}
\end{proposition}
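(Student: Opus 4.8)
The plan is to construct the natural maps $PT^i(B'/A', M') \to PT^i(B/A, M')$ by producing a morphism of Poisson cotangent complexes compatible with the given square, and then applying $\mathrm{Hom}$. First I would choose a free Poisson extension $\mathscr{E}$ of $B$ over $A$; by the construction in the previous sections, this is a sequence $0 \to U/U_0 \to F/U_0 \to R = A\{x_i\} \to B \to 0$ with $R$ a free Poisson algebra over $A$ and $F$ a free $\mathcal{U}_{Pois(R)}$-module. Independently I would choose \emph{any} free Poisson extension $\mathscr{E}'$ of $B'$ over $A'$ (its existence was established in the same way). The given commutative square of Poisson homomorphisms, together with the freeness of $\mathscr{E}$, lets me invoke the Corollary following the ``property $(L)$'' lemma: since a free Poisson algebra over $A$ satisfies property $(L)$, there exists a homomorphism of Poisson extensions $\alpha : \mathscr{E} \to \mathscr{E}'$ extending $b : B \to B'$, and any two such extensions induce homotopic maps $\mathcal{U}_{Pois(B')} \otimes_{\mathcal{U}_{Pois(B)}} PL^\bullet(\mathscr{E}) \to PL^\bullet(\mathscr{E}')$.

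Next I would assemble the pieces. From $\alpha$ I get, as explained just before the definition of the Poisson cotangent complex, a map of complexes $\bar\alpha : \mathcal{U}_{Pois(B')} \otimes_{\mathcal{U}_{Pois(B)}} PL^\bullet(\mathscr{E}) \to PL^\bullet(\mathscr{E}')$. Now apply the contravariant additive functor $\mathrm{Hom}_{\mathcal{U}_{Pois(B')}}(-, M')$, where $M'$ is regarded as a Poisson $B'$-module (equivalently a left $\mathcal{U}_{Pois(B')}$-module). This gives a map of complexes
\[
\mathrm{Hom}_{\mathcal{U}_{Pois(B')}}(PL^\bullet(\mathscr{E}'), M') \longrightarrow \mathrm{Hom}_{\mathcal{U}_{Pois(B')}}\bigl(\mathcal{U}_{Pois(B')} \otimes_{\mathcal{U}_{Pois(B)}} PL^\bullet(\mathscr{E}), M'\bigr).
\]
By the tensor-hom adjunction the right-hand side is canonically $\mathrm{Hom}_{\mathcal{U}_{Pois(B)}}(PL^\bullet(\mathscr{E}), M')$, where $M'$ is restricted to a $\mathcal{U}_{Pois(B)}$-module along $\mathcal{U}_{Pois}(b) : \mathcal{U}_{Pois(B)} \to \mathcal{U}_{Pois(B')}$. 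Taking cohomology in degrees $i = 0,1,2$ yields the desired homomorphisms $PT^i(B'/A', M') \to PT^i(B/A, M')$.

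Finally I would check well-definedness, i.e.\ independence of all the choices. Independence of the choice of $\alpha$ (for fixed $\mathscr{E}, \mathscr{E}'$) follows from the homotopy statement in the Corollary, since homotopic maps of complexes induce the same map on cohomology after applying $\mathrm{Hom}$. Independence of the choice of free Poisson extension $\mathscr{E}$ of $B$ over $A$ (resp.\ $\mathscr{E}'$ of $B'$ over $A'$) follows because any two such extensions give homotopically equivalent Poisson cotangent complexes — this is exactly the argument already used to see that $PT^i(B/A, M)$ itself is well-defined — and one checks that the square relating the two choices of $\alpha$ through a homotopy equivalence commutes up to homotopy. I expect the main obstacle to be precisely this bookkeeping of compatibilities: verifying that the map on $PT^i$ does not depend on the auxiliary extensions and lifting, and that it is functorial (composes correctly for a vertical stacking of two such squares, and is the identity for the identity square). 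The verification is routine in spirit but requires care because $\bar\alpha$ is only canonical up to homotopy; one must track homotopies at two levels (the choice of $\alpha$ and the choice of extension) and confirm they interact coherently. Functoriality in $M'$ for a fixed square is immediate from additivity of $\mathrm{Hom}$, and the long-exact-sequence compatibility, if desired, follows from the fact that $PL^0$ and $PL^1$ are free $\mathcal{U}_{Pois(B)}$-modules so that $\mathrm{Hom}_{\mathcal{U}_{Pois(B)}}(PL^\bullet(\mathscr{E}), -)$ takes short exact sequences of Poisson $B$-modules to short exact sequences of complexes.
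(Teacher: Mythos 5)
Your proposal is correct and follows essentially the same route as the paper: choose free Poisson extensions of $B/A$ and $B'/A'$, lift $b$ to a morphism of extensions (using freeness and property $(L)$), pass to the induced map $\mathcal{U}_{Pois(B')}\otimes_{\mathcal{U}_{Pois(B)}}PL^\bullet(\mathscr{E})\to PL^\bullet(\mathscr{E}')$, apply $\mathrm{Hom}_{\mathcal{U}_{Pois(B')}}(-,M')$ with the adjunction, and use homotopy uniqueness to get independence of all choices. The paper's independence argument is exactly your sketch made precise (comparing $\bar\beta$ with $\bar\gamma'\circ\bar\alpha\circ\bar\gamma$ via the homotopy statement for maps out of a free extension), so no gap remains.
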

\begin{proof}
Let $\mathscr{E}:0\to F_2\to F_1\to P\to B\to 0$ be a free Poisson extension of $B$ over $A$ and $\mathscr{E}:0\to F_2'\to F_1'\to P'\to B'\to 0$ be a free Poisson extension of $B'$ over $A'$. Then we have a homomorphism $\alpha:\mathscr{E}\to \mathscr{E}'$ extending $b:B\to B'$ and this induces $\bar{\alpha}:PL^\bullet (\mathscr{E})\to PL^\bullet(\mathscr{E}')$ (hence $\mathcal{U}_{Pois(B')}\otimes_{\mathcal{U}_{Pois(B)}} PL^\bullet (\mathscr{E})\to PL^\bullet(\mathscr{E}'))$ and so we have
\begin{center}
\small{$\begin{CD}
Hom_{\mathcal{U}_{Pois(B')}}(PL^{0'},M')@>>> Hom_{\mathcal{U}_{Pois(B')}}(PL^{1'},M')@>>> Hom_{\mathcal{U}_{Pois(B')}}(PL^{2'},M')\\
@VVV @VVV @VVV\\
Hom_{\mathcal{U}_{Pois(B)}}(PL^{0},M')@>>> Hom_{\mathcal{U}_{Pois(B)}}(PL^{1},M')@>>> Hom_{\mathcal{U}_{Pois(B)}}(PL^{2},M')
\end{CD}$}
\end{center}
So this induces $PT^i(B'/A',M')\to PT^i(B/A,M')$. The construction is independent of the choices of $\mathscr{E}$ and $\mathscr{E'}$. Indeed, Let $\mathscr{F}$ be another free Poisson extension of $B$ over $A$ and $\mathscr{F'}$ be another free Poisson extension of $B'$ over $A'$. Let $\bar{\beta}:PL^\bullet(\mathscr{F})\to PL^\bullet(\mathscr{F}')$ a homomorphism extending $B\to B'$ similarly to $PL^\bullet(\mathscr{E})\to PL^\bullet(\mathscr{E}')$ as above. Let $\bar{\gamma}:PL^\bullet (\mathscr{F})\to PL^\bullet(\mathscr{E})$ and $\bar{\gamma}':PL^\bullet(\mathscr{E}')\to PL^\bullet(\mathscr{F}')$ be homotopically equivalent maps. Then in the diagram
\begin{center}
$\begin{CD}
PL^\bullet(\mathscr{E})@>\bar{\alpha}>> PL^\bullet(\mathscr{E}')\\
@A\bar{\gamma}AA @VV\bar{\gamma}'V\\
PL^\bullet(\mathscr{F})@>\bar{\beta}>\bar{\gamma}'\circ \bar{\alpha}\circ \bar{\gamma}> PL^\bullet(\mathscr{F}')
\end{CD}$
\end{center}
$\bar{\beta}$ and $\bar{\gamma}'\circ \bar{\alpha}\circ \bar{\gamma}$ are hotomopic maps since $\mathscr{F}$ is a free Poisson extension. Hence the induced maps $PT^i(B'/A',M')\to PT^i(B/A,M)$ are equal.

\end{proof}

\begin{definition}[Short Poisson extension]
Let $A\to B$ a Poisson homomorphism of Poisson $k$-algebras, $k$ a field, and $M$ be a Poisson $B$-module. By a short Poisson extension of $B$ over $A$ by $M$, we mean an exact sequence:
\begin{align*}
0\to M\xrightarrow{i} E\xrightarrow{k} B\to 0
\end{align*}
where $A\to E$ is an Poisson homomorphism of Poisson algebras and $k:E\to B$ a Poisson homomorphism compatible with $A$ $($i.e $A\to B$ factor through $E$$)$ and $M$ is regarded as a square zero Poisson ideal in $E$, i.e $i(M)\cdot i(M)=0$ and $\{i(M),i(M)\}=0$. We note that a short Poisson extension is a Poisson extension since $M$ is a $\mathcal{U}_{Pois(E)}$-module $($the $\mathcal{U}_{Pois(E)}$-module structure is induced from $\mathcal{U}_{Pois(B)}$-module structure via $k$$)$, and so $i(x)y-i(y)x$ and $X_{i(x)}y+X_{i(y)}x$ are trivially $0$ because $ki(x)=0$ for all $x\in M$. Hence $0\to 0\to M\to E\to B\to 0$ is a Poisson extension. If $E'$ is another short Poisson extension of $B$ over $A$ by $M$, we say that $E$ and $E'$ are equivalent if there exists a Poisson homomorphism $\theta:E\to E'$ compatible with $A$ inducing the following commutative diagram
\begin{center}
$\begin{CD}
0@>>>  M@>i>>E@>k>>B @>>> 0\\
@. @|@V\theta VV @| @.\\
0@>>> M@>i'>> E' @>k'>>B@>>>0
\end{CD}$
\end{center}
\end{definition}

\begin{definition}
We define $PEx^1(B/A,M)$ be the set of equivalence classes of short Poisson extensions of $B$ over $A$ by $M$.
\end{definition}

\begin{lemma}
Let $0\to M\xrightarrow{i}E\xrightarrow{k}B\to0$ be a short Poisson extension of $B$ over $A$ by $M$ as above. Given an Poisson homomorphism $A\to C$ and two Poisson homomorphisms $f_1,f_2:C\to E$ compatible with $A$ such that $kf_1=k f_2$, the induced map $f_2-f_1:C\to M$ is an Poisson $A$-derivation. 
\end{lemma}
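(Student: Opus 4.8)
The plan is to verify directly that the $A$-linear map $d := f_2 - f_1 \colon C \to E$ takes values in $M$ (identified with the square-zero ideal $i(M) \subset E$), and then that it satisfies the two Leibniz rules defining a Poisson $A$-derivation into $M$: namely $d(xy) = x\,d(y) + y\,d(x)$ and $d(\{x,y\}) = \{x, d(y)\} + \{d(x), y\}$, where the $C$-module and Poisson-module structures on $M$ are the ones induced via $k$. First I would note that since $kf_1 = kf_2$, for every $x \in C$ we have $k(d(x)) = k(f_2(x)) - k(f_1(x)) = 0$, so $d(x) \in \ker k = i(M)$; thus $d$ lands in $M$. Also, for $a \in A$, since $f_1, f_2$ are compatible with $A \to E$, they agree on the image of $A$, so $d(a) = 0$; combined with additivity this gives that $d$ is $A$-linear (and $k$-linear).

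The multiplicative Leibniz rule is the computation in \cite{Sch67} transported verbatim: writing $f_2(x) = f_1(x) + d(x)$ with $d(x) \in i(M)$ and using $i(M)\cdot i(M) = 0$, we get
\begin{align*}
d(xy) &= f_2(xy) - f_1(xy) = f_2(x)f_2(y) - f_1(x)f_1(y) \\
&= (f_1(x) + d(x))(f_1(y) + d(y)) - f_1(x)f_1(y) = f_1(x)\,d(y) + d(x)\,f_1(y),
\end{align*}
and since $d(y) \in i(M)$ one has $f_1(x)\cdot d(y) = k(f_1(x))\cdot d(y) = x\cdot d(y)$ by definition of the $C$-module structure on $M$ via $k$ (because $i(M)$ is annihilated by $\ker k$), and similarly $d(x)\cdot f_1(y) = d(x)\cdot y = y\cdot d(x)$. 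Hence $d(xy) = x\,d(y) + y\,d(x)$. The bracket Leibniz rule is the exact analogue using $\{i(M), i(M)\} = 0$: expanding $\{f_2(x), f_2(y)\} = \{f_1(x) + d(x), f_1(y) + d(y)\}$ and using bilinearity of the bracket on $E$ together with the vanishing of $\{d(x), d(y)\}$ yields
\begin{align*}
d(\{x,y\}) &= \{f_2(x), f_2(y)\} - \{f_1(x), f_1(y)\} = \{f_1(x), d(y)\} + \{d(x), f_1(y)\},
\end{align*}
and then $\{f_1(x), d(y)\} = \{k(f_1(x)), d(y)\} = \{x, d(y)\}$ by the definition of the Poisson-module bracket $A \otimes_k M \to M$ induced via $k$ (recall $i(M)$ is a square-zero Poisson ideal, so the $E$-bracket on $i(M)$ factors through $B$), and likewise $\{d(x), f_1(y)\} = \{d(x), y\}$ using the right-module notation $\{m, b\} := m \cdot X_b$ from the Poisson-enveloping-algebra formalism. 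This gives $d(\{x,y\}) = \{x, d(y)\} + \{d(x), y\} = -\{d(y), x\} + \{x, d(y)\}$ after applying antisymmetry, matching the stated form. Finally, $A$-linearity of $d$ together with these two identities is precisely the definition of an element of $PDer_A(C, M)$.

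The steps are all routine once the module structures are pinned down correctly; the only genuine point requiring care — and the step I expect to be the main obstacle in writing it cleanly — is the bookkeeping that the ambient $E$-module and $E$-Poisson-bracket operations on the square-zero ideal $i(M)$ descend to the $C$-module and Poisson-$C$-module operations on $M$ that appear in the definition of $PDer_A(C,M)$. This is where the square-zero hypotheses $i(M)\cdot i(M) = 0$ and $\{i(M), i(M)\} = 0$ are used, and where one must invoke the identification of $M$ as a $\mathcal{U}_{Pois(B)}$-module (equivalently, Poisson $B$-module, then restricted to $C$ via $A \to C \to B$), so that the coefficients $f_1(x)$ and $f_1(y)$ may be replaced by their images $x, y$ in $B$ without changing the result. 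Everything else is a two-line expansion.
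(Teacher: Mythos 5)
Your proof is correct and follows essentially the same route as the paper: a direct verification of the two Leibniz identities using the square-zero conditions $i(M)\cdot i(M)=0$ and $\{i(M),i(M)\}=0$, with the $C$-module and Poisson-module structures on $M$ obtained from $f_1$ (equivalently $f_2$, or via $k$) exactly as in the paper's argument. The only cosmetic difference is that you expand via the substitution $f_2=f_1+d$ and kill the cross terms $d(x)d(y)$ and $\{d(x),d(y)\}$, whereas the paper uses the equivalent add-and-subtract telescoping of mixed products; your explicit remarks that $d$ lands in $\ker k=i(M)$ and vanishes on $A$ are welcome details the paper leaves implicit.
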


\begin{proof}
We assume that $i$ is an inclusion. First we note that $M$ is an Poisson $B$-module. We define $C$-module structure on $M$ by setting $c\cdot m:=f_1(c) m=f_2(c)m$ which is well-defined since $M^2=0$. We define an Poisson $C$-module structure on $M$ by setting $\{c,m\}:=\{f_1(c),m\}=\{f_2(c),m\}$ which is well-defined since $\{M,M\}=0$.
Cleary $f_2-f_1$ is $A$-linear. We note that
\begin{align*}
(f_2-f_1)(c_1c_2)&=f_2(c_1)f_2(c_2)-f_1(c_1)f_1(c_2)
\\&=f_2(c_1)f_2(c_2)-f_2(c_1)f_1(c_2)+f_2(c_1)f_1(c_2)-f_1(c_1)f_1(c_2)\\
&=f_2(c_1)(f_2(c_2)-f_1(c_2))+f_1(c_2)(f_2(c_1)-f_1(c_1))
\\&=c_1\cdot(f_2-f_1)(c_1)+c_2\cdot (f_2-f_1)(c_1)\\
(f_2-f_1)(\{c_1,c_2\})&=\{f_2(c_1),f_2(c_2)\}-\{f_1(c_1),f_1(c_2)\}\\
&=\{f_2(c_1),f_2(c_2)\}-\{f_2(c_1),f_1(c_2)\}+\{f_2(c_1),f_1(c_2)\}-\{f_1(c_1),f_1(c_2)\}\\
&=\{f_2(c_1),(f_2-f_1)(c_2)\}+\{(f_2-f_1)(c_1),f_1(c_2)\}\\
&=\{c_1,(f_2-f_1)(c_2)\}-\{c_2,(f_2-f_1)(c_1)\}
\end{align*}

\end{proof}

\begin{definition}
The short Poisson extension of $B$ over $A$ by $M:0\to M\xrightarrow{i}E\xrightarrow{k}B\to0$ is called trivial if it has a section, that is if there exists a Poisson homomorphism $\sigma:B\to E$ such that $k\sigma=1_B$ and $\sigma$ is compatible with $A$.
\end{definition}

Given an Poisson $B$-module $M$, a trivial short Poisson extension of $B$ over $A$ by $M$ can be constructed by considering the Poisson algebra $B\tilde{\oplus} M$ whose underlying $A$-module is $B\oplus M$ with multiplication and bracket defined by
\begin{align*}
(b_1,m_1)(b_2,m_2)&=(b_1b_2,b_1m_2+b_2m_1)\\
\{(b_1,m_1),(b_2,m_2)\}&=(\{b_1,b_2\},-\{b_2,m_1\}+\{b_1,m_2\})
\end{align*}

The first projection
\begin{align*}
p:B\tilde{\oplus} M\to B
\end{align*}
is a Poisson homomorphism compatible with $A$ and defines an Poisson extension of $B$ over $A$ by $M$.

A section of $p$ can be identified with a Poisson $A$-derivations $d:B\to M$. Indeed, if we have a section $\sigma:B\to B\tilde{\oplus} M$ with $\sigma(b)=(b,d(b))$, then for all $b,b'\in B$
\begin{align*}
\sigma(bb')&=(bb',d(bb'))=\sigma(b)\sigma(b')=(b,d(b))(b',d(b'))=(bb',bd(b')+b'd(b))\\
\sigma(\{b,b'\})&=(\{b,b'\},d(\{b,b'\})=\{\sigma(b),\sigma(b')\}=\{(b,d(b)),(b',d(b'))\}=(\{b,b'\},-\{b',d(b)\}+\{b,d(b')\})
\end{align*}

and if $a\in A$ then $\sigma(ab)=(ab,d(ab))=a\sigma(b)=a(b,d(b))=(ab,ad(b))$. Hence $d:B\to M$ is a Poisson $A$-derivation. Conversely, every Poisson $A$-derivation $d:B\to M$ defines a section $\sigma_d:B\to B\tilde{\oplus} M$ by $\sigma_d(b)=(b,d(b))$.

\begin{proposition}
Every trivial short Poisson extension $E$ of $B$ by $M$ is isomorphic to $(B\tilde{\oplus} M,p)$. 
\end{proposition}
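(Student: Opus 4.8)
The plan is to show that a trivial short Poisson extension is not just abstractly isomorphic to some $B\tilde{\oplus}M$, but that the data of a section $\sigma$ gives a \emph{canonical} Poisson isomorphism, and that this isomorphism is compatible with the structure maps $i$ and $k$. So the first step is to fix a trivial short Poisson extension
\begin{align*}
0\to M\xrightarrow{i} E\xrightarrow{k} B\to 0
\end{align*}
together with a Poisson section $\sigma:B\to E$ with $k\sigma=1_B$ and $\sigma$ compatible with $A$. I will treat $i$ as an inclusion, so that $M$ is a square-zero Poisson ideal of $E$ with $k|_M=0$. Recall from the preceding discussion that $M$ acquires its Poisson $B$-module structure via $k$; I should note this agrees with the $B$-module and bracket operations used to define $B\tilde{\oplus}M$.

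Next I would define the candidate map $\varphi:B\tilde{\oplus}M\to E$ by $\varphi(b,m)=\sigma(b)+i(m)$, and check directly that it is a homomorphism of $A$-algebras. For multiplicativity, expand $\varphi((b_1,m_1))\varphi((b_2,m_2))=(\sigma(b_1)+m_1)(\sigma(b_2)+m_2)$ and use $m_1m_2=0$ (square-zero) together with $\sigma(b_i)m_j$ being computed through $k\sigma(b_i)=b_i$, i.e.\ $\sigma(b_i)m_j=b_im_j$; this matches the term $b_1m_2+b_2m_1$ in the definition of the product on $B\tilde{\oplus}M$. For the bracket, expand $\{\sigma(b_1)+m_1,\sigma(b_2)+m_2\}$, use $\{m_1,m_2\}=0$, use that $\sigma$ is a Poisson map so $\{\sigma(b_1),\sigma(b_2)\}=\sigma(\{b_1,b_2\})$, and use that $\{\sigma(b_i),m_j\}$ is by definition $\{b_i,m_j\}$; one then recovers the sign convention $(\{b_1,b_2\},-\{b_2,m_1\}+\{b_1,m_2\})$ used above (here I must be careful to match the sign convention in the definition of the bracket on $B\tilde{\oplus}M$, since $M$ is a right $\mathcal{U}_{Pois}$-module with $m\cdot X_b=-\{b,m\}$). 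Compatibility with $A$ is immediate from compatibility of $\sigma$ and $i$.

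Then I would verify the diagram
\begin{center}
$\begin{CD}
0@>>>  M@>\iota>>B\tilde{\oplus}M@>p>>B @>>> 0\\
@. @|@V\varphi VV @| @.\\
0@>>> M@>i>> E @>k>>B@>>>0
\end{CD}$
\end{center}
commutes, where $\iota(m)=(0,m)$: indeed $\varphi\iota(m)=\sigma(0)+i(m)=i(m)$ and $k\varphi(b,m)=k\sigma(b)+k(i(m))=b=p(b,m)$. Finally, bijectivity: injectivity follows because $\varphi(b,m)=0$ forces $b=k\varphi(b,m)=0$ and then $i(m)=0$, hence $m=0$; surjectivity because for $e\in E$ we have $e-\sigma(k(e))\in\ker k=i(M)$, so $e=\sigma(k(e))+i(m)=\varphi(k(e),m)$ for a unique $m$. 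Since $\varphi$ is a bijective Poisson $A$-algebra homomorphism, its inverse is automatically a Poisson homomorphism, so $\varphi$ is an equivalence of short Poisson extensions.

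The only real subtlety — and the step I would be most careful with — is the bookkeeping of signs and of the module structures in the bracket computation: one must check that the module operations on $M$ induced from $E$ (via $k$, as a square-zero ideal) literally coincide with those postulated in the definition of $B\tilde{\oplus}M$, including the asymmetric sign in $\{(b_1,m_1),(b_2,m_2)\}$. Everything else is a routine unwinding of definitions. No deep input is needed here; the statement is the Poisson analogue of the classical fact that a square-zero extension with a section splits, and the proof is the same diagram chase adapted to the bracket.
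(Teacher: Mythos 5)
Your proof is correct and takes essentially the same route as the paper: the paper defines the same map $\xi((b,m))=\sigma(b)+m$, checks the bracket compatibility exactly as you do (including the sign $-\{b_2,m_1\}+\{b_1,m_2\}$), and exhibits the same inverse $e'\mapsto(k(e'),e'-\sigma k(e'))$. Your additional verifications of multiplicativity, $A$-linearity, and commutativity of the diagram are routine details the paper leaves implicit.
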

\begin{proof}
If $\sigma:B\to E$ is a section, an isomorphism $\xi:B\tilde{\oplus} M\to E$ is given by $\xi((b,m))=\sigma(b)+m$. We check only Poisson compatibility. $\xi(\{(b_1,m_1),(b_2,m_2)\})=\sigma(\{b_1,b_2\})-\{b_2,m_1\}+\{b_1,m_2\}=\{\sigma(b_1),\sigma(b_2)\}-\{b_2,m_1\}+\{b_1,m_2\}$. On the other hand, $\{(\xi((b_1,m_1)),\xi((b_2,m_2))\}=\{\sigma(b_1)+m_1,\sigma(b_2)+m_2\}=\{\sigma(b_1),\sigma(b_2)\}-\{b_2,m_1\}+\{b_1,m_2\}$. We define an inverse map $\xi^{-1}(e')=(k(e'),e'-\sigma k(e'))$.
\end{proof}

Let $\mathscr{E}:0\to M\to E\to B\to 0$ be a short Poisson extension of $B$ over $A$ by $M$ as above. Let $h:A\to E$ be the Poisson homomorphism from $\mathscr{E}$. Then we have by Proposition \ref{3prot},
\begin{align*}
PT^1(B/E,M)=Hom_{\mathcal{U}_{Pois(B)}}(M/M^2\oplus \{M,M\},M)=Hom_{\mathcal{U}_{Pois(B)}}(M,M).
\end{align*} 
Then by Proposition \ref{3pror}, $h$ induces 
\begin{align*}
h^*:=Hom_{\mathcal{U}_{Pois(B)}}(M,M)=PT^1(B/E,M)\to PT^1(B/A,M)
\end{align*} 

\begin{theorem}\label{3thm}
The assignment $\mathscr{E} \to h^*(id)$ induces a bijection
\begin{align*}
\rho:PEx^1(B/A,M)\to PT^1(B/A,M)
\end{align*}
in which the class of the trivial short Poisson extension corresponds to $0$.
\end{theorem}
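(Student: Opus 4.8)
The plan is to mimic the classical argument that $\mathrm{Ex}^1(B/A,M)\cong T^1(B/A,M)$ from \cite{Sch67}, adapted to the Poisson setting, by constructing an explicit inverse to $\rho$. First I would fix a free Poisson extension $\mathscr{F}\colon 0\to F_2\xrightarrow{e_2} F_1\xrightarrow{e_1} R\xrightarrow{e_0} B\to 0$ of $B$ over $A$ with $R=A\{x_i\}$, so that $PT^1(B/A,M)$ is computed as the cohomology of $\Hom_{\mathcal{U}_{Pois(B)}}(PL^\bullet(\mathscr{F}),M)$ at the middle term; a class in $PT^1$ is represented by a $\mathcal{U}_{Pois(B)}$-module homomorphism $\phi\colon \mathcal{U}_{Pois(B)}\otimes_{\mathcal{U}_{Pois(R)}} F_1\to M$ with $\phi\circ d_2=0$ (the cocycle condition, where $d_2$ is induced by $e_2$), modulo those $\phi$ that factor through $d_1\colon \mathcal{U}_{Pois(B)}\otimes_{\mathcal{U}_{Pois(R)}}\Omega^1_{\mathcal{U}_{Pois(R)/A}}\to$, i.e. through a Poisson $A$-derivation $R\to M$. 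Given such a cocycle $\phi$, I would build a short Poisson extension by taking the pushout: form $E:=\bigl(R\oplus M\bigr)/W$, where the $R$-module (indeed $\mathcal{U}_{Pois(R)}$-module) and Poisson-algebra structure on $R\tilde{\oplus}M$ is the trivial square-zero one, and $W$ is generated by the elements $\bigl(e_1(f),-\tilde\phi(f)\bigr)$ for $f\in F_1$, where $\tilde\phi$ lifts $\phi$. One checks $W$ is a Poisson ideal using the cocycle condition $\phi\circ d_2=0$ (this is exactly what makes the bracket descend); then $E\to B$ induced by $e_0$ on the first factor and $0$ on $M$ is a short Poisson extension of $B$ over $A$ by $M$, and the Poisson structure on $E$ over $A$ comes from that of $R$. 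This produces a map $\sigma\colon PT^1(B/A,M)\to PEx^1(B/A,M)$.

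Next I would verify the two compositions $\rho\circ\sigma$ and $\sigma\circ\rho$ are the identity. For $\rho\circ\sigma=\mathrm{id}$: given $\phi$, the extension $E=\sigma(\phi)$ comes with a natural Poisson homomorphism $R\to E$ (the first inclusion composed with the quotient), hence a homomorphism of Poisson extensions $\mathscr{F}\to \mathscr{E}$ over $\mathrm{id}_B$; chasing through the definition of $\rho$ — which pulls back $\mathrm{id}_M\in PT^1(B/E,M)$ along the structure map $A\to E$ via Proposition \ref{3pror} — one recovers exactly the class of $\phi$, because the connecting map in the construction is built from the very identification $F_2\to M$, hence $F_1$-component, encoded by $\phi$. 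For $\sigma\circ\rho=\mathrm{id}$: starting from an arbitrary short Poisson extension $\mathscr{E}\colon 0\to M\to E\to B\to 0$, the freeness of $\mathscr{F}$ (property $(L)$) gives a homomorphism $\mathscr{F}\to \mathscr{E}$ lifting $\mathrm{id}_B$, hence a cocycle $\phi$ representing $\rho(\mathscr{E})$; two different lifts differ by a homotopy, which by the homotopy invariance established before Definition of $PT^i$ changes $\phi$ by a coboundary, so $\rho$ is well defined, and unwinding $\sigma(\phi)$ reproduces $\mathscr{E}$ up to equivalence because the pushout is characterized by a universal property that $E$ satisfies. The statement that the trivial extension $B\tilde\oplus M$ maps to $0$ is immediate: it admits a section $B\to B\tilde\oplus M$ compatible with $A$, so the induced lift $\mathscr{F}\to B\tilde\oplus M$ factors through $R\to B$, forcing the $F_1$-cocycle to factor through $\Omega^1_{\mathcal{U}_{Pois(R)/A}}$, i.e. to be a coboundary; conversely a class mapping to $0$ yields a section by reversing this.

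I would organize the write-up in three steps: (1) construction of $\sigma$ and the verification that $W$ is a Poisson ideal — here the key computation is that $\{W,R\tilde\oplus M\}\subseteq W$ and $W\cdot(R\tilde\oplus M)\subseteq W$, which reduce to the relations defining $F_1$ as a $\mathcal{U}_{Pois(R)}$-module together with $\phi$ being $\mathcal{U}_{Pois(B)}$-linear and $\phi\circ d_2=0$; (2) $\rho\circ\sigma=\mathrm{id}$ and well-definedness of both maps on equivalence/cohomology classes, using Proposition \ref{3pror}, Proposition \ref{3prot} ($PT^1(B/E,M)=\Hom_{\mathcal{U}_{Pois(B)}}(M,M)$), and the homotopy argument; (3) $\sigma\circ\rho=\mathrm{id}$ and the normalization statement. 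The main obstacle I anticipate is step (1): showing that the square-zero Poisson structure on $R\tilde\oplus M$ descends to the quotient $E$, because this is precisely the place where the Poisson cocycle condition (as opposed to the mere module cocycle condition in the associative-commutative case of \cite{Sch67}) must be used, and one has to be careful that the bracket $\{-,-\}$ on $E$ is simultaneously well defined, $A$-bilinear, and satisfies the Jacobi identity — the latter following from $[\Lambda_R,\Lambda_R]=0$ encoded in $R$ being a genuine Poisson algebra. A secondary subtlety is checking that the map $\mathscr{F}\to\mathscr{E}$ supplied by property $(L)$ is genuinely a homomorphism of Poisson extensions (not just of the underlying flat extensions), which requires tracking the Poisson-biderivation correction terms exactly as in the proof of the homotopy lemma above.
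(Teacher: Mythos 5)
Your proposal is correct and follows essentially the same route as the paper: $\rho$ is defined by lifting the identity of $B$ along the fixed free Poisson extension $\mathscr{F}$ and taking the class of the induced map on $F_1$, and the inverse sends a cocycle $f\colon F_1\to M$ to the quotient $(P\oplus M)/K$ with $K$ generated by the elements $(\bar{j}(x),-f(x))$, exactly the paper's construction, with the same key verifications (that $K$ is a Poisson ideal, that $M\to E$ is injective via $f\circ\bar{i}=0$, homotopy-independence of the lift, independence of the representative $f$ up to a Poisson derivation, and the trivial extension giving the zero class). The only cosmetic difference is that you propose to check both composites explicitly, whereas the paper verifies $\rho\circ\rho^{-1}=\mathrm{id}$ together with well-definedness of both maps and leaves the other composite implicit.
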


\begin{proof}
Let
\begin{align*}
\mathscr{F}:0\to F_2\to F_1\to P\to B\to 0
\end{align*}
be a fixed free Poisson extension of $B$ over $A$. Given an short extension $\mathscr{E}:0\to M\to E\to B\to 0$, which is also an Poisson extension, we have a homomorphism $\alpha:\mathscr{F}\to \mathscr{E}$ extending the identity $B\to B$, unique up to homotopy,
\begin{center}
$\begin{CD}
0@>>> 0@>>> M@>j>> E@>k>> B@>>> 0\\
@. @AA\alpha_2A @AA\alpha_1A @AA\alpha_0A @AAidA @.\\
0@>>> F_2@>\bar{i}>> F_1@>\bar{j}>> P@>\bar{k}>> B @>>>0
\end{CD}$
\end{center}
 Let's consider the cotangent complex of $B$ over $A$
\begin{align*}
0\to F_2\xrightarrow{\bar{i}}\mathcal{U}_{Pois(B)}\otimes_{\mathcal{U}_{Pois(R)}} F_1\xrightarrow{d\circ\bar{j}} \mathcal{U}_{Pois(B)}\otimes_{\mathcal{U}_{Pois(P)}} \Omega_{\mathcal{U}_{Pois(P)/A}}^1\to 0
\end{align*}
Then we have
\begin{align*}
Hom_{\mathcal{U}_{Pois(P)}}( \Omega_{\mathcal{U}_{Pois(P)/A}}^1,M)\to Hom_{\mathcal{U}_{Pois(P)}}(F_1,M)\to Hom_{\mathcal{U}_{Pois(B)}}(F_2,M)
\end{align*}
The class of $\alpha_1:F_1\to M$ is $h^*(id)$. Let $\mathscr{E}':0\to M\to E'\to B\to 0$ be an equivalent short Poisson extension with $\mathscr{E}$ given by $\theta:E\to E'$. The we have a homomorphism $\mathscr{F}\to \mathscr{E}'$ extending the identity $B\to B$,
\begin{center}
$\begin{CD}
0@>>> 0@>>> M@>j'>> E@>k'>> B@>>> 0\\
@. @AA\alpha_2A @AA\alpha_1A @AA\theta \circ\alpha_0A @AAidA @.\\
0@>>> F_2@>\bar{i}>> F_1@>\bar{j}>> P@>\bar{k}>> B @>>>0
\end{CD}$
\end{center}
We also have the same $\alpha_1:F_1\to M$. Hence $\rho$ is well-defined.

Now we define the inverse map $\rho^{-1}$ of $\rho$ in the following way: given $e\in PT^1(B/A,M)$, choose $f:F_1\to M$ inducing $e$. We put $J=(P\oplus M)/K$ where $K$ is the Poisson ideal of $(P\oplus M)$ generated by the elements of the form $(\bar{j}(x),-f(x))$ for $x\in F_1$. We note that we have actually $K=\{(\bar{j}(x),-f(x))|x\in F_1\}$. Indeed, let $(p,m)\in  P\oplus M$. Then $(p,m)\cdot (\bar{j}(x),-f(x))=(p\cdot\bar{j}(x),-pf(x)+\bar{j}(x)m)=(\bar{j}(px),-f(px))$ since $M$ is a Poisson $P$-module via $P\xrightarrow{\bar{k}} B$, so $\bar{j}(x)m$ means $\bar{k}(\bar{j}(x))m=0$. On the other hand $\{(p,m),(\bar{j}(x),-f(x))\}=(\{p,\bar{j}(x)\}_P,-\{p,f(x)\}_M-\{\bar{j}(x), m\}_M)=(\bar{j}(\{p,x\}_{F_1}),f(\{p,x\}_{F_1})$ since $\{\bar{j}(x),m\}_M$ means $\{\bar{k}\bar{j}(x),m\}_M=0$. Hence $K=\{(\bar{j}(x),-f(x))|x\in F_1\}$. 

Now we claim that the following sequence is a short Poisson extension of $B$ over $A$ by $M$,
\begin{align*}
\mathscr{E}:0\to M\xrightarrow{j} (P\oplus M)/K\xrightarrow{k} B\to 0
\end{align*}
where $j(m):=$ the class of $(0,m)=\overline{(0,m)}$, and $k(\overline{(p,m)}):=\bar{k}(p)\in B$. $k$ is well-defined and a surjective Poisson map since for $(p,m)=(\bar{x},-f(x))\in K=\{(\bar{j}(x),-f(x))|x\in F_1\}$, $\bar{k}(p)=\bar{k}(\bar{j}(x))=0$. Now let $k(\overline{(p,m)})=\bar{k}(p)=0$. Then there exists $x\in F_1$ such that $\bar{j}(x)=p$. Then $(p,m)-(0,m+f(x))=(p,-f(x))=(\bar{j}(x),-f(x))\in K$. Hence $j(m+f(x))=\overline{(0,m+f(x))}=\overline{(p,m)}$. Hence $ker(k)\subset im(j)$ and clearly $im(j)\subset ker(k)$. Hence $im(j)=ker(k)$. Now we show that $j$ is injective. Let $\overline{(0,m)}=0$. Then $(0,m)\in K$. So we have $0=\bar{j}(x)$ and $m=-f(x)$ for some $x\in F_1$. Hence $x=\bar{i}(y)$ for some $y\in F_2$. Note that under the map $Hom_{\mathcal{U}_{Pois(P)}}( F_1,M)\to Hom_{\mathcal{U}_{Pois(B)}}(F_2,M)$, $f$ goes to $0$ since $f$ defines the cohomology class $e$, and so $f\circ \bar{i}=0$. Hence $m=-f(x)=-f(\bar{i}(y))=0$. So $j$ is injective. We have the following commutative diagram.
\begin{center}
$\begin{CD}
0@>>> 0@>>> M@>j>> (P\oplus M) /K@>k>> B@>>> 0\\
@. @AAA @AAfA @AA\alpha_0'A @AAidA @.\\
0@>>> F_2@>\bar{i}>> F_1@>\bar{j}>> P@>\bar{k}>> B @>>>0
\end{CD}$
\end{center}
where $\alpha_0'(p)=\overline{(p,0)}$. Indeed, let $x\in F_1$. $\alpha_0'(\bar{j}(x))=\overline{(\bar{j}(x),0)}$ and $j(f(x))=\overline{(0,f(x))}$. $(\bar{j}(x),0)-(0,f(x))=(\bar{j}(x),-f(x))\in K$. Thus $e=$ the class of $f=h^*(id)$.

Now we show that $\rho^{-1}$ is well-defined. In other words, $\rho^{-1}(e)$ is independent of choices of $f$ inducing $e\in T^1(B/A,M)$ up to equivalence of short Poisson extensions. Let $f,f': F_1\to M$ inducing $e$. Then there exists $v:\Omega_{\mathcal{U}_{Pois(P)/A}}^1\to M$ such that $f'-f=v\circ d\circ \bar{j}$ where $F_1\xrightarrow{\bar{j}}P\xrightarrow{d}\Omega_{\mathcal{U}_{Pois(P)/A}}^1\xrightarrow{v} M$. Let $\mathscr{E}'$ be the Poisson extension constructed from $f':F_1\to M$ as above,
\begin{align*}
0\to M\xrightarrow{j'} (P\oplus M)/K'\xrightarrow{k'} B\to 0
\end{align*}
where $K'$ is the Poisson ideal $\{(\bar{j}(x),-f'(x))|x\in F_1\}$. Consider an endomorphism $P\oplus M\to P\oplus M$ defined by $\varphi:(p,m)\mapsto (p,-v( d(p))+m)$, which is one to one and onto with the inverse $\varphi^{-1}(p,m)=(p,v(d(p))+m)$. We show that $\varphi$ is a Poisson homomorphism. Indeed,
\begin{align*}
\varphi((p_1,m_1)(p_2,m_2))=&\varphi(p_1p_2,p_1m_2+p_2m_1)=(p_1p_2,-v(p_1dp_2+p_2dp_1)+p_1m_2+p_2m_1)\\
&=(p_1p_2,p_1(-v(dp_2)+m_2)+p_2(-v(dp_1)+m_1))\\
&=(p_1,-v(dp_1)+m_1)(p_2,-v(dp_2)+m_2)\\
&=\varphi(p_1,m_1)\varphi(p_2,m_2)\\
\varphi(\{(p_1,m_1),(p_2,m_2)\})&=\varphi((\{p_1,p_2\}_P,\{p_1,m_2\}_M-\{p_2,m_1\}_M)\\
&=(\{p_1,p_2\}_P, -v(-\{p_2,dp_1\}_M+\{p_1,dp_2\}_M)+\{p_1,m_2\}_M-\{p_2,m_1\}_M)\\
&=(\{p_1,p_2\}_P,-\{p_2,m_1-v(dp_1)\}_M+\{p_1,m_2-v(dp_2)\}_M)\\
&=(\{(p_1,m_1-v(dp_1)),(p_2,m_2-v(dp_2))\})\\
&=\{\varphi(p_1,m_1),\varphi(p_2,m_2)\}
\end{align*}
On the other hand, 
\begin{align*}
\varphi((\bar{j}(x),-f(x))&=(\bar{j}(x), -v(d(\bar{j}(x)))-f(x))=(\bar{j}(x),-f'(x))\\
\varphi^{-1}(-\bar{j}(x),f'(x))&=(-\bar{j}(x), -v(d(\bar{j}(x)))+f'(x))=(-\bar{j}(x),f(x))
\end{align*} for $x\in F_1$. Hence $\varphi$ maps $K$ to $K'$. Hence $\varphi$ induces an isomorphism $(P\oplus M)/K\to (P\oplus M)/K'$. Hence $\mathscr{E}$ is equivalent to $\mathscr{E}'$. Hence $\rho^{-1}$ is well-defined.

Lastly we show that the class of trivial short Poisson extension $0\to M\to B\tilde{\oplus} M\to B\to 0$ corresponds to $0\in PT^1(B/A,M)$ via $\rho$. In the following diagram
\begin{center}
$\begin{CD}
0@>>> 0@>>> M@>>> B\tilde{\oplus} M @>p>>B@>>> 0\\
@. @AAA @AA\alpha_1A @AA\alpha_0A @AAidA @.\\
0@>>> F_2@>\bar{i}>> F_1@>\bar{j}>> P@>\bar{k}>> B @>>>0
\end{CD}$
\end{center}
Let $q: B\tilde{\oplus} M\to M$ be the projection. Then $q\circ \alpha_0:P\to M$ be a Poisson $A$-derivation with $\alpha_1=q\circ \alpha_0\circ \bar{j}$. Hence there exists a map $v:\Omega_{\mathcal{U}_{Pois(P)/A}}^1\to M$ such that $q\circ \alpha_0=v\circ d$. So we have $\alpha_1=v\circ d\circ \bar{j}$. Hence the class of $\alpha_1$ is $0$.
\end{proof}

\section{First order Poisson deformations of affine Poisson schemes}

Let's explain in more detail a short Poisson extension of $R$ over $A$ by $I$ : $0\to I\xrightarrow{j} R'\xrightarrow{\phi} R\to0$, where $I$ is a Poisson $R$-module with $j(I)\cdot j(I)=0$ and $\{j(I),j(I)\}=0$. Here $\phi$ is a Poisson homomorphism of Poisson $k$-algebras compatible with a Poisson algebra $A$. Then we assume that $I$ is a Poisson $R'$-module via $\phi$, and so $j$ is a Poisson $R'$-module homomorphism.

When we say that a short Poisson extension of $R$ over $A$ by $R$ means that we have an exact sequence $0\to R\xrightarrow{j} R'\xrightarrow{\phi} R\to0$, where $R$ is a natural Poisson $R$-module, the image of $j$ satisfies $j(R)^2=0$ and $\{j(R),j(R)\}=0$, $R$ also have a $R'$-module structure via $\phi$. By these $R'$-module structure, and $j$ is a Poisson $R'$- module homomorphism. Now we show that this extension gives a $k[\epsilon]$-Poisson algebra structure on $R'$. Note that $j$ is completely determined by $1$ since $j(f)=j(f\cdot 1)=j(f'\cdot 1)=f'\cdot j(1)$ where $f'$ is a lift of $f$. We will give a $k[\epsilon]$-algebra structure on $R'$ by $\epsilon\to j(1)$ since $j(1)^2=0$. To show $k[\epsilon]$-Poisson algebra structure on $R'$, we have to show that $\{R', \epsilon\}=0$, equivalently $\{r,j(1)\}=0$ for all $r\in R'$. Since $j$ is a $R'$-module homomorphism, $\{r,j(1)\}_{R'}=j(X_r\cdot 1)=j(\{\phi(r),1\}_R)=0$.

\begin{proposition}[compare $\cite{Har10}$ Theorem 5.1]\label{3prr}
Let $B_0$ be a Poisson $k$-algebra, and let $X_0=Spec(B_0)$. Then there is a natural isomorphism 
\begin{align*}
PDef_{B_0}(k[\epsilon])\cong PEx^1(B_0/k,B_0)
\end{align*}
where the class of trivial Poisson deformation corresponds to $0\in PT_{B_0}^1$.
\end{proposition}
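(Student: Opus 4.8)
The plan is to exhibit mutually inverse maps between $PDef_{B_0}(k[\epsilon])$, the set of equivalence classes of first-order Poisson deformations of $X_0=\mathrm{Spec}(B_0)$, and $PEx^1(B_0/k,B_0)$, the set of equivalence classes of short Poisson extensions of $B_0$ over $k$ by $B_0$, and then to invoke Theorem \ref{3thm} to identify the latter with $PT^1(B_0/k,B_0)$. The key observation, already sketched in the paragraphs preceding the statement, is that a short Poisson extension
\[
0\to B_0\xrightarrow{j} R'\xrightarrow{\phi} B_0\to 0
\]
automatically endows $R'$ with the structure of a Poisson $k[\epsilon]$-algebra by sending $\epsilon\mapsto j(1)$: we have $j(1)^2=0$ because $j(B_0)^2=0$, and $\{r,j(1)\}_{R'}=j(X_r\cdot 1)=j(\{\phi(r),1\}_{B_0})=0$ because $j$ is a $\mathcal U_{Pois(R')}$-module map, so $\epsilon$ is Poisson-central. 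Moreover $R'$ is flat over $k[\epsilon]$: the sequence $0\to B_0\xrightarrow{\cdot\epsilon} R'\to B_0\to 0$ shows $R'$ is $k[\epsilon]$-flat by the standard artinian flatness criterion (see Lemma \ref{3ll} or \cite{Ser06} Lemma 1.2.6), and $R'\otimes_{k[\epsilon]}k\cong B_0$ as Poisson $k$-algebras. Hence $\mathrm{Spec}(R')\to\mathrm{Spec}(k[\epsilon])$ is a first-order Poisson deformation of $X_0$; this defines a map $PEx^1(B_0/k,B_0)\to PDef_{B_0}(k[\epsilon])$.

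First I would check this map is well defined on equivalence classes: an equivalence of short Poisson extensions is a Poisson $k$-homomorphism $\theta\colon R'\to R''$ compatible with the identity on $B_0$ on both ends; such a $\theta$ sends $j'(1)\mapsto j''(1)$, hence is a $k[\epsilon]$-algebra map, hence a Poisson isomorphism of the associated deformations over $k[\epsilon]$ inducing the identity on $X_0$. Conversely, given a first-order Poisson deformation, i.e.\ a flat Poisson $k[\epsilon]$-algebra $R'$ with a Poisson isomorphism $R'\otimes_{k[\epsilon]}k\cong B_0$, multiplication by $\epsilon$ gives an exact sequence $0\to \epsilon R'\to R'\to B_0\to 0$; flatness gives $\epsilon R'\cong R'\otimes_{k[\epsilon]}(\epsilon)\cong B_0$ as $B_0$-modules, and one checks the induced bracket on $\epsilon R'$ coincides with the natural Poisson $B_0$-module bracket on $B_0$ (using that $\epsilon$ is Poisson-central, so $\{r,\epsilon r'\}=\epsilon\{r,r'\}$ unambiguously, compatibly with $R'\twoheadrightarrow B_0$). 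Since $(\epsilon R')^2=0$ and $\{\epsilon R',\epsilon R'\}=0$ (as $\epsilon^2=0$), this is a short Poisson extension of $B_0$ over $k$ by $B_0$; and a Poisson $k[\epsilon]$-isomorphism of deformations inducing the identity on $X_0$ is exactly an equivalence of the corresponding short Poisson extensions. These two assignments are visibly inverse to one another, giving a natural bijection $PDef_{B_0}(k[\epsilon])\cong PEx^1(B_0/k,B_0)$.

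Finally, composing with the bijection $\rho\colon PEx^1(B_0/k,B_0)\xrightarrow{\sim} PT^1(B_0/k,B_0)$ of Theorem \ref{3thm}, and noting that the trivial first-order Poisson deformation $B_0[\epsilon]=B_0\tilde\oplus B_0$ corresponds precisely to the trivial short Poisson extension, which Theorem \ref{3thm} sends to $0$, we obtain the desired isomorphism carrying the trivial class to $0$. Naturality in $B_0$ is routine from the constructions. The step I expect to require the most care is verifying that the bracket transported onto $\epsilon R'$ under the flatness isomorphism $\epsilon R'\cong B_0$ agrees on the nose with the intrinsic Poisson $B_0$-module structure on $B_0$; this is where one must carefully use both the $k[\epsilon]$-bilinearity of the bracket and the Poisson-centrality of $\epsilon$, checking that the identifications are compatible with all the module and bracket operations rather than merely additive. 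Everything else is a translation between the two bookkeeping formalisms and an appeal to Theorem \ref{3thm}.
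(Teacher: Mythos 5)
Your proposal is correct and follows essentially the same route as the paper: both directions use the identification $\epsilon\mapsto j(1)$ (Poisson-centrality of $\epsilon$ from the $\mathcal{U}_{Pois}$-module property of $j$) and the flatness criterion $0\to B_0\xrightarrow{\epsilon} B\to B_0\to 0$, with the same equivalence checks and the same treatment of the trivial class, followed by Theorem \ref{3thm}. The bracket-compatibility step you flag is exactly the point the paper handles (implicitly) via the statement that the induced module structure on $\epsilon B\cong B_0$ is the natural one, so there is no substantive difference.
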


\begin{proof}
A first order Poisson deformation of $B_0$ consists of a flat Poisson $k[\epsilon]$-algebra $B$ with Poisson $k$-isomorphism $B\otimes_{k[\epsilon]} k \cong B_0$ with the following commutative diagram
\begin{center}
$\begin{CD}
B@>\phi>> B_0\\
@AAA @AAA\\
k[\epsilon]@>>> k
\end{CD}$
\end{center}
where $\phi$ is a Poisson homomorphism over $k$. We note that a algebra $B$ is flat over $k[\epsilon]$ if and only if $0\to B_0\otimes_{k} (\epsilon)\cong B_0 \to B$ is exact. (see \cite{Har10} Proposition 2.2). So given a first order Poisson deformation of $B_0$, we have an exact sequence $ 0\to B_0\xrightarrow{j=\epsilon} B\xrightarrow{\phi} B/\epsilon B \cong B_0\to 0$, where the first map $\epsilon(b_0):=\epsilon \cdot b$, where $b$ is a lifting of $b_0$ via $\phi$. This is a short Poisson extension of $B_0$ over $k$ by $B_0$ since $B_0$ is a Poisson $B$-module and $\epsilon(B_0)^2=\{\epsilon(B_0),\epsilon(B_0)\}=0$ and the induced $B_0$-module structure on $B_0$ via $\phi$ is given by the multiplication on $B_0$. Let $B'$ be an equivalent Poisson deformation of $B_0$ over $k[\epsilon]$ with the first order Poisson deformation $B$. Then we would like to show that  the following diagram commutes
\begin{center}
$\begin{CD}
0@>>> B_0@>j>> B@>\phi>> B_0@>>> 0\\
@. @| @V\cong \Phi VV @| \\
0  @>>> B_0  @>j'>> B'@>\phi'>> B_0 @>>>0
\end{CD}$
\end{center}
where $\Phi$ is a Poisson homomorphism over $k[\epsilon]$ defining equivalent first order Poisson deformations of $B_0$.
Since right diagram commutes by definition of equivalence, we only check that the left diagram commutes. $j(b_0)=\epsilon b$ where $\phi(b)=b_0$. Since $\phi'(\Phi(b))=b_0$, $j'(b_0)=\epsilon \Phi(b)$. Hence the diagram commutes. So equivalent first order Poisson deformations corresponds to equivalent short Poisson extensions $B_0$ over $k$ by $B_0$.

Conversely, let $0\to B_0\xrightarrow{j} B\xrightarrow{\phi} B_0 \to 0$ be a Poisson extension. Then $B$ is a Poisson $k[\epsilon]$-algebra by the above discussion. In this case, we can identify $B_0$ with $j(B_0)=\epsilon B=B\otimes_{k[\epsilon]} (\epsilon)$. Hence $B$ is flat over $k[\epsilon]$ and $B/\epsilon B=B\otimes_{k[\epsilon]} k\cong B_0$. Since $\epsilon B$ is a Poisson ideal ($\phi$ is a Poisson map), $\phi$ induces a Poisson isomorphism $B\otimes_{k[\epsilon]} k\cong B_0$. Given a equivalent Poisson extension, since the right diagram in the above commutes, it gives an equivalent Poisson deformations of $B_0$.

Let $B_0\otimes_k k[\epsilon]=B_0\oplus \epsilon B_0$ be the trivial Poisson deformation of $B_0$. Then the associated Poisson extension is trivial $0\to B_0\to B_0\oplus \epsilon B_0 \to B_0$:
\end{proof}

\begin{corollary}
Let $B_0$ be a Poisson $k$-algebra. Then the set of first order Poisson deformations of $B_0$ is in natural one to one correspondence $PT^1(B_0/k,B_0)$.
\begin{align*}
PDef_{B_0}(k[\epsilon])\cong PT^1(B_0/k,B_0)
\end{align*}
\end{corollary}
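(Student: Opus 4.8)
The plan is to chain together the two results that were just proved so that the Corollary follows with essentially no new work. First I would recall Proposition~\ref{3prr}, which gives a natural bijection $PDef_{B_0}(k[\epsilon])\cong PEx^1(B_0/k,B_0)$ carrying the trivial deformation class to the trivial short Poisson extension class. Then I would invoke Theorem~\ref{3thm} with $A=k$, $B=B_0$, and $M=B_0$: the map $\rho\colon PEx^1(B_0/k,B_0)\to PT^1(B_0/k,B_0)$ is a bijection sending the class of the trivial short Poisson extension to $0$. Composing these two bijections yields the desired natural one-to-one correspondence
\begin{align*}
PDef_{B_0}(k[\epsilon])\cong PEx^1(B_0/k,B_0)\cong PT^1(B_0/k,B_0).
\end{align*}

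The one point that deserves a sentence of care is \emph{naturality}: I would note that both Proposition~\ref{3prr} and Theorem~\ref{3thm} were constructed in a way that is functorial in the relevant data, so the composite is also compatible with Poisson $k$-algebra homomorphisms $B_0\to B_0'$ (and, more to the point for applications, that the correspondence is canonical, not dependent on auxiliary choices such as a chosen free Poisson extension $\mathscr{F}$ of $B_0$ over $k$ — this independence is exactly what the well-definedness argument in the proof of Theorem~\ref{3thm} establishes, via the homotopy-uniqueness of $\alpha\colon\mathscr{F}\to\mathscr{E}$). Beyond recording that the trivial-to-zero correspondence is preserved under the composition, there is nothing further to verify.

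The main obstacle, such as it is, is purely bookkeeping: one must make sure that the Poisson $B$-module structure used on $B_0$ in the definition of $PEx^1(B_0/k,B_0)$ — where $B_0$ sits as a square-zero Poisson ideal $\epsilon B=B\otimes_{k[\epsilon]}(\epsilon)$ inside the total deformation $B$ — matches the Poisson $B_0$-module structure (ordinary multiplication and the given bracket on $B_0$) appearing in $PT^1(B_0/k,B_0)=\mathrm{Hom}_{\mathcal{U}_{Pois(B_0)}}(\Omega^1_{\mathcal{U}_{Pois(B_0)}/k},B_0)$ by Proposition~\ref{3pro}. But this identification was already carried out inside the proof of Proposition~\ref{3prr} (the induced $B_0$-module structure on $\epsilon B$ via $\phi$ is the multiplication on $B_0$), so no new computation is needed. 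I would therefore present the Corollary's proof in two or three lines: cite Proposition~\ref{3prr}, cite Theorem~\ref{3thm} with $M=B_0$, compose, and remark that the trivial class goes to $0$ throughout.
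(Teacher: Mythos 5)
Your proposal is correct and is exactly the paper's argument: the paper's proof is the one-line composition of Proposition~\ref{3prr} with Theorem~\ref{3thm} (taking $A=k$, $B=B_0$, $M=B_0$), giving $PDef_{B_0}(k[\epsilon])\cong PEx^1(B_0/k,B_0)\cong PT^1(B_0/k,B_0)$. Your additional remarks on naturality and on matching the Poisson $B_0$-module structures are sound but not needed beyond what the cited results already establish.
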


\begin{proof}
By Theorem \ref{3thm} and Proposition \ref{3prr}, we have $PDef_{B_0}(k[\epsilon])\cong PEx^1(B_0/k,B_0)\cong PT^1(B_0/k,B_0)$.
\end{proof}

\section{First order deformations of a Poisson closed subscheme of an affine Poisson scheme}

Let $X$ be a Poisson scheme over $k$ and let $Y$ be a closed Poisson subscheme of $(X,\Lambda_0)$. We will define a Poisson deformation of $Y$ over $Spec\, k[\epsilon]$ in $X$ to be a Poisson subscheme $Y'\subset (X\times Spec(k[\epsilon]),\Lambda_0\oplus 0)$ where $(X\times Spec(k[\epsilon]),\Lambda_0\oplus 0)$  is the trivial Poisson deformation of $X$ over $Spec(k[\epsilon])$, $Y'\times_{Spec \,k[\epsilon]} k=Y$ and $Y'$ is flat over $Spec(k[\epsilon])$. 

We discuss deformations of Poisson subschemes when $X$ is an affine Poisson scheme. Then $X$ corresponds to a Poisson $k$-algebra $(B,\Lambda_0)$, and $Y$ is defined by an Poisson ideal $I\subset B$. We would like to find Poisson ideals of $k[\epsilon]$-Poisson algebra $(B'=B\otimes_k k[\epsilon],\Lambda_0\oplus 0)$ such that the image of $I'$ in $B=B'/\epsilon B'$ is $I$ and $B'/I'$ is flat over $k[\epsilon]$. We note that the flatness of $B'/I'$ over $k[\epsilon]$ is equivalent to the exactness of $0\to B/I\xrightarrow{\epsilon} B'/I'\to B/I\to 0$ where $\epsilon$ is the multiplication by $\epsilon$ if and only if $0\to I\xrightarrow{\epsilon} I'\to I\to 0$ is exact. (see \cite{Har10} page 11)

\begin{proposition}[compare $\cite{Har10}$ Proposition 2.3]\label{3p}
To give a Poisson ideal $I'\subset B'=B\oplus \epsilon B$ such that $B'/I'$ is flat over $k[\epsilon]$ and the image of $I'$ in $B$ is $I$ is equivalent to an element 
\begin{align*}
\varphi\in Hom_{\mathcal{U}_{Pois(B)}}(I,B/I)=Hom_{\mathcal{U}_{Pois(B/I)}}(I/I^2\oplus \{I,I\},B/I). 
\end{align*}
In particular, $\varphi=0$ corresponds to the trivial deformation given by $I'=I\oplus \epsilon I$ inside $B'=B\oplus \epsilon B$. \end{proposition}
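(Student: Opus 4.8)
The plan is to mimic the classical non-Poisson argument (compare \cite{Har10} Proposition 2.3), keeping careful track of the extra bracket structure at every step, and then to observe that all the ideals that arise are automatically Poisson ideals. First I would set up the correspondence at the level of $k$-submodules exactly as in the commutative case: an ideal $I'\subset B'=B\oplus \epsilon B$ whose image in $B$ is $I$ and for which $B'/I'$ is flat over $k[\epsilon]$ corresponds (via the flatness criterion recalled just before the statement, i.e. exactness of $0\to I\xrightarrow{\epsilon} I'\to I\to 0$) to the graph of a $k$-linear map; writing a general element of $I'$ as $x+\epsilon(y+\psi(x))$ with $x,y\in I$, the datum $I'$ is determined by a well-defined $\psi:I\to B/I$. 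The ideal condition ($B'\cdot I'\subseteq I'$) translates, as in \cite{Har10}, precisely into $\psi$ being $B$-linear, i.e. $\psi(bx)=b\psi(x)$ for $b\in B$, $x\in I$, so $\psi$ descends to $\overline\psi\in\mathrm{Hom}_B(I/I^2,B/I)$.

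Next comes the genuinely new part: I would show that the Poisson-ideal condition on $I'$ — namely $\{B',I'\}_{\Lambda_0\oplus 0}\subseteq I'$ — is equivalent to $\psi$ being compatible with the bracket action, i.e. $\psi(\{b,x\})=\{b,\psi(x)\}$ for $b\in B$, $x\in I$, where $\{b,\psi(x)\}$ is the Poisson $B/I$-module action on $B/I$ (induced from $\Lambda_0$). The computation is the bracket analogue of the multiplicative one: for $b\in B$ and $x+\epsilon(y+\psi(x))\in I'$ one has, since the Poisson structure on $B'$ is $\Lambda_0\oplus 0$ and hence $\{b,\epsilon(\cdot)\}=\epsilon\{b,\cdot\}_{\Lambda_0}$,
\[
\{b,\;x+\epsilon(y+\psi(x))\}_{B'}=\{b,x\}_B+\epsilon\big(\{b,y\}_B+\{b,\psi(x)\}\big),
\]
and membership of this element in $I'$ says exactly that its $\epsilon$-part, modulo $I$, equals $\psi$ applied to its degree-zero part $\{b,x\}_B\in I$, i.e. $\{b,\psi(x)\}\equiv\psi(\{b,x\}_B)\pmod I$. (One also checks the symmetric condition $\{I',B'\}\subseteq I'$, which gives nothing new by skew-symmetry of the bracket.) Combining the multiplicative and bracket conditions, the data $\overline\psi$ is precisely an element of $\mathrm{Hom}_{\mathcal{U}_{Pois(B/I)}}(I/I^2\oplus\{I,I\},B/I)=\mathrm{Hom}_{\mathcal{U}_{Pois(B)}}(I,B/I)$; here I use the identification of Poisson-module homomorphisms out of $I/(I^2\oplus\{I,I\})$ established in the first section of this chapter, together with the fact that $\psi$ killing $\{I,I\}$ follows automatically once $\psi$ is a Poisson-module map (it kills $I^2$ by $B$-linearity and vanishing on $I$, and it kills $\{I,I\}$ by the bracket compatibility and $\{I,\cdot\}$ landing in $I$). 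Conversely, given such a $\overline\psi$, I would define $I'$ as the graph $\{x+\epsilon(y+\psi(x))\mid x,y\in I\}$ and verify directly, reversing the above, that it is a Poisson ideal of $B'$ with the required properties; flatness is immediate from the exactness of $0\to I\xrightarrow{\epsilon} I'\to I\to 0$.

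Finally I would record that $\overline\psi=0$ gives $I'=I\oplus\epsilon I=I\otimes_k k[\epsilon]$, which is the Poisson ideal cutting out the trivial deformation $Y\times\mathrm{Spec}\,k[\epsilon]$ inside $(X\times\mathrm{Spec}\,k[\epsilon],\Lambda_0\oplus 0)$. The main obstacle I anticipate is bookkeeping rather than conceptual: making sure that the Poisson $B/I$-module structure appearing on the target of $\overline\psi$ is exactly the one induced by $\Lambda_0$ (so that the bracket-compatibility equation is the correct module-homomorphism condition and not something weaker), and checking that $\psi$ automatically annihilates $\{I,I\}$ so that the two displayed $\mathrm{Hom}$-groups genuinely coincide — both points require unwinding the definitions of the Poisson enveloping algebra action from the first section of this chapter, but involve no hard estimates. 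Everything else parallels \cite{Har10} Proposition 2.3 line by line, with ``$\cdot$'' systematically accompanied by ``$\{-,-\}$''.
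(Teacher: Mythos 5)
Your proposal is correct and follows essentially the same route as the paper's proof: the graph correspondence $I'=\{x+\epsilon y\mid x\in I,\ \bar y=\varphi(x)\}$, well-definedness of $\varphi$ via the flatness criterion $0\to I\xrightarrow{\epsilon}I'\to I\to 0$, the ideal condition giving $B$-linearity and the Poisson-ideal condition giving bracket compatibility, and the converse construction with $\varphi=0$ yielding $I'=I\oplus\epsilon I$. Your extra remarks on descending to $I/(I^2\oplus\{I,I\})$ and identifying the two $\mathrm{Hom}$-groups are harmless elaborations of what the paper cites from its earlier results on $\mathcal{U}_{Pois}$-modules.
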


\begin{proof}
$B'=B\oplus \epsilon B$ is naturally a Poisson $B$-algebra in the following way: $a\cdot (b+\epsilon c)=ab+\epsilon ac$ and $\{a,b+\epsilon c\}=\{a,b\}+\epsilon\{a,c\}$. Now let $I'$ be a Poisson ideal of $B'=B\oplus \epsilon B$ such that $B'/I'$ is flat over $k[\epsilon]$ and $\pi(I')=I$, where $\pi:B\oplus \epsilon B\to B$ be the projection. Let $x\in I$ and choose a lifting $x'$ of $x$ via $\pi$. Then $x'=x+\epsilon y\in I'$ for some $y\in B$. Let $x''=x+\epsilon y'\in I'$ be an another lifting of $x$. Then $y-y'\in I$ by the flatness of $B'/I'$ over $k[\epsilon]$. So the image $\bar{y}$ in $B/I$ is uniquely determined. So $\varphi:I\to B/I$, $x\to \bar{y}$ is well-defined. We claim that this is a Poisson $B$-module homomorphism. Indeed, let $\varphi(x)=\bar{y}$. Since $x+\epsilon y\in I'$, we have $bx+\epsilon by\in I'$ for $b\in B$. So  $\varphi(b\cdot x)=\overline{by}=b\bar{y}=b\cdot\varphi(y)$. On the other hand, since $\{b,x\}+\epsilon\{b,y\}\in I'$, we have $\varphi(\{b,x\})=\overline{\{b,x\}}=\{b,\bar{x}\}=\{b,\varphi(x)\}$.

Conversely, let $\varphi\in Hom_{\mathcal{U}_{Pois(B)}}(I,B/I)$. Define
\begin{align*}
I'=\{x+\epsilon y|x\in I,y\in B, \,\,\text{the image of $\bar{y}$ of $y$ in $B/I$ is equal to $\varphi(x)$}\}
\end{align*}
We claim that $I'$ is a Poisson ideal of $B'$. Let $x+\epsilon y\in I'$ and $a+\epsilon b\in B'$. Then $x\in I$ and $\varphi(x)=\bar{y}$. Since $(a+\epsilon b)(x+\epsilon y)=ax+\epsilon(bx+ay)$ and $\overline{bx+ay}=\overline{ay}$, we have $\varphi(ax)=a\varphi(x)=\overline{ay}$. On the other hand, since $\{a+\epsilon b, x+\epsilon y\}=\{a,x\}+\epsilon(\{b,x\}+\{a,y\})$ and $\overline{\{b,x\}+\{a,y\}}=\overline{\{a,y\}}$, we have  $\varphi(\{a,x\})=\overline{\{a,y\}}=\{a,\varphi(x)\}$. We have a natural exact sequence $0\to I \xrightarrow{\epsilon} I'\to I\to 0$, where $\epsilon$ means multiplication by $\epsilon$. Since the exactness means the exactness of $0\to B/I\xrightarrow{\epsilon} B'/I'\to B/I\to 0$, $B'/I'$ is a flat over $k[\epsilon]$.

These two construction is one to one correspondence. When $\varphi=0\in Hom_{\mathcal{U}_{Pois(B)}}(B,B/I)$, we have $I'=I\oplus \epsilon I$.
\end{proof}

\begin{corollary}
Let $B_0$ be a Poisson $k$-algebra and $I$ be a Poisson ideal of $B_0$. Let $C=B_0/I$. Then the set of first order deformations of Poisson closed subscheme $Spec(B_0/I)$ of an affine Poisson scheme $Spec(B_0)$ is in natural one to one correspondence with $PT^1(C/B_0,C)$.
\end{corollary}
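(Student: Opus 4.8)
The plan is to combine the two results that immediately precede this corollary, namely Proposition \ref{3p} and Proposition \ref{3prot}, exactly as was done in the previous corollary (on deformations of $\mathrm{Spec}(B_0)$ itself). Write $B_0$ for the ambient Poisson $k$-algebra, $I$ for the given Poisson ideal, and $C = B_0/I$. By definition, a first order deformation of the Poisson closed subscheme $\mathrm{Spec}(C) \subset \mathrm{Spec}(B_0)$ is a Poisson subscheme $Y' \subset (\mathrm{Spec}(B_0)\times \mathrm{Spec}(k[\epsilon]), \Lambda_0 \oplus 0)$, flat over $k[\epsilon]$, restricting to $\mathrm{Spec}(C)$ over the closed point. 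Such a $Y'$ corresponds precisely to a Poisson ideal $I' \subset B_0' := B_0\otimes_k k[\epsilon]$ with $B_0'/I'$ flat over $k[\epsilon]$ and with image $I$ in $B_0$.

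First I would invoke Proposition \ref{3p}, which gives a natural bijection between the set of such Poisson ideals $I'$ and the $C$-module
\begin{align*}
\mathrm{Hom}_{\mathcal{U}_{Pois(B_0)}}(I, B_0/I) \;=\; \mathrm{Hom}_{\mathcal{U}_{Pois(C)}}(I/(I^2\oplus\{I,I\}), C),
\end{align*}
where the trivial deformation $I' = I\oplus \epsilon I$ corresponds to the zero homomorphism. Next I would apply Proposition \ref{3prot} to the surjective Poisson homomorphism $B_0 \to C = B_0/I$ with kernel $I$: that proposition identifies $PT^1(C/B_0, M) = \mathrm{Hom}_{\mathcal{U}_{Pois(C)}}(I/(I^2\oplus\{I,I\}), M)$ for every Poisson $C$-module $M$, and in particular $PT^1(C/B_0, C) = \mathrm{Hom}_{\mathcal{U}_{Pois(C)}}(I/(I^2\oplus\{I,I\}), C)$. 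Composing these two identifications yields the asserted natural bijection between first order deformations of the Poisson subscheme $\mathrm{Spec}(C)$ in $\mathrm{Spec}(B_0)$ and $PT^1(C/B_0, C)$, under which the trivial deformation goes to $0$.

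The only point requiring a little care — and the place I expect the (mild) obstacle to be — is checking that the two bijections are genuinely natural and that the $\mathcal{U}_{Pois(C)}$-module structures match up: the source module in Proposition \ref{3p} is $\mathrm{Hom}_{\mathcal{U}_{Pois(B_0)}}(I, C)$ while the source in Proposition \ref{3prot} is $\mathrm{Hom}_{\mathcal{U}_{Pois(C)}}(I/(I^2\oplus\{I,I\}), C)$, and one must confirm that the canonical isomorphism $\mathcal{U}_{Pois(C)}\otimes_{\mathcal{U}_{Pois(B_0)}} I \cong I/(I^2\oplus\{I,I\})$ (established in the proof of the exact sequence (\ref{3equ})) is the one making both sides agree as Poisson $C$-modules. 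Since both of the cited propositions already record this isomorphism, the verification is routine, and no further computation is needed; the statement then follows formally.
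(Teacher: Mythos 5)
Your proposal is correct and follows exactly the paper's own (very brief) argument: the paper proves this corollary simply by citing Proposition \ref{3prot} and Proposition \ref{3p}, which is precisely the combination you make. Your extra remark about matching the module identification $\mathcal{U}_{Pois(C)}\otimes_{\mathcal{U}_{Pois(B_0)}} I \cong I/(I^2\oplus \{I,I\})$ is a reasonable point of care, but it is already built into the statements of the cited propositions, so nothing further is needed.
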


\begin{proof}
This follows from Proposition \ref{3prot} and Proposition \ref{3p}
\end{proof}

\begin{remark}\label{3remarks}
If our construction of Poisson cotangent complex may turn out to be correct and use right languages, there is a ``globalization'' problem, which I cannot solve at this point. In $\cite{Sch67}$, Lichtenbaum and Schlessinger actually constructed  a quasi-coherent sheaf $\mathcal{T}^i (X/Y,\mathcal{F})$ for a morphism of schemes $f:X\to Y$ and a quasi-coherent sheaf $\mathcal{F}$ of $\mathcal{O}_X$-module where $X$ is separated, $Y$ is noetherian and $f$ is locally of finite type. I could not show that $PT^i(B/A,M)$ commutes with localization and so that we can define a sheaf $\mathcal{PT}^i(X/Y,\mathcal{F})$ where $f:X\to Y$ is a morphism of Poisson schemes satisfying suitable finiteness conditions and $\mathcal{F}$ is a quasi-cohernt Poisson $\mathcal{O}_X$-module.
\end{remark}

\appendix
\chapter{Basic materials on Poisson algebras and holomorphic Poisson manifolds}\label{appendixa}

In this section, we present basic facts about holomorphic Poisson manifolds relevant to our discussions.  Our reference is \cite{Lau13}.

\begin{definition}
A commutative $\mathbb{C}$-algebra $A$ with identity is called a Poisson algebra if there is a Poisson bracket $\{-.-\}$ such that  
\begin{enumerate}
\item $(A,\{-,-\})$ is a Lie algebra over $\mathbb{C}$
\item The multiplications are compatible in the sense that
\begin{align*}
\{a\cdot b,c\}=a\cdot \{b,c\}+ b\cdot \{a,c\}
\end{align*}
for any $a,b,c\in A$
\end{enumerate}
\end{definition}

\begin{definition}
Let $(A,\{-,-\}_1)$ and $(B,\{-,-\}_2)$ be Poisson algebras. A $\mathbb{C}$-algebra homomorphism $\phi:A\to B$ is called a Poisson map if it is compatible with Poisson brackets
\begin{align*}
 \phi(\{a,b\})=\{\phi(a),\phi(b)\}
\end{align*}
for any $a,b\in A$.
\end{definition}

\begin{definition}
A holomorphic Poisson manifold $M$ is a complex manifold such that the structure sheaf is a sheaf of Poisson algebra. In other words, for any open set $U\in M$, $\mathcal{O}_M(U)$ is a Poisson algebra and for any open set $V\subset U$, the restriction map $\mathcal{O}_M(U)\to \mathcal{O}_M(V)$ is a Poisson map.
\end{definition}

We recall a Schouten bracket, denoted by $[-,-]_{Sch}$, on holomorphic polyvector fields $\bigoplus_{i\geq 0} H^0(M,\wedge^i T_M)$ on $M$.

\begin{proposition}
Let $M$ be a $n$-dimensional holomorphic Poisson manifold. Then there exist a holomorphic bivector field $\Lambda \in H^0(M,\wedge^2 T_M)$ with $[\Lambda,\Lambda]_{Sch}=0$ and $\{-,-\}$ is defined in the following way: let $U=(z_1,...,z_n)$ be a coordinate neighborhood of $M$ and $\Lambda=\sum_{i,j=1}^n \Lambda_{ij}(z)\frac{\partial}{\partial z_i}\wedge \frac{\partial}{\partial z_j}$ on $U$ , then for 
\begin{align*}
(*)\{f,g\}=\Lambda(df,dg)=\Lambda(\partial f ,\partial g )=\sum_{i,j=1}^n \Lambda_{ij}(z)\left(\frac{\partial f}{\partial z_i}\frac{\partial g}{\partial z_j}-\frac{\partial g}{\partial z_i}\frac{\partial f}{\partial z_j}\right)
\end{align*}
for any holomorphic functions $f,g\in \mathcal{O}_M(U)$.

Conversley, a holomorphic bivector field $\Lambda$ in $H^0(M,\wedge^2 T_M)$ with $[\Lambda,\Lambda]_{Sch}=0$ makes $M$ a holomorphic Poisson manifold by $(*)$.
\end{proposition}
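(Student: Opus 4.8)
The statement has two directions. For the forward direction, assume $M$ is a holomorphic Poisson manifold with structure sheaf a sheaf of Poisson algebras, and produce the bivector $\Lambda \in H^0(M,\wedge^2\Theta_M)$. The plan is to work locally. On a coordinate polydisk $U=(z_1,\dots,z_n)$ the Poisson bracket $\{-,-\}$ on $\mathcal{O}_M(U)$ is a $\mathbb{C}$-bilinear operation which, by the Leibniz rule, is a biderivation; hence it is determined by its values $g_{ij}:=\{z_i,z_j\}$ on the coordinate functions, and for holomorphic $f,g$ one has $\{f,g\}=\sum_{i,j} g_{ij}\,\partial_{z_i}f\,\partial_{z_j}g$ with $g_{ij}=-g_{ji}$. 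Thus $\Lambda|_U := \sum_{i<j} g_{ij}\,\partial_{z_i}\wedge\partial_{z_j}$ is a holomorphic bivector field on $U$ representing the bracket. The first step is to check that these local bivectors glue: since the brackets on overlapping charts are restrictions of the same sheaf bracket and the restriction maps are Poisson maps, the pairing $\{f,g\}$ is coordinate-independent, so $\Lambda|_U$ and $\Lambda|_{U'}$ agree on $U\cap U'$ by the biderivation characterization (equivalently, $\Lambda$ is the image of the bracket under the canonical isomorphism $\wedge^2\Theta_M \cong \mathscr{H}om(\wedge^2\Omega^1_M,\mathcal{O}_M)$, which is a sheaf statement). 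This gives a global $\Lambda \in H^0(M,\wedge^2\Theta_M)$.

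The second step is the integrability condition $[\Lambda,\Lambda]_{\mathrm{Sch}}=0$. The Jacobi identity for $\{-,-\}$, when written out in local coordinates using the biderivation formula above, becomes exactly the system of equations
\[
\sum_{l=1}^n g_{li}\,\partial_{z_l}g_{jk} + g_{lj}\,\partial_{z_l}g_{ki} + g_{lk}\,\partial_{z_l}g_{ij} = 0
\]
for all $i,j,k$. This is precisely the local coordinate expression of $[\Lambda,\Lambda]_{\mathrm{Sch}}=0$ (compare Lemma \ref{formula} in the excerpt, which records exactly this equivalence). So I would invoke that lemma: the Jacobi identity on each chart is equivalent to the vanishing of $[\Lambda,\Lambda]_{\mathrm{Sch}}$ on that chart, and since $M$ is covered by such charts, $[\Lambda,\Lambda]_{\mathrm{Sch}}=0$ globally.

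For the converse, start from $\Lambda \in H^0(M,\wedge^2\Theta_M)$ with $[\Lambda,\Lambda]_{\mathrm{Sch}}=0$ and define $\{f,g\}:=\Lambda(\partial f,\partial g)$ for local holomorphic functions; equivalently define the bracket on each chart by the explicit formula $(*)$. Then: $\mathbb{C}$-bilinearity and skew-symmetry are immediate from skew-symmetry of $\Lambda$; the Leibniz rule $\{ab,c\}=a\{b,c\}+b\{a,c\}$ follows because $\Lambda(\partial(ab),\partial c)=a\,\Lambda(\partial b,\partial c)+b\,\Lambda(\partial a,\partial c)$; and the Jacobi identity follows from $[\Lambda,\Lambda]_{\mathrm{Sch}}=0$ again via Lemma \ref{formula}, which I have already noted translates the Schouten condition into exactly the Jacobi equations. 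One must also check these local brackets are compatible with restriction, which is automatic since they are all restrictions of the single global operation $f,g\mapsto \Lambda(\partial f,\partial g)$; hence $\mathcal{O}_M$ becomes a sheaf of Poisson algebras and $M$ a holomorphic Poisson manifold.

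\textbf{Main obstacle.} None of this is deep; the only point requiring care is the bookkeeping in expanding the Jacobi identity into the coordinate identity for $[\Lambda,\Lambda]_{\mathrm{Sch}}$, i.e.\ correctly matching signs and index ranges so that the hypothesis $g_{ij}=-g_{ji}$ is used consistently. Since Lemma \ref{formula} is already available, this reduces to citing it rather than recomputing, so in practice the ``hard part'' is merely making sure the local-to-global gluing is phrased cleanly — best done by passing through the canonical sheaf isomorphism $\wedge^2\Theta_M\cong\mathscr{H}om_{\mathcal{O}_M}(\wedge^2\Omega^1_M,\mathcal{O}_M)$ so that ``the bracket is a global biderivation'' and ``$\Lambda$ is a global section'' are literally the same datum.
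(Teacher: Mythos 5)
Your proposal is correct, but note that the paper does not actually prove this proposition: it is stated in Appendix \ref{appendixa} with the proof deferred to the reference \cite{Lau13} (and its affine algebraic analogue in chapter \ref{chapter7} is likewise cited to \cite{Lau13}, Proposition 3.5). So what you have written is a self-contained argument where the thesis simply cites the literature, and it is essentially the standard one. Two points that you gloss over deserve to be made explicit if this were written out in full. First, the step ``a biderivation of $\mathcal{O}_M(U)$ is determined by its values $\{z_i,z_j\}$ on the coordinates'' is not a purely formal fact for holomorphic (as opposed to polynomial) functions: it uses that every $\mathbb{C}$-linear derivation of the ring of germs of holomorphic functions is a holomorphic vector field, which one proves via the Hadamard-type expansion $f(z)=f(w)+\sum_i (z_i-w_i)h_i(z)$ with $h_i(w)=\partial f/\partial z_i(w)$; this is also what guarantees that $\{f,g\}(x)$ depends only on $df(x)$, $dg(x)$, i.e.\ that the bracket really is a bivector. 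Second, in both directions you pass between the Jacobi identity for all holomorphic functions and the coordinate equations of Lemma \ref{formula}; this reduction is legitimate because the Jacobiator $J(f,g,h)=\{f,\{g,h\}\}+\{g,\{h,f\}\}+\{h,\{f,g\}\}$ of a skew-symmetric biderivation is itself a derivation in each argument, so its vanishing on the coordinate functions forces its vanishing identically — without that remark, citing Lemma \ref{formula} in the converse direction only gives the Jacobi identity on coordinates. With those two standard facts spelled out, your gluing argument (the biderivation and the section of $\wedge^2\Theta_M\cong\mathscr{H}om_{\mathcal{O}_M}(\wedge^2\Omega^1_M,\mathcal{O}_M)$ are the same sheaf-theoretic datum) completes the proof, up to the usual normalization ambiguity between summing over $i<j$ and summing over all $i,j$ with antisymmetric coefficients, which is purely a convention and matches the one fixed in the thesis.
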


We denote a holomorphic Poisson manifold by $(M,\Lambda)$.

\begin{definition}
A holomorphic map $f:(M,\Lambda)\to (M,\Lambda')$ between holomorphic Poisson manifolds is called a Poisson map if for all open sets $U\subset M$ and $V\subset N$ with $f(U)\subset V$, the induced map (defined by pullback) $f^*:\mathcal{O}_N(V)\to \mathcal{O}_M(U)$ is a Poisson map.
\end{definition}

We recall that for each $p\in M$, we have a linear $f_{*}:T_p M\to T_{f(p)} N$. This extends to $f_{*}:\wedge^2 T_p M\to \wedge^2 T_{f(p)} N$.
\begin{proposition}
Let $f:(M,\Lambda) \to (N,\Lambda')$ be a holomorphic map. Then $f$ is a Poisson map if and only if 
\begin{align*}
f_{*} \Lambda_p=\Lambda'_{f(p)}
\end{align*}
for all $p \in M$.
\end{proposition}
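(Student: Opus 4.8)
The plan is to reduce the statement to the coordinate characterization of the Poisson bracket on a holomorphic Poisson manifold and the definition of the pushforward $f_*\colon \wedge^2 T_pM\to \wedge^2 T_{f(p)}N$. First I would fix $p\in M$, set $q=f(p)$, and choose coordinate polydisks $U=(z_1,\dots,z_m)$ around $p$ and $V=(w_1,\dots,w_n)$ around $q$ with $f(U)\subset V$, writing $w_\alpha = f_\alpha(z)$ for the component functions. In these coordinates the Poisson structures are $\Lambda = \sum_{i,j}\Lambda_{ij}(z)\,\frac{\partial}{\partial z_i}\wedge\frac{\partial}{\partial z_j}$ on $U$ and $\Lambda' = \sum_{\alpha,\beta}\Lambda'_{\alpha\beta}(w)\,\frac{\partial}{\partial w_\alpha}\wedge\frac{\partial}{\partial w_\beta}$ on $V$, and the brackets are computed by the formula $(*)$ of the preceding proposition. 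Since $f^*w_\alpha = f_\alpha$, the pullbacks $f^*w_1,\dots,f^*w_n$ generate $\mathcal O_N(V)$ as a Poisson algebra over $\mathbb C$ locally (more precisely, any holomorphic function on $V$ is a convergent power series in the $w_\alpha$, and both $f^*$ and the Poisson brackets are continuous and satisfy the Leibniz rule), so $f^*$ is a Poisson map on $V$ if and only if $f^*\{w_\alpha,w_\beta\}_N = \{f^*w_\alpha, f^*w_\beta\}_M$ for all $\alpha,\beta$.

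The next step is to unwind both sides of that identity in coordinates. On one hand, $f^*\{w_\alpha,w_\beta\}_N = f^*\bigl(2\Lambda'_{\alpha\beta}(w)\bigr) = 2\Lambda'_{\alpha\beta}(f(z))$, which is exactly (twice) the $(\alpha,\beta)$-coefficient of $\Lambda'_{q}$ read off along the image of $U$. On the other hand, by $(*)$ applied in the $z$-coordinates,
\begin{align*}
\{f^*w_\alpha, f^*w_\beta\}_M
= \{f_\alpha, f_\beta\}_M
= \sum_{i,j}\Lambda_{ij}(z)\left(\frac{\partial f_\alpha}{\partial z_i}\frac{\partial f_\beta}{\partial z_j} - \frac{\partial f_\beta}{\partial z_i}\frac{\partial f_\alpha}{\partial z_j}\right),
\end{align*}
and this is precisely (twice) the $(\alpha,\beta)$-coefficient of $f_*\Lambda_p$ expressed in the $w$-coordinates, because $f_*\bigl(\frac{\partial}{\partial z_i}\bigr) = \sum_\alpha \frac{\partial f_\alpha}{\partial z_i}\frac{\partial}{\partial w_\alpha}$ and $f_*$ on $\wedge^2 T_pM$ is the induced map on the second exterior power. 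Hence the equality $f^*\{w_\alpha,w_\beta\}_N = \{f^*w_\alpha,f^*w_\beta\}_M$ holding for all $\alpha,\beta$ is equivalent to $f_*\Lambda_p = \Lambda'_{f(p)}$ as elements of $\wedge^2 T_{f(p)}N$.

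Putting the two directions together: if $f$ is a Poisson map then $f^*$ is a Poisson map on every admissible pair $(U,V)$, so the displayed coordinate identities hold and $f_*\Lambda_p = \Lambda'_{f(p)}$ for every $p$ (the point $p$ and the coordinate charts were arbitrary). Conversely, if $f_*\Lambda_p = \Lambda'_{f(p)}$ for all $p$, then the coordinate identities hold at every point, hence $f^*\{w_\alpha,w_\beta\}_N = \{f^*w_\alpha,f^*w_\beta\}_M$ identically on each such $U$; by the Leibniz rule and continuity this upgrades to $f^*\{g,h\}_N = \{f^*g, f^*h\}_M$ for all holomorphic $g,h$ on $V$, and since $M,N$ are covered by such charts and being a Poisson map is a local condition, $f$ is a Poisson map. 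The main obstacle I anticipate is the clean justification that $f^*$ preserving the bracket on the generators $w_\alpha$ forces it to preserve the bracket on all holomorphic functions — this requires the observation that a $\mathbb C$-algebra homomorphism between the local rings that is additionally compatible with brackets on a topologically generating set, and is continuous in the natural Fréchet topology, is a Poisson map; handling this rigorously (rather than waving at "polynomials are dense") is the one genuinely non-formal point, and it is exactly where one uses that the $\Lambda_{ij}, \Lambda'_{\alpha\beta}$ are holomorphic so that the brackets are determined by their values on coordinate functions via the biderivation property.
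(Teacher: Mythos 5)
Your argument is correct in substance, and note that the paper itself states this proposition in Appendix \ref{appendixa} without proof (it is background quoted from the reference cited there), so there is no in-paper argument to compare against; the coordinate computation you give is the standard one. The only point to adjust is the step you single out as "genuinely non-formal": no density, continuity, or Fr\'echet-topology argument is needed to pass from the coordinate functions $w_\alpha$ to arbitrary holomorphic $g,h$. By the formula $(*)$ in the preceding proposition the bracket is pointwise tensorial, $\{g,h\}_N(q)=\Lambda'_q(dg_q\wedge dh_q)$, and by the chain rule $\{f^*g,f^*h\}_M(p)=(f_*\Lambda_p)\bigl(dg_{f(p)}\wedge dh_{f(p)}\bigr)$; hence at each point $p$ both sides of the Poisson-map condition depend only on the differentials $dg_{f(p)}$ and $dh_{f(p)}$, and since $dw_1,\dots,dw_n$ span $T^*_{f(p)}N$, equality on coordinate functions at every point is already equivalent both to $f_*\Lambda_p=\Lambda'_{f(p)}$ and to the identity for all local holomorphic $g,h$. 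Your displayed computation of $\{f_\alpha,f_\beta\}_M$ is precisely the verification of this chain-rule identity, so the "upgrade" you anticipate as the hard point is automatic; holomorphy of the coefficients $\Lambda_{ij},\Lambda'_{\alpha\beta}$ is not what makes it work (it only ensures the bivectors are holomorphic sections). A small wording slip: the $w_\alpha$ do not generate $\mathcal{O}_N(V)$ as a Poisson algebra; what you actually use, and all you need, is that their differentials span the cotangent space at each point of $V$.
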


\begin{definition}
Let $(M,\Lambda)$ be a holomorphic Poisson manifold and $N$ be a complex submanifold of $M$. Assume that $N$ is a holomorphic Poisson manifold with a holomorphic bivector field $\Lambda'$. Then $(N,\Lambda')$ is called a Poisson submanifold if the inclusion map $i:(N,\Lambda')\to (M,\Lambda)$ is a Poisson map. Hence $\Lambda'$ is uniquely determined by $\Lambda$ by restricting $\Lambda$ to $N$.
\end{definition}

\begin{proposition}\label{box}
Let $(M,\Lambda)$ be a holomorphic Poisson manifold and $N$ be a complex submanifold of $M$. Then the following are equivalent.
\begin{enumerate}
\item $N$ is a holomorphic Poisson submanifold of $(M,\Lambda)$.
\item The ideal sheaf $\mathcal{I}_N\subset \mathcal{O}_M$ defined by $\mathcal{I}_N(U)=\{f \in \mathcal{O}_M(U)|f|_{N\cap U}=0\}$ for open sets $U\subset M$ is a Poisson ideal of $\mathcal{O}_M(U)$. That is, $\mathcal{I}_N(U)$ is an ideal under the Poisson bracket$:$ if $f\in \mathcal{I}_M(U)$ and $g\in \mathcal{O}_N(U)$, then $\{f,g\}\in \mathcal{I}_M(U)$.
\item For every $p\in N$, the bivector $\Lambda_p$ belongs to $\wedge^2 T_p N$.
\end{enumerate}
\end{proposition}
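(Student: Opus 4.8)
\textbf{Proof proposal for Proposition \ref{box}.}
The plan is to prove the cyclic chain of implications $(3)\Rightarrow(2)\Rightarrow(1)\Rightarrow(3)$, all three of which are essentially local, so throughout I would fix a point $p\in N$ and work in an adapted holomorphic coordinate chart $U=(z_1,\dots,z_n)$ on $M$ centered at $p$ in which $N\cap U=\{z_{k+1}=\cdots=z_n=0\}$, where $k=\dim N$. In such coordinates $\mathcal{I}_N(U)$ is the ideal generated by $z_{k+1},\dots,z_n$, and a holomorphic function lies in $\mathcal{I}_N(U)$ exactly when its restriction to $N\cap U$ vanishes. I would write $\Lambda=\sum_{i<j}\Lambda_{ij}(z)\,\frac{\partial}{\partial z_i}\wedge\frac{\partial}{\partial z_j}$ on $U$, so that by formula $(*)$ of Appendix \ref{appendixa} the Poisson bracket is $\{f,g\}=\sum_{i,j}\Lambda_{ij}\big(\tfrac{\partial f}{\partial z_i}\tfrac{\partial g}{\partial z_j}-\tfrac{\partial g}{\partial z_i}\tfrac{\partial f}{\partial z_j}\big)$.

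First I would prove $(3)\Rightarrow(2)$. Condition $(3)$ says $\Lambda_p\in\wedge^2 T_pN$, which in the adapted coordinates means $\Lambda_{ij}(0)=0$ whenever $i>k$ or $j>k$; but the same condition holds at every point of $N\cap U$ (shrinking $U$ if necessary), so $\Lambda_{ij}$ vanishes identically on $N\cap U$ whenever $i>k$ or $j>k$, i.e. $\Lambda_{ij}\in\mathcal{I}_N(U)$ in that range. Now take $f\in\mathcal{I}_N(U)$ and $g\in\mathcal{O}_M(U)$; write $f=\sum_{a>k} z_a h_a$ with $h_a$ holomorphic. A direct expansion of $\{f,g\}$ using the Leibniz rule (compatibility of the Poisson bracket with multiplication) reduces matters to showing $\{z_a,g\}\in\mathcal{I}_N(U)$ for each $a>k$, and $\{z_a,g\}=\sum_j \Lambda_{aj}\tfrac{\partial g}{\partial z_j}-\sum_i\Lambda_{ia}\tfrac{\partial g}{\partial z_i}$, which lies in $\mathcal{I}_N(U)$ because every coefficient $\Lambda_{aj},\Lambda_{ia}$ with $a>k$ does. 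Hence $\mathcal{I}_N(U)$ is a Poisson ideal, which is $(2)$.

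Next, $(2)\Rightarrow(1)$: if $\mathcal{I}_N$ is a sheaf of Poisson ideals, then for open $U\subset M$ the quotient $\mathcal{O}_M(U)/\mathcal{I}_N(U)\cong\mathcal{O}_N(N\cap U)$ inherits a well-defined Poisson bracket, so the structure sheaf of $N$ becomes a sheaf of Poisson algebras; by construction the restriction-of-functions map realizing $i^\sharp:\mathcal{O}_M(U)\to\mathcal{O}_N(N\cap U)$ is then a Poisson map, so the inclusion $i:N\hookrightarrow M$ is a Poisson map, and the induced holomorphic bivector $\Lambda'$ on $N$ makes $(N,\Lambda')$ a holomorphic Poisson submanifold of $(M,\Lambda)$ in the sense of Appendix \ref{appendixa}. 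Finally $(1)\Rightarrow(3)$ is immediate from the proposition in Appendix \ref{appendixa} characterizing Poisson maps: since $i$ is a Poisson map, $i_*\Lambda'_p=\Lambda_{i(p)}=\Lambda_p$ for all $p\in N$, and $i_*\Lambda'_p$ visibly lies in the image of $\wedge^2 T_pN\to\wedge^2 T_pM$, i.e. $\Lambda_p\in\wedge^2 T_pN$.

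I do not anticipate a serious obstacle here; the only point requiring a little care is the bookkeeping in $(3)\Rightarrow(2)$, namely passing from "$\Lambda_p\in\wedge^2 T_pN$ at the single point $p$" to "the relevant coefficients $\Lambda_{ij}$ vanish on all of $N\cap U$" — this is where one uses that $(3)$ is assumed at every point of $N$, not just one, together with the freedom to choose coordinates adapted to $N$ uniformly on a neighborhood. Everything else is the Leibniz rule plus the coordinate formula $(*)$, and the implications $(2)\Rightarrow(1)$ and $(1)\Rightarrow(3)$ are formal consequences of the definitions and the Poisson-map criterion already recorded in Appendix \ref{appendixa}.
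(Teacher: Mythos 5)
The paper itself contains no proof of Proposition \ref{box}: it is stated in Appendix \ref{appendixa} as background material, with \cite{Lau13} cited as the source. Your cyclic argument $(3)\Rightarrow(2)\Rightarrow(1)\Rightarrow(3)$ is correct and is essentially the standard proof one finds in that reference: adapted coordinates reduce $(3)\Rightarrow(2)$ to the vanishing along $N$ of every coefficient $\Lambda_{ij}$ carrying an index larger than $\dim N$, after which the Leibniz rule and the generators $z_{k+1},\dots,z_n$ of $\mathcal{I}_N$ finish the step; $(2)\Rightarrow(1)$ is passage to the quotient sheaf together with the correspondence, recorded in Appendix \ref{appendixa}, between sheaf-of-Poisson-algebras structures and holomorphic bivectors with vanishing Schouten square; and $(1)\Rightarrow(3)$ is the pushforward criterion $i_*\Lambda'_p=\Lambda_p$ for Poisson maps.

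Two pieces of bookkeeping deserve a sentence, though neither is a genuine gap. In $(2)\Rightarrow(1)$ the identification $\mathcal{O}_M(U)/\mathcal{I}_N(U)\cong\mathcal{O}_N(N\cap U)$ is only guaranteed for small $U$ (restriction of holomorphic functions need not be surjective for arbitrary open $U$); since the bracket is local this is harmless --- define the bracket on $\mathcal{O}_N$ chartwise via local extensions, with well-definedness provided exactly by $(2)$, and glue by the sheaf axiom. Likewise the decomposition $f=\sum_{a>k}z_a h_a$ in $(3)\Rightarrow(2)$ needs $U$ to be an adapted polydisk (or at least a product-type chart), which is what your ``shrinking $U$'' supplies, and checking $\{f,g\}\in\mathcal{I}_N(U)$ for a general open $U$ is again a purely local matter. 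Finally, item $(2)$ as printed in the paper contains typos ($\mathcal{I}_M$ for $\mathcal{I}_N$, $\mathcal{O}_N$ for $\mathcal{O}_M$); you have read it in the only sensible way, namely $f\in\mathcal{I}_N(U)$, $g\in\mathcal{O}_M(U)$, $\{f,g\}\in\mathcal{I}_N(U)$, which is the intended statement.
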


\begin{definition}
If $(M,\Lambda)$ and $(N,\Lambda)$ are holomorphic Poisson manifolds, then we can make $M\times N$ a holomorphic Poisson manifold $(M\times N,\Lambda\oplus \Lambda')$ induced from $\Lambda$ and $\Lambda'$ in the following way. Let $U=(z_1,...,z_n)$ and $V=(w_1,...,w_m)$ be coordinate neighborhoods of $M$ and $N$ respectively. Then $U\times V=(z_1,..,z_n,w_1,...,w_m)$ is a coordinate neighborhood of $M\times N$. Let $\Lambda=\sum_{i,j} g_{ij}(z)\frac{\partial}{\partial z_i}\wedge \frac{\partial}{\partial z_j}$ and $\Lambda'=\sum_{r,s} h_{rs}(w)\frac{\partial}{\partial w_r}\wedge \frac{\partial}{\partial w_s}$. We define 
\begin{align*}
\Lambda\oplus\Lambda'=\sum_{i,j=1}^n g_{ij}(z)\frac{\partial}{\partial z_i}\wedge \frac{\partial}{\partial z_j}+\sum_{r,s=1}^m h_{rs}(w)\frac{\partial}{\partial w_r}\wedge \frac{\partial}{\partial w_s}
\end{align*}
Then $\Lambda\oplus \Lambda'$ is a holomorphic bivector field on $M$ and $[\Lambda\oplus\Lambda',\Lambda\oplus \Lambda']_{Sch}=0$.
\end{definition}

\chapter{Hypercohomology}\label{appendixb}

In this appendix, we present the basic materials on hypercohomology relevant to our discussion. Our reference is \cite{EV92} Appendix. In this section we fix $M$ as a complex manifold. We denote $T_M=T$ by the holomorphic tangent bundle of $M$ and $\Theta_M=\Theta$ be the sheaves of germs of holomorphic section of $T$.

\begin{definition}
The map $\sigma:\mathcal{F}^{\bullet}\to \mathcal{I}^{\bullet}$ between two complexes of sheaves of $\mathbb{C}$-module on $M$ is called an injective resolution of $\mathcal{F}^{\bullet}$ if $\mathcal{I}^{\bullet}$ is a complex of $\mathbb{C}$-module bounded below, $\sigma$ is a quasi isomorphism, and the sheaves $\mathcal{I}^i$ are injective for all $i$.
\end{definition}

\begin{remark}
Every complex of $\mathbb{C}$-modules on $M$ which is bounded below admits an injective resolution.
\end{remark}

\begin{definition}
Let $\mathcal{F}^{\bullet}$ be a complex of $\mathbb{C}$-modules on $M$ which is bounded below. Then the hypercohomology group $\mathbb{H}^a(M,\mathcal{F})$ is defined to be the $\mathbb{C}$-module
\begin{align*}
\mathbb{H}^a(M,\mathcal{F}^{\bullet}):=\frac{ker\,\,\,\Gamma(M,\mathcal{I}^a)\to \Gamma(M,\mathcal{I}^{a+1})}{im\,\,\,\Gamma(M,\mathcal{I}^{a-1})\to\Gamma(M,\mathcal{I}^a)}
\end{align*}
\end{definition}

\begin{remark}
This definition does not depend on the choice of injective resolution.
\end{remark}

\begin{proposition} \label{alpha}
If $\sigma:\mathcal{F}^{\bullet}\to \mathcal{G}^{\bullet}$ is a quasi-isomorphism and if $H^a(M,\mathcal{G}^i)=0$ for all $a>0$ and all $i$, then 
\begin{align*}
\mathbb{H}^a(M,\mathcal{F}^{\bullet})=\frac{ker\,\,\, \Gamma(M,\mathcal{G}^a)\to \Gamma(X,\mathcal{G}^{a+1})}{im\,\,\,\Gamma(M,\mathcal{G}^{a-1})\to \Gamma(M,\mathcal{G}^a)}
\end{align*}
We call $\mathcal{G}^{\bullet}$ an acyclic resolution of $\mathcal{F}^{\bullet}$.
\end{proposition}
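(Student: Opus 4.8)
The plan is to reduce the computation of $\mathbb{H}^a(M,\mathcal{F}^\bullet)$ via an acyclic resolution to the standard injective definition, and then to read off the answer from the double-complex (hypercohomology) spectral sequence. First I would choose a Cartan--Eilenberg resolution $\mathcal{G}^\bullet \to \mathcal{I}^{\bullet,\bullet}$: a double complex of injective sheaves, first-quadrant in the resolution index, together with an augmentation such that for each fixed $i$ the column $0 \to \mathcal{G}^i \to \mathcal{I}^{i,0}\to \mathcal{I}^{i,1}\to\cdots$ is an injective resolution, compatibly with the differentials of $\mathcal{G}^\bullet$. Since $\mathcal{G}^\bullet$ is bounded below such a resolution exists, and the total complex $\mathcal{I}^\bullet := \mathrm{Tot}(\mathcal{I}^{\bullet,\bullet})$ is a bounded-below complex of injectives equipped with a quasi-isomorphism $\mathcal{G}^\bullet \to \mathcal{I}^\bullet$; i.e.\ it is an injective resolution of $\mathcal{G}^\bullet$. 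Composing with $\sigma$, it is also an injective resolution of $\mathcal{F}^\bullet$, so by the definition of hypercohomology, $\mathbb{H}^a(M,\mathcal{F}^\bullet) = H^a\big(\Gamma(M,\mathcal{I}^\bullet)\big)$.

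Next I would compute $H^\bullet(\Gamma(M,\mathcal{I}^\bullet))$ using the spectral sequence of the double complex $\Gamma(M,\mathcal{I}^{\bullet,\bullet})$ obtained by first taking cohomology in the resolution direction. Because each $\mathcal{G}^i \to \mathcal{I}^{i,\bullet}$ is an injective, hence $\Gamma(M,-)$-acyclic, resolution, one has $H^q(\Gamma(M,\mathcal{I}^{i,\bullet})) = H^q(M,\mathcal{G}^i)$, which by hypothesis vanishes for $q>0$ and equals $\Gamma(M,\mathcal{G}^i)$ for $q=0$. Thus the relevant page is concentrated in a single row, namely $\Gamma(M,\mathcal{G}^\bullet)$ with the differential induced by that of $\mathcal{G}^\bullet$; the spectral sequence therefore degenerates, and one reads off $\mathbb{H}^a(M,\mathcal{F}^\bullet) \cong H^a(\Gamma(M,\mathcal{G}^\bullet))$, which is exactly the claimed formula. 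Boundedness below of all complexes involved guarantees convergence, so no limiting issues arise.

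As a remark I would record an entirely elementary alternative, avoiding spectral sequences: break $\mathcal{G}^\bullet$ into short exact sequences $0 \to Z^i \to \mathcal{G}^i \to B^{i+1}\to 0$ and $0\to B^i \to Z^i \to \mathcal{H}^i \to 0$ with $\mathcal{H}^i$ the cohomology sheaves, use that $\Gamma(M,-)$ is exact on $\Gamma$-acyclic sheaves together with the associated long exact cohomology sequences and the acyclicity of the $\mathcal{G}^i$, and then argue by induction (using boundedness) that the value of $\mathbb{H}^a(M,\mathcal{F}^\bullet)$ computed from injectives agrees with the naive cohomology of the complex $\Gamma(M,\mathcal{G}^\bullet)$. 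This is the ``acyclic assembly'' argument: bookkeeping-heavy but using no new machinery.

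The main obstacle is organizational rather than conceptual, the result being a textbook fact (see \cite{EV92}): one must set up the Cartan--Eilenberg resolution carefully enough that $\mathrm{Tot}$ really yields an injective resolution of $\mathcal{G}^\bullet$ (this uses the boundedness hypothesis to see that totalizing preserves the quasi-isomorphism), and then keep the indexing of the spectral sequence straight so that the surviving row is genuinely $\Gamma(M,\mathcal{G}^\bullet)$ with the correct induced differential. Once those two points are pinned down, the identification $\mathbb{H}^a(M,\mathcal{F}^\bullet)\cong H^a(\Gamma(M,\mathcal{G}^\bullet))$ is immediate.
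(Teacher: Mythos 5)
Your argument is correct, and there is nothing in the paper to compare it against: Proposition \ref{alpha} is stated without proof, with a pointer to the appendix of \cite{EV92}, and your Cartan--Eilenberg/double-complex spectral sequence argument is precisely the standard proof that reference supplies. The two points you flag as needing care are indeed the only ones — that totalizing a Cartan--Eilenberg resolution of the bounded-below complex $\mathcal{G}^{\bullet}$ yields an injective resolution of $\mathcal{G}^{\bullet}$ (hence, via $\sigma$, of $\mathcal{F}^{\bullet}$), and that the first spectral sequence of $\Gamma(M,\mathcal{I}^{\bullet,\bullet})$ collapses to the single row $\Gamma(M,\mathcal{G}^{\bullet})$ because $H^{q}(M,\mathcal{G}^{i})=0$ for $q>0$ — and you handle both, with boundedness ensuring convergence; your elementary alternative via the short exact sequences $0\to Z^{i}\to\mathcal{G}^{i}\to B^{i+1}\to 0$ is likewise sound and is essentially the acyclic-assembly bookkeeping the paper invokes elsewhere (Appendix \ref{appendixd}) in a different context.
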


\begin{example}\label{rr}
Let $(M,\Lambda)$ be a $n$-dimensional compact holomorphic Poisson manifold. Let $\mathcal{F}^{\bullet}$ be the complex of sheaves $0\to \Theta\to \wedge^2 \Theta\to\cdots\to \wedge^n \Theta\to 0$ induced by $[\Lambda,-]$. We denote by $\mathscr{A}^{0,p}(\wedge^q T)$ the sheaf of germs of $C^{\infty}$-section of $\wedge^p \bar{T}^*\otimes \wedge^q T$ and by $A^{0,p}(M,\wedge^q T)$ the global section of $\mathscr{A}^{0,p}(\wedge^q T)$. Let's consider the following bicomplex of sheaves. 
\begin{center}
$\begin{CD}
@A[\Lambda,-]AA\\
\mathscr{A}^{0,0}(\wedge^3 T)@>\bar{\partial}>>\cdots\\
@A[\Lambda,-]AA @A[\Lambda,-]AA\\
\mathscr{A}^{0,0}(\wedge^2 T)@>\bar{\partial}>> \mathscr{A}^{0,1}(\wedge^2 T)@>\bar{\partial}>>\cdots\\
@A[\Lambda,-]AA @A[\Lambda,-]AA @A[\Lambda,-]AA\\
\mathscr{A}^{0,0}(T)@>\bar{\partial}>>\mathscr{A}^{0,1}(T)@>\bar{\partial}>>\mathscr{A}^{0,2}(T)@>\bar{\partial}>>\cdots\\
@AAA @AAA @AAA @AAA \\
0@>>>0 @>>> 0 @>>> 0@>>> \cdots
\end{CD}$
\end{center}
Each rows is a resolution of $\wedge^i \Theta$. Hence $\mathcal{F}^{\bullet}$ is quasi-isomorphic to the total complex of the above bicomplex of sheaves. Hence by Proposition \ref{alpha}, the hypercohomology $\mathbb{H}^a(M,\mathcal{F}^{\bullet})$ is the $a$-th cohomology of the total complex of the following bicomplex
\begin{center}
$\begin{CD}
@A[\Lambda,-]AA\\
A^{0,0}(M,\wedge^3 T)@>\bar{\partial}>>\cdots\\
@A[\Lambda,-]AA @A[\Lambda,-]AA\\
A^{0,0}(M,\wedge^2 T)@>\bar{\partial}>> A^{0,1}(M,\wedge^2 T)@>\bar{\partial}>>\cdots\\
@A[\Lambda,-]AA @A[\Lambda,-]AA @A[\Lambda,-]AA\\
A^{0,0}(M,T)@>\bar{\partial}>>A^{0,1}(M,T)@>\bar{\partial}>> A^{0,2}(M,T)@>\bar{\partial}>>\cdots\\
@AAA @AAA @AAA @AAA \\
0@>>>0 @>>> 0 @>>> 0@>>> \cdots
\end{CD}$
\end{center}
\end{example}

Now we will consider a \u{C}ech resolution of a complex of sheaves. Let $\mathcal{U}=\{U_{\alpha}:\alpha\in A\}$, for $A\in \mathbb{N}$ be a locally finite open covering of a complex manifold or some open covering of an algebraic scheme $X$. For
\begin{align*}
U_{\alpha_0...\alpha_a}:=U_{\alpha_0}\cap\cdots \cap U_{\alpha_a}\,\,\,\,\,\alpha_0<\alpha_1<\cdots<\alpha_a.
\end{align*}
$\rho$ denotes the open embedding
\begin{align*}
\rho=\rho_{\alpha_0...\alpha_a}:U_{\alpha_0\cdots\alpha_a}\to X,
\end{align*}

To a bounded below complex $\mathcal{F}^{\bullet}$ we associates its \u{C}ech complex $\mathcal{G}^{\bullet}$ such that
\begin{align*}
\mathcal{G}^i :=\bigoplus_{a\geq 0} \mathcal{C}^a(\mathcal{U},\mathcal{F}^{i-a})
\end{align*}
where 
\begin{equation*}
\mathcal{C}^a(\mathcal{U},\mathcal{F}^{i-a})=\Pi_{\alpha_0<\alpha_1<\cdots<\alpha_a} \rho_{*}(\mathcal{F}^{i-1}|_{U_{\alpha_0\cdots \alpha_a}}).
\end{equation*}

The differential $\Delta$ of $\mathcal{G}^{\bullet}$ is defined by
\begin{align*}
\Delta(s)=(-1)^i \delta s + d_{\mathcal{F}^{\bullet}} s \,\,\,\,\, s\in \mathcal{C}^a(\mathcal{U},\mathcal{F}^{i-1}).
\end{align*}
and $d_{\mathcal{F}^{\bullet}}$ is the differential of $\mathcal{F}^{\bullet}$. Then the natural map 
\begin{align*}
\sigma:\mathcal{F}^{\bullet}\to \mathcal{G}^{\bullet}
\end{align*}
defined by
\begin{equation*}
\mathcal{F}^i\xrightarrow{\rho} \Pi_{\alpha\in A} \rho_{*}(\mathcal{F}^i|_{U_{\alpha}})= \mathcal{C}^0(\mathcal{U},\mathcal{F}^i)
\end{equation*}
is a quasi isomorphism.

\begin{example}
Now let $(M,\Lambda)$ be a $n$-dimensional compact holomorphic Poisson manifold. let $\mathcal{F}^{\bullet}$ be the complex of sheaves $0\to \Theta\to \wedge^2 \Theta\to\cdots\to \wedge^n \Theta\to 0$ induced by $[\Lambda,-]$. And assume that for each $U_j \in \mathcal{U}= \{ U_{\alpha}:\alpha\in A\}$, we have
\begin{align*}
U_j=\{z_j\in \mathbb{C}^n| |z_j^{\alpha}|<r_j^{\alpha},\alpha=1,...,n\}
\end{align*}
Then by Proposition \ref{alpha}, the hypercohomology $\mathbb{H}^a(M,\mathcal{F}^{\bullet})$ is the $a$-th cohomology of the total complex of the following bicomplex 
\begin{center}
$\begin{CD}
@A[\Lambda,-]AA\\
C^0(\mathcal{U},\wedge^3 \Theta)@>-\delta>>\cdots\\
@A[\Lambda,-]AA @A[\Lambda,-]AA\\
C^0(\mathcal{U},\wedge^2 \Theta)@>\delta>> C^1(\mathcal{U},\wedge^2 \Theta)@>-\delta>>\cdots\\
@A[\Lambda,-]AA @A[\Lambda,-]AA @A[\Lambda,-]AA\\
C^0(\mathcal{U},\Theta)@>-\delta>>C^1(\mathcal{U},\Theta)@>\delta>>C^2(\mathcal{U},\Theta)@>-\delta>>\cdots\\
@AAA @AAA @AAA @AAA \\
0@>>>0 @>>> 0 @>>> 0@>>> \cdots
\end{CD}$
\end{center}
where $C^i(\mathcal{U},\wedge^j \Theta)=H^0(M,\mathcal{C}^i(\mathcal{U},\wedge^j \Theta))=\bigoplus_{\alpha_0<\cdots <\alpha_i} H^0(\mathcal{U}_{\alpha_0...\alpha_i},\wedge^j \Theta)$.

\end{example}

\begin{example}
Now let $(X,\Lambda)$ be an algebraic Poisson scheme over $k$, where $k$ is an algebraically closed field and $\Lambda\in \Gamma(X,\mathscr{H}om_{\mathcal{O}_X}(\wedge^2\Omega_{X/k}^1,\mathcal{O}_X))$ with $[\Lambda,\Lambda]=0$ $($for the definition, see chapter $\ref{chapter7}$$)$. Let $\mathcal{F}^\bullet$ be the complex of sheaves
\begin{align*}
0\to \mathscr{H}om_{\mathcal{O}_X}(\Omega_{X/k}^1,\mathcal{O}_X)\xrightarrow{[\Lambda,-]} \mathscr{H}om_{\mathcal{O}_X}(\wedge^2 \Omega^1_{X/k},\mathcal{O}_X)\xrightarrow{[\Lambda,-]}\mathscr{H}om_{\mathcal{O}_X}(\wedge^3\Omega^1_{X/k},\mathcal{O}_X)\xrightarrow{[\Lambda,-]}\cdots
\end{align*}
which is the $($truncated$)$ Lichnerowicz-Poisson complex of sheaves of $(X,\Lambda_0)$ $($See Definition \ref{3def}$)$. And assume that for each $U_j\in \mathcal{U}=\{U_{\alpha},\alpha\in A\}$, $U_j$ is affine. Then by Remark $\ref{3remark2}$ and Proposition $\ref{alpha}$, the hypercohomology $\mathbb{H}^a(X,\mathcal{F}^\bullet)$ is the $a$-th cohomology of the total complex of the following bicomplex
\begin{center}
$\begin{CD}
@A[\Lambda,-]AA\\
C^0(\mathcal{U},\mathscr{H}om_{\mathcal{O}_X}(\wedge^3 \Omega_{X/k},\mathcal{O}_X)))@>-\delta>>\cdots\\
@A[\Lambda,-]AA @A[\Lambda,-]AA\\
C^0(\mathcal{U},\mathscr{H}om_{\mathcal{O}_X}(\wedge^2 \Omega_{X/k},\mathcal{O}_X)))@>\delta>> C^1(\mathcal{U},\mathscr{H}om_{\mathcal{O}_X}(\wedge^2 \Omega_{X/k},\mathcal{O}_X)))@>-\delta>>\cdots\\
@A[\Lambda,-]AA @A[\Lambda,-]AA @A[\Lambda,-]AA\\
C^0(\mathcal{U},\mathscr{H}om_{\mathcal{O}_X}(\Omega_{X/k}^1,\mathcal{O}_X)))@>-\delta>>C^1(\mathcal{U},\mathscr{H}om_{\mathcal{O}_X}(\Omega_{X/k}^1,\mathcal{O}_X)))@>\delta>>C^2(\mathcal{U},\mathscr{H}om_{\mathcal{O}_X}(\Omega_{X/k}^1,\mathcal{O}_X)))\\
@AAA @AAA @AAA \\
0@>>>0 @>>> 0 
\end{CD}$
\end{center}
where 
\begin{align*}
C^i(\mathcal{U},\mathscr{H}om_{\mathcal{O}_X}(\wedge^j \Omega_{X/k}^1,\mathcal{O}_X))&=H^0(X,\mathcal{C}^i(\mathcal{U},\mathscr{H}om_{\mathcal{O}_X}(\wedge^j \Omega_{X/k}^1,\mathcal{O}_X)))\\&=\bigoplus_{\alpha_0<\cdots <\alpha_i} H^0(\mathcal{U}_{\alpha_0...\alpha_i},\mathscr{H}om_{\mathcal{O}_X}(\wedge^j \Omega_{X/k}^1,\mathcal{O}_X)).
\end{align*}
\end{example}

\chapter{Differential graded Lie algebra structure on $\bigoplus_{i\geq 0} g^i=\bigoplus_{p+q-1=i, q \geq1} A^{0,p}(M,\wedge^q T_M)$}\label{appendixc}

 In this section, we fix $M$ as a $n$-dimensional complex manifold. Let $F=\bar{T}^*\oplus T$ be the direct sum of antiholomorphic cotangent bundle and holomorphic tangent bundle on $M$. We consider the bundle $\bigoplus_{i\geq 1} F^i=\bigoplus_{i\geq 1} \wedge^i F$ where $F^i=\bigoplus_{p+q=i, p,q\geq 0} \wedge^p \bar{T}^*\otimes \wedge^q T$. We denote by $\mathcal{F}$ the sheaf of germs of $C^{\infty}$ section of $F$, by $\mathcal{F}^i$ by the sheaf of germs of $C^{\infty}$ section of $ \bigoplus_{p+q=i, p,q\geq 0} \wedge^p \bar{T}^*\otimes \wedge^q T$ and  by $\mathscr{A}^{0,p}(\wedge^q T)$ the sheaf of germs of $C^{\infty}$-section of $\wedge^p \bar{T}^*\otimes \wedge^q T$. Then we have $\mathcal{F}=\bigoplus_{i\geq 1} \mathcal{F}^i= \bigoplus_{p+q=i, p,q\geq 0} \mathscr{A}^{0,p}(\wedge^q T)$. If we denote by $A^{0,p}(M,\wedge^q T)$ the global section of $\mathscr{A}^{0,p}(\wedge^q T)$, then the global section of $\mathcal{F}$ is $\bigoplus_{p+q=i, p,q\geq 0} {A}^{0,p}(M,\wedge^q T)$. We set $A^i:=\bigoplus_{p+q=i} A^{0,p}(M,\wedge^q T)$ and $A=:\bigoplus_{i\geq0}  A^i=\bigoplus_{p+q=i, p,q\geq 0} {A}^{0,p}(M,\wedge^q T)$. Then $A$ is a graded vector space over $\mathbb{C}$. In this appendix, we discuss the differential Gerstenhaber algebra structure on $A$ and, by shifting the degree 1, a differential grade Lie algebra structure on $A[1]=\bigoplus_{i\geq 0} g^i=\bigoplus_{p+q-1=i} A^{0,p}(M,\wedge^q T_M)$. Our main references are \cite{Mac05} and \cite{Man04}.

\begin{definition}

A differential Gerstenhaber algebra $(B,[,],\wedge,\bar{\partial})$ is the data of a graded vector space $B=\sum_{i\in \mathbb{Z}} B^i$ equipped with a bilinear bracket $[-,-]:B\times B\to B$, a wedge product $\wedge$  and a linear map $\bar{\partial}:B\to B$ satisfying the following properties $($here $\bar{x}$ is the grading of the homogeneous element $x$.$)$

\begin{align}
& [B^i,B^j]\subset B^{i+j-1}.\\
&  [a,b]=(-1)^{\bar{a}\bar{b}+\bar{a}+\bar{b}} [b,a] \text{for homogeneous elements $a,b\in B$}. \label{y} 
\end{align}

\begin{align}
&  [a,[b,c]]=[[a,b],c]-(-1)^{\bar{a}\bar{b}+\bar{a}+\bar{b}}[b,[a,c]] \text{ for homogeneous elements $a,b,c \in B$}.\\
&  B^i\wedge B^j\subset B^{i+j}.\\
&  a\wedge b=(-1)^{\bar{a}\bar{b}}b\wedge a\\
&  [a\wedge b,c]=a\wedge [b,c]+(-1)^{\bar{a}\bar{b}} b\wedge [a,c] \label{z} \\ 
&  \bar{\partial} B^i\subset B^{i+1}.\\
&  \bar{\partial}\circ \bar{\partial}=0.\\
&  \bar{\partial}[a,b]=[\bar{\partial}a,b]-(-1)^{\bar{a}}[a,\bar{\partial}b]\\
&  \bar{\partial}(a\wedge b)=\bar{\partial}a \wedge b+(-1)^{\bar{a}}a\wedge \bar{\partial}b.\label{x}
\end{align}

\end{definition}

We discuss a differential Gerstenhaber algebra structure on $A=\bigoplus_{p+q=i, p,q\geq 0} A^{0,p}(M,\wedge^q T)$.
\begin{enumerate}
\item Let $f\in A^{0,0}(M,\mathcal{O}_M)$, $g_i\frac{\partial}{\partial z_i}\in A^{0,0}(M,T)$ and $h_jd\bar{z}^j\in A^{0,1}(M,\mathcal{O}_M)$. We define $\bar{\partial}$ in the following way.
\begin{enumerate}
\item  $\bar{\partial}f:=\frac{\partial f}{\partial \bar{z}^i}d\bar{z}^i$
\item  $\bar{\partial} (g_i\frac{\partial}{\partial z_i}):=\frac{\partial g_i}{\partial \bar{z}^k}d\bar{z}^k\wedge \frac{\partial}{\partial z^i}$
\item $\bar{\partial} (h_j d\bar{z}^j):=\frac{\partial h_j}{\partial \bar{z}^k}d\bar{z}^k\wedge d\bar{z}^j$
\end{enumerate}
These definitions are independent of coordinate transformations. By the rule \ref{x}, we extend $\bar{\partial}$ to $A$.
\item We define $[-,-]$ in the following way
\begin{enumerate}
\item $[g_i\frac{\partial}{\partial z_i},f]:= g_i\frac{\partial f}{\partial z_i}=L_{g_i\frac{\partial}{\partial z_i}} f$ (Lie derivative of $f$ in the direction of $g_i\frac{\partial}{\partial z_i}$).
\item $[g_i\frac{\partial}{\partial z_i},a_k\frac{\partial}{\partial z_k}]:=g_i\frac{\partial a_k}{\partial z_i}-a_k\frac{\partial g_i}{\partial z_k}$
\item $[g_i\frac{\partial}{\partial z_i},h_jd\bar{z}_j]:=g_i\frac{\partial h_j}{\partial z_i}d\bar{z}_j$
\item $[h_jd\bar{z}_j,f]:=0$
\item $[h_jd\bar{z}_j,b_ld\bar{z}_l]:=0$
\end{enumerate}
These definitions are independent of coordinate transformations. By the rule \ref{y} and \ref{z}, we extend $[-.-]$ to $A$.
\end{enumerate}
Then $(A,\wedge,\bar{\partial})$ is a differential Gerstenhaber algebra and have the following property: For $v_i=f_{ik}\frac{\partial}{\partial z_k}$ and $w_j=f'_{jk}\frac{\partial}{\partial z_k}$, then we have
\begin{equation*}
[v_1\wedge\cdots\wedge v_n,w_1\wedge \cdots w_m]=\sum_{i=1}^n \sum_{j=1}^m (-1)^{i+j} [v_i,w_j]\wedge v_1\wedge\cdots \wedge \hat{v}_i\wedge\cdots \wedge v_n\wedge w_1\wedge\cdots \wedge \hat{w}_j\wedge\cdots \wedge w_m
\end{equation*}

\begin{remark}
 polyvector fields $A'=\oplus_{i\geq 0} A^{0,0}(M,\wedge^i T)$ is a subalgebra of $L$. When we restrict the bracket $[-,-]$ on $A'$, we get the Schouten-Nijenhuis bracket $[-,-]_{Sch}$.
\end{remark}

More generally, for $\phi_p=g_{pk}d\bar{z}_k$ and $\psi_q=g'_{qk}d\bar{z}_k$, we have
\begin{align*}
&[\phi_1\wedge\cdots \wedge \phi_l\wedge v_1\wedge\cdots\wedge v_n,\psi_1\wedge \cdots \wedge \psi_k\wedge w_1\wedge \cdots \wedge w_m]\\
&=\phi_1\wedge\cdots\wedge \phi_l\wedge [v_1\wedge\cdots,v_n,\psi_1\wedge\cdots\wedge \psi_k]\wedge w_1\wedge\cdots \wedge w_m\\
&+(-1)^{(l+n)(k+m)+(l+n)+(k+m)}\psi_1\wedge\cdots \wedge \psi_k\wedge [w_1\wedge\cdots\wedge w_m,\phi_1\wedge\cdots\wedge \phi_l]\wedge v_1\wedge\cdots\wedge v_n\\
&+(-1)^{k(n+1)}\phi_1\wedge\cdots \wedge \phi_l \wedge \psi_1\wedge \cdots \wedge \psi_k\wedge[v_1\wedge\cdots\wedge v_n,w_1\wedge\cdots \wedge w_m]
\end{align*}
In particular, it is practical to have the following formula

\begin{align*}
[fdz_I\frac{\partial}{\partial z_J},gdz_K\frac{\partial}{\partial z_L}]&=(-1)^{|K|(|J|+1)} dz_I\wedge dz_K [f\frac{\partial}{\partial z_J},g\frac{\partial}{\partial z_L}]_{Sch}\\
[f\frac{\partial}{\partial z_I}d\bar{z}_J,g\frac{\partial}{\partial z_H}d\bar{z}_K]&=(-1)^{|J|(|H|-1)}[f\frac{\partial}{\partial z_I},g\frac{\partial}{\partial z_H}]_{Sch} d\bar{z}_J\wedge d\bar{z}_K
\end{align*}

Now we discuss a differential graded Lie algebra structure on $A[1]$. 
\begin{definition}
A differential graded Lie algebra $(C,[-,-],\bar{\partial})$ is the data of a graded vector space $C=\oplus_{i\in \mathbb{Z}} C^i$ over $\mathbb{C}$ together with a bilinear bracket $[-,-]:C\times C \to C$ and a linear map $\bar{\partial}:C\to C$ satisfying the following properties
\begin{enumerate}
\item $[C^i,C^j]\subset C^{i+j}$
\item $[c,d]=-(-1)^{\bar{c}\bar{d}}[d,c]$ for homogeneous elements $c,d$ $($here $\bar{x}$ is the grading of the homogeneous element $x$$)$
\item $[a,[b,c]]=[[a,b],c]+(-1)^{\bar{a}\bar{b}}[b,[a,c]]$ for homogeneous elements $a,b,c$.
\item $\bar{\partial} C^i\subset C^{i+1}$
\item $\bar{\partial}\circ \bar{\partial}=0$
\item $\bar{\partial}[a,b]=[\bar{\partial}a,b]+(-1)^{\bar{a}}[a,\bar{\partial}b]$
\end{enumerate}

\end{definition}

It is clear that if $L$ is a differential Gerstenhaber algebra, then $L[1]$ \footnote{$L[1]$ is the algebra $L$ but the grading is shifted by $1$. Hence $L[1]^i =L^{i+1}$} is a differential graded lie algebra. Hence $(A[1],[-,-],\bar{\partial})$ is a differential graded lie algebra and so satisfies the following properties. But we have to mention that on $A[1]$, we have other differential graded Lie algebra structures by changing the differential into $L=\bar{\partial}+[\Lambda,-]$, where $\Lambda$ is a holomorphic bivector field such that $[\Lambda,\Lambda]=0$.(i.e $L\Lambda=0$). 

\begin{proposition}\label{d}
$(A[1],L,[-,-])$ is a differential graded Lie algebra.
\end{proposition}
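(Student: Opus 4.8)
The plan is to reduce everything to three facts already on hand: (i) $(A[1],\bar\partial,[-,-])$ is a differential graded Lie algebra, with the graded antisymmetry, graded Jacobi identity, and the Leibniz rule $\bar\partial[a,b]=[\bar\partial a,b]+(-1)^{p+q+1}[a,\bar\partial b]$ recorded just above the statement; (ii) $\Lambda$ is holomorphic, i.e. $\bar\partial\Lambda=0$; and (iii) $[\Lambda,\Lambda]=0$. Since the underlying graded vector space $A[1]$, the bracket $[-,-]$, and hence the graded antisymmetry are unchanged, the only things to verify are that $L=\bar\partial+[\Lambda,-]$ is a homogeneous degree-one operator, that $L^2=0$, and that $L$ is a graded derivation of $[-,-]$.

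First I would pin down the degree. In the grading of $A[1]$, an element of $A^{0,p}(M,\wedge^q T)$ has degree $p+q-1$, so $\Lambda\in A^{0,0}(M,\wedge^2 T)$ has odd degree $1$; consequently $\operatorname{ad}_\Lambda=[\Lambda,-]$ raises the $A[1]$-degree by $1$, matching $\bar\partial$, and $L$ is homogeneous of degree $1$. Next, $L^2=\bar\partial^2+\bigl(\bar\partial\circ[\Lambda,-]+[\Lambda,-]\circ\bar\partial\bigr)+[\Lambda,[\Lambda,-]]$. The first summand vanishes. For the cross term, the Leibniz rule for $\bar\partial$ with first argument $\Lambda$ gives $\bar\partial[\Lambda,a]=[\bar\partial\Lambda,a]+(-1)^{0+2+1}[\Lambda,\bar\partial a]=-[\Lambda,\bar\partial a]$ using $\bar\partial\Lambda=0$, so the cross term is zero as an operator. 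For the last summand, the graded Jacobi identity with $a=b=\Lambda$ (both odd) yields $2[\Lambda,[\Lambda,a]]=[[\Lambda,\Lambda],a]=0$, hence $[\Lambda,[\Lambda,-]]=0$; therefore $L^2=0$. Finally, $\bar\partial$ is a graded derivation of $[-,-]$ by hypothesis, and the graded Jacobi identity with first entry $\Lambda$ reads $[\Lambda,[a,b]]=[[\Lambda,a],b]+(-1)^{|a|}[a,[\Lambda,b]]$ (the exponent $(-1)^{(p+q+1)(p'+q'+1)}$ collapses to $(-1)^{p'+q'+1}=(-1)^{|a|}$ because $p+q+1$ is odd for $\Lambda$), which is exactly the Leibniz rule for $[\Lambda,-]$; adding the two shows $L[a,b]=[La,b]+(-1)^{|a|}[a,Lb]$.

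The only genuinely delicate point is the sign bookkeeping forced by the décalage: confirming that $\Lambda$ has odd $A[1]$-degree and that the exponents $(-1)^{p+q+1}$ appearing in the stated Leibniz and Jacobi identities for $(A[1],\bar\partial,[-,-])$ reduce to the expected $(-1)^{|\Lambda|}$ and $(-1)^{|\Lambda||a|}$ when one of the arguments is $\Lambda$. Once these parities are checked, the remainder is the standard observation that twisting the differential of a DGLA by the adjoint action of a square-zero, $\bar\partial$-closed element of degree $1$ again produces a DGLA, and no further computation is needed.
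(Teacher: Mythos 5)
Your proof is correct and follows essentially the same route as the paper: the paper also observes that only the axioms involving the differential need checking, disposes of $L\circ L=0$ by the computation you spell out (using $\bar\partial\Lambda=0$ and $[\Lambda,\Lambda]=0$), and obtains the Leibniz rule for $L$ exactly by substituting $\Lambda$ into the graded Jacobi identity and adding the $\bar\partial$-Leibniz rule. You merely make explicit the sign bookkeeping and the ``simple computation'' that the paper leaves to the reader.
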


\begin{proof}
Let $a\in A[1]$. We only need to see the properties $(4),(5)$ and $(6)$. $(4)$ is clear by definition, and we have $L \circ L =0$ by simple computation. For $(6)$, note that by plugging $\Lambda$ into $a$ in $(3)$ and combining $(6)$ in the definition, we get $L[a,b]=[La,b]+(-1)^{\bar{a}}[a,Lb]$.
\end{proof} 

In holomorphic Poisson deformations there is no role of structure sheaf $\mathcal{O}_M$, we define the following sub differential graded Lie algebra of $(A[1],L,[-,-])$ on a holomorphic Poisson manifold $(M,\Lambda)$.

\begin{definition}
$(\mathfrak{g}=\bigoplus_{i\geq 0} g^i=\bigoplus_{p+q-1=i, q \geq1} A^{0,p}(M,\wedge^q T_M),L,[-,-])$.
\end{definition}

\begin{definition}
The Maurer-Cartan equation of a differential graded Lie algebra $(C,[-,-],\bar{\partial})$ is
\begin{align*}
\bar{\partial}a+\frac{1}{2}[a,a]=0,\,\,\,\,\, a\in C^1
\end{align*}
The solutions $MC(L)\subset C^1$ of the Maurer-Cartan equation are called the Maurer Cartan elements of the differential graded Lie algebra $C$.
\end{definition}

\chapter{Ellipticity of the operator $\bar{\partial} +[\Lambda,-]$}\label{appendixd}

Let $(M,\Lambda)$ be a holomorphic Poisson manifolds. In this appendix, we discuss the operator $\bar{\partial}+[\Lambda,-]$ on $M$. Our main reference is \cite{Wel08}. Let $\pi:E\to X$ be a differentiable complex vector bundle on a differentiable manifold. For an open set $U\subset X$, we denote $C^{\infty}$ functions on $U$ by $\mathcal{E}(U)$, $C^{\infty}$ section of $E$  on $X$ by $\mathcal{E}(U,E)$ and $E_x:=\pi^{-1}(x)$, $x\in X$ by a $\mathbb{C}$ vector space fiber over $x$.

\begin{definition}
Let $E$ and $F$ be differentiable complex vector bundles over a differentiable manifold $X$. Let $L:\mathcal{E}(X,E)\to \mathcal{E}(X,F)$ be linear. We say that $L$ is a differentiable operator if for any choice of local coordinates and local trivializations, there exists a linear partial differential operator $\tilde{L}$ such that the diagram
\begin{align*}
\begin{CD}
[\mathcal{E}(U)]^p@>\tilde{L}>>[\mathcal{E}(U)]^q\\
@A\cong AA @AA\cong A \\
\mathcal{E}(U,U\times \mathbb{C}^p) @>>> \mathcal{E}(U,U\times \mathbb{C}^q)\\
@A\bigcup AA @AA\bigcup A\\
\mathcal{E}(X,E)|_U @>L>> \mathcal{E}(X,F)|_U
\end{CD}
\end{align*}
commutes. That is, for $f=(f_1,...,f_p)\in [\mathcal{E}(U)]^p$
\begin{align*}
\tilde{L}(f)_i=\sum_{j=1,|\alpha|\leq k}^p a_{\alpha}^{ij}D^{\alpha}f_j,\,\,\,\,\, i=1,...,q.
\end{align*}
A differential operator is said to be of order $k$ if there are no derivatives of order $\geq k+1$ appearing in a local representation. Let Diff$_k(E,F)$ denote the vector space of all differential operators of order $k$ mapping $\mathcal{E}(X,E)$ to $\mathcal{E}(X,F)$. 
\end{definition}

\begin{example}
If $(M,\Lambda_0)$ is a holomorphic Poisson manifold, then 
\begin{center}
$L=\bar{\partial} +[\Lambda_0, -]:A^{0,p-1}(M,T)\oplus \cdots \oplus A^{0,0}(M,\wedge^p T)\to A^{0,p}(M,T)\oplus A^{0,0}(M,\wedge^{p+1} T)$
\end{center}
is a differential operator of order $1$.
\end{example}

\begin{definition}
Let $L:\mathcal{E}(X,E)\to \mathcal{E}(X,F)$ is differential operator of order $k$. Let $T_{\mathbb{R}}^*(X)$ be the real cotangent bundle to a differentiable manifold $X$, let $T_{\mathbb{R}}'(X)$ denote $T_{\mathbb{R}}^*(X)$ with the zero section deleted (the bundle of nonzero cotangent vectors), and let $T_{\mathbb{R}}'(X)\xrightarrow{\pi} X$ denote the projection mapping. Then $\pi^*E$ and $\pi^* F$ denote the pullback of $E$ and $F$ over $T_{\mathbb{R}}'(X)$. We set, for any $k\in \mathbb{Z}$,
\begin{align*}
Smbl_k(E,F):=\{ \sigma \in Hom(\pi^* E,\pi^* F):\sigma(x,\rho v)=\rho^k \sigma(x,v),(x,v) \in T_{\mathbb{R}}'(X),\rho>0\}
\end{align*}
We now define a linear map
\begin{align*}
\sigma_k:\text{Diff}_k(E,F)\to Smbl_k(E,F)
\end{align*}
where $\sigma_k(L)$ is called the $k$-symbol of the differential operator $L$. To define $\sigma_k(L)$, we first note that $\sigma_k(L)(x,v)$ is to be a linear mapping from $E_x$ to $F_x$, where $(x,v)\in T_{\mathbb{R}}'(X)$. Therefore let $(x,v)\in T_{\mathbb{R}}'(X)$ and $e\in E_x$ be given. Find $g\in \mathcal{E}(X)$ and $f\in \mathcal{E}(X,E)$ such that $dg_x=v$, and $f(x)=e$. The we define
\begin{align*}
\sigma_k(L)(x,v)e=L\left(\frac{i^k}{k!}(g-g(x))^kf\right)(x)\in F_x.
\end{align*}
Then this defines a linear mapping
\begin{align*}
\sigma_k(L)(x,v):E_x \to F_x
\end{align*}
which then defines an element of $Symbl_k(E,F)$ and $\sigma_k(L)$ is independent of the choices made. We call $\sigma_k(L)$ the $k$-symbol of $L$.
\end{definition}

\begin{example}
Let $(M,\Lambda)$ be a holomorphic Poisson manifold.
Here we compute a symbol of 
\begin{center}
$L=[\Lambda, -]:A^{0,p}(M,\wedge^q T)\to A^{0,p}(M,\wedge^{q+1} T)$
\end{center}
First we note that $L$ has order $1$. Let $(x,v)\in T_{\mathbb{R}}' M$ and $e\in (\wedge^p \overline{T}^*\otimes \wedge^q T)_x$ be given. Find $g\in \mathcal{E}(M)$ and $f\in A^{0,p}(M,\wedge^q T)$ such that $dg_x=v$, and $f(x)=e$. Let $\Lambda=\sum_{i,j} \Lambda_{ij}(z) \frac{\partial}{\partial z_i}\wedge \frac{\partial}{\partial z_j}$ around $x$ and $f=\sum_{r_a,s_b} f_{r_1...r_q s_1...s_p}(z)\frac{\partial}{\partial z_{r_1}}\wedge \cdots\wedge \frac{\partial}{\partial z_{r_q}}\wedge d\bar{z}_{s_1}\cdots \wedge d\bar{z}_{s_p}$ around $x$. Then

\begin{align*}
&\sigma_1(L)(x,v)e=L(i(g-g(x))f)(x)=iL((g-g(x))f)(x)\\
&=i\sum_{r_a,s_b,i,j}[\Lambda_{ij}(z)\frac{\partial}{\partial z_i}\wedge \frac{\partial}{\partial z_j}, (g-g(x)) f_{r_1...r_q s_1...s_p}(z)\frac{\partial}{\partial z_{r_1}}\wedge \cdots\wedge \frac{\partial}{\partial z_{r_q}}\wedge d\bar{z}_{s_1}\cdots \wedge d\bar{z}_{s_p}](x)\\
&=i\sum_{r_a,s_b,i,j}[\Lambda_{ij}(z)\frac{\partial}{\partial z_i}\wedge \frac{\partial}{\partial z_j}, (g-g(x)) f_{r_1...r_q s_1...s_p}(z)\frac{\partial}{\partial z_{r_1}}\wedge \cdots\wedge \frac{\partial}{\partial z_{r_q}}]\wedge d\bar{z}_{s_1}\cdots \wedge d\bar{z}_{s_p}(x)\\
&=i\sum_{r_a,s_b,i,j}  [\Lambda_{ij}(z)\frac{\partial}{\partial z_i},(g-g(x)) f_{r_1...r_q s_1...s_p}(z) \frac{\partial}{\partial z_{r_1}}]\wedge \frac{\partial}{\partial z_j}\wedge \frac{\partial}{\partial z_{r_2}}\cdots\wedge \frac{\partial}{\partial z_{r_q}}\wedge d\bar{z}_{s_1}\cdots \wedge d\bar{z}_{s_p}(x)\\
&-\Lambda_{ij}(z)[\frac{\partial}{\partial z_j}, (g-g(x)) f_{r_1...r_q s_1...s_p}(z) \frac{\partial}{\partial z_{r_1}}] \wedge \frac{\partial}{\partial z_i}\wedge \frac{\partial}{\partial z_{r_2}}\cdots\wedge \frac{\partial}{\partial z_{r_q}}\wedge d\bar{z}_{s_1}\cdots \wedge d\bar{z}_{s_p}(x)\\
\end{align*}

\begin{align*}
&=i\sum_{r_a,s_b,i,j}\Lambda_{ij}(z)\frac{\partial g}{\partial z_i}  f_{r_1...r_q s_1...s_p}(z) \frac{\partial}{\partial z_{r_1}}\wedge \frac{\partial}{\partial z_j}\wedge \frac{\partial}{\partial z_{r_2}}\cdots\wedge \frac{\partial}{\partial z_{r_q}}\wedge d\bar{z}_{s_1}\cdots \wedge d\bar{z}_{s_p}(x)\\
&-\Lambda_{ij}(z)\frac{\partial g}{\partial z_j}f_{r_1...r_q s_1...s_p}(z) \frac{\partial}{\partial z_{r_1}} \wedge \frac{\partial}{\partial z_i}\wedge \frac{\partial}{\partial z_{r_2}}\cdots\wedge \frac{\partial}{\partial z_{r_q}}\wedge d\bar{z}_{s_1}\cdots \wedge d\bar{z}_{s_p}(x)\\
&=-i\left(\sum_{i,j} \Lambda_{ij}(z)\frac{\partial g}{\partial z_i}\frac{\partial}{\partial z_j}-\Lambda_{ij}(z)\frac{\partial g}{\partial z_j}\frac{\partial}{\partial z_i}\right)\wedge f(x)\\
&=-i\Lambda_x(v^{1,0})\wedge e
\end{align*}
where $v^{1,0}=\sum_{k=1}^n \frac{\partial g}{\partial z_k}dz_k|_x$.
\end{example}

\begin{example}
Let $(M,\Lambda)$ be a holomorphic Poisson manifold.
Here we compute a symbol of 
\begin{center}
$L=\bar{\partial}:A^{0,p}(M,\wedge^q T)\to A^{0,p}(M,\wedge^{q+1} T)$
\end{center}
First we note that $L$ has order $1$. Let $(x,v)\in T_{\mathbb{R}}'(M)$ and $e\in (\wedge^p \overline{T}^*\otimes \wedge^q T)_x$ be given. Find $g\in A^{0,0}(M)$ and $f\in A^{0,p}(M,\wedge^q T)$ such that $dg_x=v$, and $f(x)=e$. Let $f=\sum_{r_a,s_b} f_{r_1...r_q s_1...s_p}(z)d\bar{z}_{s_1}\cdots \wedge d\bar{z}_{s_p} \wedge \frac{\partial}{\partial z_{r_1}}\wedge \cdots\wedge \frac{\partial}{\partial z_{r_q}}$ around $x$. Then
\begin{align*}
\sigma_1(L)(x,v)e=L(i(g-g(x))f)(x)=iL((g-g(x))f)(x)\\
=i\sum_{r_a,s_b,i}\frac{\partial g}{\partial \bar{z_i}}d\bar{z}_i \wedge f(x)\\
=iv^{0,1}\wedge e
\end{align*}
\end{example}

\begin{remark}
Let $(M,\Lambda)$ be a holomorphic Poisson manifold and let
\begin{center}
$L=\bar{\partial} +[\Lambda_0, -]:A^p=A^{0,p-1}(M,T)\oplus \cdots \oplus A^{0,0}(M,\wedge^p T)\to A^{p+1}= A^{0,p}(M,T)\oplus \cdots \oplus A^{0,0}(M,\wedge^{p+1} T)$
\end{center}
Let $(x,v)\in T_{\mathbb{R}}'(M)$ and $g\in \mathcal{E}(M)$ such that $dg_x=v$. Then $\sigma_1(L)(x,v)e=iv^{0,1}\wedge e -i\Lambda_x(v^{1,0})\wedge e $
\end{remark}

\begin{definition}
Let $E_0,...,E_N$ be a sequence of differentiable vector bundles defined over a compact differentiable manifold $X$. Suppose that there is a sequence of differential operators, of fixed order $k$, $L_0,...,L_{N-1}$ mapping as in the following sequence
\begin{align*}
\mathcal{E}(X,E_0)\xrightarrow{L_0} \mathcal{E}(X,E_1)\xrightarrow{L_1} \mathcal{E}(X,E_2)\to \cdots \xrightarrow{L_{N-1}} \mathcal{E}(X,E_N).
\end{align*}
Associated with the sequence is the associated symbol sequence
\begin{align*}
0\to \pi^* E_0\xrightarrow{\sigma(L_0)}\pi^*E_1 \xrightarrow{\sigma(L_1)} \pi^*E_2\to \cdots \xrightarrow{\sigma(L_{N-1})} \pi^*E_N\to 0
\end{align*}
Here we denote by $\sigma(L_i)$ the $k$-symbol of the operator $L_j$.
The sequence of operators and vector bundle $E_i$, the sequence is called a complex if $L_i\circ L_{i-1}=0$,$i=1,...,N-1$. Such a complex is called an elliptic complex if the associated symbol sequence is exact.
\end{definition}

\begin{notation}
Denote by
\begin{align*} 
\mathcal{E}=\oplus_{j=0}^N\mathcal{E}(E_j)
\end{align*}
the graded vector space so obtained with the natural grading. We define a operator $L$ by letting
\begin{align*}
L(\xi)=L(\xi_0+\cdots+\xi_N)=L_0\xi_0+\cdots L_N\xi_N
\end{align*}
where $\xi=\xi_0+\cdots+\xi_N$ is the decomposition of $\xi\in \mathcal{E}(E)$ into homogeneous component corresponding to the above grading.
\end{notation}

\begin{proposition}
Let $(M,\Lambda)$ be a holomorphic Poisson manifold.
Let $A^p=A^{0,p-1}(M,T)\oplus \cdots \oplus A^{0,0}(M,\wedge^p T))$ and $L=\bar{\partial} +[\Lambda,-]$. Then
\begin{align*}
0\xrightarrow{L} A^1\xrightarrow{L}\cdots \to A^n\xrightarrow{L} A^{n+1 }\to0
\end{align*}
is elliptic.
\end{proposition}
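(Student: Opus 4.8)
The plan is to reduce ellipticity to a pointwise linear‑algebra statement, since the analytic content here is minimal. The operators $L$ form a complex because $(\mathfrak{g},L,[-,-])$ is a differential graded Lie algebra, and in particular $L\circ L=0$ (Proposition \ref{d}); and each $L$ is a first‑order differential operator, as already noted. So the only thing to prove is that the associated symbol sequence over $T_{\mathbb{R}}'(M)$ is exact, i.e.\ that for every $(x,v)\in T_{\mathbb{R}}'(M)$ the sequence of fibre maps $\sigma_1(L)(x,v)$ on $A^\bullet_x$ is exact. The symbol has already been computed in the preceding remark: for $(x,v)\in T_{\mathbb{R}}'(M)$,
\[
\sigma_1(L)(x,v)(e)=i\bigl(v^{0,1}-\Lambda_x(v^{1,0})\bigr)\wedge e .
\]
Writing $w=w(x,v):=v^{0,1}-\Lambda_x(v^{1,0})\in F_x=\bar{T}^{*}_{x}\oplus T_{x}$ and ignoring the harmless nonzero scalar $i$, the symbol sequence at $x$ is exactly exterior multiplication by $w$ on the part of the exterior algebra $\wedge^\bullet F_x$ in which at least one tangent factor occurs (the ``$q\ge 1$'' part).

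First I would show that $w(x,v)\ne 0$ whenever $v\ne 0$. Since $v$ is a \emph{real} cotangent vector, its $(1,0)$‑ and $(0,1)$‑components satisfy $v^{0,1}=\overline{v^{1,0}}$; hence $v\ne 0$ forces $v^{1,0}\ne 0$ and therefore $v^{0,1}\ne 0$. Because $v^{0,1}$ lies in the summand $\bar{T}^{*}_{x}$ of $F_x$ while $\Lambda_x(v^{1,0})$ lies in the complementary summand $T_{x}$, the element $w$ has nonzero $\bar{T}^{*}_{x}$‑component, so $w\ne 0$.

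Next I would deduce exactness of the symbol sequence from the classical Koszul fact that for $0\ne w\in V$ with $V$ finite‑dimensional, the augmented complex $0\to\wedge^{0}V\xrightarrow{w\wedge}\wedge^{1}V\xrightarrow{w\wedge}\cdots\to\wedge^{\dim V}V\to 0$ is exact. The subtlety is that the complex in the statement is not the full $\wedge^\bullet F_x$ but only its $q\ge 1$ part, and the $q$‑grading is not respected by a splitting of $F_x$ adapted to $w$. I would circumvent this with the short exact sequence of complexes
\[
0\longrightarrow A^\bullet_x\longrightarrow\bigl(\wedge^\bullet F_x,\ w\wedge-\bigr)\longrightarrow\bigl(\wedge^\bullet \bar{T}^{*}_{x},\ v^{0,1}\wedge-\bigr)\longrightarrow 0,
\]
in which $A^\bullet_x$ (the $q\ge 1$ part) is a subcomplex since exterior multiplication by $w$ cannot lower the $q$‑degree, and the quotient is the exterior algebra on $\bar{T}^{*}_{x}$ equipped with the \emph{induced} differential, which is exterior multiplication by the $\bar{T}^{*}_{x}$‑component $v^{0,1}$ of $w$. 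As $w\ne 0$ and $v^{0,1}\ne 0$, both the middle and the right‑hand complexes are exact Koszul complexes, so the long exact cohomology sequence forces $H^\bullet(A^\bullet_x)=0$; letting $(x,v)$ vary over $T_{\mathbb{R}}'(M)$ gives exactness of the symbol sequence of vector bundles, which is the definition of an elliptic complex. (Here the complex in the statement should be read as the total complex of the bicomplex of Example \ref{rr}, running from degree $1$ up to degree $2\dim M$.)

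The main obstacle is bookkeeping rather than a genuine difficulty: identifying the symbol with exterior multiplication by $w$, checking $w\ne 0$, and --- the one point that really needs care --- handling the restriction to the $q\ge 1$ part, since a finite truncation of an exact complex need not stay exact; the short exact sequence of complexes above, whose quotient is the honest Dolbeault--Koszul complex on $\bar{T}^{*}_{x}$, is what makes the argument go through cleanly. The only supporting computation is the verification that $A^\bullet_x$ is a subcomplex and that the quotient differential equals $v^{0,1}\wedge-$, which follows by decomposing $w=v^{0,1}+\bigl(-\Lambda_x(v^{1,0})\bigr)$ into its $\bar{T}^{*}_{x}$‑ and $T_{x}$‑components.
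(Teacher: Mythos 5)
Your proof is correct, but it is organized differently from the paper's. The paper treats the symbol sequence directly as the total complex of the bicomplex displayed in its proof, whose rows are exterior multiplication by $iv^{0,1}$ and whose columns are multiplication by $-i\Lambda_x(v^{1,0})$; it then observes that each row is an exact Koszul complex and invokes the Acyclic Assembly Lemma for bounded double complexes with exact rows to conclude acyclicity of the total complex. You instead embed everything into the full exterior algebra $\wedge^\bullet F_x$ with $F_x=\bar{T}^*_x\oplus T_x$, identify the symbol with wedging by the single element $w=v^{0,1}-\Lambda_x(v^{1,0})$, and isolate the ``$q\ge 1$'' truncation via the short exact sequence of complexes with the full Koszul complex $(\wedge^\bullet F_x, w\wedge)$ in the middle and $(\wedge^\bullet\bar{T}^*_x, v^{0,1}\wedge)$ as quotient, finishing with the long exact sequence. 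Both reduce to the same elementary Koszul exactness; what your route buys is that it avoids the double-complex machinery entirely and makes explicit the nonvanishing inputs ($v^{0,1}=\overline{v^{1,0}}\neq 0$ because $v$ is a nonzero real covector, hence $w\neq 0$), a point the paper leaves implicit in the assertion ``each row is exact.'' What the paper's route buys is that it never needs the identification of the symbol with a single wedge operator inside $\wedge^\bullet F_x$ (so no bookkeeping about how the bigraded pieces sit in the exterior algebra), and it works verbatim with the bicomplex already set up in Example \ref{rr}. Your parenthetical remark that the complex should be read as running up to total degree $2\dim M$ is also apt, since exactness of the symbol sequence at the top requires the full range of bidegrees, which is what both arguments actually establish.
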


\begin{proof}
Let $(x,v)\in T_{\mathbb{R}}'(X)$ and $g\in \mathcal{E}(M)$ such that $dg_x=v$. Then the associated symbol sequence is a total complex of the following bicomplex
\begin{center}
$\begin{CD}
@A-i\Lambda_x(v^{1,0})\wedge AA @A-i\Lambda_x(v^{1,0})\wedge AA @A-i\Lambda_x(v^{1,0})\wedge AA  \\
\wedge^3 T_x  @>iv^{0,1}\wedge>> \bar{T}_x^{*}\otimes \wedge^3 T_x @>iv^{0,1}\wedge>> \wedge^2 \bar{T}_x^*\otimes \wedge^3 T_x @>iv^{0,1}\wedge>>\\
@A-i\Lambda_x(v^{1,0})\wedge AA @A-i\Lambda_x(v^{1,0})\wedge AA @A-i\Lambda_x(v^{1,0})\wedge AA \\
\wedge^2 T_x @>iv^{0,1}\wedge>> \bar{T}_x^*\otimes \wedge^2 T_x @>iv^{0,1}\wedge>> \wedge^2 \bar{T}_x^* \otimes \wedge^2 T_x\ @>iv^{0,1}\wedge>>\\
@A-i\Lambda_x(v^{1,0})\wedge AA @A-i\Lambda_x(v^{1,0})\wedge AA @A-i\Lambda_x(v^{1,0})\wedge AA \\
T_x @> iv^{0,1}\wedge>> \bar{T}_x^*\otimes T_x @>iv^{0,1}\wedge>> \wedge^2 \bar{T}_x^*\otimes T_x @>iv^{0,1}\wedge>>\\
@AAA @AAA @AAA \\
0@>>> 0 @>>> 0 @>>>
\end{CD}$
\end{center}
Indeed, the above diagram is bicomplex since $-i\Lambda_x(v^{1,0})\wedge iv^{0,1}+ iv^{0,1}\wedge(-i\Lambda_x(v^{1,0}))=0$. Note that each row is exact. So the total complex is exact by the following lemma. Hence the sequence is elliptic.
\end{proof}

\begin{lemma}[Acyclic Assembly Lemma]
Let $C$ be a bounded double complex in the category of $\mathbb{C}$-vector spaces. Then the total complex is an acyclic chain complex if $C$ is an upper half-plane complex with exact rows.
\end{lemma}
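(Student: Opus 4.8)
The plan is to prove this by a diagram-chase/induction argument in the style of a staircase pursuit. I would first fix notation: write $C=\{C_{p,q}\}$ for the double complex, with horizontal differential $d^h:C_{p,q}\to C_{p-1,q}$ and vertical differential $d^v:C_{p,q}\to C_{p,q-1}$ (or the cohomological variants, as the excerpt uses upper-half-plane ascending conventions — I would match whichever indexing the ambient bicomplexes in Appendix~\ref{appendixd} use, so that ``upper half-plane'' means $C_{p,q}=0$ for $q<0$ and for $q$ sufficiently large in each relevant column). The total complex has $\mathrm{Tot}(C)_n=\bigoplus_{p+q=n}C_{p,q}$, a finite sum because $C$ is bounded in the remaining direction, with differential $D=d^h\pm d^v$. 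The goal is: if every row $C_{\bullet,q}$ is exact, then $H_n(\mathrm{Tot}(C))=0$ for all $n$.

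The key steps, in order: (1) Take a cycle $c=(c_{p,q})_{p+q=n}\in\mathrm{Tot}(C)_n$ with $Dc=0$; because $C$ is an upper half-plane complex, only finitely many components $c_{p,q}$ are nonzero, and there is a topmost nonzero one, say $c_{p_0,q_0}$ with $q_0$ maximal. (2) Reading off the component of $Dc=0$ of bidegree $(p_0-1,q_0)$: since $c_{p_0-1,q_0+1}$ has $q$-degree exceeding $q_0$ it vanishes, so the only surviving term is $d^h(c_{p_0,q_0})=0$, i.e. $c_{p_0,q_0}$ is a horizontal cycle in the exact row $C_{\bullet,q_0}$. (3) By exactness of that row, choose $b_{p_0+1,q_0}$ with $d^h(b_{p_0+1,q_0})=c_{p_0,q_0}$ (up to the sign bookkeeping in $D$). (4) Replace $c$ by $c-D(b_{p_0+1,q_0})$: this is homologous to $c$, and a short check shows the new cycle has no component in $q$-degree $\ge q_0$ — the top component has been killed and $D(b_{p_0+1,q_0})$ introduces nothing above $q$-degree $q_0$. (5) Iterate: since $C$ is bounded below (upper half-plane), after finitely many steps the cycle is reduced to $0$, exhibiting the original $c$ as a boundary. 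Hence $\mathrm{Tot}(C)$ is acyclic. Finally I would remark that this is exactly the instance needed in the preceding proposition: each bicomplex appearing there ($A^p$ with $L=\bar\partial+[\Lambda,-]$, and the symbol bicomplex) is concentrated in nonnegative $q$-degree and is bounded (finite-dimensional fibers, $q\le n+1$), and each row is exact ($iv^{0,1}\wedge$ gives a Koszul-type exact complex away from the zero cotangent vector), so the Acyclic Assembly Lemma applies and the symbol sequence is exact, i.e. $L$ is an elliptic complex.

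The main obstacle I expect is not conceptual but bookkeeping: getting the signs in $D=d^h\pm(-1)^{p}d^v$ consistent so that steps (3)–(4) genuinely cancel the top component without reintroducing terms, and being careful that ``upper half-plane plus bounded'' really does guarantee termination of the induction (one must verify the induction descends on the finite invariant ``largest $q$-degree of a nonzero component,'' which is well-defined precisely because of the half-plane hypothesis together with the boundedness that makes $\mathrm{Tot}$ a finite sum). I would also note that one should state the lemma for a first-quadrant-type truncation or invoke the standard reference (e.g. Weibel's homological algebra text) rather than reprove the general statement, but since the excerpt states it as a self-contained lemma I would include the short staircase argument above. Everything else — exactness of the rows in the concrete bicomplex — is the content already recorded in the proof of the proposition and needs no further argument.
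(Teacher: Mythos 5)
Your argument is correct, but note that the paper itself does not prove this lemma at all: its ``proof'' is the citation to Weibel, Lemma 2.7.3, p.~59 (\cite{Wei94}), so you are supplying the argument that the paper outsources. Your staircase chase is in fact the standard proof of that lemma, and the delicate points are handled correctly: on a fixed antidiagonal $p+q=n$ the component of maximal $q$ is unique (since $p=n-q$), so ``kill the top component'' is unambiguous; reading off $Dc=0$ in bidegree $(p_0-1,q_0)$ uses only the vanishing of $c_{p_0-1,q_0+1}$; subtracting $D(b_{p_0+1,q_0})$ strictly lowers the top $q$-degree; and the process terminates because at $q=0$ the vertical correction lands in the vanishing row $q=-1$, which is exactly where the upper half-plane hypothesis enters. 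One comparative remark worth recording: Weibel's general statement distinguishes the direct-sum and direct-product total complexes --- with exact rows and an upper half-plane complex it is $\mathrm{Tot}^{\oplus}$ that is acyclic, the product version requiring instead exact columns --- and your proof implicitly uses only that a cycle has finitely many nonzero components, which is guaranteed here both by the $\mathrm{Tot}^{\oplus}$ convention and by the boundedness hypothesis in the paper's statement (which also makes the two total complexes coincide, so the distinction is moot in the application). Your closing observation that the symbol bicomplex of $L=\bar\partial+[\Lambda,-]$ is bounded, concentrated in $q\ge 0$, and has exact (Koszul-type) rows for $v\ne 0$ matches what the paper's proof of the ellipticity proposition asserts, so the lemma applies exactly as you say.
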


\begin{proof}
See \cite{Wei94} Lemma 2.7.3 p.59.
\end{proof}

\begin{thm}\label{i}
Let $(\mathcal{E}(E)=\oplus_{j=0}^N\mathcal{E}(E_j),L)$ be an elliptic complex equipped with an inner product. Then
\begin{enumerate}
\item There is an orthogonal decomposition 
\begin{align*}
\mathcal{E}=\mathbb{H}(E)\oplus LL^*G\mathcal{E}(E)\oplus L^*LG\mathcal{E}(E)
\end{align*}\\
\item The following commutation relations are valid:

$(a) I=H+\Box G=H+G\Box$

$(b) HG=GH=H\Box=\Box H=0$

$(c) L\Box=\Box L,L^*\Box=\Box L^*$

$(d) LG=GL, L^*G=GL^*$\\
\item $\dim_{\mathbb{C}} \mathbb{H}(E)<\infty$, there is a canonical isomorphism
\begin{align*}
\mathbb{H}^j=\mathbb{H}(E_j)\cong H^j(E) :\text{$j$-th cohomology group with coeffiient $E$}
\end{align*}
\end{enumerate}
where $\Box=LL^*+L^*L$ and $\mathbb{H}(E)=\oplus \mathbb{H}(E_j)$ is the total space of $ \Box$-harmonic sections. $G$ is a Green operator and $H$ is the orthogonal projection onto the closed subspace $\mathbb{H}(E)$.
\end{thm}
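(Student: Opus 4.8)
The statement is the standard Hodge-theoretic package for an elliptic complex equipped with an inner product, applied to the elliptic complex $(\mathcal{E}(E)=\bigoplus_j \mathcal{E}(E_j), L)$ that we have just shown exists. Since we have already verified in the preceding proposition that the complex $0\xrightarrow{L} A^1\xrightarrow{L}\cdots\xrightarrow{L}A^{n+1}\to 0$ (with $L=\bar\partial+[\Lambda,-]$) is elliptic, the present theorem is a purely functional-analytic consequence; I would not reprove it from scratch but rather reduce it to the classical Hodge decomposition theorem for a single elliptic operator. The key object is the Laplacian $\Box = LL^* + L^*L$, which, because $L$ has order $1$ and its symbol sequence is exact, is an elliptic operator of order $2$ acting on each $\mathcal{E}(E_j)$ (its symbol is $\sigma(\Box)=\sigma(L)\sigma(L)^*+\sigma(L)^*\sigma(L)$, which is an isomorphism on each fiber by exactness of the symbol sequence together with a standard linear-algebra lemma). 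This is the one genuinely substantive input, and I would cite Wells \cite{Wel08} for it rather than grind through the symbol computation.

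\textbf{Key steps.} First I would fix a Hermitian metric on $E$ and a volume form on $X$ (both exist since $X$ is compact), obtaining an $L^2$ inner product on $\mathcal{E}(E)$ and the formal adjoint $L^*$. Second, invoking the general Hodge theorem for the self-adjoint elliptic operator $\Box$ on the compact manifold $X$, I get: $\ker\Box=\mathbb{H}(E)$ is finite-dimensional; there is an $L^2$-orthogonal decomposition $\mathcal{E}(E)=\mathbb{H}(E)\oplus \Box(\mathcal{E}(E))$; and there is a Green operator $G$, which is the inverse of $\Box$ on $\mathbb{H}(E)^\perp$ and zero on $\mathbb{H}(E)$, satisfying $I = H + \Box G = H + G\Box$ where $H$ is orthogonal projection onto $\mathbb{H}(E)$. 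This gives part (2)(a) and part of (2)(b) directly. Third, I would establish the commutation relations: $\Box$ commutes with $L$ and with $L^*$ because $L L^* L = L L^* L$ trivially and $\Box L = LL^*L + L^*LL = LL^*L = L(L^*L+LL^*) = L\Box$ using $L^2=0$; similarly $\Box L^* = L^*\Box$. From $[\Box,L]=[\Box,L^*]=0$ one deduces $L$ and $L^*$ preserve $\mathbb{H}(E)=\ker\Box$, hence preserve $\mathbb{H}(E)^\perp$, hence commute with $G$ (since on $\mathbb{H}(E)^\perp$, $G$ is just $\Box^{-1}$, and on $\mathbb{H}(E)$ both sides vanish); this yields $LG=GL$ and $L^*G=GL^*$, and $HG=GH=H\Box=\Box H=0$ is immediate from the definitions. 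Fourth, for part (1): write $\xi = H\xi + \Box G\xi = H\xi + LL^*G\xi + L^*LG\xi$, and check orthogonality of the three summands using $\langle L\alpha,\beta\rangle=\langle\alpha,L^*\beta\rangle$, $L^2=0$, $(L^*)^2=0$, and $H\mathbb{H}(E)^\perp$-vanishing. Fifth, for part (3): one shows $\ker(L:\mathcal{E}(E_j)\to\mathcal{E}(E_{j+1})) = \mathbb{H}(E_j)\oplus L(\mathcal{E}(E_{j-1}))$ — a closed element $\xi$ decomposes as $H\xi + LL^*G\xi + L^*LG\xi$, the last term vanishes because $0=L^*\langle L\xi, \cdot\rangle$-type argument shows $L^*LG\xi=0$ when $L\xi=0$ (using $LG\xi$ is closed and $\langle L^*LG\xi,L^*LG\xi\rangle = \langle LG\xi, LL^*LG\xi\rangle = \langle LG\xi, L\Box G\xi\rangle - \ldots = 0$) — so the map $\mathbb{H}(E_j)\to H^j(E)$ sending a harmonic section to its cohomology class is well-defined, injective (a harmonic exact form is zero), and surjective (by the decomposition of closed forms).

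\textbf{Main obstacle.} The only real obstacle is the verification that $\Box$ is elliptic of order $2$, i.e. the symbol computation $\sigma_2(\Box)(x,v) = \sigma_1(L)(x,v)\sigma_1(L)^*(x,v) + \sigma_1(L)^*(x,v)\sigma_1(L)(x,v)$ is an isomorphism on each fiber $E_{j,x}$ for $v\neq 0$. This follows from exactness of the symbol sequence of the elliptic complex by a standard pointwise linear-algebra argument (if $d_1\colon V_0\to V_1$, $d_2\colon V_1\to V_2$ with $d_2 d_1=0$ and $\ker d_2 = \operatorname{im} d_1$, then $d_1 d_1^* + d_2^* d_2$ is invertible on $V_1$), and I would simply cite it from \cite{Wel08} rather than reproduce it. Everything else — the passage from the single-operator Hodge theorem to the complex version, the commutation relations, and the cohomology identification — is formal algebra once the elliptic regularity and finite-dimensionality of $\ker\Box$ are in hand, so my proof would be short: state the setup, quote the scalar elliptic Hodge theorem applied to $\Box$, and then assemble (1), (2), (3) from it by the elementary manipulations sketched above.
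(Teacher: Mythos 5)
Your proposal is correct and follows the standard route; the paper itself gives no proof of this statement, simply citing Wells \cite{Wel08} (Theorem 5.2, p.~147), and your sketch — reduce to the Hodge decomposition for the single self-adjoint elliptic operator $\Box$ (elliptic of order $2$ because the symbol sequence is exact) and then assemble (1)–(3) by formal algebra using $L^2=0$ — is exactly the argument given in that reference. The one step to phrase more carefully is the claim that commutation with $\Box$ alone makes $L$ and $L^*$ preserve $\mathbb{H}(E)^\perp$: you also need that harmonic sections are $L$- and $L^*$-closed (this is the paper's Proposition \ref{j}), after which $LG=GL$, $L^*G=GL^*$ and the identification $\mathbb{H}(E_j)\cong H^j(E)$ follow as you describe.
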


\begin{proof}
See \cite{Wel08}  Theorem 5.2 p.147.
\end{proof}

\begin{proposition}\label{j}
Let $\xi\in \mathcal{E}(E)$. Then $\Box\xi=0$ if and only if $L\xi=L^*\xi=0$; moreover, $LH=HL=L^*H=HL^*=0$.
\end{proposition}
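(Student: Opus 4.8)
The statement to prove is Proposition \ref{j}: for $\xi\in\mathcal{E}(E)$, we have $\Box\xi=0$ if and only if $L\xi=L^*\xi=0$, and moreover $LH=HL=L^*H=HL^*=0$.

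\textbf{Approach.} The plan is to argue the equivalence by the standard Hodge-theoretic inner product trick, then deduce the operator identities from the equivalence together with the commutation relations already recorded in Theorem \ref{i}. This is entirely formal once one has the self-adjointness of $\Box$ and the fact that the inner product on $\mathcal{E}(E)$ is positive definite; no ellipticity or analysis beyond what Theorem \ref{i} supplies is needed here.

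\textbf{Key steps.} First I would establish the easy direction: if $L\xi=0$ and $L^*\xi=0$, then immediately $\Box\xi=(LL^*+L^*L)\xi=L(L^*\xi)+L^*(L\xi)=0$. Second, for the converse, suppose $\Box\xi=0$. I would compute the inner product $\langle\Box\xi,\xi\rangle$ and expand it using the fact that $L^*$ is the formal adjoint of $L$ with respect to the chosen inner product:
\begin{align*}
0=\langle\Box\xi,\xi\rangle=\langle LL^*\xi,\xi\rangle+\langle L^*L\xi,\xi\rangle=\langle L^*\xi,L^*\xi\rangle+\langle L\xi,L\xi\rangle=\|L^*\xi\|^2+\|L\xi\|^2.
\end{align*}
Since both summands are nonnegative and their sum is zero, each vanishes, whence $L\xi=0$ and $L^*\xi=0$. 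This proves the equivalence. Third, for the operator identities: by Theorem \ref{i}(3) the image of $H$ is $\mathbb{H}(E)$, the space of $\Box$-harmonic sections, i.e. $\{\xi:\Box\xi=0\}$; by the equivalence just proved, every $\xi$ in the image of $H$ satisfies $L\xi=0$ and $L^*\xi=0$, so $LH=0$ and $L^*H=0$. For $HL=0$ and $HL^*=0$ I would take adjoints: $H$ is self-adjoint (being an orthogonal projection), so $(LH)^*=H^*L^*=HL^*$, and since $LH=0$ we would like to conclude $HL^*=0$; to get $HL=0$ directly I would instead note that $L^*H=0$ implies $(L^*H)^*=HL=0$. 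Thus from $LH=0$ we get $HL^*=0$ by adjunction, and from $L^*H=0$ we get $HL=0$ by adjunction, giving all four identities.

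\textbf{Main obstacle.} There is essentially no serious obstacle; the only point requiring a little care is making sure the adjunction argument in the last step is applied consistently — one must use that $H=H^*$ and that $(AB)^*=B^*A^*$ on the relevant (possibly infinite-dimensional) function spaces, which is legitimate here because $H$ is the finite-rank orthogonal projection onto $\mathbb{H}(E)$ furnished by Theorem \ref{i} and $L,L^*$ are genuine adjoints of one another by construction of the inner product. If one prefers to avoid adjoints on unbounded operators altogether, an alternative is to verify $HL=0$ pointwise: for any $\eta$, write $L\eta$ in the orthogonal decomposition of Theorem \ref{i}(1) and observe that $\mathbb{H}(E)\perp \operatorname{im}L$ and $\mathbb{H}(E)\perp\operatorname{im}L^*$, which follows from the equivalence since harmonic elements are killed by both $L$ and $L^*$, hence orthogonal to everything in the images of $L^*$ and $L$ respectively; then $H(L\eta)=0$. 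I would present whichever of these is cleanest given the conventions already fixed in the Appendix.
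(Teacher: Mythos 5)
Your proof is correct and complete. The paper does not actually prove this proposition—it simply cites Wells (Proposition 5.3, p.147)—and your argument (the identity $\langle\Box\xi,\xi\rangle=\|L\xi\|^{2}+\|L^{*}\xi\|^{2}$ on the compact manifold for the equivalence, then the orthogonality of harmonic sections to $\operatorname{im}L$ and $\operatorname{im}L^{*}$ for the identities $LH=HL=L^{*}H=HL^{*}=0$) is exactly the standard one found in that reference, so it matches the intended proof.
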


\begin{proof}
See \cite{Wel08} Proposition 5.3 p.147.
\end{proof}

\begin{remark}
Let $(M,\Lambda)$ be a holomorphic Poisson manifold. Let $L=\bar{\partial} +[\Lambda,-]$. And  we have a Hermitian inner product on $A^p$. Since 
\begin{align*}
0\xrightarrow{L} A^1\xrightarrow{L}\cdots \to A^n\xrightarrow{L} A^{n+1 }\to0
\end{align*}
is elliptic, we can apply the above Theorem \ref{i} and Proposition \ref{j} .
\end{remark}

\bibliographystyle{amsalpha}
\bibliography{References-Ret3}

\end{document}